\newtheorem{thm}{Theorem}[chapter]
\newtheorem{lemma}{Lemma}[chapter]
\newtheorem{coro}{Corollary}[chapter]
\newtheorem{prop}{Proposition}[chapter]
\newtheorem{assp}{Assumption}[chapter]
\newtheorem{expl}{Example}[chapter]
\newtheorem{rem}{Remark}[chapter]
\crefname{assp}{Assumption}{Assumptions}
\crefname{prop}{Proposition}{Propositions}
\crefname{thm}{Theorem}{Theorems}
\crefname{lemma}{Lemma}{Lemmas}
\crefname{coro}{Corollary}{Corollaries}
\crefname{rem}{Remark}{Remarks}
\crefname{expl}{Example}{Examples}
\crefname{defn}{Definition}{Definitions}
\newcommand{\lbracket}{[}
\newcommand{\rbracket}{]}
\newcommand{\E}{\mathbb{E}}
\newcommand{\PP}{\mathbb{P}}
\newcommand{\RR}{\mathbb{R}}
\def\tr{\Delta}
\def\e{\varepsilon}
\def\<{\langle}
\def\>{\rangle}
\def\l{\lambda}
\def\nn{\nonumber}
\begin{document}
    \prefrontmatter
    \frontmatter
    \pagenumbering{roman} 
    \cleardoublepage
    \chapter*{Abstract}

This paper investigates longtime behaviors of the $\theta$-Euler--Maruyama method for the stochastic functional differential equation  with superlinearly growing coefficients.  
%This aim of this paper is to provide  a  {\color{red}thorough} investigation  of longtime behaviors of the $\theta$-Euler--Maruyama ($\theta$-EM) method for the stochastic functional differential equation (SFDE) with superlinearly growing coefficients.
 We focus on the longtime convergence analysis in mean-square sense and weak sense of the $\theta$-Euler--Maruyama method, the convergence of the numerical invariant measure, the existence and convergence of the numerical density function, and the Freidlin--Wentzell large deviation principle of the  method. 
The main contributions are outlined as follows. 
First, we obtain the longtime mean-square convergence of the $\theta$-Euler--Maruyama method and show that the mean-square convergence rate is $\frac12$.
A key step in the proof  is to establish the time-independent boundedness of high-order moments of the numerical functional solution.
%(Central to this analysis is to prove  the time-independent boundedness of high-order moment of the numerical functional solution.) 
%(The key in the analysis is to prove the time-independent boundedness of high-order moment of the numerical functional solution.) 
%{\color{blue}Second, based on a detailed analysis of the time-independent \textit{a priori} estimates, we present the longtime  weak convergence for the $\theta$-Euler--Maruyama method via Malliavin calculus, which implies that the numerical invariant measure of the $\theta$-Euler--Maruyama functional solution converges to the exact one with the optimal rate $1.$} 
Second, based on the technique of   the  Malliavin calculus, we present the longtime weak convergence of the  $\theta$-Euler--Maruyama method, which implies that the  invariant measure of the $\theta$-Euler--Maruyama  functional solution converges to the exact one with   rate $1.$ 
%  and further the optimal convergence rate of the invariant measure of the $\theta$-EM functional solution for the SFDE.  
Third, by the  analysis of  the  test-functional-independent weak convergence  and  negative moment estimates of the determinant of the corresponding Malliavin covariance matrix, we derive the existence,  convergence, and the logarithmic  estimate   of the  density function  of the $\theta$-Euler--Maruyama  solution. At last,  utilizing the weak convergence method, we obtain the Freidlin--Wentzell large deviation principle for the $\theta$-Euler--Maruyama solution  on the infinite time horizon. 
 %process given by the $\theta$-Euler--Maruyama method
%\mbox{}\linebreak

\noindent {\bf Keywords:} Stochastic functional differential equation; $\theta$-Euler--Maruyama method; longtime behaviors; mean-square and weak convergence; invariant measure;  density function; large deviation principle

        \markboth{Abstract}{}
      \cleardoublepage
    \pdfbookmark{\contentsname}{toc}
    \renewcommand{\sectionmark}[1]{\markright{#1}}
    \addtolength{\parskip}{-\baselineskip}  
    \tableofcontents
    \begingroup
        \let\clearpage\relax
        \vspace*{20pt}
        %\listoftables
        \vspace*{20pt}
       % \listoffigures
        \vspace*{20pt}
        %\listoffigures
         \vspace*{20pt}
        %\listoftables
         \vspace*{20pt}
       % \listoftables
    \endgroup
    \addtolength{\parskip}{\baselineskip}
    \renewcommand{\sectionmark}[1]{\markright{\thesection\ #1}}
    \mainmatter
    %-----------------------------------------------------------------------
% Beginning of chap1.tex
%-----------------------------------------------------------------------
%
%  AMS-LaTeX sample file for a chapter of a monograph, to be used with
%  an AMS monograph document class.  This is a data file input by
%  chapter.tex.
%
%  Use this file as a model for a chapter; DO NOT START BY removing its
%  contents and filling in your own text.
% 
%%%%%%%%%%%%%%%%%%%%%%%%%%%%%%%%%%%%%%%%%%%%%%%%%%%%%%%%%%%%%%%%%%%%%%%%

%\part{This is a Part Title Sample}

\chapter{Introduction}

The stochastic functional differential equation (SFDE)
%appears   in branch of stochastic analysis to 
is commonly used to model  systems
that depend not only on the present state but also on the past ones, %Since time delays are unavoidable in real systems,} 
%Such model and its  longtime behaviors 
which has wide applications in  diverse fields such as biology, epidemiology, mechanics, neural networks, economics, finance and so on. 
%see e.g. monographs \cite{Mohammed84, Mao2007, Baobook}.  
%Correspondingly, 
The numerical method of the SFDE  
%as a powerful  tool facilitating both qualitative and quantitative explorations into stochastic phenomena and the attendant challenges across diverse disciplinary domains,
%{\color{red}as a powerful  tool to facilitate both qualitative and quantitative explorations into stochastic phenomena and the attendant challenges across diverse disciplinary domains,}
serves as  a powerful  tool to  qualitatively and quantitatively study stochastic phenomena and time-lag effect  prevalent in these fields. %(intrinsic properties of the original equation),  
%has attracted a great deal of attention in recent decades.  
  %The aim of this paper is to present a rigorous study on the longtime behavior for the $\theta$-Euler--Maruyama ($\theta$-EM)  method of the SFDE.  
There is a growing interest in  the study of longtime behaviors of numerical methods, which  provides effective approaches for investigating the intrinsic properties  of the exact solution. However, many problems  on this topic  remain unsolved. 
%(The research on longtime behaviors of  numerical methods occupies (holds) an important position, providing an effective approach for investigating the intrinsic properties  of the exact solution, yet many problems on this research remain unresolved.) 
 %, but also paves an efficient way for numerically simulating the characteristics of exact solutions. This paper focuses on the $\theta$-Euler--Maruyama ($\theta$-EM) method of the SFDE, and investigates the longtime behaviors of the numerical solutions. }
This paper focuses on the investigation of  longtime behaviors for the $\theta$-Euler--Maruyama ($\theta$-EM) method of the SFDE.

%{\color{red}Numerical methods of stochastic partial differential equations are important tools to qualitatively and quantitatively study stochastic phenomena and problems arising in many fields such as quantum mechanics, statistical physics, economics and finance.}

%The long-time behavior of the SFDE and its numerical solution is one of central problems in and has recently attracted a great deal of attention in recent decades. 
 %Such model  plays an important role in formulation and analysis in mechanical, electrical, control engineering and physical sciences, economic and social sciences; 
 %This paper investigates the longtime behaviors of numerical solutions for stochastic functional differential equations. 
\section{Stochastic functional differential equation}\label{PR}

In this paper, we consider the following SFDE on 
$t\in[-\tau,+\infty)$,
\begin{align}\label{FF}
\left\{
\begin{array}{ll}
\mathrm{d}x^{\xi}(t)=b(x^{\xi}_t)\mathrm{d}t +\sigma(x^{\xi}_t)\mathrm{d}W(t),\quad t>0,\\
~~x^{\xi}(t)=\xi(t),\quad t\in[-\tau,0],\\
\end{array}
\right.
\end{align}
where the delay $\tau>0$, the initial datum $\xi\in \mathcal C([-\tau,0];\RR^d),$  the drift coefficient  $b: \mathcal C([-\tau,0];\RR^d) \rightarrow \RR^{d}$ is locally Lipschitz continuous, the diffusion coefficient $\sigma: \mathcal C([-\tau,0];\RR^d)  \rightarrow \RR^{d\times m}$ is  globally Lipschitz continuous, $\{W(t),t\ge 0\}$ is an $m$-dimensional standard Brownian motion on a filtered probability space $(\Omega,\mathcal F,\{\mathcal F_t\}_{t\ge 0},\mathbb P),$ and $x^{\xi}_t:r\mapsto x^{\xi}(t+r)$ is the $\mathcal C([-\tau,0];\mathbb R^d)$-valued functional solution for $t\ge 0.$ The well-posedness of \eqref{FF} has been deeply studied, see e.g.  \cite{Mohammed84, Mao2007, XuYang, RM10}  and references therein for details.

 %in many branches of science and industry 

The longtime behavior  of the SFDE on the infinite time horizon is a recent and  ongoing research subject. There are now a certain number of papers devoted to this research.  
% has been investigated in recent decades. 
%{\color{blue}For instance, existence and uniqueness of the  solution have been proved under globally  Lipschitz conditions on the coefficients (see e.g. the monograph \cite{Mohammed84}) and under locally Lipschitz and the monotonicity conditions (see e.g. \cite{Mao2007, XuYang, RM10} and references therein);}  
%and moment estimates of solutions on the finite time horizon are estalished}  
For instance, 
%for the finite time horizon case, moment estimates of solutions are established;   %existence and uniqueness of the  solution have been proved under globally  Lipschitz conditions on the coefficients (see e.g. the monograph \cite{Mohammed84}) and under locally Lipschitz and the monotonicity conditions (see e.g. \cite{Mao2007, XuYang, RM10} and references therein); 
%Some probabilistic characteristics, including 
%The density function and the large deviation principle of the solution of SFDE {\color{blue}on the finite time horizon} have been studied in e.g. \cite{Ma_functional, AAdensity, Baobook}. For the case of the infinite time horizon, 
the stability of the exact solution has been established in e.g. \cite{BDH18, LMS11, Jiang2023, Shaikhet2013}; The theory on the existence and uniqueness of the invariant measure of the functional solution has  been well studied in e.g. 
 \cite{BS17, Bao-Shao-Yuan2023, MYM17, Baobook, WWM2019}.  
We 
recall some  results on  the functional solution of \eqref{FF}, including  the second moment boundedness, the exponential attractiveness, and the existence and uniqueness of the invariant measure.
% for the infinite time horizon case. 
%As  this paper focuses on the longtime behaviors of the solution when considering its numerical approximations, 
Suppose that the coefficients satisfy the following two assumptions, which are globally  Lipschitz condition of the diffusion  coefficient $\sigma$ and the dissipative condition of the drift coefficient $b$.

\begin{assp}\label{a1}
There exist a constant   $L>0$ and a probability measure $\nu_1$ on $[-\tau,0]$ such that for any $\phi_1,~\phi_2\in \mathcal C([-\tau,0];\mathbb R^d)$,
\begin{align*}%\label{s2.3}
   |\sigma(\phi_1)-\sigma(\phi_2)|^2
  \leq{}
  L\Big(|\phi_1(0)-\phi_2(0)|^2+\int_{-\tau}^{0}|\phi_1(r)- \phi_2(r)|^2\mathrm d \nu_1(r)\Big).
  \end{align*}
\end{assp}

\begin{assp}\label{a2}
The drift coefficient $b$ is continuous and there exist a probability measure $\nu_2$ on $[-\tau,0]$ and positive constants $a_1,a_2$ satisfying $a_1>a_2+L$  such that for any $\phi_1,~\phi_2\in \mathcal C([-\tau,0];\mathbb R^d)$,
\begin{align*}
\langle\phi_1(0)\!-\!\phi_2(0),b(\phi_1)\!-\!b(\phi_2)\rangle
\leq \!-a_1|\phi_1(0)\!-\!\phi_2(0)|^{2}\!+a_2\int_{-\tau}^{0}|\phi_1(r)\!-\!\phi_2(r)|^2\mathrm d\nu_2(r).
\end{align*}
\end{assp}

Note that under  \cref{a2},  the drift coefficient $b$ may grow superlinearly, in particular, the classical example  $b(x)=\frac1\tau\int_{-\tau}^0x(s)\mathrm ds-|x(0)|^2x(0)-x(0),\,x\in\mathcal C([-\tau,0];\mathbb R^d)$ is included. 
%The growth properties of $b,\sigma$ are also implied by Assumptions \ref{a1}-\ref{a2}, which are stated in the following remark.  
%in particular, not assumed to be globally Lipschitz continuous. 

\begin{rem}\label{r1}
By Assumption {\ref{a2}}, we obtain that for any $\phi\in \mathcal C([-\tau,0];\mathbb R^d)$,
\begin{align}\label{F1r1}
\langle \phi(0), b(\phi)\rangle
\leq |\phi(0)||b(0)|-a_1|\phi(0)|^{2}+a_2\int_{-\tau}^{0}|\phi(r)|^2\mathrm d\nu_2(r).
\end{align}
Since $a_1>a_2+L$, by the Young inequality, we have  
\begin{align}\label{F2r1}
2|\phi(0)||b(0)|\leq \frac{1}{a_1-a_2-(1+\e)L}|b(0)|^2+ (a_1-a_2-(1+\e)L)|\phi(0)|^2,
\end{align}
where the positive constant $\e$ satisfies  $a_1-a_2-(1+\e)L>0$.
Inserting \eqref{F2r1} into \eqref{F1r1} leads to
\begin{align*}
2\langle \phi(0), b(\phi)\rangle
\leq{}&\frac{1}{a_1-a_2-(1+\e)L}|b(0)|^2-(a_1+a_2+(1+\e)L)|\phi(0)|^{2}\nn\\
&\quad+2a_2\int_{-\tau}^{0}|\phi(r)|^2\mathrm d\nu_2(r).
%\leq&~K-(a_1+a_2+(1+\e)L)|\phi(0)|^{2}+2a_2\int_{-\tau}^{0}|\phi(r)|^2\mathrm d\nu_2(r),
\end{align*}
This, along with Assumption {\ref{a1}} implies that
\begin{align*}
&2\langle \phi(0), b(\phi)\rangle+|\sigma(\phi)|^2\nn\\
\leq{}& \frac{1}{a_1-a_2-(1+\e)L}|b(0)|^2+(1+\frac{1}{\e})|\sigma(0)|^2-(a_1+a_2)|\phi(0)|^{2}\nn\\
&\quad+(1+\e)L\int_{-\tau}^{0}|\phi(r)|^2\mathrm d\nu_1(r)+2a_2\int_{-\tau}^{0}|\phi(r)|^2\mathrm d\nu_2(r).
\end{align*}
%where the positive constant $K$ is dependent of $a_1,~a_2,~L$ and $b(0)$.
Letting 
$
\e=\frac{a_1-(a_2+L)}{2L},
$
we have
\begin{align}\label{F2r1+1}
2\langle \phi(0), b(\phi)\rangle
\leq K-\frac{3a_1+a_2+L}{2}|\phi(0)|^{2}+2a_2\int_{-\tau}^{0}|\phi(r)|^2\mathrm d\nu_2(r)
%\leq&~K-(a_1+a_2+(1+\e)L)|\phi(0)|^{2}+2a_2\int_{-\tau}^{0}|\phi(r)|^2\mathrm d\nu_2(r),
\end{align}
and
\begin{align}\label{F3r1}
2\langle \phi(0), b(\phi)\rangle+|\sigma(\phi)|^2
\leq{}&K-(a_1+a_2)|\phi(0)|^{2}+\frac{a_1-a_2+L}{2}\int_{-\tau}^{0}|\phi(r)|^2\mathrm d\nu_1(r)\nn\\
&\quad+2a_2\int_{-\tau}^{0}|\phi(r)|^2\mathrm d\nu_2(r),
\end{align}
where the positive constant $K$ depends on $a_1$,~$a_2$,~$L$, $b(0)$,  and $\sigma(0)$.
\end{rem}

Under Assumptions {\ref{a1}} and {\ref{a2}},  the  SFDE \eqref{FF} with the initial datum $\xi\in \mathcal C([-\tau,0];\mathbb R^d)$ has a unique strong solution $x^{\xi}(\cdot)$ on $[-\tau, \infty)$ according to \cite[Theorem 2.3]{RM10}. 
 Below, we list 
 %The functional solution $x^{\xi}_{\cdot}$ of \eqref{FF} has some longtime properties, including 
 the second moment boundedness, the exponential attractiveness,  and the existence and uniqueness of the invariant measure for the functional solution of \eqref{FF}. 
 %We list them below for our convenience, and 
 The proofs are similar to \cite[Lemma 3.1, Lemma 3.2, and Theorem 1.3]{Bao-Shao-Yuan2023}, which are omitted.

%Now we prepare some  results on the exact solution, which cite from without the Markov chain case.
\begin{lemma}\label{l1}
Under Assumptions {\ref{a1}} and {\ref{a2}}, the functional solution of \eqref{FF} satisfies
\begin{align*}
\sup_{t\geq0}\E[\|x_t^{\xi}\|^2]\leq K(1+\|\xi\|^2)
\end{align*}
%where the positive constant $K_1$ is dependent of $\xi$, but independent of $t$.
%\end{lemma}
\iffalse
Choose a positive constant $\lambda$ small sufficiently such that
\begin{align}\label{lamb}
c_{\l}:=2a_1-L-(2a_2+L)e^{\lambda\tau}-\lambda>0.
\end{align}
\fi
%\begin{lemma}\label{l2.3}\cite[Lemma 3.1]{Bao-Shao-Yuan2023}
%Under Assumptions {\ref{a1}} and {\ref{a2}}, 
and 
\begin{align*}
\E[\|x^{\xi}_t-x^{\eta}_t\|^2]\leq K\|\xi-\eta\|^2e^{-\lambda t}\quad \forall ~t\geq0,
\end{align*}
where $\xi,\eta\in \mathcal C([-\tau,0];\mathbb R^d),\,K>0$, and $\lambda>0$ is a sufficiently small constant satisfying 
\begin{align}\label{lamb}
c_{\l}:=2a_1-L-(2a_2+L)e^{\lambda\tau}-\lambda>0.
\end{align}
%where the positive constant $K_1$ is independent of $\xi,~\eta$, and $t$.
\end{lemma}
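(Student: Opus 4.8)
The plan is to run a Lyapunov-type argument based on the exponential weight $e^{\lambda t}$, It\^o's formula for $|\cdot|^2$, and the dissipativity estimates collected in \cref{r1} from \cref{a1} and \cref{a2}, with the delay terms handled by a change of variables that trades the weight $e^{\lambda s}$ for a factor $e^{\lambda\tau}$. The algebraic heart of both assertions is that, because $a_1>a_2+L$, one can choose $\lambda>0$ small enough that \eqref{lamb} holds and that the coefficient multiplying the accumulated delay energy becomes nonpositive; the remaining work is then to pass from pointwise-in-time second moments to the segment supremum norm $\|\cdot\|$.

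For the uniform bound I would apply It\^o's formula to $e^{\lambda s}|x^{\xi}(s)|^2$, take expectations so the stochastic integral drops out, and insert \eqref{F3r1} with $\phi=x^{\xi}_s$. Integrating on $[0,t]$ yields
\begin{align*}
\E\big[e^{\lambda t}|x^{\xi}(t)|^2\big]\leq |\xi(0)|^2+\frac{K}{\lambda}e^{\lambda t}+\big(\lambda-(a_1+a_2)\big)\int_0^t e^{\lambda s}\E|x^{\xi}(s)|^2\,\mathrm ds+R(t),
\end{align*}
where $R(t)$ collects the two delay integrals against $\nu_1$ and $\nu_2$. Using Fubini, the substitution $u=s+r$ together with $e^{-\lambda r}\le e^{\lambda\tau}$ for $r\in[-\tau,0]$, and the fact that $\nu_1,\nu_2$ are probability measures, each delay integral is bounded by $e^{\lambda\tau}\int_{-\tau}^t e^{\lambda u}\E|x^{\xi}(u)|^2\,\mathrm du$, the part over $[-\tau,0]$ contributing only a constant multiple of $\|\xi\|^2$. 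Collecting terms, the coefficient of $\int_0^t e^{\lambda u}\E|x^{\xi}(u)|^2\,\mathrm du$ is $\lambda-(a_1+a_2)+\tfrac{a_1+3a_2+L}{2}e^{\lambda\tau}$, which tends to $-\tfrac{a_1-a_2-L}{2}<0$ as $\lambda\downarrow0$; hence for small $\lambda$ it is nonpositive and may be discarded. Dividing by $e^{\lambda t}$ leaves $\E|x^{\xi}(t)|^2\le K(1+\|\xi\|^2)$ uniformly in $t$.

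The attractiveness estimate is analogous but uses the Assumptions in their difference form. Setting $z=x^{\xi}-x^{\eta}$, It\^o's formula for $e^{\lambda s}|z(s)|^2$ together with the bounds $2\langle z(s),b(x^{\xi}_s)-b(x^{\eta}_s)\rangle\le -2a_1|z(s)|^2+2a_2\int_{-\tau}^0|z(s+r)|^2\mathrm d\nu_2(r)$ from \cref{a2} and $|\sigma(x^{\xi}_s)-\sigma(x^{\eta}_s)|^2\le L|z(s)|^2+L\int_{-\tau}^0|z(s+r)|^2\mathrm d\nu_1(r)$ from \cref{a1} give, after the same change of variables, that the coefficient of $\int_0^t e^{\lambda u}\E|z(u)|^2\,\mathrm du$ equals exactly $\lambda-(2a_1-L)+(2a_2+L)e^{\lambda\tau}=-c_{\lambda}\le 0$ under \eqref{lamb}. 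Discarding this nonpositive term yields $\E[e^{\lambda t}|z(t)|^2]\le C\|\xi-\eta\|^2$, that is, $\E|z(t)|^2\le C\|\xi-\eta\|^2 e^{-\lambda t}$.

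I expect the main obstacle to be upgrading these pointwise-in-time bounds on $\E|x^{\xi}(t)|^2$ and $\E|z(t)|^2$ to the segment norm $\E\|x^{\xi}_t\|^2=\E\sup_{r\in[-\tau,0]}|x^{\xi}(t+r)|^2$ (and likewise for $z$), since the supremum cannot be interchanged with the expectation for free. I would treat each window $[t-\tau,t]$ by writing the solution in integral form, applying the Burkholder--Davis--Gundy inequality to the martingale part and the Cauchy--Schwarz inequality to the drift part, and then controlling the resulting integrands by the already-established pointwise moment bounds (for the boundedness) or by the pointwise exponential decay (for the attractiveness). This converts $\sup_{t-\tau\le s\le t}\E|\cdot(s)|^2$ into a bound on $\E\sup$ at the cost of harmless constants and a factor $e^{\lambda\tau}$, thereby preserving both the uniformity in $t$ and the rate $e^{-\lambda t}$.
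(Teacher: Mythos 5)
Your proposal is correct and follows essentially the same route as the proof the paper points to: the paper omits the argument, citing \cite[Lemma 3.1, Lemma 3.2, Theorem 1.3]{Bao-Shao-Yuan2023}, whose proofs are exactly this Lyapunov/It\^o argument with the weight $e^{\lambda t}$, the change of variables trading $e^{\lambda s}$ for $e^{\lambda\tau}$ in the delay integrals, the sign condition \eqref{lamb}, and a Burkholder--Davis--Gundy step to pass to the segment norm. Your algebra (in particular the limiting coefficient $-\tfrac{a_1-a_2-L}{2}<0$ for the boundedness part and the exact identification of the coefficient with $-c_\lambda$ for the attractiveness part) checks out, and it mirrors the discrete-time analogues the paper does prove in Propositions \ref{p2.2} and \ref{p2.3}.
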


For any  $\xi\in \mathcal C([-\tau,0];\mathbb R^d)$ and $t\geq 0$, denote by $\mu_t^{\xi}$ the probability measure generated by  $x_t^{\xi}$, namely,
\begin{align*}
\mu_t^{\xi}(A)=\PP\big\{\omega\in\Omega: x_{t}^{\xi}\in A\big\}\quad \forall A\in\mathcal{B}(\mathcal C([-\tau,0];\mathbb R^d)).
\end{align*}
Define the bounded-Wasserstein distance  $\mathbb{W}_q~(q\in[1,\infty))$ as
%between two probability measures  $\mathfrak u_1,\mathfrak u_2\in \mathcal P$ as follows:
\begin{align}\label{2.1+}
\mathbb{W}_q(\mathfrak u_1,\mathfrak u_2):=\Big(\inf_{\pi\in\Pi(\mathfrak u_1,\mathfrak u_2)}\int_{\mathcal C([-\tau,0];\mathbb R^d)\times \mathcal C([-\tau,0];\mathbb R^d)}1\wedge\|\phi_1-\phi_2\|^q\pi(\mathrm d\phi_1,\mathrm d\phi_2)\Big)^{\frac{1}{q}},
\end{align}
where $\Pi(\mathfrak u_1,\mathfrak u_2)$ denotes the collection of all probability measures on $\mathcal C([-\tau,0];\mathbb R^d)\times\mathcal C([-\tau,0];\mathbb R^d)$ with
marginal measures $\mathfrak u_1$ and $\mathfrak u_2$; \textup{see \cite[Chapter 6]{Villani}} for details.

\begin{lemma}\label{l2.4}
%\cite[Theorem 1.3]{Bao-Shao-Yuan2023}
Under Assumptions {\ref{a1}} and {\ref{a2}}, the functional solution $x^{\xi}_{\cdot}$
% with any intial data $\xi\in \mathcal C([-\tau,0];\mathbb R^d)$ 
is asymptotically stable in distribution and admits a unique invariant measure $\mu$ satisfying
\begin{align*}
\mathbb W_2(\mu_t^{\xi},\mu)\leq K(1+\|\xi\|)e^{-\frac{\lambda}{2} t}.
\end{align*}
%where the positive constant $K$ is independent of $\xi$ but independent of $t$.
\end{lemma}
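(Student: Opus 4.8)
The plan is to treat the segment process $x^{\xi}_{\cdot}$ as a Markov process on $\mathcal C:=\mathcal C([-\tau,0];\mathbb R^d)$ with semigroup $(P_t)_{t\ge0}$, so that $\mu_t^{\xi}=\delta_{\xi}P_t$ and the Chapman--Kolmogorov identity $\mu_{t+s}^{\xi}=\int_{\mathcal C}\mu_t^{\eta}\,\mu_s^{\xi}(\mathrm d\eta)$ holds. I will combine the two estimates of \cref{l1}---the uniform-in-time second-moment bound and the exponential contractivity in the initial datum---to show that $\{\mu_t^{\xi}\}_{t\ge0}$ is $\mathbb W_2$-Cauchy, to identify its limit as the unique invariant measure $\mu$, and to extract the decay rate.

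The first and central step is a synchronous-coupling estimate. Running \eqref{FF} from two initial data $\xi,\eta$ with the \emph{same} Brownian motion produces a coupling of $\mu_t^{\xi}$ and $\mu_t^{\eta}$; since $1\wedge r\le r$, the definition \eqref{2.1+} and the contractivity in \cref{l1} give
\begin{align*}
\mathbb W_2(\mu_t^{\xi},\mu_t^{\eta})^2\le\E\big[\|x_t^{\xi}-x_t^{\eta}\|^2\big]\le K\|\xi-\eta\|^2e^{-\lambda t}.
\end{align*}
From here I would proceed as follows. Using Chapman--Kolmogorov together with the convexity of $\mathbb W_2^2$ under a common mixing measure (glue, for each $\eta$, an optimal coupling of $(\mu_t^{\eta},\mu_t^{\xi})$ with weight $\mu_s^{\xi}(\mathrm d\eta)$), the displayed bound yields $\mathbb W_2(\mu_{t+s}^{\xi},\mu_t^{\xi})^2\le Ke^{-\lambda t}\,\E[\|x_s^{\xi}-\xi\|^2]\le K(1+\|\xi\|^2)e^{-\lambda t}$, where the uniform second-moment bound of \cref{l1} controls the last factor. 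This makes $\{\mu_t^{\xi}\}$ Cauchy in the complete space $(\mathcal P(\mathcal C),\mathbb W_2)$, hence convergent to some $\mu^{\xi}$; the coupling estimate shows $\mathbb W_2(\mu_t^{\xi},\mu_t^{\eta})\to0$, so the limit $\mu:=\mu^{\xi}$ is independent of $\xi$, and the same estimate provides the $\mathbb W_2$-Feller continuity needed to pass to the limit in $\mu_{s+t}^{\xi}=\mu_s^{\xi}P_t$ and conclude $\mu P_t=\mu$.

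Uniqueness and the rate both follow by mixing the coupling estimate against an invariant measure. For uniqueness, any invariant $\pi$ with finite second moment satisfies $\pi=\int_{\mathcal C}\mu_t^{\eta}\pi(\mathrm d\eta)$, so $\mathbb W_2(\pi,\mu)^2\le Ke^{-\lambda t}\iint\|\eta-\zeta\|^2\pi(\mathrm d\eta)\mu(\mathrm d\zeta)\to0$. For the rate, invariance gives $\mu=\int_{\mathcal C}\mu_t^{\eta}\mu(\mathrm d\eta)$, whence $\mathbb W_2(\mu_t^{\xi},\mu)^2\le Ke^{-\lambda t}\int_{\mathcal C}\|\xi-\eta\|^2\mu(\mathrm d\eta)\le K(1+\|\xi\|^2)e^{-\lambda t}$; taking square roots and using $\sqrt{1+\|\xi\|^2}\le1+\|\xi\|$ produces the claimed bound $K(1+\|\xi\|)e^{-\lambda t/2}$.

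I expect the main obstacle to be organizational rather than deep: one must justify the convexity (mixture) inequality for the truncated cost $1\wedge\|\cdot\|^2$, which I would handle by gluing optimal couplings through a measurable selection $\eta\mapsto\pi^{\eta}$, and one must verify that the limit $\mu$ has a finite second moment, which I would obtain from the uniform-in-time bound of \cref{l1} and the weak lower semicontinuity of $\eta\mapsto\|\eta\|^2$ under $\mathbb W_2$-convergence. With these two points in place, the argument reduces to a routine combination of the Markov property with the two estimates already recorded in \cref{l1}.
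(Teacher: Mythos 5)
The paper never writes out a proof of this lemma; it defers to the cited reference (Bao--Shao--Yuan, Lemmas 3.1--3.2 and Theorem 1.3). Your route---synchronous coupling to get $\mathbb W_2^2(\mu_t^{\xi},\mu_t^{\eta})\le K\|\xi-\eta\|^2e^{-\lambda t}$, mixture convexity of the transport cost through the Markov property to get Cauchy-ness of $\{\mu_t^{\xi}\}$ in the complete space $(\mathcal P,\mathbb W_2)$, then invariance via the Feller property and the rate by mixing against $\mu$---is precisely the standard argument behind that citation, and it is the same technique the paper itself carries out in detail for the numerical counterpart in Chapter 3 (Lemma \ref{lemm2.1} and the theorem following it). Your handling of the two technical points you flag (measurable gluing of optimal couplings; finiteness of $\int\|\eta\|^2\mathrm d\mu(\eta)$ via Lemma \ref{l1} and lower semicontinuity, which yields the claimed rate $K(1+\|\xi\|)e^{-\lambda t/2}$) is sound.

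The one place your write-up is weaker than the claim is the uniqueness step: you only exclude invariant measures $\pi$ \emph{with finite second moment}, because you discard the truncation and bound the transport cost by $Ke^{-\lambda t}\iint\|\eta-\zeta\|^2\,\pi(\mathrm d\eta)\mu(\mathrm d\zeta)$, which says nothing if $\pi$ has infinite second moment, whereas the lemma asserts uniqueness among all invariant measures. The repair is to keep the truncation inside the mixture: since the cost in \eqref{2.1+} is $1\wedge\|\cdot\|^2$,
\begin{align*}
\mathbb W_2^2(\pi,\mu)\leq \int_{\mathcal C^d\times\mathcal C^d}1\wedge\big(K\|\eta-\zeta\|^2e^{-\lambda t}\big)\,\pi(\mathrm d\eta)\,\mu(\mathrm d\zeta),
\end{align*}
where the integrand is bounded by $1$ and tends to $0$ pointwise as $t\to\infty$, so dominated convergence gives $\pi=\mu$ with no moment hypothesis. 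This is exactly how the paper argues in the numerical analogue (see \eqref{th3.1.1}); with this one-line fix your proof is complete.
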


%Except the mentioned properties of exact solution, the 

%including well-posedness, controllability, and homogenization, has been established in

\section{$\theta$-Euler--Maruyama  method}

%In most of the applications in branches of science and industry,  SFDEs  
%with superlinearly growing coefficients 
%cannot be solved analytically, thus numerical methods are used to approximate their solutions, which also provide a qualitative way  to investigate the longtime behaviors  of the exact solution. 
The numerical study of the SFDE on the infinite time horizon has attracted a great deal of attention in the past decades. For instance, authors in  \cite{Zhou15, LD21} obtain 
the stability
% of the trivial solution for 
of the Euler--Maruyama (EM) method and the backward Euler--Maruyama (BEM) method  for the linearly growing coefficients case and  the superlinearly growing coefficients case, respectively;  Authors in  \cite{Bao-Shao-Yuan2023} present the existence, uniqueness, and convergence of the invariant  measure of the EM method for the SFDE with Markov switching; \cite{Wufuke2024} proves that for the SFDE with superlinearly growing coefficients, the numerical invariant measure of the  BEM method converges to the exact one.
%has also been studied; see e.g. \cite{Zhou15, ZXF14, LD21}. 
%and numerical methods are used to approximate their solutions.  

%{\color{blue}Building upon these results and seeking a deeper study,} 
In this paper, we introduce a  general class of  numerical methods, namely,  
the $\theta$-Euler--Maruyama ($\theta$-EM) method ($\theta\in[0,1]$),
and  investigate  the longtime behaviors of numerical solutions for the SFDE. The  $\theta$-EM method
 %in a quantitative way, 
    includes the  
 EM method (for which $\theta=0$) and the BEM method (for which $\theta=1$).
 %is introduced and considered in this paper. 
 %The EM method, which is a popular numerical method in the numerical study of stochastic differential equations,  has  been extensively studied in the existing literature; see. 
 %Due to the existence of the delay in the SFDE, the analysis of $\theta$-EM method for SFDEs rather than SODEs presents additional challenges. 
%The standard method for approximating SFDEs is the $\theta$-EM method with $\theta\in[0,1]$. When $\theta=0,\theta=\frac12$ or $\theta=1$, it is the Euler--Maruyama method, the midpoint method or the implicit Euler method, respectively. Such methods have  been widely studied for stochastic ordinary  differential equations in the existing literature; see. 
Concerning that for the SFDE with superlinearly growing coefficients, %like \eqref{FF} with Assumptions \ref{a1}-\ref{a2}, 
the explicit method such as  the EM method often fails to converge strongly or weakly to the exact solution; see e.g. \cite{diverge}. To overcome the divergence, we consider the implicit case with $\theta\in(\frac12,1]$ in this paper. We also remark that for the case of $\theta\in[0,\frac12],$ results in this paper can also be obtained under a stronger condition on coefficients $b$ and $\sigma$.
%; {\color{blue}see Remark [].} 
Without loss of generality, we  take a sufficiently large integer  $N\geq\tau$  
%assume that there exists an integer $N\geq\tau$ large sufficiently 
such that $\tr=\frac{\tau}{N}\in(0,1]$. Let $t_k=k\tr$ for $k=-N,-N+1,\ldots$
  Define the $\theta$-EM method   as follows: for any
$\xi\in \mathcal C([-\tau,0];\mathbb R^d)$, 
\begin{align}\label{thEM}
\left\{
\begin{array}{ll}
y^{\xi,\tr}(t_k)=\xi(t_k),~~~k=-N,-N+1,\ldots, 0,\\
y^{\xi,\tr}(t_{k+1})=y^{\xi,\tr}(t_k)+(1-\theta)b(y^{\xi,\tr}_{t_k})\tr+\theta b(y^{\xi,\tr}_{t_{k+1}})\tr +\sigma(y^{\xi,\tr}_{t_k})\delta W_k,~~~k\in\mathbb N,
\end{array}
\right.
\end{align}
where $\theta\in(\frac12,1]$, $\delta W_k=W(t_{k+1})-W(t_k)$, and $y^{\xi,\tr}_{t_k}$ is a $\mathcal C([-\tau,0];\mathbb R^d)$-valued random variable defined by 
\begin{align}\label{thL}
y^{\xi,\tr}_{t_k}(r)=\frac{t_{j+1}-r}{\tr}y^{\xi,\tr}(t_{k+j})
+\frac{r-t_{j}}{\tr}y^{\xi,\tr}(t_{k+j+1})
\end{align}
for $r\in[t_j,t_{j+1}], \;j\in\{-N,\ldots,-1\}.$ 
We call $\{y^{\xi,\tr}(t_k)\}_{k=-N}^{\infty}$ the \emph{$\theta$-EM solution} and $\{y^{\xi,\tr}_{t_k}\}_{k=0}^{\infty}$ the \emph{$\theta$-EM functional solution}. 
%This paper investigates the longtime behaviors of numerical solutions for stochastic functional differential equations. 
Without  particular clarifications, 
%With no specific statement, 
we always assume $\theta\in(\frac12,1].$

%Since \eqref{thEM} is an implicit scheme for $\theta\in(\frac12,1]$, we need to 
The following lemma shows the solvability of the $\theta$-EM method. 

\begin{lemma}\label{solvability}
Under \cref{a1,a2}, the solution of the implicit method \eqref{thEM}   exists uniquely  for any $\tr\in(0,1]$.
\end{lemma}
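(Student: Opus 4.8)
The plan is to reduce the solvability of the whole scheme \eqref{thEM} to a single implicit algebraic equation at each time step and to solve that equation by the theory of monotone operators on $\RR^d$. Argue by induction on $k$: suppose the nodal values $y^{\xi,\tr}(t_j)$ have already been determined for all $j\le k$. Then $y^{\xi,\tr}_{t_k}$, $\delta W_k$, and hence the quantity $c_k:=y^{\xi,\tr}(t_k)+(1-\theta)\tr\,b(y^{\xi,\tr}_{t_k})+\sigma(y^{\xi,\tr}_{t_k})\delta W_k$ are known $\mathcal F_{t_{k+1}}$-measurable random variables. Writing $x=y^{\xi,\tr}(t_{k+1})$ for the unknown, the update becomes $x=c_k+\theta\tr\,b(\phi_x)$, where $\phi_x:=y^{\xi,\tr}_{t_{k+1}}\in\mathcal C([-\tau,0];\RR^d)$ is the interpolant from \eqref{thL}. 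The crucial structural observation is that, by \eqref{thL}, only the last node of $\phi_x$ is unknown: one has $\phi_x(0)=x$, $\phi_x(r)=\frac{r+\tr}{\tr}x+\frac{-r}{\tr}y^{\xi,\tr}(t_k)$ for $r\in[-\tr,0]$, and $\phi_x(r)$ is independent of $x$ for $r\le -\tr$. Thus $x\mapsto\phi_x$ is affine, and for two candidates $x,x'$ one has $\phi_x(r)-\phi_{x'}(r)=\frac{r+\tr}{\tr}(x-x')$ on $[-\tr,0]$ and $\phi_x(r)-\phi_{x'}(r)=0$ on $[-\tau,-\tr)$.

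To solve $x=c_k+\theta\tr\,b(\phi_x)$ I would introduce the field $\Phi(x):=x-\theta\tr\,b(\phi_x)-c_k$ and show it is continuous and strongly monotone. Continuity is immediate since $b$ is continuous by \cref{a2} and $x\mapsto\phi_x$ is affine (indeed $1$-Lipschitz) into $\mathcal C([-\tau,0];\RR^d)$. For monotonicity, fix $x,x'$ and apply \cref{a2} to $\phi_x,\phi_{x'}$, using $\phi_x(0)-\phi_{x'}(0)=x-x'$, to get
\[
\langle x-x',\,b(\phi_x)-b(\phi_{x'})\rangle\le -a_1|x-x'|^2+a_2\int_{-\tau}^{0}|\phi_x(r)-\phi_{x'}(r)|^2\,\mathrm d\nu_2(r).
\]
Since the integrand equals $\big(\tfrac{r+\tr}{\tr}\big)^2|x-x'|^2\le|x-x'|^2$ on $[-\tr,0]$ and vanishes elsewhere, and $\nu_2$ is a probability measure, the integral is bounded by $|x-x'|^2$. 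Consequently
\[
\langle \Phi(x)-\Phi(x'),\,x-x'\rangle\ge\big(1+\theta\tr(a_1-a_2)\big)|x-x'|^2,
\]
and the constant $1+\theta\tr(a_1-a_2)>1$ because $a_1>a_2+L>a_2$.

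Strong monotonicity already yields uniqueness, since the last estimate forces $x=x'$ whenever $\Phi(x)=\Phi(x')$; it also gives coercivity, as $\langle\Phi(x),x\rangle\ge(1+\theta\tr(a_1-a_2))|x|^2-|\Phi(0)|\,|x|\to\infty$ as $|x|\to\infty$. A continuous, coercive, monotone operator on the finite-dimensional space $\RR^d$ is surjective (Minty--Browder), so $\Phi(x)=0$ has a unique solution; this produces $y^{\xi,\tr}(t_{k+1})$ and closes the induction, while measurability of the new node follows from uniqueness and the continuous dependence of the solution on $c_k$. I expect the only genuinely delicate point to be the bookkeeping of the affine dependence of $\phi_x$ on the unknown $x$ over the final subinterval $[-\tr,0]$ and the resulting bound on the $\nu_2$-integral; once that is in place the monotonicity constant is strictly positive for \emph{every} $\tr>0$ (so the restriction $\tr\in(0,1]$ is not actually needed for solvability), and the rest is routine. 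A fully elementary alternative avoiding Minty--Browder would be a topological-degree/homotopy argument for the coercive field $\Phi$, but the monotone-operator route is the cleanest.
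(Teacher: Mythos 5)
Your proposal is correct and follows essentially the same route as the paper's proof: reduce solvability to the one-step implicit equation, exploit the affine dependence of the interpolant on the unknown node over $[-\Delta,0]$ to get the bound $\int_{-\tau}^{0}|\phi_x(r)-\phi_{x'}(r)|^2\,\mathrm d\nu_2(r)\leq|x-x'|^2$, establish strong monotonicity and coercivity of the field $x\mapsto x-\theta\Delta\, b(\phi_x)$ via Assumption \ref{a2}, and invoke a finite-dimensional monotone-operator surjectivity result (the paper cites \cite[Lemma 3.1]{Mao-Szpruch2013}, which plays exactly the role of your Minty--Browder argument). Your side remarks — that strong monotonicity alone yields coercivity and that $\Delta\leq 1$ is not actually needed — are accurate but do not change the substance of the argument.
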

\begin{proof}
Fix $\tr\in(0,1]$ and $\theta\in(\frac12,1]$.
For any $\phi\in \mathcal C([-\tau,0];\mathbb R^d)$ and $u\in\RR^d$, define  continuous functions  $\Phi^{\tr}_{\phi,u}: [-\tau,0]\rightarrow \RR^d$ and $F^{\tr}_{\theta,\phi}:\RR^{d}\rightarrow \RR^d$, respectively,  
by
\begin{align}\label{movephi}
\Phi^{\tr}_{\phi,u}(r)=\left\{
\begin{array}{ll}
\phi(r+\tr),~~~~~~~~~~r\in[-\tau,-\tr),&\\
\frac{-r}{\tr}\phi(0)+\frac{\tr+r}{\tr}u,~~~r\in[-\tr,0]&
\end{array}
\right.
\end{align}
and 
\begin{align*}
F^{\tr}_{\theta,\phi}(u)=&\;u-\theta b(\Phi^{\tr}_{\phi,u})\tr.
\end{align*}
Moreover, by virtue of \eqref{F2r1+1} and the convex property of $|\cdot|^2,$ we have
\begin{align*}
&\lim_{|u|\rightarrow\infty}\frac{\langle u, F^{\tr}_{\theta,\phi}(u)\rangle}{|u|^2}
=1-\theta\tr\lim_{|u|\rightarrow\infty}\frac{\langle \Phi^{\tr}_{\phi,u}(0), b(\Phi^{\tr}_{\phi,u})\rangle}{|u|^2}\nn\\
\geq&\,1-\theta\Delta\lim_{|u|\rightarrow\infty}\frac{K-\frac{3a_1+a_2+L}{2}|\Phi^{\tr}_{\phi,u}(0)|^2
+2a_2\int_{-\tau}^{0}|\Phi^{\tr}_{\phi,u}(r)|^2\mathrm d\nu_2(r) }{2|u|^2}\nn\\
\geq &\,1-\theta\tr \lim_{|u|\rightarrow\infty}\frac{K-\frac{3a_1+a_2+L}{2}|u|^2+2a_2(\|\phi\|^2+|u|^2)}{2|u|^2}\nn\\
=&\,1+\theta\tr\frac{3(a_1-a_2)+L}{4}>1,
\end{align*}
which implies that $F^{\tr}_{\theta,\phi}$ is coercive.
In addition, for any $u_1,u_2\in\RR^d$ with $u_1\neq u_2$, by Assumption \ref{a2} and \eqref{movephi}, we derive
\begin{align*}
&\langle u_1-u_2, F^{\tr}_{\theta,\phi}(u_1)-F^{\tr}_{\theta,\phi}(u_2) \rangle\nn\\
=&\,|u_1-u_2|^2-\theta\tr\langle \Phi^{\tr}_{\phi,u_1}(0)-\Phi^{\tr}_{\phi,u_2}(0), b(\Phi^{\tr}_{\phi,u_1})-b(\Phi^{\tr}_{\phi,u_2}) \rangle\nn\\
\geq&\, |u_1-u_2|^2+\theta\tr a_1|\Phi^{\tr}_{\phi,u_1}(0)-\Phi^{\tr}_{\phi,u_2}(0)|^2
-\theta\tr a_2\int_{-\tau}^{0}|\Phi^{\tr}_{\phi,u_1}(r)-\Phi^{\tr}_{\phi,u_2}(r)|^2 \mathrm d\nu_2(r)\nn\\
=&\,(1+\theta\tr a_1)|u_1-u_2|^2-\theta\tr a_2\int_{-\tr}^{0}\frac{\tr+r}{\tr}|u_1-u_2|^2\mathrm d\nu_2(r)\nn\\
\geq&\, (1+\theta\tr a_1-\theta\tr a_2)|u_1-u_2|^2>|u_1-u_2|^2>0,
\end{align*}
which shows the monotone property of $F^{\tr}_{\theta,\phi}$.
Hence according to \cite[Lemma 3.1]{Mao-Szpruch2013}, we arrive at that for any $v\in\RR^d$, the equation
$F^{\tr}_{\theta,\phi}(u)=v$
has a unique solution $u\in\RR^d,$ and the inverse $(F^{\tr}_{\theta,\phi})^{-1}$ of $F^{\tr}_{\theta,\phi}$ exists.
For the fixed integer  $k\geq 0$ and $y^{\xi,\tr}_{t_k}$, we observe
\begin{align*}
F^{\tr}_{\theta,y^{\xi,\tr}_{t_k}}(y^{\xi,\tr}(t{_{k+1}}))=y^{\xi,\tr}(t_{k+1})-\theta b(\Phi^{\tr}_{y^{\xi,\tr}_{t_{k}},y^{\xi,\tr}(t_{k+1})})\tr =y^{\xi,\tr}(t_{k+1})-\theta b(y^{\xi,\tr}_{t_{k+1}})\tr.
\end{align*}
Then  \eqref{thEM} can be rewritten as
$$F^{\tr}_{\theta,y^{\xi,\tr}_{t_k}}(y^{\xi,\tr}(t{_{k+1}}))=y^{\xi,\tr}(t_k)+(1-\theta)b(y^{\xi,\tr}_{t_k})\tr
+\sigma(y^{\xi,\tr}_{t_k})\delta W_k.$$
By virtue of the existence of $(F^{\tr}_{\theta,y^{\xi,\tr}_{t_k}})^{-1}$, we derive that $y^{\xi,\tr}(t{_{k+1}})$ exists uniquely and satisfies
\begin{align*}
y^{\xi,\tr}(t{_{k+1}})=(F^{\tr}_{\theta,y^{\xi,\tr}_{t_k}})^{-1}\big(y^{\xi,\tr}(t_k)+(1-\theta)b(y^{\xi,\tr}_{t_k})\tr
+\sigma(y^{\xi,\tr}_{t_k})\delta W_k\big)\quad \text{a.s}.
\end{align*}
 Thus, the proof is finished. 
\end{proof}
The following lemma shows the Markov property of the numerical functional solution, whose proof is similar to that of \cite[Theorem 6.1]{Wufuke2024} and is omitted. 
\begin{lemma}
The $\theta$-EM functional solution is a time-homogenous Markov chain.
\end{lemma}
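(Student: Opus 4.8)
The plan is to exhibit one step of the functional recursion \eqref{thEM} as the action of a single, time-independent measurable map applied to the current functional state and an independent noise increment, and then to invoke the standard fact that such a structure generates a time-homogeneous Markov chain. Fix $\tr\in(0,1]$ and $\theta\in(\frac12,1]$. By \cref{solvability} the map $F^{\tr}_{\theta,\phi}$ is invertible for every $\phi\in\mathcal C([-\tau,0];\RR^d)$, so, recalling $y^{\xi,\tr}(t_k)=y^{\xi,\tr}_{t_k}(0)$, the updated endpoint is
$$y^{\xi,\tr}(t_{k+1})=G\big(y^{\xi,\tr}_{t_k},\delta W_k\big),\qquad G(\phi,w):=(F^{\tr}_{\theta,\phi})^{-1}\big(\phi(0)+(1-\theta)b(\phi)\tr+\sigma(\phi)w\big).$$
Comparing the interpolation rule \eqref{thL} with the shift map \eqref{movephi}, the next functional state is assembled from $\phi=y^{\xi,\tr}_{t_k}$ by discarding the oldest sub-interval, shifting forward by $\tr$, and interpolating linearly toward the new endpoint, which is exactly $\Phi^{\tr}_{\phi,u}$ with $u$ that endpoint. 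Hence
$$y^{\xi,\tr}_{t_{k+1}}=\mathcal G\big(y^{\xi,\tr}_{t_k},\delta W_k\big),\qquad \mathcal G(\phi,w):=\Phi^{\tr}_{\phi,\,G(\phi,w)},$$
where, crucially, $\mathcal G$ does not depend on $k$; the continuity of $b$, $\sigma$, of $\phi\mapsto(F^{\tr}_{\theta,\phi})^{-1}$, and of the interpolation \eqref{movephi} renders $\mathcal G:\mathcal C([-\tau,0];\RR^d)\times\RR^m\to\mathcal C([-\tau,0];\RR^d)$ jointly measurable.

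Next I would record the probabilistic independence. The increments $\{\delta W_k\}_{k\ge0}$ are independent and identically $\mathcal N(0,\tr I_m)$-distributed, and each $\delta W_k$ is independent of $\mathcal F_{t_k}$; since $y^{\xi,\tr}_{t_0},\dots,y^{\xi,\tr}_{t_k}$ are $\mathcal F_{t_k}$-measurable (being built from $\xi$ and $\delta W_0,\dots,\delta W_{k-1}$), the increment $\delta W_k$ is independent of the whole past of the functional chain. Combining this with the representation above, for any bounded measurable $f$ on $\mathcal C([-\tau,0];\RR^d)$ the factorization lemma for conditional expectations gives
$$\E\big[f(y^{\xi,\tr}_{t_{k+1}})\,\big|\,\mathcal F_{t_k}\big]=(Pf)\big(y^{\xi,\tr}_{t_k}\big),\qquad (Pf)(\phi):=\int_{\RR^m}f\big(\mathcal G(\phi,w)\big)\,\mathcal N(0,\tr I_m)(\mathrm dw).$$
The right-hand side depends on the past only through $y^{\xi,\tr}_{t_k}$, which is the Markov property, and the transition kernel $P$ is independent of $k$, which is time-homogeneity.

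I expect the main obstacle to be the careful verification that $y^{\xi,\tr}_{t_{k+1}}$ genuinely depends on the single functional state $y^{\xi,\tr}_{t_k}$ and on no further past information. This rests on the observation that the piecewise-linear state \eqref{thL} stores exactly the grid values $y^{\xi,\tr}(t_{k-N}),\dots,y^{\xi,\tr}(t_k)$ that enter both the computation of the new endpoint through $G$ and the assembly of the next state through \eqref{movephi}, so that no information beyond $y^{\xi,\tr}_{t_k}$ is consumed. A secondary technical point is the joint measurability of $(\phi,v)\mapsto(F^{\tr}_{\theta,\phi})^{-1}(v)$, which follows from the continuous dependence of the unique root on its parameters guaranteed by the coercivity and strict monotonicity established in \cref{solvability}.
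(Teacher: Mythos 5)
Your proposal is correct, and it is essentially the argument the paper delegates to the cited reference \cite[Theorem 6.1]{Wufuke2024}: one step of the scheme is a fixed, $k$-independent measurable map $\mathcal G$ of the current functional state and an independent Gaussian increment, which immediately yields a time-homogeneous transition kernel via the freezing lemma. The two points that genuinely need care --- that $y^{\xi,\Delta}_{t_{k+1}}$ is assembled from $y^{\xi,\Delta}_{t_k}$ alone through the shift-and-interpolate map \eqref{movephi} composed with the solvability representation of \cref{solvability}, and the joint measurability of $(\phi,v)\mapsto(F^{\Delta}_{\theta,\phi})^{-1}(v)$ coming from the strong monotonicity bound there --- are both correctly identified and handled in your write-up.
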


\section{Outline and notations}

\subsection{Outline}
Our objective is to investigate 
 longtime  behaviors   of the  numerical solution of  $\theta$-EM method \eqref{thEM}, including the longtime mean-square convergence and weak convergence, the convergence of the numerical invariant measure, %and to study the longtime probabilistic characteristics of numerical solutions, including 
 the existence and convergence of the numerical density function, and the large deviation principle of the numerical solution. To be specific, 
 \begin{itemize}
 \item[(1)] In \Cref{chap2}, we obtain the longtime mean-square convergence  of the $\theta$-EM functional solution for the SFDE \eqref{FF},  and show that the mean-square convergence rate is  $\frac12$. 
 \item[(2)] In Chapter 3, we present a quantitative estimate of the invariant measure of the   $\theta$-EM functional solution, based on the investigation  of  the  time-independent  weak convergence rate of the considered numerical method. 
 \item[(3)] In Chapter 4, we show the existence of density functions of the exact solution  and $\theta$-EM solution for SFDE \eqref{FF}, 
% study the approximation of the density function by the numerical one of $\theta$-EM method 
and further  establish the  convergence  of the numerical density function.
 \item[(4)] In Chapter 5, we investigate the Freidlin--Wentzell large deviation principle (LDP) of the $\theta$-EM solution, and further  present the logarithmic estimate of the numerical density function for the SFDE \eqref{FF} with small noise. 
 \end{itemize}

We proceed with the main results obtained in this paper:

\textbf{Chapter 2:} 
%Solutions of SFDEs can not be solved exactly in general, thus 
The mean-square convergence  is one of the  crucial  features  for the accuracy of a stochastic numerical method. %as  to describe the efficiency of the method,  is an important re in the numerical analysis. 
%explore  the intrinsic properties and behaviors of the exact solution. 
Over the past decades, much attention has been paid to this topic  for SFDEs on the finite time horizon. 
%For the  finite time horizon case, the mean-square convergence of numerical methods has been well studied over the past decades. 
For instance,  for SFDE with the globally Lipschitz continuous coefficients, when the delay argument is of the discrete type, authors in \cite{Huetal2004} develop a strong Milstein scheme and show that the strong convergence rate is 1; 
Authors in \cite{caowr} use the Wong--Zakai approximation as an intermediate step to derive three numerical schemes and analyze their mean-square convergence. For the distributed type of delay argument, the author in \cite{Mao2003LMS} obtains the mean-square convergence of the EM method and further proves that the convergence rate is $\frac12$; The author in \cite{Buckwar2006} provides a systematic approach to investigating the mean-square convergence of drift-implicit one-step schemes. 
For the superlinear growing coefficients case,  when the delay argument is of the discrete type,  authors in \cite{TamedEM2016} construct the tamed EM method and analyze its mean-square convergence. Recently, for the distributed type of delay argument, authors in \cite{LMS2024spa} design a truncated EM method and show that the mean-square convergence rate is $\frac12$.
By contrast, the research on the longtime 
mean-square convergence for  numerical methods of  SFDEs remains rare. And we 
%The mean-square convergence  plays an important role in studying 
%the intrinsic properties and  behaviors  of the underlying SFDE,
% and their numerical approximations, 
%which has been well studied for the finite time horizon case; see e.g. \cite{Buckwar2004,LMS2024spa}. 
 are aware of a recent paper \cite{chen2023probabilistic}, in which the  mean-square convergence rate of the EM method for the SFDE on the infinite time horizon is proved  to be $\frac12$ under the globally Lipschitz condition.  However, for the superlinearly growing coefficients case, how can one  obtain the longtime mean-square convergence of numerical methods for the SFDE?
 In  Chapter 2, we focus on   the $\theta$-EM method of the SFDE \eqref{FF} with superlinearly growing coefficients, and  study the longtime mean-square convergence of the method.  
%The key in our analysis lies in 
% overcoming the difficulty caused by the  superlinearity   of the coefficient,  through
%To overcome the difficulty caused by the  superlinearity   of the coefficients, the key is 
%The prerequisite is 
 We first establish   
 %the time-independent  moment boundedness,
%including the second moment boundedness and even
  the time-independent boundedness of high-order moments of the $\theta$-EM functional solution.
% to overcome the difficulty caused by the strong nonlinearity of the coefficients. 
%due to the superlinearity of the coefficients. 
Then we derive the  inheritance of the exponential attractiveness   and the exponential stability of the exact functional solution
%of the trivial solution 
by the $\theta$-EM method. 
%Within a general framework, 
Armed with these preparations, we prove that the  mean-square convergence rate of the $\theta$-EM method on the infinite time horizon is  $\frac12.$ As applications, we derive the probabilistic limit behaviors, including the strong law of large numbers and the central limit theorem,  for the  $\theta$-EM functional solution.

\textbf{Chapter 3:} Invariant measure is an important characteristic for describing longtime dynamical behaviors of Markov processes. 
%As a class of most important long-time behavior of numerical schemes for stochastic dynamical systems, ergodicity of the numerical methods and its approximation to exact ergodicity have received the increasing attentions.
%The existence and uniqueness of the invariant measure implies the ergodicity. 
%which is an important characteristic for describing the longtime dynamical behavior of numerical solutions. 
%For the SFDE, the solution is no longer Markovian because of the dependence on the past, and hence the ergodicity of the underlying equation is described by the functional solution in the infinite dimensional state space. 
For  SFDEs with  %different kind of 
either discrete or distributed 
delay arguments, it is known that the solution is non-Markovian due to the dependence on the past. %Hence the study of existence and uniqueness of the invariant measure of the functional solution in the infinite dimensional state space. 
Hence, the functional solution, which is proved to be Markovian, becomes the objective to study the existence and  uniqueness of the invariant measure of the underlying  SFDE.
%While the functional solution of the SFDE is proved to be Markovian  in the infinite dimensional state space, and 
%However, the functional solution of the SFDE is proved to be Markovian in the infinite dimensional state space, and the existence and uniqueness of the invariant measure of the funcational solution has been deeply studied; see e.g. . 
Numerical approximation of the unique invariant measure of the functional solution  for SFDEs with different  types of delay arguments has been studied in the existing literature. For example, for a kind of  discrete delay arguments case, authors in  \cite{luyulan} prove that the numerical solution of the BEM method at integer time admits a unique invariant measure, and further prove that the convergence rate of the numerical invariant measure is $1$.  
For the case of the  distributed delay argument, 
authors in \cite{Bao-Shao-Yuan2023} demonstrate that the  functional solution of the EM method of the SFDE with Markov switching admits a unique numerical invariant  measure which converges  to the exact one in the Wasserstein distance; Authors in \cite{Wufuke2024} concern the  approximation of invariant measure of the functional solution for the SFDE by the BEM method 
%under one-sided Lipschitz drift coefficient condition
 and reveal the convergence of the numerical invariant measure in the Wasserstein distance. Natural questions   arise:    
 For the distributed delay argument case,  does the numerical invariant measure of a numerical method converge to the original one with rate 1? In particular,  is this convergence  result  applicable  to  the superlinealy growing coefficients case? 
 Less is known about these questions  to our knowledge. 
 %(However, there are few  results about  the convergence rate of the numerical invariant measure for the SFDE with the distributed delay argument).}
 %(The primary aim of Chapter 3 is to further study the  convergence rate of the numerical invariant measure for the SFDE  with the  distributed delay argument.) 
%To our knowledge, less is known about the optimal convergence rate if the invariant measure  
%The primary aim of this chapter is to further study the  optimal convergence rate of the numerical invariant measure 
%and BEM method admit the unique invariant measure which converges  to the invariant measure of the original equation in the Wasserstein distance. 

To take a step towards answering  the above  questions, in Chapter 3, we focus on the $\theta$-EM method for the SFDE \eqref{FF} with superlinearly growing coefficients, and investigate the existence, uniqueness, and the convergence rate of the invariant measure for the numerical functional solution. 
%To our knowledge,  for SFDEs with superlinearly growing coefficients,  there are few results on  the convergence rate of the numerical invariant measure. 
We first obtain the existence and uniqueness of the numerical invariant measure by means of  the time-independent moment boundedness  and the exponential attractiveness of the $\theta$-EM functional solution. 
A common approach to obtaining  the convergence rate of the numerical invariant measure is based on the longtime  weak convergence of the numerical method, which can be established by means of the Malliavin calculus.  
%By introducing the technique based on the Malliavin calculus, 
The prerequisite   is to derive the time-independent estimates for Malliavin derivatives and G\^ateaux derivatives of both the exact and numerical functional solutions. We show that the moments of these derivatives decay exponentially in time. 
%The key to analyzing the weak convergence lies in  deriving  the time-independent estimates for Malliavin derivatives and G\^ateaux derivatives of both the exact and numerical functional solutions. 
%which is established by means of the Malliavin calculus. 
%{\color{red}in the case of \eqref{FF}, due to the difficulty caused by the infinite-dimensional set-ups for variables of coefficients,} 
 %caused by the existence of the variable delay, 
 %However, due to the difficulty caused by existence of the distributed delay argument. 
 %there is no result on the weak convergence rate of  numerical  methods for SFDEs \eqref{FF} with superlinearly growing coefficients. 
% To take a step toward filling this gap, by introducing the technique based on the Malliavin calculus, we give the longtime weak convergence analysis of the $\theta$-EM method. 
%{\color{red}The key is to derive the time-independent estimates for derivatives of both the exact solution and the numerical solution.} 
%With the detailed analysis, 
%With these time-independent boundedness of derivatives, 
Based on these properties, we obtain  
 the weak convergence rate with order 1 of the $\theta$-EM method on the infinite time horizon, which  is  twice the mean-square one. As a by-product, the  convergence rate for the numerical invariant measure of the $\theta$-EM functional solution is derived, which is the same as the weak convergence rate.

 %is a standard tool to obtain the convergence rate of the numerical invariant measure.  

\textbf{Chapter 4:} 
 %P. Malliavin exhibits a probabilistic proof of the H\"ormander theorem in \cite{PMall}.   %opening  up the study and development on  density functions of different types of stochastic processes.  
%In most of the cases, Malliavin calculus becomes a powerful tool to prove the  existence,  regularity and  strict positivity of density functions, which has been 
%, as one of the basic application of the Malliavin calculus, has been 
%extensively developed in the literature; see e.g. \cite{memo_SHEden, Pardoux, Nualart_Quer, Friz, Nualart} and references therein. In order to approximate the density function of the stochastic equation, the study on the numerical density function of a numerical method has received much attention recently. For instance, for the stochastic differential equation with globally Lipschitz coefficients,  the convergence of the density of the numerical solution is proved in \cite{talay_density2, Hu_Watanabe, Ito_Taylor} based on It\^o–Taylor type dicretizations under the H\"ormander condition.  For the stochastic Langevin equation with non-globally  coefficients, \cite{Sheng_density_SODE} proves  the existence, smoothness, and convergence of the numerical density function of the splitting method. For the stochastic partial differential equation, authors in \cite{Sheng_density_SPDE, hong2023density} investigate the existence and convergence of the numerical density function for the stochastic heat equation and the stochastic Cahn--Hilliard  equation,  respectively.
The density function of the solution of a stochastic differential equation, characterizing  all the relevant probabilistic information, is one of the most essential characteristics that reveals the probabilistic behavior of the underlying solution. 
%Due to that the expression of the density function for the general stochastic differential
%equation cannot be given exactly, 
%P. Malliavin established Malliavin calculus in 1970s, 
%exhibits a probabilistic proof of the H\"ormander theorem in \cite{PMall}.   %opening  up the study and development on  density functions of different types of stochastic processes.  
The Malliavin calculus, as   a powerful tool to prove the  existence and smoothness  of density functions, was first introduced by P. Malliavin in \cite{PMall} with the aim of giving a probabilistic proof of the H\"ormander hypoellipticity theorem; see  e.g. \cite{memo_SHEden, Pardoux, Nualart_Quer, Friz, Nualart} and references therein for further development and application of the Malliavin calculus. 
In order to approximate the density function of the stochastic differential equation, the study on the numerical density function of a numerical method has received much attention recently. For instance, for the stochastic  ordinary differential equation (SODE) with globally Lipschitz coefficients,  the convergence of the density of the numerical solution is proved in e.g. \cite{SODE_density, talay_density2, Hu_Watanabe, Ito_Taylor} based on It\^o–Taylor type dicretizations under the H\"ormander condition, and in e.g. \cite{KAKA08} based on the kernel density estimation and the Malliavin calculus.  For the stochastic Langevin equation with non-globally  coefficients, \cite{Sheng_density_SODE} proves  the existence, smoothness, and convergence of the numerical density function of the splitting method. For the stochastic partial differential equation, authors in \cite{Sheng_density_SPDE, hong2023density} investigate the existence and convergence of the numerical density function for the stochastic heat equation and the stochastic Cahn--Hilliard  equation,  respectively. 
%which has been 
%, as one of the basic application of the Malliavin calculus, has been 
%extensively developed in the literature; see e.g. \cite{memo_SHEden, Pardoux, Nualart_Quer, Friz, Nualart} and references therein.
%The study of the density function of the numerical method for the stochastic differential equation  
%approximation of the density function by numerical methods  is proposed  and   
%Convergence of density  functions for numerical approximations 
%has received considerable attention in recent decades, see e.g. \cite{SODE_density, Sheng_density_SODE} for SODEs and e.g. \cite{Sheng_density_SPDE, hong2023density} for SPDEs. 
However, less is known about the case of SFDEs. 

The main aim in Chapter 4 is to investigate the existence and convergence  of the density function of  the $\theta$-EM method for the SFDE \eqref{FF}.  
We first present the nondegeneracy of the numerical solution and prove the existence of the corresponding numerical density function by means of the Malliavin calculus. 
For the SFDE driven by  the multiplicative noise, 
combining the mean-square convergence result of the $\theta$-EM method, the convergence of the numerical density function is proved. 
% with the multiplicative noise is derived. 
While for the case of the additive noise,  
the convergence rate of the numerical density function is derived,  
 based on the test-functional-independent weak convergence analysis for the $\theta$-EM method.
 % driven by the additive noise. 
The major obstacle in the proof lies in  the estimates of the negative moments of the determinant of the corresponding Malliavin covariance matrix of the numerical solution, which is overcome by presenting a discrete comparison principle for the SFDE with additive noise. Combining the full use of the Malliavin integration  by parts formula, the test-functional-independent weak convergence rate and further the convergence rate of numerical density function are both proved to be $1.$ 

\textbf{Chapter 5:} The LDP for the stochastic differential equation with small noise is also called the Freidlin--Wentzell LDP, which characterizes the exponential decay probabilities that sample paths of the solution of the stochastic differential equation deviate from that of the corresponding deterministic equation as the intensity of the noise tends to zero. Fruitful results appear on the  Freidlin--Wentzell LDP for both the SODE and the stochastic partial differential equation (SPDE). In contrast, the research for the case of the  numerical method is still at its infancy. We are aware of \cite{sode_LDP, jin_langevinldp} for the numerical study of the LDP for the SODE case and of \cite{chenzh2021large, chen2022adaptive, maxwell_LDP, jin2023asymptotics} for the SPDE case. 
The Freidlin--Wentzell LDP for SFDEs on the finite time horizon  has received much attention in recent years; see e.g. \cite{AAdensity, Baobook, jin2022large} and references therein. 
%To our knowledge, there is no result for the SFDE \eqref{FF} on the infinite time horizon case, {\color{red}not to mention its  numerical methods.}
To our knowledge, there are few results for  both the SFDE \eqref{FF} and its numerical method on the infinite time horizon case.
%Furthermore, it is still unknown if the numerical solution could possess the Freidlin--Wentzell LDP of the exact solution of SFDEs. 

The main contribution of Chapter 5 is to present  the  Freidlin--Wentzell LDP for the  $\theta$-EM method of the SFDE with small noise on the infinite time horizon. 
%(A well-known approach proposed by  \cite{Weakapproach} to presenting the LDP on the finite time horizon is through the equivalence to the Laplace principle, which is also called the weak convergence method.) 
A well-known approach to presenting  the LDP,  through its equivalence to the Laplace principle, is proposed by \cite{Weakapproach}, where the finite time horizon case is considered.  Such method is also known as the weak convergence method, and has further been developed to prove the LDP on the infinite time horizon (see \cite{ZhengZH}).
%, which is implemented via its equivalence to the Laplace principle. 
The key in the analysis lies in  establishing the compactness of solutions for both the skeleton equation and the stochastic controlled equation  
%(To apply this approach, the main difficulty lies in proving the compactness of solutions of the skeleton equation and the stochastic controlled equation)} 
in Banach space $\mathcal C_{\xi}([-\tau,+\infty);\mathbb R^d)$. 
In this regard, by analyzing the time-independent  moment estimats of the solution of the stochastic controlled equation, we prove that the $\theta$-EM solution  satisfies the LDP on $\mathcal C_{\xi}([-\tau,+\infty);\mathbb R^d)$ with the rate function given by the corresponding skeleton equation. Moreover, combining the technique of the Malliavin
calculus, we present the logarithmic estimate of the numerical density function 
and establish the relation between the logarithmic limit and the rate function of the LDP. 
%is presented with the limit given by the rate function of the LDP. 
%coinciding with the rate function of the LDP for one single point of the numerical  solution. 

\subsection{Notations}

\begin{basedescript}{\desclabelstyle{\pushlabel}\desclabelwidth{12em}}
%\begin{description}
%[ABCDDDDDDDDDGGGG]
%\begin{enumerate}
\item[\rm{SFDE}] Stochastic functional differential equation
\item[\rm{EM}] Euler--Maruyama
\item[\rm{BEM}] Backward Euler--Maruyama
\item[$\theta$\rm{-EM}] $\theta$-Euler--Maruyama
\item[\rm{LDP}] Large deviation principle
%\item[LLN] Law of large numbers
%\item[CLT] Central limit theorem 
\item[$\mathbb N$] Set of all non-negative integers 
\item[$\mathbb{N}_{+}$] Set of all positive integers

\item[${\mathbb R}^d$] $d$-dimensional Euclidean space
\item[$|x|$] Euclidean norm of $x\in\mathbb R^d$ or  $x\in\mathbb R^{d\times m}$
\item[$\<x,y\>$] Inner product  for $x,y\in\mathbb R^d$
\item[$\mathrm{Id}_{d\times d}$] $d$-dimensional identity operator
\item[$\otimes$] Kronecker inner product 
%\item[$\lfloor a\rfloor$] Integer part of a real number $a$
%\item[$\lceil a\rceil$] Minimal integer greater than or equal to $a$
\item[$|\alpha|$]  $\sum_{i=1}^d\alpha_i$ for a multi-index $\alpha=(\alpha_1,\ldots,\alpha_d)$

\item[$i!$]  factorial of $i\in\mathbb N_+$
\item[$i!!$]  double factorial of $i\in\mathbb N_+$
\item[$C_{p}^{i}$] combinatorial number, i.e., $C_{p}^{i}=\frac{p!}{i!(p-i)!}$ 
\item[$a\vee b$ \rm{(resp.} $a\wedge b$)] Maximum (resp. minimum) of $a,b\in\mathbb{R}$
\item[$\mathbf 1_{A}$] $\mathbb R$-valued indicator function of a set $A$  with $\mathbf 1_{A}(x)=1$ for $x\in A$ and $\mathbf 1_{A}(x)=0$ for $x\in A^c$ 
%\item[$\mathbb I_{A}$] $\mathbb R^m$-valued indicator function of a set $A$

\item [$\mathcal C^d:=(\mathcal {C}(\lbracket-\tau, 0\rbracket;\mathbb {R}^d),\|\cdot\|)$] Space of  continuous functions $\phi:[-\tau,0]\to\mathbb R^d$ equipped with the norm $\|\phi\|=\sup_{s\in[-\tau,0]}|\phi(s)|$

\item[$\mathcal B(\mathcal {C}(\lbracket-\tau,0\rbracket ;\mathbb R^d))$] Borel $\sigma$-algebra of  $\mathcal C([-\tau,0];\mathbb R^d)$

\item[$\mathcal P$] Space of probability measures on $(\mathcal C^d,\mathcal B(\mathcal C^d))$

\item[$\mathcal C_b^k$] Space of continuous  functionals $f:\mathbb R^d\to\mathbb R$ with bounded and continuous derivatives of orders up to $k$ 
\item[$\mathcal C_{pol}^{\infty}(\mathbb R^d;\mathbb R)$] Space of smooth functions $f:\RR^d\rightarrow \RR$ whose partial  derivatives have at most  polynomial growth
\item[$\mathcal C([0,\infty);\RR^m)$]
Space of continuous functions  $w:[0,+\infty)\rightarrow \RR^m$ endowed with the norm
$\|w\|_{\mathcal C([0,\infty);\RR^m)}:= \int_{0}^{\infty} |w(t)|e^{-\mathfrak a_1 t}\mathrm dt$ for some $\mathfrak a_1>0$

\item[$\mathcal C_0(\lbracket 0,+\infty);\mathbb R^m)$] Space of continuous functions  $w:[0,+\infty)\rightarrow \RR^m$ with initial value being 0

\item[$\mathcal C_{\xi}(\lbracket -\tau,+\infty);\mathbb R^d)$] Space of continuous functions $u:[-\tau,+\infty)\to\mathbb R^d$ with $u(r)=\xi(r)$ for $r\in[-\tau,0]$, endowed with the norm 
$$\|u\|_{\mathcal C_{\xi}}:=\sum_{k=1}^{\infty}e^{-\mathfrak a t_k}(\sup_{t\in[-\tau,t_k]}|u(t)|)\Delta$$ for some $\mathfrak a>0,$
which is also written as $\mathcal C_{\xi}$ for short
%\item[$\|u\|_{\mathcal C_{\xi}}$] Norm in $\mathcal C_{\xi}$ defined as $\|u\|_{\mathcal C_{\xi}}:=\sum_{k=1}^{\infty}2^{-k}(\sup_{t\in[-\tau,t_k]}|u(t)|\wedge 1)$

\item[$(\Omega,{\mathcal F},\{{\mathcal F}_{t}\}_{t\ge 0},{\mathbb P})$] Filtered probability space 

\item[${\mathbb E}\lbracket u\rbracket$] Expectation of a random variable $u$ 
\item[${\mathbb E}\lbracket u|{\mathcal G}\rbracket$] Conditional expectation of $u$ with respect to a $\sigma$-algebra ${\mathcal G}\subset{\mathcal F}$

\item[$\mathcal N(0,v^2)$] Normal random variable with mean 0 and variance $v^2$

\item[$\Delta$] Step size of the $\theta$-EM method
%\item[$\mathbb W_q,q\ge 1$] Bounded Wasserstein distance 
\item[$K$] A generic positive constant independent of $\Delta$ whose value may vary at different occurrences 
\item[$\lfloor t \rfloor $] Maximal grid point that no larger than $t,$ i.e., $\max \{t_k\leq t: \,t_k=k\Delta,\,k=0,1,\ldots\}$
\item[$\mu$] Invariant measure of the SFDE
\item[$\mu^{\Delta}$] Invariant measure of the $\theta$-EM method
%\item[$\nu_i,i=1,2,3,4$] Probability measures on $[-\tau,0]$
\item[$\xi$] Deterministic initial value in $\mathcal C^d$ 
\item[$x^{\xi}(\cdot)$ \rm{(resp.} $x^{\xi}_{\cdot}$)] Solution \rm{(resp.} functional solution) of the SFDE
\item[$y^{\xi,\Delta}(\cdot)$ \rm{(resp.} $y^{\xi,\Delta}_{\cdot}$)] Solution (resp. functional solution) of the $\theta$-EM method
\item[$\varphi(t;t_i,\eta)$] Solution of the SFDE at time $t$ with initial value $\eta$ at $t_i$
\item[$\varphi^{Int}(\cdot;t_i,\eta)$] Linear interpolation of $\varphi(\cdot;t_i,\eta)$ by grids $\{(t_k,\varphi(t_k;t_i,\eta)),\\k=i,i+1,\ldots\}$
\item[$\Phi(t_k;t_i,\eta^{Int})$] Solution of the $\theta$-EM method at time $t_k$ with initial value $\eta^{Int}$ at $t_i$

\item[$\mu^{\xi}_t$] Probability measure generated by $x^{\xi}_t$
\item[$P^{\xi}_t$] Markov semigroup generated by $\mu^{\xi}_t$
\item[$\mu^{\xi,\Delta}_t$] Probability measure generated by $y^{\xi,\Delta}_t$
\item[$P^{\xi,\Delta}_t$] Markov semigroup generated by $\mu^{\xi,\Delta}_t$ 

\item[$\mathfrak p$] Density function of the solution of the SFDE
\item[$\mathfrak p^{\Delta}$] Density function of the solution of the $\theta$-EM method

\item[$L^{p}(\Omega, E)$] Space of all $E$-valued random variables which are $p$-integrable with respect to $\mathbb{P}$

\item[$(H,\<\cdot,\cdot\>_{H},\|\cdot\|_H)$] Hilbert space $L^2([0,+\infty);\mathbb R^m)$

\item[$L^2_{\Delta}(\lbracket0,+\infty);\mathbb R^m)$] Space of functions $v:[0,\infty)\to\mathbb R^m$ satisfying that $v(\lfloor \cdot\rfloor)$ is square integrable, which is also written as $L^2_{\Delta}$ for short
\item[$\|\cdot\|_{L^2_{\Delta}}$] Norm in $L^2_{\Delta}$ 
\item[$\mathcal S_{M}$] Subspace of $L^2_{\Delta}([0,+\infty);\mathbb R^m)$ for functions $v$ satisfying $\|v\|^2_{L^2_{\Delta}}\leq M$
\item[$\mathcal P_M$] Space of $\mathcal F_t$-measurable functions $v:\Omega\times [0,+\infty)\to\mathbb R^m$ satisfying that $v\in\mathcal S_M$ a.s.
\item[$(\mathcal L(E_1;E_2),\|\cdot\|_{\mathcal L(E_1;E_2)})$] Space of  bounded linear operators from $E_1$ to $E_2$ with usual operator norm

\item[$\mathcal D$] G\^ateaux derivative operator
\item[$\mathcal D^{\alpha}$] G\^ateaux  derivative operator of  order $\alpha$
\item[$D$] Malliavin derivative operator
\item[$D^{\alpha}$] Malliavin derivative operator of  order $\alpha$
\item[$(\mathbb D^{\alpha,p}(\mathbb R^d),\|\cdot\|_{\alpha,p})$] Malliavin Sobolev space consisting of all random variables $f:\Omega\to {\mathbb R^d}$ whose Malliavin derivatives up to orders $\alpha$  belong to $L^p(\Omega)$

%\item[$K$] A generic constant whose value may vary at different occurrences 
%\item[{\color{red}$\mathbb D^{\alpha,p}_{loc}$}]
%....
%\item[$\gamma_F$] Malliavin covariance matrix

%\end{description}
\end{basedescript}
%\printnomenclature

 \section{Acknowledgement}

 This work is funded by the National key R\&D Program of China under Grant (No. 2020YFA0713701),   National Natural Science Foundation of China (No. 12031020, No. 12022118), and by Youth Innovation Promotion Association CAS, China.

    \cleardoublepage
    %-----------------------------------------------------------------------
% Beginning of chap1.tex
%-----------------------------------------------------------------------
%
%  AMS-LaTeX sample file for a chapter of a monograph, to be used with
%  an AMS monograph document class.  This is a data file input by
%  chapter.tex.
%
%  Use this file as a model for a chapter; DO NOT START BY removing its
%  contents and filling in your own text.
% 
%%%%%%%%%%%%%%%%%%%%%%%%%%%%%%%%%%%%%%%%%%%%%%%%%%%%%%%%%%%%%%%%%%%%%%%%

%\part{This is a Part Title Sample}

\chapter{Mean-square convergence analysis} \label{chap2}
%mean-square convergence analysis on a  finite time horizon for SFDEs has been deeply studied in recent decades; see... To have a better understanding of the longtime behavior of the numerical method, 
In this chapter, we analyze the  longtime mean-square convergence  of the $\theta$-EM method \eqref{thEM} of the SFDE \eqref{FF} with superlinearly growing coefficients. A prerequisite is to prove the time-independent moment   boundedness of the numerical solution. Then we derive the exponential attractiveness  and the exponential stability 
%of the trivial solution 
of the $\theta$-EM method. 
%Within a general framework, 
With these preparations, the time-independent 
  mean-square convergence rate of the $\theta$-EM method is shown to be $\frac12.$

\section{Time-independent moment boundedness of numerical solution} 
%{\color{red}Time-independent  boundedness of moment of numerical solution?? Two 'of'}
In this section, we investigate the time-independent moment boundedness of the  $\theta$-EM functional solution,
%{\color{blue}The second moment boundedness is derived under the dissipative condition of coefficients time independence. %Then by strengthing the dissipative condition of the drift coefficient, we finally obtain the high order moment boundedness of the numerical functional solution,}
%including the second moment boundedness and the high order moment boundedness. 
which is the key  to obtain the longtime mean-square convergence rate of the considered numerical method. 

\subsection{Time-independent boundedness of second moment} 

This subsection is devoted to proving the time-independent boundedness of the second moment of  the $\theta$-EM functional solution. Before that, we show that the $p$th moment for any $p>0$ of the numerical solution in any finite time horizon is bounded, which ensures that the stochastic integral with respect to the numerical solution is a martingale.

For this purpose, we define an auxiliary process as follows:
\begin{align}
%\left\{
%\begin{array}{ll}
\begin{cases}\label{ST}
z^{\xi,\Delta}(t_k)=\;\xi(t_k),\quad k=-N,-N+1,\ldots,-1,\\
z^{\xi,\Delta}(t_{k})=\;y^{\xi,\Delta}(t_k)-\theta b(y^{\xi,\Delta}_{t_{k}})\Delta,\quad k=0,\\
z^{\xi,\Delta}(t_{k})=\;z^{\xi,\Delta}(t_{k-1})+b(y^{\xi,\Delta}_{t_{k-1}})\Delta 
+\sigma(y^{\xi,\Delta}_{t_{k-1}})\delta W_{k-1},\quad k\in\mathbb N_+.
%\end{array}
%\right.
\end{cases}
\end{align}
The continuous version $\{z^{\xi,\Delta}(t)\}_{t\geq-\tau}$ is given by  
\begin{align*}
z^{\xi,\Delta}(t)=
\begin{cases}
z^{\xi,\Delta}(t_{k})\!+\!b(y^{\xi,\Delta}_{t_k})(t\!-\!t_k)
\!+\!\sigma(y^{\xi,\Delta}_{t_k})(W(t)\!-\!W(t_k)),~~ t\in[t_k,t_{k+1}),~~k\in\mathbb N,\\[2mm]
\frac{t_{j+1}-t}{\Delta}z^{\xi,\Delta}(t_{j})
+\frac{t-t_{j}}{\Delta}z^{\xi,\Delta}(t_{j+1}),\quad t\in[t_j,t_{j+1}],~~j\in\{-N,\ldots,-1\}.
\end{cases}
\end{align*}
We see clearly that
\begin{align}\label{zcont}
z^{\xi,\Delta}(t)=z^{\xi,\Delta}(0)+\int_{0}^{t}b(y^{\xi,\Delta}_{s})\mathrm ds
+\int_{0}^{t}\sigma(y^{\xi,\Delta}_{s})\mathrm dW(s), ~~~t>0,
\end{align}
where
$$y^{\xi,\Delta}_{s}:=\sum_{k=0}^{\infty}y^{\xi,\Delta}_{t_{k}}\mathbf 1_{[t_k,t_{k+1})}(s).$$
Denote by $\{z^{\xi,\Delta}_t\}_{t\geq0}$ the segment process  with respect to $\{z^{\xi,\Delta}(t)\}_{t\geq-\tau}$, namely, for any $t\geq0$ and $r\in[-\tau,0]$,
we have $z^{\xi,\Delta}_t(r)=z^{\xi,\Delta}(t+r).$

%\subsubsection{Case 1: $\theta\in(0.5,1]$}
\begin{prop}\label{p2.1}
Under Assumptions {\ref{a1}} and {\ref{a2}}, 
 for any $T>0$ and integer $p\ge 2,$ it holds that
\begin{align*}
\sup_{\Delta\in(0,1]}\E\Big[\sup_{t_k\in[0,T]}\big(|z^{\xi,\Delta}(t_k)|^{2p}
+|y^{\xi,\Delta}(t_k)|^{2p}\big)\Big]\leq K_T(1+\|\xi\|^{2p}),
\end{align*}
where the positive constant $K_T$ depends on $p$ and $T$.
%but is independent of $\Delta$.
\end{prop}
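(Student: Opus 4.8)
The plan is to prove the bound first for the auxiliary split process $z^{\xi,\Delta}$ and then transfer it to $y^{\xi,\Delta}$. Passing to $z^{\xi,\Delta}$ is what makes the implicit scheme tractable: by \eqref{zcont} its increments are driven by $b$ and $\sigma$ evaluated only at the left endpoints $y^{\xi,\Delta}_{t_k}$, so no implicit solve intervenes in the estimates. Because $b$ may grow superlinearly (so that a priori even finiteness of the moments is not obvious), I first localize: for $R>0$ set $\rho_R:=\inf\{t_k:\ |y^{\xi,\Delta}(t_k)|\vee|z^{\xi,\Delta}(t_k)|\ge R\}$, prove the asserted inequality with $t_k\wedge\rho_R$ in place of $t_k$ and with a constant $K_T$ independent of $R$, and then send $R\to\infty$ by Fatou's lemma (the uniform bound forces $\PP(\rho_R\le T)\to0$). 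On $\{t_k\le\rho_R\}$ every coefficient is bounded, so the discrete stochastic integrals below are genuine martingales whose conditional expectations vanish.

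The core is a one-step estimate. Writing $z^{\xi,\Delta}(t_{k+1})=z^{\xi,\Delta}(t_k)+\zeta_k$ with $\zeta_k:=b(y^{\xi,\Delta}_{t_k})\Delta+\sigma(y^{\xi,\Delta}_{t_k})\delta W_k$, I expand $|z^{\xi,\Delta}(t_{k+1})|^{2p}=\big(|z^{\xi,\Delta}(t_k)|^2+2\langle z^{\xi,\Delta}(t_k),\zeta_k\rangle+|\zeta_k|^2\big)^p$ by the binomial theorem (a finite sum since $p$ is an integer). Inserting $z^{\xi,\Delta}(t_k)=y^{\xi,\Delta}(t_k)-\theta b(y^{\xi,\Delta}_{t_k})\Delta$ and taking the conditional expectation given $\mathcal F_{t_k}$, the first-order term carries the factor
\begin{align*}
\E\big[2\langle z^{\xi,\Delta}(t_k),\zeta_k\rangle+|\zeta_k|^2\,\big|\,\mathcal F_{t_k}\big]
=2\Delta\Big(\langle y^{\xi,\Delta}_{t_k}(0),b(y^{\xi,\Delta}_{t_k})\rangle+\tfrac12|\sigma(y^{\xi,\Delta}_{t_k})|^2\Big)+(1-2\theta)\Delta^2|b(y^{\xi,\Delta}_{t_k})|^2.
\end{align*}
The parenthesis is bounded by the dissipativity estimate \eqref{F3r1}; the decisive point is that $\theta>\tfrac12$ makes $(1-2\theta)\Delta^2|b(y^{\xi,\Delta}_{t_k})|^2$ negative, and this term is exactly what absorbs the superlinear contributions $|b(y^{\xi,\Delta}_{t_k})|^{2i}\Delta^{2i}$, $i\ge2$, produced by the higher-order binomial terms, once the surplus powers of $\Delta\le1$ are redistributed by Young's inequality. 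The delay terms coming from \eqref{F3r1} — weighted integrals of $|y^{\xi,\Delta}_{t_k}(r)|^2$ — are dominated by $\max_{k-N\le j\le k}|y^{\xi,\Delta}(t_j)|^2$, since $y^{\xi,\Delta}_{t_k}$ is the piecewise-linear interpolant of the nodal values and $|y^{\xi,\Delta}_{t_k}(\cdot)|$ attains its supremum at a node; a final use of Young's inequality lifts them to the power $2p$. Collecting terms yields, on $\{t_k\le\rho_R\}$, a recursion of the shape
\begin{align*}
\E\big[|z^{\xi,\Delta}(t_{k+1})|^{2p}\big]\le(1+K\Delta)\,\E\Big[\max_{-N\le j\le k}\big(|z^{\xi,\Delta}(t_j)|^{2p}+|y^{\xi,\Delta}(t_j)|^{2p}\big)\Big]+K\Delta\big(1+\|\xi\|^{2p}\big).
\end{align*}

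To pull the supremum inside the expectation I retain the martingale increments (the $\delta W_k$-terms of zero conditional mean), estimate the running maximum of their partial sums by the Burkholder--Davis--Gundy and Young inequalities, and absorb a fraction of $\E[\max_{t_j\le t}|z^{\xi,\Delta}(t_j)|^{2p}]$ into the left-hand side; the discrete Gronwall inequality then produces the $T$-dependent bound for $z^{\xi,\Delta}$. Finally I transfer it to $y^{\xi,\Delta}$: from $z^{\xi,\Delta}(t_k)=y^{\xi,\Delta}(t_k)-\theta\Delta b(y^{\xi,\Delta}_{t_k})$, the one-sided bound \eqref{F2r1+1}, and Cauchy--Schwarz — where $a_1>a_2+L$ provides a coercivity gain keeping the coefficient of $|y^{\xi,\Delta}(t_k)|^2$ positive — I obtain $|y^{\xi,\Delta}(t_k)|^{2p}\le K|z^{\xi,\Delta}(t_k)|^{2p}+K+K\Delta\max_{k-N\le j\le k-1}|y^{\xi,\Delta}(t_j)|^{2p}$, so that $y^{\xi,\Delta}$ is controlled by $z^{\xi,\Delta}$ and strictly past values; taking the running maximum, absorbing the order-$\Delta$ delay term, and inserting the bound for $z^{\xi,\Delta}$ closes the estimate, after which $R\to\infty$ removes the localization.

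The step I expect to be the main obstacle is the control of the superlinearly growing drift: $|b(y^{\xi,\Delta}_{t_k})|^2$ admits no polynomial bound in $\|y^{\xi,\Delta}_{t_k}\|$, so the whole argument hinges on extracting the negative $(1-2\theta)\Delta^2|b|^2$ term furnished by the implicit structure — this is precisely where $\theta>\tfrac12$ is used — and on verifying that, after redistributing the $\Delta$-powers by Young's inequality, it genuinely dominates every superlinear term thrown up by the $2p$-fold expansion. Keeping the Burkholder--Davis--Gundy absorption constant strictly below one and carrying the delay terms through the running maximum are the remaining technical points.
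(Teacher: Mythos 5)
There is a genuine gap in your core one-step estimate, and it is precisely at the point you flag as the main obstacle. You expand $|z^{\xi,\Delta}(t_{k+1})|^{2p}=\big(|z(t_k)|^2+2\langle z(t_k),\zeta_k\rangle+|\zeta_k|^2\big)^p$ with $\zeta_k=b_k\Delta+\sigma_k\delta W_k$ and claim that the negative first-order contribution $p(1-2\theta)\Delta^2|b_k|^2|z(t_k)|^{2(p-1)}$ absorbs, via Young's inequality and redistribution of powers of $\Delta\le1$, the superlinear terms $|z(t_k)|^{2(p-i)}|b_k|^{2i}\Delta^{2i}$, $i\ge 2$, coming from the higher binomial orders. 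This absorption is impossible: writing $A=|z(t_k)|^2$, $B=|b_k|^2\Delta^2$, an inequality $A^{p-i}B^{i}\le \epsilon A^{p-1}B+C_\epsilon A^{a}$ would require the exponent pair $(p-i,i)$ to be a convex combination of $(p-1,1)$ and $(a,0)$, forcing $i\le 1$. Concretely, on the event where $z(t_k)$ is close to $0$ the absorbing term vanishes while the top-order term $|b_k|^{2p}\Delta^{2p}$ survives; and under Assumption \ref{a2} alone $b$ satisfies no polynomial growth bound, so $|b_k|^{2p}\Delta^{2p}$ cannot be dominated by $K(1+\|y^{\xi,\Delta}_{t_k}\|^{2p})$ and your Gr\"onwall recursion cannot close, uniformly in the localization parameter $R$.

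The terms are in fact controllable, but by a different mechanism, which is the one the paper uses: the implicit relation $\theta b_k\Delta = y^{\xi,\Delta}(t_k)-z^{\xi,\Delta}(t_k)$ must be used to eliminate \emph{every} occurrence of $b_k\Delta$, not only the one in the conditional mean. In the paper this is done at the level of the \emph{square}: in \eqref{12p2.1} the drift contributes $(1-2\theta)|b_k|^2\Delta^2\le 0$ plus the dissipative term from \eqref{F2r1+1}, and the noise--drift cross term $2\langle b_k,\sigma_k\delta W_k\rangle\Delta$ is rewritten as $\tfrac{2}{\theta}\langle y(t_k)-z(t_k),\sigma_k\delta W_k\rangle$; dropping the non-positive terms yields \eqref{10p2.1}, whose right-hand side contains no $b_k$ at all (the martingale part \eqref{mart} involves only $z(t_k)$, $y(t_k)$, $\sigma_k$). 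Only \emph{then} is the inequality raised to the $p$-th power, so the binomial expansion never produces a superlinear drift term. Equivalently, you could repair your direct $2p$-expansion by substituting $b_k\Delta=\theta^{-1}(y(t_k)-z(t_k))$ into all higher-order terms, which turns them into polynomial expressions in $|y(t_k)|,|z(t_k)|$ of degree at most $2p$. The rest of your architecture (localization and passage to the limit, transfer from $z$ to $y$ via \eqref{9p2.1}, Burkholder--Davis--Gundy plus absorption for the running maximum, discrete Gr\"onwall) matches the paper's Steps 1--2 and is sound once the one-step estimate is fixed.
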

\begin{proof}
We divide the proof into two steps.

\underline{Step 1.} We shall show that for any $T>0$ and $p\in \mathbb{N}_+$,
\begin{align}\label{BoundT}
\sup_{\Delta\in(0,1]}\sup_{t_k\in[0,T]}\E\Big[|z^{\xi,\Delta}(t_k)|^{2p}
+|y^{\xi,\Delta}(t_k)|^{2p}\Big]<K\|\xi\|^{2p}e^{KT}.
\end{align}
%where the positive constant $\tilde R_1$ is dependent of $T,p,\xi$ but independent of $\Delta$.
Fix $\Delta\in(0,1]$ and $k\in \mathbb N$.
For any $M>\|\xi\|$, define a random time $\zeta^{\xi,\Delta}_{M}$ by
\begin{align*}
\zeta^{\xi,\Delta}_{M}=\inf\{i\in \mathbb N: |y^{\xi,\Delta}(t_i)|\vee|z^{\xi,\Delta}(t_i)|>M\}.
\end{align*}
Obviously, $$\{\omega\in\Omega:\zeta^{\xi,\Delta}_{M}(\omega)>i+1\}
=\{\omega\in\Omega:\zeta^{\xi,\Delta}_{M}(\omega)\leq i\}^c\in \mathcal F_{t_i}\quad\forall~i\in\mathbb N.$$
To simplify notations, we introduce the following abbreviations
\begin{align*}
&z(t_k)=z^{\xi,\Delta}(t_k), ~~z_{t_k}=z^{\xi,\Delta}_{t_k},\\
&y(t_k)=y^{\xi,\Delta}(t_k), ~~y_{t_k}=y^{\xi,\Delta}_{t_k},\\
&b_{k}=b(y^{\xi,\Delta}_{t_k}),~~\sigma_k=\sigma(y^{\xi,\Delta}_{t_k}),
~~\zeta_{M}=\zeta^{\xi,\Delta}_{M}.
\end{align*}
In fact, it follows from \eqref{F2r1+1} and \eqref{ST} that
\begin{align}\label{12p2.1}
&\quad |z(t_{k+1})|^2\nn\\
&=|z(t_k)|^2+|b_k|^2\Delta^2+|\sigma_k\delta W_k|^2
+2\langle y(t_k)-\theta b_k\Delta, b_k\rangle\Delta\nn\\
&\quad +2\langle z(t_k), \sigma_k\delta W_k\rangle+2\langle b_k, \sigma_k\delta W_k\rangle\Delta\nn\\
&\leq |z(t_k)|^2+(1-2\theta)|b_k|^2\Delta^2+|\sigma_k\delta W_k|^2
+K\Delta-\frac{3a_1+a_2+L}{2}\Delta|y(t_k)|^2\nn\\
&\quad +2a_2\Delta\int_{-\tau}^{0}|y_{t_k}(r)|^2\mathrm d\nu_2(r)+2\langle z(t_k), \sigma_k\delta W_k\rangle\!+\!\frac{2}{\theta}\langle y(t_k)\!-\!z(t_k), \sigma_k\delta W_k\rangle.
%\leq&|z(t_k)|^2+\frac{1-2\theta}{\theta^2}|y(t_k)-z(t_k)|^2+|\sigma_k\Delta W_k|^2
%+K\Delta-(a_1+a_2+L)\Delta|y(t_k)|^2\nn\\
%&\quad+2a_2\Delta\int_{-\tau}^{0}|y_{t_k}(r)|^2+\mathrm d\nu_2(r)+\mathcal M_k\nn\\
%=&\frac{(1-\theta)^2}{\theta^2}|z(t_k)|^2+\frac{1-2\theta}{\theta^2}|y(t_k)|^2
%+\frac{2(2\theta-1)}{\theta^2}\<z(t_k),y(t_k)\>+|\sigma_k\Delta W_k|^2+K\Delta\nn\\
%&-(a_1+a_2+L)\Delta|y(t_k)|^2+2a_2\Delta\int_{-\tau}^{0}|y_{t_k}(r)|^2\mathrm d\nu_2(r)+\mathcal M_k,
\end{align}
By $\theta\in(0.5,1]$, we have
\begin{align}\label{10p2.1}
|z(t_{k+1})|^2
\leq |z(t_k)|^2+|\sigma_k\delta W_k|^2
+K\Delta+2a_2\Delta\int_{-\tau}^{0}|y_{t_k}(r)|^2\mathrm d\nu_2(r)+\mathcal M_k,
\end{align}
where  $\{\mathcal M_{k}\}_{k=0}^{\infty}$ is a local martingale defined by
\begin{align}\label{mart}
\mathcal M_k=\frac{2(\theta-1)}{\theta}\langle z(t_k), \sigma_k\delta W_k\rangle+\frac{2}{\theta}\langle y(t_k), \sigma_k\delta W_k\rangle.
\end{align}
If $\zeta_{M}\geq k+1$, we have $k\wedge \zeta_{M}=k$ and
 \begin{align*}
 &|z(t_{(k+1)\wedge \zeta_{M}})|^2=|z(t_{k+1})|^2\nn\\
\leq{} &|z(t_{k\wedge \zeta_{M}})|^2+|\sigma_k\delta W_k|^2
+K\Delta+2a_2\Delta\int_{-\tau}^{0}|y_{t_k}(r)|^2\mathrm d\nu_2(r)+\mathcal M_k.
\end{align*}
If $\zeta_{M}<k+1$, we have $\zeta_{M}\leq k$ and 
\begin{align*}
 |z(t_{(k+1)\wedge \zeta_{M}})|^2=|z(t_{\zeta_{M}})|^2
=|z(t_{k\wedge \zeta_{M}})|^2.
\end{align*}
The above two cases imply  that
\begin{align*}
 &|z(t_{(k+1)\wedge \zeta_{M}})|^2\nn\\
\leq {}&|z(t_{k\wedge \zeta_{M}})|^2+\Big(|\sigma_k\delta W_k|^2
+K\Delta+2a_2\Delta\int_{-\tau}^{0}|y_{t_k}(r)|^2\mathrm d\nu_2(r)
+\mathcal M_k\Big)\textbf 1_{\{\zeta_M\geq k+1\}}.
\end{align*}
Then for any $p\in\mathbb N$,
\begin{align}\label{1p2.1}
&\E[|z(t_{(k+1)\wedge \zeta_{M}})|^{2p}]\nn\\
\leq{}&\E [|z(t_{k\wedge \zeta_{M}})|^{2p}]+\sum_{l=1}^{p}C_{p}^{l}\E\Big[|z(t_{k\wedge \zeta_{M}})|^{2(p-l)}\big(|\sigma_k\delta W_k|^2
+K\Delta\nn\\
&\quad+2a_2\Delta\int_{-\tau}^{0}|y_{t_k}(r)|^2\mathrm d\nu_2(r)+\mathcal M_k\big)^{l}\mathbf 1_{\{\zeta_M\geq k+1\}}\Big]\nn\\
=: {}&\E [|z(t_{k\wedge \zeta_{M}})|^{2p}]+\sum_{l=1}^p\mathscr I_{l},
\end{align}
where the positive constant $C_{p}^l$ is the  binomial coefficient.
Note that $$\delta W_k\textbf 1_{\{\zeta_M\geq k+1\}}=W(t_{(k+1)\wedge \zeta_M})-W(t_{k\wedge  \zeta_M}).$$
By virtue of the Doob martingale stopping
 time theorem, %\cite[p.11, Theorem 3.3]{Mao2007}  
 we obtain
\begin{align}\label{o3.35}
&\E\Big[A(t_k) \delta W_k\textbf 1_{\{ \zeta_M\geq k+1\}}\big|
\mathcal{F}_{t_{k\wedge \zeta_M}}\Big]=0,\nn\\
&\E\Big[\big|A(t_k)\delta W_k\big|^{2}\textbf 1_{\{ \zeta_M\geq k+1\}}\big|
\mathcal{F}_{t_{k\wedge \zeta_M}}\Big]
=|A(t_k)|^2\Delta\textbf 1_{\{ \zeta_M\geq k+1\}},
\end{align}
where $A(t_k)$ is a $\RR^{d\times m}$-valued  $\mathcal{F}_{t_k}$-measurable random variable for $k\in\mathbb N$. 
Then
\begin{align*}
\mathscr I_{1}
%\E\Big[|z(t_{k\wedge \zeta_{M}})|^{2(p-1)}\big(|\sigma_k\delta W_k|^2+K\Delta+2a_2\Delta\int_{-\tau}^{0}|y_{t_k}(r)|^2\mathrm d\nu_2(r)+\mathcal M_k\big)\textbf 1_{\{\zeta_M\geq k+1\}}\Big]\nn\\
&=C^1_p\E\bigg[\E\Big[|z(t_{k\wedge \zeta_{M}})|^{2(p-1)}\big(|\sigma_k\delta W_k|^2
+K\Delta+2a_2\Delta\int_{-\tau}^{0}|y_{t_k}(r)|^2\mathrm d\nu_2(r)\nn\\
&\quad +\mathcal M_k\big)\textbf 1_{\{\zeta_M\geq k+1\}}\big|
\mathcal{F}_{t_{k\wedge \zeta_M}}\Big]\bigg]\nn\\
&=C^1_p\Delta\E\Big[|z(t_{k\wedge \zeta_{M}})|^{2(p-1)}\big(|\sigma_k|^2
+K+2a_2\int_{-\tau}^{0}|y_{t_k}(r)|^2\mathrm d\nu_2(r)\big)\textbf 1_{\{\zeta_M\geq k+1\}}\Big].
\end{align*}
Making use of Assumption \ref{a1} and applying the Young inequality, we derive
\begin{align}\label{2p2.1}
\mathscr I_{l}%&C_{p}^{1}\E\Big[|z(t_{k\wedge \zeta_{M}})|^{2(p-1)}\big(|\sigma_k\delta W_k|^2 +K\Delta+2a_2\Delta\int_{-\tau}^{0}|y_{t_k}(r)|^2\mathrm d\nu_2(r)+\mathcal M_k\big)\textbf 1_{\{\zeta_M\geq k+1\}}\Big]\nn\\
\leq{} &C_{p}^{1}\Delta\E\Big[|z(t_{k\wedge \zeta_{M}})|^{2(p-1)}\big(2L|y(t_k)|^2+2L\int_{-\tau}^{0}|y_{t_k}(r)|^2\mathrm d\nu_1(r)
+K\nn\\
&\quad+2a_2\int_{-\tau}^{0}|y_{t_k}(r)|^2\mathrm d\nu_2(r)\big)\textbf 1_{\{\zeta_M\geq k+1\}}\Big]\nn\\
\leq{} &K\Delta\!\!+\!\!K\Delta\E[|z(t_{k\wedge \zeta_{M}})|^{2p}]\!+\!K\Delta\E[|y(t_{k\wedge \zeta_{M}})|^{2p}]\!+\!K\Delta\E \Big[\big( \int_{-\tau}^{0}|y_{t_k}(r)|^{2p}\mathrm d\nu_1(r)\nn\\
&\quad+\int_{-\tau}^{0}|y_{t_k}(r)|^{2p}\mathrm d\nu_2(r)\big)\textbf 1_{\{\zeta_M\geq k+1\}}\Big].
\end{align}
Moreover, by the elementary inequality we obtain
\begin{align*}
&\big(|\sigma_k\delta W_k|^2
+K\Delta+2a_2\Delta\int_{-\tau}^{0}|y_{t_k}(r)|^2\mathrm d\nu_2(r)+\mathcal M_k\big)^{l}\nn\\
\leq{}& K\big(|\sigma_k\delta W_k|^{2l}
\!+\!\Delta^l+\Delta^l\int_{-\tau}^{0}|y_{t_k}(r)|^{2l}\mathrm d\nu_2(r)\!+\!|z(t_k)|^l|\sigma_k\delta W_k|^l\!+\!|y(t_k)|^l|\sigma_k\delta W_k|^l\big).
\end{align*}
This, along with the following fact
\begin{align*}
&\E\Big[\big|A(t_k)\delta W_k\big|^{i}\textbf 1_{\{ \zeta_M\geq k+1\}}\big|
\mathcal{F}_{t_{k\wedge\zeta_M}}\Big]
\leq K |A(t_k)|^{i}\Delta^{\frac{i}{2}}\textbf 1_{\{ \zeta_M\geq k+1\}}~~~~\forall \,i\in\mathrm N ~\mbox{with} ~i\geq2
\end{align*}
implies that
 \begin{align*}
\sum_{l=2}^{p}\mathscr I_{l}
%C_{p}^{l}\E\Big[|z(t_{k\wedge \zeta_{M}})|^{2(p-l)}\big(|\sigma_k\delta W_k|^2+K\Delta+2a_2\Delta\int_{-\tau}^{0}|y_{t_k}(r)|^2\mathrm d\nu_2(r)+\mathcal M_k\big)^{l}\textbf 1_{\{\zeta_M\geq k+1\}}\Big]\nn\\
\leq{}&\sum_{l=2}^{p} K\E\bigg[\E\Big[|z(t_{k\wedge \zeta_{M}})|^{2(p-l)}\big(|\sigma_k\delta W_k|^{2l}
+\Delta^l+\Delta^l\int_{-\tau}^{0}|y_{t_k}(r)|^{2l}\mathrm d\nu_2(r)\nn\\
&\quad+|z(t_k)|^l|\sigma_k\delta W_k|^l+|y(t_k)|^l|\sigma_k\delta W_k|^l\big)\textbf 1_{\{\zeta_M\geq k+1\}}\big|
\mathcal{F}_{t_{k\wedge\zeta_M}}\Big]\bigg]\nn\\
\leq{}& K\Delta\sum_{l=2}^{p}\E\Big[|z(t_{k\wedge \zeta_{M}})|^{2(p-l)}\big(|\sigma_k|^{2l}
+1+\int_{-\tau}^{0}|y_{t_k}(r)|^{2l}\mathrm d\nu_2(r)\nn\\
&\quad+|z(t_k)|^l|\sigma_k|^l+|y(t_k)|^l|\sigma_k|^l\big)\textbf 1_{\{\zeta_M\geq k+1\}}\Big].
\end{align*}
Applying  Assumption \ref{a1} and the Young inequality again, we arrive at
 \begin{align}\label{3p2.1}
\sum_{l=2}^{p}\mathscr I_{l}
%C_{p}^{l}\E\Big[|z(t_{k\wedge \zeta_{M}})|^{2(p-l)}\big(|\sigma_k\delta W_k|^2+K\Delta+2a_2\Delta\int_{-\tau}^{0}|y_{t_k}(r)|^2\mathrm d\nu_2(r)+\mathcal M_k\big)^{l}\textbf 1_{\{\zeta_M\geq k+1\}}\Big]\nn\\
\leq{} &K\Delta+K\Delta\E[|z(t_{k\wedge \zeta_{M}})|^{2p}]+K\Delta\E[|y(t_{k\wedge \zeta_{M}})|^{2p}]\nn\\
&\quad+K\Delta\E \Big[\big( \int_{-\tau}^{0}|y_{t_k}(r)|^{2p}\mathrm d\nu_1(r)+\int_{-\tau}^{0}|y_{t_k}(r)|^{2p}\mathrm d\nu_2(r)\big)\textbf 1_{\{\zeta_M\geq k+1\}}\Big].
\end{align}
Inserting \eqref{2p2.1} and \eqref{3p2.1} into \eqref{1p2.1} and by the recursive calculation, we obtain 
\begin{align}\label{4p2.1}
&\E[|z(t_{(k+1)\wedge \zeta_{M}})|^{2p}]\nn\\
\leq{}&\E [|z(t_{k\wedge \zeta_{M}})|^{2p}]+K\Delta+K\Delta\E[|z(t_{k\wedge \zeta_{M}})|^{2p}]+K\Delta\E[|y(t_{k\wedge \zeta_{M}})|^{2p}]\nn\\
&\quad+K\Delta\E \Big[\big( \int_{-\tau}^{0}|y_{t_k}(r)|^{2p}\mathrm d\nu_1(r)+\int_{-\tau}^{0}|y_{t_k}(r)|^{2p}\mathrm d\nu_2(r)\big)\textbf 1_{\{\zeta_M\geq k+1\}}\Big]\nn\\
\leq{} &\E [|z(0)|^{2p}]\!+\!K(k+1)\Delta\!+\!K\Delta\sum_{i=0}^{k}\E[|z(t_{i\wedge \zeta_{M}})|^{2p}]\!+\!K\Delta\sum_{i=0}^{k}\E[|y(t_{i\wedge \zeta_{M}})|^{2p}]\nn\\
&\quad+K\Delta\sum_{i=0}^{k}\E \Big[\big( \int_{-\tau}^{0}|y_{t_i}(r)|^{2p}\mathrm d\nu_1(r)
+\int_{-\tau}^{0}|y_{t_i}(r)|^{2p}\mathrm d\nu_2(r)\big)\textbf 1_{\{\zeta_M\geq i+1\}}\Big].
\end{align}
It follows from \eqref{thL} and the convex property of $|\cdot|^{2p}$ that
\begin{align}\label{5p2.1}
&\sum_{i=0}^{k}\int_{-\tau}^{0}|y_{t_i}(r)|^{2p}\mathrm d\nu_1(r)\textbf 1_{\{\zeta_M\geq i+1\}}\nn\\
=&\sum_{j=-N}^{-1}\sum_{i=0}^{k}\int_{t_j}^{t_{j+1}}
|\frac{t_{j+1}-r}{\Delta}y(t_{i+j})+\frac{r-t_{j}}{\Delta}y(t_{i+j+1})|^{2p}\mathrm d\nu_1(r)\textbf 1_{\{\zeta_M\geq i+1\}}\nn\\
\leq{}&\sum_{j=-N}^{-1}\sum_{i=0}^{k}\int_{t_j}^{t_{j+1}}
\frac{t_{j+1}-r}{\Delta}\mathrm d\nu_1(r)|y(t_{i+j})|^{2p}\textbf 1_{\{\zeta_M\geq i+j+1\}}\nn\\
&\quad+\sum_{j=-N}^{-1}\sum_{i=0}^{k}\int_{t_j}^{t_{j+1}}
\frac{r-t_{j}}{\Delta}\mathrm d\nu_1(r)|y(t_{i+j+1})|^{2p}\textbf 1_{\{\zeta_M\geq i+j+2\}}\nn\\
\leq{}&\sum_{j=-N}^{-1}\sum_{l=-N}^{k}\int_{t_j}^{t_{j+1}}
\frac{t_{j+1}-r}{\Delta}\mathrm d\nu_1(r)|y(t_{l})|^{2p}\textbf 1_{\{\zeta_M\geq l+1\}}\nn\\
&\quad+\sum_{j=-N}^{-1}\sum_{l=-N}^{k}\int_{t_j}^{t_{j+1}}
\frac{r-t_{j}}{\Delta}\mathrm d\nu_1(r)|y(t_{l})|^{2p}\textbf 1_{\{\zeta_M\geq l+1\}}\nn\\
={}&\sum_{j=-N}^{-1}\sum_{l=-N}^{k}\int_{t_j}^{t_{j+1}}
\mathrm d\nu_1(r)|y(t_{l})|^{2p}\textbf 1_{\{\zeta_M\geq l+1\}}\nn\\
\leq{}&N\|\xi\|^{2p}+\sum_{i=0}^{k}|y(t_{i})|^{2p}\textbf 1_{\{\zeta_M\geq i+1\}}.
\end{align}
Similarly,
 \begin{align}\label{6p2.1}
\sum_{i=0}^{k}\int_{-\tau}^{0}|y_{t_i}(r)|^{2p}\mathrm d\nu_2(r)\textbf 1_{\{\zeta_M\geq i+1\}}
 \leq N\|\xi\|^{2p}+\sum_{i=0}^{k}|y(t_{i})|^{2p}\textbf 1_{\{\zeta_M\geq i+1\}}.
\end{align}
Inserting \eqref{5p2.1} and \eqref{6p2.1} into \eqref{4p2.1}, we have
 \begin{align}\label{7p2.1}
\E[|z(t_{(k+1)\wedge \zeta_{M}})|^{2p}]
\leq{} &\E [|z(0)|^{2p}]+K(k+1)\Delta+K\Delta\sum_{i=0}^{k}\E[|z(t_{i\wedge \zeta_{M}})|^{2p}]+K\tau\|\xi\|^{2p}\nn\\
&\quad+K\Delta\sum_{i=0}^{k}\E\big[|y(t_{i})|^{2p}\textbf 1_{\{\zeta_M\geq i+1\}}\big].
\end{align}
It is straightforward to see from $z(t_i)=y(t_i)-\theta b_{i}\Delta$  and \eqref{F2r1+1} that
\begin{align}\label{9p2.1}
|z(t_i)|^{2}={}& |y(t_i)|^2+\theta^{2}\Delta^2b_i^2-2\theta\Delta \langle y(t_i),b_i\rangle\nn\\
\geq{}&-K\Delta+(1+\theta\Delta\frac{3a_1+a_2+L}{2})|y(t_i)|^2-2a_2\theta\Delta\int_{-\tau}^{0}|y_{t_i}(r)|^2\mathrm d\nu_2(r).
\end{align}
According to \cite[Lemma 4.1]{Mao2007}, we know that for any $u,v\in\RR$, ~$q\geq1$, and $\e>0$,  there exists a constant $K_{\e}>1$ such that
\begin{align*}%\la{lemen}
|u+v|^{q}\leq K_{\e}|u|^{q}+(1+\e)|v|^{q}.
\end{align*}
This, along with  \eqref{9p2.1} implies that
\begin{align*}
&\Big(1+\big(\theta\Delta\frac{3a_1+a_2+L}{2}\big)^{p}\Big)\sum_{i=0}^{k}|y(t_{i})|^{2p}\textbf 1_{\{\zeta_M\geq i+1\}}\nn\\
\leq{}& K_{\e}\sum_{i=0}^{k}(K\Delta+|z(t_{i})|^{2}\textbf 1_{\{\zeta_M\geq i+1\}})^{p}\\
&\quad+(1+\e)(2a_2\theta\Delta)^{p}\sum_{i=0}^{k}
\int_{-\tau}^{0}|y_{t_{i}}(r)|^{2p}\mathrm d\nu_2(r)\textbf 1_{\{\zeta_M\geq i+1\}}.
\end{align*}
By \eqref{6p2.1}, we derive
\begin{align*}
&\Big(1+\big(\theta\Delta\frac{3a_1+a_2+L}{2}\big)^{p}\Big)
\sum_{i=0}^{k}|y(t_{i})|^{2p}\textbf 1_{\{\zeta_M\geq i+1\}}\nn\\
\leq{}&KK_{\e}(k+1)\Delta+KK_{\e}\sum_{i=0}^{k}|z(t_{i\wedge\zeta_{M}})|^{2p}+K\Delta^{p-1}\|\xi\|^{2p}\nn\\
&\quad+(1+\e)(2a_2\theta\Delta)^{p}\sum_{i=0}^{k}
|y(t_{i})|^{2p}\textbf 1_{\{\zeta_M\geq i+1\}}.
\end{align*}
Choose  a sufficiently small $\e>0$   such that
$$\Big(\frac{3a_1+a_2+L}{2}\Big)^{p}>(1+\e)(2a_2)^p.$$
Then
\begin{align}\label{yz-relat}
&\sum_{i=0}^{k}|y(t_{i})|^{2p}\textbf 1_{\{\zeta_M\geq i+1\}}
\leq K(k+1)\Delta+K\sum_{i=0}^{k}|z(t_{i\wedge\zeta_{M}})|^{2p}+K\Delta^{p-1}\|\xi\|^{2p}\nn\\
&-\big((\frac{3a_1+a_2+L}{2})^{p}-(1+\e)(2a_2)^{p}\big)\theta^p\Delta^p\sum_{i=0}^{k}
|y(t_{i})|^{2p}\textbf 1_{\{\zeta_M\geq i+1\}}\nn\\
\leq{}& K(k+1)\Delta+K\sum_{i=0}^{k}|z(t_{i\wedge\zeta_{M}})|^{2p}+K\|\xi\|^{2p}.
\end{align}
Inserting this into \eqref{7p2.1} yields
\begin{align*}
\E[|z(t_{(k+1)\wedge \zeta_{M}})|^{2p}]
\leq K\|\xi\|^{2p}+K(k+1)\Delta+K\Delta\sum_{i=0}^{k}\E[|z(t_{i\wedge \zeta_{M}})|^{2p}].
\end{align*}
By the discrete Gr\"onwall inequality, we have that for any $T>0$,
\begin{align*}
\sup_{0\leq t_k\leq T}\E[|z(t_{k\wedge\zeta_{M}})|^{2p}]\leq K\|\xi\|^{2p}e^{KT}.
\end{align*}
 %where the positive constant $\tilde R_1$ is dependent of $T,~p$ but independent of $\Delta,~M$.
 Letting $M\rightarrow\infty$, one has
 $$\sup_{0\leq t_k\leq T}\E[|z(t_{k})|^{2p}]\leq K\|\xi\|^{2p}e^{KT}.$$
Combining \eqref{9p2.1}, we completes the proof of \underline{Step 1}.

\underline{Step 2.}
%By \eqref{zcont} and the It\^o formula we get
%\begin{align}
%|z(t)|^{p}\leq&|z(0)|^{p}+\frac{p}{2}\int_{0}^{t}|z(s)|^{p-2}\big(2\<z(s),b(y_{s})\>
%+(p-1)|\sigma(y_s)|^2\big)\mathrm ds\nn\\
%&\quad+p\int_{0}^{t}|z(s)|^{p-2}\<z(s),\sigma(y_{s})\mathrm dW(s)\>\nn\\
%\leq&|z(0)|^{p}+\frac{p}{2}\int_{0}^{t}|z(s)|^{p-2}\big(2\<z(s),b(y_{s})\>
%+(p-1)|\sigma(y_s)|^2\big)\mathrm ds\nn\\
%&\quad+p\int_{0}^{t}|z(s)|^{p-2}\<z(s),\sigma(y_{s})\mathrm dW(s)\>\nn\\
%\end{align}
Similarly, by \eqref{10p2.1}
%that
%\begin{align*}
%|z(t_{k+1})|^{2p}
%\leq& |z(t_{k})|^{2p}+C_{p}^{1}|z(t_{k})|^{2(p-1)}\big(|\sigma_k\Delta W_k|^2
%+K\Delta+2a_2\Delta\int_{-\tau}^{0}|y_{t_k}(r)|^2\mathrm d\nu_2(r)\big)\nn\\
%&\quad+\sum_{l=2}^{p}C_{p}^{l}|z(t_{k})|^{2(p-l)}\big||\sigma_k\Delta W_k|^2
%+K\Delta+2a_2\Delta\int_{-\tau}^{0}|y_{t_k}(r)|^2\mathrm d\nu_2(r)\nn\\
%&\quad+\frac{2(1-\theta)}{\theta}\langle z(t_k), \sigma_k\Delta W_k\rangle+\frac{2}{\theta}\langle y(t_k), \sigma_k\Delta W_k\rangle\big|^{l}\nn\\
%&\quad+C_{p}^{1}|z(t_{k})|^{2(p-1)}\big(
%\frac{2(1-\theta)}{\theta}\langle z(t_k), \sigma_k\Delta W_k\rangle+\frac{2}{\theta}\langle y(t_k), \sigma_k\Delta W_k\rangle\big).
%\end{align*}
and the recursive calculation, we obtain
\begin{align*}
&|z(t_{k+1})|^{2p}\nn\\
\leq{}& |z(0)|^{2p}+\sum_{i=0}^{k}C_{p}^{1}|z(t_{i})|^{2(p-1)}\Big(|\sigma_i\delta W_i|^2
+K\Delta+2a_2\Delta\int_{-\tau}^{0}|y_{t_i}(r)|^2\mathrm d\nu_2(r)\Big)\nn\\
&\quad+\sum_{i=0}^{k}\sum_{l=2}^{p}C_{p}^{l}|z(t_{i})|^{2(p-l)}\Big||\sigma_i\delta W_i|^2
+K\Delta+2a_2\Delta\int_{-\tau}^{0}|y_{t_i}(r)|^2\mathrm d\nu_2(r)+\mathcal M_i\Big|^{l}\nn\\
&\quad+\sum_{i=0}^{k}C_{p}^{1}|z(t_{i})|^{2(p-1)}\mathcal M_i.
\end{align*}
Then
\begin{align}\label{11p2.1}
\E\Big[\sup_{0\leq t_k\leq T}|z(t_{k})|^{2p}\Big]
\leq |z(0)|^{2p}+I_1+I_2+I_3,
\end{align}
where
\begin{align*}
I_1:=&\sum_{k=0}^{\lfloor T\rfloor}\Big\{C_{p}^{1}\E\Big[|z(t_{k})|^{2(p-1)}\big(|\sigma_k\delta W_k|^2
+K\Delta+2a_2\Delta\int_{-\tau}^{0}|y_{t_k}(r)|^2\mathrm d\nu_2(r)\big)\Big]\nn\\
&+\sum_{l=2}^{p}\!C_{p}^{l}\E\Big[|z(t_ {k})|^{2(p-l)}\Big||\sigma_k\delta W_k|^2 \!+\!K\Delta\!+\!2a_2\Delta\int_{-\tau}^{0}|y_{t_k}(r)|^2\mathrm d\nu_2(r)\!+\!\mathcal M_k\Big|^{l}\Big]\Big\}\nn\\
I_2:=&\E\Big[\sup_{0\leq t_k\leq T}\sum_{i=0}^{k}C_{p}^{1}\frac{2(\theta-1)}{\theta}|z(t_{i})|^{2(p-1)}
\langle z(t_i), \sigma_i\delta W_i\rangle\Big],\nn\\
I_3:=&\E\Big[\sup_{0\leq t_k\leq T}\sum_{i=0}^{k}C_{p}^{1}\frac{2}{\theta}|z(t_{i})|^{2(p-1)}\langle y(t_i), \sigma_i\delta W_i\rangle\Big].
\end{align*}
By similar arguments to those in the proof of \underline{Step 1}, we derive
\begin{align}\label{I1p2.1}
I_1\leq K\Delta\sum_{k=0}^{\lfloor T\rfloor}\E[|z(t_{k})|^{2p}]+K\Delta+K\|\xi\|^{2p}
+K\Delta\sum_{k=0}^{\lfloor T \rfloor}\mathbb E[|y(t_{k})|^{2p}].
\end{align}
Applying the Burkholder--Davis--Gundy inequality,  we arrive at
\begin{align*}
I_2\leq{}&K\E\Big[\Big(\sum_{k=0}^{\lfloor T \rfloor}|z(t_{k})|^{4p-2}|\sigma_k|^2\Delta\Big)^\frac{1}{2}\Big]\nn\\
%\leq{}&K\E\Big(\sum_{k=0}^{\lfloor T/\Delta\rfloor}|z(t_{k})|^{2p-1}|\sigma_k|\Delta^{\frac{1}{2}}\Big)\nn\\
\leq{}&\E\Big[\sup_{0\leq t_i\leq\lfloor T \rfloor }|z(t_{i})|^{2p-1} \big(K\Delta\sum_{k=0}^{\lfloor T\rfloor}|\sigma_k|^2\big)^{\frac{1}{2}}\Big].
\end{align*}
Using the Young inequality, Assumption \ref{a1}, and \eqref{5p2.1},  one has
\begin{align}\label{I2p2.1}
I_2\leq{}&\frac{1}{4}\E\Big[\sup_{0\leq t_k\leq\lfloor T \rfloor }|z(t_{k})|^{2p}\Big]+K\Delta^p\E\Big[\big(\sum_{k=0}^{\lfloor T \rfloor}|\sigma_k|^2\big)^{p}\Big]\nn\\
\leq{}&\frac{1}{4}\E\Big[\sup_{0\leq t_k\leq\lfloor T \rfloor }|z(t_{k})|^{2p}\Big]+K\Delta(T^{p-1}+1)\sum_{k=0}^{\lfloor T \rfloor}\E[|\sigma_k|^{2p}]\nn\\
\leq{}&\frac{1}{4}\E\Big[\sup_{0\leq t_k\leq\lfloor T \rfloor }|z(t_{k})|^{2p}\Big]+K(T^{p-1}+1)\nn\\
&\quad+K\Delta(T^{p-1}+1)\sum_{k=0}^{\lfloor T \rfloor}\mathbb E\Big[|y(t_{k})|^{2p}+\int_{-\tau}^{0}|y_{t_{k}}(r)|^{2p}\mathrm d\nu_1(r)\Big]\nn\\
\leq{}&\frac{1}{4}\E\Big[\sup_{0\leq t_k\leq\lfloor T \rfloor }|z(t_{k})|^{2p}\Big]+K\Delta(T^{p-1}+1)\sum_{k=0}^{\lfloor T \rfloor}\mathbb E[|y(t_{k})|^{2p}]\nn\\
&\quad+K(T^{p-1}+1)(1+\|\xi\|^{2p}).
\end{align}
Similarly, for $p\ge 2,$ we obtain 
\begin{align}\label{I3p2.1}
I_3\leq{}&K\E\Big[\Big(\sum_{k=0}^{\lfloor T \rfloor}|z(t_{k})|^{4p-4}|y(t_{k})|^2|\sigma_k|^2\Delta\Big)^\frac{1}{2}\Big]\nn\\
\leq{}&\frac{1}{4}\E\Big[\sup_{0\leq t_k\leq\lfloor T \rfloor }|z(t_{k})|^{2p}\Big]+K\Delta^{\frac{p}{2}}\E\Big[\big(\sum_{k=0}^{\lfloor T \rfloor}|y(t_k)|^2|\sigma_k|^2\big)^{\frac{p}{2}}\Big]\nn\\
\leq{}&\frac{1}{4}\E\Big[\sup_{0\leq t_k\leq\lfloor T \rfloor }|z(t_{k})|^{2p}\Big]+K\Delta(T^{\frac{p}{2}-1}+1)\sum_{k=0}^{\lfloor T \rfloor}\E[|y(t_{k})|^{p}|\sigma_k|^p]\nn\\
\leq{}&\frac{1}{4}\E\Big[\sup_{0\leq t_k\leq\lfloor T \rfloor }|z(t_{k})|^{2p}\Big]+K\Delta(T^{\frac{p}{2}-1}+1)\sum_{k=0}^{\lfloor T \rfloor}\E[|y(t_{k})|^{2p}]\nn\\
&\quad+K\Delta(T^{\frac{p}{2}-1}+1)\sum_{k=0}^{\lfloor T \rfloor}\E[|\sigma_k|^{2p}]\nn\\
\leq{}&\frac{1}{4}\E\Big[\sup_{0\leq t_k\leq\lfloor T \rfloor }|z(t_{k})|^{2p}\Big]+K\Delta(T^{\frac{p}{2}-1}+1)\sum_{k=0}^{\lfloor T \rfloor}\E[|y(t_{k})|^{2p}]\nn\\
&\quad+K(T^{\frac{p}{2}-1}+1)(1+\|\xi\|^{2p}).
\end{align}
Plugging \eqref{I1p2.1}--\eqref{I3p2.1} into \eqref{11p2.1}, a direct computation derives
\begin{align*}
&\E\big[\sup_{0\leq t_k\leq T}|z(t_{k})|^{2p}\big]\nn\\
\leq{}& 2\Big(K+K\Delta\sum_{k=0}^{\lfloor T \rfloor}\E[|z(t_{k})|^{2p}]+K\Delta+K(T^{p-1}+1)(1+\|\xi\|^{2p})\nn\\
&\quad+K\Delta(T^{p-1}+1)\sum_{k=0}^{\lfloor T \rfloor}\E[|y(t_{k})|^{2p}]\Big)\nn\\
\leq{}&K(T^{p-1}+1)(1+\|\xi\|^{2p})+K\Delta(\frac{T}{\Delta}+1)\sup_{0\leq t_k\leq T}\E[|z(t_k)|^{2p}]\nn\\
&\quad+ K\Delta(T^{p-1}+1)(\frac{T}{\Delta}+1)\sup_{0\leq t_k\leq T}\E[|y(t_k)|^{2p}]\nn\\
\leq{}& K_{T}(1+\|\xi\|^{2p}),
\end{align*}
where the positive constant $K_T$  depends on $T$ and $p$, but is independent of $\Delta$.
 From the above inequality and \eqref{9p2.1}, one obtains that
 $\E\big[\sup_{0\leq t_k\leq T}|y(t_{k})|^{2p}\big]\leq K_T(1+\|\xi\|^{2p}).$ Thus the proof is completed. 
\end{proof}

\begin{prop}\label{p2.2}
Under Assumptions {\ref{a1}} and {\ref{a2}},  it holds that
\begin{align*}
\sup_{\Delta\in(0,1]}\sup_{k\geq 0}\E\big[\|z^{\xi,\Delta}_{t_k}\|^2
+\|y^{\xi,\Delta}_{t_k}\|^2\big]\leq K(1+\|\xi\|^2+|b(\xi)|^2\Delta^2).
\end{align*}
%where the positive constant $K_3$ is independent of $\Delta,~k$.
\end{prop}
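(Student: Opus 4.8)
The plan is to reduce the segment-norm statement to time-independent \emph{pointwise} second-moment bounds for the grid values, namely $\sup_{\Delta\in(0,1]}\sup_{k\ge0}\E|z^{\xi,\Delta}(t_k)|^2\le K(1+\|\xi\|^2+|b(\xi)|^2\Delta^2)$ and the analogous bound for $y^{\xi,\Delta}$, and then to upgrade these to $\E\|z^{\xi,\Delta}_{t_k}\|^2$ and $\E\|y^{\xi,\Delta}_{t_k}\|^2$ by a single-window maximal estimate. Throughout, \cref{p2.1} guarantees finite moments on every finite horizon, so the relevant stochastic sums are genuine martingales and their increments vanish under conditional expectation.

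For the pointwise bound I would start from the split form \eqref{ST} and expand $|z(t_{k+1})|^2$ exactly as in \eqref{12p2.1}. Taking expectations and using $\theta\in(\tfrac12,1]$ to discard the favourable term $(1-2\theta)|b_k|^2\Delta^2\le0$ gives $\E|z(t_{k+1})|^2\le \E|z(t_k)|^2+\Delta\,\E[2\langle y(t_k),b(y_{t_k})\rangle+|\sigma(y_{t_k})|^2]$; crucially, the superlinear term $|b_k|^2$ is removed here rather than estimated. Now \eqref{F3r1} bounds the bracket by $K-(a_1+a_2)|y(t_k)|^2+\tfrac{a_1-a_2+L}{2}\int_{-\tau}^0|y_{t_k}|^2\mathrm d\nu_1+2a_2\int_{-\tau}^0|y_{t_k}|^2\mathrm d\nu_2$. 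Rewriting the delay integrals via \eqref{thL} as convex combinations of grid values and summing with the index shift of \eqref{5p2.1}--\eqref{6p2.1} exposes the key cancellation: the total weight of the delayed terms is $\tfrac{a_1-a_2+L}{2}+2a_2$, which is strictly dominated by the dissipative coefficient $a_1+a_2$, the deficit being exactly $\tfrac{a_1-a_2-L}{2}>0$ under \cref{a2}. The initial datum enters only through $z(t_0)=\xi(0)-\theta b(\xi)\Delta$, producing the term $|b(\xi)|^2\Delta^2$.

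The main obstacle is to convert this one-step inequality into a bound uniform in $k$, not merely on $[0,T]$ as in \cref{p2.1}. Here I would run a discrete Halanay / exponentially weighted argument: multiplying by $e^{\lambda t_k}$ with $\lambda$ small enough that $c_\lambda>0$ as in \eqref{lamb}, the weight both absorbs the constant $K\Delta$ (so that no linear-in-$t_k$ growth survives after division by $e^{\lambda t_{k+1}}$) and, through the index shift, inflates the delay weight only by the factor $e^{\lambda\tau}$, keeping the net dissipativity positive for small $\lambda$. The delicate point is that the recursion dissipates $\E|y(t_k)|^2$ whereas the natural Lyapunov quantity is $\E|z(t_k)|^2$; the two are reconciled by \eqref{9p2.1}, which yields $(1+\theta\Delta\tfrac{3a_1+a_2+L}{2})\E|y(t_k)|^2\le \E|z(t_k)|^2+K\Delta+2a_2\theta\Delta\int_{-\tau}^0\E|y_{t_k}|^2\mathrm d\nu_2$, combined with the observation that $\theta>\tfrac12$ forces $(2\theta-1)\Delta^2\E|b(y_{t_k})|^2$ to equal $\Delta\,\E[2\langle y(t_k),b_k\rangle+|\sigma_k|^2]$ minus the one-step increment of $\E|z(t_k)|^2$; consequently $\E|z(t_k)|^2$ and $\E|y(t_k)|^2$ differ only by terms that are either $O(\Delta)$ or telescoping, which is what lets the Halanay estimate close and produces $\sup_k\E|z(t_k)|^2\vee\sup_k\E|y(t_k)|^2\le K(1+\|\xi\|^2+|b(\xi)|^2\Delta^2)$.

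Finally, to pass from the grid-point bounds to $\E\|z^{\xi,\Delta}_{t_k}\|^2$ and $\E\|y^{\xi,\Delta}_{t_k}\|^2$, I would use that by \eqref{thL} the piecewise-linear segment attains its norm at a grid point, so $\|y^{\xi,\Delta}_{t_k}\|^2=\max_{k-N\le i\le k}|y(t_i)|^2$, and estimate this maximum over the single window $[t_{k-N},t_k]$ of fixed length $\tau$. Using \eqref{zcont} on that window, the drift contribution is again handled termwise by the dissipativity bound so that $|b|^2$ never reappears, while the supremum of the martingale part is controlled by the Burkholder--Davis--Gundy inequality; a Young inequality then absorbs a fraction of $\E\max_{\mathrm{window}}|z|^2$ into the left-hand side, leaving only $\sum_{\mathrm{window}}\E|\sigma(y_{t_i})|^2\Delta\le K\tau(1+\sup_i\E|y(t_i)|^2)$, finite by the pointwise bound. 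Because the window contains $N$ steps but $N\Delta=\tau$ is fixed, no negative power of $\Delta$ is generated, and the claimed $\Delta$-uniform bound follows.
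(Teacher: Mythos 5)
Your proposal is correct and shares the paper's two-stage skeleton (time-uniform pointwise second-moment bounds, then a maximal estimate over a single window of length $\tau$ via Burkholder--Davis--Gundy), but your Step 1 takes a genuinely different algebraic route. The paper does \emph{not} discard the term $(1-2\theta)|b_k|^2\Delta^2$: it rewrites it as $\frac{1-2\theta}{\theta^2}|y(t_k)-z(t_k)|^2$, expands, and applies the Young inequality to the cross term $\langle z(t_k),y(t_k)\rangle$ with parameter $\tilde c_\kappa\Delta$, which transfers dissipation onto $z$ itself and produces the strict contraction factor $\frac{(1-\theta)^2}{\theta^2}+\frac{2\theta-1}{\theta^2(1+\tilde c_\kappa\Delta)}<1$ in front of $|z(t_k)|^2$; the exponential weight $e^{\e\Delta}$ is then absorbed directly by this contraction (this is precisely \eqref{varep}), and no comparison of $\E[|z(t_k)|^2]$ with $\E[|y(t_k)|^2]$ is needed inside the recursion. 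You instead throw the superlinear term away, keep all dissipation in $y$, and reconcile $z$ with $y$ through \eqref{9p2.1} together with the increment identity $(2\theta-1)\Delta^2\E[|b_k|^2]=\Delta\E[2\langle y(t_k),b_k\rangle+|\sigma_k|^2]-(\E[|z(t_{k+1})|^2]-\E[|z(t_k)|^2])$. This route does close, but the point you call ``delicate'' is thinner than your sketch suggests: after Abel summation the telescoping contribution to $(e^{\lambda\Delta}-1)\sum_i e^{\lambda t_i}\E[|z(t_i)|^2]$ regenerates a weighted $z$-sum with coefficient of order $\frac{\theta^2}{2\theta-1}(e^{\lambda\Delta}-1)(1-e^{-\lambda\Delta})$, so you need a self-absorption step, legitimate because the sums are finite by \cref{p2.1}, and valid only for $\lambda<\frac{2\theta-1}{\theta^2}$ in addition to the smallness forced by the delay factor $e^{\lambda\tau}$. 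What each route buys: the paper's Young-inequality trick makes the contraction immediate and keeps everything in one recursion; yours avoids that bookkeeping and never manipulates $|b_k|^2$ at all, at the price of the extra Abel/absorption argument. Your Step 2 is essentially the paper's (their \underline{Step 2}, including the extended-window treatment of the delayed $\nu_2$-integral behind \eqref{6p2.2}, which you also need because the BDG bound for $\mathcal M_l$ involves $|y(t_l)|$ as well as $|z(t_l)|$); one caveat is that the grid-max identity for the segment norm holds for $y^{\xi,\Delta}_{t_k}$, which is piecewise linear, but not for $z^{\xi,\Delta}_{t_k}$, whose segments contain Brownian arcs, so for $z$ you must keep the continuous-time BDG estimate based on \eqref{zcont}, as you indicate.
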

\begin{proof}
We divide the proof into two steps.

\underline{Step 1.}
We shall prove
$$\sup_{\Delta\in(0,1]}\sup_{k\geq -N}(\E[|z(t_{k})|^2]+\E[|y(t_{k})|^2])\leq K(1+\|\xi\|^2+|b(\xi)|^2\Delta^2).$$
By $a_1>a_2+L$, one can choose a sufficiently small $\kappa>0$  such that
$$\tilde c_\kappa:=\big(a_1+a_2-(\frac{a_1-a_2+L}{2}+2a_2)e^{\kappa\tau}\big)>0.$$
For any $\Delta\in(0,1]$ and $k\in \mathrm N$, according to \eqref{12p2.1} and $z(t_k)=y(t_k)-\theta b_{k}\Delta$, we have
\begin{align*}
|z(t_{k+1})|^2
\leq{}&|z(t_k)|^2+\frac{1-2\theta}{\theta^2}|y(t_k)-z(t_k)|^2+|\sigma_k\delta W_k|^2
+K\Delta\nn\\
&-\frac{3a_1+a_2+L}{2}\Delta|y(t_k)|^2+2a_2\Delta\int_{-\tau}^{0}|y_{t_k}(r)|^2\mathrm d\nu_2(r)+\mathcal M_k\nn\\
={}&\frac{(1-\theta)^2}{\theta^2}|z(t_k)|^2+\frac{1-2\theta}{\theta^2}|y(t_k)|^2
+\frac{2(2\theta-1)}{\theta^2}\<z(t_k),y(t_k)\>+|\sigma_k\delta W_k|^2\nn\\
&+K\Delta-\frac{3a_1+a_2+L}{2}\Delta|y(t_k)|^2+2a_2\Delta\int_{-\tau}^{0}|y_{t_k}(r)|^2\mathrm d\nu_2(r)+\mathcal M_k,
\end{align*}
Applying the Young inequality yields
\begin{align*}
2\<z(t_k),y(t_k)\>
\leq \frac{1}{1+\tilde c_\kappa\Delta}|z(t_k)|^2+(1+\tilde c_\kappa\Delta)|y(t_k)|^2.
\end{align*}
Recall that  $\theta\in(0.5,1].$ Then we have 
\begin{align*}
|z(t_{k+1})|^2
\leq{}&\Big(\frac{(1-\theta)^2}{\theta^2}+\frac{2\theta-1}{\theta^2(1+\tilde c_\kappa\Delta)}\Big)|z(t_k)|^2+|\sigma_k\delta W_k|^2+K\Delta\nn\\
&-\Big(\frac{3a_1+a_2+L}{2}-\tilde c_{\kappa}\Big)\Delta|y(t_k)|^2+2a_2\Delta\int_{-\tau}^{0}|y_{t_k}(r)|^2\mathrm d\nu_2(r)+\mathcal M_k.
\end{align*}
Hence for any $\e\in(0,\kappa]$,
\begin{align}\label{1p2.2}
&e^{\e t_{k+1}}|z(t_{k+1})|^2=|z(0)|^2+\sum_{i=0}^{k}(e^{\e t_{i+1}}|z(t_{i+1})|^2-e^{\e t_{i}}|z(t_{i})|^2)\nn\\
\leq{}&|z(0)|^2+\Big(\frac{(1-\theta)^2}{\theta^2}\!+\!\frac{2\theta-1}{\theta^2(1\!+\!\tilde c_\kappa\Delta)}
-e^{-\e\Delta}\Big)\sum_{i=0}^{k}e^{\e t_{i+1}}|z(t_i)|^2\!+\!\sum_{i=0}^{k}e^{\e t_{i+1}}|\sigma_i\delta W_i|^2\nn\\
&\quad+K\Delta\sum_{i=0}^{k}e^{\e t_{i+1}}-(\frac{3a_1+a_2+L}{2}-\tilde c_{\kappa})\Delta\sum_{i=0}^{k}e^{\e t_{i+1}}|y(t_i)|^2\nn\\
&\quad+2a_2\Delta\sum_{i=0}^{k}e^{\e t_{i+1}}\int_{-\tau}^{0}|y_{t_i}(r)|^2\mathrm d\nu_2(r)+\sum_{i=0}^{k}e^{\e t_{i+1}}\mathcal M_i.
\end{align}
According to Proposition \ref{p2.1} we obtain that $\mathcal M_k$ is a martingale and
$$\E\Big[\sum_{i=0}^{k}e^{\e t_{i+1}}\mathcal M_i\Big]=0.$$
Taking expectation on both sides of  \eqref{1p2.2} and using Remark \ref{r1}, we arrive at
\begin{align}\label{2p2.2}
&e^{\e t_{k+1}}\E[|z(t_{k+1})|^2]\nn\\
\leq{}&|z(0)|^2\!+\!\big(\frac{(1\!-\!\theta)^2}{\theta^2}\!+\!\frac{2\theta\!-\!1}{\theta^2(1\!+\!\tilde c_\kappa\Delta)}
\!-\!e^{-\e\Delta}\big)\sum_{i=0}^{k}e^{\e t_{i+1}}\E[|z(t_i)|^2]\!+\!K\Delta\sum_{i=0}^{k}e^{\e t_{i+1}}\nn\\
&-(a_1+a_2-\tilde c_{\kappa})\Delta\sum_{i=0}^{k}e^{\e t_{i+1}}\E[|y(t_i)|^2]+\frac{a_1-a_2+L}{2}\Delta\sum_{i=0}^{k}e^{\e t_{i+1}}\times\nn\\
&\E\Big[\int_{-\tau}^{0}|y_{t_i}(r)|^2\mathrm d\nu_1(r)\Big]+2a_2\Delta\sum_{i=0}^{k}e^{\e t_{i+1}}\E\Big[\int_{-\tau}^{0}|y_{t_i}(r)|^2\mathrm d\nu_2(r)\Big].
\end{align}
Similar to \eqref{5p2.1}, one has
\begin{align*}
&\frac{a_1-a_2+L}{2}\Delta\sum_{i=0}^{k}e^{\e t_{i+1}}\int_{-\tau}^{0}|y_{t_i}(r)|^2\mathrm d\nu_1(r)
+2a_2\Delta\sum_{i=0}^{k}e^{\e t_{i+1}}\int_{-\tau}^{0}|y_{t_i}(r)|^2\mathrm d\nu_2(r)\nn\\
\leq{}&\big(\frac{a_1-a_2+L}{2}+2a_2\big)e^{\e\tau}\tau\|\xi\|^2+\big(\frac{a_1-a_2+L}{2}+2a_2\big)e^{\e\tau}\Delta\sum_{i=0}^{k}e^{\e t_{i+1}}|y(t_i)|^2.
\end{align*}
Inserting this into \eqref{2p2.2} yields
\begin{align*}
&e^{\e t_{k+1}}\E[|z(t_{k+1})|^2]\nn\\
\leq{}&|z(0)|^2+\Big(\frac{(1-\theta)^2}{\theta^2}+\frac{2\theta-1}{\theta^2(1+\tilde c_\kappa\Delta)}
-e^{-\e\Delta}\Big)\sum_{i=0}^{k}e^{\e t_{i+1}}\E[|z(t_i)|^2]+\frac{K}{\e}e^{\e t_{k+2}}\nn\\
&-\Big(a_1+a_2-\tilde c_{\kappa}-(\frac{a_1-a_2+L}{2}+2a_2)e^{\e\tau}\Big)\Delta\sum_{i=0}^{k}e^{\e t_{i+1}}\E[|y(t_i)|^2]\nn\\
&+(\frac{a_1-a_2+L}{2}+2a_2)e^{\e \tau}\tau\|\xi\|^2.
\end{align*}
It follows from the definition of $\tilde c_{\kappa}$ that
\begin{align*}
a_1+a_2-\tilde c_{\kappa}-(\frac{a_1-a_2+L}{2}+2a_2)e^{\e\tau}\geq 0.
\end{align*}
Moreover, using $e^{-\e\Delta}>1-\e\Delta$, we conclude  that
\begin{align}\label{varep}
&\frac{(1-\theta)^2}{\theta^2}+\frac{2\theta-1}{\theta^2(1+\tilde c_\kappa\Delta)}
-e^{-\e\Delta}\nn\\
< {}&\frac{(1-\theta)^2}{\theta^2}+\frac{2\theta-1}{\theta^2(1+\tilde c_\kappa\Delta)}
-(1-\e\Delta)\nn\\
%=&\frac{2\theta-1}{\theta^2}(\frac{1}{1+c_\kappa\Delta}-1)+\e\Delta\nn\\
={}&-\frac{(2\theta-1)\tilde c_{\kappa}}{\theta^2(1+\tilde c_\kappa\Delta)}\Delta+\e\Delta
< (\e-\frac{(2\theta-1)\tilde c_{\kappa}}{\theta^2(1+\tilde c_{\kappa})})\Delta.
\end{align}
Hence, letting
$\e=\frac{(2\theta-1)\tilde c_{\kappa}}{\theta^2(1+\tilde c_{\kappa})}\wedge \kappa$
yields
\begin{align*}
e^{\e t_{k+1}}\E[|z(t_{k+1})|^2]
%\leq{}& |\xi(0)-\theta b(\xi)\Delta|^2+(\e-\frac{(2\theta-1)\tilde c_{\kappa}}{\theta^2(1+\tilde c_{\kappa})})\Delta\sum_{i=0}^{k}e^{\e t_{i+1}}\E[|z(t_i)|^2]\nn\\
%&\quad+K\Delta\frac{e^{\e t_{k+2}}-e^{\e\Delta}}{\e\Delta}+(L+2a_2)e^{\e \tau}\tau\|\xi\|^2\nn\\
\leq{} &K(\|\xi\|^2+|b(\xi)|^2\Delta^2)+K e^{\e t_{k+2}},
\end{align*}
which  implies
$$\sup_{\Delta\in(0,1]}\sup_{k\geq -N}\E[|z(t_{k})|^2]\leq K(1+\|\xi\|^2+|b(\xi)|^2\Delta^2).$$
%where the positive constant $\tilde K_3$ is independent of $\Delta$ and $k$.
In addition, by \eqref{9p2.1}, we obtain
\begin{align}\label{+6p2.2}
&(1+\theta\Delta\frac{3a_1+a_2+L}{2})\sup_{\Delta\in(0,1]}\sup_{k\geq -N}\E[|y(t_k)|^2]\nn\\
\leq{}& K\Delta+\sup_{\Delta\in(0,1]}\sup_{k\geq -N}\E[|z(t_k)|^{2}]+2a_2\theta\Delta\sup_{\Delta\in(0,1]}\sup_{k\geq -N}\E[|y(t_{k})|^2].
%+2a_2\theta\Delta\sup_{\Delta\in(0,1]}\sup_{k\geq 0}\E\Big[\int_{-\tau}^{0}|y_{t_k}(r)|^2\mathrm d\nu_2(r)\Big]\nn\\
%\leq{}&K\Delta+K(1+\|\xi\|^2+|b(\xi)|^2\Delta^2)+2a_2\theta\Delta\sup_{\Delta\in(0,1]}\sup_{k\geq -N}\E[|y(t_{k})|^2].
\end{align}
Together with $a_1>a_2+L$, we arrive at
$$\sup_{\Delta\in(0,1]}\sup_{k\geq -N}\E[|y(t_k)|^2]\leq K(1+\|\xi\|^2+|b(\xi)|^2\Delta^2).$$
\underline{Step 2}.
It follows from \eqref{1p2.2} that 
\begin{align}\label{3p2.2}
&\E\Big[\sup_{(k-N)\vee0\leq i\leq k}e^{\e t_{i+1}}|z(t_{i+1})|^2\Big]\nn\\
\leq{}&|z(0)|^2+K\|\xi\|^2+\E\Big[\sup_{(k-N)\vee0\leq i\leq k}\sum_{l=0}^{i}e^{\e t_{l+1}}|\sigma_l\delta W_l|^2\Big]+K\Delta\sum_{i=0}^{k}e^{\e t_{i+1}}\nn\\
&\quad+\E\Big[\sup_{(k-N)\vee0\leq i\leq k}\sum_{l=0}^{i}e^{\e t_{l+1}}\mathcal M_l\Big]\nn\\
\leq{}&|z(0)|^2+K\|\xi\|^2+\Delta\E\Big[\sum_{l=0}^{k}e^{\e t_{l+1}}|\sigma_l|^2\Big]+\frac{K}{\e}e^{\e t_{k+2}}\nn\\
&\quad +\E\Big[\sup_{(k-N)\vee0\leq i\leq k}\sum_{l=(k-N)\vee0}^{i}e^{\e t_{l+1}}\mathcal M_l\Big].
\end{align}
Using Assumption \ref{a1} and  \underline{Step 1}, we obtain
\begin{align}\label{4p2.2}
\Delta\E\Big[\sum_{l=0}^{k}e^{\e t_{l+1}}|\sigma_l|^2\Big]\leq{} & K\Big(\Delta\sum_{l=0}^{k}e^{\e t_{l+1}}\E\Big[|y(t_l)|^2+\int_{-\tau}^{0}|y_{t_l}(r)|^2\mathrm d\nu_1(r)\Big]+ e^{\e t_{k+2}}\Big)\nn\\
\leq{}&K\Big(\Delta\sup_{i\geq-N}\E[|y(t_i)|^2]\sum_{l=0}^{k}e^{\e t_{l+1}}+ e^{\e t_{k+2}}\Big)\nn\\
\leq{}& K(1+\|\xi\|^2+|b(\xi)|^2\Delta^2)e^{\e t_{k+2}}.
\end{align}
Similar to the estimates of $I_2$  and $I_3$ in  Proposition \ref{p2.1}, one has
\begin{align}\label{5p2.2}
&\E\Big[\sup_{(k-N)\vee0\leq i\leq k}\sum_{l=(k-N)\vee 0}^{i}e^{\e t_{l+1}}\mathcal M_l\Big]\nn\\
\leq{}&\E\Big[\Big(\big(\sup_{(k-N)\vee0\leq i\leq k}e^{\e t_{i}}|z(t_{i})|^2\big)\big( Ke^{\e\Delta}\Delta\sum_{l=(k-N)\vee0}^{k}e^{\e t_{l+1}}|\sigma_l|^2\big)\Big)^{\frac{1}{2}}\Big]\nn\\
&\quad+\E\Big[\Big(\big(\sup_{(k-N)\vee0\leq i\leq k}e^{\e t_{i}}|y(t_{i})|^2\big)\big( Ke^{\e\Delta}\Delta\sum_{l=(k-N)\vee0}^{k}e^{\e t_{l+1}}|\sigma_l|^2\big)\Big)^{\frac{1}{2}}\Big]\nn\\
\leq{}&\frac{1}{4}\E\Big[\sup_{(k-N)\vee0\leq i\leq k}e^{\e t_{i}}|z(t_{i})|^2\Big]+\frac{1}{4}\E\Big[\sup_{(k-N)\vee0\leq i\leq k}e^{\e t_{i}}|y(t_{i})|^2\Big]\nn
\\
&\quad+K\Delta\E\Big[\sum_{i=(k-N)\vee0}^{k}e^{\e t_{i+1}}|\sigma_i|^2\Big]\nn\\
\leq{}&   \frac{1}{4}\E\Big[\sup_{(k-N)\vee0\leq i\leq k}e^{\e t_{i}}|z(t_{i})|^2\Big]+\frac{1}{4}\E\Big[\sup_{(k-N)\vee0\leq i\leq k}e^{\e t_{i}}|y(t_{i})|^2\Big]\nn\\
&\quad+K(1+\|\xi\|^2+|b(\xi)|^2\Delta^2)e^{\e t_{k+2}}.
%\leq{}&K(1+\|\xi\|^2+|b(\xi)|^2\Delta^2)e^{\e t_{k+2}}+\frac{1}{2}\E(\sup_{(k-N)\vee0\leq i\leq k}e^{\e t_{i+1}}|z(t_{i+1})|^2).
\end{align}
Note that
\begin{align*}
&(1+\theta\Delta\frac{3a_1+a_2+L}{2})\E\Big[\sup_{(k-N)\vee0\leq i\leq k}e^{\e t_{i}}|y(t_{i})|^2\Big]\nn\\
%\leq{}& K\Delta e^{\e t_{k}}+\E\big[\sup_{(k-N)\vee0\leq i\leq k}e^{\e t_{i}}|z(t_{i})|^2\big]+2a_2\theta\Delta\E\Big[\sup_{(k-N)\vee0\leq i\leq k}e^{\e t_{i}}\int_{-\tau}^{0}|y_{t_i}(r)|^2\mathrm d\nu_2(r)\Big]\nn\\
\leq{}&K\Delta e^{\e t_{k}}\!+\!\E\Big[\sup_{(k-N)\vee0\leq i\leq k}e^{\e t_{i}}|z(t_{i})|^2\Big]\!+\!2a_2\theta\Delta \E \Big[\sup_{(k-2N)\vee (-N)\leq i\leq k}e^{\e t_{i}}|y(t_{i})|^2\Big]\nn\\
\leq{}&K\Delta e^{\e t_{k}}+\E\Big[\sup_{(k-N)\vee0\leq i\leq k}e^{\e t_{i}}|z(t_{i})|^2\Big]+2a_2\theta\Delta e^{\e t_{k}}\sum_{i=(k-2N)\vee (-N)}^{k}\E [|y(t_{i})|^2]\nn\\
\leq{}& K\Delta e^{\e t_{k}}+\E\Big[\sup_{(k-N)\vee0\leq i\leq k}e^{\e t_{i}}|z(t_{i})|^2\Big]+4a_2\theta\tau e^{\e t_{k}}\sup_{i\geq-N}\E [|y(t_{i})|^2]\nn\\
\leq{}& K\Delta e^{\e t_{k}}+\E\Big[\sup_{(k-N)\vee0\leq i\leq k}e^{\e t_{i}}|z(t_{i})|^2\Big]+e^{\e t_{k}}K(1+\|\xi\|^2+|b(\xi)|^2\Delta^2),
\end{align*}
which implies
\begin{align}\label{6p2.2}
&\E\Big[\sup_{(k-N)\vee0\leq i\leq k}e^{\e t_{i}}|y(t_{i})|^2\Big]\nn\\
\leq{}& \E\Big[\sup_{(k-N)\vee0\leq i\leq k}e^{\e t_{i}}|z(t_{i})|^2\Big]+e^{\e t_{k}}K(1+\|\xi\|^2+|b(\xi)|^2\Delta^2).
\end{align}
Inserting \eqref{6p2.2} into \eqref{5p2.2} leads to
\begin{align}\label{7p2.2}
&\E\Big[\sup_{(k-N)\vee0\leq i\leq k}\sum_{l=(k-N)\vee 0}^{i}e^{\e t_{l+1}}\mathcal M_l\Big]\nn\\
\leq{}&\frac{1}{2}\E\Big[\sup_{(k-N)\vee0\leq i\leq k}e^{\e t_{i}}|z(t_{i})|^2\Big]+K(1+\|\xi\|^2+|b(\xi)|^2\Delta^2)e^{\e t_{k+2}}\nn\\
\leq{}&\frac{1}{2}\E\Big[\sup_{(k-N)\vee0\leq i\leq k}e^{\e t_{i+1}}|z(t_{i+1})|^2\Big]+K(1+\|\xi\|^2+|b(\xi)|^2\Delta^2)e^{\e t_{k+2}}\nn\\
&\quad+|z(0)|^2+\E[e^{\e t_{k-N}}|z(t_{k-N})|^2]\nn\\
\leq{}&\frac{1}{2}\E\Big[\sup_{(k-N)\vee0\leq i\leq k}e^{\e t_{i+1}}|z(t_{i+1})|^2\Big]+K(1+\|\xi\|^2+|b(\xi)|^2\Delta^2)e^{\e t_{k+2}}.
\end{align}
Substituting \eqref{4p2.2} and \eqref{7p2.2} into \eqref{3p2.2} we deduce
\begin{align*}
e^{\e t_{k-N}}\E\Big[\sup_{(k-N)\vee0\leq i\leq k}|z(t_{i+1})|^2\Big]\leq{} &\E\Big[\sup_{(k-N)\vee0\leq i\leq k}e^{\e t_{i+1}}|z(t_{i+1})|^2\Big]\nn\\
\leq{} &K(1+\|\xi\|^2+|b(\xi)|^2\Delta^2)e^{\e t_{k+2}},
\end{align*}
which implies
\begin{align*}
\sup_{\Delta\in(0,1]}\sup_{k\geq 0}\E\Big[\sup_{(k-N)\vee 0\leq i\leq k}|z(t_{i+1})|^2\Big]\leq K(1+\|\xi\|^2+|b(\xi)|^2\Delta^2).
\end{align*} 
Combining \eqref{6p2.2}, we finishes the proof.
\end{proof}

\subsection{Time-independent boundedness of high-order moment}
%In order to derive the optimal mean-square convergence rate of the $\theta$-EM method \eqref{thEM} 
For the SFDE \eqref{FF} with superlinearly growing coefficients, 
the time-independent boundedness of high-order moment of the numerical solution is required  in the longtime mean-square convergence analysis of numerical method.
%we show the time-independent high order moment boundedness of the numerical solution. For this purpose, 
We introduce  the following assumptions on the  coefficient $b$ and the initial value $\xi.$ 
%{\color{blue}see e.g. for more details on this assumption.}  

\begin{assp}\label{a4}
 There exist positive constants $K,a_3, a_4, \ell$, and a function  $\nu_2\in\mathcal{P}$ such that for any $\phi\in \mathcal C^d$,
\begin{align*}
\langle\phi(0),b(\phi)\rangle
\leq K-a_3|\phi(0)|^{2+\ell}+a_4\int_{-\tau}^{0}|\phi(r)|^2\mathrm d\nu_2(r).
\end{align*}
\end{assp}

\begin{assp}\label{a5}
There exist  positive constants $K$ and $\beta$ such that for any $\phi_1,\phi_2\in \mathcal C^d$,
%big(|\phi_1(0)-\phi_2(0)|^2+\int_{-\tau}^{0}|\phi_1(r)-\phi_2(r)|^2\mathrm d\nu_3(r)\big) 
\begin{align*}
|b(\phi_1)-b(\phi_2)|\leq K\|\phi_1-\phi_2\|(1+\|\phi_1\|^{\beta}+\|\phi_2\|^{\beta}).
\end{align*}
\end{assp}

\begin{assp}\label{a7}
There exist  positive constants $K$ and $\rho\geq1/2$ such that
\begin{align*}
|\xi(s_1)-\xi(s_2)|\leq K|s_1-s_2|^{\rho}.
\end{align*}
\end{assp}
 %Assumptions \ref{a4} and \ref{a5} are used to ensure that the exact functional solution and the numerical functional solution are  bounded uniformly in $L^p$ for all $p\geq1$.  
 %Assumption \ref{a5} is used to estimate the errors of $b(y^{\xi,\Delta}_{t_k})$ and $b(x^{\xi}_{t_k})$.
 The following lemma shows that the moments of functional solution of \eqref{FF} are  bounded uniformly for $t\ge 0$,
whose proof is similar to that of \cite[Theorem 3]{LMS11} and is omitted.

 \begin{lemma}\label{l4.1}
Under Assumptions \ref{a1}  and \ref{a4}, the following estimate holds: for integer  $p\geq 2$, 
\begin{align}\label{l4.1.1}
\sup_{t\geq 0}\E[\|x^{\xi}(t)\|^p] \leq K(1+\|\xi\|^p).
\end{align}
%where the positive constant $K_5$ is dependent of $\xi,p$.
Moreover, the invariant  measure $\mu$ given in Lemma \ref{l2.4} satisfies 
\begin{align}\label{l4.1.2}
\mu(\|\cdot\|^p):=\int_{\mathcal C^d}\|\phi\|^{p}\mathrm d\mu(\phi)\leq K.
\end{align}
\end{lemma}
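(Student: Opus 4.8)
The plan is to prove the pointwise moment bound $\sup_{t\ge 0}\E[|x^{\xi}(t)|^p]\le K(1+\|\xi\|^p)$ first, then upgrade it to the functional version and transfer it to the invariant measure. For the pointwise bound I would apply It\^o's formula to $e^{\gamma t}|x^{\xi}(t)|^p$ for a small weight $\gamma>0$ to be fixed later, after localizing by the stopping times $\tau_n=\inf\{t\ge 0:|x^{\xi}(t)|>n\}$ so that the stochastic integral is a genuine martingale (finite-time moments being guaranteed by well-posedness of \eqref{FF}). Using Assumption \ref{a4} on the drift and Assumption \ref{a1} on the diffusion, the generator term is bounded above by
\[\gamma|x|^p-pa_3|x|^{p+\ell}+C_1|x|^p+C_2|x|^{p-2}+C_3|x|^{p-2}\!\int_{-\tau}^0\!|x(t+r)|^2\,\mathrm d(\nu_1+\nu_2)(r),\]
where the last two terms arise from the cross term $p|x|^{p-2}\langle x,b\rangle$ and the It\^o correction $\tfrac{p(p-1)}{2}|x|^{p-2}|\sigma|^2$.

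The crucial point, and what distinguishes the higher-moment estimate from the second-moment one in Lemma \ref{l1}, is that the superlinear dissipativity $-pa_3|x|^{p+\ell}$ dominates every polynomial term of order $p$. Young's inequality turns $|x|^{p-2}|x(t+r)|^2$ into $\tfrac{p-2}{p}|x|^p+\tfrac{2}{p}|x(t+r)|^p$ and then absorbs all instantaneous $|x|^p$ contributions into $-\tfrac{pa_3}{2}|x|^{p+\ell}$; since for \emph{any} prescribed $c_0>0$ one has $c_0|x|^p\le\tfrac{pa_3}{2}|x|^{p+\ell}+C(c_0)$, I may extract an effective linear decay rate $c_0$ as large as I wish. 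This reduces the differential inequality to $\mathrm d|x|^p\le[-c_0|x|^p+C_4+C_5\int_{-\tau}^0|x(t+r)|^p\,\mathrm d\nu(r)]\,\mathrm dt+\mathrm dM_t$ for a fixed constant $C_5$ depending only on $a_4,L,p$.

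Integrating the weighted form and taking expectations, the delay term is handled by the change of variables $u=s+r$, giving
\[\int_0^t e^{\gamma s}\E\!\int_{-\tau}^0\!|x(s+r)|^p\,\mathrm d\nu(r)\,\mathrm ds\le e^{\gamma\tau}\Big(\int_{-\tau}^0 e^{\gamma u}|\xi(u)|^p\,\mathrm du+\int_0^t e^{\gamma u}\E|x(u)|^p\,\mathrm du\Big),\]
the first integral being bounded by $C\|\xi\|^p$. Choosing $c_0>C_5$ (possible by the previous paragraph) and then $\gamma>0$ small enough that $\gamma-c_0+C_5 e^{\gamma\tau}\le 0$, the two $\int_0^t e^{\gamma s}\E|x(s)|^p\,\mathrm ds$ terms cancel, leaving $e^{\gamma t}\E|x(t)|^p\le C(1+\|\xi\|^p)+\tfrac{C_4}{\gamma}e^{\gamma t}$; letting $n\to\infty$ via Fatou yields the pointwise bound. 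I expect this competition between the dissipation rate $c_0$ and the delayed feedback $C_5$ to be the main obstacle, and it is precisely the superlinear term in Assumption \ref{a4} that resolves it by making $c_0$ arbitrarily large, so that no balance condition between $a_3,a_4,L$ is needed.

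To pass to $\sup_{t\ge 0}\E\|x_t^{\xi}\|^p$ I would apply the Burkholder--Davis--Gundy inequality on each window $[t-\tau,t]$ together with the just-proved pointwise bound, thereby controlling $\E\sup_{r\in[-\tau,0]}|x(t+r)|^p$ by the supremum of the pointwise moments over that window. Finally, for \eqref{l4.1.2} I fix $\xi\equiv 0$, so that $\sup_{t\ge 0}\E\|x_t^0\|^p\le K$, and use that $\mathbb W_2(\mu_t^0,\mu)\to 0$ from Lemma \ref{l2.4} metrizes weak convergence: for each $R>0$ the functional $\phi\mapsto\|\phi\|^p\wedge R$ is bounded and continuous, whence $\int(\|\phi\|^p\wedge R)\,\mathrm d\mu=\lim_t\int(\|\phi\|^p\wedge R)\,\mathrm d\mu_t^0\le\sup_t\E\|x_t^0\|^p\le K$, and monotone convergence in $R$ gives $\mu(\|\cdot\|^p)\le K$.
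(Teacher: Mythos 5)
Your proposal is correct. The paper itself omits the proof of this lemma, deferring to \cite[Theorem 3]{LMS11}, so there is no line-by-line argument to compare against; but your route is exactly the standard one such references employ: It\^o's formula applied to $e^{\gamma t}|x^{\xi}(t)|^p$ with localization, the superlinear dissipation $-pa_3|x|^{p+\ell}$ from Assumption \ref{a4} absorbing (via Young) every order-$p$ instantaneous term so that the effective decay rate $c_0$ can be made arbitrarily large, the change of variables $u=s+r$ to fold the delayed $|x(s+r)|^p$ terms back into the weighted time integral, a Burkholder--Davis--Gundy argument on windows $[t-\tau,t]$ to upgrade pointwise moments to the segment norm $\E\|x_t^{\xi}\|^p$, and truncation plus weak convergence (which the bounded-Wasserstein distance $\mathbb W_2$ of Lemma \ref{l2.4} metrizes) followed by monotone convergence for \eqref{l4.1.2}. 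Your observation that no balance condition among $a_3,a_4,L$ is needed -- precisely because $c_0$ can be chosen larger than the fixed delay coefficient $C_5$ -- is the key structural point and is consistent with Assumption \ref{a4} imposing no such restriction; the only technical caveats (a priori finiteness of $\E\sup_{s\in[t-\tau,t]}|x(s)|^p$ before rearranging the BDG step, and interpreting the paper's $\|x^{\xi}(t)\|$ as the segment norm $\|x^{\xi}_t\|$) are ones you have implicitly handled.
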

\begin{lemma}\label{l4.2}
Under Assumptions \ref{a1}, \ref{a4}, and  \ref{a5}, for integer $p\geq 2$, 
it holds that
$$\sup_{\Delta\in(0,1]}\sup_{k\geq -N}(\E[|z^{\xi,\Delta}(t_k)|^p]+\E[|y^{\xi,\Delta}(t_k)|^p])\leq K(1+\|\xi\|^{p(\beta+1)}).$$
%where the positive constant $K_6$ is independent of $\Delta,~k$.
\end{lemma}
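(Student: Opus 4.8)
The plan is to lift the second-moment argument of Proposition~\ref{p2.2} to the $2q$-th moment ($p=2q$, $q\in\mathbb N_+$), working throughout with the split process $z$ of \eqref{ST} and transferring back to $y$ at the end; odd $p$ then follows from H\"older's inequality, $\E[|z|^p]\le(\E[|z|^{p+1}])^{p/(p+1)}$ applied to the even exponent $p+1$, which preserves the exponent $p(\beta+1)$. At the outset I would record the source of the exponent $\beta+1$: since $z^{\xi,\Delta}(0)=\xi(0)-\theta b(\xi)\Delta$ and Assumption~\ref{a5} yields $|b(\xi)|\le K(1+\|\xi\|^{\beta+1})$, one gets $\E[|z^{\xi,\Delta}(0)|^{2q}]\le K(1+\|\xi\|^{2q(\beta+1)})$ using $\Delta\le1$; this is precisely the quantity to be propagated in time.

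First I would reproduce the one-step identity for $|z(t_{k+1})|^2$ from \eqref{ST} as in \eqref{12p2.1}, but invoking the superlinear dissipativity of Assumption~\ref{a4} in place of \eqref{F2r1+1}. Writing $z(t_k)=y(t_k)-\theta b_k\Delta$, using $\langle y(t_k),b_k\rangle\le K-a_3|y(t_k)|^{2+\ell}+a_4\int_{-\tau}^0|y_{t_k}(r)|^2\mathrm d\nu_2(r)$, and exploiting $\theta>\frac12$ to discard the favorable $(1-2\theta)|b_k|^2\Delta^2$ term, I obtain
\begin{align*}
|z(t_{k+1})|^2\le |z(t_k)|^2+|\sigma_k\delta W_k|^2+K\Delta-2a_3\Delta|y(t_k)|^{2+\ell}+2a_4\Delta\!\int_{-\tau}^0\!|y_{t_k}(r)|^2\mathrm d\nu_2(r)+\mathcal M_k,
\end{align*}
where, exactly as in the passage to \eqref{mart}, the identity $2\Delta\langle b_k,\sigma_k\delta W_k\rangle=\tfrac2\theta\langle y(t_k)-z(t_k),\sigma_k\delta W_k\rangle$ is used so that the martingale difference $\mathcal M_k$ carries no explicit $b$. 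Raising to the power $q$ (valid because the right-hand side dominates $|z(t_{k+1})|^2\ge0$), expanding binomially, and taking $\E[\,\cdot\mid\mathcal F_{t_k}]$ — with Proposition~\ref{p2.1} guaranteeing that $\mathcal M_k$ is a true martingale — the term linear in $\mathcal M_k$ vanishes. The decisive term is the order-$\Delta$ dissipation $-2qa_3\Delta\,|z(t_k)|^{2(q-1)}|y(t_k)|^{2+\ell}$, of polynomial degree $2q+\ell$ in the state.

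The heart of the matter is that this single term dominates every competitor. Because Assumption~\ref{a1} allows $\sigma$ only linear growth, and because $\mathcal M_k$ was arranged to be free of $b$, every remaining term beyond the leading $|z(t_k)|^{2q}$ is of order at least $\Delta$ and of polynomial degree at most $2q$; by Young's inequality each is bounded by $\varepsilon\Delta|y(t_k)|^{2q+\ell}+K(\varepsilon)\Delta$ and absorbed into the dissipation. Distributed-delay contributions $\int|y_{t_k}|^{2q}\mathrm d\nu_i$ are processed through the interpolation \eqref{thL} and the index-shift estimates \eqref{5p2.1}--\eqref{6p2.1}, which trade them for $|y(t_i)|^{2q}$ plus a boundary term of size $N\|\xi\|^{2q}$. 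Introducing the weight $e^{\varepsilon t_k}$ and summing precisely as in \eqref{1p2.2}--\eqref{varep} then produces the time-uniform bound $\sup_k\E[|z(t_k)|^{2q}]\le K(1+\|\xi\|^{2q(\beta+1)})$; finally the Assumption~\ref{a4} analogue of \eqref{9p2.1}, namely $|y(t_k)|^2\le|z(t_k)|^2+K\Delta+K\Delta\int|y_{t_k}|^2\mathrm d\nu_2$, raised to the $q$-th power and combined with \eqref{6p2.1}, transfers the estimate from $z$ to $y$.

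I expect the main obstacle to be converting the pointwise superlinear dissipation in $|y(t_k)|$ into a genuine contraction factor $(1-c\Delta)$ on $\E[|z(t_k)|^{2q}]$. The natural step $-c\Delta|y(t_k)|^{2q+\ell}\le -c'\Delta|z(t_k)|^{2q}+K\Delta$ requires comparing $|y(t_k)|$ with $|z(t_k)|$, but the gap $|z(t_k)-y(t_k)|=\theta\Delta|b(y_{t_k})|\le K\Delta(1+\|y_{t_k}\|^{\beta+1})$ depends on the whole history through the superlinear growth of $b$ (Assumption~\ref{a5}), not merely on the current value. Controlling these history terms — so that the net coefficient stays dissipative while all constants remain independent of $k$ and $\Delta$ — is the technical crux, and it is exactly the mechanism by which the factor $\|\xi\|^{\beta+1}$ enters the final bound. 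The exponential-weight bookkeeping of Proposition~\ref{p2.2}, which discounts the influence of earlier large excursions, is what ultimately renders the estimate uniform in time.
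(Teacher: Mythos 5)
Your skeleton (the split process $z$ of \eqref{ST}, binomial expansion, exponential weights, the delay estimates \eqref{5p2.1}--\eqref{6p2.1}, final transfer to $y$) matches the paper's, and you correctly trace the exponent $\beta+1$ to $|z^{\xi,\Delta}(0)|=|\xi(0)-\theta b(\xi)\Delta|$. But there is a genuine gap at exactly the point you flag, and the mechanism you invoke to close it (exponential-weight bookkeeping) cannot work. Because you discard the term $(1-2\theta)|b_k|^2\Delta^2$ outright, your one-step inequality carries the coefficient $1$ on $|z(t_k)|^2$; after multiplying by $e^{\varepsilon t_k}$ and telescoping, this produces a \emph{positive} term of size roughly $\varepsilon\Delta\sum_i e^{\varepsilon t_i}\E[|z(t_i)|^{2q}]$, which can only be cancelled by dissipation expressed in $|y|$. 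Converting $|z(t_k)|^{2q}$ into $|y(t_k)|$-quantities meets the history gap $\theta\Delta|b(y_{t_k})|\le K\Delta(1+\|y_{t_k}\|^{\beta+1})$: the resulting term has degree $2q(\beta+1)$, which exceeds every dissipation available (degree $2q+\ell$ at order $\Delta$, or degree $q(2+\ell)$ at order $\Delta^q$; for the paper's cubic example $\beta=2$, $\ell=2$, so $2q(\beta+1)=6q>4q=q(2+\ell)$), and the Young inequalities needed to force such absorptions generate constants proportional to negative powers of $\Delta$, which blow up as $\Delta\to0$. Your intermediate claim that every competitor in the expansion ``has degree at most $2q$'' is also inconsistent with keeping $-2a_3\Delta|y(t_k)|^{2+\ell}$ inside the bracket: for $i\ge2$ the binomial terms contain $(2a_3\Delta)^i|y(t_k)|^{i(2+\ell)}$ in absolute value, of degree $2q+i\ell$, and these again cannot be absorbed with constants uniform over $\Delta\in(0,1]$.

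The paper closes these gaps with three ingredients missing from your proposal. First, instead of discarding $(1-2\theta)|b_k|^2\Delta^2$, it substitutes $b_k\Delta=\theta^{-1}(y(t_k)-z(t_k))$ and completes the square, turning the leading coefficient into $A_{\theta,\Delta}=\frac{(1-\theta)^2}{\theta^2}+\frac{2\theta-1}{\theta^2(1+\Delta)}<1$ at the affordable price of an extra $O(\Delta)|y(t_k)|^2$ term (see \eqref{l4.2.16+}); this contraction, coming purely from the implicitness, is what beats $e^{\varepsilon\Delta}$ and the Young losses in \eqref{l4.2.16}, so $\E[|z(t_k)|^{2p}]$ never has to be dominated by $y$-dissipation at all. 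Second, \cref{a4} is used to trade the superlinear dissipation for a quadratic one, $-R_2\Delta|y(t_k)|^2$ with $R_2$ arbitrarily large, and it is this term that is kept inside the expansion (so all degrees stay $\le 2p$); the conversion of the cross term $-|z(t_k)|^{2(p-1)}|y(t_k)|^2$ into $-3^{-(p-1)}|y(t_k)|^{2p}$ runs through the one-sided comparison \eqref{l4.2.15+}, $|y(t_k)|^2\le|z(t_k)|^2+2K\theta\Delta+2a_4\theta\Delta\int_{-\tau}^0|y_{t_k}(r)|^2\mathrm d\nu_2(r)$, whose error is quadratic in the history rather than of order $\|y_{t_k}\|^{\beta+1}$ — this is precisely the resolution of your ``technical crux'' (you cite this inequality, but only for the final $z$-to-$y$ transfer, not where it is actually needed, namely inside \eqref{l4.2.2}). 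Third, the leftover terms of orders $\Delta^2,\dots,\Delta^p$ force the paper to split into two regimes: for $\Delta\in(0,\Delta^*]$ they are absorbed after shrinking $\Delta^*$, while for $\Delta\in[\Delta^*,1]$ a separate argument retains the full superlinear dissipation $(a_3\Delta)^p|y(t_k)|^{(2+\ell)p}$ together with the uniform contraction $A_{\theta,\Delta^*}<1$; your single argument claimed to be uniform over $\Delta\in(0,1]$ cannot be closed. Finally, $\|\xi\|^{\beta+1}$ enters the bound only through the initial datum, not through the evolution as your last paragraph suggests.
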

\begin{proof} We divide the proof into three steps.

\underline{Step 1.}
We shall  show that there exists  $\Delta^{*}\in(0,1)$ such that 
$$\sup_{\Delta\in(0,\Delta^*]}\sup_{k\geq 0}\E[|z^{\xi,\Delta}(t_k)|^p]\leq K(1+\|\xi\|^{p(\beta+1)}) \quad\forall\, p\geq2. $$
In fact, by Assumption \ref{a4}, we obtain that for any $\Delta\in(0,1]$ and $k\in\mathbb N$,
\begin{align}\label{l4.2.16+}
|z(t_{k+1})|^2\leq &\;A_{\theta,\Delta}|z(t_k)|^2+2K\Delta-2a_3\Delta|y(t_k)|^{2+\ell}+2a_4\Delta\int_{-\tau}^{0}|y_{t_k}(r)|^{2}\mathrm d\nu_2(r)\nn\\
&\quad+\Delta|y(t_k)|^{2}+|\sigma_{k}\Delta W_{k}|^2+\mathcal M_{k}\nn\\
\leq&\;A_{\theta,\Delta}|z(t_k)|^2+R_1\Delta-a_3\Delta|y(t_k)|^{2+\ell}+2a_4\Delta\int_{-\tau}^{0}|y_{t_k}(r)|^{2}\mathrm d\nu_2(r)\nn\\
&-R_2\Delta|y(t_k)|^{2}+|\sigma_{k}\delta W_{k}|^2+\mathcal M_{k},
\end{align}
where $A_{\theta,\Delta}:=\frac{(1-\theta)^2}{\theta^2}+\frac{2\theta-1}{\theta^2(1+\Delta)}$, and
$R_1,R_2$ are positive constants with $R_2>2L$.
For any integer $p\geq2$,
\begin{align}\label{l4.2.3}
&\E\Big[\big(|z(t_{k+1})|^2+a_3\Delta|y(t_k)|^{2+\ell}\big)^p\Big]
%\leq &\E\Big(A_{\theta,\Delta}|z(t_k)|^2+R\Delta-\Delta|y(t_k)|^{2}+a_4\Delta\int_{-\tau}^{0}|y_{t_k}(r)|^{2}\mathrm d\nu_2(r)
%+|\sigma_{k}\Delta W_{k}|^2+\mathcal M_{k}\Big)^p\nn\\
\leq A^p_{\theta,\Delta}\E|z(t_k)|^{2p}+\sum_{i=1}^{p}C_{p}^{i}\E [I_{i}],
%\E\Big(pA^{p-1}_{\theta,\Delta}|z(t_k)|^{2(p-1)}\big(R\Delta-\Delta|y(t_k)|^{2}+a_4\Delta\int_{-\tau}^{0}|y_{t_k}(r)|^{2}\mathrm d\nu_2(r)\nn\\
%&\quad+|\sigma_{k}\Delta W_{k}|^2+\mathcal M_{k}\big)\Big)+\sum_{i=0}^{k}C_{p}^{i}\E\Big(A^{p-i}_{\theta,\Delta}|z(t_k)|^{2(p-i)}\big(R\Delta-\Delta|y(t_k)|^{2}\nn\\
%&\quad+a_4\Delta\int_{-\tau}^{0}|y_{t_k}(r)|^{2}\mathrm d\nu_2(r)+|\sigma_{k}\Delta W_{k}|^2+\mathcal M_{k}\big)^{i}\Big).
\end{align}
where
\begin{align*}
I_i:=&\,A^{p-i}_{\theta,\Delta}|z(t_k)|^{2(p-i)}\Big(R_1\Delta-R_2\Delta|y(t_k)|^{2}+2a_4\Delta\int_{-\tau}^{0}|y_{t_k}(r)|^{2}\mathrm d\nu_2(r)\\
&+|\sigma_{k}\delta W_{k}|^2
+\mathcal M_{k}\Big)^{i}.
\end{align*}
It follows from Assumption \ref{a1} and the Young inequality that for any $\e_1\in(0,1)$,
\begin{align}\label{l4.2.7}
\E[ I_1]
\leq&\;\E\Big[A^{p-1}_{\theta,\Delta}|z(t_k)|^{2(p-1)}\big(R_1\Delta-(R_2-2L)\Delta|y(t_k)|^{2}\nn\\
&\quad+2a_4\Delta\int_{-\tau}^{0}|y_{t_k}(r)|^{2}\mathrm d\nu_2(r)+2L\Delta\int_{-\tau}^{0}|y_{t_k}(r)|^2\mathrm d\nu_1(r)\big)\Big]\nn\\
\leq&\;3\e_1\Delta A^{p}_{\theta,\Delta}\E[|z(t_k)|^{2p}]\!+\!R(\e_1)\Delta\!-\!(R_2\!-\!2L)\Delta\E\big[A^{p-1}_{\theta,\Delta}|z(t_k)|^{2(p-1)}|y(t_k)|^{2}\big]\nn\\
&\quad+\frac{(2a_4)^{p}\Delta}{\e_1^{p-1}}\E\Big[\int_{-\tau}^{0}|y_{t_k}(r)|^{2p}\mathrm d\nu_2(r)\Big]+\frac{(2L)^p\Delta}{\e_1^{p-1}}\E\Big[\int_{-\tau}^{0}|y_{t_k}(r)|^{2p}\mathrm d\nu_1(r)\Big].
\end{align}
By \eqref{ST} and Assumption \ref{a4}, one has
\begin{align}\label{l4.2.15+}
|y(t_k)|^2\leq& \,|z(t_k)|^2+2K\theta\Delta-2a_3\theta\Delta|y(t_k)|^{2+\ell}+2a_4\theta\Delta\int_{-\tau}^{0}|y_{t_k}(r)|^2\mathrm d\nu_2(r)\nn\\
\leq& \,|z(t_k)|^2+2K\theta\Delta+2a_4\theta\Delta\int_{-\tau}^{0}|y_{t_k}(r)|^2\mathrm d\nu_2(r).
\end{align}
It is straightforward to see that
\begin{align*}
|y(t_k)|^{2p}
\leq &\,\Big(|z(t_k)|^2+2K\theta\Delta+2a_4\theta\Delta\int_{-\tau}^{0}|y_{t_k}(r)|^2\mathrm d\nu_2(r)\Big)^{p-1}|y(t_k)|^{2}\nn\\
\leq&\,3^{p-1}|z(t_k)|^{2(p-1)}|y(t_k)|^{2}+(6K\theta\Delta)^{p-1}|y(t_k)|^{2}\nn\\
&\,+(6a_4\theta\Delta)^{p-1}|y(t_k)|^{2}\int_{-\tau}^{0}|y_{t_k}(r)|^{2(p-1)}\mathrm d\nu_2(r).
\end{align*}
This implies
 \begin{align}\label{l4.2.2}
-|z(t_k)|^{2(p-1)}|y(t_k)|^{2}
\leq&-\frac{1}{3^{p-1}}|y(t_k)|^{2p}
+(2K\theta\Delta)^{p-1}|y(t_k)|^2\nn\\
&\quad+(2a_4\theta\Delta)^{p-1}|y(t_k)|^2\int_{-\tau}^{0}|y_{t_k}(r)|^{2(p-1)}\mathrm d\nu_2(r)\nn\\
\leq& -\frac{1}{3^{p-1}}|y(t_k)|^{2p}
+(2K\Delta)^{p-1}+(2\Delta)^{p-1}(K^{p-1}+a_4^{p-1})|y(t_k)|^{2p}\nn\\
&\quad+(2a_4\Delta)^{p-1}\int_{-\tau}^{0}|y_{t_k}(r)|^{2p}\mathrm d\nu_2(r).
%\leq&-\frac{a_3^{p-1}}{6^{p-1}}\Delta^{p-1}|y(t_k)|^{2p+\ell(p-1)}+R\Delta^{p-1}
%+(a_4\Delta)^{p-1}\int_{-\tau}^{0}|y_{t_k}(r)|^{2p}\mathrm d\nu_2(r).
\end{align}
Inserting \eqref{l4.2.2} into \eqref{l4.2.7} and using $A_{\theta,\Delta}\in(0,1)$ yield
\begin{align}\label{l4.2.9}
\E [I_1]
\leq &\,3\e_1\Delta A^{p}_{\theta,\Delta}\E[|z(t_k)|^{2p}]+R(\e_1)\Delta-\frac{(R_2-2L)A^{p-1}_{\theta,\Delta}}{3^{p-1}}\Delta\E[|y(t_k)|^{2p}]\nn\\
&\,+(R_2-2L) 2^{p-1}(K^{p-1}+a_4^{p-1})\Delta^{p}\E[|y(t_k)|^{2p}]\nn\\
&\,+(R_2-2L)(2a_4)^{p-1}\Delta^p\E\Big[\int_{-\tau}^{0}|y_{t_k}(r)|^{2p}\mathrm d\nu_2(r)\Big]\nn\\
&\,+\frac{(2a_4)^p}{\e_1^{p-1}}\Delta\E\Big[\int_{-\tau}^{0}|y_{t_k}(r)|^{2p}\mathrm d\nu_2(r)\Big]
+\frac{(2L)^p\Delta}{\e_1^{p-1}}\E\Big[\int_{-\tau}^{0}|y_{t_k}(r)|^{2p}\mathrm d\nu_1(r)\Big].
\end{align}
For the term $I_i$ with $i\in\{2,\ldots,p\}$,
\begin{align*}
&\quad\E [I_i]
\leq\,6^{i}A^{p-i}_{\theta,\Delta}\E\Big[|z(t_k)|^{2(p-i)}\Big(R_1\Delta^i+R_2\Delta^i|y(t_k)|^{2i}+(2i-1)!!|\sigma_{k}|^{2i}\Delta^{i}\nn\\
&\,+(2a_4)^{i}\Delta^{i}\int_{-\tau}^{0}|y_{t_k}(r)|^{2i}\mathrm d\nu_2(r)+2^{i}(i-1)!!A^{\frac{i}{2}}_{\theta,\Delta} |z(t_k)|^i |\sigma_k|^i\Delta^{\frac{i}{2}}\nn\\
&\,+4^{i}(i-1)!!| y(t_k)|^i |\sigma_k|^i|\Delta^{\frac{i}{2}}\Big)\Big]\nn\\
\leq&\,6\e_1\Delta A^{p}_{\theta,\Delta}\E[|z(t_k)|^{2p}]+R(\e_1)\Delta+\frac{6^piR_2^{\frac{p}{i}}\Delta^2}{p\e_1^{p-i}}\E[|y(t_k)|^{2p}]\\
&\,+\frac{i(12a_4)^p\Delta}{p\e_1^{p-i}}\E\Big[\int_{-\tau}^{0}|y_{t_k}(r)|^{2p}\mathrm d\nu_2(r)\Big]+\frac{6^p((2i-1)!!)^{\frac{p}{i}}i\Delta}{p\e_1^{p-i}}\E[|\sigma_k|^{2p}]\nn\\
&\,+\frac{12^{2p}((i-1)!!)^{\frac{2p}{i}}i\Delta}{2p\e_1^{2p-i}}\E[|\sigma_k|^{p}|z(t_k)|^p]+\frac{24^p((i-1)!!)^{\frac{p}{i}}i\Delta}{p\e_1^{p-i}}\E[|y(t_k)|^p|\sigma_k|^{p}],
\end{align*}
where $i!!$ represents the double factorial of $i$.
Together with the Young inequality and Assumption \ref{a1}, we obtain
\begin{align}\label{l4.2.8}
\E [I_i]
\leq&\,6\e_1\Delta A^{p}_{\theta,\Delta}\E[|z(t_k)|^{2p}]+R(\e_1)\Delta+\frac{6^pR_2^{\frac{p}{i}}\Delta^2}{\e_1^{p-i}}\E[|y(t_k)|^{2p}]\nn\\
&\,+\frac{(12a_4)^p\Delta}{\e_1^{p-i}}\E\Big[\int_{-\tau}^{0}|y_{t_k}(r)|^{2p}\mathrm d\nu_2(r)\Big]+\frac{12^{2p}((2i-1)!!)^{\frac{2p}{i}}\Delta}{\e_1^{2p-i}}\E[|\sigma_k|^{2p}]\nn\\
&\,+\frac{24^p((i-1)!!)^{\frac{p}{i}}\Delta}{\e_1^{p-i}}\E\big[|y(t_k)|^{2p}+|\sigma_k|^{2p}\big]\nn\\
%\leq&\,6\e_1\Delta A^{p}_{\theta,\Delta}\E[|z(t_k)|^{2p}]+R(\e_1)\Delta+\frac{6^pR_2^{\frac{p}{i}}\Delta^2}{\e_1^{p-i}}\E[|y(t_k)|^{2p}]\nn\\
%&\,+\frac{(12a_3)^p\Delta}{\e_1^{p-i}}\E\Big[\int_{-\tau}^{0}|y_{t_k}(r)|^{2p}\mathrm d\nu_2(r)\Big]\nn\\
%&\,+\big(\frac{(24L)^{2p}((2i-1)!!)^{\frac{2p}{i}}\Delta}{\e_1^{2p-i}}+\frac{24^p(1+(2L)^{2p})((i-1)!!)^{\frac{p}{i}}\Delta}{\e_1^{p-i}}\big)\E[|y(t_k)|^{2p}]\nn\\
%&\,\!+\!\!\big(\frac{(24L)^{2p}((2i\!\!-\!\!1)!!)^{\frac{2p}{i}}\Delta}{\e_1^{2p\!\!-\!\!i}}\!+\!\frac{24^p(2L)^{2p}((i\!-\!1)!!)^{\frac{p}{i}}\Delta}{\e_1^{p-i}}\big)\E\Big[\!\int_{-\tau}^{0}|y_{t_k}(r)|^{2p}\mathrm d\nu_1(r)\!\Big]\nn\\
\leq&\,6\e_1\Delta A^{p}_{\theta,\Delta}\E[|z(t_k)|^{2p}]+R(\e_1)\Delta+\frac{6^pR_2^{\frac{p}{i}}\Delta^2}{\e_1^{p-i}}\E[|y(t_k)|^{2p}]\nn\\
&\,+\frac{(12a_4)^p\Delta}{\e_1^{p-i}}\E\Big[\int_{-\tau}^{0}|y_{t_k}(r)|^{2p}\mathrm d\nu_2(r)\Big]\nn\\
&\,+B_{i}\Delta\E[|y(t_k)|^{2p}]+B_i\Delta\E\Big[\int_{-\tau}^{0}|y_{t_k}(r)|^{2p}\mathrm d\nu_1(r)\Big],
\end{align}
where
$$B_i:=\frac{(24L)^{2p}((2i-1)!!)^{\frac{2p}{i}}}{\e_1^{2p-i}}+\frac{24^p(1+(2L)^{2p})((i-1)!!)^{\frac{p}{i}}}{\e_1^{p-i}}.$$
Plugging   \eqref{l4.2.8} and \eqref{l4.2.9}  into \eqref{l4.2.3} leads to
\begin{align*}
&\E[|z(t_{k+1})|^{2p}]\nn\\
\leq&\,\Big(1+3\e_1p\Delta+6\e_1\Delta\sum_{i=2}^{p}C_{p}^{i}\Big)A^{p}_{\theta,\Delta}\E[|z(t_k)|^{2p}]+R(\e_1)\Delta+J_{k,1}(R_2)+J_{k,2}(R_2),
\end{align*}
where
\begin{align*}
J_{k,1}(R_2)
&=-\frac{p(R_2-2L)A^{p-1}_{\theta,\Delta}}{3^{p-1}}\Delta\E[|y(t_k)|^{2p}]+\frac{p(2a_4)^p}{\e_1^{p-1}}\Delta\E\Big[\int_{-\tau}^{0}|y_{t_k}(r)|^{2p}\mathrm d\nu_2(r)\Big]\nn\\
&\quad+\frac{p(2L)^p\Delta}{\e_1^{p-1}}\E\Big[\!\int_{-\tau}^{0}|y_{t_k}(r)|^{2p}\mathrm d\nu_1(r)\Big]+\sum_{i=2}^{p}C_{p}^{i}\Delta \Big(B_{i}\E[|y(t_k)|^{2p}] \nn\\
&\quad+\frac{(12a_4)^p}{\e_1^{p-i}}\E\Big[\!\int_{-\tau}^{0}|y_{t_k}(r)|^{2p}\mathrm d\nu_2(r)\Big]+B_i\E\Big[\int_{-\tau}^{0}|y_{t_k}(r)|^{2p}\mathrm d\nu_1(r)\Big]\Big),\nn\\
J_{k,2}(R_2)
&=\;p2^{p-1}(R_2-2L)(K^{p-1}\!+\!a_4^{p-1})\Delta^{p}\E[|y(t_k)|^{2p}]+p(R_2\!-\!2L)(2a_4)^{p-1}\Delta^p\nn\\
&\quad\times\E\Big[\int_{-\tau}^{0}|y_{t_k}(r)|^{2p}\mathrm d\nu_2(r)\Big]
+\sum_{i=2}^{p}C_{p}^{i}\frac{6^pR_2^{\frac{p}{i}}\Delta^2}{\e_1^{p-i}}\E[|y(t_k)|^{2p}].
\end{align*}
It is clear that for any $\e>0$,
\begin{align*}
&e^{\e t_{k+1}}\E[|z(t_{k+1})|^{2p}]-e^{\e t_{k}}\E[|z(t_{k})|^{2p}]\nn\\
\leq&\,\big((1+2^p6\e_1\Delta)A^{p}_{\theta,\Delta}e^{\e \Delta}-1\big)e^{\e t_k}\E[|z(t_k)|^{2p}]+R(\e_1)e^{\e t_{k+1}}\Delta\nn\\
&\quad +e^{\e t_{k+1}}J_{k,1}(R_2)+e^{\e t_{k+1}}J_{k,2}(R_2),
\end{align*}
which leads to
%Summing the above inequality on both sides from $0$ to $k$ yields
\begin{align}\label{l4.2.9+}
e^{\e t_{k+1}}\E[|z(t_{k+1})|^{2p}]
\leq&\,|z(0)|^{2p}+\sum_{l=0}^{k}\big((1+2^p6\e_1\Delta)A^{p}_{\theta,\Delta}e^{\e \Delta}-1\big)e^{\e t_l}\E[|(z(t_l)|^{2p}]\nn\\
&\,+\sum_{l=0}^{k}\big(R(\e_1)e^{\e t_{l+1}}\Delta+e^{\e t_{l+1}}J_{l,1}(R_2)+e^{\e t_{l+1}}J_{l,2}(R_2)\big).
\end{align} 
Note that
\begin{align*}
(1+2^p\cdot6\e_1\Delta)A^{p}_{\theta,\Delta}e^{\e \Delta}=&\;(1+2^p\cdot6\e_1\Delta)\Big(\frac{(1-\theta)^2}{\theta^2}+\frac{2\theta-1}{\theta^2(1+\Delta)}\Big)e^{\e \Delta}\nn\\
=&\;(1+2^p\cdot6\e_1\Delta)(1-\frac{(2\theta-1)\Delta}{\theta^2(1+\Delta)})e^{\e \Delta}\nn\\
\leq&\;(1-\frac{(2\theta-1)\Delta}{2\theta^2})e^{(2^p\cdot6\e_1+\e) \Delta}\nn\\
\leq &\;e^{(-\frac{(2\theta-1)}{2\theta^2}+2^p\cdot6\e_1+\e) \Delta}.
\end{align*}
Letting
$$2^p\cdot6\e_1=\e=\frac{(2\theta-1)}{4\theta^2},$$
we derive
\begin{align}\label{l4.2.16}
(1+2^p\cdot6\e_1\Delta)A^{p}_{\theta,\Delta}e^{\e \Delta}\leq1.
\end{align}
This, along with \eqref{l4.2.9+} implies that
\begin{align}\label{l4.2.12}
 &\quad e^{\e t_{k+1}}\E[|z(t_{k+1})|^{2p}]\nn\\
&\leq|z(0)|^{2p}+\sum_{l=0}^{k}\big(R(\e_1)e^{\e t_{l+1}}\Delta+e^{\e t_{l+1}}J_{l,1}(R_2)+e^{\e t_{l+1}}J_{l,2}(R_2)\big).
\end{align} 
By the definition of $A_{\theta,\Delta}$ with $\theta\in(0.5,1]$, we have
$$0<A_{\theta}:=\frac{(1-\theta)^2}{\theta^2}+\frac{2\theta-1}{2\theta^2}<A_{\theta,\Delta}<1.$$
Similar to \eqref{5p2.1}, we deduce
\begin{align*}
\sum_{l=0}^{k}e^{\e t_{l+1}}J_{l,1}(R_2)
\leq&-\!\Big(\frac{p(R_2\!-\!2L)A^{p-1}_{\theta}}{3^{p-1}}\!-\!e^{\e\tau}\tilde B\!-\!\sum_{i=2}^{p}C_{p}^{i}B_i\Big)\Delta\sum_{l=0}^{k} e^{\e t_{l+1}}\E[|y(t_l)|^{2p}]\nn\\
&\quad+e^{\e\tau}\tilde B\tau\|\xi\|^{2p},
\end{align*}
where
$$\tilde B:=\frac{p(2a_4)^p}{\e_1^{p-1}}+\frac{p(2L)^p}{\e_1^{p-1}}
+\sum_{i=2}^{p}C_{p}^{i}\Big(\frac{(12a_4)^p}{\e_1^{p-i}}+B_i\Big).$$
Choose a sufficiently large number $R_2^*>0$  such that
$$\frac{p(R_2^*-2L)A^{p-1}_{\theta}}{2\cdot3^{p-1}}-e^{\e\tau}\tilde B-\sum_{i=2}^{p}C_{p}^{i}B_i>0.$$
This leads to
\begin{align}\label{l4.2.10}
&\quad \sum_{l=0}^{k}e^{\e t_{l+1}}J_{l,1}(R_2^*)\nn\\
&\leq -\frac{p(R_2^*-2L)A^{p-1}_{\theta}}{2\cdot3^{p-1}}\Delta\sum_{l=0}^{k}e^{\e t_{l+1}}\E[|y(t_l)|^{2p}]+e^{\e\tau}\tilde B\tau\|\xi\|^{2p}.
\end{align}
Similarly,
\begin{align*}
\sum_{l=0}^{k}e^{\e t_{l+1}}J_{l,2}(R_2^*)
\leq D_{\Delta}\Delta\sum_{l=0}^{k}e^{\e t_{l+1}}\E[|y(t_l)|^{2p}]
+e^{\e\tau}\tau p(R_2^*-2L)(2a_4)^{p-1}\|\xi\|^{2p},
\end{align*}
where
\begin{align*}  
D_{\Delta}:=&~ p2^{p-1}(R_2^*-2L)(K^{p-1}+a_4^{p-1})\Delta^{p-1}
+e^{\e\tau}p(R_2^*-2L)(2a_4)^{p-1}\Delta^{p-1}\nn\\
&\quad+\sum_{i=2}^{p}C_{p}^{i}\frac{6^p(R_2^*)^{\frac{p}{i}}\Delta}{\e_1^{p-i}}.
\end{align*}
Choose a sufficiently small number $\Delta^{*}\in(0,1)$   such that
\begin{align*}
D_{\Delta^*}
\leq \frac{p(R_2^*-2L)A^{p-1}_{\theta}}{2\cdot3^{p-1}}.
\end{align*}
Together with \eqref{l4.2.10}, we obtain that for any $\Delta\in(0,\Delta^*]$,
\begin{align}\label{l4.2.11}
\sum_{l=0}^{k}e^{\e t_{l+1}}(J_{l,1}(R_2^*)+J_{l,2}(R_2^*))
\leq e^{\e\tau}\tau\big(\tilde B+p(R_2^*-2L)(2a_4)^{p-1}\big)\|\xi\|^{2p}.
\end{align}
Inserting \eqref{l4.2.11} into \eqref{l4.2.12} and using Assumption \ref{a5} yield
\begin{align*}
&\quad e^{\e t_{k+1}}\E[|z(t_{k+1})|^{2p}]\nn\\
&\leq|\xi-\theta b({\xi})\Delta|^{2p}+\sum_{l=0}^{k}R(\e_1)e^{\e t_{l+1}}\Delta+e^{\e\tau}\tau\big(\tilde B+p(R_2^*-2L)(2a_4)^{p-1}\big)\|\xi\|^{2p}\nn\\
&\leq K(1+\|\xi\|^{2p(\beta+1)})+\frac{R(\e_1)e^{\e t_{k+2}}}{\e}+e^{\e\tau}\tau\big(\tilde B+p(R_2^*-2L)(2a_4)^{p-1}\big)\|\xi\|^{2p},
\end{align*}
which finishes the proof of \underline{step 1}. 
%$$\sup_{\Delta\in(0,\Delta^*]}\sup_{k\geq 0}\E[|z^{\xi,\Delta}(t_k)|^p]\leq K(1+\|\xi\|^{2p(\beta+1)}).$$

\underline{Step 2.}
 We  show that for $p\ge 2,$
 $$\sup_{\Delta\in[\Delta^*,1]}\sup_{k\geq 0}\E[|z^{\xi,\Delta}(t_k)|^p]\leq K(1+\|\xi\|^{p(\beta+1)}).$$
Similar to \underline{Step 1}, we have that for any $\kappa_{1}\in(0,1)$, 
%\begin{align}\label{l4.2.1}
%\E I_1
%%\leq&\E\Big(pA^{p-1}_{\theta,\Delta}|z(t_k)|^{2(p-1)}\big(R_1\Delta-(R_2-2L)\Delta|y(t_k)|^{2}+a_4\Delta\int_{-\tau}^{0}|y_{t_k}(r)|^{2}\mathrm d\nu_2(r)\nn\\
%%&\quad+2L\Delta\int_{-\tau}^{0}|y_{t_k}(r)|^2\mathrm d\nu_1(r)\big)\Big)\nn\\
%\leq&3\e_1A^{p}_{\theta,\Delta}\E|z(t_k)|^{2p}+R_{\e_1}\Delta-(R_2-2L)\Delta\E\big(A^{p-1}_{\theta,\Delta}|z(t_k)|^{2(p-1)}|y(t_k)|^{2}\big)\nn\\
%&\quad+\frac{(a_4\Delta)^{p}}{\e_1^{p-1}}\E\int_{-\tau}^{0}|y_{t_k}(r)|^{2p}\mathrm d\nu_2(r)+\frac{(2L\Delta)^p}{\e_1^{p-1}}\E\int_{-\tau}^{0}|y_{t_k}(r)|^{2p}\mathrm d\nu_1(r).
%\end{align}
%Inserting \eqref{l4.2.2} into \eqref{l4.2.1} and using $A_{\theta,\Delta}\in(0,1)$ yields
\begin{align}\label{l4.2.4}
\E [I_1]
\leq~& 3\kappa_{1}A^{p}_{\theta,\Delta}\E[|z(t_k)|^{2p}]+R(\kappa_{1})\Delta-\frac{(R_2-2L)A^{p-1}_{\theta,\Delta}}{3^{p-1}}\Delta\E[|y(t_k)|^{2p}]\nn\\
&+(R_2-2L) 2^{p-1}(K^{p-1}\!+\!a_4^{p-1})\Delta^{p}\E[|y(t_k)|^{2p}]\nn\\
&+\Big((R_2-2L)(2a_4)^{p-1}+\frac{(2a_4)^p}{\kappa_{1}^{p-1}}\Big)\Delta^{p}\E\Big[\int_{-\tau}^{0}|y_{t_k}(r)|^{2p}\mathrm d\nu_2(r)\Big]
\nn\\
&+\frac{(2L\Delta)^p}{\kappa_{1}^{p-1}}\E\Big[\int_{-\tau}^{0}|y_{t_k}(r)|^{2p}\mathrm d\nu_1(r)\Big],
\end{align}
%For the term $I_i$ with the integer $i\in[2,p]$,
and
%\begin{align}
%\E I_i
%\leq&6^{i}A^{p-i}_{\theta,\Delta}\E\Big(|z(t_k)|^{2(p-i)}\big(R_1\Delta^i+R_2\Delta^i|y(t_k)|^{2i}+a_4^{i}\Delta^{i}\int_{-\tau}^{0}|y_{t_k}(r)|^{2i}\mathrm d\nu_2(r)+|\sigma_{k}|^{2i}\Delta^{i}\nn\\
%&\quad+2^{i}A^{\frac{i}{2}}_{\theta,\Delta} |z(t_k)|^i |\sigma_k|^i\Delta^{\frac{i}{2}}+4^{i}| y(t_k)|^i |\sigma_k|^i|\Delta^{\frac{i}{2}}\big)\Big)\nn\\
%\leq&6\e_1A^{p}_{\theta,\Delta}\E|z(t_k)|^{2p}+R_{\e_1}\Delta^{p}+R_{\e_1}\Delta^p\E|y(t_k)|^{2p}+\frac{i(6a_3\Delta)^p}{p\e_1^{p-i}}\E\int_{-\tau}^{0}|y_{t_k}(r)|^{2p}\mathrm d\nu_2(r)\nn\\
%&\quad+\frac{i(6\Delta)^p}{p\e_1^{p-i}}\E|\sigma_k|^{2p}
%+\frac{12^{2p}i\Delta^p}{2p\e_1^{2p-i}}\E|\sigma_k|^{2p}
%+\frac{24^pi\Delta^{\frac{p}{2}}}{p\e_1^{p-i}}\E|y(t_k)|^p|\sigma_k|^{p}.
%\end{align*}
%Together with Young's inequality and Assumption \ref{a1} we get
\begin{align}\label{l4.2.5}
\E [I_i]
%\leq&6\e_1A^{p}_{\theta,\Delta}\E|z(t_k)|^{2p}+R_{\e_1}\Delta+R_{\e_1}\Delta^p\E|y(t_k)|^{2p}+\frac{(6a_3\Delta)^p}{\e_1^{p-i}}\E\int_{-\tau}^{0}|y_{t_k}(r)|^{2p}\mathrm d\nu_2(r)\nn\\
%&\quad+\frac{12^{2p}\Delta^p}{\e_1^{2p-i}}\E|\sigma_k|^{2p}
%+\frac{24^p\Delta}{\e_1^{p-i}}\E\big(|y(t_k)|^{2p}+|\sigma_k|^{2p}\big)\nn\\
\leq&~\,6\kappa_{1}A^{p}_{\theta,\Delta}\E[|z(t_k)|^{2p}]+R(\kappa_{1})\Delta+R(\kappa_{1})\Delta^p\E[|y(t_k)|^{2p}]\nn\\
&\,+\frac{(12a_4\Delta)^p}{\kappa_{1}^{p-i}}\E\Big[\int_{-\tau}^{0}|y_{t_k}(r)|^{2p}\mathrm d\nu_2(r)\Big]\nn\\
&\,+\frac{(24L)^{2p}\Delta^p}{\kappa_{1}^{2p-i}}\E\Big[\int_{-\tau}^{0}|y_{t_k}(r)|^{2p}\mathrm d\nu_1(r)\Big]
\!\!+\!\!\frac{24^p\Delta}{\kappa_{1}^{p-i}}\big(1\!+\!(2L)^{2p}\big)\E[|y(t_k)|^{2p}]\nn\\
&\,+\frac{24^p(2L)^{2p}\Delta}{\kappa_{1}^{p-i}}\E\Big[\int_{-\tau}^{0}|y_{t_k}(r)|^{2p}\mathrm d\nu_1(r)\Big]\quad \forall \,i\in\{2,\ldots, p\}.
\end{align}
Plugging   \eqref{l4.2.4} and \eqref{l4.2.5}  into \eqref{l4.2.3} leads to
\begin{align*}
&\E[|z(t_{k+1})|^{2p}]+\frac{(a_3\Delta)^p}{2^p}\E[|y(t_k)|^{(2+\ell)p}]\nn\\
\leq&~(1+3\kappa_{1}p+6\kappa_{1}\sum_{i=2}^{p}C_{p}^{i})A^{p}_{\theta,\Delta}\E[|z(t_k)|^{2p}]+R(\kappa_{1})\Delta+J_{k,3}+J_{k,4},
\end{align*}
where
\begin{align*}
J_{k,3}:=&-\Big(\frac{p(R_2-2L)A^{p-1}_{\theta,\Delta}}{3^{p-1}}-\sum_{i=2}^{p}C_{p}^{i}\frac{24^p}{\kappa_{1}^{p-i}}\big(1+(2L)^{2p}\big)\Big)\Delta\E[|y(t_k)|^{2p}]\nn\\
&\quad+\sum_{i=2}^{p}C_{p}^{i}\frac{24^p(2L)^{2p}\Delta}{\kappa_{1}^{p-i}}\E\Big[\int_{-\tau}^{0}|y_{t_k}(r)|^{2p}\mathrm d\nu_1(r)\Big],\nn\\
J_{k,4}:=&~R(\kappa_{1})\Delta^{p}\E[|y(t_k)|^{2p}]+\Delta^{p} \Big(p(R_2-2L)(2a_4)^{p-1}+p\frac{(2a_4)^p}{\kappa_{1}^{p-1}}\nn\\
&\quad +\sum_{i=2}^{p}C_{p}^{i}\frac{(12a_4)^p}{\kappa_{1}^{p-i}}\Big) \E\Big[\int_{-\tau}^{0}|y_{t_k}(r)|^{2p}\mathrm d\nu_2(r)\Big]\nn
\\
&\quad +\Big(\frac{p(2L)^p}{\kappa_{1}^{p-1}}+\sum_{i=2}^{p}C_{p}^{i} \frac{(24L)^{2p}}{\kappa_{1}^{2p-i}}\Big)\Delta^{p}\E\Big[\int_{-\tau}^{0}|y_{t_k}(r)|^{2p}\mathrm d\nu_1(r)\Big].
\end{align*}
It is clear that for any $\kappa>0$,
%\begin{align*}
%&e^{\kappa t_{k+1}}\E|z(t_{k+1})|^{2p}-e^{\e t_{k}}\E|z(t_{k})|^{2p}\nn\\
%\leq&\big((1+2^p6\e_1)A^{p}_{\theta,\Delta}e^{\e \Delta}-1\big)e^{\e t_k}\E|z(t_k)|^{2p}+R_{\e_1}e^{\e t_{k+1}}\Delta-e^{\e t_{k+1}}J_{1,k}+e^{\e t_{k+1}}J_{2,k}\nn\\
%&-\frac{(a_3\Delta)^p}{2^p}e^{\e t_{k+1}}\E|y(t_k)|^{(2+\ell)p}.
%\end{align*}
%Summing the above inequality on both sides from $0$ to $k$ yields
\begin{align}\label{l4.2.6}
&e^{\kappa t_{k+1}}\E[|z(t_{k+1})|^{2p}]\nn\\
\leq&~|z(0)|^{2p}\!+\!\sum_{l=0}^{k}\big((1\!+\!2^p6\kappa_{1})A^{p}_{\theta,\Delta}e^{\kappa \Delta}-1\big)e^{\kappa t_l}\E[|z(t_l)|^{2p}]
\!+\!\sum_{l=0}^{k}\Big(R(\kappa_{1})e^{\kappa t_{l+1}}\Delta\nn\\
&\quad+e^{\kappa t_{l+1}}J_{l,3}+e^{\kappa t_{l+1}}J_{l,4}-\frac{(a_3\Delta)^p}{2^p}e^{\kappa t_{l+1}}\E[|y(t_l)|^{(2+\ell)p}]\Big).
\end{align} 
By the definition of $A_{\theta,\Delta}$ again, we have that for any $\Delta\in[\Delta^*,1]$,
$$A_{\theta}=\frac{(1-\theta)^2}{\theta^2}+\frac{2\theta-1}{2\theta^2}<\frac{(1-\theta)^2}{\theta^2}+\frac{2\theta-1}{\theta^2(1+\Delta)}=A_{\theta,\Delta}\leq A_{\theta,\Delta^{*}}<1.$$
Choose a sufficiently small number $\kappa_1>0$   such that
$$(1+2^p6\kappa_1)A^{p}_{\theta,\Delta^*}<1.$$
For such $\kappa_1,$  choose another   sufficiently small  number $\kappa>0$  such that
\begin{align}\label{l4.2.15}
(1+2^p6\kappa_1)A^{p}_{\theta,\Delta^{*}}e^{\kappa}\leq 1.
\end{align}
This, along with \eqref{l4.2.6} implies that
\begin{align}\label{l4.2.12+}
e^{\kappa t_{k+1}}\E[|z(t_{k+1})|^{2p}]
\leq&~|z(0)|^{2p}+\sum_{l=0}^{k}\Big(R(\kappa_{1})e^{\kappa t_{l+1}}\Delta+e^{\kappa t_{l+1}}J_{l,3}\nn\\
&\quad+e^{\kappa t_{l+1}}J_{l,4}-\frac{(a_3\Delta)^p}{2^p}e^{\kappa t_{l+1}}\E[|y(t_l)|^{(2+\ell)p}]\Big).
\end{align} 
Similar to \eqref{5p2.1}, we deduce
\begin{align*}
&\sum_{l=0}^{k}e^{\kappa t_{l+1}}J_{l,3}
\leq -\Big(\frac{p(R_2-2L)A^{p-1}_{\theta}}{3^{p-1}}-\sum_{i=2}^{p}C_{p}^{i}\frac{24^p}{\kappa_1^{p-i}}\big(1+(2L)^{2p}\big)\\
&-\sum_{i=2}^{p}C_{p}^{i}\frac{24^p(2L)^{2p}e^{\kappa\tau}}{\kappa_1^{p-i}}\Big)\sum_{l=0}^k e^{\kappa t_{l+1}}\Delta\E[|y(t_k)|^{2p}] +\sum_{i=2}^{p}C_{p}^{i}\frac{24^p(2L)^{2p}e^{\kappa\tau}\tau}{\kappa_1^{p-i}}\|\xi\|^{2p}.
\end{align*}
Letting $R_2>0$ be large sufficiently such that 
$$\frac{p(R_2-2L)A^{p-1}_{\theta}}{3^{p-1}}-\sum_{i=2}^{p}C_{p}^{i}\frac{24^p}{\kappa_1^{p-i}}\big(1+(2L)^{2p}\big)-\sum_{i=2}^{p}C_{p}^{i}\frac{24^p(2L)^{2p}e^{\kappa\tau}}{\kappa_1^{p-i}}>0,$$
we obtain
\begin{align}\label{l4.2.13}
\sum_{l=0}^{k}e^{\kappa t_{l+1}}J_{l,3}
\leq~\sum_{i=2}^{p}C_{p}^{i}\frac{24^p(2L)^{2p}e^{\kappa\tau}\tau}{\kappa_1^{p-i}}\|\xi\|^{2p}.
\end{align}
Similarly,
\begin{align}\label{l4.2.14}
&\quad\sum_{l=0}^{k}\big(e^{\kappa t_{l+1}}J_{l,4}-\frac{(a_3\Delta)^p}{2^p}e^{\kappa t_{l+1}}\E[|y(t_l)|^{(2+\ell)p}]\big)\nn\\
&\leq G \Delta^{p}\sum_{l=0}^{k}e^{\kappa t_{l+1}}\E[|y(t_l)|^{2p}]
-\frac{(a_3\Delta)^p}{2^p}\sum_{l=0}^{k}e^{\kappa t_{l+1}}\E[|y(t_l)|^{(2+\ell)p}]\nn\\
&\quad +K\|\xi\|^{2p}\leq K\|\xi\|^{2p}+Ke^{\kappa t_{k+1}},
\end{align}
where
\begin{align*}
G:=&~R(\kappa_{1})+e^{\kappa\tau}\Big(p\big((R_2-2L)(2a_4)^{p-1}+\frac{(2a_4)^p}{\kappa_{1}^{p-1}}\big)+\sum_{i=2}^{p}C_{p}^{i}\frac{(12a_4)^p}{\kappa_{1}^{p-i}}\Big)\nn\\
&\quad+e^{\kappa\tau}\big(\frac{p(2L)^p}{\kappa_{1}^{p-1}}+\sum_{i=2}^{p}C_{p}^{i} \frac{(24L)^{2p}}{\kappa_{1}^{2p-i}}\big).
\end{align*}
Then the desired result follows from \eqref{l4.2.12+}--\eqref{l4.2.14}.

\underline{Step 3.}
%We begin with  estimating  the term $\E[|y(t_k)|^2]$. 
It follows from \eqref{l4.2.15+} that
\begin{align*}
&\sup_{k\geq0}\E[|y(t_k)|^2]+2a_3\theta\Delta\big(\sup_{k\geq0}\E[|y(t_k)|^2]\big)^{\frac{2+\ell}{2}}\nn\\
\leq&~\sup_{k\geq0} \E[|z(t_k)|^2]+2K\theta\Delta+2a_4\theta\Delta\sup_{k\geq-N}\E[|y(t_k)|^2],
%\leq&\sup_{k\geq0} \E|z(t_k)|^2+2K\theta\Delta+2a_4\theta\Delta\|\xi\|^2+2a_4\theta\Delta\sup_{k\geq0}\E|y_{t_k}(r)|^2\nn\\
\end{align*}
which implies
\begin{align*}
\sup_{k\geq0}\E[|y(t_k)|^2]
\leq&~\sup_{k\geq0} \E[|z(t_k)|^2]\!+\!2a_4\theta\Delta\sup_{k\geq0}\E[|y(t_k)|^2]
\!-\!2a_3\theta\Delta\big(\sup_{k\geq0}\E[|y(t_k)|^2]\big)^{\frac{2+\ell}{2}}\nn\\
&\quad+2K\theta\Delta+2a_4\theta\Delta\|\xi\|^2\nn\\
\leq&\sup_{k\geq-N} \E[|z(t_k)|^2]+K(1+\|\xi\|^{2}).
\end{align*}
This, along with Lemma \ref{l4.2} leads to
\begin{align}\label{l4.3.1}
\sup_{\Delta\in(0,1]}\sup_{k\geq -N}\E[|y^{\xi,\Delta}(t_k)|^p]
\leq&~\sup_{\Delta\in(0,1]}\big(\sup_{k\geq-N} \E[|z(t_k)|^2]+K(1+\|\xi\|^{2})\big)^{\frac{p}{2}}\nn\\
\leq &~K(1+\|\xi\|^{p(1+\beta)})\quad\forall ~p\geq 2.
\end{align}
This proof is therefore completed.
\end{proof}
\begin{prop}\label{l4.3}
Under Assumptions \ref{a1}, \ref{a4}, and \ref{a5}, for any integer  $p\geq 2$, 
it holds that
\begin{align}\label{4.3l+1}
\sup_{\Delta\in(0,1]}\sup_{k\geq 0}\E\big[\|z^{\xi,\Delta}_{t_k}\|^p
+\|y^{\xi,\Delta}_{t_k}\|^p\big]\leq K(1+\|\xi\|^{p(\beta+1)}). 
\end{align}
%where the positive constant $K$ is independent of $k$ and $\Delta$.
%Furthermore, for any $\Delta\in(0,1]$ the numerical invariant  measure $\mu^{\Delta}$ satisfies %$\mu^{\Delta}(\|\cdot\|^p)<\infty$, namely,
%\begin{align}\label{4.3l+2}
%\sup_{\Delta\in(0,1]}\mu^{\Delta}(\|\cdot\|^p):=\sup_{\Delta\in(0,1]}\int_{\mathcal C^d}\|\phi\|^{p}\mathrm d\mu^{\Delta}(\phi)\leq K.
%\end{align}
\end{prop}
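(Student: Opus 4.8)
The plan is to upgrade the second-moment functional estimate of \cref{p2.2} to arbitrary integer order $p\ge 2$, the essential new input being the \emph{time-independent, nodal} high-order moment bounds $\sup_{k\ge -N}\E[|y(t_k)|^{2p}]\le K(1+\|\xi\|^{2p(\beta+1)})$ supplied by \cref{l4.2}. I keep the abbreviations $y(t_i)=y^{\xi,\Delta}(t_i)$, $z(t_i)=z^{\xi,\Delta}(t_i)$, $\sigma_i=\sigma(y^{\xi,\Delta}_{t_i})$ as in the earlier proofs. First I would reduce the functional (supremum) norms to running maxima over the $N+1$ nodes in the window $[(k-N)\vee(-N),k]$. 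Since $y^{\xi,\Delta}_{t_k}$ is by \eqref{thL} the piecewise-linear interpolation of the nodal values and $|\cdot|$ is convex, one has exactly $\|y^{\xi,\Delta}_{t_k}\|=\max_{(k-N)\vee(-N)\le i\le k}|y(t_i)|$; for $z^{\xi,\Delta}_{t_k}$ the continuous version \eqref{zcont} contributes, in addition to the nodal maximum, the within-interval fluctuation $\sup_{t\in[t_i,t_{i+1}]}|z(t)-z(t_i)|\le|b_i|\Delta+\sup_t|\sigma_i(W(t)-W(t_i))|$, which is of lower order and is handled by the Burkholder--Davis--Gundy (BDG) inequality together with \cref{a1,a5} and the nodal bounds of \cref{l4.2}, exactly as the fluctuation terms were absorbed in \cref{p2.2}. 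Hence it suffices to bound $\E[\max_{(k-N)\vee0\le i\le k}|z(t_i)|^{2p}]$ and its $y$-analogue uniformly in $k$ and $\Delta$; the passage from even order $2p$ to general $p\ge2$ is then immediate by monotonicity of $L^p$-norms.

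The core step mirrors \underline{Step 2} of \cref{p2.2}, but now at order $2p$. Starting from the exponentially weighted recursion \eqref{l4.2.12+} for $e^{\kappa t_{k+1}}|z(t_{k+1})|^{2p}$ derived inside the proof of \cref{l4.2}, I would take the supremum over $i\in[(k-N)\vee0,k]$ \emph{before} taking expectations. The deterministic contributions $R(\kappa_1)\Delta$, $J_{l,3}$, $J_{l,4}$ are either nonpositive or summably bounded in terms of $\|\xi\|^{2p}$ by the same manipulations (the discrete analogue of \eqref{5p2.1}) already carried out in \cref{l4.2}, so they produce the correct $\|\xi\|^{2p(\beta+1)}$-type contribution. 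The running maximum of the martingale part $\sum_l e^{\kappa t_{l+1}}\mathcal M_l$, with $\mathcal M_l$ as in \eqref{mart}, is then controlled by BDG exactly as for $I_2,I_3$ in \cref{p2.1} and for \eqref{5p2.2}--\eqref{7p2.2} in \cref{p2.2}: its quadratic variation is $\sum_l e^{2\kappa t_{l+1}}(|z(t_l)|^2+|y(t_l)|^2)|\sigma_l|^2\Delta$, so after Young's inequality I absorb a term $\tfrac14\E[\sup_i e^{\kappa t_i}|z(t_i)|^{2p}]$ into the left-hand side while the remainder is estimated via \cref{a1} (linear growth of $\sigma$) and the uniform nodal bounds of \cref{l4.2}. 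Finally the relation \eqref{l4.2.15+} between $|y(t_k)|^2$ and $|z(t_k)|^2$ lets me bound the $y$-running-maximum by the $z$-running-maximum, precisely as in \eqref{6p2.2}, closing the estimate.

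The hard part will be carrying out the BDG estimate of the running maximum of $\sum_l e^{\kappa t_{l+1}}\mathcal M_l$ with constants \emph{independent of} $N=\tau/\Delta$: a crude bound $\max\le\sum$ over the $N+1$ nodes would cost a factor $\Delta^{-1}$ and inflate the initial-data exponent beyond $p(\beta+1)$. This is exactly where the exponential weight $e^{\kappa t}$ is indispensable, since it renders the geometric sums $\Delta\sum_l e^{\kappa t_{l+1}}$ bounded by $Ke^{\kappa t_{k+2}}/\e$; combined with the time-independent high-order nodal moments of \cref{l4.2}, it guarantees that the quadratic-variation sum is $O\big(e^{\kappa t_k}(1+\|\xi\|^{2p(\beta+1)})\big)$, so that cancelling the weight $e^{\kappa t_{k+1}}$ on the left yields the asserted time-uniform bound with the sharp $\|\xi\|^{p(\beta+1)}$ dependence. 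The same weighting, applied to the $z$ within-interval fluctuations, is what keeps those correction terms from degrading the exponent.
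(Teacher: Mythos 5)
Your proposal is correct and follows essentially the same route as the paper's proof: both re-run the pathwise high-order recursion behind Lemma~\ref{l4.2} with the martingale terms retained, take the windowed supremum over $[(k-N)\vee 0,k]$, control the windowed martingale maximum via the Burkholder--Davis--Gundy inequality plus Young absorption using the time-independent nodal bounds of Lemma~\ref{l4.2}, and finally transfer from $z$ to $y$ through \eqref{l4.2.15+} exactly as in \eqref{6p2.2}. The only cosmetic differences are that the paper carries out the argument separately for $\Delta\in(0,\Delta^*]$ and $\Delta\in[\Delta^*,1]$ (a split your execution would inherit anyway from Lemma~\ref{l4.2}), and that you are more explicit than the paper about the within-interval fluctuations of the continuous version of $z$.
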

\begin{proof}
We divide the proof into two  cases. Recall that $\Delta^*\in(0,1)$ and $\e$ are defined in \underline{Step 1} of Lemma \ref{l4.2}.
%cases to prove the result.

\underline{Case 1: $\Delta\in(0,\Delta^*]$.} By \eqref{l4.2.16+}, we have that for any $\Delta\in(0,\Delta^*]$,
\begin{align}\label{l4.3.2}
&e^{\e t_{k+1}}|z(t_{k+1})|^{2p}\nn\\
\leq &\;|z(0)|^{2p}+\sum_{l=0}^{k}(A^p_{\theta,\Delta}e^{\e\Delta}-1)e^{\e t_{l}}|z(t_l)|^{2p}+\sum_{l=0}^{k}\sum_{i=1}^{p}C_{p}^{i}e^{\e t_{l+1}}\tilde I_{k,i},
\end{align}
where 
%$\Delta^*,\e$ are given by the Step 1 of Lemma \ref{l4.2}, and 
$$\tilde I_{k,i}:=A^{p-i}_{\theta,\Delta}|z(t_k)|^{2(p-i)}\Big(R_1\Delta+2a_4\Delta\int_{-\tau}^{0}|y_{t_k}(r)|^{2}\mathrm d\nu_2(r)+|\sigma_{k}\delta W_{k}|^2
+\mathcal M_{k}\Big)^{i}.$$
Applying the Young inequality yields
\begin{align}\label{l4.3.3}
\tilde I_{k,1}
\leq &\,3\e_1\Delta A^{p}_{\theta,\Delta}|z(t_k)|^{2p}+\frac{\Delta}{\e_1^{p-1}}\Big(R_1^p+(2a_4)^p\int_{-\tau}^{0}|y_{t_k}(r)|^{2p}\mathrm d\nu_2(r)+\frac{|\sigma_{k} \delta W_{k}|^{2p}}{\Delta^{p}}\Big)\nn\\
&\,+A^{p-1}_{\theta,\Delta}|z(t_k)|^{2(p-1)}\mathcal M_{k}
\end{align}
and
\begin{align}\label{l4.3.4}
\tilde I_{k,i}
\leq&\,5^{i}\Delta A^{p-i}_{\theta,\Delta}|z(t_k)|^{2(p-i)}\Big(R_1^i+(2a_4)^i\int_{-\tau}^{0}|y_{t_k}(r)|^{2i}\mathrm d\nu_2(r)+\frac{|\sigma_{k}\delta W_{k}|^{2i}}{\Delta}\nn\\
&\,+\frac{A_{\theta,\Delta}^{\frac{i}{2}}(2|z(t_k)||\sigma_k\delta W_k|)^{i}}{\Delta}+\frac{(4|y(t_k)||\sigma_k\delta W_k|)^{i}}{\Delta}\Big)\nn\\
\leq&\,5\e_1\Delta A^{p}_{\theta,\Delta}|z(t_k)|^{2p}+\frac{5^p\Delta}{\e_1^{p-i}}\Big(R_1^{p}+(2a_4)^{p}\int_{-\tau}^{0}|y_{t_k}(r)|^{2p}\mathrm d\nu_2(r)+\frac{|\sigma_{k}\delta W_{k}|^{2p}}{\Delta^{\frac{p}{i}}}\nn\\
&\,+\frac{(4|y(t_k)||\sigma_k\delta W_k|)^{p}}{\Delta^{\frac{p}{i}}}\Big)+\frac{(10|\sigma_k\delta W_k|)^{2p}}{\e_1^{2p-i}\Delta^{\frac{2p}{i}}}\quad \forall\,i\in\{2,\ldots,p\},
\end{align}
where $\e_1$ is given in  \underline{Step 1} of Lemma \ref{l4.2}.
Inserting \eqref{l4.3.3} and \eqref{l4.3.4} into \eqref{l4.3.2}, we arrive at
\begin{align*}
&e^{\e t_{k+1}}|z(t_{k+1})|^{2p}\\
\leq &\,|z(0)|^{2p}+\sum_{l=0}^{k}\big((1+3p\e_1\Delta+5\e_1\Delta\sum_{i=2}^{p}C_{p}^{i})A^p_{\theta,\Delta}e^{\e\Delta}-1\big)e^{\e t_{l}}|z(t_l)|^{2p}\nn\\
&\,+\sum_{l=0}^{k}e^{\e t_{l+1}}\tilde J_{l,1}
+p\sum_{l=0}^{k}A^{p-1}_{\theta,\Delta}|z(t_l)|^{2(p-1)}\mathcal M_{l},
\end{align*}
where
\begin{align*}
\tilde J_{l,1}:=&\,5^p\Delta\sum_{i=1}^{p}\frac{C_{p}^{i}}{\e_1^{p-i}}\Big(R_1^{p}+(2a_4)^{p}\int_{-\tau}^{0}|y_{t_l}(r)|^{2p}\mathrm d\nu_2(r)\Big)
+\frac{|\sigma_{l}\delta W_{l}|^{2p}}{(\e_1\Delta)^{p-1}}\nn\\
&\,+5^p\Delta\sum_{i=2}^{p}\frac{C_{p}^{i}}{\e_1^{p-i}}\big(\frac{|\sigma_{l}\delta W_{l}|^{2p}}{\Delta^{\frac{p}{i}}}+\frac{(4|y(t_l)||\sigma_l\delta W_l|)^{p}}{\Delta^{\frac{p}{i}}}\big)+\sum_{i=2}^{p}\frac{C_{p}^{i}(10|\sigma_l\delta W_l|)^{2p}}{\e_1^{2p-i}\Delta^{\frac{2p}{i}}}.
\end{align*}
It follows from \eqref{l4.2.16} that
\begin{align*}(1+3p\e_1\Delta+5\e_1\Delta\sum_{i=2}^{p}C_{p}^{i})A^p_{\theta,\Delta}e^{\e\Delta}-1
\leq(1+2^p\cdot 6\e_1\Delta)A^p_{\theta,\Delta}e^{\e\Delta}-1\leq0,
\end{align*}
which implies 
\begin{align*}
e^{\e t_{k+1}}|z(t_{k+1})|^{2p}
\leq &\,|z(0)|^{2p}+\sum_{l=0}^{k}e^{\e t_{l+1}}\tilde J_{l,1}
+p\sum_{l=0}^{k}A^{p-1}_{\theta,\Delta}|z(t_l)|^{2(p-1)}\mathcal M_{l}.
\end{align*}
It is clear that
\begin{align}\label{l4.3.5}
&\,\E\Big[\sup_{(k-N)\vee 0\leq j\leq k}e^{\e t_{j+1}}|z(t_{j+1})|^{2p}\Big]\nn\\
\leq &\,|z(0)|^{2p}+\E\Big[\sum_{l=0}^{k}e^{\e t_{l+1}}\tilde J_{l,1}\Big]+p\E\Big[\sup_{(k-N)\vee 0\leq j\leq k}\!\!\!\sum_{l=0}^{(k-N)\vee 0}e^{\e t_{l+1}}A^{p-1}_{\theta,\Delta}|z(t_l)|^{2(p-1)}\mathcal M_{l}\nn\\
&\,+\sup_{(k-N)\vee 0\leq j\leq k}\sum_{l=(k-N)\vee 0}^{j}e^{\e t_{l+1}}A^{p-1}_{\theta,\Delta}|z(t_l)|^{2(p-1)}\mathcal M_{l}\Big]\nn\\
=&\,|z(0)|^{2p}+\E\Big[\sum_{l=0}^{k}e^{\e t_{l+1}}\tilde J_{l,1}\Big]\nn\\
&\,+p\E\Big[\sup_{(k-N)\vee 0\leq j\leq k}\sum_{l=(k-N)\vee 0}^{j}e^{\e t_{l+1}}A^{p-1}_{\theta,\Delta}|z(t_l)|^{2(p-1)}\mathcal M_{l}\Big].
\end{align}
Similar to \eqref{l4.2.10} and by \eqref{l4.3.1}, one has
\begin{align}\label{l4.3.6}
&\E\Big[\sum_{l=0}^{k}e^{\e t_{l+1}}\tilde J_{l,1}\Big]\nn\\
%\leq&e^{\e\tau}5^p\Delta\sum_{i=1}^{p}\frac{C_{p}^{i}}{\e_1^{p-i}}\big(R_1^{p}+(2a_4)^{p}\int_{-\tau}^{0}|y_{t_l}(r)|^{2p}\mathrm d\nu_2(r)\big)
%+\frac{|\sigma_{l}\Delta W_{l}|^{2p}}{(\e_1\Delta)^{p-1}}\nn\\
%&\quad+5^p\Delta\sum_{i=2}^{p}\frac{C_{p}^{i}}{\e_1^{p-i}}\big(\frac{|\sigma_{l}\Delta W_{l}|^{2p}}{\Delta^{\frac{p}{i}}}+\frac{(4|y(t_l)||\sigma_l\Delta W_l|)^{p}}{\Delta^{\frac{p}{i}}}\big)+\sum_{i=2}^{p}\frac{C_{p}^{i}(10|\sigma_l\Delta W_l|)^{2p}}{\e_1^{2p-i}\Delta^{\frac{2p}{i}}}\nn\\
\leq&\,K_{\e_1}\Delta \E\Big[\sum_{l=0}^{k}e^{\e t_{l+1}}\Big(1+ \int_{-\tau}^{0}|y_{t_l}(r)|^{2p}\mathrm d\nu_2(r) +|\sigma_{l}|^{2p} \Big)\Big]\nn\\
\leq&\,K_{\e_1}\Delta \E\Big[\sum_{l=0}^{k}e^{\e t_{l+1}}\big(1+ \int_{-\tau}^{0}|y_{t_l}(r)|^{2p}\mathrm d\nu_2(r) +|y(t_l)|^{2p}+ \int_{-\tau}^{0}|y_{t_l}(r)|^{2p}\mathrm d\nu_1(r)\big)\Big]\nn\\
\leq&\,K_{\e_1}\|\xi\|^{2p}+K_{\e_1}\Delta \sup_{j\geq -N}\E[|y(t_j)|^{2p}]\sum_{l=0}^{k}e^{\e t_{l+1}}\nn\\
\leq&\, K_{\e_1}(1+\|\xi\|^{2p(\beta+1)})\frac{e^{\e t_{k+2}}}{\e}.
\end{align} 
Recalling the definition of $\mathcal M_k$, we apply the Burkholder--Davis--Gundy inequality to obtain
\begin{align*}
&\,p\E\Big[\sup_{(k-N)\vee 0\leq j\leq k}\sum_{l=(k-N)\vee 0}^{j}e^{\e t_{l+1}}A^{p-1}_{\theta,\Delta}|z(t_l)|^{2(p-1)}\mathcal M_{l}\Big]\nn\\
\leq&\,p\E\Big[\Big(\sum_{l=(k-N)\vee 0}^{k}\big(2e^{\e t_{l+1}}A^{p-\frac{1}{2}}_{\theta,\Delta}|z(t_l)|^{2p-1}|\sigma_l|\big)^2\Delta\Big)^{\frac{1}{2}}\Big]\nn\\
&\,+p\E\Big[\Big(\sum_{l=(k-N)\vee 0}^{k}\big(4e^{\e t_{l+1}}A^{p-1}_{\theta,\Delta}|z(t_l)|^{2(p-1)}|y(t_l)||\sigma_l|\big)^2\Delta\Big)^{\frac{1}{2}}\Big].
%\leq &2p\E\big(\sum_{l=(k-N)\vee 0}^{k}|z(t_l)|^{2p-1}|\sigma_l\Delta W_l|\big)+4p\E\big(\sum_{l=(k-N)\vee 0}^{k}|z(t_l)|^{2(p-1)}|y(t_l)||\sigma_l\Delta W_l|\big)
\end{align*}
Using $A_{\theta,\Delta}\in(0,1)$ and applying the Young inequality, we obtain
\begin{align*}
&\,p\E\Big[\sup_{(k-N)\vee 0\leq j\leq k}\sum_{l=(k-N)\vee 0}^{j}e^{\e t_{l+1}}A^{p-1}_{\theta,\Delta}|z(t_l)|^{2(p-1)}\mathcal M_{l}\Big]\nn\\
\leq&\,2p\E\Big[\big(\sup_{(k-N)\vee 0\leq j\leq k}(e^{\e t_{j+1}})^{\frac{2p-1}{2p}}|z(t_j)|^{2p-1}\big)\big(\sum_{l=(k-N)\vee 0}^{k}(e^{\e t_{l+1}})^{\frac{1}{p}}|\sigma_l|^2\Delta\big)^{\frac{1}{2}}\Big]\nn\\
&\,+4p\E\Big[\big(\!\!\sup_{(k-N)\vee 0\leq j\leq k}(e^{\e t_{j+1}})^{\frac{(p-1)}{p}}|z(t_j)|^{2(p-1)}\big)\big(\!\!\sum_{l=(k-N)\vee 0}^{k}\!(e^{\e t_{l+1}})^{\frac{2}{p}}|y(t_l)|^2|\sigma_l|^2\Delta\big)^{\frac{1}{2}}\Big]\nn\\
\leq &\,\frac{1}{2e^{\e \Delta}}\E\Big[\sup_{(k-N)\vee 0\leq j\leq k}e^{\e t_{j+1}}|z(t_j)|^{2p}\Big]
\!+\!\frac{(2p)^{2p}e^{\e(2p-1)}}{4^{2p-1}}\E\Big[\!\big(\!\!\sum_{l=(k-N)\vee 0}^{k}\!\!(e^{\e t_{l+1}})^{\frac{1}{p}}|\sigma_l|^2\Delta\big)^{p}\Big]\nn\\
&\,+\frac{(4p)^pe^{\e(p-1)}}{4^{p-1}}\E\Big[\big(\sum_{l=(k-N)\vee 0}^{k}(e^{\e t_{l+1}})^{\frac{2}{p}}|y(t_l)|^2|\sigma_l|^2\Delta\big)^{\frac{p}{2}}\Big]\nn\\
\leq&\,e^{\e t_{(k-N)\vee0}}\E[|z(t_{(k-N)\vee0})|^{2p}]\!+\!\frac{1}{2}\E\Big[\sup_{(k-N)\vee 0\leq j\leq k}e^{\e t_{j+1}}|z(t_{j+1})|^{2p}\Big]\!+\!K(N+1)^{p-1}\Delta^p\nn\\
&\,\times\sum_{l=(k-N)\vee 0}^{k}e^{\e t_{l+1}}\E[|\sigma_l|^{2p}]+K(N+1)^{\frac{p}{2}-1}\Delta^{\frac{p}{2}}\sum_{l=(k-N)\vee 0}^{k}e^{\e t_{l+1}}\E[|y(t_l)|^{p}|\sigma_l|^{p}]\nn\\
\leq&\,e^{\e t_{(k-N)\vee0}}\E[|z(t_{(k-N)\vee0})|^{2p}]+\frac{1}{2}\E\Big[\sup_{(k-N)\vee 0\leq j\leq k}e^{\e t_{j+1}}|z(t_{j+1})|^{2p}\Big]\nn\\
&\,+K(\tau\!+\!1)^{p-1}\Delta\!\!\!\sum_{l=(k-N)\vee 0}^{k}\!\!e^{\e t_{l+1}}\E[|\sigma_l|^{2p}]\!+\!K(\tau\!+\!1)^{\frac{p}{2}-1}\Delta\!\!\!\sum_{l=(k-N)\vee 0}^{k}\!e^{\e t_{l+1}}\E[|y(t_l)|^{2p}].
\end{align*}
 According to Assumption \ref{a1}, Lemma \ref{l4.2}, and \eqref{l4.3.1}, we deduce
\begin{align}\label{l4.3.7}
&\,p\E\Big[\sup_{(k-N)\vee 0\leq j\leq k}\sum_{l=(k-N)\vee 0}^{j}e^{\e t_{l+1}}A^{p-1}_{\theta,\Delta}|z(t_l)|^{2(p-1)}\mathcal M_{l}\Big]\nn\\
\leq&\, K(1+\|\xi\|^{2p(\beta+1)})e^{\e t_{k+1}}+\frac{1}{2}\E\Big[\sup_{(k-N)\vee 0\leq j\leq k}e^{\e t_{j+1}}|z(t_{j+1})|^{2p}\Big]\nn\\
&\,+K\Delta\sum_{l=(k-N)\vee 0}^{k}e^{\e t_{l+1}}\E\Big[|y(t_l)|^{2p}+\int_{-\tau}^{0}|y_{t_l}(r)|^{2p}\mathrm d\nu_1(r)\Big]\nn\\
\leq&\, K(1+\|\xi\|^{2p(\beta+1)})e^{\e t_{k+1}}+\frac{1}{2}\E\Big[\sup_{(k-N)\vee 0\leq j\leq k}e^{\e t_{j+1}}|z(t_{j+1})|^{2p}\Big]\nn\\
&\,+K\sup_{j\geq-N}\E[|y_{t_j}(r)|^{2p}]\sum_{l=(k-N)\vee 0}^{k} e^{\e t_{l+1}}\Delta\nn\\
\leq &\, K(1+\|\xi\|^{2p(\beta+1)})e^{\e t_{k+2}}+\frac{1}{2}\E\Big[\sup_{(k-N)\vee 0\leq j\leq k}e^{\e t_{j+1}}|z(t_{j+1})|^{2p}\Big].
\end{align}
%where the positive constant $K$ is independent of $k,\Delta$.
Inserting \eqref{l4.3.6} and \eqref{l4.3.7} into \eqref{l4.3.5} yields
\begin{align*}
\E\Big[\sup_{(k-N)\vee 0\leq j\leq k}e^{\e t_{j+1}}|z(t_{j+1})|^{2p}\Big]
\leq  K_{\e_1}(1+\|\xi\|^{2p(\beta+1)})e^{\e t_{k+2}}.
\end{align*}
This implies that for any $\Delta\in(0,\Delta^*]$,
\begin{align*}%\label{l4.3.8}
\E\Big[\sup_{(k-N)\vee 0\leq j\leq k}|z(t_{j+1})|^{2p}\Big]
&\leq e^{-\e t_{{(k-N)\vee 0+1}}} K_{\e_1}(1+\|\xi\|^{2p(\beta+1)})e^{\e t_{k+2}}\\
&\leq K(1+\|\xi\|^{2p(\beta+1)}).
\end{align*}
%where the positive constant $K_7$ is independent of  $k,\Delta$.\\
\underline{Case 2: $\Delta\in[\Delta^*,1]$.} Similar arguments to  \underline{Case 1} yield that for any $\Delta\in[\Delta^*,1]$,
\begin{align}\label{l4.3.9}
e^{\kappa t_{k+1}}|z(t_{k+1})|^{2p}
\leq &\,|z(0)|^{2p}+\sum_{l=0}^{k}\big((1+3p\kappa_1+5\kappa_1\sum_{i=2}^{p}C_{p}^{i})A^p_{\theta,\Delta}e^{\kappa\Delta}-1\big)e^{\kappa t_{l}}|z(t_l)|^{2p}\nn\\
&\,+\sum_{l=0}^{k}e^{\kappa t_{l+1}}\tilde J_{l,2}
+p\sum_{l=0}^{k}A^{p-1}_{\theta,\Delta}|z(t_l)|^{2(p-1)}\mathcal M_{l},
\end{align}
where 
%$\kappa,\kappa_1$ are given in Lemma \ref{l4.2}, and 
\begin{align*}
\tilde J_{l,2}:=&\,5^p\Delta^p\sum_{i=1}^{p}\frac{C_{p}^{i}}{\e_1^{p-i}}\Big(R_1^{p}+(2a_4)^{p}\int_{-\tau}^{0}|y_{t_l}(r)|^{2p}\mathrm d\nu_2(r)\Big)
+5^p\sum_{i=1}^{p}\frac{C_{p}^{i}}{\e_1^{p-i}}|\sigma_{l}\delta W_{l}|^{2p}\nn\\
&\,+5^p\sum_{i=2}^{p}\frac{C_{p}^{i}}{\e_1^{p-i}}(4|y(t_l)||\sigma_l\delta W_l|)^{p}+\sum_{i=2}^{p}\frac{C_{p}^{i}(10|\sigma_l\delta W_l|)^{2p}}{\e_1^{2p-i}}.
\end{align*}
Similar to \eqref{l4.3.6}  and \eqref{l4.3.7}, we derive 
\begin{align*}
&\,\E\Big[\sum_{l=0}^{k}e^{\kappa t_{l+1}}\tilde J_{l,2}\Big]
\leq K_{\kappa_1}(1+\|\xi\|^{2p(\beta+1)})\frac{e^{\kappa t_{k+2}}}{\kappa},\nn\\
&\,p\E\Big[\sup_{(k-N)\vee 0\leq j\leq k}\sum_{l=(k-N)\vee 0}^{j}
A^{p-1}_{\theta,\Delta}|z(t_l)|^{2(p-1)}\mathcal M_{l}\Big]\nn\\
\leq&\, K_{\kappa_1}(1+\|\xi\|^{2p(\beta+1)})\frac{e^{\kappa t_{k+2}}}{\kappa}+\frac{1}{2}\E\Big[\sup_{(k-N)\vee 0\leq j\leq k}e^{\kappa t_{j+1}}|z(t_{j+1})|^{2p}\Big].
\end{align*}
Plugging these into \eqref{l4.3.9}, and combining  \eqref{l4.2.15}, we have
\begin{align*}
&\, e^{\kappa t_{k+1}}|z(t_{k+1})|^{2p}\\
\leq &\,|z(0)|^{2p}+\sum_{l=0}^{k}\big(1+2^p\cdot 6\kappa_1)A^p_{\theta,\Delta^*}e^{\kappa\Delta}-1\big)e^{\kappa t_{l}}|z(t_l)|^{2p}\nn\\
&\,+K(1+\|\xi\|^{2p(\beta+1)})\frac{e^{\kappa t_{k+2}}}{\kappa}
+Ke^{\kappa t_{k+2}}+\frac{1}{2}\Big(\sup_{(k-N)\vee 0\leq j\leq k}e^{\kappa t_{j+1}}|z(t_{j+1})|^{2p}\Big)\nn\\
\leq&\,K(1+\|\xi\|^{2p(\beta+1)})\frac{e^{\kappa t_{k+2}}}{\kappa}+\frac{1}{2}\Big(\sup_{(k-N)\vee 0\leq j\leq k}e^{\kappa t_{j+1}}|z(t_{j+1})|^{2p}\Big).
\end{align*}
This implies that for any $\Delta\in[\Delta^*,1]$,
\begin{align*}
\E\Big[\sup_{(k-N)\vee 0\leq j\leq k}|z(t_{j+1})|^{2p}\Big]
\leq K(1+\|\xi\|^{2p(\beta+1)}).
\end{align*}
Consequently, 
\begin{align}\label{l4.3.10}
\sup_{\Delta\in(0,1]}\sup_{k\geq 0}\E\Big[\sup_{k-N\leq j\leq k}|z(t_{j+1})|^{2p}\Big]
\leq K(1+\|\xi\|^{2p(\beta+1)}).
\end{align}
In addition, by virtue of \eqref{l4.2.15+}, \eqref{l4.3.1}, and \eqref{l4.3.10}, we obtain that for any $\Delta\in(0,1]$,
\begin{align*}
&\E\Big[\sup_{(k-N)\vee0\leq j\leq k}|y(t_{j})|^{2p}\Big]\nn\\
\leq&\, K\E\Big[\sup_{(k-N)\vee0\leq j\leq k}|z(t_{j})|^{2p}\Big]+K\Delta
+K\Delta\E\Big[\sup_{(k-N)\vee0\leq j\leq k}\int_{-\tau}^{0}|y_{t_j}(r)|^{2p}\mathrm d\nu_2\Big]\nn\\
\leq&\,K(1+\|\xi\|^{2p(\beta+1)})+K\Delta\E\Big[\sup_{(k-2N)\vee (-N)\leq j\leq k}|y(t_j)|^{2p}\Big]\nn\\
\leq&\,K(1+\|\xi\|^{2p(\beta+1)})+K\Delta\cdot(2N)\sup_{k\geq-N}\E[|y(t_k)|^{2p}]\leq K(1+\|\xi\|^{2p(\beta+1)}).
\end{align*}
The proof is finished.
\end{proof}
%Define a linear interpolation process $\{\bar z^{\xi,\Delta}(t)\}_{t\geq-\tau}$ generated by $\{z^{\xi,\Delta}(t_k)\}_{k=-N}^{\infty}$ as below:
%\begin{align}\label{4+2}
%\bar z^{\xi,\Delta}(t)=\frac{t_{k+1}-t}{\Delta}z^{\xi,\Delta}(t_k)+\frac{t-t_{k+1}}{\Delta}z^{\xi,\Delta}(t_{k+1}),~~~\forall ~t\geq-\tau ~\mbox{with} ~t\in[t_k,t_{k+1}] .
%\end{align}
%Then it follows from Assumption \ref{a5} that for any $p>1$ and $k\in \mathbb N$,
%\begin{align}\label{4+1}
%&\E\|y^{\xi,\Delta}_{t_k}-\bar z^{\xi,\Delta}_{t_k}\|^p\leq \E\big(\max_{i\in\{k-N,\cdots, k\}}|y^{\xi,\Delta}(t_i)-z^{\xi,\Delta}(t_i)|^p\big)\leq \E\big(\max_{i\in\{k-N,\cdots, k\}}\theta^p b^p(y^{\xi,\Delta}_{t_i})\Delta^p\big)\nn\\
%\leq&K\Delta^p\E\big(\max_{i\in\{k-N,\cdots, k\}}(1+\|y^{\xi,\Delta}_{t_i}\|^{p(\beta+1)})\big)
%\leq K\Delta^p N\sup_{k\geq0}\E\|y^{\xi,\Delta}_{t_k}\|^{p(\beta+1)}\leq K\Delta^{p-1}.
%\end{align}
\begin{lemma}\label{l4.4}
Let Assumptions \ref{a1}--\ref{a2}, \ref{a4}--\ref{a7} hold. Then  for any $\Delta\in(0,1]$ and $p\geq1$,
\begin{align*}
\sup_{t\in[0,\infty)}\E[\|z^{\xi,\Delta}_{t}-y^{\xi,\Delta}_{t}\|^p]\leq K(1+\|\xi\|^{p(1+\beta)^2})\Delta^{\frac{p}{2}}\quad \forall \,\xi\in \mathcal C^d.
\end{align*}
%where the positive constant $K_8$ is independent of $t$ and $\Delta$.
\end{lemma}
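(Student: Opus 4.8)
The plan is to control the discrepancy between the continuous auxiliary process $z^{\xi,\Delta}$ and the piecewise-linear numerical functional solution $y^{\xi,\Delta}$ by exploiting the defining relation $z^{\xi,\Delta}(t_k)=y^{\xi,\Delta}(t_k)-\theta b(y^{\xi,\Delta}_{t_k})\Delta$ together with the continuous representation \eqref{zcont}. Writing $\lfloor t\rfloor$ for the largest grid point not exceeding $t$ and $y^{Int}$ for the piecewise-linear interpolation of the grid values $\{y^{\xi,\Delta}(t_j)\}$ (so that, by \eqref{thL}, $y^{\xi,\Delta}_{t}(r)=y^{Int}(\lfloor t\rfloor+r)$), I would first reduce the claim to an estimate for the error path $e(s):=z^{\xi,\Delta}(s)-y^{Int}(s)$ on the moving window $[t-\tau,t]$, since $\|z^{\xi,\Delta}_t-y^{\xi,\Delta}_t\|$ is bounded by $\sup_{s\in[t-\tau,t]}|e(s)|$ plus a within-step increment $|z^{\xi,\Delta}(s+\delta)-z^{\xi,\Delta}(s)|$ with $\delta=t-\lfloor t\rfloor\in[0,\Delta)$, the latter being estimated by the same mechanism as $e$ itself.

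On each subinterval $[t_m,t_{m+1}]$ the continuous version of $z^{\xi,\Delta}$ is affine-plus-Brownian, so a direct computation using \eqref{thEM} shows that the affine parts of $e(s)$ cancel up to a remainder of size $\Delta$ and that $e(s)=\sigma(y^{\xi,\Delta}_{t_m})\big[(W(s)-W(t_m))-\tfrac{s-t_m}{\Delta}\,\delta W_m\big]+\theta\Delta\big[(1-\lambda)b(y^{\xi,\Delta}_{t_m})+\lambda b(y^{\xi,\Delta}_{t_{m+1}})\big]$ with $\lambda=(s-t_m)/\Delta$; that is, a scaled Brownian bridge plus an $O(\Delta)$ drift remainder. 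The drift remainder is handled by the polynomial growth bound of Assumption \ref{a5}, which gives $|b(\phi)|\le K(1+\|\phi\|^{1+\beta})$, and by the time-independent moment bound of Proposition \ref{l4.3} applied with exponent $p(1+\beta)$; this is precisely the step that produces the factor $\|\xi\|^{p(1+\beta)^2}$ and contributes at order $\Delta^{p}\le\Delta^{p/2}$. For the part of the window lying in $[-\tau,0]$, where $z^{\xi,\Delta}\equiv\xi$ while $y^{Int}$ only interpolates $\xi$ at grid points, I would use the H\"older regularity of Assumption \ref{a7} to bound the interpolation error of $\xi$ by $K\Delta^{\rho}\le K\Delta^{1/2}$, which is the reason $\rho\ge 1/2$ is assumed.

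The diffusion (Brownian bridge) term is the source of the $\Delta^{1/2}$ rate and is the main obstacle, because the norm requires a supremum of these fluctuations over the entire delay window $[t-\tau,t]$, which spans $N=\tau/\Delta$ steps. I would estimate it by conditioning on $\mathcal F_{t_m}$, using that $\sigma(y^{\xi,\Delta}_{t_m})$ is $\mathcal F_{t_m}$-measurable and that the bridge on $[t_m,t_{m+1}]$ is independent of $\mathcal F_{t_m}$ with $\E[\sup_{s\in[t_m,t_{m+1}]}|W(s)-W(t_m)-\lambda\,\delta W_m|^p]\le K\Delta^{p/2}$, together with the linear growth $|\sigma(\phi)|\le K(1+\|\phi\|)$ from Assumption \ref{a1} and the uniform moment bound $\sup_k\E[\|y^{\xi,\Delta}_{t_k}\|^{p}]\le K(1+\|\xi\|^{p(1+\beta)})$ from Proposition \ref{l4.3}. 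Applying the Burkholder--Davis--Gundy inequality to the continuous martingale part of \eqref{zcont} and a maximal estimate across the steps of the window then yields the per-window bound of order $\Delta^{p/2}$; the delicate point is to carry out this supremum uniformly over a window whose length $\tau$ is fixed (hence containing $O(1/\Delta)$ steps) while keeping the constant independent of $t$, which is exactly where the time-uniform character of Proposition \ref{l4.3} is essential. Collecting the initial-data, drift, and diffusion contributions and taking the supremum over $t\ge0$ gives the asserted bound $\sup_{t\ge0}\E[\|z^{\xi,\Delta}_t-y^{\xi,\Delta}_t\|^p]\le K(1+\|\xi\|^{p(1+\beta)^2})\Delta^{p/2}$.
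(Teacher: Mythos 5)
Your proposal is correct and takes essentially the same route as the paper: your per-interval identity for $e(s)$ (a Brownian bridge term plus a $\theta\Delta$-weighted combination of $b(y^{\xi,\Delta}_{t_m})$ and $b(y^{\xi,\Delta}_{t_{m+1}})$, up to sign) is exactly the decomposition the paper derives in its case analysis, and your three estimates—drift via Assumption \ref{a5} and Proposition \ref{l4.3} (yielding the exponent $p(1+\beta)^2$), initial segment via the H\"older condition of Assumption \ref{a7}, diffusion via Assumption \ref{a1}, the H\"older inequality and Burkholder--Davis--Gundy—are precisely the paper's bounds for its terms $I_1$, $I_2$, $I_3$. The only differences are organizational (you track an error path plus a time-shift term, while the paper performs a pointwise case analysis in $r\in[-\tau,0]$ according to the position of $t+r$), and both treatments handle the supremum of the bridge fluctuations over the $O(1/\Delta)$ steps of the delay window at the same level of detail.
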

\begin{proof}
%Since
%\begin{align*}
%\|z_{t}-y_{t}\|=\|z_{t}-y_{t_k}\|=\|z_t-\bar z_{t_k}\|+\|\bar z_{t_k}-y_{t_k}\|,
%\end{align*}
%by \eqref{4+1} we get
%\begin{align}
%\E\|z_{t}-y_{t}\|^p \leq K\E \|z_t-\bar z_{t_k}\|^p+K\Delta^{p-1}.
%\end{align}
%Hence we only need to estimate $\E\|z_t-\bar z_{t_k}\|^p$.
For any $t\geq0$ and $r\in[-\tau,0]$, there exist $k\in\mathbb N$ and $j\in\{-N,-N+1,\ldots,-1\}$ such that
$t\in[t_k,t_{k+1})$ and $r\in[t_j,t_{j+1}]$. It is straightforward to see that
$$t+r\in[t_{k+j},t_{k+j+2}),~~~~t_{k}+r\in[t_{k+j},t_{k+j+1}).$$
Recall
$z^{\xi,\Delta}_t(r)=z^{\xi,\Delta}(t+r)$ and $y^{\xi,\Delta}_t(r)=y^{\xi,\Delta}_{t_k}(r)=y^{\xi,\Delta}(t_k+r)$. We first give estimates of $|z^{\xi,\Delta}_t(r)-y^{\xi,\Delta}_{t}(r)|$, whose proof is 
divided into four cases.
%Since_
%\begin{align}
%\E\|z^{0,\Delta}_{t}-y^{0,\Delta}_{t}\|^p
%=\E(\sup_{-\tau\leq r\leq0}|z^{0,\Delta}(t+r)-y^{0,\Delta}(t_k+r)|^p)\nn\\
%\leq&
%\end{align}

\underline{Case 1:  $t+r\in[t_{k+j},t_{k+j+1})\in[0,\infty)$.} By \eqref{thL} and \eqref{zcont}, we derive 
\begin{align*}
&z^{\xi,\Delta}_t(r)-y^{\xi,\Delta}_{t}(r)=z^{\xi,\Delta}(t+r)-y^{\xi,\Delta}_{t_k}(r)\nn\\
=&~z^{\xi,\Delta}(t_{k+j})+b(y^{\xi,\Delta}_{t_{k+j}})(t+r-t_{k+j})+\sigma(y^{\xi,\Delta}_{t_{k+j}})(W(t+r)-W(t_{k+j}))\nn\\
&~-\big(\frac{t_{j+1}-r}{\Delta}y^{\xi,\Delta}(t_{k+j})+\frac{r-t_j}{\Delta}y^{\xi,\Delta}(t_{k+j+1})\big)\nn\\
=&~\frac{t_{j+1}-r}{\Delta}\big(z^{\xi,\Delta}(t_{k+j})-y^{\xi,\Delta}(t_{k+j})\big)+\frac{r-t_{j}}{\Delta}\big(z^{\xi,\Delta}(t_{k+j})-y^{\xi,\Delta}(t_{k+j+1})\big)\nn\\
&\quad+b(y^{\xi,\Delta}_{t_{k+j}})(t+r-t_{k+j})+\sigma(y^{\xi,\Delta}_{t_{k+j}})(W(t+r)-W(t_{k+j})).
\end{align*}
Combining  $z^{\xi,\Delta}(t_l)=y^{\xi,\Delta}(t_l)-\theta b(y^{\xi,\Delta}_{t_l})\Delta$ for all $l\in\mathbb N$ yields
\begin{align*}
&z^{\xi,\Delta}_t(r)-y^{\xi,\Delta}_{t}(r)\nn\\
=&-\frac{t_{j+1}-r}{\Delta}\theta b(y^{\xi,\Delta}_{t_{k+j}})\Delta-\frac{r-t_{j}}{\Delta}\theta b(y^{\xi,\Delta}_{t_{k+j+1}})\Delta+\frac{r-t_{j}}{\Delta}\big(z^{\xi,\Delta}(t_{k+j})-z^{\xi,\Delta}(t_{k+j+1})\big)\nn\\
&\quad+b(y^{\xi,\Delta}_{t_{k+j}})(t+r-t_{k+j})+\sigma(y^{\xi,\Delta}_{t_{k+j}})(W(t+r)-W(t_{k+j}))\nn\\
=&-\frac{t_{j+1}-r}{\Delta}\theta b(y^{\xi,\Delta}_{t_{k+j}})\Delta-\frac{r-t_{j}}{\Delta}\theta b(y^{\xi,\Delta}_{t_{k+j+1}})\Delta+b(y^{\xi,\Delta}_{t_{k+j}})(t-t_{k})\nn\\&\quad+\sigma(y^{\xi,\Delta}_{t_{k+j}})\big(W(t+r)-W(t_{k+j})-\frac{r-t_{j}}{\Delta}(W(t_{k+j+1})-W(t_{k+j})\big).
\end{align*}
This implies that for any $p\geq1$,
\begin{align*}
&|z^{\xi,\Delta}_t(r)\!-\!y^{\xi,\Delta}_{t}(r)|^p\\
\leq& \,K\Delta^p\big(|b(y^{\xi,\Delta}_{t_{k+j}})|^p\!+\!|b(y^{\xi,\Delta}_{t_{k+j+1}})|^p\big)+K\big|\sigma(y^{\xi,\Delta}_{t_{k+j}})\big(W(t+r)\!-\!W(t_{k+j})\big)\big|^p\nn\\
&\,+K\big|\sigma(y^{\xi,\Delta}_{t_{k+j}})\big(W(t_{k+j+1})-W(t_{k+j})\big)\big|^p.
\end{align*}

\underline{Case 2:  $t+r\in[t_{k+j},t_{k+j+1})\!\in\![-\tau,0]$.}
It is obvious that $y^{\xi,\Delta}_{t}(r)=y^{\xi,\Delta}_{t_k}(r)$, and
\begin{align*}
|z^{\xi,\Delta}_t(r)-y^{\xi,\Delta}_{t}(r)|^p
={}&|\frac{t_{k+j+1}-(t+r)}{\Delta}\xi(t_{k+j})+\frac{t+r-t_{k+j}}{\Delta}\xi(t_{k+j+1})\nn\\
&-\frac{t_{j+1}-r}{\Delta}\xi(t_{k+j})-\frac{r-t_{j}}{\Delta}\xi(t_{k+j+1})|^p\nn\\
={} &\frac{t-t_k}{\Delta}|\xi(t_{k+j+1})-\xi(t_{k+j})|^p\leq K^p\Delta^{\frac{p}{2}},
%\leq{}&K (1+\|\xi\|^{p(1+\beta)})\Delta^p.
\end{align*}
where we used Assumption \ref{a7}.

\underline{Case 3: $t+r\in[t_{k+j+1},t_{k+j+2})\in[0,\infty)$.}
Similar to  \underline{Case 1}, we obtain
\begin{align*}
&|z^{\xi,\Delta}_t(r)-y^{\xi,\Delta}_{t}(r)|^p\nn\\
=&~\big|z^{\xi,\Delta}(t_{k+j+1})+b(y^{\xi,\Delta}_{t_{k+j+1}})(t+r-t_{k+j+1})+\sigma(y^{\xi,\Delta}_{t_{k+j+1}})\big(W(t+r)-W(t_{k+j+1})\big)\nn\\
&-\big(\frac{t_{j+1}-r}{\Delta}y^{\xi,\Delta}(t_{k+j})+\frac{r-t_j}{\Delta}y^{\xi,\Delta}(t_{k+j+1})\big)\big|^p\nn\\
\leq{}&K\Delta^p|b(y^{\xi,\Delta}_{t_{k+j+1}})|^p+K\big|\sigma(y^{\xi,\Delta}_{t_{k+j+1}})\big(W(t+r)-W(t_{k+j+1})\big)\big|^p
+K|z^{\xi,\Delta}(t_{k+j+1})\nn\\
&-y^{\xi,\Delta}(t_{k+j+1})|^p+K|z^{\xi,\Delta}(t_{k+j+1})-z^{\xi,\Delta}(t_{k+j})|^p+K|z^{\xi,\Delta}(t_{k+j})-y^{\xi,\Delta}(t_{k+j})|^p\nn\\
\leq{}&K\Delta^p\big(|b(y^{\xi,\Delta}_{t_{k+j}})|^p\!+\!|b(y^{\xi,\Delta}_{t_{k+j+1}})|^p\big)+K\big|\sigma(y^{\xi,\Delta}_{t_{k+j+1}})\big(W(t+r)-W(t_{k+j+1})\big)\big|^p\nn\\
&+K|z^{\xi,\Delta}(t_{k+j+1})-z^{\xi,\Delta}(t_{k+j})|^p\big(\textbf 1_{\{t_{k+j+1>0}\}}+\textbf 1_{\{t_{k+j+1=0}\}}\big)\nn\\%+K|z^{\xi,\Delta}(t_{k+j+1})-z^{\xi,\Delta}(t_{k+j})|^p\textbf 1_{\{t_{k+j+1=0}\}}\nn\\
\leq{}&K\Delta^p\big(|b(y^{\xi,\Delta}_{t_{k+j}\vee 0})|^p\!+\!|b(y^{\xi,\Delta}_{t_{k+j+1}})|^p\!+\!|b(\xi)|^p\big)\!\!+\!K\big|\sigma(y^{\xi,\Delta}_{t_{k+j+1}})\big(W(t\!+\!r)\!\!-\!\!\!W(t_{k+j+1})\big)\big|^p\nn\\
&+K\big|\sigma(y^{\xi,\Delta}_{t_{k+j}})\big(W(t_{k+j+1})\!-\!W(t_{k+j})\big)\big|^p\textbf 1_{\{t_{k+j+1>0}\}}+K\Delta^{\frac{p}{2}},
\end{align*}
where we used Assumption \ref{a7}.
%
%\underline{Case 4}. If $t+r\in[t_{k+j+1},t_{k+j+2})\in[0,\Delta)$,
%one has
%\begin{align*}
%&|z^{\xi,\Delta}_t(r)-y^{\xi,\Delta}_{t}(r)|^p
%=|z^{\xi,\Delta}(t+r)-y^{\xi,\Delta}_{t_k}(r)|^p\nn\\
%=&|z^{\xi,\Delta}(0)+b({\xi})(t+r)+\sigma({\xi})W(t+r)-\frac{t_{j+1}-r}{\Delta}\xi(-\Delta)-\frac{r-t_j}{\Delta}\xi(0)|^p\nn\\
%={}&|\xi(0)-\Delta\theta b({\xi})+b({\xi})(t+r)+\sigma({\xi})W(t+r)-\frac{t_{j+1}-r}{\Delta}\xi(-\Delta)-\frac{r-t_j}{\Delta}\xi(0)|^p\nn\\
%\leq{}&K(1+|b({\xi})|^p)\Delta^p+K|\sigma({\xi})W(t+r)|^p\nn\\
%\leq {}&K (1+\|\xi\|^{p(1+\beta)})\Delta^p+K(1+\|\xi\|^p)|W(t+r)|^p.
%\end{align*}

\underline{Case 4:  $t+r\in[t_{k+j+1},t_{k+j+2})\in[-\tau,0)$.}
It can be calculated that 
\begin{align*}
|z^{\xi,\Delta}_t(r)-y^{\xi,\Delta}_{t}(r)|^p
\leq K\Delta^{\frac p2}.
\end{align*}
Together with the above cases, we have 
%\begin{align}
%&\E|z^{0,\Delta}_t(r)-y^{0,\Delta}_{t}(r)|^p\nn\\
%\leq&\E\Big(
%K\Delta^p+K\Delta^p\big(|b(y^{0,\Delta}_{t_{k+j}})|^p\!+\!|b(y^{0,\Delta}_{t_{k+j+1}})|^p\big)+K\big|\sigma(y^{0,\Delta}_{t_{k+j+1}})\big(W(t+r)\nn\\
%&-W(t_{k+j+1})\big)\big|^p+K\big|\sigma(y^{0,\Delta}_{t_{k+j}})\big(W(t_{j+1})-W(t_{k+j})\big)\big|^p\Big)\nn\\
%\leq&K\Delta^p+K\Delta^p \sup_{l\in\mathbb N}\E|b(y^{0,\Delta}_{t_{l}})|^p+K\Delta^\frac{p}{2} \sup_{l\in\mathbb N}\E|\sigma(y^{0,\Delta}_{t_{l}})|^p.
%\end{align}
\begin{align}\label{l4.4.1}
&\E\Big[\sup_{r\in[-\tau,0]}|z^{\xi,\Delta}_t(r)-y^{\xi,\Delta}_{t}(r)|^p\Big]\nn\\
\leq{}& K (1+\|\xi\|^{p(1+\beta)})\Delta^{\frac p2}+K(I_1+I_2+I_3),
%\leq&\E\Big(
%K\Delta^p+K\sup_{j\in\{-N,\cdots,-1\}}\sup_{r\in[t_j,t_{j+1}]}\big(|b(y^{0,\Delta}_{t_{k+j}})|^p\Delta^p+|b(y^{0,\Delta}_{t_{k+j+1}})|^p\Delta^p+K\big|\sigma(y^{0,\Delta}_{t_{k+j+1}})\nn\\
%&\times\big(W(t+r)
%-W(t_{k+j+1})\big)\big|^p+K\big|\sigma(y^{0,\Delta}_{t_{k+j}})\big(W(t_{j+1})-W(t_{k+j})\big)\big|^p\big)\Big).
\end{align}
where
\begin{align*}
I_1:=&\,\E\Big[\sup_{j\in\{-N,\ldots,-1\}}\big(|b(y^{\xi,\Delta}_{t_{k+j}\vee0})|^p\Delta^p+|b(y^{\xi,\Delta}_{t_{k+j+1}\vee0})|^p\Delta^p)\Big],\nn\\
I_2:=&\,\E\Big[\sup_{j\in\{-N,\ldots,-1\}}\sup_{r\in[t_j,t_{j+1}]}\big|\sigma(y^{\xi,\Delta}_{t_{k+j+1}\vee0})\big(W(t+r)-W(t_{k+j+1})\big)\big|^p\times\nn\\
&\,\textbf 1_{\{t+r\in[t_{k+j+1},t_{k+j+2})\}}\Big],\nn\\
I_3:=&\,\E\Big[\sup_{j\in\{-N,\ldots,-1\}}\big|\sigma(y^{\xi,\Delta}_{t_{k+j}\vee0})\big(W(t_{k+j+1}\vee0)-W(t_{k+j}\vee0)\big)\big|^p\Big].
\end{align*}
It follows from Assumption \ref{a5} that
\begin{align*}
%&K\E\Big(\sup_{j\in\{-N,\cdots,-1\}}\big(|b(y^{0,\Delta}_{t_{k+j}})|^p\Delta^p+|b(y^{0,\Delta}_{t_{k+j+1}})|^p\Delta^p)\Big)\nn\\
I_1\leq&~2\Delta^p\E\Big[\sup_{j\in\{-N,\ldots,0\}}|b(y^{\xi,\Delta}_{t_{k+j}\vee0})|^p\Big]
\leq K\Delta^p\E \Big[\sup_{j\in\{-N,\ldots,0\}}(1+\|y^{\xi,\Delta}_{t_{k+j}\vee0}\|^{p(\beta+1)})\Big].
\end{align*}
Since
$$\sup_{j\in\{-N,\ldots,0\}}\|y^{\xi,\Delta}_{t_{k+j}}\|
\leq \|y^{\xi,\Delta}_{t_{k-N}}\|+\|y^{\xi,\Delta}_{t_{k}}\|,
$$
by \eqref{4.3l+1}, we have
\begin{align}\label{l4.4.2}
I_1\leq K\Delta^p\E[1+\|\xi\|^{p(\beta+1)}+\|y^{\xi,\Delta}_{t_{k-N}}\|^{p(1+\beta)}+\|y^{\xi,\Delta}_{t_{k}}\|^{p(1+\beta)}]\leq K (1+\|\xi\|^{p(1+\beta)^2})\Delta^p.
\end{align}
Applying the H\"older inequality yields
\begin{align*}
%&K\E\Big(\sup_{j\in\{-N,\cdots,-1\}}\sup_{r\in[t_j,t_{j+1}]}\big|\sigma(y^{0,\Delta}_{t_{k+j+1}})\big(W(t+r)-W(t_{k+j+1})\big)\big|^p\Big)\nn\\
I_2\leq& \Big(\E\Big[\sup_{j\in\{-N,\ldots,-1\}}\big|\sigma(y^{\xi,\Delta}_{t_{k+j+1}\vee0})|^{2p}\Big]\Big)^{\frac{1}{2}}\Big(\E\Big[\sup_{s_1-s_2\in[0,\Delta]}\big|W(s_1)-W(s_2)\big|^{2p}\Big]\Big)^{\frac{1}{2}}.
\end{align*}
Similar to \eqref{l4.4.2}, by virtue of Assumption \ref{a1} and \eqref{4.3l+1}, we arrive at
\begin{align*}
I_2\leq K(1+\|\xi\|^{p(\beta+1)})\Big(\E\Big[\sup_{s_1-s_2\in[0,\Delta]}\big|W(s_1)-W(s_2)\big|^{2p}\Big]\Big)^{\frac{1}{2}}.
\end{align*}
Applying the Burkholder--Davis--Gundy inequality, we obtain
\begin{align}\label{l4.4.3}
%I_2\leq &K m^{\frac{p-1}{2}}\Big(\sum_{l=1}^{m}\E\big(\sup_{s_1-s_2\in[0,\Delta]}\big|W^{l}(s_1)-W^{l}(s_2)\big|^{2p}\big)\Big)^{\frac{1}{2}}\nn\\
%= &K m^{\frac{p-1}{2}}\Big(\sum_{l=1}^{m}\int_{\RR} \sup_{\substack{t_{k+1-N}\leq s_2\leq s_1\leq t_{k+1}\\s_1-s_2\leq \Delta}}\frac{1}{\sqrt{2\pi(s_1-s_2)}}|u|^{2p}e^{-\frac{u^2}{2(s_1-s_2)}}\mathrm du\Big)^{\frac{1}{2}}\nn\\
%\leq&K m^{p-1}\sum_{l=1}^{m}\int_{\RR} \frac{1}{\sqrt{2\pi\Delta}}|u|^{2p}e^{-\frac{u^2}{2\Delta}}\mathrm du\leq K\Delta^p.
I_2\leq K(1+\|\xi\|^{p(\beta+1)})\Delta^{\frac{p}{2}}.
\end{align}
Similarly,
\begin{align}\label{l4.4.4}
I_3\leq K(1+\|\xi\|^{p(\beta+1)})\Delta^{\frac{p}{2}}.
\end{align}
The desired argument follows from \eqref{l4.4.1}--\eqref{l4.4.4}. The proof is completed.
\end{proof}

\section{Attractiveness and stability of numerical solution}

In this subsection, we  reveal that numerical solutions of the $\theta$-EM method can inherit  the  the exponential  attractiveness and the exponential  stability of the exact ones.  
%which play a  significant role for the  existence and uniqueness of  numerical invariant measures. 

\begin{prop}\label{p2.3}
Under Assumptions {\ref{a1}} and {\ref{a2}}, for any  $\Delta\in(0,1]$ and $k\in\mathbb N$, one has 
\begin{align*}
\E[\|y^{\xi,\Delta}_{t_k}-y^{\eta,\Delta}_{t_k}\|^2]\leq K\Big(|b(\xi)-b(\eta)|^2+\|\xi-\eta\|^2\Big)e^{-\lambda_0 t_k}\quad \forall\,\xi,\eta\in \mathcal C^d,
\end{align*}
where 
 $\l_0\in(0,\frac{(2\theta-1)c_{\l}}{\theta^2}\wedge \l)$.
%and the positive constant $K_4$ is independent of $\xi,~\eta,~\Delta,~k$.
\end{prop}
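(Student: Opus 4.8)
The plan is to reproduce, for the difference of two functional solutions, the weighted Lyapunov argument that gave \cref{l1} for the exact equation, carried out at the discrete level through the auxiliary split process \eqref{ST} in the same spirit as \cref{p2.2}. Write $e(t_k):=y^{\xi,\Delta}(t_k)-y^{\eta,\Delta}(t_k)$ and $\hat e(t_k):=z^{\xi,\Delta}(t_k)-z^{\eta,\Delta}(t_k)$, and abbreviate $\bar b_k:=b(y^{\xi,\Delta}_{t_k})-b(y^{\eta,\Delta}_{t_k})$, $\bar\sigma_k:=\sigma(y^{\xi,\Delta}_{t_k})-\sigma(y^{\eta,\Delta}_{t_k})$. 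The key algebraic identity, inherited from the relation $z^{\xi,\Delta}(t_k)=y^{\xi,\Delta}(t_k)-\theta b(y^{\xi,\Delta}_{t_k})\Delta$ of \eqref{ST}, is $\theta\Delta\,\bar b_k=e(t_k)-\hat e(t_k)$, so that $\Delta^2|\bar b_k|^2=\tfrac1{\theta^2}|e(t_k)-\hat e(t_k)|^2$; this is exactly the device that produced the coefficients $\tfrac{(1-\theta)^2}{\theta^2}$ in \cref{p2.2}.

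First I would expand $|\hat e(t_{k+1})|^2$ from $\hat e(t_{k+1})=\hat e(t_k)+\bar b_k\Delta+\bar\sigma_k\delta W_k$, rewrite the drift cross term via $\hat e(t_k)=e(t_k)-\theta\Delta\bar b_k$ as $2\Delta\langle\hat e(t_k),\bar b_k\rangle=2\Delta\langle e(t_k),\bar b_k\rangle-2\theta\Delta^2|\bar b_k|^2$, and convert the surviving $(1-2\theta)\Delta^2|\bar b_k|^2=\tfrac{1-2\theta}{\theta^2}|e(t_k)-\hat e(t_k)|^2$. Collecting terms yields
\begin{align*}
|\hat e(t_{k+1})|^2={}&\tfrac{(1-\theta)^2}{\theta^2}|\hat e(t_k)|^2+\tfrac{1-2\theta}{\theta^2}|e(t_k)|^2+\tfrac{2(2\theta-1)}{\theta^2}\langle e(t_k),\hat e(t_k)\rangle\\
&+2\Delta\langle e(t_k),\bar b_k\rangle+|\bar\sigma_k\delta W_k|^2+\mathcal M_k',
\end{align*}
where $\mathcal M_k'$ gathers the $\delta W_k$-linear terms and is a true martingale increment thanks to the finite-time moment bound of \cref{p2.1}. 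Retaining $2\Delta\langle e(t_k),\bar b_k\rangle$ is essential: since $b$ is not assumed globally Lipschitz, one may never bound $|\bar b_k|$, but \cref{a2} (read with $y^{\xi,\Delta}_{t_k}(0)-y^{\eta,\Delta}_{t_k}(0)=e(t_k)$) controls precisely this inner product, while $\theta>\tfrac12$ makes $\tfrac{1-2\theta}{\theta^2}$ negative.

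Taking expectations annihilates $\mathcal M_k'$; then I would apply \cref{a2} to $2\Delta\,\E\langle e(t_k),\bar b_k\rangle$, use $\E|\bar\sigma_k\delta W_k|^2=\Delta\,\E|\bar\sigma_k|^2$ with \cref{a1}, and dispose of $\tfrac{2(2\theta-1)}{\theta^2}\langle e(t_k),\hat e(t_k)\rangle$ by a Young inequality with a parameter of the form $1+\tilde c\Delta$, which turns the $|\hat e(t_k)|^2$ coefficient into $\tfrac{(1-\theta)^2}{\theta^2}+\tfrac{2\theta-1}{\theta^2(1+\tilde c\Delta)}$ just as in \cref{p2.2}. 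Multiplying the resulting one-step inequality by $e^{\lambda_0 t_{k+1}}$, telescoping, and handling the delay integrals $\int_{-\tau}^0|y^{\xi,\Delta}_{t_k}(r)-y^{\eta,\Delta}_{t_k}(r)|^2\mathrm d\nu_i(r)$ by the convexity-plus-index-shift computation of \eqref{5p2.1} (the interpolation \eqref{thL} bounds them by convex combinations of $|e(t_{k+j})|^2$, and shifting produces a factor $e^{\lambda_0\tau}$ and boundary terms dominated by $\|\xi-\eta\|^2$, since $e(t_l)=\xi(t_l)-\eta(t_l)$ for $l\le0$) gives an inequality in which the coefficient of $\Delta\sum_k e^{\lambda_0 t_{k+1}}\E|e(t_k)|^2$ combines the dissipation $-(2a_1-L)$, the delayed loss $(2a_2+L)e^{\lambda_0\tau}$, the growth $e^{\lambda_0\Delta}-1$ of the weight, and the implicit factor $\tfrac{2\theta-1}{\theta^2}$ from the Young step. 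The main obstacle is exactly this bookkeeping: one must verify, mimicking \eqref{varep}, that choosing $\lambda_0\in(0,\tfrac{(2\theta-1)c_\lambda}{\theta^2}\wedge\lambda)$ with $c_\lambda$ from \eqref{lamb} renders that coefficient nonpositive, whence $e^{\lambda_0 t_k}\E|\hat e(t_k)|^2\le K(\|\xi-\eta\|^2+|b(\xi)-b(\eta)|^2)$, the second term coming from $\hat e(t_0)=(\xi(0)-\eta(0))-\theta\Delta\bar b_0$.

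Finally, to pass from $\hat e$ to the functional norm, I would invoke the lower bound $|\hat e(t_k)|^2\ge(1+2\theta a_1\Delta)|e(t_k)|^2-2\theta a_2\Delta\int_{-\tau}^0|y^{\xi,\Delta}_{t_k}(r)-y^{\eta,\Delta}_{t_k}(r)|^2\mathrm d\nu_2(r)$, which follows from \cref{a2} exactly as \eqref{9p2.1}, to recover the time-pointwise exponential decay of $\E|e(t_k)|^2$; a segment estimate over the window $[t_{k-N},t_k]$ in the manner of \underline{Step 2} of \cref{p2.2} then upgrades this to $\E\|y^{\xi,\Delta}_{t_k}-y^{\eta,\Delta}_{t_k}\|^2$, using that by \eqref{thL} the supremum over $[-\tau,0]$ is attained at a node, so $\|y^{\xi,\Delta}_{t_k}-y^{\eta,\Delta}_{t_k}\|^2\le\sum_{j=-N}^0|e(t_{k+j})|^2$ and each summand decays like $e^{-\lambda_0 t_k}$ up to the harmless factor $e^{\lambda_0\tau}$. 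This yields the claimed bound.
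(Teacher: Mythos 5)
Your route is the paper's own: the split-process identity $\theta\Delta\bar b_k=e(t_k)-\hat e(t_k)$, the one-step expansion with coefficients $\tfrac{(1-\theta)^2}{\theta^2}$, $\tfrac{1-2\theta}{\theta^2}$, $\tfrac{2(2\theta-1)}{\theta^2}$, the Young step with parameter $1+c_\lambda\Delta$, the weighted telescoping with the \eqref{varep}-type verification of the coefficient, and the lower bound \eqref{3p2.3} to return from $\hat e$ to $e$ are exactly what the paper does. One point you gloss over: passing from the decay of $\E[|\hat e(t_k)|^2]$ to the pointwise decay of $\E[|e(t_k)|^2]$ is not a one-line substitution, because the right-hand side of your lower bound contains $\int_{-\tau}^0|y^{\xi,\Delta}_{t_k}(r)-y^{\eta,\Delta}_{t_k}(r)|^2\mathrm d\nu_2(r)$, i.e.\ the as-yet-uncontrolled $y$-differences at earlier times. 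The paper resolves this by an induction on $k$ (splitting the $\nu_2$-integral at $-\Delta$ via \eqref{thL}, absorbing the current node into the factor $1+2a_1\theta\Delta$, and using $a_2e^{\lambda_0\tau}<a_1$, which follows from \eqref{lamb}); an equivalent sup-absorption argument would also do, but one of these mechanisms must be stated — as written you assert the conclusion rather than prove it.

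The genuine gap is in your final step. Bounding $\|y^{\xi,\Delta}_{t_k}-y^{\eta,\Delta}_{t_k}\|^2\le\sum_{j=-N}^0|e(t_{k+j})|^2$ and then summing the pointwise decay estimates produces the factor $N+1=\tau/\Delta+1$, so your resulting ``constant'' is of order $\Delta^{-1}$; but the constant $K$ in the proposition must be independent of $\Delta$ (this uniformity is the whole point of the chapter), so the bound you obtain does not prove the statement. This is precisely why the paper does not sum pointwise estimates: it redoes the telescoping inside $\E\big[\sup_{(k-N)\vee0\leq i\leq k}|z^{\xi,\Delta}(t_{i+1})-z^{\eta,\Delta}(t_{i+1})|^2\big]$ and controls the martingale part with the Burkholder--Davis--Gundy inequality, in the manner of \underline{Step 2} of \cref{p2.2}, arriving at \eqref{5p2.3+}; the sum-over-nodes trick is only legitimate afterwards, in \eqref{6p2.3}, where the delayed $y$-term carries a prefactor $\Delta$ that converts the $N$ terms into the harmless factor $\tau$. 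You do name the Step-2 machinery, but the justification you actually give bypasses it and fails; the BDG-based supremum estimate is not optional here.
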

\begin{proof}
Fix $\xi,\eta\in \mathcal C^d$, $\Delta\in(0,1]$, ~$k\in\mathbb N$, and $\l_0\in(0,\l]$, where $\l$ satisfies \eqref{lamb}.
Similar to \underline{Step 1} in Proposition \ref{p2.2}, and using Assumptions \ref{a1} and \ref{a2}, we deduce 
\begin{align*}
&e^{\lambda_0 t_{k+1}}\E [|z^{\xi,\Delta}(t_{k+1})-z^{\eta, \Delta}(t_{k+1})|^2]\nn\\
\leq{}&|z^{\xi,\Delta}(0)-z^{\eta, \Delta}(0)|^2+(L+2a_2)\tau e^{\l_0\tau}\|\xi-\eta\|^2\nn\\
&+(\frac{(1-\theta)^2}{\theta^2}+\frac{2\theta-1}{\theta^2(1+c_\l\Delta)}
-e^{-\l_0\Delta})\sum_{i=0}^{k}e^{\l_0 t_{i+1}}\E[|z^{\xi,\Delta}(t_i)-z^{\eta, \Delta}(t_i)|^2]\nn\\
&-(2a_1-L-c_{\l}-(L+2a_2)e^{\l_0\tau})\Delta\sum_{i=0}^{k}e^{\l_0 t_{i+1}}\E[|y^{\xi,\Delta}(t_i)-y^{\eta, \Delta}(t_i)|^2].
\end{align*}
It follows from \eqref{lamb} that
$$2a_1-L-c_{\l}-(L+2a_2)e^{\l_0\tau}>0.$$ 
Similar to \eqref{varep}, we obtain 
\begin{align*}
\frac{(1-\theta)^2}{\theta^2}+\frac{2\theta-1}{\theta^2(1+c_\l\Delta)}
-e^{-\l_0\Delta}
< (\l_0-\frac{(2\theta-1)c_{\l}}{\theta^2(1+c_{\lambda})})\Delta.
\end{align*}
Letting $\l_0\in(0,\frac{(2\theta-1)c_{\l}}{\theta^2(1+c_{\lambda})}\wedge \l]$, we have
\begin{align}\label{2p2.3}
&\E [|z^{\xi,\Delta}(t_{k})-z^{\eta, \Delta}(t_{k})|^2]\nn\\
\leq{}&(1\vee((L+2a_2)\tau e^{\l_0\tau}))\big( |z^{\xi,\Delta}(0)-z^{\eta, \Delta}(0)|^2+\|\xi-\eta\|^2\big)e^{-\l_0 t_k}.
%\leq&\Big( 2|b(\xi)-b(\eta)|^2+(2+L+2a_2)\tau e^{\l_0\tau}\|\xi-\eta\|^2\Big)e^{-\l_0 t_k}.
\end{align}
Next, we shall show that $\{y^{\cdot,\Delta}(t_k)\}_{k\in\mathbb N}$  has the similar property by the induction argument. 
In fact,  it follows from $z^{\xi,\Delta}(t_k)=y^{\xi,\Delta}(t_k)-\theta b(y^{\xi,\Delta}_{t_k})\Delta$ and Assumption \ref{a2} that
\begin{align}\label{3p2.3}
&(1+2a_1\theta\Delta)|y^{\xi,\Delta}(t_k)-y^{\eta,\Delta}(t_k)|^2\nn\\
\leq{}& |z^{\xi,\Delta}(t_k)-z^{\eta,\Delta}(t_k)|^2+2a_2\theta\Delta\int_{-\tau}^{0}|y^{\xi,\Delta}_{t_k}(r)-y^{\eta,\Delta}_{t_k}(r)|^2\mathrm d\nu_2(r).
\end{align}
 By virtue of \eqref{thL}, we arrive at
 \begin{align}\label{1p2.3}
&(1+2a_1\theta\Delta)\E[|y^{\xi,\Delta}(t_k)-y^{\eta,\Delta}(t_k)|^2]\nn\\
\leq{}&\E[|z^{\xi,\Delta}(t_k)-z^{\eta,\Delta}(t_k)|^2]+2a_2\theta\Delta\int_{-\Delta}^{0}\frac{r+\Delta}{\Delta}\mathrm d\nu_2(r)\E[|y^{\xi,\Delta}(t_k)-y^{\eta,\Delta}(t_k)|^2]\nn\\
&+2a_2\theta\Delta\Big(\nu_2([-\tau,-\Delta])+\int_{-\Delta}^{0}\frac{-r}{\Delta}\mathrm d\nu_2(r)\Big)\sup_{k-N\leq l \leq k-1}\E[|y^{\xi,\Delta}(t_l)-y^{\eta,\Delta}(t_l)|^2].
\end{align}
For $k=1$, according to $a_2 e^{\lambda_0\tau}<a_1$ and \eqref{3p2.3}, we derive 
\begin{align*}
&\Big(1+2a_1\theta\Delta\big(1-\int_{-\Delta}^{0}\frac{r+\Delta}{\Delta}\mathrm d\nu_2(r)\big)\Big)\E[|y^{\xi,\Delta}(t_1)-y^{\eta,\Delta}(t_1)|^2]\nn\\
%\leq{}&\big(1+2a_1\theta\Delta-2a_2\theta\Delta\int_{-\Delta}^{0}\frac{r+\Delta}{\Delta}\mathrm d\nu_2(r)\big)\E|y^{\xi,\Delta}(t_1)-y^{\eta,\Delta}(t_1)|^2\nn\\
\leq{}&\mathbb E[|z^{\xi,\Delta}(t_1)-z^{\eta,\Delta}(t_1)|^2]+2a_2\theta\Delta\big(\nu_2([-\tau,-\Delta])+\int_{-\Delta}^{0}\frac{-r}{\Delta}\mathrm d\nu_2(r)\big)\|\xi-\eta\|^2\nn\\
\leq{}&\Big(1+2a_2\theta\Delta\big(1-\int_{-\Delta}^{0}\frac{r+\Delta}{\Delta}\mathrm d\nu_2(r)\big)\Big)\Big(1\vee\big((L+2a_2)\tau e^{\l_0\tau}\big)\Big)\nn\\
&\times\big( |z^{\xi,\Delta}(0)-z^{\eta, \Delta}(0)|^2+\|\xi-\eta\|^2\big)e^{-\l_0 t_1}.
\end{align*}
It is clear that
\begin{align}\label{MIstep1}
&\E[|y^{\xi,\Delta}(t_1)-y^{\eta,\Delta}(t_1)|^2]\nn\\
\leq{}& (1\vee((L+2a_2)\tau e^{\l_0\tau}))\big( |z^{\xi,\Delta}(0)-z^{\eta, \Delta}(0)|^2+\|\xi-\eta\|^2\big)e^{-\l_0 t_1}.
\end{align}
Assume that for some integer  $k>1$ and $l\in\{1,\ldots,k\}$,
\begin{align}\label{MIstep2}
&\E[|y^{\xi,\Delta}(t_l)-y^{\eta,\Delta}(t_l)|^2]\nn\\
\leq{}&(1\vee((L+2a_2)\tau e^{\l_0\tau}))\big( |z^{\xi,\Delta}(0)-z^{\eta, \Delta}(0)|^2+\|\xi-\eta\|^2\big)e^{-\l_0 t_l}.
\end{align} 
Then for $k+1$, taking into account \eqref{2p2.3}, \eqref{1p2.3}, and \eqref{MIstep2}, we deduce 
\begin{align*}
&\Big(1+2a_1\theta\Delta\big(1-\int_{-\Delta}^{0}\frac{r+\Delta}{\Delta}\mathrm d\nu_2(r)\big)\Big)\E[|y^{\xi,\Delta}(t_{k+1})-y^{\eta,\Delta}(t_{k+1})|^2]\nn\\
\leq{}&\E[|z^{\xi,\Delta}(t_{k+1})-z^{\eta,\Delta}(t_{k+1})|^2]+2a_2\theta\Delta\big(1-\int_{-\Delta}^{0}\frac{r+\Delta}{\Delta}\mathrm d\nu_2(r)\big)\nn\\
&\times\sup_{k-N+1\leq l \leq k}\E[|y^{\xi,\Delta}(t_l)-y^{\eta,\Delta}(t_l)|^2]\nn\\
\leq{}&\big(1\vee((L+2a_2)\tau e^{\l_0\tau})\big)\Big(1+2a_2\theta e^{\lambda_0\tau}\Delta\big(1-\int_{-\Delta}^{0}\frac{r+\Delta}{\Delta}\mathrm d\nu_2(r)\big)\Big)\nn\\
&\times\big( |z^{\xi,\Delta}(0)-z^{\eta, \Delta}(0)|^2+\|\xi-\eta\|^2\big)e^{-\l_0 t_{k+1}},
\end{align*}
which implies 
\begin{align}\label{MIstep3}
&\E[|y^{\xi,\Delta}(t_{k+1})-y^{\eta,\Delta}(t_{k+1})|^2]\nn\\
\leq{}&(1\vee((L+2a_2)\tau e^{\l_0\tau}))\big( |z^{\xi,\Delta}(0)-z^{\eta, \Delta}(0)|^2+\|\xi-\eta\|^2\big)e^{-\l_0 t_{k+1}}.
\end{align} 
Hence, we conclude  from \eqref{MIstep1}--\eqref{MIstep3} that 
\begin{align}\label{4p2.3}
&\E[|y^{\xi,\Delta}(t_k)-y^{\eta,\Delta}(t_k)|^2]\nn\\
\leq{}&(1\vee((L+2a_2)\tau e^{\l_0\tau}))\big( |z^{\xi,\Delta}(0)-z^{\eta, \Delta}(0)|^2+\|\xi-\eta\|^2\big)e^{-\l_0 t_k}.
\end{align} 
Furthermore, by similar arguments to those in \underline{Step 2} of Proposition \ref{p2.2}, we use \eqref{2p2.3} and \eqref{4p2.3} to obtain 
\begin{align*}
&\E\Big[\sup_{(k-N)\vee0\leq i\leq k}e^{\l_0 t_{i+1}}|z^{\xi,\Delta}(t_{i+1})-z^{\eta,\Delta}(t_{i+1})|^2\Big]\nn\\
\leq{}& K\big( |z^{\xi,\Delta}(0)-z^{\eta, \Delta}(0)|^2+\|\xi-\eta\|^2\big)
+K\Delta\sum_{i=0}^{k}e^{\l_0 t_{i+1}}\E[|y^{\xi,\Delta}(t_i)-y^{\eta, \Delta}(t_i)|^2]\nn\\
&\quad+2a_2\theta\Delta\sum_{i=(k-2N)\vee(-N)}^{k}e^{\l_0 t_{i+1}}\E[|y^{\xi,\Delta}(t_i)-y^{\eta, \Delta}(t_i)|^2]\nn\\
\leq{}&K( |b(\xi)-b(\eta)|^2+\|\xi-\eta\|^2)(1+t_{k}).
\end{align*}
This implies that for any $\e\in(0,\l_0)$,
\begin{align}\label{5p2.3}
\E\Big[\sup_{(k-N)\vee0\leq i\leq k}|z^{\xi,\Delta}(t_{i+1})-z^{\eta,\Delta}(t_{i+1})|^2\Big]
\leq K( |b(\xi)-b(\eta)|^2+\|\xi-\eta\|^2)e^{(\l_0-\e) t_{k}}.
\end{align}
Without loss of generality, we redefine  $\l_0\in(0,\frac{(2\theta-1)c_{\l}}{\theta^2(1+c_{\lambda})}\wedge \l)$.
Then, it follows from \eqref{4p2.3} that
\begin{align}\label{5p2.3+}
\E\Big[\sup_{(k-N)\vee0\leq i\leq k}|z^{\xi,\Delta}(t_{i+1})-z^{\eta,\Delta}(t_{i+1})|^2\Big]
\leq K( |b(\xi)-b(\eta)|^2+\|\xi-\eta\|^2)e^{-\l_0 t_{k}}.
\end{align}
%where the positive constant $K_4$ is independent of $\xi,~\eta,~\Delta,~k$.
Using \eqref{3p2.3} leads to
\begin{align}\label{6p2.3}
&(1+2a_1\theta\Delta)\E\Big[\sup_{(k-N)\vee0\leq i\leq k}|y^{\xi,\Delta}(t_i)-y^{\eta,\Delta}(t_i)|^2\Big]\nn\\
\leq{}& \E\Big[\sup_{(k-N)\vee0\leq i\leq k}|z^{\xi,\Delta}(t_{i})-z^{\eta,\Delta}(t_{i})|^2\Big]\nn\\
&\quad+2a_2\theta\Delta\E\Big[\sup_{(k-2N)\vee(-N)\leq i\leq k}|y^{\xi,\Delta}(t_i)-y^{\eta,\Delta}(t_i)|^2\Big]\nn\\
\leq{}&\E\Big[\sup_{(k-N)\vee0\leq i\leq k}|z^{\xi,\Delta}(t_{i})-z^{\eta,\Delta}(t_{i})|^2\Big]\nn\\
&\quad+4a_2\theta\tau\sup_{(k-2N)\vee(-N)\leq i\leq k}\E[|y^{\xi,\Delta}(t_i)-y^{\eta,\Delta}(t_i)|^2].
\end{align}
Then the desired argument follows from \eqref{4p2.3} and \eqref{5p2.3+}. We finish the proof.
\end{proof}

Numerical stability is an important characteristic  to measure the reliability of numerical methods in long-term computation. Below, we present  the exponential stability of the trivial solution of \eqref{FF}. We propose the following  condition to ensure the existence of the trivial solution.

\begin{assp}\label{stab}
The drift and diffusion coefficients satisfy $b(\textup{\textbf 0})=\sigma(\textup{\textbf 0})=0$.
\end{assp}
It follows from Assumption \ref{stab} that $x^{\textbf 0}(t)\equiv0$, which implies
$x^{\textbf 0}_t\equiv\textbf 0$. By virtue of Lemma \ref{l1},  it is straightforward to obtain the exponential  stability of the  functional solution of \eqref{FF}.
\begin{thm}
Let Assumptions \ref{a1}, \ref{a2}, and \ref{stab} hold. Then the  functional solution of \eqref{FF} with the initial datum $\xi\in \mathcal C^d$  is mean-square exponential stable and satisfies
\begin{align*}
\E[\|x^{\xi}_{t}\|^2]\leq K (1+\|\xi\|^2)e^{-\lambda t},
\end{align*}
where the constant $\lambda$ satisfies \eqref{lamb}.
\end{thm}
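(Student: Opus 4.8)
The plan is to reduce the claim to the exponential attractiveness already recorded in \cref{l1}, by comparing the solution started from $\xi$ against the trivial solution started from the zero initial datum. The whole argument rests on identifying $\mathbf{0}$ as an equilibrium path.

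First I would confirm that \cref{stab} forces the zero functional to be invariant for \eqref{FF}. Indeed, with initial datum $\xi=\mathbf{0}$ (the constant zero path), the candidate solution $x^{\mathbf 0}(t)\equiv 0$ satisfies $x^{\mathbf 0}_t\equiv\mathbf 0$ for every $t\ge 0$; since $b(\mathbf 0)=\sigma(\mathbf 0)=0$ by \cref{stab}, both the drift and diffusion terms in \eqref{FF} vanish identically along this path, so it solves the equation. By the uniqueness of the strong solution guaranteed under \cref{a1,a2} (via \cite[Theorem 2.3]{RM10}), this is the unique solution, and in particular $\|x^{\mathbf 0}_t\|=0$ almost surely for all $t\ge 0$. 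This is exactly the observation made in the sentence preceding the statement.

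With the equilibrium path in hand, I would apply the second estimate of \cref{l1} with the choice $\eta=\mathbf 0$. That estimate reads $\E[\|x^{\xi}_t-x^{\eta}_t\|^2]\le K\|\xi-\eta\|^2 e^{-\lambda t}$, where $\lambda$ satisfies \eqref{lamb}. Substituting $\eta=\mathbf 0$ and using $x^{\mathbf 0}_t\equiv\mathbf 0$ collapses the left-hand side to $\E[\|x^{\xi}_t\|^2]$, yielding
\begin{align*}
\E[\|x^{\xi}_t\|^2]=\E[\|x^{\xi}_t-x^{\mathbf 0}_t\|^2]\le K\|\xi-\mathbf 0\|^2 e^{-\lambda t}=K\|\xi\|^2 e^{-\lambda t}\le K(1+\|\xi\|^2)e^{-\lambda t},
\end{align*}
which is the asserted bound (in fact with the slightly sharper prefactor $K\|\xi\|^2$).

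I expect no genuine obstacle here: the result is essentially immediate once the trivial solution is recognized, and the only point demanding care is the justification that $x^{\mathbf 0}_t\equiv\mathbf 0$, which follows from \cref{stab} together with the well-posedness and uniqueness of the SFDE under the standing assumptions. The decay rate $\lambda$ is inherited verbatim from \cref{l1}, so no new smallness condition beyond \eqref{lamb} is needed.
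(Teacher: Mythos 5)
Your proposal is correct and follows exactly the paper's route: the paper also notes that Assumption \ref{stab} forces $x^{\mathbf 0}(t)\equiv 0$ (hence $x^{\mathbf 0}_t\equiv\mathbf 0$) and then invokes the exponential attractiveness estimate of Lemma \ref{l1} with $\eta=\mathbf 0$ to conclude. Your additional remark on uniqueness of the trivial solution is a harmless elaboration of the same argument.
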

Similarly, by virtue of Proposition \ref{p2.3}, we can  deduce that the $\theta$-EM method inherits the mean-square exponential stability.
\begin{thm}
Suppose that Assumptions \ref{a1}, \ref{a2}, and \ref{stab} hold. Then for any  $\Delta\in(0,1]$,  the numerical functional solution is mean-square exponential stable and satisfies
\begin{align*}
\E[\|y^{\xi,\Delta}_{t_k}\|^2]\leq K(1+|b(\xi)|^2+\|\xi\|^{2})e^{-\lambda_0 t_{k}},
\end{align*}
where the constant $\lambda_0$ is defined in Proposition \ref{p2.3}.
\end{thm}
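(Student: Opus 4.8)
The plan is to obtain this estimate as an immediate corollary of the exponential attractiveness already proved in Proposition \ref{p2.3}, by exploiting that Assumption \ref{stab} forces the zero initial datum to generate a trivial numerical solution. The whole argument is therefore a specialization of Proposition \ref{p2.3} to the reference datum $\eta\equiv\textbf{0}$, rather than a fresh computation.

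First I would verify that under Assumption \ref{stab}, i.e. $b(\textbf{0})=\sigma(\textbf{0})=0$, the $\theta$-EM scheme \eqref{thEM} started from $\eta\equiv\textbf{0}\in\mathcal C^d$ produces the identically zero solution. The initial segment is $y^{\textbf{0},\Delta}(t_k)=\textbf{0}$ for $k=-N,\ldots,0$, so $y^{\textbf{0},\Delta}_{t_0}\equiv\textbf{0}$. Arguing by induction on $k$, if $y^{\textbf{0},\Delta}_{t_k}\equiv\textbf{0}$, then $b(y^{\textbf{0},\Delta}_{t_k})=\sigma(y^{\textbf{0},\Delta}_{t_k})=0$, and the one-step implicit equation reduces to $y^{\textbf{0},\Delta}(t_{k+1})=\theta b(y^{\textbf{0},\Delta}_{t_{k+1}})\Delta$; by the uniqueness guaranteed in Lemma \ref{solvability}, its only solution is $y^{\textbf{0},\Delta}(t_{k+1})=\textbf{0}$, whence $y^{\textbf{0},\Delta}_{t_{k+1}}\equiv\textbf{0}$. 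Thus $y^{\textbf{0},\Delta}_{t_k}\equiv\textbf{0}$ for every $k\in\mathbb N$.

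Next I would apply Proposition \ref{p2.3} with the two initial data $\xi$ and $\eta=\textbf{0}$, which yields
\[
\E[\|y^{\xi,\Delta}_{t_k}-y^{\textbf{0},\Delta}_{t_k}\|^2]\leq K\big(|b(\xi)-b(\textbf{0})|^2+\|\xi-\textbf{0}\|^2\big)e^{-\lambda_0 t_k}.
\]
Using the triviality of the zero solution, the left-hand side is exactly $\E[\|y^{\xi,\Delta}_{t_k}\|^2]$, and since $b(\textbf{0})=0$ the right-hand side becomes $K(|b(\xi)|^2+\|\xi\|^2)e^{-\lambda_0 t_k}$, which is dominated by $K(1+|b(\xi)|^2+\|\xi\|^2)e^{-\lambda_0 t_k}$. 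This is precisely the claimed bound, and the exponential factor $e^{-\lambda_0 t_k}$ immediately delivers mean-square exponential stability.

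Because every ingredient is already in hand, there is no genuine obstacle here; the only point deserving a word of care is the verification in the first step that the zero datum produces the trivial solution, which rests on the uniqueness of the implicit step from Lemma \ref{solvability} (ensuring $\textbf{0}$ is the \emph{unique} solution of the one-step equation when the preceding segment vanishes). Everything else is a direct substitution into Proposition \ref{p2.3}.
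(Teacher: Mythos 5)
Your proposal is correct and follows exactly the route the paper takes: the paper's (omitted) proof is precisely an application of Proposition \ref{p2.3} with reference datum $\eta\equiv\textbf{0}$, relying on the fact that Assumption \ref{stab} makes the zero datum produce the trivial numerical solution. Your added induction step, using the uniqueness from Lemma \ref{solvability} to confirm $y^{\textbf{0},\Delta}_{t_k}\equiv\textbf{0}$, just fills in a detail the paper leaves implicit.
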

\begin{rem}
In fact, if the coefficients $b$ and $\sigma$ are locally  Lipschitz continuous, and 
 Assumptions \ref{a1} and \ref{a2}  are relaxed as 
\begin{align*}
2\langle\phi(0),b(\phi)\rangle+|\sigma(\phi)|^2
\leq -d_1|\phi(0)|^{2}+d_2\int_{-\tau}^{0}|\phi(r)|^2\mathrm d\nu(r),
\end{align*}
 we can also deduce the above stability results of the exact functional solution and the numerical one,  where  $d_1$ and $d_2$ are positive constants with $d_1>d_2$, and $\nu$ is a probability measure on $[-\tau, 0]$. 
The proof is similar to  those of Lemma \ref{l1} and Proposition \ref{p2.3}.
\end{rem}

\section{Mean-square convergence rate}
In this subsection, we aim to prove the 
%numerical invariant measure $\mu^{\Delta}$ of $\theta$-EM method converges to the underlying one of \eqref{FF}  with $1/2$ convergence rate. 
%Roughly speaking, we begin with studying 
mean-square convergence rate of $\theta$-EM method,  based on the estimates of high-order moments.
%For this purpose, we add restrictions to  initial data and the drift coefficient. %propose the following stronger hypotheses instead  \ref{a3}.
%\subsubsection{Case 1: $\theta\in(0.5,1]$}

\begin{lemma}\label{l4.5}
Let Assumptions \ref{a1}--\ref{a2}, \ref{a4}--\ref{a7} hold. Then,
\begin{align*}
\sup_{t\geq 0}\E[\|x^{\xi}_t-z^{\xi,\Delta}_t\|^2 ]\leq K(1+\|\xi\|^{2(1+\beta)(2\beta+2)})\Delta\quad \forall \,\xi\in \mathcal C^d.
\end{align*}
%where the positive constant $K_9$ is independent of $\Delta,~k$.
\end{lemma}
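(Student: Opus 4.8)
The plan is to study the error process $e(t):=x^{\xi}(t)-z^{\xi,\Delta}(t)$ through a weighted energy estimate that trades the dissipativity of \cref{a2} against the exponential weight $e^{\lambda t}$. Since $x^{\xi}$ solves \eqref{FF} and $z^{\xi,\Delta}$ satisfies \eqref{zcont}, the error obeys, for $t>0$,
$$\mathrm de(t)=\big(b(x^{\xi}_t)-b(y^{\xi,\Delta}_{\lfloor t\rfloor})\big)\mathrm dt+\big(\sigma(x^{\xi}_t)-\sigma(y^{\xi,\Delta}_{\lfloor t\rfloor})\big)\mathrm dW(t),$$
where $y^{\xi,\Delta}_{\lfloor t\rfloor}$ is the piecewise-constant segment in \eqref{zcont}. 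First I would record that the time-uniform high-order moment bounds of \cref{l4.1} and \cref{l4.3} make the accompanying stochastic integral a genuine martingale, and that on $[-\tau,0]$ the error is the difference between $\xi$ and its linear interpolation, hence $O(\Delta^{\rho})=O(\Delta^{1/2})$ in sup-norm by \cref{a7}. Applying It\^o's formula to $e^{\lambda t}|e(t)|^2$ with $\lambda$ as in \eqref{lamb} and taking expectations gives
$$e^{\lambda t}\E|e(t)|^2=\E|e(0)|^2+\int_0^t e^{\lambda s}\,\E\big[\lambda|e(s)|^2+2\langle e(s),b(x^{\xi}_s)-b(y^{\xi,\Delta}_{\lfloor s\rfloor})\rangle+|\sigma(x^{\xi}_s)-\sigma(y^{\xi,\Delta}_{\lfloor s\rfloor})|^2\big]\mathrm ds.$$

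The dissipative mechanism enters next. I would split $e(s)=\bar e(s)+\delta(s)$ with $\bar e(s):=x^{\xi}(s)-y^{\xi,\Delta}(\lfloor s\rfloor)$ and $\delta(s):=y^{\xi,\Delta}(\lfloor s\rfloor)-z^{\xi,\Delta}(s)$, using $x^{\xi}_s(0)=x^{\xi}(s)$ and $y^{\xi,\Delta}_{\lfloor s\rfloor}(0)=y^{\xi,\Delta}(\lfloor s\rfloor)$. The pairing $2\langle\bar e(s),b(x^{\xi}_s)-b(y^{\xi,\Delta}_{\lfloor s\rfloor})\rangle$ is controlled by \cref{a2} and $|\sigma(x^{\xi}_s)-\sigma(y^{\xi,\Delta}_{\lfloor s\rfloor})|^2$ by \cref{a1}; together they yield $(-2a_1+L)|\bar e(s)|^2$ plus the delay integrals $(2a_2+L)\int_{-\tau}^0|x^{\xi}_s(r)-y^{\xi,\Delta}_{\lfloor s\rfloor}(r)|^2\mathrm d\nu_i(r)$. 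Bounding $|x^{\xi}_s(r)-y^{\xi,\Delta}_{\lfloor s\rfloor}(r)|\le|x^{\xi}(s+r)-z^{\xi,\Delta}(s+r)|+|z^{\xi,\Delta}_s(r)-y^{\xi,\Delta}_{\lfloor s\rfloor}(r)|$ turns these into a memory term $\int_{-\tau}^0\E|e(s+r)|^2\mathrm d\nu_i(r)$ plus an $O(\Delta)$ remainder from \cref{l4.4}. The $e^{\lambda s}$ weight converts the memory term into $e^{\lambda\tau}\int e^{\lambda u}\E|e(u)|^2\mathrm du$, so the net coefficient of $\int_0^t e^{\lambda s}\E|e(s)|^2\mathrm ds$ is governed by $-c_\lambda=-(2a_1-L-(2a_2+L)e^{\lambda\tau}-\lambda)<0$; this is precisely the balance \eqref{lamb} that renders the estimate uniform in $t$ and lets these terms be discarded.

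The genuinely delicate step is the cross term $\E[2\langle\delta(s),b(x^{\xi}_s)-b(y^{\xi,\Delta}_{\lfloor s\rfloor})\rangle]$, which a naive Cauchy--Schwarz with \cref{l4.4} would bound only by $O(\Delta^{1/2})$. To recover the sharp $O(\Delta)$, I would use the explicit representation $\delta(s)=b(y^{\xi,\Delta}_{t_k})[\theta\Delta-(s-t_k)]-\sigma(y^{\xi,\Delta}_{t_k})(W(s)-W(t_k))$ for $s\in[t_k,t_{k+1})$ coming from \eqref{ST} and \eqref{zcont}. The deterministic part carries the factor $\theta\Delta-(s-t_k)=O(\Delta)$, so after \cref{a5} and the uniform moment bounds of \cref{l4.1,l4.3} it contributes $O(\Delta)$. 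For the stochastic part I would write $b(x^{\xi}_s)-b(y^{\xi,\Delta}_{t_k})=[b(x^{\xi}_s)-b(x^{\xi}_{t_k})]+[b(x^{\xi}_{t_k})-b(y^{\xi,\Delta}_{t_k})]$; the second bracket is $\mathcal F_{t_k}$-measurable and pairs with the conditionally centred increment $\sigma(y^{\xi,\Delta}_{t_k})(W(s)-W(t_k))$ to give zero expectation, while the first bracket is $O(\Delta^{1/2})$ in mean square (by \cref{a5}, the mean-square H\"older continuity of $x^{\xi}$, and \cref{l4.1}) and multiplies another $O(\Delta^{1/2})$ increment, again producing $O(\Delta)$. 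The superlinear growth factors $(1+\|x^{\xi}_s\|^{\beta}+\|y^{\xi,\Delta}_{\lfloor s\rfloor}\|^{\beta})$ created by \cref{a5} are separated by H\"older's inequality and absorbed into the uniform high-moment bounds; tracking the resulting powers of $\|\xi\|$ through these H\"older splits (using \cref{l4.4} at exponent $4$ together with \cref{l4.3}) is what yields the stated factor $1+\|\xi\|^{2(1+\beta)(2\beta+2)}$.

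Collecting the above gives $e^{\lambda t}\E|e(t)|^2\le\E|e(0)|^2+K(1+\|\xi\|^{2(1+\beta)(2\beta+2)})\Delta\,e^{\lambda t}$, whence the pointwise bound $\sup_{t\ge0}\E|e(t)|^2\le K(1+\|\xi\|^{2(1+\beta)(2\beta+2)})\Delta$ after dividing by $e^{\lambda t}$ and accounting for the $O(\Delta)$ initial contribution. Finally I would upgrade to the segment norm: over each moving window $[t-\tau,t]$, which contains a bounded number of steps, the Burkholder--Davis--Gundy inequality applied to the stochastic integral, combined with the pointwise estimate and the moment bounds, controls $\E[\sup_{u\in[t-\tau,t]}|e(u)|^2]$ by the same $O(\Delta)$, yielding $\sup_{t\ge0}\E[\|x^{\xi}_t-z^{\xi,\Delta}_t\|^2]\le K(1+\|\xi\|^{2(1+\beta)(2\beta+2)})\Delta$. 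I expect the cross-term analysis — in particular the conditional-centring argument that rescues the extra half power of $\Delta$ while keeping all constants independent of $t$ — to be the main obstacle.
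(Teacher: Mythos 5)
Your skeleton — weighted energy estimate built on the slack in \eqref{lamb}, an $O(\Delta)$ initial-segment contribution from \cref{a5,a7}, absorption of the delay integrals through the exponential weight, and a final Burkholder--Davis--Gundy upgrade to the segment norm — matches the paper's proof, and you correctly identify the cross term as the crux. The gap is in your resolution of that cross term. In the split $b(x^{\xi}_s)-b(y^{\xi,\Delta}_{t_k})=[b(x^{\xi}_s)-b(x^{\xi}_{t_k})]+[b(x^{\xi}_{t_k})-b(y^{\xi,\Delta}_{t_k})]$, the conditional centring of the second bracket is fine, but the claim that the first bracket is $O(\Delta^{1/2})$ in mean square fails. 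Since $b$ is a functional on $\mathcal C^d$, \cref{a5} forces you to control the \emph{segment} increment $\|x^{\xi}_s-x^{\xi}_{t_k}\|=\sup_{r\in[-\tau,0]}|x^{\xi}(s+r)-x^{\xi}(t_k+r)|$, not the pointwise one. This is a supremum of lag-$(s-t_k)$ increments over a window of length $\tau$ containing about $\tau/\Delta$ disjoint subintervals; already for additive noise it has exact order $\sqrt{\Delta\log(1/\Delta)}$ in $L^2$ (L\'evy modulus of continuity), not $\sqrt{\Delta}$. Consequently your cross term is at best $O(\Delta\sqrt{\log(1/\Delta)})$ (and only $O(\Delta^{3/4})$ with the cruder $L^4$ union bound), so the argument does not close at the stated rate $K\Delta$ uniformly in $t$.

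The paper's proof is designed precisely to avoid any increment of $x^{\xi}$. It splits the drift difference as $[b(x^{\xi}_s)-b(z^{\xi,\Delta}_s)]+[b(z^{\xi,\Delta}_s)-b(y^{\xi,\Delta}_s)]$, applies \cref{a1,a2} to the pair $(x^{\xi}_s,z^{\xi,\Delta}_s)$ — legitimate because the pairing variable is $x^{\xi}(s)-z^{\xi,\Delta}(s)=x^{\xi}_s(0)-z^{\xi,\Delta}_s(0)$ — and uses the Young inequality so that the entire discretization error enters only through $\frac1\e|b(z^{\xi,\Delta}_s)-b(y^{\xi,\Delta}_s)|^2+(1+\frac1\e)|\sigma(z^{\xi,\Delta}_s)-\sigma(y^{\xi,\Delta}_s)|^2$. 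Because this error is \emph{quadratic} in $\|z^{\xi,\Delta}_s-y^{\xi,\Delta}_s\|$, the half-order bound of \cref{l4.4} (used with $p=4$ and H\"older against the moment bounds of \cref{l4.3}) squares up to $O(\Delta)$ directly, with no conditional centring and no path regularity of $x^{\xi}$; this is also where the exponent $2(1+\beta)(2\beta+2)$ in the statement comes from. If you wish to keep your decomposition $e=\bar e+\delta$, the repair is the same idea: never route the split through $x^{\xi}_{t_k}$, but through $z^{\xi,\Delta}_s$ and $y^{\xi,\Delta}_s$, so that every discretization difference is one that \cref{l4.4} controls and, wherever possible, appears squared.
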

\begin{proof}
It follows from \eqref{FF} and \eqref{zcont} that
\begin{align*}
x^{\xi}(t)-z^{\xi,\Delta}(t)=\theta b({\xi})\Delta+\int_{0}^{t}b(x^{\xi}_s)-b(y^{\xi,\Delta}_s)\mathrm ds+\int_{0}^{t}\sigma(x^{\xi}_s)-\sigma(y^{\xi,\Delta}_s)\mathrm dW(s).
\end{align*}
By the It\^o formula and the Young inequality, we obtain  that for any $\e\in(0,1)$,
\begin{align*}
&e^{\e t}|x^{\xi}(t)-z^{\xi,\Delta}(t)|^2\nn\\
=&\;\theta^2 |b({\xi})|^2\Delta^2\!+\!\int_{0}^{t}e^{\e s}\Big(\e|x^{\xi}(s)\!-\!z^{\xi,\Delta}(s)|^2
+2\<x^{\xi}(s)\!-\!z^{\xi,\Delta}(s),b(x^{\xi}_s)\!-\!b(y^{\xi,\Delta}_s)\>\nn\\&\quad+|\sigma(x^{\xi}_s)-\sigma(y^{\xi,\Delta}_s)|^2\Big)\mathrm ds+\mathcal N_t\nn\\
\leq&\,K(1\!+\!\|\xi\|^{2(\beta+1)})\Delta^2\!\!+\!\!\int_{0}^{t}\!e^{\e s}\Big(\!\e|x^{\xi}(s)\!-\!z^{\xi,\Delta}(s)|^2
\!+\!2\<x^{\xi}(s)\!-\!z^{\xi,\Delta}(s),b(x^{\xi}_s)\!-\!b(z^{\xi,\Delta}_s)\>\nn\\
&\quad+(1+\e)|\sigma(x^{\xi}_s)-\sigma(z^{\xi,\Delta}_s)|^2\Big)\mathrm ds+\int_{0}^{t}\e e^{\e s}|x^{\xi}(s)-z^{\xi,\Delta}(s)|^2\mathrm ds+\mathcal E_t+\mathcal N_t,
\end{align*}
%\begin{align*}
%&|x^0(t)-z^{0,\Delta}(t)|^2\nn\\
%=&\theta^2 b^2(0)\Delta^2+\int_{0}^{t}2\<x^{0}(s)-z^{0,\Delta}(s),b(x^{0}_s)-b(y^{0,\Delta}_s)\>+|\sigma(x^{0}_s)-\sigma(y^{0,\Delta}_s)|^2\mathrm ds+\mathcal N_t\nn\\
%\leq&K\Delta^2+\int_{0}^{t}2\<x^{0}(s)-z^{0,\Delta}(s),b(x^{0}_s)-b(z^{0,\Delta}_s)\>+(1+\e)|\sigma(x^{0}_s)-\sigma(z^{0,\Delta}_s)|^2\mathrm ds\nn\\
%&\quad+\int_{0}^{t}\e|x^{0}(s)-z^{0,\Delta}(s)|^2+\frac{1}{\e}|b(z^{0,\Delta}_s)-b(y^{0,\Delta}_s)|^2+(1+\frac{1}{\e})|\sigma(z^{0,\Delta}_s)-\sigma(y^{0,\Delta}_s)|^2\mathrm ds+\mathcal N_t,
%\end{align*}
where 
\begin{align*}
\mathcal E_t:=\int_{0}^{t}e^{\e s}\Big(\frac{1}{\e}|b(z^{\xi,\Delta}_s)-b(y^{\xi,\Delta}_s)|^2+(1+\frac{1}{\e})|\sigma(z^{\xi,\Delta}_s)-\sigma(y^{\xi,\Delta}_s)|^2\Big)\mathrm ds,
\end{align*}
and $\mathcal N_{t}$ is a martingale defined by
\begin{align*}
\mathcal N_{t}:=2\int_{0}^{t}e^{\e s}\<x^{\xi}(s)-z^{\xi,\Delta}(s),(\sigma(x^{\xi}_s)-\sigma(y^{\xi,\Delta}_s))\mathrm dW(s)\>.
\end{align*}
According to Assumptions \ref{a1} and \ref{a2}, we derive
\begin{align*}%\label{l4.5.1}
&e^{\e t}|x^{\xi}(t)-z^{\xi,\Delta}(t)|^2\nn\\
\leq&\,K(1+\|\xi\|^{2(\beta+1)})\Delta^2+\int_{0}^{t}\Big(-\big(2a_1-2\e-(1+\e)L\big)e^{\e s}|x^{\xi}(s)-z^{\xi,\Delta}(s)|^2\nn\\
&+2a_2\!\!\int_{-\tau}^{0}\!e^{\e s}|x^{\xi}_s(r)\!-\!z^{\xi,\Delta}_s(r)|^2\mathrm d\nu_2(r)+(1+\e)L\!\int_{-\tau}^{0}\!e^{\e s}|x^{\xi}_s(r)-z^{\xi,\Delta}_s(r)|^2\mathrm d\nu_1(r)\Big)\mathrm ds\nn\\
&+\mathcal E_t+\mathcal N_t\nn\\
\leq&\,K(1+\|\xi\|^{2(\beta+1)})\Delta^2-\big(2a_1-2\e-(1+\e)L-(2a_2 +(1+\e)L)e^{\e\tau}\big)\int_{0}^{t}e^{\e s}|x^{\xi}(s)\nn\\
&-z^{\xi,\Delta}(s)|^2\mathrm ds
+(2a_2+(1+\e)L)e^{\e\tau}\int_{-\tau}^{0}e^{\e s}|x^{\xi}(s)-z^{\xi,\Delta}(s)|^2\mathrm ds+\mathcal E_t+\mathcal N_t.
\end{align*}
In view of $a_1>a_2+L$, choose a sufficiently small number $\e\in(0,1)$ such that
$$2a_1-2\e-(1+\e)L-(2a_2 +(1+\e)L)e^{\e\tau}\geq0.$$
By Assumptions \ref{a5} and \ref{a7}, we have
$$(2a_2+(1+\e)L)e^{\e\tau}\int_{-\tau}^{0}e^{\e s}|x^{\xi}(s)-z^{\xi,\Delta}(s)|^2\mathrm ds\leq K(1+\|\xi\|^{2(\beta+1)})\Delta.$$
Thus,
\begin{align}\label{l4.5.2}
e^{\e t}|x^{\xi}(t)-z^{\xi,\Delta}(t)|^2
\leq K(1+\|\xi\|^{2(\beta+1)})\Delta+\mathcal E_t+\mathcal N_t.
\end{align}
 By virtue of Assumptions \ref{a1} and \ref{a5}, we apply the H\"older inequality to deduce
\begin{align*}
\E[\mathcal E_t]
\leq&K\E\Big[\int_{0}^{t}e^{\e s}
\|z^{\xi,\Delta}_s-y^{\xi,\Delta}_s\|^2(1+\|z^{\xi,\Delta}_s\|^{2\beta}+\|y^{\xi,\Delta}_s\|^{2\beta})\mathrm ds\Big]\nn\\
\leq&K \int_{0}^{t}e^{\e s}\big(\E[\|z^{\xi,\Delta}_s-y^{\xi,\Delta}_s\|^4]\big)^\frac{1}{2}\big(\E[1+\|z^{\xi,\Delta}_s\|^{4\beta}+\|y^{\xi,\Delta}_s\|^{4\beta}]\big)^{\frac{1}{2}}\mathrm ds.
\end{align*}
Making use of \eqref{4.3l+1} and Lemma \ref{l4.4} yields
\begin{align*}
\E[\mathcal E_t] &\leq K(1+\|\xi\|^{2(1+\beta)^2})(1+\|\xi\|^{2\beta(1+\beta)})\Delta\int_{0}^{t}e^{\e s}\mathrm ds\\
&\leq K(1+\|\xi\|^{2(1+\beta)(2\beta+2)})e^{\e t}\Delta.
\end{align*}
This, along with \eqref{l4.5.2} implies that
\begin{align}\label{l4.5.3}
\E[|x^{\xi}(t)-z^{\xi,\Delta}(t)|^2]\leq {}&e^{-\e t}\big(K(1+\|\xi\|^{2(1+\beta)})\Delta+\E [\mathcal E_t]\big)\nn\\
\leq{}& K(1+\|\xi\|^{2(1+\beta)(2\beta+2)})\Delta,
\end{align}
where the positive constant $K$ is independent of $\Delta$ and $t$.
Furthermore, according to \eqref{l4.5.2}, we apply the Burkholder--Davis--Gundy inequality to derive
\begin{align*}
&\E\Big[\sup_{(t-\tau)\vee 0 \leq u\leq t}e^{\e u}|x^{\xi}(u)-z^{\xi,\Delta}(u)|^2\Big]\nn\\
\leq &\,K(1+\|\xi\|^{2(1+\beta)})\Delta+\E[\mathcal E_t]+\E \Big[\sup_{(t-\tau)\vee 0 \leq u\leq t}\mathcal N_u\Big]\nn\\
\leq&\,K(1+\|\xi\|^{2(1+\beta)(2\beta+2)})e^{\e t}\Delta\\
&\,+K\E\Big[\big(\int_{(t-\tau)\vee 0}^{t}e^{2\e s}|x^{\xi}(s)-z^{\xi,\Delta}(s)|^2|\sigma(x^{\xi}_s)-\sigma(y^{\xi,\Delta}_s)|^2\mathrm ds\big)^{\frac{1}{2}}\Big]\nn\\
%\leq&Ke^{\e t}\Delta^2+K\E\big(\int_{(t-\tau)\vee 0}^{t}e^{2\e s}|x^0(s)-z^{0,\Delta}(s)|^2|\sigma(x^{0}_s)-\sigma(y^{0,\Delta}_s)|^2\mathrm ds\big)^{\frac{1}{2}}\nn\\
\leq&\,K(1+\|\xi\|^{2(1+\beta)(2\beta+2)})e^{\e t}\Delta+K\E\Big[\Big(\big(\sup_{(t-\tau)\vee 0 \leq u\leq t}e^{\e u}|x^{\xi}(u)-z^{\xi,\Delta}(u)|^2\big)\nn\\
&\,\times\int_{(t-\tau)\vee 0}^{t}e^{\e s}|\sigma(x^{\xi}_s)-\sigma(y^{\xi,\Delta}_s)|^2\mathrm ds\Big)^{\frac{1}{2}}\Big].
\end{align*}
By virtue of the Young inequality and Assumption \ref{a1},  we arrive at
\begin{align*}
&\E\Big[\sup_{(t-\tau)\vee 0 \leq u\leq t}e^{\e u}|x^{\xi}(u)-z^{\xi,\Delta}(u)|^2\Big]\nn\\
\leq{}&K(1+\|\xi\|^{2(1+\beta)(2\beta+2)})e^{\e t}\Delta+\frac{1}{2}\E\Big[\sup_{(t-\tau)\vee 0 \leq u\leq t}e^{\e u}|x^{\xi}(u)-z^{\xi,\Delta}(u)|^2\Big]\nn\\
&\quad+K\E\Big[\int_{(t-\tau)\vee 0}^{t}e^{\e s}|\sigma(x^{\xi}_s)-\sigma(y^{\xi,\Delta}_s)|^2\mathrm ds\Big]\nn\\
\leq& \,K(1+\|\xi\|^{2(1+\beta)(2\beta+2)})e^{\e t}\Delta+\frac{1}{2}\E\Big[\sup_{(t-\tau)\vee 0 \leq u\leq t}e^{\e u}|x^{\xi}(u)-z^{\xi,\Delta}(u)|^2\Big]\nn\\
&\quad+K\E\Big[\int_{(t-\tau)\vee 0}^{t}e^{\e s}|\sigma(x^{\xi}_s)-\sigma(z^{\xi,\Delta}_s)|^2\mathrm ds\Big]\nn\\
&\quad+K\E\Big[\int_{(t-\tau)\vee 0}^{t}e^{\e s}|\sigma(z^{\xi,\Delta}_s)-\sigma(y^{\xi,\Delta}_s)|^2\mathrm ds\Big]\nn\\
\leq&\,K(1+\|\xi\|^{2(1+\beta)(2\beta+2)})e^{\e t}\Delta+\frac{1}{2}\E\Big[\sup_{(t-\tau)\vee 0 \leq u\leq t}e^{\e u}|x^{\xi}(u)-z^{\xi,\Delta}(u)|^2\Big]\nn\\
&\quad+K\int_{(t-\tau)\vee 0}^{t}e^{\e s}\E[|x^{\xi}(s)-z^{\xi,\Delta}(s)|^2]\mathrm ds+K\int_{(t-\tau)\vee 0}^{t}e^{\e s}\E[\|z^{\xi,\Delta}_s-y^{\xi,\Delta}_s\|^2]\mathrm ds
\nn\\
&\quad+\int_{(t-\tau)\vee 0}^{t}e^{\e s}\int_{-\tau}^{0}\E[|x^{\xi}_s(r)-z^{\xi,\Delta}_s(r)|^2]\mathrm d \nu_{1}(r)\mathrm ds.
\end{align*}
It follows from \eqref{l4.5.3} and Lemma \ref{l4.4} that
\begin{align*}
&\frac{1}{2}e^{\e(t-\tau)\vee 0}\E\Big[\sup_{(t-\tau)\vee 0 \leq u\leq t}|x^{\xi}(u)-z^{\xi,\Delta}(u)|^2\Big]\nn\\
\leq {}&\frac{1}{2}\E\Big[\sup_{(t-\tau)\vee 0 \leq u\leq t}e^{\e u}|x^{\xi}(u)-z^{\xi,\Delta}(u)|^2\Big]\nn\\
\leq{}&K(1+\|\xi\|^{2(1+\beta)(2\beta+2)})e^{\e t}\Delta+K(1+\|\xi\|^{2(1+\beta)(2\beta+2)})\Delta\int_{(t-\tau)\vee 0}^{t}e^{\e s}\mathrm ds\nn\\
\leq{}& K(1+\|\xi\|^{2(1+\beta)(2\beta+2)})e^{\e t}\Delta,
\end{align*}
which implies
$$\E\Big[\sup_{(t-\tau)\vee 0 \leq u\leq t}|x^{\xi}(u)-z^{\xi,\Delta}(u)|^2\Big]\leq {}K(1+\|\xi\|^{2(1+\beta)(2\beta+2)})\Delta.$$
The proof is completed.
\end{proof}

Utilizing  Lemmas \ref{l4.4} and \ref{l4.5}, we conclude the following mean-square convergence rate  of the $\theta$-EM functional  solution. 
\begin{thm}\label{th4.1}
Let Assumptions \ref{a1}--\ref{a2}, \ref{a4}--\ref{a7} hold. Then for $\Delta\in(0,1]$ and $\xi\in \mathcal C^d$, 
\begin{align*}
\sup_{k\geq 0}\E[\|x^{\xi}_{t_k}-y^{\xi,\Delta}_{t_k}\|^2] \leq K(1+\|\xi\|^{2(1+\beta)(2\beta+2)})\Delta.
\end{align*}
\end{thm}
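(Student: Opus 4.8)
The plan is to bridge the exact functional solution $x^{\xi}_{t_k}$ and the $\theta$-EM functional solution $y^{\xi,\Delta}_{t_k}$ through the auxiliary segment process $z^{\xi,\Delta}_{t_k}$, for which the two uniform-in-time estimates needed are already in hand. Concretely, I would apply the triangle inequality in the $\mathcal C^d$-norm,
\[
\|x^{\xi}_{t_k}-y^{\xi,\Delta}_{t_k}\|\leq \|x^{\xi}_{t_k}-z^{\xi,\Delta}_{t_k}\|+\|z^{\xi,\Delta}_{t_k}-y^{\xi,\Delta}_{t_k}\|,
\]
and then use the elementary bound $(a+b)^2\leq 2a^2+2b^2$ to obtain
\[
\E[\|x^{\xi}_{t_k}-y^{\xi,\Delta}_{t_k}\|^2]\leq 2\,\E[\|x^{\xi}_{t_k}-z^{\xi,\Delta}_{t_k}\|^2]+2\,\E[\|z^{\xi,\Delta}_{t_k}-y^{\xi,\Delta}_{t_k}\|^2].
\]
This reduces the assertion to controlling the exact-versus-auxiliary error and the auxiliary-versus-interpolation error separately.

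For the first term I would invoke \cref{l4.5}, evaluated at the grid time $t=t_k$, which yields a bound of order $\Delta$ with prefactor $K(1+\|\xi\|^{2(1+\beta)(2\beta+2)})$; this is legitimate since the lemma is stated with a supremum over $t\geq 0$, so it applies uniformly in $k$. For the second term I would invoke \cref{l4.4} with $p=2$, giving $K(1+\|\xi\|^{2(1+\beta)^2})\Delta$, again uniformly in $k$. The only remaining bookkeeping is to reconcile the two polynomial prefactors: since $1+\beta\leq 2\beta+2=2(1+\beta)$, one has $2(1+\beta)^2\leq 2(1+\beta)(2\beta+2)$, whence $1+\|\xi\|^{2(1+\beta)^2}\leq K(1+\|\xi\|^{2(1+\beta)(2\beta+2)})$. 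Adding the two contributions and taking the supremum over $k\geq 0$ then produces the claimed estimate with the single prefactor $K(1+\|\xi\|^{2(1+\beta)(2\beta+2)})$ and the rate $\Delta$.

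I do not expect any serious obstacle at the level of this theorem: the genuine difficulty has already been discharged in the two preceding lemmas, and \cref{th4.1} is essentially a clean triangle-inequality assembly of them. The hard part lies upstream, in the uniform-in-time comparison $x^{\xi}-z^{\xi,\Delta}$ of \cref{l4.5}, whose proof rests on the dissipativity encoded in \cref{a2} (used through the exponential weight $e^{\e t}$ to absorb the accumulation of the diffusion contribution over the infinite horizon), on the time-independent high-order moment bounds of \cref{l4.3}, and on the one-step interpolation estimate of \cref{l4.4}. Once these are granted, the convergence rate $\frac12$ on the whole half-line follows immediately.
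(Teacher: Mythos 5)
Your proposal is correct and coincides with the paper's own argument: the paper introduces Theorem \ref{th4.1} precisely as a consequence of Lemmas \ref{l4.4} and \ref{l4.5}, assembled by the triangle inequality through the auxiliary segment process $z^{\xi,\Delta}_{t_k}$, exactly as you describe. Your bookkeeping of the polynomial prefactors (absorbing $\|\xi\|^{2(1+\beta)^2}$ into $\|\xi\|^{2(1+\beta)(2\beta+2)}$) is also the right way to obtain the single stated constant.
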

%\begin{proof}\textbf{Proof}.
%Making use of Lemmas \ref{l4.4} and \ref{l4.5} yields
%\begin{align*}
%\E\|x^{\xi}_{t_k}-y^{\xi,\Delta}_{t_k}\|^2
%\leq&2\E\|x^{\xi}_{t_k}-z^{\xi,\Delta}_{t_k}\|^2+2\E\|z^{\bf 0,\Delta}_{t_k}-y^{\bf 0,\Delta}_{t_k}\|^2
%\leq K\Delta.
%\end{align*} 
%Recalling Lemma \ref{l2.3} and Proposition \ref{p2.3}, we deduce that for any $\xi\in C$, $\Delta\in(0,1]$, and $k\in\mathbb N$,
%\begin{align}\label{t4.1.1}
%\E\|x^{\xi}_{t_k}-y^{\xi,\Delta}_{t_k}\|^2
%\leq&3\E\|x^{\xi}_{t_k}-x^{\bf 0}_{t_k}\|^2+3\E\|x^{\bf 0}_{t_k}-y^{\bf 0,\Delta}_{t_k}\|^2+3\E\|y^{\bf 0,\Delta}_{t_k}-y^{\xi,\Delta}_{t_k}\|^2\nn\\
%\leq&K\|\xi\|^{2}(1+\|\xi\|^{2\beta})(e^{-\lambda_0 t_{k}}+\Delta),
%\end{align} 
%where $\lambda_0$ is given by Proposition \ref{p2.3}. %and the positive constant $K$ is dependent of $\xi$ but independent of $\Delta$ and $k$. 
%Let $\bar k_{\Delta}\in\mathbb N$ large sufficiently such that 
%$$\bar k_{\Delta}\geq -\frac{1}{\lambda_0 \Delta}\ln \Delta.$$
%This implies that for any $k\geq \bar k_{\Delta}$,
%$$e^{-\lambda_0 t_k}\leq e^{-\lambda_0 t_{\bar k_{\Delta}}}\leq \Delta.$$
%Combining with \eqref{t4.1.1} yields 
%\begin{align}
%\E\|x^{\xi}_{t_k}-y^{\xi,\Delta}_{t_k}\|^2\leq K(1+\|\xi\|^{2(\beta+1)})\Delta,~~~\forall ~k\geq \bar k_{\Delta}.
%\end{align}
%The proof hence is completed.
%\end{proof}

In view of Proposition \ref{l4.3}, Proposition \ref{p2.3}, Theorem \ref{th4.1}, and \cite[Theorems 3.1--3.2]{chen2023probabilistic}, we present the strong law of large numbers (SLLN) and the central limit theorem (CLT) of the 
$\theta$-EM method.   Define the empirical measure as $\Pi^{m,\Delta}:=\frac{1}{m}\sum_{k=0}^{m-1}\delta_{y^{\xi,\Delta}_{t_k}}$ with $\delta$ being the Dirac function. $\Pi^{m,\Delta}(f):=\frac{1}{m}\sum_{k=0}^{m-1}f(y^{\xi,\Delta}_{t_k})$ is called the time-average of the $\theta$-EM functional solution. 
\begin{coro}\label{coro_SLLN}
Under Assumptions \ref{a1}--\ref{a2} and  \ref{a4}--\ref{a7},  the numerical functional solution $\{y^{\xi,\Delta}_{t_k}\}_{k\ge 0}$ fulfills the  strong law of large numbers (SLLN) and the central limit theorem (CLT),  namely,  for any $p\ge 1,\gamma\in(0,1],$ and $f\in\mathcal C_{p,\gamma}$,
\begin{subequations}
\begin{align}
&\lim_{\Delta\to0}\lim_{m_1\to\infty}\frac{1}{m_1}\sum_{k=0}^{m_1-1}f(y^{\xi,\Delta}_{t_k})=\mu(f)\quad a.s., \tag{SLLN}\\
&\lim_{\Delta\to0}\frac{1}{\sqrt{m_2\Delta}}\sum_{k=0}^{m_2-1}(f(y^{\xi,\Delta}_{t_k})-\mu(f))\Delta=\mathcal N(0,v^2)\quad \text{in distribution}, \tag{CLT}
\end{align}
\end{subequations}
where $m_2=\lceil \Delta^{-1-2\lambda}\rceil$ with $\lambda\in(0,\frac12\gamma),$ $v^2=2\mu\big((f-\mu(f))\int_0^{\infty}(P_tf-\mu(f))\big).$ Here, $\mathcal C_{p,\gamma}:=\mathcal C_{p,\gamma}(\mathcal C^d;\mathbb R)$ is the  space of continuous functions $f$ satisfying $$\sup_{\phi\in\mathcal C^d}\frac{|f(\phi)|}{1\!+\!\|\phi\|^{\frac p2}}+\!\sup_{\substack{\phi_1,\phi_2\in\mathcal C^d\\\phi_1\neq \phi_2}}\frac{|f(\phi_1)-f(\phi_2)|}{(1\wedge \|\phi_1-\phi_2\|^{\gamma})(1+\|\phi_1\|^p+\|\phi_2\|^p)^{\frac12}}<\infty.$$
\end{coro}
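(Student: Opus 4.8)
The plan is to verify that the $\theta$-EM functional solution satisfies the structural hypotheses underpinning the abstract SLLN and CLT of \cite[Theorems 3.1--3.2]{chen2023probabilistic}, and then invoke those theorems verbatim. That abstract framework requires, for a time-homogeneous Markov chain on $\mathcal C^d$, three ingredients: (i) time-independent boundedness of high-order moments of the chain; (ii) a uniform-in-$\Delta$ exponential contraction estimate in a Wasserstein-type distance, which yields a unique numerical invariant measure $\mu^{\Delta}$ with geometric ergodicity; and (iii) a time-independent strong convergence rate to the exact functional solution. All three are already established: Proposition \ref{l4.3} gives (i) via $\sup_{\Delta}\sup_{k}\E[\|y^{\xi,\Delta}_{t_k}\|^p]\le K(1+\|\xi\|^{p(\beta+1)})$ for every integer $p\ge2$; Proposition \ref{p2.3} gives (ii) through $\E[\|y^{\xi,\Delta}_{t_k}-y^{\eta,\Delta}_{t_k}\|^2]\le K(|b(\xi)-b(\eta)|^2+\|\xi-\eta\|^2)e^{-\lambda_0 t_k}$; and Theorem \ref{th4.1} gives (iii) with rate $\frac12$.

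For the SLLN I would first record that, by (i)--(ii), the chain admits a unique $\mu^{\Delta}$ with $\mu^{\Delta}(\|\cdot\|^p)\le K$ uniformly in $\Delta$, and that the ergodic averages $\Pi^{m_1,\Delta}(f)$ converge $\PP$-a.s.\ to $\mu^{\Delta}(f)$ as $m_1\to\infty$ for every $f\in\mathcal C_{p,\gamma}$; this is the inner limit, delivered by the first cited theorem. The growth exponent $\frac p2$ in the definition of $\mathcal C_{p,\gamma}$ is matched exactly to the moment order of Proposition \ref{l4.3}, keeping $\mu^{\Delta}(f)$ finite and uniformly bounded. For the outer limit $\mu^{\Delta}(f)\to\mu(f)$ as $\Delta\to0$, I would combine the Hölder seminorm in the definition of $\mathcal C_{p,\gamma}$ with Theorem \ref{th4.1}: splitting $|f(y^{\xi,\Delta}_{t_k})-f(x^{\xi}_{t_k})|$ into the factor $1\wedge\|y^{\xi,\Delta}_{t_k}-x^{\xi}_{t_k}\|^{\gamma}$ and the polynomial factor $(1+\|y^{\xi,\Delta}_{t_k}\|^p+\|x^{\xi}_{t_k}\|^p)^{1/2}$, then applying the Hölder inequality together with Jensen's inequality ($2\gamma\le2$) and Proposition \ref{l4.3}, Lemma \ref{l4.1}, yields $|\E f(y^{\xi,\Delta}_{t_k})-\E f(x^{\xi}_{t_k})|\le K\Delta^{\gamma/2}$ uniformly in $k$. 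Letting $k\to\infty$ and using $\mu^{\xi}_t\to\mu$, $\mu^{\xi,\Delta}_{t_k}\to\mu^{\Delta}$ gives $|\mu^{\Delta}(f)-\mu(f)|\le K\Delta^{\gamma/2}\to0$, which combined with the inner limit completes the double-limit SLLN.

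For the CLT, the choice $m_2=\lceil\Delta^{-1-2\lambda}\rceil$ with $\lambda\in(0,\tfrac12\gamma)$ is calibrated so that the accumulated bias survives the $\sqrt{m_2\Delta}$ normalization as a vanishing term. Decomposing $f(y^{\xi,\Delta}_{t_k})-\mu(f)=[f(y^{\xi,\Delta}_{t_k})-\mu^{\Delta}(f)]+[\mu^{\Delta}(f)-\mu(f)]$, the second piece contributes $\sqrt{m_2\Delta}\,(\mu^{\Delta}(f)-\mu(f))=O(\Delta^{\gamma/2-\lambda})\to0$ by the bias estimate just derived, while the first piece, under the scaling $\sqrt{m_2\Delta}$, converges to a centered Gaussian with numerical variance $v_{\Delta}^2$ by the second cited theorem. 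The main obstacle is identifying the limiting variance: one must show $v_{\Delta}^2\to v^2$, i.e.\ that $\sum_{k}\big(P^{\Delta}_{t_k}f-\mu^{\Delta}(f)\big)\Delta$ approximates the Green--Kubo integral $\int_0^{\infty}(P_tf-\mu(f))\,\mathrm dt$ defining $v^2$. I would handle this by using the geometric decay from Proposition \ref{p2.3} to truncate both the discrete sum and the continuous integral at a common horizon, uniformly in $\Delta$, and then the longtime weak convergence of $P^{\Delta}_{t}f$ to $P_tf$ on the truncated interval to match the two. The crucial feature enabling every step is the time-independence of the constants in Propositions \ref{l4.3}, \ref{p2.3}, and Theorem \ref{th4.1}; without it the double scaling in $m_2$ and $\Delta$ could not be passed to the limit.
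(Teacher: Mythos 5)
Your proposal follows exactly the paper's route: the paper's entire proof of Corollary~\ref{coro_SLLN} is the single observation that Proposition~\ref{l4.3} (uniform high-order moment bounds), Proposition~\ref{p2.3} (exponential attractiveness), and Theorem~\ref{th4.1} (time-independent mean-square rate $\tfrac12$) verify the hypotheses of \cite[Theorems 3.1--3.2]{chen2023probabilistic}, which is precisely your ingredients (i)--(iii). The additional details you supply --- the bias bound $|\mu^{\Delta}(f)-\mu(f)|\le K\Delta^{\gamma/2}$ via the H\"older/Jensen splitting, the calibration $\lambda\in(0,\tfrac12\gamma)$ making $\sqrt{m_2\Delta}\,(\mu^{\Delta}(f)-\mu(f))=O(\Delta^{\gamma/2-\lambda})\to0$, and the variance identification $v_{\Delta}^2\to v^2$ --- are consistent with the machinery of the cited reference, so the argument is correct and essentially identical to the paper's.
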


\section{Summary and outlook}
In this chapter, we present the longtime mean-square convergence analysis for the $\theta$-EM method for the SFDE \eqref{FF} with the superlinearly growing drift coefficient. The  mean-square convergence rate is proved to be $\frac12.$ The time-independent boundedness of high-order moment of the numerical solution is the key to obtain the desired mean-square convergence result.  
There are  some rooms to loose the conditions proposed in this chapter. 
For example, if we weaken  Assumption \ref{a5} to 
\begin{align*}
\langle b(\phi),\phi(0)\rangle \leq K-a_3|\phi(0)|^{\alpha+\epsilon}+a_4\int_{-\tau}^0|\phi(r)|^{\alpha}\mathrm d\nu_2(r),\quad \phi\in\mathcal C^d,
\end{align*}
the mean-square convergence result  still holds based on  
 the approach used in this chapter, where $\epsilon>0,\alpha\ge 2, a_3,a_4>0$. 
When both the drift and the diffusion coefficients are superlinearly growing, there are some longtime mean-square convergence  results of  numerical methods for stochastic ordinary differential equations (for BEM method see \cite{LXW23}).  However, less is known for the  SFDE case.

As applications of the longtime mean-square convergence result,  we present in Corollary \ref{coro_SLLN} the SLLN and the CLT of the time-average of the  $\theta$-EM functional solution. Following this line, %as a supplementary work, 
it is interesting to further study the probabilistic asymptotics that  the empirical measure  deviates from the invariant measure. 
%optimal convergence the law of the iterated logarithm and the exponential decay probability of the time-average of the functional solution takes large values of the numerical functional solution.   
We leave this as our future work.

    \cleardoublepage
    %-----------------------------------------------------------------------
% Beginning of chap1.tex
%-----------------------------------------------------------------------
%
%  AMS-LaTeX sample file for a chapter of a monograph, to be used with
%  an AMS monograph document class.  This is a data file input by
%  chapter.tex.
%
%  Use this file as a model for a chapter; DO NOT START BY removing its
%  contents and filling in your own text.
% 
%%%%%%%%%%%%%%%%%%%%%%%%%%%%%%%%%%%%%%%%%%%%%%%%%%%%%%%%%%%%%%%%%%%%%%%%

%\part{This is a Part Title Sample}

\chapter{Invariant measure and weak convergence analysis}

In this chapter, we investigate the existence and uniqueness of the invariant measure of the $\theta$-EM method.
%This implies the ergodicity of the consider numerical method, which  
In order to give a quantitative estimate between  the numerical invariant measure and the exact one, we analyze the longtime weak convergence  of the $\theta$-EM method. Finally, it is shown that the convergence rate of the invariant measure of the $\theta$-EM method is $1$, which is twice the mean-square convergence rate.  

\section{Numerical invariant measure}
In this  section, we  show that   the $\theta$-EM functional  solution admits a unique invariant measure. 
%Then based on the mean-square convergence rate  result obtained in {\color{red}Chapter 2}, it is proved that the numerical invariant measure converges to the exact one with order $\frac12.$ 
%\subsection{Existence and uniqueness of numerical invariant measure}
%This subsection is devoted to presenting the exiatence and the uniqueness of the numerical invariant measure, 
 Recall
$$y^{\xi,\Delta}_{t}:=\sum_{k=0}^{\infty}y^{\xi,\Delta}_{t_{k}}\mathbf 1_{[t_k,t_{k+1})}(t)$$
and define the measure $\mu_{t}^{\xi,\Delta}$ by
\begin{align*}
\mu_{t}^{\xi,\Delta}(A):=\PP\big\{\omega\in\Omega: y_{t}^{\xi,\Delta}\in A\big\}\quad \forall\, A\in\mathcal{B}(\mathcal C^d).
\end{align*}
Denote by $P_{t}^{\xi, \Delta}$ the Markov semigroup associated to $\mu^{\xi,\Delta}_{t}$.
According to Propositions \ref{p2.2} and \ref{p2.3}, we can obtain the tightness of the sequence $\{\mu_{t_k}^{\xi,\Delta}\}_{k=0}^{\infty}$.
\begin{lemma}\label{lemm2.1}
Under Assumptions \ref{a1} and \ref{a2}, for $\xi\in \mathcal C^d$ and $\Delta\in(0,1]$, the family $\{\mu_{t_k}^{\xi,\Delta}\}_{k=0}^{\infty}$ is tight. 
\end{lemma}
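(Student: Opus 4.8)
The plan is to reduce the tightness of $\{\mu_{t_k}^{\xi,\Delta}\}_{k\ge0}$ to showing that this sequence is Cauchy in the bounded-Wasserstein distance $\mathbb W_1$ on the Polish space $\mathcal C^d$. Indeed, by Proposition \ref{p2.2} we have the time-independent bound $\sup_{k\ge0}\E[\|y_{t_k}^{\xi,\Delta}\|^2]\le K(1+\|\xi\|^2+|b(\xi)|^2\Delta^2)$, so all measures in the family share a uniform second-moment bound. Invoking \cite[Lemma 6.14]{Villani}, it then suffices to verify the Cauchy property in $(\mathcal P,\mathbb W_1)$, since a $\mathbb W_1$-Cauchy sequence whose second moments are uniformly bounded is automatically tight.

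Next I would estimate $\mathbb W_1(\mu_{t_{k+i}}^{\xi,\Delta},\mu_{t_k}^{\xi,\Delta})$ via the Kantorovich--Rubinstein duality, writing it as a supremum over test functionals $\Psi$ with $\|\Psi\|_{\mathrm{Lip}}\vee\|\Psi\|_{\infty}\le1$ of $|\E\Psi(y_{t_{k+i}}^{\xi,\Delta})-\E\Psi(y_{t_k}^{\xi,\Delta})|$. Conditioning on $\mathcal F_{t_i}$ and exploiting the time-homogeneous Markov property of the numerical functional solution, this equals $\big|\E\big(\E(\Psi(y_{t_k}^{\zeta,\Delta})-\Psi(y_{t_k}^{\xi,\Delta}))\big|_{\zeta=y_{t_i}^{\xi,\Delta}}\big)\big|$; the $1$-Lipschitz and boundedness constraints on $\Psi$ then control this by $\E\big(\E(1\wedge\|y_{t_k}^{\zeta,\Delta}-y_{t_k}^{\xi,\Delta}\|)\big|_{\zeta=y_{t_i}^{\xi,\Delta}}\big)$.

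The core of the argument is to apply the exponential attractiveness of Proposition \ref{p2.3}, which supplies the decay $\E[\|y_{t_k}^{\zeta,\Delta}-y_{t_k}^{\xi,\Delta}\|^2]\le K(|b(\zeta)-b(\xi)|^2+\|\zeta-\xi\|^2)e^{-\lambda_0 t_k}$. Because $b$ is only locally Lipschitz and may grow superlinearly, the factor $|b(y_{t_i}^{\xi,\Delta})-b(\xi)|$ cannot be bounded uniformly, so I would truncate at a level $M>\|\xi\|$: on $\{\|y_{t_i}^{\xi,\Delta}\|\le M\}$ the continuity of $b$ keeps $\sup_{\|u\|\le M}(|b(u)|^2+\|u\|^2)$ finite, while the complementary event is absorbed through the Chebyshev inequality together with the moment bound of Proposition \ref{p2.2}, contributing a term of order $M^{-2}$. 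This yields, uniformly in $i$,
\begin{align*}
\mathbb W_1(\mu_{t_{k+i}}^{\xi,\Delta},\mu_{t_k}^{\xi,\Delta})
\le 4K\sup_{\|u\|\le M}\big(|b(u)|^2+\|u\|^2\big)e^{-\lambda_0 t_k}
+\frac{2K(1+\|\xi\|^2+|b(\xi)|^2\Delta^2)}{M^2}.
\end{align*}

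Finally, given $\varepsilon>0$ I would first fix $M$ large enough that the second term falls below $\varepsilon/2$, and then choose $k_1$ so that the exponentially decaying first term is below $\varepsilon/2$ for all $k\ge k_1$; this forces $\mathbb W_1(\mu_{t_{k+i}}^{\xi,\Delta},\mu_{t_k}^{\xi,\Delta})<\varepsilon$ uniformly in $i$, establishing the Cauchy property and hence the tightness. I expect the main obstacle to be precisely the superlinear growth of $b$ entering through Proposition \ref{p2.3}; the truncation-plus-Chebyshev split is what decouples the exponential-in-$k$ decay from the uncontrolled growth of $b$, and it is essential that the moment bound in Proposition \ref{p2.2} is time-independent so that the $M^{-2}$ term remains uniform in both $k$ and $i$.
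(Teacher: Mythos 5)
Your proposal is correct and follows essentially the same route as the paper's proof: reduction of tightness to the Cauchy property in $(\mathcal P,\mathbb W_1)$ via the uniform second-moment bound of Proposition \ref{p2.2} and \cite[Lemma 6.14]{Villani}, Kantorovich--Rubinstein duality combined with the time-homogeneous Markov property, and then the truncation-plus-Chebyshev split that feeds the exponential attractiveness of Proposition \ref{p2.3} into the Wasserstein estimate. One cosmetic slip: since Proposition \ref{p2.3} is a mean-square estimate, passing to $\E[1\wedge\|y^{\zeta,\Delta}_{t_k}-y^{\xi,\Delta}_{t_k}\|]$ requires Jensen's inequality, so your displayed bound should read $K\sup_{\|u\|\le M}\big(|b(u)|+\|u\|\big)e^{-\lambda_0 t_k/2}$ (square roots and halved exponent) rather than the squared form with $e^{-\lambda_0 t_k}$; this is exactly the form in the paper and is immaterial to the $\varepsilon/2$-argument and the conclusion.
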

\begin{proof}
Since $\mathcal C^d$ is a Polish space and 
$$\sup_{k\in\mathbb N}\E[\|y_{t_k}^{\xi,\Delta}\|^2]\leq K(1+\|\xi\|^2+|b(\xi)|^2\Delta^2),$$
and by virtue of \cite[Lemma 6.14]{Villani}, it suffices  to prove that  the family $\{\mu_{t_k}^{\xi,\Delta}\}_{k=0}^{\infty}$ is a Cauchy sequence in the space $(\mathcal P, \mathbb{W}_1)$. 
In fact, for any $k,i\in\mathbb N$, applying \cite[Remark 6.5]{Villani} leads to
\begin{align*}
&\quad\mathbb{W}_1(\mu_{t_{k+i}}^{\xi,\Delta},\mu_{t_k}^{\xi,\Delta})\nn\\
&=\sup_{\|\Psi\|_{Lip}\vee\|\Psi\|_{\infty}\leq1}\Big|\int_{\mathcal C^d}\Psi(\phi)\mathrm d\mu_{t_{k+i}}^{\xi,\Delta}(\phi)-\int_{\mathcal C^d}\Psi(\phi)\mathrm d\mu_{t_k}^{\xi,\Delta}(\phi)\Big|,
\end{align*}
where $\Psi: \mathcal C^d\rightarrow\RR$  satisfies the Lipschitz condition and norms $\|\cdot\|_{Lip}$  and $\|\cdot\|_{\infty}$ are defined, respectively,  by
$$\|\Psi\|_{Lip}=\sup_{\substack{\phi_1,\phi_2\in \mathcal C^d\\
\phi_1\neq \phi_2}}\frac{|\Psi(\phi_1)-\Psi(\phi_2)|}{\|\phi_1-\phi_2\|}\;\text{ and }\;\|\Psi\|_{\infty}=\sup_{\phi\in \mathcal C^d}\frac{|\Psi(\phi)|}{\|\phi\|}.$$
By the definition of $\mu_{\cdot}^{\xi,\Delta}$ and the time-homogeneous Markov property of $\{y^{\xi,\Delta}_{t_{k}}\}_{k=0}^{\infty}$, we deduce
\begin{align*}
&\mathbb{W}_1(\mu_{t_{k+i}}^{\xi,\Delta},\mu_{t_k}^{\xi,\Delta})
=\sup_{\|\Psi\|_{Lip}\vee\|\Psi\|_{\infty}\leq1}|\E[\Psi(y^{\xi,\Delta}_{t_{k+i}})]-\E[\Psi(y^{\xi,\Delta}_{t_{k}})]|\nn\\
=&\,\sup_{\|\Psi\|_{Lip}\vee\|\Psi\|_{\infty}\leq1}\Big|\E\Big[\E\big[\Psi(y^{\xi,\Delta}_{t_{k+i}})|\mathcal F_{t_i}\big]\Big]-\E[\Psi(y^{\xi,\Delta}_{t_{k}})]\Big|\nn\\
=&\,\sup_{\|\Psi\|_{Lip}\vee\|\Psi\|_{\infty}\leq1}\Big|\E\Big[\E\big[\Psi(y^{\zeta,\Delta}_{t_{k}})\big]\big|_{\zeta=y^{\xi,\Delta}_{t_{i}}}\Big]-\E[\Psi(y^{\xi,\Delta}_{t_{k}})]\Big|\nn\\
=&\,\sup_{\|\Psi\|_{Lip}\vee\|\Psi\|_{\infty}\leq1}\Big|\E\Big[\E\big[\Psi(y^{\zeta,\Delta}_{t_{k}})-\Psi(y^{\xi,\Delta}_{t_{k}})\big]\big|_{\zeta=y^{\xi,\Delta}_{t_{i}}}\Big]\Big|.
\end{align*}
It follows from the Lipschitz property of $\Psi$ and Proposition \ref{p2.3}  that for any constant $M>\|\xi\|$,
\begin{align*}
&\mathbb{W}_1(\mu_{t_{k+i}}^{\xi,\Delta},\mu_{t_k}^{\xi,\Delta})
\leq\E\Big[\E\big[2\wedge\|y^{\zeta,\Delta}_{t_{k}}-y^{\xi,\Delta}_{t_{k}}\|\big]\big|_{\zeta=y^{\xi,\Delta}_{t_{i}}}\Big]\nn\\
\leq&\;K\E\Big[\big(|b(y^{\xi,\Delta}_{t_{i}})-b(\xi)|+\|y^{\xi,\Delta}_{t_{i}}-\xi\|\big)\textbf 1_{\{\|y^{\xi,\Delta}_{t_{i}}\|\leq M\}}\Big]e^{-\frac{\lambda_0 t_k}{2}}
+2\PP\{\|y^{\xi,\Delta}_{t_{i}}\|> M\}.
\end{align*}
According to the Chebyshev inequality and Proposition \ref{p2.2}, we have
\begin{align*}
\mathbb{W}_1(\mu_{t_{k+i}}^{\xi,\Delta},\mu_{t_k}^{\xi,\Delta})
\leq&\,K\sup_{\|\phi\|\leq M}\big(|b(\phi)|+\|\phi\|\big)e^{-\frac{\lambda_0 t_k}{2}}
+2\frac{\E[\|y^{\xi,\Delta}_{t_{i}}\|^2]}{M^2}\nn\\
\leq&\,K\sup_{\|\phi\|\leq M}\big(|b(\phi)|+\|\phi\|\big)e^{-\frac{\lambda_0 t_k}{2}}
+\frac{K(1+\|\xi\|^2+|b(\xi)|^2\Delta^2)}{M^2}.
\end{align*}
For any $\e>0$, choose a sufficiently large number $M>0$  such that
$$\frac{K(1+\|\xi\|^2+|b(\xi)|^2\Delta^2)}{M^2}<\frac{\e}{2}.$$
By virtue of the continuity of $b(\cdot)$, choose a sufficiently large number $k\in\mathbb N$ such that 
$$ K\sup_{\|\phi\|\leq M}\big(|b(\phi)|^2+\|\phi\|^2\big)e^{-\frac{\lambda_0 t_k}{2}}<\frac{\e}{2}.$$
Hence, the family $\{\mu_{t_k}^{\xi,\Delta}\}_{k=0}^{\infty}$ is a Cauchy sequence in  $(\mathcal P, \mathbb{W}_1)$. The proof is finished.
\end{proof}

Taking advantage of  Proposition \ref{p2.3} and Lemma \ref{lemm2.1}, we give the existence and uniqueness  of the numerical invariant measure of the $\theta$-EM functional solution.

\begin{thm}
Under Assumptions \ref{a1} and {\ref{a2}}, for $\Delta\in(0,1]$, the $\theta$-EM functional solution admits a unique invariant measure $\mu^{\Delta}$. Furthermore, for any $\Delta\in(0,1]$ the numerical invariant  measure $\mu^{\Delta}$ satisfies %$\mu^{\Delta}(\|\cdot\|^p)<\infty$, namely,
\begin{align}\label{4.3l+2}
\sup_{\Delta\in(0,1]}\mu^{\Delta}(\|\cdot\|^p):=\sup_{\Delta\in(0,1]}\int_{\mathcal C^d}\|\phi\|^{p}\mathrm d\mu^{\Delta}(\phi)\leq K.
\end{align}
\end{thm}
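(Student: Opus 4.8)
The plan is to split the argument into three parts: existence, uniqueness, and the uniform moment estimate. For \textbf{existence}, I would invoke the Krylov--Bogoliubov theorem, for which two ingredients are needed. The first is the tightness of the family $\{\mu_{t_k}^{\xi,\Delta}\}_{k\ge 0}$, already supplied by Lemma \ref{lemm2.1}. The second is the Feller property of the semigroup $P_{t_k}^{\xi,\Delta}$, i.e.\ the continuity of $\xi\mapsto \E[\Phi(y^{\xi,\Delta}_{t_k})]$ for every bounded continuous $\Phi:\mathcal C^d\to\RR$. This I would deduce from Proposition \ref{p2.3}: as $\xi\to\eta$ the estimate there forces $y^{\xi,\Delta}_{t_k}\to y^{\eta,\Delta}_{t_k}$ in $L^2(\Omega;\mathcal C^d)$, hence in distribution, so that $\E[\Phi(y^{\xi,\Delta}_{t_k})]\to\E[\Phi(y^{\eta,\Delta}_{t_k})]$. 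Tightness together with the Feller property then yields at least one invariant measure $\mu^\Delta$ for each fixed $\Delta\in(0,1]$.

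For \textbf{uniqueness}, suppose $\mu^\Delta_1$ and $\mu^\Delta_2$ are both invariant. Using invariance, the convexity of the map $(\mathfrak u_1,\mathfrak u_2)\mapsto \mathbb W_2^2(\mathfrak u_1,\mathfrak u_2)$, and a coupling through the two numerical flows started from $\xi$ and $\eta$, I would estimate
\begin{align*}
\mathbb W_2^2(\mu^\Delta_1,\mu^\Delta_2)
\leq \int_{\mathcal C^d\times\mathcal C^d} 1\wedge\Big(K\big(|b(\xi)-b(\eta)|^2+\|\xi-\eta\|^2\big)e^{-\lambda_0 t_k}\Big)\,\mathrm d\mu^\Delta_1(\xi)\,\mathrm d\mu^\Delta_2(\eta),
\end{align*}
where Proposition \ref{p2.3} controls the integrand. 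The integrand is bounded by $1$ and tends to $0$ pointwise as $k\to\infty$, so the dominated convergence theorem gives $\mathbb W_2(\mu^\Delta_1,\mu^\Delta_2)=0$, proving uniqueness. A by-product of Lemma \ref{lemm2.1} and uniqueness is that $\mu_{t_k}^{\xi,\Delta}\to\mu^\Delta$ in $\mathbb W_1$, hence weakly, for every $\xi\in\mathcal C^d$.

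Finally, for the \textbf{uniform moment bound} I would exploit the freedom in the initial datum and take $\xi=\mathbf 0$. Proposition \ref{l4.3} evaluated at the zero initial datum yields $\sup_{\Delta\in(0,1]}\sup_{k\ge 0}\E[\|y^{\mathbf 0,\Delta}_{t_k}\|^p]\leq K$ with $K$ independent of both $k$ and $\Delta$. Because $\|\cdot\|^p$ is unbounded, I cannot pass to the limit directly against weak convergence; instead I would test against the bounded continuous functional $\|\cdot\|^p\wedge M$ to get $\mu^\Delta(\|\cdot\|^p\wedge M)=\lim_{k\to\infty}\E[\|y^{\mathbf 0,\Delta}_{t_k}\|^p\wedge M]\leq K$, and then let $M\to\infty$ via monotone convergence to conclude $\sup_{\Delta\in(0,1]}\mu^\Delta(\|\cdot\|^p)\leq K$.

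The technically delicate points are not in this final assembly, which is routine once the three preparatory propositions are in hand; the genuine effort sits in Propositions \ref{p2.2}, \ref{p2.3}, and \ref{l4.3}. Within the present proof the only subtle step is the last one: the available moment estimates are all $\|\xi\|$-dependent and do not themselves decay, so a $\xi$- and $\Delta$-independent bound on $\mu^\Delta$ cannot be read off directly. The remedy is the combination of the special choice $\xi=\mathbf 0$, which collapses the right-hand side of Proposition \ref{l4.3} to the constant $K$, with the truncation-and-monotone-convergence device needed to handle the unbounded test functional under mere weak convergence.
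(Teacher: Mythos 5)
Your proposal is correct and follows essentially the same route as the paper: existence via Krylov--Bogoliubov using the tightness of Lemma \ref{lemm2.1} and the Feller property deduced from Proposition \ref{p2.3}, uniqueness via the convexity of $\mathbb W_2^2$ together with Proposition \ref{p2.3} and dominated convergence, and the uniform moment bound via the choice $\xi=\mathbf 0$ in Proposition \ref{l4.3} combined with truncation at level $M$ and monotone convergence. Your explicit remark that $\mu_{t_k}^{\xi,\Delta}\to\mu^{\Delta}$ weakly (needed to pass the bounded test functional $\|\cdot\|^p\wedge M$ to the limit) is left implicit in the paper, so if anything your write-up is slightly more careful on that point.
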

\begin{proof}
We divide the proof of the existence and uniqueness of the numerical invariant measure into two steps.

\underline{Step 1.} We present the existence of numerical invariant measure based on the Krylov--Bogoliubov theorem \cite[Theorem 3.1.1]{DaPrato1996}.
% Let $C_{b}$ be the space of all continuous and bounded mapping $\Phi:C\rightarrow\RR$ with the norm
%$$\|\Phi\|_{b}=\sup_{u\in C}|\Phi(u)|.$$
For any $\xi\in \mathcal C^d$, $\Delta\in(0,1]$, and $k\in\mathbb N$, %denote by $P_{t_k}^{\xi, \Delta}$ the Markov semigroup generated by $\mu^{\xi,\Delta}_{t_{k}}$.
%$$P_{t_k}^{\xi,\Delta}\Psi:=\int _{C} \Psi(u) \mathrm d\mu^{\xi,\Delta}_{t_{k}}(u),$$
it follows from Lemma \ref{lemm2.1} that   the family $\{\mu^{\xi,\Delta}_{t_{k}}\}_{k=0}^{\infty}$ is tight.
Hence we only need to prove that  $P_{t_k}^{\xi,\Delta}$ is Feller, i.e.,  to show that the mapping
$$\xi\rightarrow  P_{t_k}^{\xi,\Delta}(\Phi)=\E[\Phi(y^{\xi,\Delta}_{t_k})]$$ 
is continuous for any bounded continuous functional $\Phi: \mathcal C^d\rightarrow\RR$.
% for any fixed $\Delta\in(0,1]$, and $k\in\mathbb N$. 
In fact, for $\xi,\eta\in\mathcal C^d,$ by Proposition \ref{p2.3}, we have  
\begin{align*}
y^{\xi,\Delta}_{t_k}\rightarrow y^{\eta,\Delta}_{t_k}\quad\mbox{in} ~L^2(\Omega; \mathcal C^d)
\end{align*}
as $\xi\rightarrow\eta$. 
By \cite[pp.256, Theorem 2]{Shiryaev}, we derive
\begin{align*}
y^{\xi,\Delta}_{t_k}\rightarrow y^{\eta,\Delta}_{t_k}\quad\mbox{in distribution}
\end{align*}
as $\xi\rightarrow\eta,$
which implies the Feller property.
Hence, for any $\xi\in \mathcal C^d$ and $\Delta\in(0,1]$, the $\theta$-EM functional solution admits an  invariant measure $\mu^{\Delta}$.
%In fact, for any $\e>0$, ~$\Phi\in C_{b}$, and $u\in C$,  by the continuity  of $
%\Phi$ there exists a positive constant $\delta_u>0$ such that for any $v\in(u-\delta, u+\delta)$,
%$$|\Phi(u)-\Phi(v)|\leq\e.$$
%For any  $\xi, \eta\in C$, 
%\begin{align}
%&|P_{t_k}^{\xi,\Delta}(\Phi)-P_{t_k}^{\eta,\Delta}(\Phi)|
%=\big|\E\big(\Phi(y^{\xi,\Delta}_{t_k})-\Phi(y^{\eta,\Delta}_{t_k})\big)\big|\nn\\
%=&\E\big(|\Phi(y^{\xi,\Delta}_{t_k})-\Phi(y^{\eta,\Delta}_{t_k})|\textbf 1_{\|y^{\xi,\Delta}_{t_k}-y^{\eta,\Delta}_{t_k}\|\geq\delta}\big)
%+\E\big(|\Phi(y^{\xi,\Delta}_{t_k})-\Phi(y^{\eta,\Delta}_{t_k})|\textbf 1_{\|y^{\xi,\Delta}_{t_k}-y^{\eta,\Delta}_{t_k}\|<\delta}\big)\nn\\
%\leq&2\|\Phi\|_{ub}\PP\{\|y^{\xi,\Delta}_{t_k}-y^{\eta,\Delta}_{t_k}\|\geq\delta\}+\e.
%\end{align}
%From the Ch\'ebyshev inequality, Proposition \ref{p2.3}, and Assumption \ref{a3}, we get
%\begin{align}
%|P_{t_k}^{\xi,\Delta}(\Phi)-P_{t_k}^{\eta,\Delta}(\Phi)|
%\leq&\frac{2\E\|y^{\xi,\Delta}_{t_k}-y^{\eta,\Delta}_{t_k}\|^2}{\delta^2}+\e\nn\\
%\leq &\frac{2K_4\big(|b(\xi)-b(\eta)|^2+\|\xi-\eta\|^2\big)}{\delta^2}+\e\nn\\
%\leq&
%\end{align}

\underline{Step 2.} We shall   show that for any $\Delta\in(0,1]$, the numerical invariant measure is unique. %In other words, it is independent of the initial data.
Assume that there exist two numerical invariant measures $\mu^{\Delta}_1$ and $\mu^{\Delta}_2$. Then in view of the convex property of $\mathbb W_2$ and Proposition \ref{p2.3}, we obtain that for any $k\in\mathbb N$,
\begin{align}\label{th3.1.1}
&\mathbb W^2_2(\mu^{\Delta}_1,\mu^{\Delta}_2)\nn\\
\leq &\int_{\mathcal C^d\times \mathcal C^d}\mathbb W^2_2(\mu^{\xi,\Delta}_{t_k},\mu^{\eta,\Delta}_{t_k})\mathrm d\mu^{\Delta}_{1}(\xi)\mathrm d\mu^{\Delta}_{2}(\eta)\nn\\
\leq&\int_{\mathcal C^d\times \mathcal C^d}1\wedge\Big( K\big(|b(\xi)-b(\eta)|^2+\|\xi-\eta\|^2\big)e^{-\lambda_0 t_k}\Big)\mathrm d\mu^{\Delta}_{1}(\xi)\mathrm d\mu^{\Delta}_{2}(\eta).
\end{align}
It follows from the dominated convergence theorem  that 
$$\lim_{k\rightarrow \infty}\mathbb W_2(\mu^{\Delta}_1,\mu^{\Delta}_2)=0,$$
which finishes the proof of \underline{Step 2}.

In addition, by means of Proposition \ref{l4.3}, we have 
\begin{align*}
\int_{C}\|\phi\|^{p}\mathrm d\mu^{\Delta}(\eta)
=&\lim_{M\rightarrow +\infty}\lim_{k\rightarrow \infty}\E \big[M\wedge\|y^{0,\Delta}_{t_k}\|^{p}\big]\nn\\
\leq&\, \sup_{k\in \mathbb N}\E[\|y^{0,\Delta}_{t_k}\|^{p}]\leq  K.
\end{align*}
The proof is therefore finished. 
\end{proof}
Taking advantage of Theorem \ref{th4.1}, we estimate the error between the numerical invariant measure and the exact one.
\begin{thm}\label{th4.2}
Let Assumptions \ref{a1}, \ref{a2},  \ref{a4}--\ref{a7} hold. Then for any $\Delta\in(0,1]$, 
\begin{align*}
\mathbb W_2(\mu,\mu^{\Delta})\leq K \Delta^{\frac{1}{2}}.
\end{align*}
%where the positive constant $K_{11}$ is independent of $\Delta$.
\end{thm}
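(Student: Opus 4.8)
The plan is to estimate $\mathbb{W}_2(\mu,\mu^{\Delta})$ by inserting an intermediate measure $\mu^{\xi}_{t_k}$ (the law of the exact functional solution) and $\mu^{\xi,\Delta}_{t_k}$ (the law of the numerical functional solution) at a suitably chosen time $t_k$, and then invoking the three main ingredients already established: the exponential contraction of the exact invariant measure (Lemma \ref{l2.4}), the analogous exponential attractiveness of the numerical invariant measure, and the finite-time strong (hence $\mathbb{W}_2$) error bound of Theorem \ref{th4.1}. The key point is that the diameter of time over which we need the finite-time error estimate grows only logarithmically in $\Delta$, so the error $\sqrt{\Delta}$ from Theorem \ref{th4.1} is not destroyed.

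First I would fix the initial datum $\xi=\mathbf 0$ (allowed since the invariant measures are independent of the starting point) and use the triangle inequality for $\mathbb{W}_2$ to write
\begin{align*}
\mathbb W_2(\mu,\mu^{\Delta})
\leq \mathbb W_2(\mu,\mu^{\mathbf 0}_{t_k})+\mathbb W_2(\mu^{\mathbf 0}_{t_k},\mu^{\mathbf 0,\Delta}_{t_k})+\mathbb W_2(\mu^{\mathbf 0,\Delta}_{t_k},\mu^{\Delta}).
\end{align*}
By Lemma \ref{l2.4}, the first term is bounded by $K e^{-\frac{\lambda}{2}t_k}$. For the third term I would argue exactly as in the uniqueness proof (see \eqref{th3.1.1}): using the convexity of $\mathbb W_2^2$ together with the numerical exponential attractiveness of Proposition \ref{p2.3}, and the uniform moment bound \eqref{4.3l+2} together with Assumption \ref{a5}, one gets
\begin{align*}
\mathbb W_2(\mu^{\mathbf 0,\Delta}_{t_k},\mu^{\Delta})
\leq \Big(\int_{\mathcal C^d} K\big(|b(\mathbf 0)-b(\eta)|^2+\|\eta\|^2\big)e^{-\lambda_0 t_k}\mathrm d\mu^{\Delta}(\eta)\Big)^{\frac12}
\leq K e^{-\frac{\lambda_0}{2}t_k},
\end{align*}
where the finiteness of $\int_{\mathcal C^d}(1+\|\eta\|^{2(\beta+1)})\mathrm d\mu^{\Delta}(\eta)$ comes from \eqref{4.3l+2}. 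The middle term is precisely the finite-time mean-square error: by the definition of $\mathbb W_2$ and Theorem \ref{th4.1},
\begin{align*}
\mathbb W_2(\mu^{\mathbf 0}_{t_k},\mu^{\mathbf 0,\Delta}_{t_k})
\leq \big(\E[\|x^{\mathbf 0}_{t_k}-y^{\mathbf 0,\Delta}_{t_k}\|^2]\big)^{\frac12}
\leq K\Delta^{\frac12}.
\end{align*}

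Combining these three bounds gives $\mathbb W_2(\mu,\mu^{\Delta})\leq K e^{-\frac{(\lambda\wedge\lambda_0)}{2}t_k}+K\Delta^{\frac12}$ for every $k$. The final step is to choose $k=k_\Delta$ so that the exponentially decaying term is dominated by $\sqrt{\Delta}$; explicitly, pick $k_\Delta\in\mathbb N$ large enough that $e^{-(\lambda\wedge\lambda_0)t_{k_\Delta}}\leq \Delta$, which forces $e^{-\frac{(\lambda\wedge\lambda_0)}{2}t_{k_\Delta}}\leq \Delta^{\frac12}$ and yields the claim. I expect the main subtlety—rather than a deep obstacle—to be bookkeeping: ensuring that the constant $K$ produced by Theorem \ref{th4.1} is genuinely independent of $t_k$ (which it is, since Theorem \ref{th4.1} is stated uniformly in $k\ge 0$), so that enlarging $k$ to $k_\Delta$ does not inflate the finite-time error. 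Because the finite-time bound is time-uniform, the otherwise dangerous coupling between the truncation time $t_{k_\Delta}\sim \log(1/\Delta)$ and the error constant is harmless, and the overall rate $\frac12$ is preserved.
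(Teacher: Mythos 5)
Your proposal is correct and follows essentially the same route as the paper: the identical three-term triangle-inequality decomposition through $\mu^{\mathbf 0}_{t_k}$ and $\mu^{\mathbf 0,\Delta}_{t_k}$, with Lemma \ref{l2.4} for the exact contraction, Theorem \ref{th4.1} for the middle term, and the argument of \eqref{th3.1.1} (Proposition \ref{p2.3} plus the moment bound \eqref{4.3l+2} and Assumption \ref{a5}) for the numerical contraction. The only cosmetic difference is the final step: you choose a finite $k_\Delta$ with $e^{-(\lambda\wedge\lambda_0)t_{k_\Delta}}\leq\Delta$, whereas the paper simply lets $k\to\infty$, which is possible precisely because, as you note, the middle bound is uniform in $k$.
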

\begin{proof}
For any $\Delta\in(0,1]$ and $k\in\mathbb N$, by virtue of Lemma \ref{l2.4} and Theorem  \ref{th4.1}, we obtain 
\begin{align}\label{th4.2.2}
\mathbb W_2(\mu,\mu^{\Delta})
&\leq\mathbb W_2(\mu,\mu^{\textbf{0}}_{t_k})+\mathbb W_2(\mu^{\textbf{0}}_{t_k},\mu^{\textbf{0},\Delta}_{t_k})+\mathbb W_2(\mu^{\textbf{0},\Delta}_{t_k},\mu^{\Delta})\nn\\
&\leq Ke^{-\frac{\lambda}{2} t_{k}}+\big(\E[\|x^{\textbf{0}}_{t_{k}}-y^{\textbf{0},\Delta}_{t_{k}}\|^2]\big)^{\frac{1}{2}}+\mathbb W_2(\mu^{\textbf{0},\Delta}_{t_k},\mu^{\Delta})\nn\\
&\leq Ke^{-\frac{\lambda}{2} t_{k}}+K\Delta^{\frac{1}{2}}+\mathbb W_2(\mu^{\textbf{0},\Delta}_{t_k},\mu^{\Delta}),
\end{align}
where the positive constant $\lambda$ satisfies \eqref{lamb}. 
Similar to the proof of \eqref{th3.1.1}, and  by Assumption \ref{a5},
 we have 
\begin{align*}%\label{th4.2.1}
\mathbb W_2(\mu^{\textbf{0},\Delta}_{t_k},\mu^{\Delta})
\leq&\,\Big(\int_{\mathcal C^d}K\big(|b({\textbf{0}})-b(\eta)|^2+\|\eta\|^2\big)e^{-\lambda_0 t_k}\mathrm d\mu^{\Delta}(\eta)\Big)^{\frac{1}{2}}\nn\\
\leq&\, Ke^{-\frac{\lambda_0}{2} t_k}\Big(\int_{\mathcal C^d}\big(1+\|\eta\|^{2(\beta+1)}\big)\mathrm d\mu^{\Delta}(\eta)\Big)^{\frac{1}{2}}\nn\\
%\leq&K e^{-\frac{\lambda_0}{2} t_k}+K e^{-\frac{\lambda_0}{2} t_k}\int_{C}\big(1+\|\eta\|^{2(\beta+1)}\big)\mathrm d\mu^{\Delta}(\eta)\nn\\
\leq &\,K e^{-\frac{\lambda_0}{2} t_k}.
\end{align*}
This, along with  \eqref{th4.2.2} implies that
\begin{align*}
\mathbb W_2(\mu,\mu^{\Delta})
\leq K e^{-\frac{\lambda}{2}t_k}+K\Delta^{\frac{1}{2}}+K e^{-\frac{\lambda_0}{2}t_k}.
\end{align*}
Letting $k\rightarrow \infty$, we obtain the desired argument.
%where the positive constant $K_{11}$ is independent of $\Delta$.
\end{proof}

\section{Weak convergence rate}
In this section, we present   the 
longtime weak convergence analysis   of the $\theta$-EM method based on the  Malliavin calculus, and  show that  the  convergence rate of the numerical invariant measure is $1.$ The key in the analysis  is to present time-independent  
estimates  of the Malliavin derivatives and the G\^ateaux derivatives of both the exact solution and the numerical solution.
% the time-independent weak convergence rate is proved to be $1$, which implies that the numerical invariant measure converges to the exact one with order $1.$ 
\subsection{Preliminaries} This subsection is devoted to giving  a brief introduction to the Malliavin calculus and presenting the decomposition of the  weak error for  the $\theta$-EM method. 
\subsubsection{Brief introduction to Malliavin calculus}
%Below we give a brief introduction to  the Malliavin calculus for the SFDE. 
Let $H$ be the Hilbert space $L^2([0,+\infty);\mathbb R^m)$ endowed with the  inner product $\<g,h\>_H:=\int_0^{\infty}g(t)^{\top}h(t)\mathrm dt$ for $g,h\in H$. 
By identifying $W(t,\omega)$ with the value $\omega(t)$ at time $t$ of an element $\omega\in\mathcal C_0([0,+\infty);\mathbb R^m),$ we take $\Omega=\mathcal C_0([0,+\infty);\mathbb R^m)$ as the Wiener space and $\tilde{\mathbb P}$ as the Wiener measure. For $g=(g^1,\ldots,g^m)\in H,$ we set $$W(g):=\sum_{k=1}^m\int_0^{\infty}g^k(t)\mathrm dW^k(t).$$ Denote by $\mathcal S$ the class of smooth random variables such that $G\in\mathcal S$ has the form $G=f(W(g_1),\ldots,W(g_n)),$ where $f\in\mathcal C_{pol}^{\infty}(\mathbb R^n;\mathbb R),g_i\in H,i=1,\ldots,n.$ Here, $\mathcal C_{pol}^{\infty}(\mathbb R^n;\mathbb R)$ is the space of all real-valued smooth functions on $\mathbb R^n$ whose partial derivatives have at most polynomial growth.  
%Here, $\mathcal C_{pol}^{\infty}(\mathbb R^n)$ is the space of infinitely continuously differentiable functions $f:\mathbb R^n\to\mathbb R$ such that $f$ and all its partial derivatives have at most polynomial growth. 
The Malliavin derivative of  a smooth random variable $G$ is an $H$-valued random variable given by $$DG=\sum_{i=1}^n\frac{\partial f}{\partial x_i}(W(g_1),\ldots,W(g_n))g_i,$$ which is also an $m$-dimensional stochastic process $DG=\{D_rG,r\ge 0\}$ with $$D_rG=\sum_{i=1}^n\partial _if(W(g_1),\ldots,W(g_n))g_i(r).$$
For any $p\ge 1,$ we denote the domain of $D$ in $L^p(\Omega)$ by $\mathbb D^{1,p},$ meaning that $\mathbb D^{1,p}$ is the closure of $\mathcal S$ with respect to the norm $\|G\|_{1,p}:=(\mathbb E[|G|^p+\|DG\|_H^p])^{\frac 1p}$. 
For $F\in\mathbb D^{1,2}(\mathbb R^d),$ the Malliavin covariance matrix is the  symmetric nonnegative random matrix defined by 
\begin{align*}
\gamma _F:=(\<D F^i,DF^j\>_H)_{1\leq i,j\leq d}.
\end{align*}

The iterated derivative $D^{\alpha}G$ is a random variable with values in $H^{\otimes \alpha}.$ Precisely, for $\alpha\in\mathbb N_+,$ $D^{\alpha}G=\{D_{r_1,\ldots,r_{\alpha}}G,\, r_i\ge 0,\,i=1,\ldots,\alpha\}$ is a measurable function on the product space $[0,+\infty)^{\alpha}\times \Omega.$ For any $p\ge 1$ and $\alpha\in\mathbb N_+,$ denote by $\mathbb D^{\alpha,p}$ the completion of $\mathcal S$ with respect  to the norm 
\begin{align*}
\|G\|_{\alpha,p}:=\Big(\mathbb E\Big[|G|^p+\sum_{j=1}^{\alpha}\|D^jG\|^p_{H^{\otimes j}}\Big]\Big)^{\frac1p}.
\end{align*}
For $G\in\mathbb D^{\alpha,p}$ and $g\in H,$ define the directional derivative by $D^{g}G:=\<DG,g\>_H$ and by induction, $D^{g_1,\ldots,g_j}G:=\<DD^{g_1,\ldots,g_{j-1}}G,g_j\>_{H},j=2,\ldots,\alpha,$ where $g_i\in H,\,i=1,\ldots,\alpha.$ 

Define $L^{\infty-}(\Omega):=\cap_{p\ge 1}L^p(\Omega),\; \mathbb D^{\alpha,\infty}:=\cap_{p\ge 1}\mathbb D^{\alpha,p},\; \mathbb D^{\infty}:=\cap_{p\ge 1}\cap_{\alpha\ge 1}\mathbb D^{\alpha,p}.$ 
Similarly, let $V$ be a real separable Hilbert space and define the space $\mathbb D^{\alpha,p}(V)$ as the completion of $V$-valued smooth random variables with respect to the norm
\begin{align*}
\|G\|_{\alpha,p,V}=\Big(\mathbb E\Big[\|G\|_V^p+\sum_{j=1}^{\alpha}\|D^jG\|^p_{H^{\otimes j}\otimes V}\Big]\Big)^{\frac1p}.
\end{align*}
In this case, the corresponding spaces are denoted by $L^{\infty-}(\Omega;V),\mathbb D^{\alpha,\infty}(V)$ and $\mathbb D^{\infty}(V),$ respectively. 

Let $D^*$ be the adjoint operator of $D.$
If an $H$-valued random variable $\phi\in L^2(\Omega;H)$ satisfies $|\mathbb E[\langle \phi,DX\rangle_H]|\leq K(\phi)\|X\|_{L^2(\Omega)}$ for all $X\in\mathbb D^{1,2},$ then $\phi\in\mathrm{Dom} (D^*)$ and $D^*(\phi)\in L^2(\Omega)$ is characterized by  
\begin{align}\label{IBPformula}
\mathbb E[\langle \phi,DX\rangle_H]=\mathbb E[X D^*(\phi)].
\end{align}
%\eqref{IBPformula} is often called the Malliavin integration by parts formula; 
We refer to \cite[Definition 1.3.1]{Nualart} for more details.

We introduce the functional version of the chain rule for Malliavin derivatives, see e.g. \cite{Ma_functional} for details.

\begin{lemma}\label{Malliavin_SFDE}
Let $G:\mathcal C^d\to\mathbb R^d$ be a map with bounded continuous Fr\'echet derivatives of first and second order. Let $x_t(r)=x(t+r),r\in[-\tau,0],\,t\ge 0$ be a $d$-dimensional  continuous $\mathcal F_t$-adapted process such that $x_t(r)\in\mathbb D^{1,2}$. If $\mathbb E\Big[|G(x_t)|^2+\|\mathscr DG(x_t)(Dx_t)\|_H^2\Big]<\infty,$ then 
$G(x_t)\in\mathbb D^{1,2}$ and $$D_{t_1}[G(x_t)]=\mathscr  D G(x_t)(D_{t_1}x_t(\cdot)),$$ where $D_{t_1}x_t(\cdot) \in\mathcal C^d$ denotes the mapping $r\to D_{t_1}x_t(r)=D_{t_1}x(t+r)$ and $\mathscr  DG$ is the Fr\'echet derivative of $G.$ %{\color{red}Give IBP formula}
\end{lemma}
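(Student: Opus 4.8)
The final statement to prove is \Cref{Malliavin_SFDE}, the functional chain rule for Malliavin derivatives applied to a Fréchet-differentiable map $G:\mathcal C^d\to\mathbb R^d$ evaluated along a segment process $x_t$.

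Let me think about what this statement is really saying. We have a process $x(t+r)$ for $r\in[-\tau,0]$, which is the segment $x_t\in\mathcal C^d$. Each $x_t(r)=x(t+r)$ is Malliavin differentiable. We have a smooth (Fréchet $C^2$) map $G$ on the path space $\mathcal C^d$. The claim is that $G(x_t)\in\mathbb D^{1,2}$ and the Malliavin derivative commutes with $G$ via its Fréchet derivative: $D_{t_1}[G(x_t)] = \mathscr D G(x_t)(D_{t_1}x_t(\cdot))$.

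The standard Malliavin chain rule (Nualart, Prop 1.2.3) handles $\varphi(F_1,\dots,F_n)$ for finitely many random variables $F_i$ and a $C^1$ function $\varphi$ on $\mathbb R^n$. The functional version is an extension to infinite dimensions where the "function" is a Fréchet-differentiable functional on a Banach space and the "arguments" form a continuum indexed by $r$. The proof strategy is approximation: discretize the path, apply the finite-dimensional chain rule, then pass to the limit using the $C^2$ regularity and the closability of $D$.

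Here is the plan. First I would set up a finite-dimensional approximation of the path $x_t$. Partition $[-\tau,0]$ by a mesh $-\tau=r_0<r_1<\cdots<r_n=0$ and let $x_t^{(n)}\in\mathcal C^d$ be the piecewise-linear interpolation of the values $\{x_t(r_j)\}_{j=0}^n$; since $\mathcal C^d$ is the space of continuous functions with the sup norm, $x_t^{(n)}\to x_t$ in $\mathcal C^d$ almost surely as the mesh is refined, by uniform continuity of the sample paths. Next, because $x_t^{(n)}$ is a fixed continuous functional (the interpolation operator) of the finitely many random variables $x_t(r_0),\dots,x_t(r_n)$, each lying in $\mathbb D^{1,2}$, the composition $G(x_t^{(n)})$ is $\varphi_n(x_t(r_0),\dots,x_t(r_n))$ for a $C^2$ function $\varphi_n$ on $(\mathbb R^d)^{n+1}$, to which the classical finite-dimensional chain rule applies. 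This yields $G(x_t^{(n)})\in\mathbb D^{1,2}$ with
\begin{align*}
D_{t_1}[G(x_t^{(n)})]=\sum_{j=0}^n \partial_j\varphi_n\big(x_t(r_0),\dots,x_t(r_n)\big)\,D_{t_1}x_t(r_j),
\end{align*}
which, by the definition of the Fréchet derivative of $G$ acting on the interpolated increment, equals $\mathscr D G(x_t^{(n)})\big(D_{t_1}x_t^{(n)}(\cdot)\big)$, where $D_{t_1}x_t^{(n)}$ is the interpolation of $\{D_{t_1}x_t(r_j)\}$.

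Then comes the limit passage. I would show that $G(x_t^{(n)})\to G(x_t)$ in $L^2(\Omega)$ using continuity of $G$ together with the hypothesis $\mathbb E[|G(x_t)|^2]<\infty$ and a uniform integrability or dominated-convergence argument, and separately that $\mathscr D G(x_t^{(n)})(D_{t_1}x_t^{(n)}(\cdot))\to \mathscr D G(x_t)(D_{t_1}x_t(\cdot))$ in $L^2(\Omega;H)$. For the latter I would split into $\big(\mathscr D G(x_t^{(n)})-\mathscr D G(x_t)\big)(D_{t_1}x_t^{(n)})$, controlled by the continuity of $\mathscr D G$ (guaranteed by bounded continuous second Fréchet derivative) and boundedness of $\mathscr D G$, plus $\mathscr D G(x_t)\big(D_{t_1}x_t^{(n)}-D_{t_1}x_t\big)$, controlled by boundedness of $\mathscr D G$ and convergence of the interpolated Malliavin derivatives; the assumption $\mathbb E[\|\mathscr D G(x_t)(Dx_t)\|_H^2]<\infty$ supplies the integrable dominating bound. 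With both $L^2$ convergences established and $D$ being a closed operator, closability forces $G(x_t)\in\mathbb D^{1,2}$ and identifies the limit derivative as the claimed expression.

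The main obstacle I anticipate is the convergence of the derivative term in $L^2(\Omega;H)$ rather than the algebra of the chain rule itself. The difficulty is that $H=L^2([0,+\infty);\mathbb R^m)$ is infinite-dimensional, so I must control $\|\mathscr D G(x_t^{(n)})(D_{t_1}x_t^{(n)})-\mathscr D G(x_t)(D_{t_1}x_t)\|_H$ uniformly in the time variable $t_1$ and produce a square-integrable dominating function; this is exactly where the two finiteness hypotheses on $G(x_t)$ and $\mathscr D G(x_t)(Dx_t)$ are needed, and where boundedness of the Fréchet derivatives $\mathscr D G,\mathscr D^2 G$ converts pointwise convergence of the interpolations into the required $L^2$ convergence via dominated convergence. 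Once this uniform domination is secured, the closability of $D$ does the rest, and the chain-rule identity is inherited in the limit.
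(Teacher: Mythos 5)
The paper itself contains no proof of this lemma: it is stated as a known functional chain rule and delegated to the reference cited just before it, so your proposal can only be measured against the standard literature argument. Your overall route --- interpolate the segment on a grid, apply the finite-dimensional chain rule to $G$ composed with the (linear, bounded) interpolation operator, and pass to the limit using closedness of $D$ --- is exactly that standard argument, and the algebraic identification $\sum_{j}\partial_j\varphi_n\big(x_t(r_0),\ldots,x_t(r_n)\big)D_{t_1}x_t(r_j)=\mathscr DG(x_t^{(n)})\big(D_{t_1}x_t^{(n)}(\cdot)\big)$ is correct.

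The limit passage, however, has two genuine gaps. First, the convergence $D_{t_1}x_t^{(n)}(\cdot)\to D_{t_1}x_t(\cdot)$ in $\mathcal C^d$ that you invoke requires $r\mapsto D_{t_1}x_t(r)$ to be continuous for a.e.\ $(t_1,\omega)$: the interpolants only see grid values, so without such continuity they need not converge to $D_{t_1}x_t(\cdot)$ at all. Nothing in the hypothesis ``$x_t(r)\in\mathbb D^{1,2}$ for each $r$'' yields this; the conclusion tacitly presupposes $D_{t_1}x_t(\cdot)\in\mathcal C^d$, and your proof must promote this to an explicit assumption (for the solution segments in this paper it follows from the linear equation satisfied by $D_u x^{\xi}(\cdot)$, cf.\ \cref{lem_fir_d2}). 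Second, the dominating functions you appeal to do not exist under the stated hypotheses. For the zeroth-order term, Lipschitz continuity of $G$ only gives $|G(x_t^{(n)})-G(x_t)|\leq 2L\|x_t\|$ with $L$ the bound on $\mathscr DG$, but $\mathbb E[\|x_t\|^2]<\infty$ is not assumed --- square integrability of $x_t(r)$ for each fixed $r$ does not give a square-integrable supremum --- and $\mathbb E[|G(x_t)|^2]<\infty$ controls only the limit, not the approximating sequence. For the first-order term the situation is worse: bounding $\|\mathscr DG(x_t)(D_{t_1}x_t^{(n)}-D_{t_1}x_t)\|$ requires control of $\sup_{r\in[-\tau,0]}|D_{t_1}x_t(r)|$ in $L^2(\Omega\times[0,\infty))$, and the hypothesis $\mathbb E[\|\mathscr DG(x_t)(Dx_t)\|_H^2]<\infty$ does not supply it, since it bounds only the composition at the true path --- the operator $\mathscr DG(x_t)$ may nearly annihilate the direction $D_{t_1}x_t(\cdot)$ while $\sup_r|D_{t_1}x_t(r)|$ fails to be square integrable. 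So your claim that this assumption ``supplies the integrable dominating bound'' is incorrect as stated. To close the argument you must either add the moment conditions $\mathbb E[\|x_t\|^2]<\infty$ and $\mathbb E\int_0^{\infty}\sup_{r\in[-\tau,0]}|D_{t_1}x_t(r)|^2\,\mathrm dt_1<\infty$ (both hold for the SFDE solution segments to which the lemma is applied here, cf.\ \cref{l4.1,lem_fir_d2}), or replace domination by a localization argument in $\mathbb D^{1,2}_{loc}$ combined with the stated square integrability of the limit objects.
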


\subsubsection{Decomposition of the weak error} 
Without loss of generality, let $T$ be a multiple of $\tau$, i.e., there exists a number $N^{\Delta}\in\mathbb N_+$ such that $T=t_{N^\Delta}$. For any $Y(\cdot)\in\mathcal C([t_{i},t_{k}]; \RR^d)$ with $i,k\in\{-N,\ldots, N^{\Delta}\}$, by $Y^{Int}(\cdot)$ we denote the  linear interpolation with respect to $Y(t_i),\ldots, Y(t_k)$. Let  $\varphi(t;t_i,\eta)$ and   $\varphi_t(t_i,\eta)$ denote the solution  and functional solution of the SFDE  at time $t$ with initial value $\eta\in\mathcal C^d$ at $t_i$, $\Phi(t_k;t_i,\eta^{Int})$ and $\Phi_{t_k}(t_i,\eta^{Int})$ denote the solution and functional solution of the $\theta$-EM method at time $t_k$ with initial value $\eta^{Int}$ at $t_i$.
%$\varphi^{Int}(\cdot;t_i,\eta)$ Linear interpolation of $\varphi(\cdot;t_i,\eta)$ by grids $\{(t_k,\varphi(t_k;t_i,\eta)),k=i,i+1,\ldots\}$
%$\varphi(t)$ and $\Phi$ denote the exact solution and numerical solution respectively, 
Let $f\in \mathcal C_b^3(\RR^d;\RR)$.
We have the following decomposition 
\begin{align}\label{decomp1}
&\quad \mathbb E[f(x(T))]-\mathbb E[f(y^{\xi,\Delta}(T))]=\mathbb E[f(\varphi(T;0,\xi))]-\mathbb E[f(\Phi(T;0,\Phi_0))]\nn\\
&=\mathbb E[f(\Phi(T;t_{N^{\Delta}},\varphi_{t_{N^{\Delta}}}^{Int}(0,\xi)))]-\mathbb E[f(\Phi(T;0,\Phi_0))]\nn\\
&=\mathbb E[f(\Phi(T;t_{N^{\Delta}},\varphi_{t_{N^{\Delta}}}^{Int}(0,\xi)))]-\mathbb E[f(\Phi(T;t_{N^{\Delta}},\varphi_{t_{N^{\Delta}}}^{Int}(0,\Phi_0)))]\nn\\
&\quad +\mathbb E[f(\Phi(T;t_{N^{\Delta}},\varphi_{t_{N^{\Delta}}}^{Int}(0,\Phi_0)))]-\mathbb E[f(\Phi(T;0,\Phi_0))].
\end{align}

We split the  term $\mathbb E[f(\Phi(T;t_{N^{\Delta}},\varphi_{t_{N^{\Delta}}}^{Int}(0,\Phi_0)))]-\mathbb E[f(\Phi(T;0,\Phi_0))]$ further  as
\begin{align}\label{int_all}
&\quad \mathbb E[f(\Phi(T;t_{N^{\Delta}},\varphi_{t_{N^{\Delta}}}^{Int}(0,\Phi_0)))]-\mathbb E[f(\Phi(T;0,\Phi_0))]\nn\\
&=\sum_{i=1}^{N^{\Delta}}\Big\{\mathbb E[f(\Phi(T;t_i,\varphi^{Int}_{t_i}(0,\Phi_0)))]-\mathbb E[f(\Phi(T;t_{i-1},\varphi^{Int}_{t_{i-1}}(0,\Phi_0)))]\Big\}\nn\\
&=\sum_{i=1}^{N^{\Delta}}\mathbb E\Big[\mathbb E\Big[f(\Phi(T;t_i,\varphi^{Int}_{t_i}(t_{i-1},\varphi_{t_{i-1}}(0,\Phi_0))))\nn\\
&\qquad \qquad -f(\Phi(T;t_i,\Phi_{t_i}(t_{i-1},\varphi^{Int}_{t_{i-1}}(0,\Phi_0))))\Big|\mathcal F_{t_i}\Big]\Big]\nn\\
&=\sum_{i=1}^{N^{\Delta}}\int_0^1\mathbb E\Big[\mathbb E\Big[f'(\Phi(T;t_i,Y^{\varsigma}_i))\mathcal D\Phi(T;t_i,Y^{\varsigma}_i)\nn\\
&\qquad\qquad \Big(\varphi^{Int}_{t_i}(t_{i-1},\varphi_{t_{i-1}}(0,\Phi_0))-\Phi_{t_i}(t_{i-1},\varphi^{Int}_{t_{i-1}}(0,\Phi_0))\Big)\Big|\mathcal F_{t_i}\Big]\Big]\mathrm d\varsigma\nn\\
&=\sum_{i=1}^{N^{\Delta}}\int_0^1\mathbb E\Big[f'(\Phi(T;t_i,Y^{\varsigma}_i))\mathcal D\Phi(T;t_i,Y^{\varsigma}_i)\nn\\
&\qquad \qquad \Big(\varphi^{Int}_{t_i}(t_{i-1},\varphi_{t_{i-1}}(0,\Phi_0))-\Phi_{t_i}(t_{i-1},\varphi^{Int}_{t_{i-1}}(0,\Phi_0))\Big)\Big]\mathrm d\varsigma,
\end{align}
where 
\begin{align}\label{Y_defini}
Y^{\varsigma}_i=\varsigma \varphi^{Int}_{t_i}(t_{i-1},\varphi_{t_{i-1}}(0,\Phi_0))+(1-\varsigma)\Phi_{t_i}(t_{i-1},\varphi^{Int}_{t_{i-1}}(0,\Phi_0)).
\end{align}
It follows from $\varphi^{Int}(t_j;0,\Phi_0)=\varphi(t_j;0,\Phi_0)$ that 
\begin{align}\label{int_1}
&\quad \varphi^{Int}_{t_i}(t_{i-1},\varphi_{t_{i-1}}(0,\Phi_0))-\Phi_{t_i}(t_{i-1},\varphi^{Int}_{t_{i-1}}(0,\Phi_0))\nn\\
&=\sum_{j=-N}^0I^j\Big(\varphi^{Int}_{t_i}(t_{i-1},\varphi_{t_{i-1}}(0,\Phi_0))(t_j)-\Phi_{t_i}(t_{i-1},\varphi^{Int}_{t_{i-1}}(0,\Phi_0))(t_j)\Big)\nn\\
&=\sum_{j=-N}^0I^j\Big(\varphi^{Int}(t_i+t_j;t_{i-1},\varphi_{t_{i-1}}(0,\Phi_0))-\Phi(t_i+t_j;t_{i-1},\varphi^{Int}_{t_{i-1}}(0,\Phi_0))\Big)\nn\\
&=\sum_{j=-N}^{-1}I^j\Big(\varphi(t_{i+j};0,\Phi_0)-\varphi^{Int}(t_{i+j};0,\Phi_0)\Big)\nn\\
&\quad +I^0\Big(\varphi(t_i;t_{i-1},\varphi_{t_{i-1}}(0,\Phi_0))-\Phi(t_i;t_{i-1},\varphi^{Int}_{t_{i-1}}(0,\Phi_0))\Big)\nn\\
&=I^0\Big(\varphi(t_i;t_{i-1},\varphi_{t_{i-1}}(0,\Phi_0))-\Phi(t_i;t_{i-1},\varphi^{Int}_{t_{i-1}}(0,\Phi_0))\Big),
\end{align}
where $I^j(s)=\frac{N}{\tau}(s-t_{j-1})\mathbf 1_{\Delta_{j-1}}(s)+\frac{N}{\tau}(t_{j+1}-s)\mathbf 1_{\Delta_j}(s)$ with $\Delta_j=[t_j,t_{j+1})$ for $j=-N+1,\ldots,-1,$ and $I^{-N}(s)=\frac{N}{\tau}(t_{-N+1}-s)\mathbf 1_{\Delta_{-N}}(s),$ and $I^0(s)=\frac{N}{\tau}(s-t_{-1})\mathbf 1_{\Delta_{-1}}(s)$ with $\Delta_{-N}=[-t_{-N}, t_{-N+1})$ and $\Delta_{-1}=[-t_{-1}, 0]$.

Note that for any $i\in\mathbb N_{+}$,
\begin{align}\label{int_2}
&\quad \varphi(t_i;t_{i-1},\varphi_{t_{i-1}}(0,\Phi_0))-\Phi(t_i;t_{i-1},\varphi^{Int}_{t_{i-1}}(0,\Phi_0))\nn\\
&=\int_{t_{i-1}}^{t_i}b(\varphi_r(t_{i-1},\varphi_{t_{i-1}}(0,\Phi_0)))\mathrm dr+\int_{t_{i-1}}^{t_i}\sigma(\varphi_r(t_{i-1},\varphi_{t_{i-1}}(0,\Phi_0)))\mathrm dW(r)\nn\\
&\quad -\int_{t_{i-1}}^{t_i}\big[(1-\theta)b(\varphi^{Int}_{t_{i-1}}(0,\Phi_0))+\theta b(\Phi_{t_i}(t_{i-1},\varphi^{Int}_{t_{i-1}}(0,\Phi_0)))\big]\mathrm dr \nn\\
&\quad -\int_{t_{i-1}}^{t_i}\sigma (\varphi^{Int}_{t_{i-1}}(0,\Phi_0))\mathrm dW(r)
=\mathcal I^i_b+\mathcal I^i_{b,\theta}+\mathcal I^i_{\sigma},
\end{align}
where 
\begin{align*}
&\mathcal I^i_b:=\int_{t_{i-1}}^{t_i}\big[b(\varphi_r(t_{i-1},\varphi_{t_{i-1}}(0,\Phi_0)))-b(\varphi^{Int}_{t_{i-1}}(0,\Phi_0))\big]\mathrm dr,\\
&\mathcal I^i_{b,\theta}:=\theta\int_{t_{i-1}}^{t_i}\big[b(\varphi^{Int}_{t_{i-1}}(0,\Phi_0))-b(\Phi_{t_i}(t_{i-1},\varphi^{Int}_{t_{i-1}}(0,\Phi_0)))\big]\mathrm dr,\\
&\mathcal I^i_{\sigma}:=\int_{t_{i-1}}^{t_i}\big[\sigma(\varphi_r(t_{i-1},\varphi_{t_{i-1}}(0,\Phi_0)))-\sigma (\varphi^{Int}_{t_{i-1}}(0,\Phi_0))\big]\mathrm dW(r).
\end{align*}
According to  \eqref{decomp1}--\eqref{int_2}, we derive 
\begin{align}\label{Wrate}
&\quad \mathbb E[f(x(T))]-\mathbb E[f(y^{\xi,\Delta}(T))]\nn\\
%=\mathbb E[f(\Phi(T;t_{N^{\Delta}},\varphi_{t_{N^{\Delta}}}^{Int}(0,\xi)))]-\mathbb E[f(\Phi(T;t_{N^{\Delta}},\varphi_{t_{N^{\Delta}}}^{Int}(0,\Phi_0)))]\\
%&\quad+ \mathbb E[f(\Phi(T;t_{N^{\Delta}},\varphi_{t_{N^{\Delta}}}^{Int}(0,\Phi_0)))]-\mathbb E[f(\Phi(T;0,\Phi_0))]\nn\\
&=\mathbb E[f(\Phi(T;t_{N^{\Delta}},\varphi_{t_{N^{\Delta}}}^{Int}(0,\xi)))]-\mathbb E[f(\Phi(T;t_{N^{\Delta}},\varphi_{t_{N^{\Delta}}}^{Int}(0,\Phi_0)))]\nn\\
&\quad +\sum_{i=1}^{N^{\Delta}}\int_0^1\mathbb E\Big[f'(\Phi(T;t_i,Y^{\varsigma}_i))\mathcal D\Phi(T;t_i,Y^{\varsigma}_i)\Big(I^0(\mathcal I^i_{b}+\mathcal I^i_{b,\theta}+\mathcal I^i_{\sigma})\Big)\Big]\mathrm d\varsigma\nn\\
&=\mathbb E[f(\Phi(T;t_{N^{\Delta}},\varphi_{t_{N^{\Delta}}}^{Int}(0,\xi)))]-\mathbb E[f(\Phi(T;t_{N^{\Delta}},\varphi_{t_{N^{\Delta}}}^{Int}(0,\Phi_0)))]\nn\\
&\quad +\sum_{i=1}^{N^{\Delta}}\int_0^1\mathbb E\Big[\Big\<(\mathcal D\Phi(T;t_i,Y^{\varsigma}_i) I^0\mathrm {Id}_{d\times d})^*f'(\Phi(T;t_i,Y^{\varsigma}_i)),\mathcal I^i_{b}+\mathcal I^i_{b,\theta}+\mathcal I^i_{\sigma}\Big\>\Big]\mathrm d\varsigma\nn\\
&=\mathcal I_0+\mathcal I_{b}+\mathcal I_{b,\theta}+\mathcal I_{\sigma},
\end{align}
where 
\begin{align*}
&\mathcal I_0:=\mathbb E[f(\Phi(T;t_{N^{\Delta}},\varphi_{t_{N^{\Delta}}}^{Int}(0,\xi)))]-\mathbb E[f(\Phi(T;t_{N^{\Delta}},\varphi_{t_{N^{\Delta}}}^{Int}(0,\Phi_0)))],\\
&\mathcal I_b:=\sum_{i=1}^{N^{\Delta}}\int_0^1\mathbb E\Big[\Big\<(\mathcal D\Phi(T;t_i,Y^{\varsigma}_i) I^0\mathrm {Id}_{d\times d})^* f'(\Phi(T;t_i,Y^{\varsigma}_i)),\mathcal I^i_{b}\Big\>\Big]\mathrm d\varsigma,\\
&\mathcal I_{b,\theta}:=\sum_{i=1}^{N^{\Delta}}\int_0^1\mathbb E\Big[\Big\<(\mathcal D\Phi(T;t_i,Y^{\varsigma}_i) I^0\mathrm {Id}_{d\times d})^* f'(\Phi(T;t_i,Y^{\varsigma}_i)),\mathcal I^i_{b,\theta}\Big\>\Big]\mathrm d\varsigma,\\
&\mathcal I_{\sigma}:=\sum_{i=1}^{N^{\Delta}}\int_0^1\mathbb E\Big[\Big\<(\mathcal D\Phi(T;t_i,Y^{\varsigma}_i) I^0\mathrm {Id}_{d\times d})^* f'(\Phi(T;t_i,Y^{\varsigma}_i)),\mathcal I^i_{\sigma}\Big\>\Big]\mathrm d\varsigma.
\end{align*}

\subsection{Estimates of Malliavin derivatives and G\^ateaux derivatives}
In this subsection, we show some time-independent estimates of  Malliavin derivatives and G\^ateaux derivatives of both the exact solution and the numerical solution.
\begin{assp}\label{first_deri}
Assume that coefficients $b$ and $\sigma$ have continuous  G\^ateaux  derivatives. 
\end{assp}

For any $\phi,\phi_1\in\mathcal C^d,$ and $t\in\mathbb R$, it follows from Assumption \ref{first_deri} and the definition of the  G\^ateaux derivative that $b(\phi_1+t\phi)-b(\phi_1)=t\mathcal Db(\phi_1)\phi+o(t).$ Then 
based on Assumptions \ref{a1}--\ref{a2}, we have 
\begin{align}
&\<\mathcal Db(\phi_1)\phi,\phi(0)\>\leq -a_1|\phi(0)|^2+a_2\int_{-\tau}^0|\phi(r)|^2\mathrm d\nu_2(r),\label{DB}\\
&|\mathcal D\sigma(\phi_1)\phi|^2\leq L(|\phi(0)|^2+\int_{-\tau}^0|\phi(r)|^2\mathrm d\nu_1(r)),\label{DS}
\end{align}
where $\phi,\phi_1\in\mathcal C^d;$ see \cite[Eq. (2.4)]{dang2023} for a similar proof.

The following lemmas give some regularity estimates of both the exact solution and the numerical solution.
\begin{lemma}\label{lem_fir_d}
Let Assumptions \ref{a1}--\ref{a2} with $a_1-a_2-(2p-1)L>0$ hold for some $p\ge 1$. And let Assumption \ref{first_deri} hold. Then for any $t\geq 0$, 
\begin{align*}
&\mathbb E[|\mathcal Dx^{\tilde\xi}(t)\cdot \eta|^{2p}]\leq K\|\eta\|^{2p}e^{-\hat\lambda_1 pt},\\
&\mathbb E[\|\mathcal Dx^{\tilde\xi}_t\cdot\eta\|^{2p}]\leq K\|\eta\|^{2p}e^{-\hat\lambda_1 pt},
\end{align*}
where $\tilde\xi\in L^{2p}(\Omega;\mathcal C^d)$, $\eta\in \mathcal C^d$, and  constants $K,\hat\lambda_1>0$. 
\end{lemma}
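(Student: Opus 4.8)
The plan is to identify $v(t):=\mathcal Dx^{\tilde\xi}(t)\cdot\eta$ with the solution of the linear variational SFDE obtained by differentiating \eqref{FF} in the direction $\eta$,
\begin{align*}
\mathrm dv(t)=\mathcal Db(x^{\tilde\xi}_t)(v_t)\,\mathrm dt+\mathcal D\sigma(x^{\tilde\xi}_t)(v_t)\,\mathrm dW(t),\quad t>0,
\end{align*}
with $v(t)=\eta(t)$ on $[-\tau,0]$ and $v_t(r)=v(t+r)$. First I would note that, because the coefficients are linear in $v_t$ and satisfy the pathwise bounds \eqref{DB} and \eqref{DS} with $\phi_1=x^{\tilde\xi}_t$, standard SFDE theory yields a unique solution with finite $2p$th moments on every finite horizon; a localization by stopping times $\zeta_M=\inf\{t:|v(t)|>M\}$, as in Proposition \ref{p2.1}, then justifies that the stochastic integrals appearing below have zero expectation.

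The core step applies the It\^o formula to $|v(t)|^{2p}$. Writing $S:=\mathcal D\sigma(x^{\tilde\xi}_t)(v_t)$ and using $v(t)=v_t(0)$, the dissipativity bound \eqref{DB} on the drift, the growth bound \eqref{DS} on $S$, and $p(2p-2)|v|^{2p-4}|S^\top v|^2\le p(2p-2)|v|^{2p-2}|S|^2$ for the It\^o correction (so that the full correction is $p(2p-1)|v|^{2p-2}|S|^2$), I obtain, after taking expectations, that $g(t):=\mathbb E[|v(t)|^{2p}]$ obeys
\begin{align*}
g'(t)\le{}&\big(-2pa_1+p(2p-1)L\big)g(t)+2pa_2\,\mathbb E\Big[|v(t)|^{2p-2}\!\!\int_{-\tau}^0\!|v_t(r)|^2\mathrm d\nu_2(r)\Big]\\
&+p(2p-1)L\,\mathbb E\Big[|v(t)|^{2p-2}\!\!\int_{-\tau}^0\!|v_t(r)|^2\mathrm d\nu_1(r)\Big].
\end{align*}
Applying the Young inequality $|v(t)|^{2p-2}|v_t(r)|^2\le\frac{p-1}{p}|v(t)|^{2p}+\frac1p|v_t(r)|^{2p}$ to each cross term collapses the pure $g(t)$ coefficient to $-2pa_1+2a_2(p-1)+L(2p-1)^2$ and leaves delay integrals with weights $2a_2$ and $(2p-1)L$. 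The key algebraic identity I would verify is that this constant together with the delay weights equals $-2p\big(a_1-a_2-(2p-1)L\big)$, which is exactly the sharp combination controlled by the hypothesis.

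Next I would multiply by $e^{\hat\lambda_1 pt}$ and integrate. The delay integrals are handled by the substitution $u=s+r$ and $e^{-\hat\lambda_1 pr}\le e^{\hat\lambda_1 p\tau}$ for $r\in[-\tau,0]$, splitting $\int_{-\tau}^t=\int_{-\tau}^0+\int_0^t$; the portion over $[-\tau,0]$ (on which $g(u)=|\eta(u)|^{2p}$) contributes a term $\le K\|\eta\|^{2p}$, exactly as in the derivation of \eqref{lamb} in Lemma \ref{l1}. This leaves
\begin{align*}
e^{\hat\lambda_1 pt}g(t)\le g(0)+K\|\eta\|^{2p}+\Big[\hat\lambda_1 p-2p\big(a_1-a_2-(2p-1)L\big)+o(1)\Big]\int_0^t e^{\hat\lambda_1 ps}g(s)\,\mathrm ds,
\end{align*}
where the $o(1)$ collects the $(e^{\hat\lambda_1 p\tau}-1)$ corrections. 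Since the bracket is strictly negative at $\hat\lambda_1=0$ under the hypothesis, by continuity I may fix $\hat\lambda_1>0$ so small that it stays nonpositive; discarding that term gives the first estimate $\mathbb E[|v(t)|^{2p}]\le K\|\eta\|^{2p}e^{-\hat\lambda_1 pt}$. For the segment estimate I would split $\|v_t\|$ into its restriction to $[-\tau,0]$ (bounded by $\|\eta\|$) and to $[0,\infty)$, and bound $\mathbb E[\sup_{(t-\tau)\vee0\le s\le t}|v(s)|^{2p}]$ by the Burkholder--Davis--Gundy inequality applied to the martingale part together with the pointwise bound just obtained, following Step 2 of Proposition \ref{p2.2} and Proposition \ref{l4.3}.

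I expect the main obstacle to be the Young-inequality bookkeeping that must reduce the delay-weighted constant precisely to $-2p(a_1-a_2-(2p-1)L)$, since this is what ties the hypothesis to the sharp decay threshold; a secondary technical point is securing the finite-horizon integrability that legitimizes both the martingale property and the Burkholder--Davis--Gundy step for the supremum norm.
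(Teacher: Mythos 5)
Your proposal is correct and follows essentially the same route as the paper's proof: the same variational equation, the It\^o formula for $|\mathcal Dx^{\tilde\xi}(t)\cdot\eta|^{2p}$ with the $p(2p-1)$ correction, the bounds \eqref{DB}--\eqref{DS}, the Young/H\"older split of the delay cross terms (your constant $-2p(a_1-a_2-(2p-1)L)$ is exactly the paper's bracket $p(\hat\lambda_1-2a_1+2a_2e^{\hat\lambda_1 p\tau}+(2p-1)L(1+e^{\hat\lambda_1 p\tau}))$ evaluated at $\hat\lambda_1=0$), the change of variables $u=s+r$ with the $e^{\hat\lambda_1 p\tau}$ factor, the continuity choice of small $\hat\lambda_1$, and the Burkholder--Davis--Gundy step for the segment norm. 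The only cosmetic differences are that you phrase the estimate as a differential inequality before introducing the exponential weight, and you add an explicit stopping-time localization that the paper leaves implicit.
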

\begin{proof}
For $\eta\in\mathcal C^d,$ $\mathcal Dx^{\tilde\xi}(t)\cdot\eta$ satisfies 
\begin{align*}
\mathrm d\mathcal Dx^{\tilde\xi}(t)\cdot\eta=\mathcal Db(x^{\tilde\xi}_t)\mathcal Dx^{\tilde\xi}_t\cdot\eta\mathrm dt+\mathcal D\sigma(x^{\tilde\xi}_t)\mathcal Dx^{\tilde\xi}_t\cdot\eta\mathrm dW(t),\quad t>0
\end{align*}
with the initial datum $\mathcal Dx^{\tilde\xi}(r)\cdot\eta=\eta(r)$ for $r\in[-\tau,0]$.
 Let $\hat\lambda_1\in(0,1)$ be determined later. Applying the It\^o formula, we obtain
\begin{align*}
&\quad \mathrm d(e^{\hat\lambda_1 pt}|\mathcal Dx^{\tilde\xi}(t)\cdot \eta|^{2p})\\
&=\hat\lambda_1 pe^{\hat\lambda_1 pt}|\mathcal Dx^{\tilde\xi}(t)\cdot \eta|^{2p}\mathrm dt+2pe^{\hat\lambda_1 pt}|\mathcal Dx^{\tilde\xi}(t)\cdot \eta|^{2p-2}\Big(\<\mathcal Dx^{\tilde\xi}(t)\cdot \eta,\mathcal Db(x^{\tilde\xi}_t)\mathcal Dx^{\tilde\xi}_t\cdot \eta\>\mathrm dt\\
&\quad +\<\mathcal Dx^{\tilde\xi}(t)\cdot \eta,\mathcal D\sigma (x^{\tilde\xi}_t)\mathcal Dx^{\tilde\xi}_t\cdot \eta\mathrm dW(t)\>\Big)\\
&\quad +2p(p-1)e^{\hat\lambda_1 pt}|\mathcal Dx^{\tilde\xi}(t)\cdot \eta|^{2p-4}|(\mathcal D\sigma (x^{\tilde\xi}_t)\mathcal Dx^{\tilde\xi}_t\cdot \eta)^{\top}\mathcal Dx^{\tilde\xi}(t)\cdot \eta|^2\mathrm dt\\
&\quad+pe^{\hat\lambda_1 pt}|\mathcal Dx^{\tilde\xi}(t)\cdot \eta|^{2p-2}|\mathcal D\sigma (x^{\tilde\xi}_t)\mathcal Dx^{\tilde\xi}_t\cdot \eta|^2\mathrm dt.
\end{align*} 
Combining \eqref{DB}--\eqref{DS}, we deduce 
\begin{align*}
&\quad e^{\hat\lambda_1 pt}|\mathcal Dx^{\tilde\xi}(t)\cdot \eta|^{2p}\nn\\
&\leq |\eta(0)|^{2p}\!+\!\!\int_0^t\hat\lambda_1 pe^{\hat\lambda_1 ps}|\mathcal Dx^{\tilde\xi}(s)\cdot \eta|^{2p}\mathrm ds\!+\!\int_0^t\!2pe^{\hat\lambda_1 ps}|\mathcal Dx^{\tilde\xi}(s)\cdot \eta|^{2p-2}\Big[\!\!-a_1|\mathcal Dx^{\tilde\xi}(s)\cdot \eta|^2\nn\\
&\quad+a_2\int_{-\tau}^0|\mathcal Dx^{\tilde\xi}(s+r)\cdot \eta|^2\mathrm d\nu_2(r)\Big]\mathrm ds+\int_0^t2pe^{\hat\lambda_1 ps}|\mathcal Dx^{\tilde\xi}(s)\cdot \eta|^{2p-2}\times\\
&\quad\big\<\mathcal Dx^{\tilde\xi}(s)\cdot \eta,\mathcal D\sigma (x^{\tilde\xi}_s)\mathcal Dx^{\tilde\xi}_s\cdot \eta\mathrm dW(s)\big\>+\int_0^tp(2p-1)e^{\hat\lambda_1 ps}|\mathcal Dx^{\tilde\xi}(s)\cdot\eta|^{2p-2}\times \\
&\quad L\Big[|\mathcal Dx^{\tilde\xi}(s)\cdot\eta|^2+\int_{-\tau}^0|\mathcal Dx^{\tilde\xi}(s+r)\cdot\eta|^2\mathrm d\nu_1(r)\Big]\mathrm ds.
\end{align*}
It follows from the H\"older inequality that
\begin{align*}
&\quad\int_0^te^{\hat\lambda_1 ps}|\mathcal Dx^{\tilde\xi}(s)\cdot \eta|^{2p-2}\int_{-\tau}^0|\mathcal Dx^{\tilde\xi}(s+r)\cdot\eta|^2\mathrm d\nu_2(r)\mathrm ds\\
&\leq \frac{p-1}{p}\int_0^te^{\hat\lambda_1 ps}|\mathcal Dx^{\tilde\xi}(s)\cdot \eta|^{2p}\mathrm ds+\frac{1}{p}\int_0^te^{\hat\lambda_1 ps}\int_{-\tau}^0|\mathcal Dx^{\tilde\xi}(s+r)\cdot\eta|^{2p}\mathrm d\nu_2(r)\mathrm ds\\
&= \frac{p-1}{p}\int_0^te^{\hat\lambda_1 ps}|\mathcal Dx^{\tilde\xi}(s)\cdot \eta|^{2p}\mathrm ds+\frac{1}{p}\int_{-\tau}^0\int_0^t e^{\hat\lambda_1 ps}|\mathcal Dx^{\tilde\xi}(s+r)\cdot\eta|^{2p}\mathrm ds\mathrm d\nu_2(r)\\
%&=\int_{-\tau}^0\int_0^te^{\hat\lambda_1 ps}|\mathcal Dx^{\tilde\xi}(s)\cdot\eta|^{2p-2}|\mathcal Dx^{\tilde\xi}\cdot\eta(s+r)|^2\mathrm ds\mathrm d\nu_2(r)\\
%&=\int_{-\tau}^0\int_r^{t+r}e^{\hat\lambda_1 p(z-r)}|\mathcal Dx^{\tilde\xi}(z-r)\cdot\eta|^{2p-2}|\mathcal Dx^{\tilde\xi}\cdot\eta(z)|^2\mathrm dz\mathrm d\nu_2(r)\\
%&\leq \int_{-\tau}^0\int_r^{t+r}\frac{p-1}{p}e^{\hat\lambda_1 p(z-r)}|\mathcal Dx^{\tilde\xi}(z-r)\cdot\eta|^{2p}+\frac 1pe^{\hat\lambda_1 p(z-r)}|\mathcal Dx^{\tilde\xi}\cdot\eta(z)|^{2p}\mathrm dz\mathrm d\nu_2(r)\\
%&\leq \int_{-\tau}^0\Big[\int_0^t\frac{p-1}{p}e^{\hat\lambda_1 ps}|\mathcal Dx^{\tilde\xi}(s)\cdot \eta|^{2p}\mathrm ds+\int_r^{t+r}\frac 1pe^{\hat\lambda_1 p(z-r)}|\mathcal Dx^{\tilde\xi}\cdot\eta(z)|^{2p}\mathrm dz\Big]\mathrm d\nu_2(r)\\
&\leq \frac{p-1}{p} \int_0^t e^{\hat\lambda_1 ps}|\mathcal Dx^{\tilde\xi}(s)\cdot\eta|^{2p}\mathrm ds+\frac1p e^{\hat\lambda_1 p\tau}\int_{-\tau}^t e^{\hat\lambda_1 pz}|\mathcal Dx^{\tilde\xi}(z)\cdot\eta|^{2p}\mathrm dz\\
&\leq \frac 1p\tau e^{\hat\lambda_1 p\tau}\|\eta\|^{2p}+e^{\hat\lambda_1 p\tau}\int_0^te^{\hat\lambda_1 ps}|\mathcal Dx^{\tilde\xi}(s)\cdot\eta|^{2p}\mathrm ds.
\end{align*}
We derive
\begin{align*}
\mathbb E[e^{\hat\lambda_1 pt}|\mathcal Dx^{\tilde\xi}(t)\cdot\eta|^{2p}]&\leq |\eta(0)|^{2p}+p\Big(\hat\lambda_1 -2a_1+2a_2e^{\hat\lambda_1 p\tau}+(2p-1)L(1+e^{\hat\lambda_1 p\tau})\Big)\times\\
&\int_0^te^{\hat\lambda_1 ps}\mathbb E[|\mathcal Dx^{\tilde\xi}(s)\cdot\eta|^{2p}]\mathrm ds+(2a_2+(2p-1)L)\tau e^{\hat\lambda_1 p\tau}\|\eta\|^{2p}.
\end{align*}
Take a sufficiently small number $\hat\lambda_1>0$  such that $\hat\lambda_1 -2a_1+2a_2e^{\hat\lambda_1 p\tau}+(2p-1)L+(2p-1)Le^{\hat\lambda_1 p\tau}< 0$. 
This gives $\mathbb E[e^{\hat\lambda_1 pt}|\mathcal Dx^{\tilde\xi}(t)\cdot\eta|^{2p}]\leq K\|\eta\|^{2p}.$ 
Moreover, applying the Burkholder--Davis--Gundy inequality,  we have
\begin{align*}
&\mathbb E\Big[\sup_{0\vee(t-\tau)\leq r\leq t}e^{\hat\lambda_1 pr}|\mathcal Dx^{\tilde\xi}(r)\cdot\eta|^{2p}\Big]\leq |\eta(0)|^{2p}+(2a_2+(2p-1)L)\tau e^{\hat\lambda_1 p\tau}\|\eta\|^{2p}\\
&\quad +\mathbb E\Big[\sup_{0\vee(t-\tau)\leq r\leq t}\int_0^r2pe^{\hat\lambda_1 ps}|\mathcal Dx^{\tilde\xi}(s)\cdot \eta|^{2p-2}\Big\<\mathcal Dx^{\tilde\xi}(s)\cdot\eta,\mathcal D\sigma(x^{\tilde\xi}_s)\mathcal Dx^{\tilde\xi}_s\cdot\eta\mathrm dW(s)\Big\>\Big]\\
&\leq K\|\eta\|^{2p}+\mathbb E\Big[\sup_{0\vee(t-\tau)\leq r\leq t}\int_{0\vee(t-\tau)}^r2pe^{\hat\lambda_1 ps}|\mathcal Dx^{\tilde\xi}(s)\cdot \eta|^{2p-2}\Big\<\mathcal Dx^{\tilde\xi}(s)\cdot\eta,\nn\\
&\quad\mathcal D\sigma(x^{\tilde\xi}_s)\mathcal Dx^{\tilde\xi}_s\cdot\eta\mathrm dW(s)\Big\>\Big]\\
&\leq  K\|\eta\|^{2p} +K\mathbb E\Big[\Big(\int_{0\vee(t-\tau)}^te^{2\hat\lambda_1 ps}|\mathcal Dx^{\tilde\xi}(s)\cdot\eta|^{4p-2}|\mathcal D\sigma(x^{\tilde\xi}_s)\mathcal Dx^{\tilde\xi}_s\cdot \eta|^2\mathrm ds\Big)^{\frac{1}{2}}\Big]\\
&\leq K\|\eta\|^{2p}+K\mathbb E\Big[\sup_{0\vee(t-\tau)\leq s\leq t}e^{\hat\lambda_1 (p-\frac12)s}|\mathcal Dx^{\tilde\xi}(s)\cdot \eta|^{2p-1}\times\nn\\
&\quad\Big(\int_{0\vee(t-\tau)}^te^{\hat\lambda_1 s}|\mathcal D\sigma(x^{\tilde\xi}_s)\mathcal Dx^{\tilde\xi}_s\cdot\eta|^2\mathrm ds\Big)^{\frac12}\Big].
\end{align*}
According to the Young inequality, \eqref{DS}, and the H\"older inequality, we obtain 
\begin{align*}
&\quad \mathbb E\Big[\sup_{0\vee(t-\tau)\leq r\leq t}e^{\hat\lambda_1 pr}|\mathcal Dx^{\tilde\xi}(r)\cdot\eta|^{2p}\Big]\\
%&\leq K\|\eta\|^{2p}+\gamma \mathbb E\Big[\sup_{0\vee(t-\tau)\leq s\leq t}e^{\hat\lambda_1 ps}|\mathcal Dx^{\tilde\xi}(s)\cdot\eta|^{2p}\Big]+K\mathbb E\Big[\Big(\int_{0\vee(t-\tau)}^te^{\hat\lambda_1 s}|\mathcal D\sigma(x^{\tilde\xi}_s)\mathcal Dx^{\tilde\xi}_s\cdot\eta|^2\mathrm ds\Big)^p\Big]\\
&\leq K\|\eta\|^{2p}+\frac12 \mathbb E\Big[\sup_{0\vee(t-\tau)\leq s\leq t}e^{\hat\lambda_1 ps}|\mathcal Dx^{\tilde\xi}(s)\cdot\eta|^{2p}\Big]\\
&\quad +K\mathbb E\Big[\Big(\int_{0\vee(t-\tau)}^te^{\hat\lambda_1 s}\big(|\mathcal Dx^{\tilde\xi}(s)\cdot\eta|^2+\int_{-\tau}^0|\mathcal Dx^{\tilde\xi}(s+r)\cdot\eta|^2\mathrm d\nu_1(r)\big)\mathrm ds\Big)^p\Big]\\
&\leq K\|\eta\|^{2p}+\frac12 \mathbb E\Big[\sup_{0\vee(t-\tau)\leq s\leq t}e^{\hat\lambda_1 ps}|\mathcal Dx^{\tilde\xi}(s)\cdot\eta|^{2p}\Big]\nn\\
&\quad+\tau^{p-1}K\int_{0\vee(t-\tau)}^t\sup_{-\tau\leq r\leq s}\E\big[e^{\hat\lambda_1 pr}|\mathcal Dx^{\tilde\xi}(r)\cdot\eta|^{2p}\big]\mathrm ds.
%&\leq K\|\eta\|^{2p}+\gamma \mathbb E\Big[\sup_{0\vee(t-\tau)\leq s\leq t}e^{\hat\lambda_1 ps}|\mathcal Dx^{\tilde\xi}(s)\cdot\eta|^{2p}\Big]+K\mathbb E\Big[\int_{0\vee(t-\tau)}^te^{\hat\lambda_1 ps}|\mathcal Dx^{\tilde\xi}(s)\cdot\eta|^{2p}\mathrm ds\Big]\tau^{\frac1q}\\
%&\leq K\|\eta\|^{2p}+K\tau^{1+\frac1q},
\end{align*}
It follows from  $\mathbb E[e^{\hat\lambda_1 pt}|\mathcal Dx^{\tilde\xi}(t)\cdot\eta|^{2p}]\leq K\|\eta\|^{2p}$ that
\begin{align*}
e^{\hat\lambda_1 p(0\vee(t-\tau))}\mathbb E[\|\mathcal Dx^{\tilde\xi}_t\cdot\eta\|^{2p}]\leq K\|\eta\|^{2p}, 
\end{align*}
which gives
\begin{align*}
\mathbb E[\|\mathcal Dx^{\tilde\xi}_t\cdot\eta\|^{2p}]\leq K\|\eta\|e^{-\hat\lambda_1 pt}.
\end{align*}
The proof is finished. 
\end{proof}
\begin{lemma}\label{lem_fir_d2}
Let Assumptions \ref{a1}--\ref{a2} with $a_1-a_2-(2p-1)L>0$ hold for some $p\geq1$. And let Assumptions \ref {a4} and \ref{first_deri} hold. Then for any $t\geq 0$ and $u\ge-\tau$, 
\begin{align*}
\mathbb E[\|D_ux^{\tilde\xi}_t\|^{2p}]\leq Ke^{-\hat\lambda_1 p(t-u)}(1+\mathbb E[\|D_u\tilde\xi\|^{2p} ]),
\end{align*}
where $\tilde\xi\in L^{2p}(\Omega;\mathcal C^d)$ and            $D_u\tilde\xi\in L^{2p}(\Omega;\mathcal C^d\otimes \mathbb R^m)$, $K>0$, and $\hat\lambda_1>0$ is given in Lemma \ref{lem_fir_d}.
\end{lemma}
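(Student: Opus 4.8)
The plan is to derive the equation satisfied by the Malliavin derivative $D_ux^{\tilde\xi}(t)$ and then run an exponential-weight energy estimate analogous to the proof of Lemma \ref{lem_fir_d}, treating the Malliavin derivative variable $u$ as a parameter and tracking the fact that the derivative is ``seeded'' at time $u$ by the diffusion coefficient. First I would recall that for $t>u\ge 0$, since the noise $\mathrm dW(u)$ enters the equation through $\sigma(x^{\tilde\xi}_u)$, the process $D_ux^{\tilde\xi}(t)$ (an $\mathbb R^{d\times m}$-valued object) satisfies the linear variational equation
\begin{align*}
\mathrm dD_ux^{\tilde\xi}(t)=\mathcal Db(x^{\tilde\xi}_t)D_ux^{\tilde\xi}_t\,\mathrm dt+\mathcal D\sigma(x^{\tilde\xi}_t)D_ux^{\tilde\xi}_t\,\mathrm dW(t),\quad t>u,
\end{align*}
with the ``initial'' condition $D_ux^{\tilde\xi}(u)=\sigma(x^{\tilde\xi}_u)$ at time $u$ (and $D_ux^{\tilde\xi}(s)=D_u\tilde\xi(s)$ coming from the random initial segment for $s\in[-\tau,0]$ when $u<0$ or from the history for $s<u$). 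The crucial structural point is that this is exactly the same linear equation governing $\mathcal Dx^{\tilde\xi}(t)\cdot\eta$ in Lemma \ref{lem_fir_d}, only with a different, random, and $\mathcal F_u$-measurable starting value.

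The key steps then mirror Lemma \ref{lem_fir_d} step by step. I would apply the It\^o formula to $e^{\hat\lambda_1 p(t-u)}|D_ux^{\tilde\xi}(t)|^{2p}$, use the monotonicity inequality \eqref{DB} on the drift term and the growth bound \eqref{DS} on the diffusion term, and invoke the H\"older inequality to absorb the delayed integrals $\int_{-\tau}^0|D_ux^{\tilde\xi}(s+r)|^{2p}\mathrm d\nu_i(r)$ into a shifted time integral picking up a factor $e^{\hat\lambda_1 p\tau}$, exactly as in the earlier lemma. Choosing $\hat\lambda_1>0$ small enough that
$$\hat\lambda_1-2a_1+2a_2e^{\hat\lambda_1 p\tau}+(2p-1)L(1+e^{\hat\lambda_1 p\tau})<0$$
(the same constant as in Lemma \ref{lem_fir_d}, valid under $a_1-a_2-(2p-1)L>0$) kills the resulting Gronwall-type coefficient. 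This yields $\mathbb E[e^{\hat\lambda_1 p(t-u)}|D_ux^{\tilde\xi}(t)|^{2p}]\leq K\mathbb E[\|D_ux^{\tilde\xi}_u\|^{2p}]$, after which a Burkholder--Davis--Gundy plus Young argument over a window of length $\tau$, identical to the second half of Lemma \ref{lem_fir_d}, upgrades the pointwise bound to the supremum (functional) bound $\mathbb E[\|D_ux^{\tilde\xi}_t\|^{2p}]\leq Ke^{-\hat\lambda_1 p(t-u)}\mathbb E[\|D_ux^{\tilde\xi}_u\|^{2p}]$.

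The remaining task, and what I expect to be the main obstacle, is controlling the seed $\mathbb E[\|D_ux^{\tilde\xi}_u\|^{2p}]$ and converting it into the stated bound $Ke^{-\hat\lambda_1 p(t-u)}(1+\mathbb E[\|D_u\tilde\xi\|^{2p}])$. The functional segment $D_ux^{\tilde\xi}_u$ at the initial time $u$ is built from $\sigma(x^{\tilde\xi}_u)$ together with the contribution $D_u\tilde\xi$ from the random initial datum, so I would estimate $\|D_ux^{\tilde\xi}_u\|^{2p}\le K(|\sigma(x^{\tilde\xi}_u)|^{2p}+\|D_u\tilde\xi\|^{2p})$ and then use the global Lipschitz growth of $\sigma$ (Assumption \ref{a1}) to bound $\mathbb E[|\sigma(x^{\tilde\xi}_u)|^{2p}]\le K(1+\sup_{t\ge0}\mathbb E[\|x^{\tilde\xi}_t\|^{2p}])$, which is finite and time-independent by Lemma \ref{l4.1} (using Assumption \ref{a4}, hence the need to assume it here). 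The delicate bookkeeping is the precise form of the history term for $u<0$ versus $u\ge 0$, i.e.\ correctly identifying which part of $D_ux^{\tilde\xi}_u$ is the deterministic/diffusion seed and which part is $D_u\tilde\xi$; once this decomposition is pinned down, combining it with the exponential decay estimate from the previous paragraph delivers the claim, with the ``$1$'' in $(1+\mathbb E[\|D_u\tilde\xi\|^{2p}])$ absorbing the uniform-in-time moment bound on $x^{\tilde\xi}$.
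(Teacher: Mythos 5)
Your proposal is correct and follows essentially the same route as the paper's proof: the same linear variational equation for $D_ux^{\tilde\xi}(t)$ seeded by $\sigma(x^{\tilde\xi}_u)$ and $D_u\tilde\xi$, the same exponentially weighted It\^o estimate with \eqref{DB}--\eqref{DS} and the shifted delayed-integral bound, the same choice of $\hat\lambda_1$ as in Lemma \ref{lem_fir_d}, the Burkholder--Davis--Gundy plus Young upgrade to the segment norm, and the control of the seed via Assumption \ref{a1} together with the uniform moment bound of Lemma \ref{l4.1}. No substantive differences.
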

\begin{proof}
When $0\leq u\leq t,$ $D_ux^{\tilde\xi}(t)$ satisfies 
\begin{align*}
D_ux^{\tilde\xi}(t)&=\int_u^t\mathcal Db(x^{\tilde\xi}_s)D_ux^{\tilde\xi}_s\mathrm ds+\int_u^t\mathcal D\sigma(x^{\tilde\xi}_s)D_ux^{\tilde\xi}_s\mathrm dW(s)\\
&\quad +\sigma(x^{\tilde\xi}_u) \mathbf{1}_{[0,t]}(u)+D_u\tilde\xi(0).
\end{align*}
By the It\^o formula, we derive
\begin{align*}
&\mathrm d(e^{\hat\lambda_1 pt}|D_ux^{\tilde\xi}(t)|^{2p})\leq \hat\lambda_1 pe^{\hat\lambda_1 pt}|D_ux^{\tilde\xi}(t)|^{2p}\mathrm dt+2pe^{\hat\lambda_1 pt}|D_ux^{\tilde\xi}(t)|^{2p-2}\times\\
&\Big(\big\<D_ux^{\tilde\xi}(t),\mathcal Db(x^{\tilde\xi}_t)D_ux^{\tilde\xi}_t\big\>\mathrm dt+\big\<D_ux^{\tilde\xi}(t),\mathcal D\sigma(x^{\tilde\xi}_t)D_ux^{\tilde\xi}_t\mathrm dW(t)\big\>\Big)\\
&\quad+p(2p-1)e^{\hat\lambda_1 pt}|D_ux^{\tilde\xi}(t)|^{2p-2}|\mathcal D\sigma(x^{\tilde\xi}_t)D_ux^{\tilde\xi}_t|^2\mathrm dt.
\end{align*}
Similar to the proof of Lemma \ref{lem_fir_d}, we have
\begin{align*}
&\quad \mathbb E[e^{\hat\lambda_1 pt}|D_ux^{\tilde\xi}(t)|^{2p}]\leq  K\mathbb E[e^{\hat\lambda_1 pu}(|\sigma(x^{\tilde\xi}_u)|^{2p}+|D_u\tilde\xi(0)|^{2p})]\\
&\quad+p\big(\hat\lambda_1-2a_1+2a_2e^{\hat\lambda_1 p\tau}+(2p-1)L(1+e^{\hat\lambda_1 p\tau})\big)\int_u^te^{\hat\lambda_1 ps}\mathbb E[|D_ux^{\tilde\xi}(s)|^{2p}]\mathrm ds\\
&\quad+(2a_2+(2p-1)L)\tau e^{\hat\lambda_1 p\tau}\|D_u\tilde\xi\|^{2p}\\
&\leq K\mathbb E[e^{\hat\lambda_1 pu}(|\sigma(x^{\tilde\xi}_u)|^{2p}+|D_u\tilde\xi(0)|^{2p}]+(2a_2+(2p-1)L)\tau e^{\hat\lambda_1 p\tau}\|D_u\tilde\xi\|^{2p},
\end{align*}
where we used the facts $D_u x^{\tilde \xi}(s)=0$ for $u\geq s$ and 
\begin{align*}
&\quad\int_{u}^{t}\int_{-\tau}^{0}e^{\hat\lambda_1 ps}|D_ux^{\tilde \xi}_{s}(r)|^{2p}\mathrm d\nu_{i}(r)\mathrm ds=\int_{0}^{t}\int_{-\tau}^{0}e^{\hat\lambda_1 ps}|D_ux^{\tilde \xi}_{s}(r)|^{2p}\mathrm d\nu_{i}(r)\mathrm ds\nn\\
&\leq \tau  e^{\hat\lambda_1 p\tau}\|D_u\tilde\xi\|^{2p}
+e^{\hat\lambda_1 p\tau}\int_{u}^{t}e^{\hat\lambda_1 ps}|D_ux^{\tilde \xi}(s)|^{2p}\mathrm ds,\quad i=1,2.
\end{align*}
This, along with Assumption \ref{a1} and Lemma \ref{l4.1} implies that
\begin{align*}
\mathbb E[|D_ux^{\tilde\xi}(t)|^{2p}]\leq Ke^{-\hat\lambda_1 p(t-u)} (1+\mathbb E[\|D_u\tilde\xi\|^{2p}]).
\end{align*}
Moreover, 
\begin{align*}
&\quad\mathbb E\Big[\sup_{u\vee(t-\tau)\leq r\leq t}e^{\hat\lambda_1 pr}|D_ux^{\tilde\xi}(r)|^{2p}\Big]\nn\\
&\leq K e^{\hat\lambda_1 pu}\mathbb E[|\sigma (x^{\tilde\xi}_u)|^{2p}+|D_u\tilde\xi(0) |^{2p}\mathbf 1_{[-\tau,0]}(u)]+(2a_2+(2p-1)L)\tau e^{\hat\lambda_1 p\tau}\|D_u\tilde\xi\|^{2p}\\
&\quad +\mathbb E\Big[\sup_{u\vee(t-\tau)\leq r\leq t}\int_u^r2pe^{\hat\lambda_1 ps}\Big\<|D_ux^{\tilde\xi}(s)|^{2p-2}D_ux^{\tilde\xi}(s),\mathcal D\sigma(x^{\tilde\xi}_s)D_ux^{\tilde\xi}_s\mathrm dW(s)\Big\>\Big]\\
%&\leq e^{\hat\lambda_1 pu}\mathbb E[|\sigma (x^{\xi}_u)\mathbf 1_{[0,t]}(u)+D_u\tilde\xi(0)\mathbf 1_{[-\tau,0]}(u) |^{2p}] \\
%&\quad+\mathbb E\Big[\sup_{t-\tau\leq r\leq t}\int_{u\vee (t-\tau)}^r2pe^{\hat\lambda_1 ps}\Big\<|D_ux^{\tilde\xi}(s)|^{2p-2}D_ux^{\tilde\xi}(s),\mathcal D\sigma(x^{\tilde\xi}_s)D_ux^{\tilde\xi}_s\mathrm dW(s)\Big\>\Big]\\
&\leq Ke^{\hat\lambda_1 pu}(1+\mathbb E[\|D_u\tilde\xi\|^{2p}])+K\mathbb E\Big[\Big(\int_{u\vee (t-\tau)}^te^{2\hat\lambda_1 ps}|D_ux^{\tilde\xi}(s)|^{4p-2}|\mathcal D\sigma(x^{\tilde\xi}_s)D_ux^{\tilde\xi}_s|^2\mathrm ds\Big)^{\frac12}\Big]\\
&\leq Ke^{\hat\lambda_1 pu}(1+\mathbb E[\|D_u\tilde\xi\|^{2p}])\\
&\quad +K\mathbb E\Big[\Big(\sup_{u\vee(t-\tau)\leq s\leq t}e^{\hat\lambda_1(p-\frac12)s}|D_ux^{\tilde\xi}(s)|^{2p-1}\Big)\Big(\int_{u\vee(t-\tau)}^te^{\hat\lambda_1 s}|\mathcal D\sigma(x^{\tilde\xi}_s)D_ux^{\tilde\xi}_s|^2\mathrm ds\Big)^{\frac12}\Big].
\end{align*}
Applying the Young inequality, we derive
\begin{align*}
&\quad\mathbb E\Big[\sup_{u\vee(t-\tau)\leq r\leq t}e^{\hat\lambda_1 pr}|D_ux^{\tilde\xi}(r)|^{2p}\Big]\nn\\
&\leq Ke^{\hat\lambda_1 pu}(1+\mathbb E[\|D_u\tilde\xi\|^{2p}])+\frac12 \mathbb E\Big[\sup_{u\vee(t-\tau)\leq s\leq t}e^{\hat\lambda_1 ps}|D_ux^{\tilde\xi}(s)|^{2p}\Big]\\
&\quad +K\mathbb E\Big[\Big(\int_{u\vee(t-\tau)}^te^{\hat\lambda_1 s}|\mathcal D\sigma(x^{\tilde\xi}_s)D_ux^{\tilde\xi}_s|^2\mathrm ds\Big)^{p}\Big]\\
%&\leq Ke^{\hat\lambda_1 pu}(1+\mathbb E[\|D_u\tilde\xi\|^{2p}])+\gamma \mathbb E\Big[\sup_{u\vee(t-\tau)\leq s\leq t}e^{\hat\lambda_1 ps}|D_ux^{\tilde\xi}(s)|^{2p}\Big] \\
%&\quad +K\mathbb E\Big[\Big(\int_{u\vee(t-\tau)}^te^{\hat\lambda_1 s}L(|D_ux^{\tilde\xi}(s)|^2+\int_{-\tau}^0|D_ux^{\tilde\xi}_s(r)|^2\mathrm d\nu_1(r))\mathrm ds\Big)^p\Big]\\
%&\leq Ke^{\hat\lambda_1 pu}(1+\mathbb E[\|D_u\tilde\xi\|^{2p}])+\gamma \mathbb E\Big[\sup_{u\vee(t-\tau)\leq s\leq t}e^{\hat\lambda_1 ps}|D_ux^{\tilde\xi}(s)|^{2p}\Big] +K\mathbb E\Big[\Big(\int_{u\vee(t-\tau)}^te^{\hat\lambda_1 s}|D_ux^{\tilde\xi}(s)|^2\mathrm ds\Big)^p\Big]\\
&\leq Ke^{\hat\lambda_1 pu}(1+\mathbb E[\|D_u\tilde\xi\|^{2p}])+\frac12\mathbb E\Big[\sup_{u\vee(t-\tau)\leq s\leq t}e^{\hat\lambda_1 ps}|D_ux^{\tilde\xi}(s)|^{2p}\Big]\\ &\quad +\tau^{p-1}K\int_{u\vee(t-\tau)}^te^{\hat\lambda_1 ps}\big(\sup_{s-\tau\leq r\leq s}\E\big[|D_ux^{\tilde\xi}(r)|^{2p}\big]\big)\mathrm ds\\
&\leq Ke^{\hat\lambda_1 pu}(1+\mathbb E[\|D_u\tilde\xi\|^{2p}])+\tau^{p}Ke^{\hat\lambda_1 pu}(1+\mathbb E[\|D_u\tilde\xi\|^{2p}]).
\end{align*}
Hence, 
\begin{align*}
e^{\hat\lambda_1 p(t-\tau)}\mathbb E[\|D_ux^{\tilde\xi}_t\|^{2p}]\leq Ke^{\hat\lambda_1 pu}(1+\mathbb E[\|D_u\tilde\xi\|^{2p}]),
\end{align*}
which implies
\begin{align*}
\mathbb E[\|D_ux^{\tilde\xi}_t\|^{2p}]\leq Ke^{\hat\lambda_1 p(u-t)}(1+\mathbb E[\|D_u\tilde\xi\|^{2p}]).
\end{align*}
We complete the proof.
\end{proof}

\begin{lemma}\label{Dy_esti}
Let Assumptions \ref{a1}--\ref{a2},~\ref{a5}, and \ref{first_deri} hold. Then for $p\in\mathbb N_+$, there exists a number $a^*_1(L,a_2,\theta,p)>0$ such that for  $a_1>a_1^*$ and $\Delta\in(0,1]$, 
\begin{align*}
\mathbb E[\|\mathcal Dy^{\tilde\xi,\Delta}_{t_k}\cdot \eta\|^{2p}]\leq Ke^{-\hat{\lambda}_2p t_k}\|\eta\|^{2p}(1+\E[\|\tilde \xi\|^{2p\beta}]),
\end{align*}
where $\tilde\xi\in L^{2p\beta}(\Omega;\mathcal C^d)$, %{\color{red}$\tilde\xi\in L^{2p}(\Omega;\mathcal C^d)$, 
%$\mathcal Db(\tilde\xi)\in L^{2p}(\Omega; \mathcal L(\mathcal C^d, \RR^d))$,
$\eta\in \mathcal C^d$, and $K,\hat{\lambda}_2>0$.
\end{lemma}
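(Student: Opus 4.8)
The plan is to differentiate the split scheme \eqref{ST} with respect to the initial datum in the direction $\eta$ and to reproduce, for the resulting \emph{linear} variational recursion, the exponential-weight argument already carried out for the state process in Propositions \ref{p2.2} and \ref{p2.3}. Writing $U(t_k):=\mathcal Dz^{\tilde\xi,\Delta}(t_k)\cdot\eta$ and $V(t_k):=\mathcal Dy^{\tilde\xi,\Delta}(t_k)\cdot\eta$, the chain rule (Lemma \ref{Malliavin_SFDE}) gives
\begin{align*}
U(t_{k+1})=U(t_k)+\mathcal Db(y^{\tilde\xi,\Delta}_{t_k})\mathcal Dy^{\tilde\xi,\Delta}_{t_k}\cdot\eta\,\Delta+\mathcal D\sigma(y^{\tilde\xi,\Delta}_{t_k})\mathcal Dy^{\tilde\xi,\Delta}_{t_k}\cdot\eta\,\delta W_k,
\end{align*}
together with the relation $U(t_k)=V(t_k)-\theta\Delta\,\mathcal Db(y^{\tilde\xi,\Delta}_{t_k})\mathcal Dy^{\tilde\xi,\Delta}_{t_k}\cdot\eta$ and the initial value $U(0)=\eta(0)-\theta\Delta\,\mathcal Db(\tilde\xi)\eta$, which is the exact derivative analogue of $z=y-\theta\Delta b(y)$. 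The bounds \eqref{DB}--\eqref{DS} play here the role that \eqref{F2r1+1} and Assumption \ref{a1} play for the state process: with $\phi=\mathcal Dy^{\tilde\xi,\Delta}_{t_k}\cdot\eta$ they yield $\langle \mathcal Db(y^{\tilde\xi,\Delta}_{t_k})\phi,V(t_k)\rangle\le -a_1|V(t_k)|^2+a_2\int_{-\tau}^0|\phi(r)|^2\mathrm d\nu_2(r)$ and $|\mathcal D\sigma(y^{\tilde\xi,\Delta}_{t_k})\phi|^2\le L(|V(t_k)|^2+\int_{-\tau}^0|\phi(r)|^2\mathrm d\nu_1(r))$.

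First I would establish the one-step estimate for $|U(t_{k+1})|^2$. Expanding the square and using $\theta\in(\tfrac12,1]$ to discard the $(1-2\theta)|\,\cdot\,|^2\Delta^2$ contribution exactly as in \eqref{12p2.1}, I expect
\begin{align*}
|U(t_{k+1})|^2\le A_{\theta,\Delta}|U(t_k)|^2-R\Delta|V(t_k)|^2+2a_2\Delta\int_{-\tau}^0|\mathcal Dy^{\tilde\xi,\Delta}_{t_k}\cdot\eta(r)|^2\mathrm d\nu_2(r)+|\mathcal D\sigma(y^{\tilde\xi,\Delta}_{t_k})\mathcal Dy^{\tilde\xi,\Delta}_{t_k}\cdot\eta\,\delta W_k|^2+\mathcal M_k,
\end{align*}
where $A_{\theta,\Delta}=\frac{(1-\theta)^2}{\theta^2}+\frac{2\theta-1}{\theta^2(1+\Delta)}<1$, the constant $R>0$ can be taken large by enlarging $a_1$, and $\mathcal M_k$ is a martingale difference. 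Then I would raise this to the $p$-th power, expand binomially, and take conditional expectations, following the bookkeeping of Proposition \ref{p2.1} and Lemma \ref{l4.2}: the first-order term in $\mathcal M_k$ vanishes by the Doob stopping argument \eqref{o3.35}, the diffusion increments are controlled through \eqref{DS}, and the terms carrying the superlinear factor $\mathcal Db(y^{\tilde\xi,\Delta}_{t_k})$ appear with at least $\Delta^2$ and can be absorbed into $-R\Delta|V(t_k)|^2$. Multiplying by $e^{\hat\lambda_2 pt_{k+1}}$, telescoping, and shifting the delay integrals via the convexity estimate \eqref{5p2.1}, I would choose $\hat\lambda_2>0$ small and $a_1>a_1^*$ large so that $(1+K\varepsilon\Delta)A^p_{\theta,\Delta}e^{\hat\lambda_2 p\Delta}\le 1$ and the net coefficient of $\sum e^{\hat\lambda_2 pt_{i+1}}\E[|V(t_i)|^{2p}]$ stays nonpositive, producing the time-independent pointwise decay $\sup_k e^{\hat\lambda_2 pt_k}\E[|U(t_k)|^{2p}]<\infty$.

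Two points deserve care. The initial datum $U(0)=\eta(0)-\theta\Delta\mathcal Db(\tilde\xi)\eta$ must be estimated by the derivative form of Assumption \ref{a5}, which gives $|\mathcal Db(\tilde\xi)\eta|\le K\|\eta\|(1+\|\tilde\xi\|^{\beta})$ and hence $\E[|U(0)|^{2p}]\le K\|\eta\|^{2p}(1+\E[\|\tilde\xi\|^{2p\beta}])$; this is precisely the source of the factor $(1+\E[\|\tilde\xi\|^{2p\beta}])$ in the claimed bound, and controlling the $\|y^{\tilde\xi,\Delta}_{t_k}\|$-dependence entering the higher binomial terms uses the time-independent moment bounds of Proposition \ref{l4.3}. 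Passing from the pointwise estimate to the functional norm $\E[\|\mathcal Dy^{\tilde\xi,\Delta}_{t_k}\cdot\eta\|^{2p}]$ requires a supremum argument, which I would carry out with the Burkholder--Davis--Gundy inequality exactly as in \underline{Step 2} of Proposition \ref{p2.2}, and then convert $U$ back to $V$ through the relation $|V(t_k)|^2\le K(|U(t_k)|^2+\Delta\int_{-\tau}^0|\mathcal Dy^{\tilde\xi,\Delta}_{t_k}\cdot\eta(r)|^2\mathrm d\nu_2(r))$, an analogue of \eqref{3p2.3} in which the delay term is absorbed using $2\theta a_2\Delta<1$. \textbf{The main obstacle} is the superlinear growth of $\mathcal Db(y^{\tilde\xi,\Delta}_{t_k})$: although the variational recursion is linear in $V$, squaring and taking high powers generates terms with the random coefficient $|\mathcal Db(y^{\tilde\xi,\Delta}_{t_k})|^{2}$, which grows like $\|y^{\tilde\xi,\Delta}_{t_k}\|^{2\beta}$; controlling these uniformly in $k$ forces both the largeness of $a_1$ (so that the dissipative gain $-R\Delta|V|^2$ dominates) and, to retain uniformity over all $\Delta\in(0,1]$, a separate treatment of the regimes $\Delta\le\Delta^*$ and $\Delta\in[\Delta^*,1]$ in the spirit of the two-step structure of Lemma \ref{l4.2}.
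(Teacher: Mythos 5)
Your overall skeleton does match the paper's proof: differentiate the split scheme, use \eqref{DB}--\eqref{DS} as the dissipativity inputs, expand $|\mathcal Dz^{\tilde\xi,\Delta}(t_{k+1})\cdot\eta|^{2p}$ binomially, weight by $e^{\hat\lambda_2 p t_k}$, telescope with the delay-shift estimate, treat $\Delta\in(0,\Delta^*]$ and $\Delta\in[\Delta^*,1]$ separately, and trace the factor $(1+\E[\|\tilde\xi\|^{2p\beta}])$ to the initial value $\mathcal Dz^{\tilde\xi,\Delta}(0)\cdot\eta=\eta(0)-\theta\Delta\mathcal Db(\tilde\xi)\eta$ via Assumptions \ref{a5} and \ref{first_deri}. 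However, there is a genuine gap in your treatment of the superlinear factor $\mathcal Db(y^{\tilde\xi,\Delta}_{t_k})$, which you yourself flag as the main obstacle. Your two proposed mechanisms both fail. Absorbing terms like $|\mathcal Db(y^{\tilde\xi,\Delta}_{t_k})\mathcal Dy^{\tilde\xi,\Delta}_{t_k}\cdot\eta|^{2}\Delta^2$ (and their appearances in the powers $\mathcal M_k^i$, $i\ge 2$, of the martingale increment) into $-R\Delta|V(t_k)|^2$ cannot work: the prefactor grows like $\|y^{\tilde\xi,\Delta}_{t_k}\|^{2\beta}$, an unbounded random variable, so no constant $R$ (however large $a_1$ is) dominates it pathwise. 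Decoupling instead by H\"older against the moment bounds of Proposition \ref{l4.3} breaks the moment closure: to bound the $2p$-th moment of $\mathcal Dy^{\tilde\xi,\Delta}_{t_k}\cdot\eta$ you would need its moments of order strictly larger than $2p$, and it would also require integrability of $\tilde\xi$ beyond the assumed $L^{2p\beta}(\Omega;\mathcal C^d)$.

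The paper's resolution is purely algebraic and you only invoke it for the final conversion, not where it is actually needed: the relation \eqref{re_d_yZ} gives $\theta\Delta\,\mathcal Db(y^{\tilde\xi,\Delta}_{t_k})\mathcal Dy^{\tilde\xi,\Delta}_{t_k}\cdot\eta=\mathcal Dy^{\tilde\xi,\Delta}(t_k)\cdot\eta-\mathcal Dz^{\tilde\xi,\Delta}(t_k)\cdot\eta$, and this identity must be substituted into \emph{every} occurrence of the drift derivative: into the quadratic term $\Delta^2|\mathcal Db(\cdot)\mathcal Dy^{\tilde\xi,\Delta}_{t_k}\cdot\eta|^2$ and into the martingale term $\mathcal M_k$. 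After substitution the recursion contains no superlinear coefficient at all; the only surviving drift term is $2\<\mathcal Dy^{\tilde\xi,\Delta}(t_k)\cdot\eta,\mathcal Db(y^{\tilde\xi,\Delta}_{t_k})\mathcal Dy^{\tilde\xi,\Delta}_{t_k}\cdot\eta\>\Delta$, killed by \eqref{DB}, and Proposition \ref{l4.3} is never used. Note also an internal inconsistency in your one-step estimate: you cannot simultaneously discard the $(1-2\theta)\Delta^2|\mathcal Db(\cdot)\mathcal Dy^{\tilde\xi,\Delta}_{t_k}\cdot\eta|^2$ term \textquotedblleft as in \eqref{12p2.1}\textquotedblright\ and claim the contraction factor $A_{\theta,\Delta}<1$ on $|\mathcal Dz^{\tilde\xi,\Delta}(t_k)\cdot\eta|^2$ --- that factor arises precisely by keeping this term, rewriting it through the identity above, and completing the square with Young's inequality; discarding it leaves coefficient $1$. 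Finally, the largeness of $a_1^*$ in the paper is needed to dominate the Young-inequality constants (of order $\epsilon_1^{-(p-1)}$) generated in the binomial expansion, not to tame superlinear growth, which has already been eliminated. With the substitution carried out everywhere, the rest of your plan goes through as in the paper.
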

\begin{proof}For the case of $p=1,$ one can obtain the result by using the similar technique  to that in Lemma \ref{l4.2}.  We are in the position to show the proof of $p>1$. 

\underline{Step 1: Case of  $\Delta\in(0,\Delta^*]$ with some $\Delta^*\in(0,1].$}
 We have that for $k\in\mathbb N,$ 
\begin{align*}
&\mathcal Dz^{\tilde\xi,\Delta}(t_{k+1})\cdot\eta=\mathcal Dz^{\tilde\xi,\Delta}(t_k)\cdot \eta+\mathcal Db(y^{\tilde\xi,\Delta}_{t_k})\mathcal Dy^{\tilde\xi,\Delta}_{t_k}\cdot \eta\Delta+\mathcal D\sigma(y^{\tilde\xi,\Delta}_{t_k})\mathcal Dy^{\tilde\xi,\Delta}_{t_k}\cdot\eta\delta W_k,\\
&\mathcal Dz^{\tilde\xi,\Delta}(0)\cdot\eta=\eta(0)-\theta \Delta\mathcal Db(\tilde\xi)\eta,
\end{align*}
and 
\begin{align}\label{re_d_yZ}
\mathcal Dz^{\tilde\xi,\Delta}(t_{k})\cdot\eta=\mathcal Dy^{\tilde\xi,\Delta}(t_k)\cdot\eta-\theta \Delta \mathcal Db(y^{\tilde\xi,\Delta}_{t_k})\mathcal Dy^{\tilde\xi,\Delta}_{t_k}\cdot\eta.
\end{align}
Then
\begin{align*}
&\quad |\mathcal Dz^{\tilde\xi,\Delta}(t_{k+1})\cdot\eta|^2\\
&=|\mathcal Dz^{\tilde\xi,\Delta}(t_k)\cdot\eta|^2+|\mathcal Db(y^{\tilde\xi,\Delta}_{t_k})\mathcal Dy^{\tilde\xi,\Delta}_{t_k}\cdot\eta|^2\Delta^2+|\mathcal D\sigma (y^{\xi,\Delta}_{t_k})\mathcal Dy^{\tilde\xi,\Delta}_{t_k}\cdot\eta\delta W_k|^2\\
&\quad +\mathcal M_k+2\<\mathcal Dz^{\tilde\xi,\Delta}(t_k)\cdot\eta,\mathcal Db(y^{\tilde\xi,\Delta}_{t_k})\mathcal Dy^{\tilde\xi,\Delta}_{t_k}\cdot\eta\>\Delta,
\end{align*}
where $\mathcal M_k:=2\big\<\mathcal Dz^{\tilde\xi,\Delta}(t_k)\cdot\eta+\mathcal Db(y^{\tilde\xi,\Delta}_{t_k})\mathcal Dy^{\tilde\xi,\Delta}_{t_k}\cdot\eta\Delta,\mathcal D\sigma(y^{\tilde\xi,\Delta}_{t_k})\mathcal Dy^{\tilde\xi,\Delta}_{t_k}\cdot\eta\delta W_k\big\>$.
It follows from \eqref{DB} and \eqref{re_d_yZ} that
\begin{align*}
&|\mathcal Dz^{\tilde\xi,\Delta}(t_{k+1})\cdot\eta|^2=\frac{(1-\theta)^2}{\theta^2}|\mathcal Dz^{\tilde\xi,\Delta}(t_k)\cdot\eta|^2+\frac{1-2\theta}{\theta^2}|\mathcal Dy^{\tilde\xi,\Delta}(t_k)\cdot\eta|^2\\
&\quad +\frac{2(2\theta-1)}{\theta^2}\<\mathcal Dy^{\tilde\xi,\Delta}(t_k)\cdot\eta,\mathcal Dz^{\tilde\xi,\Delta}(t_k)\cdot\eta\>+2\<\mathcal Dy^{\tilde\xi,\Delta}(t_k)\cdot\eta,\mathcal Db(y^{\tilde\xi,\Delta}_{t_k})\mathcal Dy^{\tilde\xi,\Delta}_{t_k}\cdot\eta\>\Delta\\
&\quad +|\mathcal D\sigma(y^{\tilde\xi,\Delta}_{t_k})\mathcal Dy^{\tilde\xi,\Delta}_{t_k}\cdot\eta\delta W_k|^2+\mathcal M_k\\
&\leq A_{\theta,\Delta}|\mathcal Dz^{\tilde\xi,\Delta}(t_k)\cdot\eta|^2+\frac{2\theta-1}{\theta^2}\Delta|\mathcal Dy^{\tilde\xi,\Delta}(t_k)\cdot\eta|^2\\
&\quad+2\<\mathcal Dy^{\tilde\xi,\Delta}(t_k)\cdot\eta,\mathcal Db(y^{\tilde\xi,\Delta}_{t_k})\mathcal Dy^{\tilde\xi,\Delta}_{t_k}\cdot\eta\>\Delta +|\mathcal D\sigma(y^{\tilde\xi,\Delta}_{t_k})\mathcal Dy^{\tilde\xi,\Delta}_{t_k}\cdot\eta\delta W_k|^2+\mathcal M_k\\
&\leq A_{\theta,\Delta}|\mathcal Dz^{\tilde\xi,\Delta}(t_k)\cdot\eta|^2+\frac{2\theta-1}{\theta^2}\Delta|\mathcal Dy^{\tilde\xi,\Delta}(t_k)\cdot\eta|^2-2\Delta a_1|\mathcal Dy^{\tilde\xi,\Delta}(t_k)\cdot\eta|^2\\
&\quad +2a_2\Delta\int_{-\tau}^0|\mathcal Dy^{\tilde\xi,\Delta}_{t_k}\cdot\eta(r)|^2\mathrm d\nu_2(r)+|\mathcal D\sigma(y^{\tilde\xi,\Delta}_{t_k})\mathcal Dy^{\tilde\xi,\Delta}_{t_k}\cdot\eta\delta W_k|^2+\mathcal M_k,
\end{align*}
where $A_{\theta,\Delta}:=\frac{(1-\theta)^2}{\theta^2}+\frac{2\theta-1}{\theta^2(1+\Delta)}.$
Hence, 
\begin{align*}
\mathbb E[|\mathcal Dz^{\tilde\xi,\Delta}(t_{k+1})\cdot\eta|^{2p}]\leq A^p_{\theta,\Delta}\mathbb E[|\mathcal Dz^{\tilde\xi,\Delta}(t_k)\cdot\eta|^{2p}]+\sum_{i=1}^pC_p^i\mathbb E[\mathcal I_i],
\end{align*}
where 
\begin{align*}\mathcal I_i&=A^{p-i}_{\theta,\Delta}|\mathcal Dz^{\tilde\xi,\Delta}(t_k)\cdot\eta|^{2(p-i)}\Big(-\Delta(2a_1-\frac{2\theta-1}{\theta^2})|\mathcal Dy^{\tilde\xi,\Delta}(t_k)\cdot\eta|^2\\
&\quad +2a_2\Delta \int_{-\tau}^0|\mathcal Dy^{\tilde\xi,\Delta}_{t_k}(r)\cdot\eta|^2\mathrm d\nu_2(r) +|\mathcal D\sigma(y^{\tilde\xi,\Delta}_{t_k})\mathcal Dy^{\tilde\xi,\Delta}_{t_k}\cdot\eta\delta W_k|^2+\mathcal M_k\Big)^i.
\end{align*}
For the term $\mathcal I_1,$ by  \eqref{DS} and the property of the conditional expectation, we obtain
\begin{align}\label{obtain1}
\mathbb E[\mathcal I_1]&\leq \mathbb E\Big[A^{p-1}_{\theta,\Delta}|\mathcal Dz^{\tilde\xi,\Delta}(t_k)\cdot\eta|^{2(p-1)}\Big(-\Delta (2a_1-\frac{2\theta-1}{\theta^2}-L)|\mathcal Dy^{\tilde\xi,\Delta}(t_k)\cdot\eta|^2\nn\\
&\quad +2a_2\Delta \int_{-\tau}^0|\mathcal Dy^{\tilde\xi,\Delta}_{t_k}(r)\cdot\eta|^2\mathrm d\nu_2(r)+L\Delta \int_{-\tau}^0|\mathcal Dy^{\tilde\xi,\Delta}_{t_k}(r)\cdot\eta|^2\mathrm d\nu_1(r)\Big)\Big]\nn\\
&\leq -\Delta (2a_1-\frac{2\theta-1}{\theta^2}-L)A^{p-1}_{\theta,\Delta}\mathbb E[|\mathcal Dz^{\tilde\xi,\Delta}(t_k)\cdot\eta|^{2(p-1)}|\mathcal Dy^{\tilde\xi,\Delta}(t_k)\cdot\eta|^2]\nn\\
&\quad +\frac{L \Delta}{\epsilon^{p-1}_1} \int_{-\tau}^0 \mathbb E[|\mathcal Dy^{\tilde\xi,\Delta}_{t_k}(r)\cdot\eta|^{2p}]\mathrm d\nu_1(r)+\frac{2a_2 \Delta}{\epsilon^{p-1}_1} \int_{-\tau}^0 \mathbb E[|\mathcal Dy^{\tilde\xi,\Delta}_{t_k}(r)\cdot\eta|^{2p}]\mathrm d\nu_2(r)\nn\\
&\quad +(2a_2+L)\epsilon_1 \Delta A^p_{\theta,\Delta}\mathbb E[|\mathcal Dz^{\tilde\xi,\Delta}(t_k)\cdot\eta|^{2p}],
\end{align}
where in the last step we used the Young inequality. 
\eqref{DB} and the relation \eqref{re_d_yZ} imply  that
\begin{align}\label{re_d_yZ2}
|\mathcal Dy^{\tilde\xi,\Delta}(t_k)\cdot\eta|^2&\leq |\mathcal Dz^{\tilde\xi,\Delta}(t_k)\cdot\eta|^2+2\theta\Delta \<\mathcal Dy^{\tilde\xi,\Delta}(t_k)\cdot\eta,\mathcal Db(y^{\tilde\xi,\Delta}_{t_k})\mathcal Dy^{\tilde\xi,\Delta}_{t_k}\cdot\eta\>\nn\\
&\leq |\mathcal Dz^{\tilde\xi,\Delta}(t_k)\cdot\eta|^2+2\theta a_2\Delta \int_{-\tau}^0|\mathcal Dy^{\tilde\xi,\Delta}_{t_k}(r)\cdot\eta|^2\mathrm d\nu_2(r),
\end{align}
which leads to
\begin{align*}
&|\mathcal Dy^{\tilde\xi,\Delta}(t_k)\cdot\eta|^{2p}\leq |\mathcal Dz^{\tilde\xi,\Delta}(t_k)\cdot\eta|^{2(p-1)}|\mathcal Dy^{\tilde\xi,\Delta}(t_k)\cdot\eta|^2\\
&\quad+|\mathcal Dy^{\tilde\xi,\Delta}(t_k)\cdot\eta|^2\sum_{j=1}^{p-1}C_{p-1}^j|\mathcal Dz^{\tilde\xi,\Delta}(t_k)\cdot\eta|^{2(p-1-j)}\Big(2\theta a_2\Delta \int_{-\tau}^0|\mathcal Dy^{\tilde\xi,\Delta}_{t_k}(r)\cdot\eta|^2\mathrm d\nu_2(r)\Big)^j\\
&\leq (1+\epsilon_1)|\mathcal Dz^{\tilde\xi,\Delta}(t_k)\cdot\eta|^{2(p-1)}|\mathcal Dy^{\tilde\xi,\Delta}(t_k)\cdot\eta|^2+K(\epsilon_1)|\mathcal Dy^{\tilde\xi,\Delta}(t_k)\cdot\eta|^2\times\\
&\quad \Big(2\theta a_2\Delta\int_{-\tau}^0|\mathcal Dy^{\tilde\xi,\Delta}_{t_k}(r)\cdot\eta|^2\mathrm d\nu_2(r)\Big)^{p-1}.
\end{align*}
This implies  
\begin{align}\label{give1}
&-|\mathcal Dz^{\tilde\xi,\Delta}(t_k)\cdot\eta|^{2(p-1)}|\mathcal Dy^{\tilde\xi,\Delta}(t_k)\cdot\eta|^2\leq -(1+\epsilon_1)^{-1}|\mathcal Dy^{\tilde\xi,\Delta}(t_k)\cdot\eta|^{2p}\nn\\
&\quad+K(\epsilon_1)(1+\epsilon_1)^{-1}|\mathcal Dy^{\tilde\xi,\Delta}(t_k)\cdot\eta|^2\Big(2\theta a_2\Delta\int_{-\tau}^0|\mathcal Dy^{\tilde\xi,\Delta}_{t_k}(r)\cdot\eta|^2\mathrm d\nu_2(r)\Big)^{p-1}.
\end{align}
Inserting \eqref{give1} into \eqref{obtain1}, we obtain
\begin{align*}
&\quad\mathbb E[\mathcal I_1]\leq \Delta (2a_1-\frac{2\theta-1}{\theta^2}-L)A^{p-1}_{\theta,\Delta}\mathbb E\Big[-(1+\epsilon_1)^{-1}|\mathcal Dy^{\tilde\xi,\Delta}(t_k)\cdot\eta|^{2p}\nn\\
&\quad\ +K(\epsilon_1)(1+\epsilon_1)^{-1}|\mathcal Dy^{\tilde\xi,\Delta}(t_k)\cdot\eta|^2 \Big(2\theta a_2\Delta\int_{-\tau}^0|\mathcal Dy^{\tilde\xi,\Delta}_{t_k}(r)\cdot\eta|^2\mathrm d\nu_2(r)\Big)^{p-1}
\Big]\nn\\
&\quad\ +(2a_2+L)\epsilon_1 \Delta A^p_{\theta,\Delta}\mathbb E[|\mathcal Dz^{\tilde\xi,\Delta}(t_k)\cdot\eta|^{2p}]+\frac{2a_2}{\epsilon^{p-1}_1}\Delta \int_{-\tau}^0 \mathbb E[|\mathcal Dy^{\tilde\xi,\Delta}_{t_k}(r)\cdot\eta|^{2p}]\mathrm d\nu_2(r)\nn\\
& \quad+\frac{L}{\epsilon^{p-1}_1}\Delta \int_{-\tau}^0 \mathbb E[|\mathcal Dy^{\tilde\xi,\Delta}_{t_k}(r)\cdot\eta|^{2p}]\mathrm d\nu_1(r)\\
&\leq -\Delta (1+\epsilon_1)^{-1} (2a_1-\frac{2\theta-1}{\theta^2}-L)A^{p-1}_{\theta,\Delta}\mathbb E[|\mathcal Dy^{\tilde\xi,\Delta}(t_k)\cdot\eta|^{2p}]\\
&\quad+(2a_2+L)\epsilon_1\Delta A^p_{\theta,\Delta}\mathbb E[|\mathcal Dz^{\tilde\xi,\Delta}(t_k)\cdot\eta|^{2p}]+K(\epsilon_1)\Delta ^p\int_{-\tau}^0\mathbb E[|\mathcal Dy^{\tilde\xi,\Delta}_{t_k}\cdot\eta|^{2p}]\mathrm d\nu_2(r)\\
&\quad+K(\epsilon_1)\Delta ^p\mathbb E[|\mathcal Dy^{\tilde\xi,\Delta}(t_k)\cdot\eta|^{2p}]+\frac{2a_2}{\epsilon^{p-1}_1}\Delta \int_{-\tau}^0 \mathbb E[|\mathcal Dy^{\tilde\xi,\Delta}_{t_k}(r)\cdot\eta|^{2p}]\mathrm d\nu_2(r) \nn\\
&\quad+\frac{L}{\epsilon^{p-1}_1}\Delta \int_{-\tau}^0 \mathbb E[|\mathcal Dy^{\tilde\xi,\Delta}_{t_k}(r)\cdot\eta|^{2p}]\mathrm d\nu_1(r).
\end{align*}
For the term $\mathcal I_2$, we have
\begin{align*}
&\quad \mathbb E[\mathcal I_2]=\mathbb E\Big[A^{p-2}_{\theta,\Delta}|\mathcal Dz^{\tilde\xi,\Delta}(t_k)\cdot\eta|^{2(p-2)}\Big(-\Delta(2a_1-\frac{2\theta-1}{\theta^2})|\mathcal Dy^{\tilde\xi,\Delta}(t_k)\cdot\eta|^2\\
&\quad +2a_2\Delta \int_{-\tau}^0|\mathcal Dy^{\tilde\xi,\Delta}_{t_k}(r)\cdot\eta|^2\mathrm d\nu_2(r) +|\mathcal D\sigma(y^{\tilde\xi,\Delta}_{t_k})\mathcal Dy^{\tilde\xi,\Delta}_{t_k}\cdot\eta\delta W_k|^2\\
&\quad+2\Big\<\mathcal Dz^{\tilde\xi,\Delta}(t_k)\cdot\eta+\frac{1}{\theta}(\mathcal D y^{\tilde\xi,\Delta}(t_{k})\cdot \eta-\mathcal D z^{\tilde\xi,\Delta}(t_{k})\cdot \eta), \mathcal D\sigma(y^{\tilde\xi,\Delta}_{t_k})\mathcal Dy^{\tilde\xi,\Delta}_{t_k}\cdot\eta\delta W_k\Big\>\Big)^2\Big]\\
&\leq\E\Big[A^{p-2}_{\theta,\Delta}|\mathcal Dz^{\tilde\xi,\Delta}(t_k)\cdot\eta|^{2(p-2)}\Big(
8A_{\theta,\Delta}|\mathcal Dz^{\tilde\xi,\Delta}(t_k)\cdot\eta|^2|\mathcal D\sigma(y^{\tilde\xi,\Delta}_{t_k})\mathcal Dy^{\tilde\xi,\Delta}_{t_k}\cdot\eta|^2\Delta\nn\\
&\quad +\frac{8}{\theta^2}|\mathcal D y^{\tilde\xi,\Delta}(t_{k})\cdot \eta\mathcal D\sigma(y^{\tilde\xi,\Delta}_{t_k})\mathcal Dy^{\tilde\xi,\Delta}_{t_k}\cdot\eta|^2\Delta
+\big(-\Delta(2a_1-\frac{2\theta-1}{\theta^2})|\mathcal Dy^{\tilde\xi,\Delta}(t_k)\cdot\eta|^2\nn\\
&\quad +2a_2\Delta \int_{-\tau}^0|\mathcal Dy^{\tilde\xi,\Delta}_{t_k}(r)\cdot\eta|^2\mathrm d\nu_2(r) +|\mathcal D\sigma(y^{\tilde\xi,\Delta}_{t_k})\mathcal Dy^{\tilde\xi,\Delta}_{t_k}\cdot\eta\delta W_k|^2\big)^2\Big]\nn\\
&\leq 16\epsilon_1 \Delta A^{p}_{\theta,\Delta}\mathbb E[|\mathcal Dz^{\tilde\xi,\Delta}(t_k)\cdot\eta|^{2p}]
+K\Delta^2 A^{p}_{\theta,\Delta}\mathbb E[|\mathcal Dz^{\tilde\xi,\Delta}(t_k)\cdot\eta|^{2p}]\nn\\
&\quad+(\frac{8}{\epsilon^{p-1}_1}+\frac{4}{\epsilon_1^{\frac{p-2}{2}}}\frac{1}{\theta^p})L\Delta\mathbb E\Big[|\mathcal Dy^{\tilde\xi,\Delta}(t_k)\cdot\eta|^{2p} +\int_{-\tau}^0|\mathcal Dy^{\tilde\xi,\Delta}_{t_k}(r)\cdot\eta|^{2p}\mathrm d\nu_1(r)\Big]\\
&\quad+\frac{4}{\epsilon_1^{\frac{p-2}{2}}}\frac{1}{\theta^p}\Delta 
\mathbb E[|\mathcal Dy^{\tilde\xi,\Delta}(t_k)\cdot\eta|^{2p}]+K\Delta^2\mathbb E[|\mathcal Dy^{\tilde\xi,\Delta}(t_k)\cdot\eta|^{2p}]\\
&\quad +K\Delta^2\mathbb E\Big[\int_{-\tau}^0|\mathcal Dy^{\tilde\xi,\Delta}_{t_k}(r)\cdot\eta|^{2p}\mathrm d\nu_1(r)\Big]+K\Delta^2\mathbb E\Big[\int_{-\tau}^0|\mathcal Dy^{\tilde\xi,\Delta}_{t_k}(r)\cdot\eta|^{2p}\mathrm d\nu_2(r)\Big].
\end{align*}
Similarly, for the term $\mathcal I_i$ with $i\in\{3,\ldots,p\},$ we obtain that 
\begin{align*}
&\mathbb E[\mathcal I_i]\leq K\Delta^2A^p_{\theta,\Delta}\mathbb E[|\mathcal Dz^{\tilde\xi,\Delta}(t_k)\cdot\eta|^{2p}] +K\Delta^2\mathbb E[|\mathcal Dy^{\tilde\xi,\Delta}(t_k)\cdot\eta|^{2p}]\\
&\quad+K\Delta^2\mathbb E\Big[\int_{-\tau}^0|\mathcal Dy^{\tilde\xi,\Delta}_{t_k}(r)\cdot\eta|^{2p}\mathrm d\nu_1(r)\Big]+K\Delta^2\mathbb E\Big[\int_{-\tau}^0|\mathcal Dy^{\tilde\xi,\Delta}_{t_k}(r)\cdot\eta|^{2p}\mathrm d\nu_2(r)\Big].
\end{align*}

Therefore, we arrive at
\begin{align*}
&\mathbb E[|\mathcal Dz^{\tilde\xi,\Delta}(t_{k+1})\cdot\eta|^{2p}]\\
&\leq A^p_{\theta,\Delta}\mathbb E[|\mathcal Dz^{\tilde\xi,\Delta}(t_k)\cdot\eta|^{2p}]+p\mathbb E[\mathcal I_1]+\frac{p(p-1)}{2}\mathbb E[\mathcal I_2]+\sum_{i=3}^pC_p^i\mathbb E[\mathcal I_i]\\
&\leq (1+p(2a_2+L)\epsilon_1\Delta+\frac{16p(p-1)}{2}\epsilon_1\Delta +K\Delta^2)A^p_{\theta,\Delta}\mathbb E[|\mathcal Dz^{\tilde\xi,\Delta}(t_k)\cdot\eta|^{2p}]\\
&\quad -\Delta p(1+\epsilon_1)^{-1} (2a_1-\frac{2\theta-1}{\theta^2}-L)A^{p-1}_{\theta,\Delta}\mathbb E[|\mathcal Dy^{\tilde\xi,\Delta}(t_k)\cdot\eta|^{2p}]\\
&\quad+\frac{2pa_2}{\epsilon^{p-1}_1}\Delta \int_{-\tau}^0 \mathbb E[|\mathcal Dy^{\tilde\xi,\Delta}_{t_k}(r)\cdot\eta|^{2p}]\mathrm d\nu_2(r) +\frac{pL}{\epsilon^{p-1}_1}\Delta \int_{-\tau}^0 \mathbb E[|\mathcal Dy^{\tilde\xi,\Delta}_{t_k}(r)\cdot\eta|^{2p}]\mathrm d\nu_1(r)\\
&\quad+\frac{p(p-1)L}{2}(\frac{8}{\epsilon^{p-1}_1}+\frac{4}{\epsilon_1^{\frac{p-2}{2}}}\frac{1}{\theta^p})\Delta\mathbb E\Big[|\mathcal Dy^{\tilde\xi,\Delta}(t_k)\cdot\eta|^{2p}+\int_{-\tau}^0|\mathcal Dy^{\tilde\xi,\Delta}_{t_k}(r)\cdot\eta|^{2p}\mathrm d\nu_1(r)\Big]\\
&\quad+\frac{2p(p-1)}{\theta^p\epsilon_1^{\frac{p-2}{2}}}\Delta\mathbb E[|\mathcal Dy^{\tilde\xi,\Delta}(t_k)\cdot\eta|^{2p}]+(K\Delta^2+K(\epsilon_1)\Delta^p)\mathbb E[|\mathcal Dy^{\tilde\xi,\Delta}(t_k)\cdot\eta|^{2p}]\\
&\quad+K\Delta^2\mathbb E\Big[\int_{-\tau}^0|\mathcal Dy^{\tilde\xi,\Delta}_{t_k}(r)\cdot\eta|^{2p}\mathrm d\nu_1(r)\Big]\nn\\
&\quad+(K\Delta^2+K(\epsilon_1)\Delta^p)\mathbb E\Big[\int_{-\tau}^0|\mathcal Dy^{\tilde\xi,\Delta}_{t_k}(r)\cdot\eta|^{2p}\mathrm d\nu_2(r)\Big]\\
&\leq (1+K(\epsilon_1+\Delta)\Delta)A^{p}_{\theta,\Delta}\mathbb E[|\mathcal Dz^{\tilde\xi,\Delta}(t_k)\cdot\eta|^{2p}]+\Delta J_{1,k}+\Delta^2 J_{2,k},
\end{align*}
where
\begin{align*}
J_{1,k}&:=-p(1+\epsilon_1)^{-1} (2a_1-\frac{2\theta-1}{\theta^2}-L)A^{p-1}_{\theta,\Delta}\mathbb E[|\mathcal Dy^{\tilde\xi,\Delta}(t_k)\cdot\eta|^{2p}]\nn\\
&\quad+\frac{2pa_2}{\epsilon^{p-1}_1} \int_{-\tau}^0 \mathbb E[|\mathcal Dy^{\tilde\xi,\Delta}_{t_k}(r)\cdot\eta|^{2p}]\mathrm d\nu_2(r) +\frac{pL}{\epsilon^{p-1}_1}\int_{-\tau}^0 \mathbb E[|\mathcal Dy^{\tilde\xi,\Delta}_{t_k}(r)\cdot\eta|^{2p}]\mathrm d\nu_1(r)\\
&\quad+\frac{p(p-1)L}{2}(\frac{8}{\epsilon^{p-1}_1}+\frac{4}{\epsilon_1^{\frac{p-2}{2}}}\frac{1}{\theta^p})\mathbb E\Big[|\mathcal Dy^{\tilde\xi,\Delta}(t_k)\cdot\eta|^{2p}+\int_{-\tau}^0|\mathcal Dy^{\tilde\xi,\Delta}_{t_k}(r)\cdot\eta|^{2p}\mathrm d\nu_1(r)\Big]\\
&\quad+\frac{2p(p-1)}{\theta^p\epsilon_1^{\frac{p-2}{2}}}\mathbb E[|\mathcal Dy^{\tilde\xi,\Delta}(t_k)\cdot\eta|^{2p}],\nn\\
J_{2,k}&:=(K+K(\epsilon_1)\Delta^{p-2})\mathbb E[|\mathcal Dy^{\tilde\xi,\Delta}(t_k)\cdot\eta|^{2p}]+K\mathbb E\Big[\int_{-\tau}^0|\mathcal Dy^{\tilde\xi,\Delta}_{t_k}(r)\cdot\eta|^{2p}\mathrm d\nu_1(r)\Big]\nn\\
&\quad+(K+K(\epsilon_1)\Delta^{p-2})\mathbb E\Big[\int_{-\tau}^0|\mathcal Dy^{\tilde\xi,\Delta}_{t_k}(r)\cdot\eta|^{2p}\mathrm d\nu_2(r)\Big].
\end{align*}
Then
\begin{align}\label{DYexp1}
&\quad e^{\hat\lambda_2 pt_{k+1}}\mathbb E[|\mathcal Dz^{\tilde\xi,\Delta}(t_{k+1})\cdot\eta|^{2p}]\nn\\
&= \sum_{l=0}^{k}\Big(e^{\hat\lambda_2 pt_{l+1}}\mathbb E[|\mathcal Dz^{\tilde\xi,\Delta}(t_{l+1})\cdot\eta|^{2p}]-e^{\hat\lambda_2 pt_l}\mathbb E[|\mathcal Dz^{\tilde\xi,\Delta}(t_l)\cdot\eta|^{2p}]\Big)\nn\\
&\leq \mathbb E[|\mathcal Dz^{\tilde\xi,\Delta}(0)\cdot\eta|^{2p}]+\sum_{l=0}^k e^{\hat\lambda_2 pt_{l+1}}(\Delta J_{1,l}+\Delta^2 J_{2,l})\nn\\
&\quad+\Big((1+K(\epsilon_1+\Delta)\Delta)A^p_{\theta,\Delta}e^{\hat\lambda_2p\Delta}-1\Big)\sum_{l=0}^{k}e^{\hat\lambda_2 pt_l}\mathbb E[|\mathcal Dz^{\tilde\xi,\Delta}(t_l)\cdot\eta|^{2p}].
\end{align}
Similar to \eqref{5p2.1}, we deduce
\begin{align*}
\Delta\sum_{l=0}^ke^{\hat\lambda_2 pt_{l+1}}J_{1,l}&\leq -R(\epsilon_1, a_1)\Delta \sum_{l=0}^ke^{\hat\lambda_2 pt_{l+1}} \mathbb E[|\mathcal Dy^{\tilde\xi,\Delta}(t_l)\cdot\eta|^{2p}]\nn\\
&\quad+K(\epsilon_1)e^{\hat\lambda_2\tau}\tau\mathbb E[\|\mathcal Dy^{\tilde\xi,\Delta}_0\cdot\eta\|^{2p}]
\end{align*}
and 
\begin{align*}
\Delta^2\sum_{l=0}^ke^{\hat\lambda_2 pt_{l+1}}J_{2,l}&\leq \Delta ^2\big(K+K(\epsilon_1)\Delta^{p-2}\big)\sum_{l=0}^ke^{\hat\lambda_2 pt_{l+1}}\mathbb E[|\mathcal Dy^{\tilde\xi,\Delta}(t_l)\cdot\eta|^{2p}]\nn\\
&\quad+K(\epsilon)e^{\hat\lambda_2\tau}\tau\mathbb E[\|\mathcal Dy^{\tilde\xi,\Delta}_0\cdot\eta\|^{2p}],
\end{align*}
where 
\begin{align*}
R(\epsilon_1, a_1,\hat{\lambda}_2)&:=p(1+\epsilon_1)^{-1}(2a_1-\frac{2\theta-1}{\theta^2}-L)A^{p-1}_{\theta,\Delta}-\frac{2pa_2e^{\hat\lambda_2 \tau}}{\epsilon_1^{p-1}}-\frac{pLe^{\hat\lambda_2 \tau}}{\epsilon_1^{p-1}}\nn\\
&\quad-\frac{p(p-1)L(1+e^{\hat\lambda_2 \tau})}{2}(\frac{8}{\epsilon^{p-1}_1}+\frac{4}{\epsilon_1^{\frac{p-2}{2}}}\frac{1}{\theta^p})
-\frac{2p(p-1)}{\theta^p\epsilon_1^{\frac{p-2}{2}}}.
\end{align*}
Note that 
\begin{align*}
A_{\theta,\Delta}=1-\frac{(2\theta-1)\Delta}{\theta^2(1+\Delta)}\leq e^{-\frac{(2\theta-1)}{\theta^2(1+\Delta)}\Delta}
\end{align*}
implies  
\begin{align*}
(1+K(\epsilon_1+\Delta)\Delta)A^p_{\theta,\Delta}e^{\epsilon p\Delta}-1\leq e^{K(\epsilon_1+\Delta+\hat\lambda_2p-\frac{p(2\theta-1)}{\theta^2(1+\Delta)})\Delta}-1.
\end{align*}
Let $\epsilon_1\in(0,\frac{p(2\theta-1)}{2\theta^2})$ and denote
\begin{align*}
R^*(\epsilon_1, a_1,\hat\lambda_2)&:=p(1+\epsilon_1)^{-1}(2a_1-\frac{2\theta-1}{\theta^2}-L)\big(\frac{(1-\theta)^2}{\theta^2}+\frac{2\theta-1}{2\theta^2}\big)^{p-1}-\frac{2pa_2e^{\hat\lambda_2 \tau}}{\epsilon_1^{p-1}}\nn\\
&\quad-\frac{pLe^{\hat\lambda_2 \tau}}{\epsilon_1^{p-1}}
-\frac{p(p-1)L(1+e^{\hat\lambda_2 \tau})}{2}(\frac{8}{\epsilon^{p-1}_1}+\frac{4}{\epsilon_1^{\frac{p-2}{2}}}\frac{1}{\theta^p})
-\frac{2p(p-1)}{\theta^p\epsilon_1^{\frac{p-2}{2}}}.
\end{align*}
Take a sufficiently large number $a_1^*$ such that 
$R^*(\epsilon_1,a_1^*, 0)<0.$
Choose a sufficiently small number ${\hat{\hat \lambda}}_2 $ such that
$\epsilon_1+{\hat{\hat \lambda}}_2 p<\frac{p(2\theta-1)}{2\theta^2}$ and
$R^*(\epsilon_1,a_1^*, {\hat{\hat \lambda}}_2)<0$.
Furthermore, there exists a number $\Delta^{*}\in(0,1]$ such that for any $\Delta\in(0,\Delta^*]$,
$\epsilon_1+\Delta+{\hat{\hat \lambda}}_2 p-\frac{p(2\theta-1)}{2\theta^2}<0$ and $R(\epsilon_1, \hat a_1^*,\hat{\lambda}_2)+ \Delta(K+K(\epsilon_1)\Delta^{p-2})<0$.
Hence, combining \eqref{re_d_yZ} we have that for $a_1>a_1^*$, $\hat\lambda_2\in(0,{\hat{\hat \lambda}}_2]$, and $\Delta\in(0,\Delta^*]$,
\begin{align*}
 e^{\hat\lambda_2 pt_{k+1}}\mathbb E[|\mathcal Dz^{\tilde\xi,\Delta}(t_{k+1})\cdot\eta|^{2p}]&\leq \mathbb E[|\mathcal Dz^{\tilde\xi,\Delta}(0)\cdot\eta|^{2p}]+Ke^{\hat\lambda_2 \tau}\tau\mathbb E[\|\mathcal Dy^{\tilde\xi,\Delta}_0\cdot\eta\|^{2p}]\nn\\
&\leq K\|\eta\|^{2p}+K\E[\|\mathcal Db(\tilde\xi)\cdot \eta\|^{2p}].
\end{align*} 
According to \cref{a5,first_deri}, we have
\begin{align*}
\|\mathcal Db(\tilde\xi)\cdot \eta\|
\leq (1+\|\tilde \xi\|^{\beta})\|\eta\|,
\end{align*}
which implies
\begin{align*}
 e^{\hat\lambda_2 pt_{k+1}}\mathbb E[|\mathcal Dz^{\tilde\xi,\Delta}(t_{k+1})\cdot\eta|^{2p}]\leq 
 K\|\eta\|^{2p}(1+\E[\|\tilde \xi\|^{2p\beta}]).
\end{align*} 
It follow from \eqref{re_d_yZ} that 
\begin{align*}
 e^{\hat\lambda_2 pt_{k+1}}\mathbb E[|\mathcal Dy^{\tilde\xi,\Delta}(t_{k+1})\cdot\eta|^{2p}]\leq  K\|\eta\|^{2p}(1+\E[\|\tilde \xi\|^{2p\beta}]).
\end{align*}
\underline{Step 2: Case of $\Delta\in[\Delta^*,1]$.} Similarly, we have that for any $\epsilon_2\in(0,1)$,
\begin{align*}
&\quad\mathbb E[\mathcal I_1]
\leq -\Delta (1+\epsilon_2)^{-1} (2a_1-\frac{2\theta-1}{\theta^2}-L)A^{p-1}_{\theta,\Delta}\mathbb E[|\mathcal Dy^{\tilde\xi,\Delta}(t_k)\cdot\eta|^{2p}]+(2a_2+L)\times\nn\\
&\quad\epsilon_2A^p_{\theta,\Delta}\mathbb E[|\mathcal Dz^{\tilde\xi,\Delta}(t_k)\cdot\eta|^{2p}]
+K(\epsilon_2)(1+\epsilon_2)^{-1}\Delta ^p(2a_1-\frac{2\theta-1}{\theta^2}-L)A^{p-1}_{\theta,\Delta}\times\\
&\quad\mathbb E[|\mathcal Dy^{\tilde\xi,\Delta}(t_k)\cdot\eta|^{2p}] +\big(K(\epsilon_2)(1+\epsilon_2)^{-1}+\frac{2a_2}{\epsilon^{p-1}_2}\big)\Delta^p \int_{-\tau}^0 \mathbb E[|\mathcal Dy^{\tilde\xi,\Delta}_{t_k}(r)\cdot\eta|^{2p}]\mathrm d\nu_2(r) \\
&\quad+\frac{L}{\epsilon^{p-1}_2}\Delta^p \int_{-\tau}^0 \mathbb E[|\mathcal Dy^{\tilde\xi,\Delta}_{t_k}(r)\cdot\eta|^{2p}]\mathrm d\nu_1(r),
\end{align*}
\begin{align*}
& \mathbb E[\mathcal I_2]\leq 17\epsilon_2 A^{p}_{\theta,\Delta}\mathbb E[|\mathcal Dz^{\tilde\xi,\Delta}(t_k)\cdot\eta|^{2p}]+(\frac{8}{\epsilon^{p-1}_2}\Delta^p+\frac{4}{\epsilon_2^{\frac{p-2}{2}}}\frac{1}{\theta^p}\Delta^{\frac{p}{2}})L\times\\
&\quad\mathbb E\Big[|\mathcal Dy^{\tilde\xi,\Delta}(t_k)\cdot\eta|^{2p}+\int_{-\tau}^0|\mathcal Dy^{\tilde\xi,\Delta}_{t_k}(r)\cdot\eta|^{2p}\mathrm d\nu_1(r)\Big]\\
&\quad+\frac{4}{\epsilon_2^{\frac{p-2}{2}}}\frac{1}{\theta^p}\Delta^{\frac{p}{2}} 
|\mathcal Dy^{\tilde\xi,\Delta}(t_k)\cdot\eta|^{2p}+K(\epsilon_2)\Delta^p\mathbb E[|\mathcal Dy^{\tilde\xi,\Delta}(t_k)\cdot\eta|^{2p}]\\
&\quad+K(\epsilon_2)\Delta^p\mathbb E\Big[\int_{-\tau}^0|\mathcal Dy^{\tilde\xi,\Delta}_{t_k}(r)\cdot\eta|^{2p}\mathrm d\nu_1(r)+\int_{-\tau}^0|\mathcal Dy^{\tilde\xi,\Delta}_{t_k}(r)\cdot\eta|^{2p}\mathrm d\nu_2(r)\Big]
\end{align*}
and for $i\in\{3,\ldots,p\},$
\begin{align*}
&\mathbb E[\mathcal I_i]\leq K\epsilon_2A^p_{\theta,\Delta}\mathbb E[|\mathcal Dz^{\tilde\xi,\Delta}(t_k)\cdot\eta|^{2p}]+K(\epsilon_2)\Delta^{\frac p2}\mathbb E[|\mathcal Dy^{\tilde\xi,\Delta}(t_k)\cdot\eta|^{2p}]\\
&\quad+K(\epsilon_2)\Delta^{\frac p2}\mathbb E\Big[\int_{-\tau}^0|\mathcal Dy^{\tilde\xi,\Delta}_{t_k}(r)\cdot\eta|^{2p}\mathrm d\nu_1(r)+\int_{-\tau}^0|\mathcal Dy^{\tilde\xi,\Delta}_{t_k}(r)\cdot\eta|^{2p}\mathrm d\nu_2(r)\Big].
\end{align*}
Hence, % by $(1+K\epsilon_2)A^p_{\theta,\Delta}e^{\epsilon \Delta}\leq e^{K\epsilon_2+\epsilon-\frac{p(2\theta-1)\Delta^*}{2\theta^2}}\leq 1$, we derive
\begin{align*}
&\quad e^{\hat\lambda_2 pt_{k+1}}\mathbb E[|\mathcal Dz^{\tilde\xi,\Delta}(t_{k+1})\cdot\eta|^{2p}]\leq \mathbb E[|\mathcal Dz^{\tilde\xi,\Delta}(0)\cdot\eta|^{2p}]+[(1+K\epsilon_2)A^p_{\theta,\Delta}e^{\hat\lambda_2p\Delta}-1]\times\nn\\
&\quad\sum_{l=0}^ke^{\hat\lambda_2 pt_l}\mathbb E[|\mathcal Dz^{\tilde\xi,\Delta}(t_{l})\cdot\eta|^{2p}]-\Big(p(1+\epsilon_2)^{-1}(2a_1-\frac{2\theta-1}{\theta^2}-L)A^{p-1}_{\theta,\Delta}-\big(\frac{2pa_2e^{\hat\lambda_2 \tau}}{\epsilon_2^{p-1}}\nn\\
&\quad+\frac{pLe^{\hat\lambda_2 \tau}}{\epsilon_2^{p-1}}\big)\Delta^{p-1}
-\frac{p(p-1)L(1+e^{\hat\lambda_2\tau})}{2}\big(\frac{8}{\epsilon^{p-1}_2}\Delta^{p-1}
+\frac{4}{\epsilon_2^{\frac{p-2}{2}}}\frac{1}{\theta^p}\Delta^{\frac{p-2}{2}}\big)
\\
&\quad
%-\frac{2p(p-1)}{2\theta^p\epsilon_2^{\frac{p-2}{2}}}\Delta^{\frac{p-2}{2}}
-K(\epsilon_2)\Delta^{\frac{p-2}{2}}\Big)\Delta\sum_{l=0}^ke^{\hat\lambda_2 pt_{l+1}}\mathbb E[|\mathcal Dy^{\tilde\xi,\Delta}(t_l)\cdot\eta|^{2p}]+Ke^{\hat\lambda_2\tau}\tau \mathbb E[\|\mathcal Dy^{\tilde\xi,\Delta}_0\cdot\eta\|^{2p}].
%&\leq \mathbb E[|\mathcal Dz^{\tilde\xi,\Delta}(0)\cdot\eta|^{2p}]+Ke^{\hat\lambda_2 \tau}\tau \mathbb E[\|\mathcal Dy^{\tilde\xi,\Delta}_0\cdot\eta\|^{2p}],
\end{align*}
Noting 
$$(1+K\epsilon_2)A^p_{\theta,\Delta}e^{\hat\lambda_2p\Delta}\leq e^{K\epsilon_2+\lambda_2 p\Delta-\frac{p(2\theta-1)\Delta^*}{2\theta^2}},$$
choose a sufficiently small number $\epsilon_2>0$ such that 
$ K\epsilon_2<\frac{p(2\theta-1)\Delta^*}{2\theta^2}.$
Letting $a_1^*$ be sufficiently large such that
\begin{align*}
&\quad p(1+\epsilon_2)^{-1}(2a_1^*-\frac{2\theta-1}{\theta^2}-L)\big(\frac{(1-\theta)^2}{\theta^2}+\frac{2\theta-1}{2\theta^2}\big)^{p-1}-\big(\frac{2pa_2}{\epsilon_2^{p-1}}+\frac{pL}{\epsilon_2^{p-1}}\big)(\Delta^*)^{p-1}\nn\\
&\quad-p(p-1)L(\frac{8}{\epsilon^{p-1}_2}(\Delta^*)^{p-1}
+\frac{4}{\epsilon_2^{\frac{p-2}{2}}}\frac{1}{\theta^p}(\Delta^*)^{\frac{p-2}{2}})-K(\epsilon_2)(\Delta^*)^{\frac{p-2}{2}}>0.
%&\quad-\frac{2p(p-1)}{2\theta^p\epsilon_2^{\frac{p-2}{2}}}(\Delta_1^*)^{\frac{p-2}{2}}<0.
\end{align*}
Letting $\hat\lambda_2\in(0,{\hat{\hat\lambda}}_2]$ 
 be sufficiently small, we obtain that for  $a_1\geq a_1^*$ and $\Delta\in(\Delta^*,1]$,
\begin{align*}
e^{\hat\lambda_2 pt_{k+1}}\mathbb E[\|\mathcal Dz^{\tilde\xi,\Delta}_{t_{k}}\cdot\eta\|^{2p}]&\leq K(\|\mathcal Dz^{\tilde\xi,\Delta}_0\cdot\eta\|^{2p}+\|\mathcal Dy^{\tilde\xi,\Delta}_0\cdot\eta\|^{2p})\nn\\
&\leq K\|\eta\|^{2p}(1+\E[\|\tilde \xi\|^{2p\beta}]).
\end{align*}

Therefore, similar to the proof of Proposition \ref{l4.3}, the desired argument follows from \underline{Steps 1--2} and \eqref{re_d_yZ2}. The proof is completed.
\end{proof}

\begin{lemma}\label{Dy_esti2}
Let Assumptions \ref{a1}--\ref{a2}  and \ref{first_deri} hold. Then for $p\in\mathbb N_+$, $a_1>a_1^*$, and $\Delta\in(0,1]$, 
\begin{align*}
\mathbb E[\|D_uy^{\tilde\xi,\Delta}_{t_k}\|^{2p}]\leq Ke^{-\hat{\lambda}_2 p(t_k-u)}(1+\mathbb E[\|D_u\tilde\xi\|^{2p}]+\E[|\mathcal Db(\tilde \xi)D_{u}\tilde\xi|^{2p}]),
\end{align*}
where $\tilde\xi\in L^{2p}(\Omega;\mathcal C^d)$, $D_u\tilde\xi\in L^{2p}(\Omega;\mathcal C^d\otimes \mathbb R^m)$, $\mathcal Db(\tilde \xi)D_{u}\tilde\xi\in L^{2p}(\Omega; \RR^{d\times m})$, $\eta\in\mathcal C^d,$ $K>0$, and $a_1^*,  \,\hat{\lambda}_2>0$ are given in Lemma \ref{Dy_esti}.
\end{lemma}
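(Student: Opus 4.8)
The plan is to mirror the proof of Lemma \ref{Dy_esti}, replacing the G\^ateaux derivative recursion by the Malliavin derivative recursion and carefully tracking the extra source term produced by differentiating the diffusion increment against the noise. First I would apply $D_u$ to the auxiliary scheme \eqref{ST}. Using the fact that $D_u(\delta W_{k-1})$ is the $m\times m$ identity matrix when $t_{k-1}<u\le t_k$ and vanishes otherwise (in particular for $u<0$), together with the chain rule (Lemma \ref{Malliavin_SFDE}), I obtain, for $k\in\mathbb N_+$,
$$D_u z^{\tilde\xi,\Delta}(t_k) = D_u z^{\tilde\xi,\Delta}(t_{k-1}) + \mathcal Db(y^{\tilde\xi,\Delta}_{t_{k-1}})D_u y^{\tilde\xi,\Delta}_{t_{k-1}}\Delta + \mathcal D\sigma(y^{\tilde\xi,\Delta}_{t_{k-1}})D_u y^{\tilde\xi,\Delta}_{t_{k-1}}\delta W_{k-1} + \sigma(y^{\tilde\xi,\Delta}_{t_{k-1}})\mathbf 1_{\{t_{k-1}<u\le t_k\}},$$
together with $D_u z^{\tilde\xi,\Delta}(0) = D_u\tilde\xi(0) - \theta\Delta\mathcal Db(\tilde\xi)D_u\tilde\xi$ and the relation $D_u z^{\tilde\xi,\Delta}(t_k) = D_u y^{\tilde\xi,\Delta}(t_k) - \theta\Delta\mathcal Db(y^{\tilde\xi,\Delta}_{t_k})D_u y^{\tilde\xi,\Delta}_{t_k}$. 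By causality, $D_u y^{\tilde\xi,\Delta}(t_k)=0$ whenever $u\ge t_k$, while $D_u y^{\tilde\xi,\Delta}(t_k)$ is $\mathcal F_{t_k}$-measurable for $u\le t_k$, so the martingale structure used below is preserved. This recursion is identical to the one for $\mathcal Dy^{\tilde\xi,\Delta}\cdot\eta$ in Lemma \ref{Dy_esti} except for the initial datum $D_u\tilde\xi$ and the single inhomogeneous ``kick'' $\sigma(y^{\tilde\xi,\Delta}_{t_{k^*-1}})$ injected at the unique index $k^*$ with $t_{k^*-1}<u\le t_{k^*}$.

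Second, since the recursion is linear in the Malliavin derivative variables (the operators $\mathcal Db(y^{\tilde\xi,\Delta}_{\cdot})$ and $\mathcal D\sigma(y^{\tilde\xi,\Delta}_{\cdot})$ do not depend on $D_u$), I would split $D_u z^{\tilde\xi,\Delta} = \mathsf H + \mathsf P$, where $\mathsf H$ solves the homogeneous recursion with initial datum $D_u\tilde\xi$ and $\mathsf P$ the recursion with zero initial datum and the kick at step $k^*$. For $\mathsf H$ the computation of Lemma \ref{Dy_esti} applies verbatim: the inequalities \eqref{DB}--\eqref{DS}, the discrete-It\^o expansion, the choice $a_1>a_1^*$, and the decay factor $e^{\hat\lambda_2 pt_k}$ are all unchanged, yielding $\E[|\mathsf H(t_k)|^{2p}]\le Ke^{-\hat\lambda_2 pt_k}(\E[\|D_u\tilde\xi\|^{2p}] + \Delta^{2p}\E[|\mathcal Db(\tilde\xi)D_u\tilde\xi|^{2p}])$, where the last term stems from $|D_uz^{\tilde\xi,\Delta}(0)|^{2p}\le K(|D_u\tilde\xi(0)|^{2p}+\Delta^{2p}|\mathcal Db(\tilde\xi)D_u\tilde\xi|^{2p})$. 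For $\mathsf P$ the kick plays the role of the initial datum at step $k^*$; running the same energy estimate from index $k^*$ gives $\E[|\mathsf P(t_k)|^{2p}]\le Ke^{-\hat\lambda_2 p(t_k-t_{k^*})}\E[|\sigma(y^{\tilde\xi,\Delta}_{t_{k^*-1}})|^{2p}]$. By Minkowski's inequality and the elementary inequality, $\E[|D_u z^{\tilde\xi,\Delta}(t_k)|^{2p}]\le 2^{2p-1}(\E[|\mathsf H(t_k)|^{2p}]+\E[|\mathsf P(t_k)|^{2p}])$.

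Third, I would bound the injected moment: by Assumption \ref{a1} (linear growth of $\sigma$) and the time-independent moment bound of Proposition \ref{l4.3}, $\E[|\sigma(y^{\tilde\xi,\Delta}_{t_{k^*-1}})|^{2p}]\le K(1+\sup_{j}\E[\|y^{\tilde\xi,\Delta}_{t_j}\|^{2p}])\le K$, which produces the leading $1$ in the asserted bound. The decay bookkeeping is then routine: for $u\ge 0$ one has $t_{k^*}\le u+\Delta$, so $e^{-\hat\lambda_2 p(t_k-t_{k^*})}\le e^{\hat\lambda_2 p\Delta}e^{-\hat\lambda_2 p(t_k-u)}$ and $e^{-\hat\lambda_2 pt_k}\le e^{-\hat\lambda_2 p(t_k-u)}$; for $u\in[-\tau,0)$ the kick is absent and $e^{-\hat\lambda_2 pt_k}\le e^{\hat\lambda_2 p\tau}e^{-\hat\lambda_2 p(t_k-u)}$. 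Combining these and absorbing $\Delta^{2p}\le1$ yields $\E[|D_u z^{\tilde\xi,\Delta}(t_k)|^{2p}]\le Ke^{-\hat\lambda_2 p(t_k-u)}(1+\E[\|D_u\tilde\xi\|^{2p}]+\E[|\mathcal Db(\tilde\xi)D_u\tilde\xi|^{2p}])$. Finally, passing from $z$ to $y$ through the relation above (exactly as in \eqref{re_d_yZ2}) and from the pointwise moment to the functional-norm moment $\E[\|D_uy^{\tilde\xi,\Delta}_{t_k}\|^{2p}]$ via the Burkholder--Davis--Gundy inequality over a window of length $\tau$, as in \underline{Step 2} of Proposition \ref{p2.2} and the end of Lemma \ref{Dy_esti}, completes the proof.

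The main obstacle is the correct derivation and treatment of the source term. The heart of the matter is recognizing that differentiating $\sigma(y^{\tilde\xi,\Delta}_{t_{k-1}})\delta W_{k-1}$ in the Malliavin sense injects exactly one copy of $\sigma(y^{\tilde\xi,\Delta}_{t_{k^*-1}})$ at the step containing $u$, which then propagates through the (now homogeneous) recursion with the same contractive dynamics as the G\^ateaux derivative; the decay must be measured from $t_{k^*}\approx u$ rather than from $0$, which is precisely what turns the bound $e^{-\hat\lambda_2 pt_k}$ of Lemma \ref{Dy_esti} into $e^{-\hat\lambda_2 p(t_k-u)}$. Once this is set up, no estimate beyond Lemma \ref{Dy_esti}, Assumption \ref{a1}, and Proposition \ref{l4.3} is required.
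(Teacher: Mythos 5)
Your proposal is correct and is essentially the argument the paper intends: the paper omits the proof of Lemma \ref{Dy_esti2} with the remark that it is similar to that of Lemma \ref{Dy_esti}, and your derivation---differentiate the scheme in the Malliavin sense, observe that the resulting recursion coincides with the G\^ateaux one up to the initial datum $D_u\tilde\xi$ and the single source $\sigma(y^{\tilde\xi,\Delta}_{t_{k^*-1}})$ injected at the step containing $u$, then rerun the weighted discrete energy estimate so that the decay is measured from $t_{k^*}\approx u$---is exactly that mirroring. Your homogeneous/particular superposition is only an organizational variant of running the single weighted estimate with the source entering at time $u$; compare the paper's continuous-time counterpart, Lemma \ref{lem_fir_d2}, where the term $\E[e^{\hat\lambda_1 pu}|\sigma(x^{\tilde\xi}_u)|^{2p}]$ plays precisely the role of your kick, and the adaptedness remark you make is the same device that keeps the martingale terms mean-zero there.

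One caveat, which is a defect of the lemma's statement rather than of your construction: to bound the kick moment $\E[|\sigma(y^{\tilde\xi,\Delta}_{t_{k^*-1}})|^{2p}]$ by a constant you invoke Proposition \ref{l4.3}, whose hypotheses include Assumptions \ref{a4}--\ref{a5}; under the assumptions actually listed in Lemma \ref{Dy_esti2} (Assumptions \ref{a1}--\ref{a2} and \ref{first_deri}) the paper only supplies second-moment bounds for the numerical solution (Proposition \ref{p2.2}), so for $p>1$ the leading ``$1$'' in the asserted estimate cannot be justified from the paper's results. Any proof of the statement as written faces the same obstruction---note that the exact-solution analogue, Lemma \ref{lem_fir_d2}, explicitly includes Assumption \ref{a4} for exactly this reason---so you should record that Assumptions \ref{a4}--\ref{a5}, or at least a time-uniform $2p$-th moment bound for $y^{\tilde\xi,\Delta}$, are implicitly required; with that amendment your argument is complete.
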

\begin{proof}
The proof is similar to that of Lemma \ref{Dy_esti} and is omitted. 
\end{proof}

\begin{assp}\label{second_deri}
Assume that coefficients $b$ and $\sigma$ have continuous derivatives up to order $2$ satisfying  that for any $\phi,\phi_1,\phi_2\in\mathcal C^d,$
\begin{align*}
&|\mathcal D^2b(\phi)(\phi_1,\phi_2)|\leq K(1+\|\phi\|^{(\beta-1)\vee 0})\|\phi_1\|\|\phi_2\|,\\
&|\mathcal D^2\sigma(\phi)(\phi_1,\phi_2)|\leq K\|\phi_1\|\|\phi_2\|,
\end{align*}
where $K>0,$  the constant $\beta>0$ is given in Assumption \ref{a5}.
\end{assp}
\begin{lemma}\label{lem_sec_d1}
Let Assumptions \ref{a1}--\ref{a2} with $a_1-a_2-(2p-1)L>0$ hold for some $p\ge 1$. And let Assumptions \ref{a4}--\ref{a5} and  \ref{first_deri}--\ref{second_deri} hold. Then for any $t\geq0$ and $u\geq-\tau$,
\begin{align*}
&\mathbb E[|D_u\mathcal Dx^{\tilde\xi}(t)\cdot\eta|^{2p}]\leq Ke^{-\hat{\lambda}_1 pt}\|\eta\|^{2p} (1+\mathbb E[\|D_u\tilde\xi\|^{8p}]),\\
&\mathbb E[\|D_u\mathcal Dx^{\tilde\xi}_t\cdot\eta\|^{2p}]\leq Ke^{-\hat{\lambda}_1 pt}\|\eta\|^{2p}(1
+\mathbb E[\|D_u\tilde\xi\|^{8p}]),
\end{align*}
where $\tilde\xi\in L^{2p((2\beta-2)\vee1)}(\Omega;\mathcal C^d)$ and $D_u\tilde\xi\in L^{8p}(\Omega;\mathcal C^d\otimes \mathbb R^m)$, $\eta\in\mathcal C^d,$ $K>0$, and  $\hat \lambda_1>0$ is given in  Lemma \ref{lem_fir_d}.
\end{lemma}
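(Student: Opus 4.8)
The plan is to derive and analyze the linear stochastic equation satisfied by the mixed derivative $D_u\mathcal Dx^{\tilde\xi}(t)\cdot\eta$, treating it as a perturbation of the first-variation equation already controlled in Lemma \ref{lem_fir_d}. Writing $V(t):=\mathcal Dx^{\tilde\xi}(t)\cdot\eta$, which solves
$$\mathrm dV(t)=\mathcal Db(x^{\tilde\xi}_t)V_t\,\mathrm dt+\mathcal D\sigma(x^{\tilde\xi}_t)V_t\,\mathrm dW(t),\quad V(r)=\eta(r),\ r\in[-\tau,0],$$
I would apply the Malliavin derivative $D_u$ and use the chain/product rule (Lemma \ref{Malliavin_SFDE}) together with Assumption \ref{second_deri}. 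For $u\ge 0$ this yields, with $R(t):=D_uV(t)$,
\begin{align*}
R(t)={}&\mathcal D\sigma(x^{\tilde\xi}_u)V_u+\int_u^t\big[\mathcal D^2b(x^{\tilde\xi}_s)(D_ux^{\tilde\xi}_s,V_s)+\mathcal Db(x^{\tilde\xi}_s)R_s\big]\mathrm ds\\
&+\int_u^t\big[\mathcal D^2\sigma(x^{\tilde\xi}_s)(D_ux^{\tilde\xi}_s,V_s)+\mathcal D\sigma(x^{\tilde\xi}_s)R_s\big]\mathrm dW(s),
\end{align*}
while for $u\in[-\tau,0]$ the datum $D_u\tilde\xi$ enters through the initial condition. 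The point is that $R$ obeys the same linear structure as $V$, with the same dissipative coefficients $\mathcal Db,\mathcal D\sigma$, plus an inhomogeneous forcing built from $\mathcal D^2b,\mathcal D^2\sigma$ and the already-estimated processes $V$ and $D_ux^{\tilde\xi}$.

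Next I would apply the It\^o formula to $e^{\hat\lambda_1 pt}|R(t)|^{2p}$ and estimate exactly as in Lemmas \ref{lem_fir_d} and \ref{lem_fir_d2}: the linear drift and diffusion terms are absorbed using the dissipativity bounds \eqref{DB}, \eqref{DS}, which, for $\hat\lambda_1>0$ small, produce a nonpositive coefficient in front of $\int_u^t e^{\hat\lambda_1 ps}\mathbb E[|R(s)|^{2p}]\mathrm ds$. The forcing terms are then handled by the Young inequality, reducing matters to bounding $\int_u^t e^{\hat\lambda_1 ps}\mathbb E\big[|\mathcal D^2b(x^{\tilde\xi}_s)(D_ux^{\tilde\xi}_s,V_s)|^{2p}+|\mathcal D^2\sigma(x^{\tilde\xi}_s)(D_ux^{\tilde\xi}_s,V_s)|^{2p}\big]\mathrm ds$. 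By Assumption \ref{second_deri} the integrand is dominated by $(1+\|x^{\tilde\xi}_s\|^{(\beta-1)\vee0})^{2p}\|D_ux^{\tilde\xi}_s\|^{2p}\|V_s\|^{2p}$, and a H\"older splitting with exponents $(2,4,4)$ lets me invoke Lemma \ref{l4.1} for the polynomial moment (this is where $\tilde\xi\in L^{2p((2\beta-2)\vee1)}$ is used), Lemma \ref{lem_fir_d} for $\mathbb E[\|V_s\|^{8p}]\le K\|\eta\|^{8p}e^{-\hat\lambda_1\cdot4ps}$, and Lemma \ref{lem_fir_d2} for $\mathbb E[\|D_ux^{\tilde\xi}_s\|^{8p}]\le Ke^{-\hat\lambda_1\cdot4p(s-u)}(1+\mathbb E[\|D_u\tilde\xi\|^{8p}])$ (this is where $D_u\tilde\xi\in L^{8p}$ enters). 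Crucially, the two independent sources of exponential decay combine so that $\mathbb E[|\text{forcing}|^{2p}]\lesssim \|\eta\|^{2p}e^{\hat\lambda_1 pu}e^{-2\hat\lambda_1 ps}(1+\mathbb E[\|D_u\tilde\xi\|^{8p}])$, whence $\int_u^t e^{\hat\lambda_1 ps}\mathbb E[|\text{forcing}|^{2p}]\,\mathrm ds$ is bounded by a constant multiple of $\|\eta\|^{2p}(1+\mathbb E[\|D_u\tilde\xi\|^{8p}])$ uniformly in $t$. Together with the initial contribution (controlled via Assumption \ref{a1} and $\mathbb E[\|D_u\tilde\xi\|^{2p}]$), this delivers $\mathbb E[|D_u\mathcal Dx^{\tilde\xi}(t)\cdot\eta|^{2p}]\le Ke^{-\hat\lambda_1 pt}\|\eta\|^{2p}(1+\mathbb E[\|D_u\tilde\xi\|^{8p}])$.

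Finally, to pass from the pointwise bound to the functional-norm bound $\mathbb E[\|D_u\mathcal Dx^{\tilde\xi}_t\cdot\eta\|^{2p}]$, I would run the same supremum-over-a-window argument used at the end of Lemmas \ref{lem_fir_d}--\ref{lem_fir_d2}: take the supremum over $[0\vee(t-\tau),t]$, apply the Burkholder--Davis--Gundy inequality to the martingale part, and use the Young inequality to absorb the resulting $\sup$ term, closing the estimate on a window of length $\tau$ and then shifting. I expect the main obstacle to be the bookkeeping of decay rates and moment orders in the forcing estimate: one must choose the H\"older exponents so that the polynomial growth factor only costs the $L^{2p((2\beta-2)\vee1)}$-moment of $\tilde\xi$ while simultaneously arranging that the product of the two decaying factors beats the weight $e^{\hat\lambda_1 ps}$. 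In particular, verifying that the higher-moment invocations of Lemmas \ref{lem_fir_d}--\ref{lem_fir_d2} at level $8p$ remain legitimate, and that the exponent $8p$ on $D_u\tilde\xi$ is exactly what the chosen splitting demands, is the delicate part of the argument.
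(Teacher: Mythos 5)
Your proposal is correct and follows essentially the same route as the paper's proof: the same variational equation for $D_u\mathcal Dx^{\tilde\xi}(t)\cdot\eta$, the same exponentially weighted It\^o argument with the linear terms absorbed via \eqref{DB}--\eqref{DS} and small parameters, the same Young/H\"older $(2,4,4)$ splitting of the $\mathcal D^2b,\mathcal D^2\sigma$ forcing invoking Lemma \ref{l4.1} and Lemmas \ref{lem_fir_d}--\ref{lem_fir_d2} at moment level $8p$, and the same Burkholder--Davis--Gundy window argument for the functional-norm bound. The delicate point you flag (legitimacy of the level-$8p$ invocations and the matching of decay rates against the weight $e^{\hat\lambda_1 ps}$) is handled identically, and no more carefully, in the paper itself.
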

\begin{proof}
For  $u\leq t,$ $D_u\mathcal Dx^{\tilde\xi}(t)\cdot \eta$ satisfies
\begin{align*}
D_u\mathcal Dx^{\tilde\xi}(t)\cdot\eta&=\int_u^t\Big(\mathcal D^2b(x^{\tilde\xi}_s)(D_ux^{\tilde\xi}_s,\mathcal Dx^{\tilde\xi}_s\cdot\eta)+\mathcal Db(x^{\tilde\xi}_s)D_u \mathcal D x^{\tilde\xi}_s\cdot\eta\Big)\mathrm ds\\
&\quad +\int_u^t\Big(\mathcal D^2\sigma(x^{\tilde\xi}_s)(D_ux^{\tilde\xi}_s,\mathcal Dx^{\tilde\xi}_s\cdot\eta)+\mathcal D\sigma(x^{\tilde\xi}_s)D_u\mathcal Dx^{\tilde\xi}_s\cdot\eta\Big)\mathrm dW(s)\\
&\quad +\mathcal D\sigma(x^{\tilde\xi}_u)\mathcal Dx^{\tilde\xi}_u\cdot\eta\mathbf 1_{[0,t]}(u).
\end{align*}
By the It\^o formula, we have %$\hat\lambda_1<\hat\lambda_1$,
\begin{align*}
&\quad\mathrm d(e^{\hat\lambda_1 pt}|D_u\mathcal Dx^{\tilde\xi}(t)\cdot\eta|^{2p})\leq \hat\lambda_1 pe^{\hat\lambda_1 pt}|D_u\mathcal Dx^{\tilde\xi}(t)\cdot\eta|^{2p}\mathrm dt+2pe^{\hat\lambda_1 pt}|D_u\mathcal Dx^{\tilde\xi}(t)\cdot\eta|^{2p-2}\times \\
&\quad\Big[\Big\<D_u\mathcal Dx^{\tilde\xi}(t)\cdot\eta,\mathcal D^2b(x^{\tilde\xi}_t)(D_ux^{\tilde\xi}_t,\mathcal Dx^{\tilde\xi}_t\cdot\eta)+\mathcal Db(x^{\tilde\xi}_t)D_u\mathcal Dx^{\tilde\xi}_t\cdot\eta\Big\>\mathrm dt\\
&\quad+\Big\<D_u\mathcal Dx^{\tilde\xi}(t)\cdot\eta,\Big(\mathcal D^2\sigma(x^{\tilde\xi}_t)(D_ux^{\tilde\xi}_t,\mathcal Dx^{\tilde\xi}_t\cdot\eta)+\mathcal D\sigma(x^{\tilde\xi}_t)D_u\mathcal Dx^{\tilde\xi}_t\cdot\eta\Big)\mathrm dW(t)\Big\>\Big]\\
&\quad+p(2p-1)e^{\hat\lambda_1 pt}|D_u\mathcal Dx^{\tilde\xi}(t)\cdot\eta|^{2p-2}|\mathcal D^2\sigma(x^{\tilde\xi}_t)(D_ux^{\tilde\xi}_t,\mathcal Dx^{\tilde\xi}_t\cdot\eta)\nn\\
&\quad+\mathcal D\sigma(x^{\tilde\xi}_t)D_u\mathcal Dx^{\tilde\xi}_t\cdot\eta|^2\mathrm dt.
\end{align*}
Taking expectation on both sides, and using  Assumptions \ref{first_deri} and  \ref{second_deri}, and the Young inequality, we obtain
\begin{align*}
&\quad\mathbb E[e^{\hat\lambda_1 pt}|D_u\mathcal Dx^{\tilde\xi}(t)\cdot\eta|^{2p}]\!\leq\! \mathbb E[e^{\hat\lambda_1 pu}|\mathcal D\sigma(x^{\tilde\xi}_u)\mathcal Dx^{\tilde\xi}_u\cdot\eta|^{2p}]\!\!+\!\!\int_u^t\!\!\mathbb E\Big[\hat\lambda_1 pe^{\hat\lambda_1 ps}|D_u\mathcal Dx^{\tilde\xi}(s)\cdot\eta|^{2p}\\
&\quad+2pe^{\hat\lambda_1 ps}|D_u\mathcal Dx^{\tilde\xi}(s)\cdot\eta|^{2p-2}\Big(-a_1|D_u\mathcal Dx^{\tilde\xi}(s)\cdot\eta|^2+a_2\int_{-\tau}^0|D_u\mathcal Dx^{\tilde\xi}(s+r)\cdot\eta|^2\mathrm d\nu_2(r)\Big)\\
&\quad+Ke^{\hat\lambda_1 ps}|D_u\mathcal Dx^{\tilde\xi}(s)\cdot\eta|^{2p-1}(1+\|x^{\xi}_s\|^{\beta-1})\|D_u x^{\xi}_s\|\|\mathcal Dx^{\tilde\xi}_s\cdot\eta\|\nn\\
&\quad+p(2p-1)e^{\hat\lambda_1 ps}|D_u\mathcal Dx^{\tilde\xi}(s)\cdot\eta|^{2p-2}\Big((1+\frac{1}{\gamma_1})|\mathcal D^2\sigma(x^{\tilde\xi}_s)(D_ux^{\tilde\xi}_s,\mathcal Dx^{\tilde\xi}_s\cdot\eta)|^2\nn\\
&\quad+(1+\gamma_1)L\big(|D_u\mathcal Dx^{\tilde\xi}(s)\cdot\eta|^2+\int_{-\tau}^0|D_u\mathcal Dx^{\tilde\xi}(s+r)\cdot\eta|^2\mathrm d\nu_1(r)\big)\Big)\Big]
\mathrm ds\\
&\leq \mathbb E[e^{\hat\lambda_1 pu}|\mathcal D\sigma(x^{\tilde\xi}_u)\mathcal Dx^{\tilde\xi}_u\cdot\eta|^{2p}]+p\big(\hat\lambda_1-2a_1+2a_2e^{\hat\lambda_1 p\tau}+(2p-1)(1+\gamma_1)L(1+e^{\hat\lambda_1 p\tau})\big)\times\nn\\
&\quad\int_u^te^{\hat\lambda_1 ps}\mathbb E[|D_u\mathcal Dx^{\tilde\xi}(s)\cdot\eta|^{2p}]\mathrm ds+K\int_u^te^{\hat\lambda_1 ps}\mathbb E[|D_u\mathcal Dx^{\tilde\xi}(s)\cdot\eta|^{2p-1}\times\nn\\
&\quad(1\!+\!\|x^{\xi}_s\|^{\beta-1})\|D_ux^{\tilde\xi}_s\|\|\mathcal Dx^{\tilde\xi}_s\cdot\eta\|]\mathrm ds\!+\!p(2p\!-\!1)(1\!+\!\frac{1}{\gamma_1})\int_u^te^{\hat\lambda_1 ps}\mathbb E[|D_u\mathcal Dx^{\tilde\xi}(s)\cdot\eta|^{2p-2}\nn\\
&\quad\times|\mathcal D^2\sigma(x^{\tilde\xi}_s)D_ux^{\tilde\xi}_s\mathcal Dx^{\tilde\xi}_s\cdot\eta|^2]\mathrm ds\\
&\leq \mathbb E[e^{\hat\lambda_1 pu}|\mathcal D\sigma(x^{\tilde\xi}_u)\mathcal Dx^{\tilde\xi}_u\cdot\eta|^{2p}]+p\big(\hat\lambda_1-2a_1+2a_2e^{\hat\lambda_1 p\tau}+(2p-1)(1+\gamma_1)L(1+e^{\hat\lambda_1 p\tau})\big)\times\nn\\
&\quad\int_u^te^{\hat\lambda_1 ps}\mathbb E[|D_u\mathcal Dx^{\tilde\xi}(s)\cdot\eta|^{2p}]\mathrm ds+\int_u^t\Big(p\gamma_2e^{\hat\lambda_1 ps}\mathbb E[|D_u\mathcal Dx^{\tilde\xi}(s)\cdot\eta|^{2p}]
+pK(\gamma_2)e^{\hat\lambda_1 ps}\mathbb E[(1\nn\\
&\quad+\|x^{\tilde\xi}_s\|^{2(\beta-1)p})\|D_ux^{\tilde\xi}_s\|^{2p}\|\mathcal Dx^{\tilde\xi}_s\cdot\eta\|^{2p}]\Big)\mathrm ds+\int_u^t p(2p-1)\gamma_{3}e^{\hat\lambda_1 ps}\mathbb E[|D_u\mathcal Dx^{\tilde\xi}(s)\cdot\eta|^{2p}]\\
&\quad+p(2p-1)K(\gamma_3)e^{\hat\lambda_1 ps}\mathbb E[\|D_ux^{\tilde\xi}_s\|^{2p}\|\mathcal Dx^{\tilde\xi}_s\cdot\eta\|^{2p}]\mathrm ds.
\end{align*}
By taking $\gamma_1,\gamma_2,\gamma_3$ sufficiently small such that 
\begin{align*}
\hat\lambda_1-2a_1+2a_2e^{\hat\lambda_1 p\tau}+(2p-1)(1+\gamma_1)L(1+e^{\hat\lambda_1 p\tau})+\gamma_2+(2p-1)\gamma_3\leq 0,
\end{align*}
  and combining Lemmas \ref{lem_fir_d}--\ref{lem_fir_d2},
  %, and taking $\epsilon\ll1$, 
  we arrive at
\begin{align*}
&\quad\mathbb E[e^{\hat\lambda_1 pt}|D_u\mathcal Dx^{\tilde\xi}(t)\cdot\eta|^{2p}]
\leq \mathbb E[e^{\hat\lambda_1 pu}|\mathcal D\sigma(x^{\tilde\xi}_u)\mathcal Dx^{\tilde\xi}_u\cdot\eta|^{2p}]+K\int_u^te^{\hat\lambda_1 ps}\mathbb E[(1\nn\\
&\quad+\|x^{\tilde\xi}_s\|^{2p((\beta-1)\vee0)})\|D_ux^{\tilde\xi}_s\|^{2p}\|\mathcal Dx^{\tilde\xi}_s\cdot\eta\|^{2p} +\|D_ux^{\tilde\xi}_s\|^{2p}\|\mathcal Dx^{\tilde\xi}_s\cdot\eta\|^{2p}]\mathrm ds\\
&\leq \mathbb E[e^{\hat\lambda_1 pu}|\mathcal D\sigma(x^{\tilde\xi}_u)\mathcal Dx^{\tilde\xi}_u\cdot\eta|^{2p}]+K\int_u^te^{\hat\lambda_1 ps}e^{-\hat{\lambda}_1 ps-\hat{\lambda}_1p(s-u)}\|\eta\|^{2p}\times\nn\\
&\quad(1+\mathbb E[\|D_u\tilde\xi\|^{8p}])\mathrm ds\\
%&\leq \mathbb E[e^{\hat\lambda_1 pu}|\mathcal D\sigma(x^{\tilde\xi}_u)\mathcal Dx^{\tilde\xi}_u\cdot\eta|^{2p}]\!+\!K(1-e^{-\hat\lambda_1p(t-u)})\|\eta\|^{2p}(1+\mathbb E[\|D_u\tilde\xi\|^{8p}]) \\
&\leq Ke^{\hat\lambda_1 pu}\mathbb E[\|\mathcal Dx^{\tilde\xi}_u\cdot\eta\|^{2p}]+K\|\eta\|^{2p}(1+\mathbb E[\|D_u\tilde\xi\|^{8p}])\nn\\
&\leq K \|\eta\|^{2p}(1+\mathbb E[\|D_u\tilde\xi\|^{8p}]),
\end{align*}
which implies  
\begin{align*}
\mathbb E[|D_u\mathcal Dx^{\tilde\xi}(t)\cdot\eta|^{2p}]\leq Ke^{-\hat\lambda_1 pt}\|\eta\|^{2p}(1+\mathbb E[\|D_u\tilde\xi\|^{8p}]).
\end{align*}
Similar to the proof of Lemmas \ref{lem_fir_d}--\ref{lem_fir_d2}, by using the Burkholder--Davis--Gundy inequality, we finish the proof.
\end{proof}

\begin{lemma}\label{Ch3D2x}
Let Assumptions \ref{a1}--\ref{a2} with $a_1-a_2-(2p-1)L>0$ hold for some $p\ge1$. And let Assumptions \ref{a4}--\ref{a5} and  \ref{first_deri}--\ref{second_deri} hold. 
Then for $t\geq0$, $u\geq-\tau$, and $w\geq -\tau$,
\begin{align*}
&\mathbb E[|D_wD_ux^{\tilde\xi}(t)|^{2p}]\!\leq\!Ke^{-\hat{\lambda}_1p(t\!-\!u\vee w)}(1\!+\!\mathbb E[\|D_u\tilde\xi\|^{8p}]\!+\!\mathbb E[\|D_w\xi\|^{8p}]\!+\!\mathbb E[\|D_wD_u\tilde\xi\|^{2p}]),\\
&\mathbb E[\|D_wD_ux^{\tilde\xi}_t\|^{2p}]\!\leq\! Ke^{-\hat{\lambda}_1p(t\!-\!u\vee w)}(1\!+\!\mathbb E[\|D_u\tilde\xi\|^{8p}]\!+\!\mathbb E[\|D_w\xi\|^{8p}]\!+\!\mathbb E[\|D_wD_u\tilde\xi\|^{2p}]),
\end{align*}
where $\tilde\xi\in L^{2p((2\beta-2)\vee1)}(\Omega;\mathcal C^d)$,  $D_u\tilde\xi\in L^{8p}(\Omega;\mathcal C^d\otimes \mathbb R^m)$, $D_wD_u\tilde\xi\in L^{2p}(\Omega;\mathcal C^d)$, $K>0$, and $\hat \lambda_1$ is given by Lemma \ref{first_deri}.
\end{lemma}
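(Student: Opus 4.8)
The plan is to mirror exactly the argument used for Lemmas \ref{lem_fir_d}--\ref{lem_sec_d1}: derive the linear SFDE governing the second Malliavin derivative $D_wD_ux^{\tilde\xi}$, apply the It\^o formula together with the dissipativity \eqref{DB} and the growth bounds \eqref{DS} and Assumption \ref{second_deri}, and close the estimate with the same exponent $\hat\lambda_1$. First I would differentiate, in the Malliavin sense, the equation for $D_ux^{\tilde\xi}$ recorded in the proof of Lemma \ref{lem_fir_d2}. Applying $D_w$ and invoking the chain rule (Lemma \ref{Malliavin_SFDE}) on the Lebesgue integral, together with the commutation rule $D_w\int_a^b F(s)\mathrm dW(s)=\int_a^b D_wF(s)\mathrm dW(s)+F(w)\mathbf 1_{[a,b]}(w)$ on the stochastic integral, one finds that for $t\ge u\vee w$ the process solves
\begin{align*}
D_wD_ux^{\tilde\xi}(t)&=\int_{u\vee w}^t\Big(\mathcal D^2b(x^{\tilde\xi}_s)(D_wx^{\tilde\xi}_s,D_ux^{\tilde\xi}_s)+\mathcal Db(x^{\tilde\xi}_s)D_wD_ux^{\tilde\xi}_s\Big)\mathrm ds\\
&\quad+\int_{u\vee w}^t\Big(\mathcal D^2\sigma(x^{\tilde\xi}_s)(D_wx^{\tilde\xi}_s,D_ux^{\tilde\xi}_s)+\mathcal D\sigma(x^{\tilde\xi}_s)D_wD_ux^{\tilde\xi}_s\Big)\mathrm dW(s)\\
&\quad+\mathcal D\sigma(x^{\tilde\xi}_w)D_ux^{\tilde\xi}_w\mathbf 1_{[u,t]}(w)+\mathcal D\sigma(x^{\tilde\xi}_u)D_wx^{\tilde\xi}_u\mathbf 1_{[w,t]}(u)+D_wD_u\tilde\xi(0),
\end{align*}
where the first two integrals form the homogeneous linear part, the $\mathcal D^2b,\mathcal D^2\sigma$ contributions and the two boundary evaluations are the inhomogeneous forcing, and $D_wD_u\tilde\xi(0)$ is the datum at $s=u\vee w$.

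Next I would apply the It\^o formula to $e^{\hat\lambda_1 pt}|D_wD_ux^{\tilde\xi}(t)|^{2p}$ and take expectations. The term from $\langle D_wD_ux^{\tilde\xi}(s),\mathcal Db(x^{\tilde\xi}_s)D_wD_ux^{\tilde\xi}_s\rangle$ is estimated by \eqref{DB} and the quadratic-variation term by \eqref{DS}, producing precisely the coefficient $\hat\lambda_1-2a_1+2a_2e^{\hat\lambda_1 p\tau}+(2p-1)L(1+e^{\hat\lambda_1 p\tau})$ in front of $\int_{u\vee w}^t e^{\hat\lambda_1 ps}\mathbb E[|D_wD_ux^{\tilde\xi}(s)|^{2p}]\mathrm ds$. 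Choosing $\hat\lambda_1$ as in Lemma \ref{lem_fir_d}, which is admissible thanks to $a_1-a_2-(2p-1)L>0$, renders this coefficient nonpositive, so the self-interaction term may be dropped. The second-order pieces $\mathcal D^2b,\mathcal D^2\sigma$ and the diagonal cross terms $\mathcal D\sigma(x^{\tilde\xi}_\cdot)D_\cdot x^{\tilde\xi}_\cdot$ are split by the Young inequality into a small multiple of $|D_wD_ux^{\tilde\xi}|^{2p}$, absorbed into the coefficient above, plus explicit forcing terms.

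It then remains to bound the forcing. Using Assumption \ref{second_deri}, the H\"older inequality, the uniform moment bound of Lemma \ref{l4.1} to absorb the growth factor $1+\|x^{\tilde\xi}_s\|^{(\beta-1)\vee 0}$ (which is where the $L^{2p((2\beta-2)\vee1)}$-integrability of $\tilde\xi$ enters), and the exponentially decaying estimates $\mathbb E[\|D_ux^{\tilde\xi}_s\|^{2p}]\lesssim e^{-\hat\lambda_1 p(s-u)}(1+\mathbb E[\|D_u\tilde\xi\|^{2p}])$ together with its higher-moment and $w$-analogues from Lemma \ref{lem_fir_d2}, one controls $\mathbb E[|\mathcal D^2b(x^{\tilde\xi}_s)(D_wx^{\tilde\xi}_s,D_ux^{\tilde\xi}_s)|^{2p}]$ and $\mathbb E[|\mathcal D^2\sigma(\cdots)|^{2p}]$ by products of the decays $e^{-\hat\lambda_1 p(s-u)}$ and $e^{-\hat\lambda_1 p(s-w)}$ (this is why the $L^{8p}$-norms of $D_u\tilde\xi$ and $D_w\tilde\xi$ appear). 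Multiplying by $e^{\hat\lambda_1 ps}$ and integrating, these products recombine into a bound of order $e^{\hat\lambda_1 p(u\vee w)}\big(1+\mathbb E[\|D_u\tilde\xi\|^{8p}]+\mathbb E[\|D_w\tilde\xi\|^{8p}]\big)$; the boundary evaluations at $s=u\vee w$ are treated the same way, and $D_wD_u\tilde\xi(0)$ contributes the $\mathbb E[\|D_wD_u\tilde\xi\|^{2p}]$ term. Dividing by $e^{\hat\lambda_1 pt}$ yields the pointwise estimate, and the functional-norm version follows by localizing on $[(t-\tau)\vee u\vee w,t]$, applying the Burkholder--Davis--Gundy inequality to the martingale part, and absorbing the supremum with the Young inequality, exactly as at the ends of Lemmas \ref{lem_fir_d}--\ref{lem_sec_d1}.

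The main obstacle is bookkeeping rather than any new analytic idea: the second Malliavin derivative generates several genuinely distinct inhomogeneous terms (the two cross terms $\mathcal D^2b,\mathcal D^2\sigma$ and the two asymmetric boundary evaluations at $s=w$ and $s=u$), and I would have to verify that all their individual rates $e^{-\hat\lambda_1 p(s-u)}$ and $e^{-\hat\lambda_1 p(s-w)}$, after the $\int e^{\hat\lambda_1 ps}\,\mathrm ds$ weighting, reassemble into the single advertised rate $e^{-\hat\lambda_1 p(t-u\vee w)}$, while simultaneously tracking the superlinear factor coming from $\mathcal D^2b$ against the higher Malliavin--Sobolev norms of the initial datum. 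Once the exponents are matched, the estimate closes in the same manner as Lemma \ref{lem_sec_d1}.
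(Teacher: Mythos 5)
Your proposal follows essentially the same route as the paper's proof: you derive the identical linear SFDE for $D_wD_ux^{\tilde\xi}(t)$ (including the two asymmetric boundary terms and the datum $D_wD_u\tilde\xi(0)$), apply the It\^o formula with \eqref{DB}--\eqref{DS} and Assumption \ref{second_deri} to absorb the self-interaction with the same choice of $\hat\lambda_1$, and close the forcing estimates via H\"older, Lemma \ref{l4.1}, and Lemma \ref{lem_fir_d2}, finishing the functional-norm version with the Burkholder--Davis--Gundy and Young inequalities exactly as the paper does. No gaps; the argument is correct.
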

\begin{proof}The proof is similar to that of Lemma \ref{lem_sec_d1}. We only give the sketch of the proof.  
$D_wD_ux^{\tilde\xi}(t)$ satisfies
\begin{align*}
&\quad D_wD_ux^{\tilde\xi}(t)=\int_{u\vee w}^t\Big(\mathcal D^2b(x^{\tilde\xi}_s)(D_wx^{\tilde\xi}_s,D_ux^{\tilde\xi}_s)+\mathcal Db(x^{\tilde\xi}_s)D_wD_ux^{\tilde\xi}_s\Big)\mathrm ds\nn\\
&\quad+\!\int_{u\vee w}^t\Big(\mathcal D^2\sigma(x^{\tilde\xi}_s)(D_wx^{\tilde\xi}_s,D_ux^{\tilde\xi}_s\big)\!+\!\mathcal D\sigma(x^{\tilde\xi}_s)D_wD_ux^{\tilde\xi}_s\Big)\mathrm dW(s)+\mathcal D\sigma(x^{\tilde\xi}_w)D_ux^{\tilde\xi}_w\mathbf 1_{[u,t]}(w)\nn\\
&\quad+\mathcal D\sigma(x^{\tilde\xi}_u)D_wx^{\tilde\xi}_u\mathbf 1_{[w,t]}(u)+D_wD_u\tilde\xi(0).
\end{align*}
 Applying the It\^o formula, \eqref{DB}--\eqref{DS}, and Assumption \ref{second_deri}, we deduce
\begin{align*}
&\quad \mathbb E[e^{\hat\lambda_1 pt}|D_wD_ux^{\tilde\xi}(t)|^{2p}]\leq \mathbb E\Big[e^{\hat\lambda_1 p(u\vee w)}|\mathcal D\sigma(x^{\tilde\xi}_w)D_ux^{\tilde\xi}_w\mathbf 1_{[u,u\vee w]}(w)\nn\\
&\quad+\mathcal D\sigma (x^{\xi}_u)D_wx^{\tilde\xi}_u\mathbf 1_{[w,u\vee w]}(u)
+D_wD_u\tilde\xi(0)|^{2p}\Big]\nn\\
&\quad+K\int_{u\vee w}^te^{\hat\lambda_1 ps}\mathbb E[(1+\|x^{\tilde\xi}_s\|^{2p((\beta-1)\vee0)})\|D_wx^{\tilde\xi}_s\|^{2p}\|D_ux^{\tilde\xi}_s\|^{2p}]\mathrm ds.
\end{align*}
It follows from Lemmas \ref{l4.1}  and \ref{lem_fir_d2} that
\begin{align*}
\mathbb E[|D_wD_ux^{\tilde\xi}(t)|^{2p}]\leq Ke^{-\hat\lambda_1 p(t-u\vee w)}(1\!+\!\mathbb E[\|D_u\tilde\xi\|^{8p}]\!+\!\mathbb E[\|D_w\xi\|^{8p}]\!+\!\mathbb E[\|D_wD_u\tilde\xi\|^{2p}]).
\end{align*}
Similarly, one can derive $$\mathbb E[\|D_wD_ux^{\tilde\xi}_t\|^{2p}]\leq Ke^{-\hat\lambda_1 p(t-u\vee w)}(1\!+\!\mathbb E[\|D_u\tilde\xi\|^{8p}]\!+\!\mathbb E[\|D_w\xi\|^{8p}]\!+\!\mathbb E[\|D_wD_u\tilde\xi\|^{2p}]).$$ The proof is completed. 
\end{proof}

In order to derive estimates of  high-order derivatives of the numerical solution, we give the following assumption on the third order derivative of coefficients $b$ and $\sigma.$
\begin{assp}\label{assp_third}
Assume that coefficients $b$ and $\sigma$ have continuous derivatives up to order $3$ satisfying that for any $\phi,\phi_1,\phi_2,\phi_3\in\mathcal C^d,$
\begin{align*}
&|\mathcal D^3b(\phi)(\phi_1,\phi_2,\phi_3)|\leq K(1+\|\phi\|^{(\beta-2)\vee0})\|\phi_1\|\|\phi_2\|\|\phi_3\|,\\
&|\mathcal D^3\sigma(\phi)(\phi_1,\phi_2,\phi_3)|\leq K\|\phi_1\|\|\phi_2\|\|\phi_3\|,
\end{align*}
where $K>0$, the constant $\beta>0$ is given in Assumption \ref{a5}.
\end{assp}

Similarly, we have the following estimates of the numerical solution. 

\begin{lemma}\label{lemma_high3}
Let Assumptions \ref{a1}--\ref{a2}, \ref{a4}--\ref{a5}, \ref{first_deri}--\ref{second_deri} hold. 
%in addition assume that $b,\sigma$ have continuous and  bounded derivatives of order $3$, 
Then for $p\in\mathbb N_+$, there exists a number $a^{**}_1(L,a_2,\theta,p)>0$ such that for  $a_1>a_1^{**}$ and $\Delta\in(0,1]$, 
\begin{align*}
&\quad \mathbb E[\|D_u\mathcal Dy^{\tilde\xi,\Delta}_{t_k}\cdot\eta\|^{2p}]
\leq Ke^{-\!\hat{\lambda}_2t_k}\|\eta\|^{2p}\big(1+\mathbb E[\|D_u\tilde\xi\|^{16p}]\big),\\
&\quad \mathbb E[\|D_wD_uy^{\tilde\xi,\Delta}_{t_k}\|^{2p}]\leq Ke^{-\hat{\lambda}_2(t_k-u\vee w)}\big(1+\mathbb E[\|D_u\tilde\xi\|^{16p}]+\mathbb E[\|D_w\tilde\xi\|^{16p}]\nn\\
&\quad\quad+\mathbb E[\|D_wD_u\tilde\xi\|^{2p}]\big),
\\
&\quad \mathbb E[\|D_wD_u\mathcal Dy^{\tilde\xi,\Delta}_{t_{k}}\cdot\eta\|^{2p}]\leq Ke^{-\hat{\lambda}_2t_k}\|\eta\|^{2p}\big(1+\mathbb E[\|D_u\tilde\xi\|^{32p}]\nn\\
&\quad\quad+\mathbb E[\|D_w\tilde\xi\|^{32p}]+
\E[\|D_wD_u\tilde\xi\|^{4p}]\big),
\end{align*}
where $\tilde\xi\in L^{p'}(\Omega;\mathcal C^d)$  with some $p'>0$,  $\eta\in\mathcal C^d$,  $K>0$,  and $\hat \lambda_2>0$ is a sufficiently small number given in  Lemma \ref{Dy_esti}.
\end{lemma}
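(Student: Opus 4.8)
The plan is to establish the three estimates of \cref{lemma_high3} by induction on the differentiation order, exactly mirroring the recursive structure already developed for the exact solution in \cref{lem_sec_d1,Ch3D2x} and for the numerical solution in \cref{Dy_esti,Dy_esti2}. The governing idea is that each higher-order (iterated Malliavin and/or G\^ateaux) derivative of $y^{\tilde\xi,\Delta}_{t_k}$ satisfies a linear recursion whose homogeneous part is governed by the same dissipative mechanism (through the contraction factor $A_{\theta,\Delta}$ together with \eqref{DB}--\eqref{DS}), while the inhomogeneous part is a polynomial in lower-order derivatives whose time-weighted moments have already been shown to decay exponentially. First I would set up, for each quantity $D_u\mathcal Dy^{\tilde\xi,\Delta}_{t_k}\cdot\eta,$ $D_wD_uy^{\tilde\xi,\Delta}_{t_k},$ and $D_wD_u\mathcal Dy^{\tilde\xi,\Delta}_{t_{k}}\cdot\eta,$ the discrete variation-of-constants recursion obtained by applying $D_u$ (resp.\ $D_w D_u$) to the recursion \eqref{re_d_yZ} for $\mathcal Dz^{\tilde\xi,\Delta}$; the chain and product rules for Malliavin derivatives generate the second- and third-order derivative terms $\mathcal D^2b,\mathcal D^2\sigma,\mathcal D^3 b,\mathcal D^3\sigma$ of the coefficients, which is precisely why Assumptions \ref{second_deri} and \ref{assp_third} are imposed.

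Concretely, for the first estimate I would write $D_u\mathcal Dz^{\tilde\xi,\Delta}(t_{k+1})\cdot\eta$ in the split form as in \eqref{re_d_yZ}, compute $|D_u\mathcal Dz^{\tilde\xi,\Delta}(t_{k+1})\cdot\eta|^2,$ and bound the cross term using \eqref{DB} plus the polynomial growth bound of \cref{second_deri} on $\mathcal D^2 b$ and $\mathcal D^2\sigma.$ Raising to the $p$th power and expanding $\sum_{i=1}^pC_p^i\mathbb E[\mathcal I_i]$ reproduces exactly the $\mathbb E[\mathcal I_1],\dots,\mathbb E[\mathcal I_p]$ bookkeeping of \cref{Dy_esti}; the only new feature is that the forcing term now contains factors of $\|D_u y^{\tilde\xi,\Delta}_{t_k}\|$ and $\|\mathcal Dy^{\tilde\xi,\Delta}_{t_k}\cdot\eta\|,$ whose high moments are controlled by \cref{Dy_esti,Dy_esti2}. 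Choosing $a_1>a_1^{**}$ large enough and $\epsilon_1$ small enough so that $(1+K\epsilon_1\Delta+K\Delta^2)A^p_{\theta,\Delta}e^{\hat\lambda_2 p\Delta}\le 1$ (as in \eqref{l4.2.16} and \eqref{DYexp1}), the homogeneous part is absorbed and the summed forcing term telescopes against the exponential weight $e^{\hat\lambda_2 p t_{l+1}}$ to give a bound $\le K\|\eta\|^{2p}(1+\mathbb E[\|D_u\tilde\xi\|^{16p}]).$ The two remaining estimates follow the same template with one extra layer of differentiation, so one must first have the $D_wD_uy^{\tilde\xi,\Delta}_{t_k}$ bound available before attacking $D_wD_u\mathcal Dy^{\tilde\xi,\Delta}_{t_k}\cdot\eta;$ hence I would carry out the three cases in the listed order, feeding each completed estimate into the forcing term of the next.

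The bookkeeping that must be handled with care is the tracking of the initial-datum moment exponents. Because each extra derivative introduces, via H\"older and Young inequalities applied to the coefficient-derivative growth factors $(1+\|y^{\tilde\xi,\Delta}_{t_k}\|^{(\beta-1)\vee0})$ and $(1+\|y^{\tilde\xi,\Delta}_{t_k}\|^{(\beta-2)\vee0}),$ products of two or three lower-order derivative norms, the required integrability of $D_u\tilde\xi$ escalates from $8p$ (exact-solution level, \cref{lem_sec_d1}) to $16p$ and then $32p,$ and the solution itself must lie in $L^{p'}(\Omega;\mathcal C^d)$ for a suitably large $p'$ determined by $\beta$ and $p$; I would simply declare $p'$ large enough and invoke \cref{l4.2,l4.3} for the needed uniform-in-time moment bounds of $y^{\tilde\xi,\Delta}.$ As in \cref{Dy_esti}, each estimate must be proved separately on $\Delta\in(0,\Delta^*]$ and $\Delta\in[\Delta^*,1]$, with the threshold $a_1^{**}$ taken as the maximum over the finitely many constants $a_1^*$ produced in each sub-case and each derivative order.

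The main obstacle will be the combinatorial control of the cross terms in the martingale and quadratic-variation contributions at third order: when expanding $|D_wD_u\mathcal Dz^{\tilde\xi,\Delta}(t_{k+1})\cdot\eta|^{2p}$ one confronts products of $\mathcal D\sigma, \mathcal D^2\sigma, \mathcal D^3\sigma$ paired against three distinct lower-order derivative processes, and verifying that every such product, after Young's inequality, either is absorbed into the dissipative $A^p_{\theta,\Delta}$ coefficient or produces a forcing term whose time-weighted moments are summable requires keeping the dissipativity margin $2a_1-\frac{2\theta-1}{\theta^2}-L$ dominant over all the accumulated $\epsilon$-dependent constants. This is exactly the reason the threshold $a_1^{**}$ must be taken strictly larger than the $a_1^{*}$ of \cref{Dy_esti}: the deeper the differentiation, the more forcing terms of the form $\mathbb E[\|\cdot\|^{2p}\|\cdot\|^{2p}\|\cdot\|^{2p}]$ appear, and each must be dominated. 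Once the dissipativity budget is confirmed to close the recursion at each order, the exponential decay rate $\hat\lambda_2$ survives intact and the stated bounds follow; I therefore expect the proof to be routine in structure but delicate in the constant-chasing, which is why the authors reasonably relegate it to ``similar to \cref{Dy_esti} and omitted.''
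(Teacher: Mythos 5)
Your proposal is correct and follows exactly the route the paper intends: the paper states \cref{lemma_high3} without proof (prefaced only by ``Similarly, we have the following estimates of the numerical solution''), delegating it to the template already worked out in \cref{Dy_esti,Dy_esti2} for the numerical solution and \cref{lem_sec_d1,Ch3D2x} for the exact one, which is precisely the recursion-plus-induction scheme you describe, including the binomial bookkeeping of the $\mathcal I_i$ terms, the absorption condition on $(1+K\epsilon_1\Delta+K\Delta^2)A^p_{\theta,\Delta}e^{\hat\lambda_2 p\Delta}$, the two-regime split in $\Delta$, the enlargement of the dissipativity threshold to $a_1^{**}$, the ordering of the three estimates so that each feeds the forcing term of the next, and the escalation of the initial-datum moment exponents to $16p$ and $32p$. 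Your reconstruction is faithful to the omitted argument, so no further comparison is needed.
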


\subsection{Weak convergence rate}For the SFDE with the distributed delay argument, under the globally Lipschitz continuous coefficients condition, 
there have been some works devoted to 
the weak convergence of numerical methods on the finite time horizon.  For instance, authors in  \cite{Buchwar2005weak} investigate the weak convergence of the EM method by using the infinite-dimensional version of the Kolmogorov equation; Authors in \cite{CEK2006} obtain the weak convergence rate of the EM method by developing a dual approach. To deepen the study,
this subsection is devoted to presenting the weak convergence rate on the infinite time horizon of the $\theta$-EM method for the SFDE with superlinearly growing coefficients. Before that, we
propose the following assumption on the coefficients $b$ and $\sigma$. 
%and the initial datum $\xi$.
 
\begin{assp}\label{ass_nablab}
Assume that there exist constants $n_b,n_{\sigma}\in\mathbb N_+$ and  probability measures $\nu_3^{i},\nu_4^{j}$ on $[-\tau,0]$ for $i \in\{1,\ldots, n_b\}$, $j \in\{1,\ldots, n_{\sigma}\}$,  such that coefficients $b$ and $\sigma$ satisfy that for any $\phi_1,\phi_2\in\mathcal C^d,$
\begin{align*}
&\mathcal  Db(\phi_1)\phi_2=\sum_{i=1}^{n_b}\int_{-\tau}^0k_b^{i}(\phi_1(s))\phi_2(s)\mathrm d\nu^{i}_3(s),\\
&\mathcal  D\sigma (\phi_1)\phi_2=\sum_{j=1}^{n_{\sigma}}\int_{-\tau}^0k^{j}_{\sigma}(\phi_1(s))\phi_2(s)\mathrm d\nu_4^{j}(s),
\end{align*}
where $k_b^{i},k^{j}_{\sigma}:\mathcal C^d\to\mathcal C([-\tau,0];\mathbb R^{d\times d})$ satisfy  %\;x\mapsto k_b^{\imath}(x,\cdot)$ and  $k^{\jmath}_{\sigma}:\mathcal C^d\to\mathcal C([-\tau,0];\mathbb R^{d\times d}), \,x\mapsto k^{\jmath}_{\sigma}(x,\cdot)$ satisfy  
\begin{align*}
&\Big\|\mathcal D k_{b}^{i}(\phi_1)\Big\|_{\mathcal L(\mathcal C^d;
\mathcal C([-\tau,0];\mathbb R^{d\times d})}
+\|k_{b}^{i}(\phi_1)\|\leq K(1+\|\phi_1\|^{\beta}),\\
&\Big\|\mathcal Dk^{j}_{\sigma}(\phi_1)\Big\|_{\mathcal L(\mathcal C^d;\mathcal C([-\tau,0];\mathbb R^{d\times d})} 
+\|k^{j}_{\sigma}(\phi_1)\|\leq K.
\end{align*} 
\end{assp}

Examples satisfying Assumption \ref{ass_nablab} are given as follows. 
\begin{expl}
Coefficients $b$ and $\sigma$ considered in \cite{Buchwar2005weak, CEK2006}  satisfy Assumption \ref{ass_nablab}, where the coefficients have the forms: $b(\int_{-\tau}^0 \phi(r)\mathrm d \nu_3(r))$ and $\sigma(\int_{-\tau}^0 \phi(r)\mathrm d \nu_4(r))$.  
%see for the globally  Lipschitz continuous.  
In fact, it can be calculated  that for $\phi_1\in\mathcal C^d,$ 
 \begin{align*}
&\mathcal D b(\int_{-\tau}^0 \phi(r)\mathrm d \nu_3(r)) \phi_1=\int_{-\tau}^0\mathcal Db (\int_{-\tau}^0 \phi(r)\mathrm d \nu_3(r))\phi_1(s)\mathrm d\nu_3(s),\\
&\mathcal D\sigma(\int_{-\tau}^0 \phi(r)\mathrm d \nu_4(r)) \phi_1=\int_{-\tau}^0\mathcal D\sigma  (\int_{-\tau}^0 \phi(r)\mathrm d \nu_4(r))\phi_1(s)\mathrm d\nu_4(s). 
\end{align*}
%Here, $\nu_3,\nu_4$ can either be taken as $\delta_0,$ and in this case, the delay no longer exists for the corresponding  coefficient. 
\end{expl}

  \begin{expl}
Let $b(\phi)=\frac{1}{\tau}\int_{-\tau}^0\phi(s)\mathrm d s-|\phi(0)|^2\phi(0)-\phi(0)$,  $\phi\in\mathcal C^d$. Then it can be verified that Assumption \ref{ass_nablab} is satisfied: for $\phi_1\in\mathcal C^d,$ $$\mathcal Db(\phi)\phi_1=\frac{1}{\tau} \int_{-\tau}^0\phi_1(s)\mathrm ds-2 \phi(0)^{\top}\phi_1(0)\phi(0)-|\phi(0)|^2\phi_1(0)-\phi_1(0).$$
In this example, $n_b=2,$  $k_b^1(\phi)=\mathrm{Id}_{d\times d}, \,k^2_b(\phi)=-2\phi \phi^{\top}-\mathrm{Id}_{d\times d},\,\nu_3^1(\mathrm ds)=\frac{1}{\tau}\mathrm ds,\,\nu_3^2(\mathrm ds)=\delta_0(\mathrm ds).$
\end{expl}

\begin{thm}\label{thm_weakcon}
Let conditions in Lemma \ref{lemma_high3}, Assumption \ref{a7} with $\rho\geq 1$, and Assumption  \ref{ass_nablab} hold. Then for $f\in \mathcal C^3_b$,
\begin{align*}
|\mathbb E[f(x^{\xi}(T))]-\mathbb E[f(y^{\xi,\Delta}(T))]|\leq K\Delta,
\end{align*}
where $K$ is independent of $T$.
\end{thm}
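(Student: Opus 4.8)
The plan is to start from the weak-error decomposition \eqref{Wrate}, namely $\mathbb E[f(x^\xi(T))]-\mathbb E[f(y^{\xi,\Delta}(T))]=\mathcal I_0+\mathcal I_b+\mathcal I_{b,\theta}+\mathcal I_\sigma$, and to bound each of the four pieces by $K\Delta$ with $K$ independent of $T=t_{N^\Delta}$. The uniformity in $T$ will come entirely from the exponential-in-$(T-t_i)$ decay of the G\^ateaux and Malliavin derivative weights established in Lemmas \ref{Dy_esti}--\ref{lemma_high3}, which must counteract the $N^\Delta=T/\Delta$ summands.

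For the initial term $\mathcal I_0$: since $T=t_{N^\Delta}$, the map $\Phi(T;t_{N^\Delta},\cdot)$ merely reads off the value at $T$, so $\mathcal I_0=\mathbb E[f(\varphi(T;0,\xi))]-\mathbb E[f(\varphi(T;0,\Phi_0))]$ is the difference of two exact functional solutions started from $\xi$ and from its piecewise-linear interpolation $\Phi_0=y^{\xi,\Delta}_0$. Assumption \ref{a7} with $\rho\ge1$ gives $\|\xi-\Phi_0\|\le K\Delta$, and the exponential attractiveness of the exact solution (Lemma \ref{l2.4}, via the moment contraction of Lemma \ref{l1}) together with the Lipschitz continuity of $f$ yields $|\mathcal I_0|\le K\|\xi-\Phi_0\|e^{-\lambda T/2}\le K\Delta$, uniformly in $T$.

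The core of the argument is the three local-error sums. First I would show that, uniformly in $\varsigma\in[0,1]$ and $\Delta$, the weight $W_i^\varsigma:=(\mathcal D\Phi(T;t_i,Y^\varsigma_i)I^0\mathrm{Id}_{d\times d})^*f'(\Phi(T;t_i,Y^\varsigma_i))$ and its first and second Malliavin derivatives have $L^p$-norms bounded by $Ke^{-c(T-t_i)}$; this uses $\|I^0\|_\infty=1$, the boundedness of $f',f'',f'''$ (whence $f\in\mathcal C^3_b$), the time-independent high-order moment bounds on $Y^\varsigma_i$ from Proposition \ref{l4.3}, and the exponential decay of $\mathcal D\Phi$ together with its Malliavin derivatives furnished by Lemmas \ref{Dy_esti} and \ref{lemma_high3}. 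A direct Cauchy--Schwarz estimate is not enough: because the decaying weights still sum to $O(\Delta^{-1})$, it produces only $O(1)$ for $\mathcal I_\sigma$ (where $\|\mathcal I^i_\sigma\|_{L^2}=O(\Delta)$) and $O(\Delta^{1/2})$ for $\mathcal I_b,\mathcal I_{b,\theta}$ (where $\|\mathcal I^i_b\|_{L^2}=O(\Delta^{3/2})$), so the goal is to upgrade each paired local term to $O(\Delta^2e^{-c(T-t_i)})$, after which the sum is $O(\Delta)$. For the diffusion term I would apply the Malliavin duality \eqref{IBPformula} to $\mathcal I^i_\sigma=\int_{t_{i-1}}^{t_i}[\sigma(\varphi_r)-\sigma(\varphi^{Int}_{t_{i-1}})]\mathrm dW(r)$, converting the It\^o integral against $W_i^\varsigma$ into a Lebesgue integral $\int_{t_{i-1}}^{t_i}\langle D_rW_i^\varsigma,\sigma(\varphi_r)-\sigma(\varphi^{Int}_{t_{i-1}})\rangle\mathrm dr$; then, expanding $\sigma(\varphi_r)-\sigma(\varphi^{Int}_{t_{i-1}})$ by It\^o's formula around $t_{i-1}$ and isolating its martingale part, a second integration by parts (again via \eqref{IBPformula}) exploits the mean-zero structure to extract the missing $O(\Delta^2)$ per step. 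The drift sums $\mathcal I_b$ and $\mathcal I_{b,\theta}$ are treated the same way after Taylor-expanding $b$: the deterministic part of $\varphi_r-\varphi^{Int}_{t_{i-1}}$ (resp. of the one-step increment $\Phi_{t_i}-\varphi^{Int}_{t_{i-1}}$) contributes $O(\Delta^2)$ directly, while its martingale part is reduced by Malliavin integration by parts exactly as above. Here Assumption \ref{ass_nablab} is essential: writing $\mathcal Db(\phi_1)\phi_2=\sum_i\int_{-\tau}^0k_b^i(\phi_1(s))\phi_2(s)\,\mathrm d\nu_3^i(s)$ and the analogous formula for $\mathcal D\sigma$, with the stated growth of $k_b^i,k_\sigma^j$ and their derivatives, makes the Malliavin derivatives of $W_i^\varsigma$ explicit and lets their superlinear growth be absorbed by the time-independent moment and derivative estimates.

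Summing the four bounds gives $|\mathbb E[f(x^\xi(T))]-\mathbb E[f(y^{\xi,\Delta}(T))]|\le K\Delta$ with $K$ independent of $T$. The main obstacle is precisely the order-two local analysis of $\mathcal I_\sigma$ (and of the martingale part of $\mathcal I_b,\mathcal I_{b,\theta}$): the iterated Malliavin integration by parts must extract $O(\Delta^2)$ per step \emph{while} keeping the derivative weights exponentially small in $T-t_i$, so that the superlinear growth of $b$ is controlled by the time-independent high-order moment and derivative bounds rather than accumulating in $T$.
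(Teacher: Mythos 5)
Your proposal follows essentially the same route as the paper's proof: the decomposition \eqref{Wrate} into $\mathcal I_0+\mathcal I_b+\mathcal I_{b,\theta}+\mathcal I_{\sigma}$, exponentially decaying $L^p$-bounds (in $T-t_i$) on the weights $(\mathcal D\Phi(T;t_i,Y^{\varsigma}_i)I^0\mathrm{Id}_{d\times d})^*f'(\Phi(T;t_i,Y^{\varsigma}_i))$ and their Malliavin derivatives via Lemmas \ref{Dy_esti}--\ref{lemma_high3} and Proposition \ref{l4.3}, the upgrade of each local error to $O(\Delta^2 e^{-c(T-t_i)})$ by Malliavin integration by parts \eqref{IBPformula} (once for the Brownian parts inside $\mathcal I_b,\mathcal I_{b,\theta}$, iterated twice for $\mathcal I_{\sigma}$) with Assumption \ref{ass_nablab} enabling the duality manipulation, and summation using $\sup_{T>0}\Delta\sum_{i=1}^{N^{\Delta}}e^{-c(T-t_i)}<\infty$. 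The only (immaterial) deviation is your treatment of $\mathcal I_0$ via the exponential attractiveness of the exact solution instead of the paper's mean-value bound through $\|\mathcal D\varphi^{Int}\|_{0,2}$; both yield $K\Delta$ uniformly in $T$.
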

\begin{proof} 
To simplify the proof, we take parameters $n_b,n_{\sigma}$ in Assumption \ref{ass_nablab} to be $n_b=n_{\sigma}=1$. The case of  $n_b>1, \, n_{\sigma}>1$ can be proved similarly. 

\textbf{Estimate of  term $\mathcal I_{0}.$}
Term $\mathcal I_0$ is estimated as 
\begin{align*}
&\quad \mathcal I_0
%=\mathbb E[f(\Phi(T;t_{N^{\Delta}},\varphi_{t_{N^{\Delta}}}^{Int}(0,\xi)))]-\mathbb E[f(\Phi(T;t_{N^{\Delta}},\varphi_{t_{N^{\Delta}}}^{Int}(0,\Phi_0)))]\\
=\mathbb E[f(\varphi^{Int}(t_{N^{\Delta}};0,\xi)]-\mathbb E[f(\varphi^{Int}(t_{N^{\Delta}};0,\Phi_0))]\\
&=\int_0^1\mathbb E\Big[f'(\varphi^{Int}(t_{N^{\Delta}};0,\varsigma\xi+(1-\varsigma)\Phi_0))\mathcal D \varphi^{Int}(t_{N^{\Delta}};0,\varsigma\xi+(1-\varsigma)\Phi_0)(\xi-\Phi_0)\Big]\mathrm d\varsigma\\
&\leq K\int_0^1\|\mathcal D \varphi^{Int}(t_{N^{\Delta}};0,\varsigma\xi+(1-\varsigma)\Phi_0)(\xi-\Phi_0)\|_{0,2}\mathrm d\varsigma\\
&\leq K\Delta\int_0^1\|\mathcal D \varphi^{Int}(t_{N^{\Delta}};0,\varsigma\xi+(1-\varsigma)\Phi_0)\|_{0,2}\mathrm d\varsigma\\
&\leq K\Delta,
\end{align*}
where we used $\sup_{\phi\in\mathbb R^d}|f'(\phi)|\leq K$ and  Assumption \ref{a7}. 

\textbf{Estimate of  term $\mathcal I_{b}.$} By the Taylor formula, we have 
\begin{align*}
&\mathcal I_b=\sum_{i=1}^{N^{\Delta}}\int_0^1\mathbb E\Big[\Big\< (\mathcal D\Phi(T;t_i,Y^{\varsigma}_i) I^0\mathrm {Id}_{d\times d})^*f'(\Phi(T;t_i,Y^{\varsigma}_i)),\int_{t_{i-1}}^{t_i}\int_0^1\mathcal Db(Z^{\beta_1}_{i,r})\\
&\quad \big(\varphi_r(t_{i-1},\varphi_{t_{i-1}}(0,\Phi_0))-\varphi^{Int}_{t_{i-1}}(0,\Phi_0)\big)\mathrm d\beta_1\mathrm dr\Big\>\Big]\mathrm d\varsigma=:\mathcal I_{b,1}+\mathcal I_{b,2},
\end{align*}
where $Z^{\beta_1}_{i,r}=\beta_1\varphi_r(t_{i-1},\varphi_{t_{i-1}}(0,\Phi_0))+(1-\beta_1)\varphi^{Int}_{t_{i-1}}(0,\Phi_0),$ and 
\begin{align*}
&\mathcal I_{b,1}=\sum_{i=1}^{N^{\Delta}}\int_0^1\mathbb E\Big[\Big\< (\mathcal D\Phi(T;t_i,Y^{\varsigma}_i) I^0\mathrm {Id}_{d\times d})^*f'(\Phi(T;t_i,Y^{\varsigma}_i)),\int_{t_{i-1}}^{t_i}\int_0^1\mathcal Db(Z^{\beta_1}_{i,r})\\
&\quad \big(\varphi^{Int}_r(t_{i-1},\varphi_{t_{i-1}}(0,\Phi_0))-\varphi^{Int}_{t_{i-1}}(0,\Phi_0)\big)\mathrm d\beta_1\mathrm dr\Big\>\Big]\mathrm d\varsigma,\\
&\mathcal I_{b,2}=\sum_{i=1}^{N^{\Delta}}\int_0^1\mathbb E\Big[\Big\< (\mathcal D\Phi(T;t_i,Y^{\varsigma}_i) I^0\mathrm {Id}_{d\times d})^*f'(\Phi(T;t_i,Y^{\varsigma}_i)),\int_{t_{i-1}}^{t_i}\int_0^1\mathcal Db(Z^{\beta_1}_{i,r})\\
&\quad \big(\varphi_r(t_{i-1},\varphi_{t_{i-1}}(0,\Phi_0))-\varphi^{Int}_r(t_{i-1},\varphi_{t_{i-1}}(0,\Phi_0))\big)\mathrm d\beta_1\mathrm dr\Big\>\Big]\mathrm d\varsigma.
\end{align*}
\textit{Estimate of term $\mathcal I_{b,1}.$} 
We first decompose $\varphi^{Int}_r(t_{i-1},\varphi_{t_{i-1}}(0,\Phi_0))-\varphi^{Int}_{t_{i-1}}(0,\Phi_0)$ for $r\in[t_{i-1},t_i).$ For $s\in[t_j,t_{j+1}]\subset [-\tau,0],$ we have 
$t_{i-1}+s\in[t_{i+j-1},t_{i+j}),$ which implies
\begin{align}\label{varphi_int}
\varphi^{Int}_{t_{i-1}}(0,\Phi_0)(s)=\frac{t_{j+1}-s}{\Delta}\varphi(t_{i+j-1};0,\Phi_0)+\frac{s-t_j}{\Delta}\varphi(t_{i+j};0,\Phi_0).
\end{align}
We also have $r+s\in[t_{i+j-1},t_{i+j+1}),$ which leads to the following two cases.
 
\underline{Case 1: $r+s\in[t_{i+j-1},t_{i+j}).$} In this case,
\begin{align*}
&\quad\varphi^{Int}_{r}(t_{i-1},\varphi_{t_{i-1}}(0,\Phi_0))(s)\nn\\
&=\frac{t_{i+j}-(r+s)}{\Delta}\varphi(t_{i+j-1};0,\Phi_0)+\frac{(r+s)-t_{i+j-1}}{\Delta}\varphi(t_{i+j};0,\Phi_0).
\end{align*}
Hence,
\begin{align*}
&\quad\varphi^{Int}_{r}(t_{i-1},\varphi_{t_{i-1}}(0,\Phi_0))(s)-\varphi^{Int}_{t_{i-1}}(0,\Phi_0)(s)\nn\\
&=\frac{t_{i-1}-r}{\Delta}\varphi(t_{i+j-1};0,\Phi_0)+\frac{r-t_{i-1}}{\Delta}\varphi(t_{i+j};0,\Phi_0)\\
&=\frac{r-t_{i-1}}{\Delta}(\varphi(t_{i+j};0,\Phi_0)-\varphi(t_{i+j-1};0,\Phi_0)).
\end{align*}

\underline{Case 2: $r+s\in[t_{i+j},t_{i+j+1}).$} In this case,
\begin{align*}
&\quad \varphi^{Int}_r(t_{i-1},\varphi_{t_{i-1}}(0,\Phi_0))(s)\nn\\
&=\frac{t_{i+j+1}-(r+s)}{\Delta}\varphi(t_{i+j};0,\Phi_0)+\frac{r+s-t_{i+j}}{\Delta}\varphi(t_{i+j+1};0,\Phi_0)\\
&=\textbf 1_{\{i+j\geq1\}}\Big\{\frac{t_{i+j+1}-(r+s)}{\Delta}\Big[\varphi(t_{i+j-1};0,\Phi_0)+\int_{t_{i+j-1}}^{t_{i+j}}b(\varphi_u(0,\Phi_0))\mathrm du \nn\\
&\quad+\int_{t_{i+j-1}}^{t_{i+j}}\sigma(\varphi_u(0,\Phi_0))\mathrm dW(u)\Big]+\frac{r+s-t_{i+j}}{\Delta}\Big[\varphi(t_{i+j};0,\Phi_0)\nn\\
&\quad+\int_{t_{i+j}}^{t_{i+j+1}}b(\varphi_u(0,\Phi_0))\mathrm du+\int_{t_{i+j}}^{t_{i+j+1}}\sigma(\varphi_u(0,\Phi_0))\mathrm d W(u)\Big]\Big\}\nn\\
&\quad+\textbf 1_{\{i+j=0\}}\Big\{
\frac{t_{i+j+1}-(r+s)}{\Delta}\xi(0)+\frac{r+s-t_{i+j}}{\Delta}\Big[\xi(0)\nn\\
&\quad+\int_{0}^{t_{1}}b(\varphi_u(0,\Phi_0))\mathrm du+\int_{0}^{t_{1}}\sigma(\varphi_u(0,\Phi_0))\mathrm d W(u)\Big]\Big\}\nn\\
&\quad+\textbf 1_{\{i+j\leq-1\}}\Big\{\frac{t_{i+j+1}-(r+s)}{\Delta}\xi(t_{i+j})+\frac{r+s-t_{i+j}}{\Delta}\xi(t_{i+j+1})\Big\}.
\end{align*}
Then,
\begin{align*}
&\quad \varphi_r^{Int}(t_{i-1},\varphi_{t_{i-1}}(0,\Phi_0))(s)-\varphi^{Int}_{t_{i-1}}(0,\Phi_0)(s)\\
&=\textbf 1_{\{i+j\geq1\}}\Big\{
\frac{t_{j+1}-s}{\Delta}\Big[\int_{t_{i+j-1}}^{t_{i+j}}b(\varphi_u(0,\Phi_0))\mathrm du+\int_{t_{i+j-1}}^{t_{i+j}}\sigma(\varphi_u(0,\Phi_0))\mathrm dW(u)\Big]\nn\\
&\quad+\frac{r+s-t_{i+j}}{\Delta}\Big[\int_{t_{i+j}}^{t_{i+j+1}}b(\varphi_u(0,\Phi_0))\mathrm du+\int_{t_{i+j}}^{t_{i+j+1}}\sigma(\varphi_u(0,\Phi_0))\mathrm dW(u)\Big]\Big\}\nn\\
&\quad+\textbf 1_{\{i+j=0\}}\Big\{
\frac{t_{j+1}-s}{\Delta}\big(\xi(0)-\xi(-\Delta)\big)+\frac{r+s-t_{i+j}}{\Delta}\Big[\int_{0}^{t_{1}}b(\varphi_u(0,\Phi_0))\mathrm du\nn\\
&\quad+\!\int_{0}^{t_{1}}\sigma(\varphi_u(0,\Phi_0))\mathrm dW(u)\Big]\Big\}\!+\!\textbf 1_{\{i+j\leq-1\}}\Big\{
\frac{t_{j+1}-s}{\Delta}\big(\xi(t_{i+j})-\xi(t_{i+j-1})\big)\nn\\
&\quad+\frac{s+r-t_{i+j}}{\Delta}\big(\xi(t_{i+j+1})-\xi(t_{i+j})\big)
\Big\}.
\end{align*}

Combining \underline{Cases 1--2}, we deduce
\begin{align*}
&\quad \varphi^{Int}_r(t_{i-1},\varphi_{t_{i-1}}(0,\Phi_0))-\varphi^{Int}_{t_{i-1}}(0,\Phi_0)\\
&=\sum_{j=-N}^{-1}\bigg\{\textbf 1_{\{i+j\geq1\}}\Big\{\mathbf 1_{[t_{i+j}-r,t_{i+j+1}-r)}(\cdot)\!\Big[\frac{t_{j+1}-\cdot}{\Delta}\Big(\!\int_{t_{i+j-1}}^{t_{i+j}}\!b(\varphi_u(0,\Phi_0))\mathrm du\nn\\
&\quad+\int_{t_{i+j-1}}^{t_{i+j}}\sigma(\varphi_u(0,\Phi_0))\mathrm dW(u)\Big)+\frac{r+\cdot-t_{i+j}}{\Delta}\Big(\int_{t_{i+j}}^{t_{i+j+1}}b(\varphi_u(0,\Phi_0))\mathrm du\nn\\
&\quad+\int_{t_{i+j}}^{t_{i+j+1}}\sigma(\varphi_u(0,\Phi_0))\mathrm dW(u)\Big)\Big]+\mathbf 1_{[t_{i+j-1}-r,t_{i+j}-r)}(\cdot)
\frac{r-t_{i-1}}{\Delta}\Big(\int_{t_{i+j-1}}^{t_{i+j}}\!b(\varphi_u(0,\Phi_0))\mathrm du\nn\\
&\quad+\!\int_{t_{i+j-1}}^{t_{i+j}}\!\sigma(\varphi_u(0,\Phi_0))\mathrm dW(u)\Big)\!\Big\}+\textbf 1_{\{i+j=0\}}\Big\{
\mathbf 1_{[t_{i+j-1}-r,t_{i+j}-r)}(\cdot)\times\nn\\
&\quad\frac{r-t_{i-1}}{\Delta}\big(\xi(0)-\xi(-\Delta)\big)\!+\!\mathbf 1_{[t_{i+j}-r,t_{i+j+1}-r)}(\cdot)\Big[\frac{t_{j+1}-\cdot}{\Delta}\big(\xi(0)-\xi(-\Delta)\big)\nn\\
&\quad+\frac{r+\cdot-t_{i+j}}{\Delta}\Big(\int_{0}^{t_{1}}b(\varphi_u(0,\Phi_0))\mathrm du+\int_{0}^{t_{1}}\sigma(\varphi_u(0,\Phi_0))\mathrm dW(u)\Big)\Big]\Big\}\nn\\
&\quad+\textbf 1_{\{i+j\leq-1\}}\Big\{
\mathbf 1_{[t_{i+j-1}-r,t_{i+j}-r)}(\cdot)\frac{r-t_{i-1}}{\Delta}\big(\xi(t_{i+j})-\xi(t_{i+j-1})\big)\nn\\
&\quad+\mathbf 1_{[t_{i+j}-r,t_{i+j+1}-r)}(\cdot)
\Big(\frac{t_{j+1}-\cdot}{\Delta}\big(\xi(t_{i+j})-\xi(t_{i+j-1})\big)\nn\\
&\quad+\frac{\cdot+r-t_{i+j}}{\Delta}\big(\xi(t_{i+j+1})-\xi(t_{i+j})\big)\Big)
\Big\}
\bigg\}.
\end{align*}
Inserting the above equality into $\mathcal I_{b,1}$, we proceed to estimate sub-terms in $\mathcal I_{b,1}$ one-by-one.
It follows from Assumption \ref{ass_nablab} that
$|\mathcal D b(x)y|\leq K(1+\|x\|^{\beta})\|y\|$.
This, along with  Lemmas  \ref{l4.1} and \ref{Dy_esti},  and $f\in\mathcal C^3_b$ implies that  
\begin{align*}
&\quad\sum_{i=1}^{N^{\Delta}}\int_0^1\mathbb E\Big[\Big\< (\mathcal D\Phi(T;t_i,Y^{\varsigma}_i) I^0\mathrm {Id}_{d\times d})^*f'(\Phi(T;t_i,Y^{\varsigma}_i)),\int_{t_{i-1}}^{t_i}\int_0^1\mathcal Db(Z^{\beta_1}_{i,r})\\
&\quad \sum_{j=-N}^{-1}\mathbf 1_{\{i+j\geq1\}}\mathbf 1_{[t_{i+j}-r,t_{i+j+1}-r)}(\cdot)\frac{t_{j+1}-\cdot}{\Delta}\int_{t_{i+j-1}}^{t_{i+j}}b(\varphi_u(0,\Phi_0))\mathrm du\mathrm d\beta_1\mathrm dr\Big\>\Big]\mathrm d\varsigma\\
&\leq K\sum_{i=1}^{N^{\Delta}}\int_0^1\Big\|\mathcal D\Phi(T;t_i,Y^{\varsigma}_{i}) I^0\mathrm{Id}_{d\times d}\int_{t_{i-1}}^{t_i}\int_0^1 \mathcal Db(Z^{\beta_1}_{i,r})\sum_{j=-N}^{-1}\mathbf 1_{\{i+j\geq1\}}\nn\\
&\quad\mathbf 1_{[t_{i+j-r},t_{i+j+1}-r)}(\cdot)\frac{t_{j+1}-\cdot}{\Delta}\int_{t_{i+j-1}}^{t_{i+j}}b(\varphi_u(0,\Phi_0))\mathrm du\mathrm d\beta_1\mathrm dr\Big\|_{0,2}\mathrm d\varsigma\\
&\leq K\sum_{i=1}^{N^{\Delta}}\int_0^1 \Big(e^{-\hat\lambda_2(T-t_i)}\mathbb E\Big[\Big\| I^0\mathrm{Id}_{d\times d}\int_{t_{i-1}}^{t_i}\int_0^1 \mathcal Db(Z^{\beta_1}_{i,r})\sum_{j=-N}^{-1}\mathbf 1_{\{i+j\geq1\}}\nn\\
&\quad\mathbf 1_{[t_{i+j-r},t_{i+j+1}-r)}(\cdot)\frac{t_{j+1}-\cdot}{\Delta}\int_{t_{i+j-1}}^{t_{i+j}}b(\varphi_u(0,\Phi_0))\mathrm du\mathrm d\beta_1\mathrm dr\Big\|^2(1+\|Y^{\varsigma}_i\|^{2\beta})\Big]\Big)^{\frac 12}\mathrm d\varsigma\\
&\leq K\sum_{i=1}^{N^{\Delta}}\int_{0}^1e^{-\frac{\hat\lambda_2(T-t_i)}{2}}\Big(\E\Big[\int_{t_{i-1}}^{t_{i}}\int_{0}^1(1+\|Z_{i,r}^{\beta_1}\|^{\beta})
\int_{t_{i+j-1}}^{t_{i+j}}(1\nn\\
&\quad+\|\varphi_{u}(0,\Phi_{0})\|^{\beta+1})\mathrm du\mathrm d\beta_1\mathrm d r\Big]^{4}\Big(1+\sup_{i\geq0}\E[\|Y^{\varsigma}_{i}\|^{4\beta}]\Big)\Big) ^{\frac14}\mathrm d\varsigma \nn\\
&\leq K\Delta ^2\sum_{i=1}^{N^{\Delta}} e^{-\frac{\hat\lambda_2(T-t_i)}{2}}\big(1+\sup_{u\ge 0}\mathbb E[\|\varphi_u(0,\Phi_0)\|^{4(2\beta+1)}]\big)^{\frac14}\sup_{i\ge 0}\big(1+\mathbb E[\|Y^{\varsigma}_i\|^{4\beta}]\big)^{\frac14}\\
&\leq K\Delta,
\end{align*}
where we used the fact that $$\sup_{T>0}\Delta\sum_{i=1}^{N^{\Delta}}e^{-\frac{\hat\lambda_2(T-t_i)}{2}}=\sup_{N^{\Delta}> 0}\Delta \sum_{i=1}^{N^{\Delta}}e^{-\frac{\hat\lambda_2\Delta (N^{\Delta}-i)}{2}}<\infty.$$

For a term with the Brownian motion,  by  \eqref{IBPformula}, we have 
\begin{align*}
&\quad \mathcal I_{b,1}^1:=\sum_{i=1}^{N^{\Delta}}\int_0^1\mathbb E\Big[\Big\< (\mathcal D\Phi(T;t_i,Y^{\varsigma}_i) I^0\mathrm {Id}_{d\times d})^*f'(\Phi(T;t_i,Y^{\varsigma}_i)),\int_{t_{i-1}}^{t_i}\!\!\int_0^1\!\mathcal Db(Z^{\beta_1}_{i,r})\nn\\
&\quad\sum_{j=-N}^{-1}\mathbf 1_{\{i+j\geq1\}}
\mathbf 1_{[t_{i+j}-r,t_{i+j+1}-r)}(\cdot)\frac{t_{j+1}-\cdot}{\Delta}\int_{t_{i+j-1}}^{t_{i+j}}\sigma (\varphi_u(0,\Phi_0))\mathrm dW(u)\mathrm d\beta_1\mathrm dr\Big\>\Big]\mathrm d\varsigma\\
&= \sum_{i=1}^{N^{\Delta}}\!\!\int_0^1\!\!\int_{t_{i-1}}^{t_i}\!\!\int_0^1\!\!\mathbb E\Big[\Big\<(\mathcal D b(Z^{\beta_1}_{i,r})\sum_{j=-N}^{-1}\mathbf 1_{\{i+j\geq1\}}\mathbf 1_{[t_{i+j}-r,t_{i+j+1}-r})(\cdot)\frac{t_{j+1}-\cdot}{\Delta}\mathrm{Id}_{d\times d})^*\nn\\
&\quad(\mathcal D\Phi(T;t_i,Y^{\varsigma}_i) I^0\mathrm {Id}_{d\times d})^* f'(\Phi(T;t_i,Y^{\varsigma}_i)),\int_{t_{i+j-1}}^{t_{i+j}}\sigma(\varphi_u(0,\Phi_0))\mathrm dW(u)\Big\>\Big]\mathrm d{\beta_1}\mathrm dr\mathrm d\varsigma\\
&=\sum_{i=1}^{N^{\Delta}}\int_0^1\int_{t_{i-1}}^{t_i}\int_0^1\int_{t_{i+j-1}}^{t_{i+j}}\mathbb E\Big[\Big\<D_u\big[(\mathcal Db(Z^{\beta_1}_{i,r})\sum_{j=-N}^{-1}\mathbf 1_{\{i+j\geq1\}}\mathbf 1_{[t_{i+j}-r,t_{i+j+1}-r)}(\cdot)\frac{t_{j+1}-\cdot}{\Delta}\nn\\
&\quad\mathrm {Id}_{d\times d})^*(\mathcal D\Phi(T;t_i,Y^{\varsigma}_i)I^0\mathrm {Id}_{d\times d})^* f'(\Phi(T;t_i,Y^{\varsigma}_i))\big],\sigma(\varphi_u(0,\Phi_0))\Big\>\Big]\mathrm du \mathrm d\beta_1\mathrm dr\mathrm d\varsigma\\
&=\sum_{i=1}^{N^{\Delta}}\int_0^1\int_{t_{i-1}}^{t_i}\int_0^1\int_{t_{i+j-1}}^{t_{i+j}}\Big\{\mathbb E\Big[\Big\<f'(\Phi(T;t_i,Y^{\varsigma}_i)),\mathcal D\Phi(T;t_i,Y^{\varsigma}_i) I^0\mathrm {Id}_{d\times d}
\mathcal D^2b(Z^{\beta_1}_{i,r})\nn\\
&\quad\Big(D_uZ^{\beta_1}_{i,r},\sum_{j=-N}^{-1}\mathbf 1_{\{i+j\geq1\}}\mathbf1_{[t_{i+j}-r,t_{i+j+1}-r)}(\cdot)\frac{t_{j+1}-\cdot}{\Delta}\mathrm {Id}_{d\times d}\sigma (\varphi_u(0,\Phi_0))\Big)\Big\>\Big]\nn\\
&\quad+\mathbb E\Big[\Big\<f'(\Phi(T;t_i,Y^{\varsigma}_i)),D_u\mathcal D\Phi(T;t_i,Y^{\varsigma}_i)I^0\mathrm {Id}_{d\times d}\mathcal Db(Z^{\beta_1}_{i,r})\sum_{j=-N}^{-1}\mathbf 1_{\{i+j\geq1\}}\nn\\
&\quad\mathbf1_{[t_{i+j}-r,t_{i+j+1}-r)}(\cdot)\frac{t_{j+1}-\cdot}{\Delta}\mathrm {Id}_{d\times d}\sigma (\varphi_u(0,\Phi_0))\Big\>\Big]+\mathbb E\Big[\!\Big\<f''(\Phi(T;t_i,Y^{\varsigma}_i))\nn\\
&\quad D_u\Phi(T;t_i,Y^{\varsigma}_i),\mathcal D\Phi(T;t_i,Y^{\varsigma}_i)I^0\mathrm {Id}_{d\times d} \mathcal Db(Z^{\beta_1}_{i,r})\sum_{j=-N}^{-1}\mathbf 1_{\{i+j\geq1\}}\mathbf1_{[t_{i+j}-r,t_{i+j+1}-r)}(\cdot)\nn\\
&\quad\frac{t_{j+1}-\cdot}{\Delta}\mathrm {Id}_{d\times d}\sigma (\varphi_u(0,\Phi_0))\Big\>\Big]\Big\}\mathrm du\mathrm d\beta_1\mathrm dr\mathrm d\varsigma\\
&\leq\mathcal I_{b,1,1}^1+\mathcal I_{b,1,2}^1+\mathcal I_{b,1,3}^1,
\end{align*}
where
\begin{align*}
\mathcal I_{b,1,1}^1&:=K\sum_{i=1}^{N^{\Delta}}\int_0^1\int_{t_{i-1}}^{t_i}\int_0^1\!\int_{t_{i+j-1}}^{t_{i+j}}\Big\| \mathcal D\Phi(T;t_i,Y^{\varsigma}_i)I^0\mathrm {Id}_{d\times d} \mathcal D^2b\big(Z^{\beta_1}_{i,r})(D_uZ^{\beta_1}_{i,r},\nn\\
&\quad\quad\sum_{j=-N}^{-1}\mathbf1_{[t_{i+j}-r,t_{i+j+1}-r)}(\cdot)\frac{t_{j+1}-\cdot}{\Delta}\mathrm {Id}_{d\times d}\sigma (\varphi_u(0,\Phi_0))\big)\Big\|_{0,2}\mathrm du\mathrm d\beta_1\mathrm dr\mathrm d\varsigma,\nn\\
\mathcal I_{b,1,2}^1&:=K\sum_{i=1}^{N^{\Delta}}\int_0^1\int_{t_{i-1}}^{t_i}\int_0^1\!\int_{t_{i+j-1}}^{t_{i+j}}\Big\| D_u\mathcal D\Phi(T;t_i,Y^{\varsigma}_i)I^0\mathrm {Id}_{d\times d}\mathcal Db(Z^{\beta_1}_{i,r})\nn\\
&\quad\quad\sum_{j=-N}^{-1}\mathbf1_{[t_{i+j}-r,t_{i+j+1}-r)}(\cdot)\frac{t_{j+1}-\cdot}{\Delta} \mathrm {Id}_{d\times d}\sigma (\varphi_u(0,\Phi_0))\Big\|_{0,2}\mathrm du\mathrm d\beta_1\mathrm dr\mathrm d\varsigma,\nn\\
\mathcal I_{b,1,3}^1&:=K\sum_{i=1}^{N^{\Delta}}\int_0^1\int_{t_{i-1}}^{t_i}\int_0^1\!\int_{t_{i+j-1}}^{t_{i+j}}\|D_u\Phi(T;t_i,Y^{\varsigma}_i)\|_{0,2}\Big\|\mathcal D\Phi(T;t_i,Y^{\varsigma}_i)I^0\mathrm {Id}_{d\times d}\nn\\
&\quad\quad\mathcal Db(Z^{\beta_1}_{i,r})\sum_{j=-N}^{-1}\mathbf1_{[t_{i+j}-r,t_{i+j+1}-r)}(\cdot)\frac{t_{j+1}-\cdot}{\Delta}\mathrm {Id}_{d\times d}\sigma (\varphi_u(0,\Phi_0))\Big\|_{0,2}\mathrm du\mathrm d\beta_1\mathrm dr\mathrm d\varsigma.
\end{align*}
Making use of Lemmas  \ref{l4.1}, \ref{lem_fir_d2}, and \ref{Dy_esti}, and Assumption \ref{second_deri}, we have
\begin{align*}
&\quad \mathcal I_{b,1,1}^1\leq 
K\sum_{i=1}^{N^{\Delta}}\!\int_0^1\!\int_{t_{i-1}}^{t_i}\!\int_0^1\int_{t_{i+j-1}}^{t_{i+j}}\!e^{-\frac{\hat\lambda_2(T-t_i)}{2}}\Big(\mathbb E\Big[\big\|I^0\mathrm{Id}_{d\times d}\mathcal D^2b(Z^{\beta_1}_{i,r})\big(D_uZ^{\beta_1}_{i,r},\nn\\
&\quad\sum_{j=-N}^{-1}\mathbf 1_{[t_{i+j}-r,t_{i+j+1}-r)}(\cdot)\frac{t_{j+1}-\cdot}{\Delta}\mathrm {Id}_{d\times d}\sigma(\varphi_u(0,\Phi_0))\big)\big\|^2(1+\|Y^{\varsigma}_i\|^{2\beta})\Big]\Big)^{\frac12}\mathrm du\mathrm d\beta_1\mathrm dr\mathrm d\varsigma\\
&\leq K\Delta.
\end{align*}
For the term $\mathcal I_{b,1,2}^1,$ by virtue of Lemmas \ref{l4.1}, \ref{lem_fir_d2}, \ref{Dy_esti}, and \ref{lemma_high3}, we arrive at 
\begin{align*}
&\quad\mathcal I_{b,1,2}^1\leq K\sum_{i=1}^{N^{\Delta}}\int_0^1\int_{t_{i-1}}^{t_i}\int_0^1\int_{t_{i+j-1}}^{t_{i+j}}e^{-\frac{\hat{\lambda}_2(T-t_i)}{2}}\Big(\E\Big[\Big\|I^0\mathrm {Id}_{d\times d}\mathcal Db(Z^{\beta_1}_{i,r})\sum_{j=-N}^{-1}\nn\\
&\quad\mathbf 1_{[t_{i+j}-r,t_{i+j+1}-r)}(\cdot)\frac{t_{j+1}\!-\!\cdot\!}{\Delta}
\mathrm {Id}_{d\times d}\sigma(\varphi_u(0,\Phi_0))\Big\|^2(1+\|D_{u}Y^{\varsigma}_{i}\|^{16})\Big]\Big)^{\frac12}\mathrm du\mathrm d\beta_1\mathrm dr\mathrm d\varsigma\nn\\
&\leq
K\sum_{i=1}^{N^{\Delta}}\int_0^1\int_{t_{i-1}}^{t_i}\int_0^1\int_{t_{i+j-1}}^{t_{i+j}}e^{-\frac{\hat{\lambda}_2(T-t_i)}{2}}\Big(\E\big[(1+\|Z^{\beta_1}_{i,r}\|^{4\beta})|\sigma(\varphi_u(0,\Phi_0))|^4\big]\nn\\
&\quad+\big(1+\E\big[\|D_uY^{\varsigma}_i\|^{32}\big]\big)\Big)^{\frac{1}{4}}\mathrm du\mathrm d\beta_1\mathrm dr\mathrm d\varsigma\nn\\
&\leq K\Delta.
\end{align*}
Similar to the estimates of $\mathcal I^1_{b,1,1}$ and $\mathcal I^1_{b,1,2}$, for the term $\mathcal I_{b,1,3}^1,$ we obtain
$
\mathcal I^1_{b,1,3}
\leq K\Delta.
$

Other terms can be estimated similarly as above.  We can derive that $\mathcal I_{b,1}\leq K\Delta.$

\textit{Estimate of term $\mathcal I_{b,2}.$} 
We first decompose the term  $\varphi_r(t_{i-1},\varphi_{t_{i-1}}(0,\Phi_0))-\varphi^{Int}_r(t_{i-1},\varphi_{t_{i-1}}(0,\Phi_0))$ for $r\in[t_{i-1},t_i).$
For $s\in[t_j,t_{j+1}]\subset[-\tau,0],$ we have $r+s\in [t_{i+j-1},t_{i+j+1}),$ which leads to the following two cases.

\underline{Case 1: $r+s\in[t_{i+j-1},t_{i+j}).$} In this case, we have 
\begin{align*}
&\quad \varphi_r(t_{i-1},\varphi_{t_{i-1}}(0,\Phi_0))(s)-\varphi^{Int}_r(t_{i-1},\varphi_{t_{i-1}}(0,\Phi_0))(s)\\
&=\varphi(r+s; t_{i-1},\varphi_{t_{i-1}}(0,\Phi_0))-\Big[\frac{t_{i+j}-(r+s)}{\Delta}\varphi(t_{i+j-1};0,\Phi_0)\nn\\
&\quad+\frac{r+s-t_{i+j-1}}{\Delta}\varphi(t_{i+j};0,\Phi_0)\Big]\nn\\
&=\mathbf 1_{\{i+j\geq1\}}\Big\{\int_{t_{i+j-1}}^{r+s}b(\varphi_u(0,\Phi_0))\mathrm du\!+\!\int_{t_{i+j-1}}^{r+s}\sigma(\varphi_u(0,\Phi_0))\mathrm dW(u)\!-\!\frac{r+s-t_{i+j-1}}{\Delta}\times\\
&\quad \Big[\int_{t_{i+j-1}}^{t_{i+j}}b(\varphi_u(0,\Phi_0))\mathrm du+\int_{t_{i+j-1}}^{t_{i+j}}\sigma(\varphi_u(0,\Phi_0))\mathrm dW(u)\Big]\Big\}.
\end{align*}

\underline{Case 2: $r+s\in [t_{i+j},t_{i+j+1}).$} In this case,
\begin{align*}
&\quad \varphi_r(t_{i-1},\varphi_{t_{i-1}}(0,\Phi_0))(s)-\varphi^{Int}_r(t_{i-1},\varphi_{t_{i-1}}(0,\Phi_0))(s)\\
%&=\varphi(t_{i+j};0,\Phi_0)+\int_{t_{i+j}}^{r+s}b(\varphi_u(0,\Phi_0))\mathrm du+\int_{t_{i+j}}^{r+s}\sigma(\varphi_u(0,\Phi_0))\mathrm dW(u)\\
%&\quad -\Big[\frac{t_{i+j+1}-(r+s)}{\Delta}\varphi(t_{i+j})+\frac{r+s-t_{i+j}}{\Delta}\varphi(t_{i+j+1})\Big]\\
&=\mathbf 1_{\{i+j\geq0\}}\Big\{\int_{t_{i+j}}^{r+s}b(\varphi_u(0,\Phi_0))\mathrm du+\int_{t_{i+j}}^{r+s}\sigma(\varphi_u(0,\Phi_0))\mathrm dW(u)-\frac{r+s-t_{i+j}}{\Delta}\times\\
&\quad \Big[\int_{t_{i+j}}^{t_{i+j+1}}b(\varphi_u(0,\Phi_0))\mathrm du+\int_{t_{i+j}}^{t_{i+j+1}}\sigma(\varphi_u(0,\Phi_0))\mathrm dW(u)\Big]\Big\}.
\end{align*}

Combining \underline{Cases 1--2} gives
\begin{align*}
&\quad \varphi_r(t_{i-1},\varphi_{t_{i-1}}(0,\Phi_0))-\varphi^{Int}_r(t_{i-1},\varphi_{t_{i-1}}(0,\Phi_0))\\
&=\sum_{j=-N}^{-1}\Big\{\mathbf 1_{\{i+j\geq1\}}\mathbf 1_{[t_{i+j-1}-r,t_{i+j}-r)}(\cdot)\Big[\int_{t_{i+j-1}}^{r+\cdot}b(\varphi_u(0,\Phi_0))\mathrm du \nn\\
&\quad+\int_{t_{i+j-1}}^{r+\cdot}\sigma(\varphi_u(0,\Phi_0))\mathrm dW(u)-\frac{r+\cdot-t_{i+j-1}}{\Delta}\Big(\int_{t_{i+j-1}}^{t_{i+j}}\!b(\varphi_u(0,\Phi_0))\mathrm du \nn\\
&\quad+\int_{t_{i+j-1}}^{t_{i+j}}\sigma(\varphi_u(0,\Phi_0))\mathrm dW(u)\Big)\Big]\!+\!\mathbf 1_{\{i+j\geq0\}}\mathbf 1_{[t_{i+j}-r,t_{i+j+1}-r)}(\cdot)\times\nn\\
&\quad\Big[\int_{t_{i+j}}^{r+\cdot}b(\varphi_u(0,\Phi_0))\mathrm du
+\int_{t_{i+j}}^{r+\cdot}\sigma(\varphi_u(0,\Phi_0))\mathrm dW(u)\nn\\
&\quad-\frac{r+\cdot-t_{i+j}}{\Delta}\Big(\int_{t_{i+j}}^{t_{i+j+1}}\!b(\varphi_u(0,\Phi_0))\mathrm du+\int_{t_{i+j}}^{t_{i+j+1}}\!\sigma(\varphi_u(0,\Phi_0))\mathrm dW(u)\Big)\Big]\Big\}.
\end{align*}
Inserting the above equality into $\mathcal I_{b,2}$, we are in the position to estimate $\mathcal I_{b,2}.$ We only give the estimate  of the term with $$\mathbf 1_{\{i+j\geq1\}}\mathbf 1_{[t_{i+j}-r,t_{i+j+1}-r)}(\cdot)\int_{t_{i+j}}^{r+\cdot}\sigma(\varphi_u(0,\Phi_0))\mathrm dW(u),$$ since estimates of other terms are similar to  those in $\mathcal I_{b,1}.$
Based on  Assumption \ref{ass_nablab}, we have 
\begin{align*}
&\quad\mathcal I_{b,2,1}:=\sum_{i=1}^{N^{\Delta}}\!\int_0^1\!\mathbb E\Big[\Big\< (\mathcal D\Phi(T;t_i,Y^{\varsigma}_i) I^0\mathrm {Id}_{d\times d})^*f'(\Phi(T;t_i,Y^{\varsigma}_i)),\int_{t_{i-1}}^{t_i}\int_0^1\nn\\
&\quad\mathcal Db(Z^{\beta_1}_{i,r})\sum_{j=-N}^{-1}\mathbf 1_{\{i+j\geq1\}}\mathbf 1_{[t_{i+j}-r,t_{i+j+1}-r)}(\cdot)\int_{t_{i+j}}^{r+\cdot}\sigma(\varphi_u(0,\Phi_0))\mathrm dW(u) \mathrm d\beta_1\mathrm dr\Big\>\Big]\mathrm d\varsigma\\
&=\sum_{i=1}^{N^{\Delta}}\int_0^1\mathbb E\Big[\Big\< (\mathcal D\Phi(T;t_i,Y^{\varsigma}_i) I^0\mathrm {Id}_{d\times d})^*f'(\Phi(T;t_i,Y^{\varsigma}_i)),\int_{t_{i-1}}^{t_i}\int_0^1\int_{-\tau}^0 k_b^{1}(Z^{\beta_1}_{i,r}(s))\nn\\
&\quad\sum_{j=-N}^{-1}\mathbf 1_{\{i+j\geq1\}}\mathbf 1_{[t_{i+j}-r,t_{i+j+1}-r)}(s)\int_{t_{i+j}}^{r+s}\sigma(\varphi_u(0,\Phi_0))\mathrm dW(u) \mathrm d\nu^1_3(s)
 \mathrm d\beta_1\mathrm dr\Big\>\Big]\mathrm d\varsigma\\
 &=\sum_{i=1}^{N^{\Delta}}\!\sum_{j=-N}^{-1}\!\int_0^1\!\int_{t_{i-1}}^{t_i}\int_0^1\int_{-\tau\vee (t_{i+j}-r)}^{ (t_{i+j+1}-r)\wedge 0} \mathbb E\Big[\Big\<k_b^{1}(Z^{\beta_1}_{i,r}(s))^*\mathbf 1_{\{i+j\geq1\}}(\mathcal D\Phi(T;t_i,Y^{\varsigma}_i)I^0\nn\\
 &\quad\mathrm {Id}_{d\times d})^*f'(\Phi(T;t_i,Y^{\varsigma}_i)),\int_{t_{i+j}}^{r+s}\sigma(\varphi_u(0,\Phi_0))\mathrm dW(u)\Big\>\Big]\mathrm d\nu_3^1(s)\mathrm d\beta_1\mathrm dr\mathrm d\varsigma\\
 &=\sum_{i=1}^{N^{\Delta}}\sum_{j=-N}^{-1}\int_0^1\int_{t_{i-1}}^{t_i}\int_0^1\int_{-\tau\vee (t_{i+j}-r)}^{ (t_{i+j+1}-r)\wedge 0}\int_{t_{i+j}}^{r+s}\mathbb E\Big[\Big\<D_u[k_b^{1}(Z^{\beta_1}_{i,r}(s))^*\mathbf 1_{\{i+j\geq1\}}\nn\\
 &\quad(\mathcal D\Phi(T;t_i,Y^{\varsigma}_i)I^0\mathrm {Id}_{d\times d})^*f'(\Phi(T;t_i,Y^{\varsigma}_i))],\sigma(\varphi_u(0,\Phi_0))\Big\>\Big]\mathrm du\mathrm d\nu_3^1(s)\mathrm d\beta_1\mathrm dr\mathrm d\varsigma\\
 &=\sum_{i=1}^{N^{\Delta}}\sum_{j=-N}^{-1}\int_0^1\int_{t_{i-1}}^{t_i}\int_0^1\int_{-\tau\vee (t_{i+j}-r)}^{ (t_{i+j+1}-r)\wedge 0}\int_{t_{i+j}}^{r+s}\Big\{\mathbb E\Big[\Big\<f'(\Phi(T;t_i,Y^{\varsigma}_i)), \mathcal D\Phi(T;t_i,Y^{\varsigma}_i)\\
 &\quad I^0\mathrm {Id}_{d\times d} D_uk_b^{1}(Z^{\beta_1}_{i,r}(s))\mathbf 1_{\{i+j\geq1\}}\sigma(\varphi_u(0,\Phi_0))\Big\>\Big] +\mathbb E\Big[\Big\<f'(\Phi(T;t_i,Y^{\varsigma}_i)), \nn\\
 &\quad D_u\mathcal D\Phi(T;t_i,Y^{\varsigma}_i) I^0\mathrm {Id}_{d\times d}k_b^{1}(Z^{\beta_1}_{i,r}(s))\mathbf 1_{\{i+j\geq1\}}\sigma(\varphi_u(0,\Phi_0))\Big\>\Big]\nn\\
 &\quad+\mathbb E\Big[\Big\<f''(\Phi(T;t_i,Y^{\varsigma}_i))D_u \Phi(T;t_i,Y^{\varsigma}_i),
 \mathcal D\Phi(T;t_i,Y^{\varsigma}_i)I^0\mathrm {Id}_{d\times d}
 k_b^{1}(Z^{\beta_1}_{i,r}(s))\mathbf 1_{\{i+j\geq1\}}\nn\\
 &\quad\sigma(\varphi_u(0,\Phi_0))\Big\>\Big]\Big\}\mathrm du\mathrm d\nu_3^1(s)\mathrm d\beta_1\mathrm dr\mathrm d\varsigma\\
&\leq K\sum_{i=1}^{N^{\Delta}}\sum_{j=-N}^{-1}\int_0^1\int_{t_{i-1}}^{t_i}\int_0^1\int_{-\tau\vee (t_{i+j}-r)}^{ (t_{i+j+1}-r)\wedge 0}\int_{t_{i+j}}^{r+s}\Big\{\big\|\mathcal D\Phi(T;t_i,Y^{\varsigma}_i)I^0\mathrm {Id}_{d\times d} D_uk_b^{1}(Z^{\beta_1}_{i,r}(s))\nn\\
&\quad\sigma(\varphi_u(0,\Phi_0))\big\|_{0,2}+\big\|D_u\mathcal D\Phi(T;t_i,Y^{\varsigma}_i)I^0\mathrm {Id}_{d\times d}k_b^{1}(Z^{\beta_1}_{i,r}(s))\sigma(\varphi_u(0,\Phi_0))\big\|_{0,2}\\
&\quad +\|D_u \Phi(T;t_i,Y^{\varsigma}_i)\|_{0,2}\big\|
 \mathcal D\Phi(T;t_i,Y^{\varsigma}_i)I^0\mathrm {Id}_{d\times d}
 k_b^{1}(Z^{\beta_1}_{i,r}(s))\nn\\
 &\quad\sigma(\varphi_u(0,\Phi_0))\big\|_{0,2}\Big\}\mathrm du\mathrm d\nu_3^1(s)\mathrm d\beta_1\mathrm dr\mathrm d\varsigma\\
 &\leq K\Delta.
\end{align*}
Here we used the property 
$$\sup\limits_{u\in [t_{i+j},t_{i+j+1}]}\mathbb E[\sup\limits_{s\in[-\tau,0]}|D_uk_b^{1}(Z^{\beta_1}_{i,r}(s))|^{2p_0}]+\mathbb E[\sup\limits_{s\in[-\tau,0]}|k_b^{1}(Z^{\beta_1}_{i,r}(s))|^{2p_0}]\leq K$$ 
for some $p_0$, which is an  application  of Assumption \ref{ass_nablab}, and  \cref{l4.1,lem_fir_d2}.

Hence, the term $\mathcal I_{b,2}$ is estimated as  $\mathcal I_{b,2}\leq K\Delta.$

\textbf{Estimate of $\mathcal I_{b,\theta}.$} We rewrite $\mathcal I_{b,\theta}$ as 
\begin{align*}
\mathcal I_{b,\theta}&=\sum_{i=1}^{N^{\Delta}}\int_0^1\mathbb E\Big[\Big\<(\mathcal D\Phi(T;t_i,Y^{\varsigma}_i) I^0\mathrm {Id}_{d\times d})^* f'(\Phi(T;t_i,Y^{\varsigma}_i)),\theta \int_{t_{i-1}}^{t_i}\int_0^1\mathcal Db(Z^{\beta_2}_i)\\
&\quad (\varphi^{Int}_{t_{i-1}}(0,\Phi_0)-\Phi_{t_i}(t_{i-1},\varphi^{Int}_{t_{i-1}}(0,\Phi_0)))\mathrm d\beta_2\mathrm dr\Big\>\Big]\mathrm d\varsigma,
\end{align*}
where $Z^{\beta_2}_{i,r}=\beta_2\varphi_{t_{i-1}}^{Int}(0,\Phi_0)+(1-\beta_2)\Phi_{t_i}(t_{i-1},\varphi^{Int}_{t_{i-1}}(0,\Phi_0)).$

It follows from  definitions of $\varphi^{Int}_{t_i}$ and $\Phi_{t_i}$ that for any $i\in\mathbb N_+$,
\begin{align*}
&\quad \varphi^{Int}_{t_{i-1}}(0,\Phi_0)-\Phi_{t_i}(t_{i-1},\varphi^{Int}_{t_{i-1}}(0,\Phi_0))\\
&=\sum_{j=-N}^{-1}I^j[\varphi(t_{i+j-1};0,\Phi_0)-\varphi(t_{i+j};0,\Phi_0)]+I^0[\varphi(t_{i-1};0,\Phi_0)\nn\\
&\quad-\Phi(t_i;t_{i-1},\varphi_{t_{i-1}}^{Int}(0,\Phi_0))]\\
&=-\sum_{j=-N}^{-1}\mathbf 1_{\{i+j\geq1\}}I^j\Big(\int_{t_{i+j-1}}^{t_{i+j}}b(\varphi_u(0,\Phi_0))\mathrm du+\int_{t_{i+j-1}}^{t_{i+j}}\sigma(\varphi_u(0,\Phi_0))\mathrm dW(u)\Big)\\
&\quad+\sum_{j=-N}^{-1}\mathbf 1_{\{i+j\leq 0\}}I^j\big(\xi(t_{i+j-1})-\xi(t_{i+j})\big)-I^0\Big((1-\theta)b(\varphi^{Int}_{t_{i-1}}(0,\Phi_0))\Delta\nn\\
&\quad+\theta b(\Phi_{t_i}(t_{i-1},\varphi^{Int}_{t_{i-1}}(0,\Phi_0)))\Delta+\sigma(\varphi^{Int}_{t_{i-1}}(0,\Phi_0))\delta W_{i-1}\Big).
\end{align*}
Hence, similar to the estimate of $\mathcal I_b,$ we can derive that $\mathcal I_{b,\theta}\leq K\Delta$.

\textbf{Estimate of $\mathcal I_{\sigma}.$} We split $\mathcal I_{\sigma}$  as
\begin{align*}
\mathcal I_{\sigma}&=\sum_{i=1}^{N^{\Delta}}\int_0^1\mathbb E\Big[\Big\<(\mathcal D\Phi(T;t_i,Y^{\varsigma}_i) I^0\mathrm {Id}_{d\times d})^* f'(\Phi(T;t_i,Y^{\varsigma}_i)),\int_{t_{i-1}}^{t_i}\int_0^1\mathcal D\sigma(Z^{\beta_1}_{i,r})\\
&\quad (\varphi_r(t_{i-1},\varphi_{t_{i-1}}(0,\Phi_0))-\varphi^{Int}_{t_{i-1}}(0,\Phi_0))\mathrm d\beta_1\mathrm dW(r) \Big\>\Big]\mathrm d\varsigma=\mathcal I_{\sigma,1}+\mathcal I_{\sigma,2},
\end{align*}
where 
\begin{align*}
&\mathcal I_{\sigma,1}:=\sum_{i=1}^{N^{\Delta}}\int_0^1\mathbb E\Big[\Big\< (\mathcal D\Phi(T;t_i,Y^{\varsigma}_i) I^0\mathrm {Id}_{d\times d})^*f'(\Phi(T;t_i,Y^{\varsigma}_i)),\int_{t_{i-1}}^{t_i}\int_0^1\mathcal D \sigma(Z^{\beta_1}_{i,r})\\
&\quad \big(\varphi^{Int}_r(t_{i-1},\varphi_{t_{i-1}}(0,\Phi_0))-\varphi^{Int}_{t_{i-1}}(0,\Phi_0)\big)\mathrm d\beta_1\mathrm dW(r)\Big\>\Big]\mathrm d\varsigma,\\
&\mathcal I_{\sigma,2}:=\sum_{i=1}^{N^{\Delta}}\int_0^1\mathbb E\Big[\Big\< (\mathcal D\Phi(T;t_i,Y^{\varsigma}_i) I^0\mathrm {Id}_{d\times d})^*f'(\Phi(T;t_i,Y^{\varsigma}_i)),\int_{t_{i-1}}^{t_i}\int_0^1\mathcal D \sigma(Z^{\beta_1}_{i,r})\\
&\quad \big(\varphi_r(t_{i-1},\varphi_{t_{i-1}}(0,\Phi_0))-\varphi^{Int}_r(t_{i-1},\varphi_{t_{i-1}}(0,\Phi_0))\big)\mathrm d\beta_1\mathrm dW(r)\Big\>\Big]\mathrm d\varsigma.
\end{align*}
For the estimate of $\mathcal I_{\sigma,1},$ we only give  the estimate of the term 
\begin{align*}
&\quad\mathcal I_{\sigma,1,1}:=\sum_{i=1}^{N^{\Delta}}\int_0^1\mathbb E\Big[\Big\< (\mathcal D\Phi(T;t_i,Y^{\varsigma}_i) I^0\mathrm {Id}_{d\times d})^*f'(\Phi(T;t_i,Y^{\varsigma}_i)),\int_{t_{i-1}}^{t_i}\int_0^1\mathcal D \sigma(Z^{\beta_1}_{i,r})\\
&\quad\!\sum_{j=-N}^{-1}\!\mathbf 1_{\{i+j\geq1\}}\mathbf 1_{[t_{i+j}-r,t_{i+j+1}-r)}(\cdot)\frac{t_{j+1}-\cdot}{\Delta}\!\!\int_{t_{i+j-1}}^{t_{i+j}}\!\!\sigma(\varphi_u(0,\Phi_0))\mathrm dW(u)
\mathrm d\beta_1\mathrm dW(r)\Big\>\Big]\mathrm d\varsigma,
\end{align*}
since other terms can be estimated similarly.  By  \eqref{IBPformula}, we derive 
\begin{align*}
&\mathcal I_{\sigma,1,1}=\sum_{i=1}^{N^{\Delta}}\!\int_0^1\!\int_{t_{i-1}}^{t_i}\!\int_0^1\!\mathbb E\Big[\Big\<D_r[(\mathcal D\Phi(T;t_i,Y^{\varsigma}_i)I^0\mathrm{Id}_{d\times d})^* f'(\Phi(T;t_i,Y^{\varsigma}_i))],\mathcal D\sigma(Z^{\beta_1}_{i,r})\nn\\
&\quad\sum_{j=-N}^{-1}\mathbf 1_{\{i+j\geq1\}}\!\mathbf 1_{[t_{i+j}-r,t_{i+j+1}-r)}(\cdot)\frac{t_{j+1}-\cdot}{\Delta}\int_{t_{i+j-1}}^{t_{i+j}}\sigma(\varphi_u(0,\Phi_0))\mathrm dW(u)
\Big\>\Big]\mathrm d\beta_1\mathrm dr\mathrm d\varsigma\\
&=\sum_{i=1}^{N^{\Delta}}\!\int_0^1\!\int_{t_{i-1}}^{t_i}\!\int_0^1\int_{t_{i+j-1}}^{t_{i+j}}\mathbb E\Big[\Big\<D_u\big((\mathcal D\sigma(Z^{\beta_1}_{i,r})\sum_{j=-N}^{-1}\!\mathbf 1_{\{i+j\geq1\}}\!\mathbf 1_{[t_{i+j}-r,t_{i+j+1}-r)}(\cdot)\frac{t_{j+1}-\cdot}{\Delta}\nn\\
&\quad\mathrm {Id}_{d\times d})^* D_r[(\mathcal D\Phi(T;t_i,Y^{\varsigma}_i)I^0\mathrm{Id}_{d\times d})^*f'(\Phi(T;t_i,Y^{\varsigma}_i))]\big),\sigma(\varphi_u(0,\Phi_0))\Big\>\Big]\mathrm du\mathrm d\beta_1\mathrm dr\mathrm d\varsigma\\
&=\sum_{i=1}^{N^{\Delta}}\int_0^1\int_{t_{i-1}}^{t_i}\int_0^1\int_{t_{i+j-1}}^{t_{i+j}}\mathbb E\Big[\Big\<D_r[(\mathcal D\Phi(T;t_i,Y^{\varsigma}_i)I^0\mathrm{Id}_{d\times d})^*f'(\Phi(T;t_i,Y^{\varsigma}_i))], \mathcal D^2\sigma(Z^{\beta_1}_{i,r})\nn\\
&\quad\Big(D_u Z^{\beta_1}_{i,r},\sum_{j=-N}^{-1}\!\mathbf 1_{\{i+j\geq1\}}\!\mathbf 1_{[t_{i+j}-r,t_{i+j+1}-r)}(\cdot)\frac{t_{j+1}-\cdot}{\Delta}\mathrm {Id}_{d\times d}\sigma(\varphi_u(0,\Phi_0))\Big)
\Big\>\Big]\\
&\quad +\mathbb E\Big[\Big\<D_uD_r[(\mathcal D\Phi(T;t_i,Y^{\varsigma}_i)I^0\mathrm{Id}_{d\times d})^*f'(\Phi(T;t_i,Y^{\varsigma}_i))],\\
&\quad \mathcal D\sigma(Z^{\beta_1}_{i,r})\sum_{j=-N}^{-1}\!\mathbf 1_{\{i+j\geq1\}}\!\mathbf 1_{[t_{i+j}-r,t_{i+j+1}-r)}(\cdot)\frac{t_{j+1}-\cdot}{\Delta}\mathrm {Id}_{d\times d}\sigma(\varphi_u(0,\Phi_0))
\Big\>\Big]\mathrm du\mathrm d\beta_1\mathrm dr\mathrm d\varsigma\\
&\leq K\sum_{i=1}^{N^{\Delta}}\int_0^1\int_{t_{i-1}}^{t_i}\int_0^1\int_{t_{i+j-1}}^{t_{i+j}}\Big\{\Big\|D_r\mathcal D\Phi(T;t_i,Y^{\varsigma}_i)I^0\mathrm{Id}_{d\times d} \mathcal D^2\sigma(Z^{\beta_1}_{i,r})\Big(D_u Z^{\beta_1}_{i,r},\sum_{j=-N}^{-1}\!\mathbf 1_{\{i+j\geq1\}}\nn\\
&\quad\mathbf 1_{[t_{i+j}-r,t_{i+j+1}-r)}(\cdot)\frac{t_{j+1}-\cdot}{\Delta}\mathrm {Id}_{d\times d}\sigma(\varphi_u(0,\Phi_0))\Big)\Big\|_{0,2}+\|D_r\Phi(T;t_i,Y^{\varsigma}_i)\|_{0,2}\nn\\
&\quad\Big\| \mathcal D\Phi(T;t_i,Y^{\varsigma}_i)I^0\mathrm{Id}_{d\times d} \mathcal D^2\sigma(Z^{\beta_1}_{i,r})D_u Z^{\beta_1}_{i,r}\sum_{j=-N}^{-1}\!\mathbf 1_{\{i+j\geq1\}}\!\mathbf 1_{[t_{i+j}-r,t_{i+j+1}-r)}(\cdot)\frac{t_{j+1}-\cdot}{\Delta}\nn\\
&\quad\mathrm {Id}_{d\times d}\sigma(\varphi_u(0,\Phi_0))\Big\|_{0,2}+\Big\|D_uD_r\mathcal D\Phi(T;t_i,Y^{\varsigma}_i)I^0\mathrm{Id}_{d\times d} \mathcal D\sigma(Z^{\beta_1}_{i,r})\sum_{j=-N}^{-1}\!\mathbf 1_{\{i+j\geq1\}}\nn\\
&\quad\mathbf 1_{[t_{i+j}-r,t_{i+j+1}-r)}(\cdot)\frac{t_{j+1}-\cdot}{\Delta}\mathrm {Id}_{d\times d}\sigma(\varphi_u(0,\Phi_0))\Big\|_{0,2}+\|D_u\Phi(T;t_i,Y^{\varsigma}_i)\|_{0,2}\times\nn\\
&\quad\Big\| D_r\mathcal D\Phi(T;t_i,Y^{\varsigma}_i)I^0\mathrm{Id}_{d\times d}\mathcal D\sigma(Z^{\beta_1}_{i,r})\sum_{j=-N}^{-1}\!\mathbf 1_{\{i+j\geq1\}}\!\mathbf 1_{[t_{i+j}-r,t_{i+j+1}-r)}(\cdot)\frac{t_{j+1}-\cdot}{\Delta}\nn\\
&\quad\mathrm {Id}_{d\times d}\sigma(\varphi_u(0,\Phi_0))\Big\|_{0,2}+\|D_r\Phi(T;t_i,Y^{\varsigma}_i)\|_{0,2}\Big\| D_u\mathcal D\Phi(T;t_i,Y^{\varsigma}_i)I^0\mathrm{Id}_{d\times d} \mathcal D\sigma(Z^{\beta_1}_{i,r})\nn\\
&\quad\sum_{j=-N}^{-1}\!\mathbf 1_{\{i+j\geq1\}}\mathbf 1_{[t_{i+j}-r,t_{i+j+1}-r)}(\cdot)\frac{t_{j+1}-\cdot}{\Delta}\mathrm {Id}_{d\times d}\sigma(\varphi_u(0,\Phi_0))\Big\|_{0,2}\nn\\
&\quad+\|D_uD_r\Phi(T;t_i,Y^{\varsigma}_i)\|_{0,2}\Big\| \mathcal D\Phi(T;t_i,Y^{\varsigma}_i)I^0\mathrm{Id}_{d\times d} \mathcal D\sigma(Z^{\beta_1}_{i,r})\sum_{j=-N}^{-1}\!\mathbf 1_{\{i+j\geq1\}}\nn\\
&\quad\mathbf 1_{[t_{i+j}-r,t_{i+j+1}-r)}(\cdot)\frac{t_{j+1}-\cdot}{\Delta}\mathrm {Id}_{d\times d}\sigma(\varphi_u(0,\Phi_0))\Big\|_{0,2}\!\!+\!\!\|D_u\Phi(T;t_i,Y^{\varsigma}_i)D_r\Phi(T;t_i,Y^{\varsigma}_i)\|_{0,2}\nn\\
&\quad\times\Big\|\mathcal D\Phi(T;t_i,Y^{\varsigma}_i)I^0\mathrm{Id}_{d\times d}\mathcal D\sigma(Z^{\beta_1}_{i,r})\sum_{j=-N}^{-1}\!\mathbf 1_{\{i+j\geq1\}}\!\mathbf 1_{[t_{i+j}-r,t_{i+j+1}-r)}(\cdot)\frac{t_{j+1}-\cdot}{\Delta}\nn\\
&\quad\mathrm {Id}_{d\times d}\sigma(\varphi_u(0,\Phi_0))\Big\|_{0,2}\Big\}\mathrm du\mathrm d\beta_1\mathrm dr\mathrm d\varsigma
\leq K\Delta.
\end{align*}

For the estimate of $\mathcal I_{\sigma,2},$ we only give the estimate of the term 
\begin{align*}
&\quad \mathcal I_{\sigma,2,1}:=\sum_{i=1}^{N^{\Delta}}\int_0^1\mathbb E\Big[\Big\< (\mathcal D\Phi(T;t_i,Y^{\varsigma}_i) I^0\mathrm {Id}_{d\times d})^*f'(\Phi(T;t_i,Y^{\varsigma}_i)),\int_{t_{i-1}}^{t_i}\int_0^1\mathcal D \sigma(Z^{\beta_1}_{i,r})\nn\\
&\quad\sum_{j=-N}^{-1}\mathbf 1_{\{i+j\geq1\}} \mathbf 1_{[t_{i+j}-r,t_{i+j+1}-r)}(\cdot)\int_{t_{i+j}}^{r+\cdot}\sigma(\varphi_u(0,\Phi_0))\mathrm dW(u)
\mathrm d\beta_1\mathrm dW(r)\Big\>\Big]\mathrm d\varsigma.
\end{align*}
It follows  from Assumption \ref{ass_nablab} that
\begin{align*}
&\quad\mathcal I_{\sigma,2,1}=\sum_{i=1}^{N^{\Delta}}\int_0^1\mathbb E\Big[\Big\< (\mathcal D\Phi(T;t_i,Y^{\varsigma}_i) I^0\mathrm {Id}_{d\times d})^*f'(\Phi(T;t_i,Y^{\varsigma}_i)), \int_{t_{i-1}}^{t_i}\int_0^1\!\sum_{j=-N}^{-1}\mathbf 1_{\{i+j\geq1\}}\nn\\
&\quad\int_{-\tau}^0 \!k^{1}_{\sigma}(Z^{\beta_1}_{i,r}(s))\mathbf 1_{[t_{i+j}-r,t_{i+j+1}-r)}(s)\!\!\int_{t_{i+j}}^{r+s}\!\sigma(\varphi_u(0,\Phi_0))\mathrm dW(u)\mathrm d\nu_4^{1}(s)
\mathrm d\beta_1\mathrm dW(r)\Big\>\Big]\mathrm d\varsigma\\
&=\sum_{i=1}^{N^{\Delta}}\sum_{j=-N}^{-1}\mathbf 1_{\{i+j\geq1\}}\int_0^1\int_{t_{i-1}}^{t_i}\int_0^1\int_{(t_{i+j}-r)\vee -\tau}^{(t_{i+j+1}-r)\wedge 0} \mathbb E\Big[\Big\<D_r[(\mathcal D\Phi(T;t_i,Y^{\varsigma}_i)I^0\mathrm {Id}_{d\times d})^*\nn\\
&\quad f'(\Phi(T;t_i,Y^{\varsigma}_i))], k^{1}_{\sigma}(Z^{\beta_1}_{i,r}(s))\int_{t_{i+j}}^{r+s}\sigma(\varphi_u(0,\Phi_0))\mathrm dW(u)
\Big\>\Big]\mathrm d\nu_4^{1}(s)\mathrm d\beta_1\mathrm dr\mathrm d\varsigma\\
&=\sum_{i=1}^{N^{\Delta}}\sum_{j=-N}^{-1}\mathbf 1_{\{i+j\geq1\}}\int_0^1\int_{t_{i-1}}^{t_i}\int_0^1\int_{(t_{i+j}-r)\vee -\tau}^{(t_{i+j+1}-r)\wedge 0} \int_{t_{i+j}}^{r+s}\mathbb E\Big[\Big\<D_u\Big(k^{1}_{\sigma}(Z^{\beta_1}_{i,r}(s))^*\nn\\
&\quad D_r[(\mathcal D\Phi(T;t_i,Y^{\varsigma}_i)I^0\mathrm {Id}_{d\times d})^*f'(\Phi(T;t_i,Y^{\varsigma}_i))]\Big),\sigma(\varphi_u(0,\Phi_0))\Big\>\Big]\mathrm du
\mathrm d\nu_4^{1}(s)\mathrm d\beta_1\mathrm dr\mathrm d\varsigma\\
&=\sum_{i=1}^{N^{\Delta}}\sum_{j=-N}^{-1}\mathbf 1_{\{i+j\geq1\}}\int_0^1\int_{t_{i-1}}^{t_i}\int_0^1\int_{(t_{i+j}-r)\vee -\tau}^{(t_{i+j+1}-r)\wedge 0} \int_{t_{i+j}}^{r+s}\mathbb E\Big[\Big\<D_r[(\mathcal D\Phi(T;t_i,Y^{\varsigma}_i)\nn\\
&\quad I^0\mathrm {Id}_{d\times d})^* f'(\Phi(T;t_i,\!Y^{\varsigma}_i))],D_u k^{1}_{\sigma}(Z^{\beta_1}_{i,r}(s)) \sigma(\varphi_u(0,\Phi_0))\Big\>\Big]\!\!+\!\!\mathbb E\Big[\Big\<D_uD_r[(\mathcal D\Phi(T;t_i,\!Y^{\varsigma}_i)\nn\\
&\quad I^0\mathrm {Id}_{d\times d})^* f'(\Phi(T;t_i,Y^{\varsigma}_i))], k^{1}_{\sigma}(Z^{\beta_1}_{i,r}(s)) \sigma(\varphi_u(0,\Phi_0))\Big\>\Big]\mathrm du
\mathrm d\nu_4^{1}(s)\mathrm d\beta_1\mathrm dr\mathrm d\varsigma\\
&\leq K\sum_{i=1}^{N^{\Delta}}\sum_{j=-N}^{-1}\int_0^1\int_{t_{i-1}}^{t_i}\int_0^1\int_{(t_{i+j}-r)\vee -\tau}^{(t_{i+j+1}-r)\wedge 0} \int_{t_{i+j}}^{r+s} \Big\{\Big\|D_r\mathcal D\Phi(T;t_i,Y^{\varsigma}_i)I^0\mathrm {Id}_{d\times d}\nn\\
&\quad D_u k^{1}_{\sigma}(Z^{\beta_1}_{i,r}(s)) \sigma(\varphi_u(0,\Phi_0))\Big\|_{0,2}+\|D_r\Phi(T;t_i,Y^{\varsigma}_i)\|_{0,2}\Big\| \mathcal D\Phi(T;t_i,Y^{\varsigma}_i)I^0\mathrm{Id}_{d\times d}\nn\\
&\quad D_uk^{1}_{\sigma}
(Z^{\beta_1}_{i,r}(s))\sigma(\varphi_u(0,\Phi_0))\Big\|_{0,2}+\Big\|D_uD_r\mathcal D\Phi(T;t_i,Y^{\varsigma}_i)I^0\mathrm{Id}_{d\times d} k^{1}_{\sigma}
(Z^{\beta_1}_{i,r}(s))\nn\\ 
&\quad \sigma(\varphi_u(0,\Phi_0))\Big\|_{0,2}+\|D_u\Phi(T;t_i,Y^{\varsigma}_i)\|_{0,2}\Big\|D_r\mathcal D\Phi(T;t_i,Y^{\varsigma}_i)I^0
\mathrm{Id}_{d\times d} k^{1}_{\sigma}
(Z^{\beta_1}_{i,r}(s))\nn\\ &\quad \sigma(\varphi_u(0,\Phi_0))\Big\|_{0,2}+\|D_r\Phi(T;t_i,Y^{\varsigma}_i)\|_{0,2}\Big\| D_u\mathcal D\Phi(T;t_i,Y^{\varsigma}_i)I^0\mathrm{Id}_{d\times d} k^{1}_{\sigma}
(Z^{\beta_1}_{i,r}(s))\nn\\ &\quad\sigma(\varphi_u(0,\Phi_0))\Big\|_{0,2}+\|D_uD_r\Phi(T;t_i,Y^{\varsigma}_i)\|_{0,2}\Big\| \mathcal D\Phi(T;t_i,Y^{\varsigma}_i)I^0\mathrm{Id}_{d\times d} k^{1}_{\sigma}
(Z^{\beta_1}_{i,r}(s))\nn\\ &\quad\sigma(\varphi_u(0,\Phi_0))\Big\|_{0,2}+\|D_u\Phi(T;t_i,Y^{\varsigma}_i) D_r\Phi(T;t_i,Y^{\varsigma}_i)\|_{0,2}\Big\| \mathcal D\Phi(T;t_i,Y^{\varsigma}_i)I^0\mathrm{Id}_{d\times d}\nn\\
&\quad k^{1}_{\sigma}
(Z^{\beta_1}_{i,r}(s)) \sigma(\varphi_u(0,\Phi_0))\Big\|_{0,2}\Big\}
\mathrm du\mathrm d\nu_4^{1}(s)\mathrm d\beta_1\mathrm dr\mathrm d\varsigma
\leq K\Delta,
\end{align*} 
where we used the property  
$$\sup\limits_{u\in [t_{i+j},t_{i+j+1}]}\mathbb E[\sup\limits_{s\in[-\tau,0]}|D_uk^{1}_{\sigma}(Z^{\beta_1}_{i,r}(s))|^{2p_0}]+\mathbb E[\sup\limits_{s\in[-\tau,0]}|k^{1}_{\sigma}(Z^{\beta_1}_{i,r}(s))|^{2p_0}]\leq K$$ for some  $p_0$. 

Combining estimates of  terms $\mathcal I_0,\mathcal I_b,\mathcal I_{b,\theta}$, and $\mathcal I_{\sigma}$, we complete  the proof. 
\end{proof}

Based on Theorem \ref{thm_weakcon}, we can derive the convergence rate of the numerical invariant measure, which is stated in the following corollary. 

\begin{coro}
Under conditions  in Theorem \ref{thm_weakcon}, it holds that  for $f\in\mathcal  C^3_b,$
\begin{align*}
|\mu(f)-\mu^{\Delta}(f)|\leq K\Delta.
\end{align*}
\end{coro}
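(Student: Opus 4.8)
The plan is to transfer the time-independent weak convergence rate of \cref{thm_weakcon} to the invariant measures by inserting a long-time marginal of the dynamics between $\mu$ and $\mu^{\Delta}$, and then sending the terminal time to infinity. Since $f\in\mathcal C_b^3(\mathbb R^d;\mathbb R)$, I read $\mu(f)=\int_{\mathcal C^d}f(\phi(0))\,\mathrm d\mu(\phi)$ and $\mu^{\Delta}(f)=\int_{\mathcal C^d}f(\phi(0))\,\mathrm d\mu^{\Delta}(\phi)$, so that $\phi\mapsto f(\phi(0))$ is a bounded Lipschitz functional on $\mathcal C^d$ (its sup-norm and Lipschitz constant are controlled by $\|f\|_{\infty}$ and $\|f'\|_{\infty}$), and the quantities $\mathbb E[f(x^{\xi}(T))]$ and $\mathbb E[f(y^{\xi,\Delta}(T))]$ appearing in \cref{thm_weakcon} are exactly the present-time marginals of these measures along the respective flows.

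First I would fix the initial datum $\xi=\mathbf 0$ and, for any grid point $T=t_{N^{\Delta}}$ that is a multiple of $\tau$, apply the triangle inequality
\begin{align*}
|\mu(f)-\mu^{\Delta}(f)|
&\leq |\mu(f)-\mathbb E[f(x^{\mathbf 0}(T))]|+|\mathbb E[f(x^{\mathbf 0}(T))]-\mathbb E[f(y^{\mathbf 0,\Delta}(T))]|\\
&\quad+|\mathbb E[f(y^{\mathbf 0,\Delta}(T))]-\mu^{\Delta}(f)|.
\end{align*}
The middle term is bounded by $K\Delta$ with $K$ independent of $T$, which is precisely the content of \cref{thm_weakcon}.

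For the first term I would use the invariance of $\mu$ together with \cref{l2.4}. Since $1\wedge x=(1\wedge x^2)^{1/2}$ for $x\geq0$, Hölder's inequality over an optimal coupling yields $\mathbb W_1\leq \mathbb W_2$, and because $\phi\mapsto f(\phi(0))$ is bounded Lipschitz we obtain $|\mu(f)-\mathbb E[f(x^{\mathbf 0}(T))]|\leq K\,\mathbb W_2(\mu^{\mathbf 0}_{T},\mu)\leq Ke^{-\frac{\lambda}{2}T}$. For the third term I would mimic the argument used for the $\mathbb W_2(\mu^{\mathbf 0,\Delta}_{t_k},\mu^{\Delta})$ contribution in the proof of \cref{th4.2}: writing $\mu^{\Delta}(f)=\int_{\mathcal C^d}\mathbb E[f(y^{\eta,\Delta}(T))]\,\mathrm d\mu^{\Delta}(\eta)$ by the invariance of $\mu^{\Delta}$ under $P^{\Delta}_{t}$, the Lipschitz property of $\phi\mapsto f(\phi(0))$ and the exponential attractiveness of \cref{p2.3} give $|\mathbb E[f(y^{\mathbf 0,\Delta}(T))]-\mu^{\Delta}(f)|\leq Ke^{-\frac{\lambda_0}{2}T}$, where the moment bound \eqref{4.3l+2} for $\mu^{\Delta}$ makes $K$ independent of $\Delta$.

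Collecting the three estimates and letting $T=t_{N^{\Delta}}\to\infty$ along multiples of $\tau$, the two exponential contributions vanish and the desired bound $|\mu(f)-\mu^{\Delta}(f)|\leq K\Delta$ follows. The main obstacle is ensuring that the constant in the middle term is genuinely uniform in $T$ and that the decay rate of the third term is uniform in $\Delta$: the former is exactly the time-independent weak error estimate of \cref{thm_weakcon}, which rests on the exponentially decaying Malliavin- and G\^ateaux-derivative bounds established earlier, while the latter follows from \cref{p2.3} and \eqref{4.3l+2}. The technical requirement that $T$ be a multiple of $\tau$ is harmless, since only a sequence $T\to\infty$ is needed to kill the exponential terms.
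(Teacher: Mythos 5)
Your proof is correct, but it organizes the argument differently from the paper's. The paper's proof is shorter and more symmetric: it takes \emph{random} initial data $\tilde\xi\sim\mu$ and $\bar\xi\sim\mu^{\Delta}$, so that by stationarity $\mu(f)=\mathbb E[f(x^{\tilde\xi}(T))]$ and $\mu^{\Delta}(f)=\mathbb E[f(y^{\bar\xi,\Delta}(T))]$ hold exactly, and then splits the difference into only two pieces: the exact flow started from the two stationary laws, controlled by the exponential attractiveness of the exact solution together with the moment bounds \eqref{l4.1.2} and \eqref{4.3l+2} (giving $Ke^{-KT}$), plus the weak error started from $\bar\xi$, bounded by $K\Delta$ via \cref{thm_weakcon}; letting $T\to\infty$ finishes. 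You instead fix the deterministic initial datum $\mathbf 0$ and use a three-term decomposition — ergodicity of the exact flow (\cref{l2.4}), the time-uniform weak error (\cref{thm_weakcon}), and ergodicity of the numerical flow (\cref{p2.3} combined with \eqref{4.3l+2}, exactly as in the proof of \cref{th4.2}). Both routes rest on the same two ingredients, namely the time-uniform weak error and exponential forgetting of the initial condition. Your version has one genuine advantage: \cref{thm_weakcon} is invoked precisely as stated, for a deterministic initial value that trivially satisfies \cref{a7}, whereas the paper applies it to the random, $\mu^{\Delta}$-distributed initial value $\bar\xi$, which tacitly requires the weak-error analysis to extend to random initial data with bounded moments. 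The price you pay is the extra third term, i.e.\ the numerical ergodicity estimate, which the paper's stationary-coupling trick avoids.
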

\begin{proof}
Let $\tilde \xi,\bar{\xi}$ be initial values   with distributions being $\mu,\mu^{\Delta}$, respectively. It follows from \eqref{l4.1.2} and \eqref{4.3l+2} that for any $p\geq 2$,  
$\mathbb E[\|\tilde \xi\|^{p}+\|\bar \xi\|^{p}]\leq K$.
Then for any $T>0$,
\begin{align*}
|\mu(f)-\mu^{\Delta}(f)|&=|\mathbb E[f(x^{\tilde \xi}(T))]-\mathbb E[f(y^{\Delta,\bar{\xi}}(T))]|\\
&\leq |\mathbb E[f(x^{\tilde \xi}(T))]-\mathbb E[f(x^{\bar{\xi}}(T))]|+|\mathbb E[f(x^{\bar{ \xi}}(T))]-\mathbb E[f(y^{\Delta,\bar{\xi}}(T))]|\\
&\leq K(\mathbb E[|x^{\tilde \xi}(T)-x^{\bar \xi}(T)|^2])^{\frac12}+K\Delta\\
&\leq Ke^{-KT}+K\Delta,
\end{align*}
which implies the result by letting $T\to\infty.$
\end{proof}

\section{Summary and outlook}
In this chapter, we investigate the numerical  approximation of  the invariant measure of the SFDE \eqref{FF} with the superlinearly growing drift coefficient. We first prove the existence and uniqueness of the invariant measure  of the $\theta$-EM functional solution. Then based on the Malliavin calculus, we analyze the longtime weak convergence of the SFDE \eqref{FF}, and further prove that the convergence rate of the numerical invariant measure is $1$. We remark that in our analysis, Assumption \ref{ass_nablab} is proposed for technical reason to pave a way for making use of the dual property of the Malliavin derivative operator (i.e., \eqref{IBPformula}, which is usually called the integration by parts formula). In fact, there are still some problems associated with these topics that are challenging and far from being well understood:
\begin{itemize}
\item[(\romannumeral1)] How to weaken or even eliminate Assumption \ref{ass_nablab} to include more general class of SFDEs? 
\item[(\romannumeral2)] 
Can we obtain the same weak convergence rate  if the diffusion coefficient  grows superlinearly?
\item[(\romannumeral3)] 
For the SFDE driven by the rough path, does the invariant measure of the numerical method exist  uniquely?
\item[(\romannumeral4)] If (\romannumeral3) holds, does the numerical invariant measure converge to the exact one?
%how to  give the convergence  of the numerical invariant measure?
\end{itemize}

%Especially, the existence and uniqueness of the invariant measure of the numerical method and  the convergence analysis of the numerical invariant measure are still in their infancy. 

%There may have some room for the improvement of this assumption. Besides, it is challenging to weaken the condition on the diffusion coefficient $\sigma$ to allow  superlinearly growing.   We leave these problems as potential work in the future. 

%(\romannumeral1) How to remove Assumption \ref{ass_nablab} to include more general class of SFDEs?\\
%(\romannumeral2) How to establish the weak convergence analysis for the case that $\sigma$ is superlinearly growing? 
%For the SFDE driven by the standard Brownian motion \eqref{FF}, the weak convergence 

%As for the general class of SFDEs, it is interesting and challenging to solve these problems and another approach may be needed. 
%In fact, there are still many problems that remain to be solved. We list several potential aspects  for future work:\\ 
%Moreover, not many results on numerical methods are known for stochastic  functional differential equations driven by rough path. Especially, the existence and uniqueness of the invariant measure of the numerical method and  the convergence analysis of the numerical invariant measure are still in their infancy. 

    \cleardoublepage
     %-----------------------------------------------------------------------
% Beginning of chap1.tex
%-----------------------------------------------------------------------
%
%  AMS-LaTeX sample file for a chapter of a monograph, to be used with
%  an AMS monograph document class.  This is a data file input by
%  chapter.tex.
%
%  Use this file as a model for a chapter; DO NOT START BY removing its
%  contents and filling in your own text.
% 
%%%%%%%%%%%%%%%%%%%%%%%%%%%%%%%%%%%%%%%%%%%%%%%%%%%%%%%%%%%%%%%%%%%%%%%%

%\part{This is a Part Title Sample}

\chapter{Approximation of density function}

In this chapter, we first study the existence of the density function for  the solution of the $\theta$-EM method with multiplicative noise. Then we investigate the convergence of the numerical density function. For the SFDE \eqref{FF} with the  multiplicative noise, the numerical density function is shown to converge to the exact one in $L^1(\mathbb R^d)$  based on the localization argument.   
%Then we show the convergence of the numerical density based on the strong convergence result. 
For the case of the additive noise, the  convergence rate in the point-wise sense of the numerical density function is proved to be $1$ based on  the test-functional-independent weak convergence analysis.
%In order to further give a quantitative estimate  on the optimal convergence rate of the numerical density function, we limit to  the case of the additive noise. ,  
%Under a more strict condition on the diffusion coefficient,

\section{Existence of numerical density function}
In this section, we investigate the existence  of  density functions of  solutions for \eqref{FF} and the 
$\theta$-EM method based on the technique of the Malliavin calculus. 
The deterministic initial value $\xi$ is fixed in this section. 
We focus on the superlinealy growing coefficients case on the finite time horizon.
%Let $T:=N^{\Delta}\Delta$ with $N^{\Delta}\in\mathbb N_+.$ 
%Instead of  the dissipative assumption (i.e., Assumption \ref{a2}), 
 Consider the following assumptions in this section.
\begin{assp}\label{repalceA2} 
Assume that there exists a constant $a_5>0$ such that for any $\phi_1,\phi_2\in\mathcal C^d$, 
\begin{align*}
\langle b(\phi_1)-b(\phi_2),\phi_1(0)-\phi_2(0)\rangle\leq a_5\Big(|\phi_1(0)-\phi_2(0)|^2+\int_{-\tau}^0|\phi_1(r)-\phi_2(r)|^2\mathrm d\nu_2(r)\Big).
\end{align*}
\end{assp}

\begin{assp}\label{Dbpoly1} 
Assume that the coefficient $b$ has continuous  G\^ateaux  derivative, and that there exists a constant $\tilde \beta\geq0$ such that  
\begin{align*}
&|\mathcal Db(\phi_1)\phi_2| \leq K(1+\|\phi_1\|^{\tilde\beta})\|\phi_2\|,
\end{align*}
where $\phi_1,\phi_2\in\mathcal C^d$, $K>0$.
\end{assp}
%In addition, to ensure the existence  of  density functions, we add the following restriction  on the coefficient $\sigma$.
\begin{assp}\label{assp_den}
Assume that the coefficient $\sigma$ has continuous G\^ateaux derivative, and that  there exists some $\sigma_0>0$ such that
\begin{align*}
\inf_{\phi\in\mathcal C^d}\min_{u\in\mathbb R^d,|u|=1}u^{\top}\sigma(\phi)\sigma(\phi)^{\top}u\ge \sigma_0. 
\end{align*} 
\end{assp}
%Under Assumption \ref{assp_den}, according to Assumptions \ref{a1} we have 
%\begin{align*}
%|\mathcal D\sigma(x)y|\leq K(|y(0)|^2+\int_{-\tau}^0|y(r)|^2\mathrm d\nu_1(r)),
%\end{align*}
%where $x,y\in\mathcal C^d$. 
%\subsection{Superlinear case}
Under Assumptions \ref{a1} and \ref{repalceA2}, the existence and uniqueness of the global solution of \eqref{FF} can be obtained by using \cite[Theorem 2]{LMS11}. In addition, the functional solution of \eqref{FF}  has the following estimates. 
\begin{lemma}\label{local_Ma} Let  Assumptions \ref{a1}, \ref{repalceA2}--\ref{Dbpoly1} hold.  Assume that the coefficient $\sigma$ has continuous G\^atueax derivative. Then for any $p\ge 2$ and $T>0$,
\begin{align*}
\mathbb E[\sup_{t\in[0,T]}\|x^{\xi}_t\|^p]\leq K_{T},\quad\quad \sup_{r\in[0,T]}\mathbb E[\sup_{t\in[r,T]}\|D_rx^{\xi}_t\|^p]\leq K_{T}.
\end{align*}
\end{lemma}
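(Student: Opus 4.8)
The plan is to establish the two estimates in \Cref{local_Ma} by standard moment bounds for stochastic functional differential equations, adapted to the finite time horizon, using the monotone-type condition in \cref{repalceA2} for the solution itself and the linear variational equation together with \cref{Dbpoly1} for the Malliavin derivative. Throughout, I would fix $p\ge 2$ and $T>0$ and freely use that the diffusion coefficient is globally Lipschitz (\cref{a1}) and has a continuous G\^ateaux derivative.

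First I would prove the moment bound $\mathbb E[\sup_{t\in[0,T]}\|x^{\xi}_t\|^p]\leq K_T$. Applying the It\^o formula to $|x^{\xi}(t)|^p$ and using \cref{repalceA2} with $\phi_2=\mathbf 0$ (so that $\langle b(\phi),\phi(0)\rangle\le |b(\mathbf 0)||\phi(0)|+a_5(|\phi(0)|^2+\int_{-\tau}^0|\phi(r)|^2\mathrm d\nu_2(r))$), together with the linear growth of $\sigma$ implied by \cref{a1}, yields a differential inequality of the form
\begin{align*}
\mathbb E\Big[\sup_{0\le s\le t}|x^{\xi}(s)|^p\Big]\le K_T\Big(1+\|\xi\|^p+\int_0^t\sup_{-\tau\le u\le s}\mathbb E[|x^{\xi}(u)|^p]\,\mathrm ds\Big),
\end{align*}
where the supremum over the delay window is controlled by $\|\xi\|^p$ plus the running supremum of the solution. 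The stochastic integral term is handled by the Burkholder--Davis--Gundy inequality followed by the Young inequality to absorb the resulting $\sup$-term into the left-hand side, exactly as in the proof of \Cref{p2.1}. The Gr\"onwall inequality then closes the estimate, giving a constant $K_T$ depending on $p$, $T$, and $\|\xi\|$ but finite for each fixed $T$; passing from the pointwise bound on $|x^{\xi}(t)|^p$ to the functional norm $\|x^{\xi}_t\|=\sup_{r\in[-\tau,0]}|x^{\xi}(t+r)|$ is immediate since the window $[t-\tau,t]$ is covered by $[-\tau,T]$.

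Next I would treat the Malliavin derivative. For $r\le t$, $D_rx^{\xi}(t)$ solves the linear variational equation
\begin{align*}
D_rx^{\xi}(t)=\sigma(x^{\xi}_r)+\int_r^t\mathcal Db(x^{\xi}_s)D_rx^{\xi}_s\,\mathrm ds+\int_r^t\mathcal D\sigma(x^{\xi}_s)D_rx^{\xi}_s\,\mathrm dW(s),
\end{align*}
with $D_rx^{\xi}(t)=0$ for $t<r$. Applying the It\^o formula to $|D_rx^{\xi}(t)|^p$, I would use \cref{Dbpoly1} to bound $|\mathcal Db(x^{\xi}_s)D_rx^{\xi}_s|\le K(1+\|x^{\xi}_s\|^{\tilde\beta})\|D_rx^{\xi}_s\|$ and the Lipschitz bound on $\mathcal D\sigma$ (from \cref{a1}) to bound the diffusion term. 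After the Young inequality splits off the polynomial weight $(1+\|x^{\xi}_s\|^{\tilde\beta})$, the BDG inequality handles the martingale part, and the initial datum contributes $\mathbb E[\|\sigma(x^{\xi}_r)\|^p]\le K_T$ via \cref{a1} and the first estimate. The main obstacle is that the coefficient $(1+\|x^{\xi}_s\|^{\tilde\beta})$ multiplying $\|D_rx^{\xi}_s\|^p$ is random and unbounded, so a naive Gr\"onwall argument does not directly apply; I would resolve this either by a H\"older splitting that separates $\sup_{s\le T}\mathbb E[(1+\|x^{\xi}_s\|^{\tilde\beta})^{q}\|D_rx^{\xi}_s\|^{pq'}]$ into factors controlled by the already-established moment bound on $x^{\xi}$ and a higher moment of $D_rx^{\xi}$, or more cleanly by first bounding $\mathbb E[\sup_{t\in[r,T]}\|D_rx^{\xi}_t\|^{p'}]$ for a sufficiently large $p'$ using the stochastic Gr\"onwall lemma (which tolerates an integrable random coefficient, its integrability guaranteed by the first estimate since all moments of $\|x^{\xi}\|$ are finite on $[0,T]$). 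Taking the supremum over $r\in[0,T]$ is uniform because the constant produced by Gr\"onwall depends only on $T$ and on $\sup_{s\le T}\mathbb E[(1+\|x^{\xi}_s\|^{\tilde\beta})^{q}]$, not on the base point $r$; this yields $\sup_{r\in[0,T]}\mathbb E[\sup_{t\in[r,T]}\|D_rx^{\xi}_t\|^p]\leq K_T$ and completes the proof.
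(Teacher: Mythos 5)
Your first estimate is fine and matches the paper's route (the paper simply cites the analogue of \cite[Theorem 2]{LMS11} for it). The gap is in the Malliavin-derivative estimate, and it is exactly the obstacle you flag yourself: after bounding $|\mathcal Db(x^{\xi}_s)D_rx^{\xi}_s|\le K(1+\|x^{\xi}_s\|^{\tilde\beta})\|D_rx^{\xi}_s\|$ via Assumption \ref{Dbpoly1}, neither of your proposed repairs closes the argument. The H\"older splitting is circular: the resulting Gr\"onwall-type inequality for $\mathbb E[\sup_{t}\|D_rx^{\xi}_t\|^{p}]$ involves $\mathbb E[\|D_rx^{\xi}_s\|^{pq'}]^{1/q'}$ for some $q'>1$, so bounding the $p$-th moment requires the $pq'$-th moment, which requires the $pq'^2$-th moment, and the regress never terminates. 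The stochastic Gr\"onwall lemma does not help either: the versions that tolerate a random coefficient $\alpha_s$ in $Z_t\le\int_0^t\alpha_sZ_s\,\mathrm ds+M_t+H_t$ control moments of $Z$ in terms of \emph{exponential} moments $\mathbb E[\exp(c\int_0^T\alpha_s\,\mathrm ds)]$, and the polynomial moment bounds on $\|x^{\xi}_s\|$ supplied by the first estimate do not imply exponential integrability of $\int_0^T(1+\|x^{\xi}_s\|^{\tilde\beta})\,\mathrm ds$; for coefficients of superlinear growth such exponential moments may simply fail.

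The paper avoids the random coefficient altogether, and this is the idea you are missing: the quadratic form $\langle D_rx^{\xi}(s),\mathcal Db(x^{\xi}_s)D_rx^{\xi}_s\rangle$ should be estimated not through the operator-norm bound of Assumption \ref{Dbpoly1}, but through the differentiated form of the monotone-type condition in Assumption \ref{repalceA2}. Applying Assumption \ref{repalceA2} to the pair $x^{\xi}_s+t\phi$ and $x^{\xi}_s$, dividing by $t^2$, and letting $t\to0$ (the same derivation as for \eqref{DB}) gives
\begin{align*}
\langle \mathcal Db(\phi_1)\phi,\phi(0)\rangle\le a_5\Big(|\phi(0)|^2+\int_{-\tau}^0|\phi(u)|^2\,\mathrm d\nu_2(u)\Big)\quad\forall\,\phi_1,\phi\in\mathcal C^d,
\end{align*}
with a \emph{deterministic} constant $a_5$. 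With this bound, together with $|\mathcal D\sigma(\phi_1)\phi|^2\le L\big(|\phi(0)|^2+\int_{-\tau}^0|\phi(u)|^2\,\mathrm d\nu_1(u)\big)$ from Assumption \ref{a1}, the It\^o expansion of $|D_rx^{\xi}(t)|^{2p}$ has drift contributions of the form $K\big(|D_rx^{\xi}(s)|^{2p}+|D_rx^{\xi}(s)|^{2p-2}\int_{-\tau}^0|D_rx^{\xi}_s(u)|^2\,\mathrm d\nu_i(u)\big)$ with constants independent of $x^{\xi}$; the Young inequality removes the mixed powers, the delayed integrals are controlled by $\sup_{r\le u\le s}|D_rx^{\xi}(u)|^{2p}$ since $D_rx^{\xi}(u)=0$ for $u<r$, the Burkholder--Davis--Gundy inequality handles the martingale part, and the ordinary deterministic Gr\"onwall inequality then closes the estimate uniformly in $r\in[0,T]$. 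In this lemma Assumption \ref{Dbpoly1} serves only to guarantee that $\mathcal Db$ exists so that the variational equation makes sense; its polynomial growth bound is never needed.
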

\begin{proof}
 Similar to the proof of \cite[Theorem 2]{LMS11}, we can obtain the first inequality. For the second inequality, when $u\leq t,$ $D_ux^{\xi}(t)$ satisfies 
\begin{align*}
D_ux^{\xi}(t)=\int_u^t\mathcal Db(x^{\xi}_s)D_ux^{\xi}_s\mathrm ds+\int_u^t\mathcal D\sigma(x^{\xi}_s)D_ux^{\xi}_s\mathrm dW(s)+\sigma(x^{\xi}_u)\mathbf 1_{[0,t]}(u).
\end{align*}
By the It\^o formula, we derive
\begin{align}\label{boundxTD1}
&\mathrm d(|D_ux^{\xi}(t)|^{2p})\leq p|D_ux^{\xi}(t)|^{2p-2}\Big(2\<D_ux^{\xi}(t),\mathcal Db(x^{\xi}_t)D_ux^{\xi}_t\>\mathrm dt\nn\\
&\quad+(2p-1)|\mathcal D\sigma(x^{\xi}_t)D_ux^{\xi}_t|^2\mathrm dt
+2\<D_ux^{\xi}(t),\mathcal D\sigma(x^{\xi}_t)D_ux^{\xi}_t\mathrm dW(t)\>\Big).
\end{align}
It follows from  Assumptions \ref{a1} and \ref{repalceA2} that
\begin{align*}
&\quad |D_ux^{\xi}(t)|^{2p}\leq 
|\sigma(x^{\xi}_u)|^{2p} \mathbf 1_{[0,t]}(u)
+K\int_{u}^{t}\Big(|D_ux^{\xi}(s)|^{2p}+|D_ux^{\xi}(s)|^{2p-2}\times \nn\\
&\quad\big(\int_{-\tau}^{0}|D_ux^{\xi}_{s}(r)|^2\mathrm d\nu_1(r)+\int_{-\tau}^{0}|D_ux^{\xi}_{s}(r)|^2\mathrm d\nu_2(r)\big)\Big)\mathrm ds\nn\\
&\quad+
2p\int_{u}^{t}|D_ux^{\xi}(s)|^{2p-2}\big\<D_ux^{\xi}(s),\mathcal D\sigma(x^{\xi}_s)D_ux^{\xi}_s\mathrm dW(s)\big\>.
\end{align*}
Applying the Burkholder--Davis--Gundy inequality, we deduce 
\begin{align*}
&\quad \mathbb E\Big[\sup_{u\leq t\leq T}|D_ux^{\xi}(t)|^{2p}\Big]\nn\\
&\leq \E[|\sigma(x^{\xi}_u)|^{2p}]
+K\int_{u}^{T}\E\Big[|D_ux^{\xi}(s)|^{2p}+|D_ux^{\xi}(s)|^{2p-2}\big(\int_{-\tau}^{0}|D_ux^{\xi}_{s}(r)|^2\mathrm d\nu_1(r)\nn\\
&\quad+\int_{-\tau}^{0}|D_ux^{\xi}_{s}(r)|^2\mathrm d\nu_2(r)\big)\Big)\mathrm ds+K\E\Big[\Big(\int_{u}^{T}|D_ux^{\xi}(s)|^{4p-2}|\mathcal D\sigma(x^{\xi}_s)D_ux^{\xi}_s|^2\mathrm ds\Big)^{\frac{1}{2}}\Big].
\end{align*}
According to Assumption \ref{a1} and  the Young inequality, we arrive at
\begin{align*}
&\quad \mathbb E\Big[\sup_{u\leq t\leq T}|D_ux^{\xi}(t)|^{2p}\Big]\nn\\
&\leq K_{T}
+K\int_{u}^{T}\E\Big[|D_ux^{\xi}(s)|^{2p}+\int_{-\tau}^{0}|D_ux^{\xi}_{s}(r)|^{2p}\mathrm d\nu_1(r)+\!\int_{-\tau}^{0}|D_ux^{\xi}_{s}(r)|^{2p}\mathrm d\nu_2(r)\Big]\mathrm ds\!\nn\\
&\quad+ \frac{1}{2}\mathbb E\Big[\sup_{u\leq t\leq T}|D_ux^{\xi}(t)|^{2p}\Big]+
K\E\Big[\int_{u}^{T}|\mathcal D\sigma(x^{\xi}_s)D_ux^{\xi}_s|^{2p}\mathrm ds\Big]\nn\\
&\leq K_{T}
+K\int_{u}^{T}\E\Big[|D_ux^{\xi}(s)|^{2p}+\int_{-\tau}^{0}|D_ux^{\xi}_{s}(r)|^{2p}\mathrm d\nu_1(r)\nn\\
&\quad+\int_{-\tau}^{0}|D_ux^{\xi}_{s}(r)|^{2p}\mathrm d\nu_2(r)\Big]\mathrm ds+\frac{1}{2}\mathbb E\Big[\sup_{u\leq t\leq T}|D_ux^{\xi}(t)|^{2p}\Big]\nn\\
&\leq K_{T}+ K \int_{u}^{T}\E\Big[\sup_{u\leq r\leq s}|D_ux^{\xi}(r)|^{2p}\Big]\mathrm ds+\frac{1}{2}\mathbb E\Big[\sup_{u\leq t\leq T}|D_ux^{\xi}(t)|^{2p}\Big].
\end{align*}
Using the Gr\"onwall inequality, one has
$$\sup_{0\leq u\leq T}\mathbb E[\sup_{u\leq t\leq T}|D_ux^{\xi}(t)|^{2p}]\leq K_{T}.$$
The proof is completed.

\end{proof}

Based on Lemma \ref{local_Ma}  and $D_r x^{\xi}(t)=0$ for $r>t$, we derive that $x^{\xi}(t)\in \mathbb D^{1,p}$ for all $p\geq1$.
Hence, according to 
\cite[Theorem 2.1.2]{Nualart}, 
in order to obtain the existence of the density function of the solution of \eqref{FF}, it suffices to show that the Malliavin covariance matrix $\gamma^{E}(t):=\int_0^t D_rx^{\xi}(t)(D_rx^{\xi}(t))^{\top}\mathrm dr$  of $x^{\xi}(t)$ for $t\in[0,T]$ is a.s. invertible. To this end, by virtue of \cite[Section 3.3]{cui_CHeq},
%\cite[Lemma 2.3.1]{Nualart}, 
we first  prove that for some $q>0,$ there exists a small number $\epsilon_0(q)$ such that for any $\epsilon<\epsilon_0(q),$ $\sup_{u\in\RR^d, |u|=1}\mathbb P(u^{\top}\gamma^{E}(t)u\leq \epsilon)\leq \epsilon^q,\;t\in[0,T].$ This is stated as follows.
\begin{prop}\label{multi_nonlip}
Under Assumptions \ref{a1}, \ref{repalceA2}--\ref{assp_den}, there exists some $\epsilon_0(q)$ such that for 
$\epsilon<\epsilon_0(q),$ 
it holds that
\begin{align*}
\sup_{u\in\RR^d, |u|=1}\mathbb P(u^{\top}\gamma^{E}(t)u\leq \epsilon)\leq \epsilon 
,\;t\in[0,T].
\end{align*}
\end{prop}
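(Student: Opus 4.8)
The plan is to establish a small-ball estimate for the quadratic form $u^{\top}\gamma^{E}(t)u$ by isolating the contribution of the Malliavin derivative on a short time interval just before $t$, where the diffusion term dominates. First I would fix a unit vector $u\in\mathbb R^d$ and a parameter $\delta\in(0,t]$ (to be chosen comparable to a power of $\epsilon$), and write
\begin{align*}
u^{\top}\gamma^{E}(t)u=\int_0^t|u^{\top}D_rx^{\xi}(t)|^2\mathrm dr\ge \int_{t-\delta}^t|u^{\top}D_rx^{\xi}(t)|^2\mathrm dr.
\end{align*}
On this window I would use the explicit representation $D_rx^{\xi}(t)=\sigma(x^{\xi}_r)+\int_r^t\mathcal Db(x^{\xi}_s)D_rx^{\xi}_s\mathrm ds+\int_r^t\mathcal D\sigma(x^{\xi}_s)D_rx^{\xi}_s\mathrm dW(s)$, so that $D_rx^{\xi}(t)$ equals the leading term $\sigma(x^{\xi}_r)$ plus a drift correction of size $O(\delta)$ and a martingale correction of size $O(\sqrt{\delta})$ in $L^p$. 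The ellipticity hypothesis (Assumption \ref{assp_den}), namely $u^{\top}\sigma(\phi)\sigma(\phi)^{\top}u\ge \sigma_0$, gives $|u^{\top}\sigma(x^{\xi}_r)|^2\ge \sigma_0$, which is the source of the lower bound.

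The key steps, in order, would be as follows. Using the elementary inequality $|a+b|^2\ge \tfrac12|a|^2-|b|^2$, I would split
\begin{align*}
\int_{t-\delta}^t|u^{\top}D_rx^{\xi}(t)|^2\mathrm dr\ge \frac{\sigma_0\delta}{2}-\int_{t-\delta}^t|u^{\top}(D_rx^{\xi}(t)-\sigma(x^{\xi}_r))|^2\mathrm dr,
\end{align*}
reducing the problem to controlling the remainder $R_{\delta}:=\int_{t-\delta}^t|u^{\top}(D_rx^{\xi}(t)-\sigma(x^{\xi}_r))|^2\mathrm dr$. Then on the event $\{u^{\top}\gamma^{E}(t)u\le \epsilon\}$ one necessarily has $R_{\delta}\ge \tfrac{\sigma_0\delta}{2}-\epsilon$, so choosing $\delta$ so that $\tfrac{\sigma_0\delta}{2}-\epsilon\ge \tfrac{\sigma_0\delta}{4}$ (for instance $\delta=c\epsilon$ with suitable $c$, or $\delta$ a fixed power of $\epsilon$) gives $\{u^{\top}\gamma^{E}(t)u\le\epsilon\}\subset\{R_{\delta}\ge \tfrac{\sigma_0\delta}{4}\}$. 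Next I would bound $\mathbb E[R_{\delta}^{q'}]$ for a large integer $q'$: using the Burkholder--Davis--Gundy inequality on the stochastic integral, Assumption \ref{a1} on $\mathcal D\sigma$, Assumption \ref{Dbpoly1} on $\mathcal Db$, and the moment bounds of Lemma \ref{local_Ma} for $\|x^{\xi}_s\|$ and $\|D_rx^{\xi}_s\|$, one obtains $\sup_{t-\delta\le r\le t}\mathbb E[|D_rx^{\xi}(t)-\sigma(x^{\xi}_r)|^{2q'}]\le K_T\,\delta^{q'}$, whence $\mathbb E[R_{\delta}^{q'}]\le K_T\,\delta^{2q'}$. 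Finally, Chebyshev's inequality yields
\begin{align*}
\mathbb P\Big(R_{\delta}\ge \frac{\sigma_0\delta}{4}\Big)\le \Big(\frac{4}{\sigma_0\delta}\Big)^{q'}\mathbb E[R_{\delta}^{q'}]\le K_T\,\delta^{q'},
\end{align*}
and substituting $\delta\sim\epsilon$ produces a bound of the form $K_T\epsilon^{q'}$, which is $\le\epsilon$ once $q'\ge 2$ and $\epsilon$ is small enough; taking the supremum over $u$ (all constants being uniform in $u$) gives the claim.

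The main obstacle I anticipate is the superlinear growth of the drift: because $\mathcal Db$ is only polynomially bounded (Assumption \ref{Dbpoly1}), the drift correction $\int_r^t\mathcal Db(x^{\xi}_s)D_rx^{\xi}_s\mathrm ds$ carries the factor $1+\|x^{\xi}_s\|^{\tilde\beta}$, so its $L^{2q'}$-norm is controlled only after invoking the time-uniform (on $[0,T]$) moment estimates of Lemma \ref{local_Ma} together with H\"older's inequality to separate $\|x^{\xi}_s\|^{\tilde\beta}$ from $\|D_rx^{\xi}_s\|$. One must ensure all these moments are finite and bounded by a constant $K_T$ independent of $r,\delta,u$; this is exactly where Lemma \ref{local_Ma} is essential, since it supplies both $\mathbb E[\sup_{t\in[0,T]}\|x^{\xi}_t\|^p]\le K_T$ and $\sup_r\mathbb E[\sup_{t\ge r}\|D_rx^{\xi}_t\|^p]\le K_T$ for every $p$. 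Once the remainder moment bound $\mathbb E[R_{\delta}^{q'}]\le K_T\delta^{2q'}$ is in hand, the rest is a routine choice of parameters, and the stated estimate with exponent $q=1$ follows directly.
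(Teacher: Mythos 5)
Your proposal is correct, and it reaches the conclusion by a route that differs from the paper's in one essential step. The paper also localizes to a window $[t-\epsilon_1,t]$ with $\epsilon_1=\frac{2\epsilon}{\sigma_0}$ and uses the ellipticity of $\sigma\sigma^{\top}$, but instead of your inequality $|a+b|^2\ge\tfrac12|a|^2-|b|^2$ it expands the square and keeps the \emph{cross term} $2\int_{t-\epsilon_1}^tu^{\top}\big(\int_r^t\mathcal Db(x^{\xi}_s)D_rx^{\xi}_s\mathrm ds+\int_r^t\mathcal D\sigma(x^{\xi}_s)D_rx^{\xi}_s\mathrm dW(s)\big)(\sigma(x^{\xi}_r))^{\top}u\,\mathrm dr$, which it then controls by Chebyshev with exponent $2$, Cauchy--Schwarz, the Burkholder--Davis--Gundy inequality, and Lemma \ref{local_Ma}; this produces a bound of order $K_T\epsilon$ (the martingale part is the dominant contribution). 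Your route instead isolates the \emph{squared} remainder $R_{\delta}$, proves $\mathbb E[R_{\delta}^{q'}]\leq K_T\delta^{2q'}$ for arbitrary $q'$ via the same ingredients (BDG, Assumptions \ref{a1} and \ref{Dbpoly1}, Lemma \ref{local_Ma}, H\"older to separate $\|x^{\xi}_s\|^{\tilde\beta}$ from $\|D_rx^{\xi}_s\|$), and applies Chebyshev with exponent $q'$, yielding $K_T\epsilon^{q'}$. This is genuinely stronger: for $q'\geq 2$ the constant $K_T$ is absorbed automatically once $\epsilon$ is small, whereas the paper's order-$\epsilon$ bound only matches the relaxed small-ball form $\epsilon^{q}$ described in the discussion preceding the proposition; your version also delivers exactly the kind of arbitrary-power estimate one wants when negative moments of $\det\gamma^E$ are needed later (as in the proof of Theorem \ref{den_exist}). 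One caveat, shared by both arguments: the window requires $\delta\leq t$ (respectively $\epsilon_1\leq t$), so the estimate is really uniform only over $t$ bounded away from $0$, or for $\epsilon$ small relative to $t$; since the result is applied at a fixed terminal time $T>0$, this is harmless, but it is worth flagging when quantifying "$t\in[0,T]$".
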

\begin{proof}
For any $r\leq t,$
\begin{align*}
D_rx^{\xi}(t)=\int_r^t\mathcal Db(x^{\xi}_s)D_rx^{\xi}_s\mathrm ds+\int_r^t\mathcal D\sigma(x^{\xi}_s)D_rx^{\xi}_s\mathrm dW(s)+\sigma(x^{\xi}_r)\mathbf 1_{[0,t]}(r). 
\end{align*}
 Fixing  $\epsilon\in(0,1)$ and letting  $\epsilon_1:=\frac{2\epsilon}{\sigma_0},$ we obtain   
  \begin{align*}
 & u^{\top}\gamma^{E}(t)u \ge \int_{t-\epsilon_1}^tu^{\top}D_rx^{\xi}(t)(D_rx^{\xi}(t))^{\top}u\mathrm dr\ge \int_{t-\epsilon_1}^tu^{\top}\sigma(x^{\xi}_r)(\sigma(x^{\xi}_r))^{\top}u\mathrm dr\\
  &\quad+2\int_{t-\epsilon_1}^tu^{\top} \Big(\int_r^t\mathcal Db(x^{\xi}_s)D_rx^{\xi}_s\mathrm ds+\int_r^t\mathcal D\sigma(x^{\xi}_s)D_rx^{\xi}_s\mathrm dW(s)\Big)(\sigma(x^{\xi}_r))^{\top}u\mathrm dr.
  \end{align*}
It follows from Assumption \ref{assp_den} that
\begin{align*}
\int_{t-\epsilon_1}^tu^{\top}\sigma(x^{\xi}_r)\sigma(x^{\xi}_r)^{\top}u\mathrm dr\ge \epsilon_1\sigma_0=2\epsilon.
\end{align*}
This, along with the Chebyshev inequality, the H\"older inequality, and the Bukerholder--Davis--Gundy inequality implies  that for  any $u\in\RR^d$ with $|u|=1,$
\begin{align*}
&\quad \mathbb P(u^{\top}\gamma^{E}(t)u\leq \epsilon)\\
&\leq \mathbb P\Big(2\Big|\!\int_{t-\epsilon_1}^t\!\!\!u^{\top}\! \Big(\!\!\int_r^t\!\!\mathcal Db(x^{\xi}_s)D_rx^{\xi}_s\mathrm ds\!+\!\int_r^t\!\!\mathcal  D\sigma(x^{\xi}_s)D_rx^{\xi}_s\mathrm dW(s)\Big)(\sigma(x^{\xi}_r) )^{\top}u\mathrm dr\Big|\ge \epsilon\Big)\\
%&\leq 4\epsilon^{-2}\mathbb E\Big[\Big|\int_{t-\epsilon_1}^ta^{\top} \Big(\int_r^t\mathcal Db^R(x^{\xi,R}_s)D_rx^{\xi,R}_s\mathrm ds+\int_r^t\mathcal D\sigma^R(x^{\xi,R}_s)D_rx^{\xi,R}_s\mathrm dW(s)\Big)\sigma^R(x^{\xi,R}_r)^{\top}a\mathrm dr\Big|^2\Big]\\
&\leq 8\epsilon^{-2}\mathbb E\Big[\Big|\int_{t-\epsilon_1}^tu^{\top} \int_{r}^t\mathcal Db(x^{\xi}_s)D_rx^{\xi}_s\mathrm ds(\sigma(x^{\xi}_r))^{\top}u\mathrm dr\Big|^2\Big]\\
&\quad +8\epsilon^{-2}\mathbb E\Big[\Big|\int_{t-\epsilon_1}^tu^{\top} \int_r^t\mathcal D\sigma(x^{\xi}_s)D_rx^{\xi}_s\mathrm dW(s)(\sigma(x^{\xi}_r))^{\top}u\mathrm dr\Big|^2 \Big]\\
&\leq 8\epsilon^{-2}\epsilon_1\int_{t-\epsilon_1}^{t}\E\Big[\sup_{r\leq s\leq T}|\mathcal Db(x^{\xi}_s)D_rx^{\xi}_s|^2|\sigma(x^{\xi}_r)|^2\Big](t-r)^2\mathrm dr\nn\\
&\quad+8\epsilon^{-2}\epsilon_1\int_{t-\epsilon_1}^t \Big(\E\Big[|\int_r^t\mathcal D\sigma(x^{\xi}_s)D_rx^{\xi}_s\mathrm dW(s)|^4\Big]\Big)^{\frac12}\Big(\E[|\sigma(x^{\xi}_r)|^4]\Big)^{\frac{1}{2}}\mathrm dr\nn\\
&\leq K\epsilon^2\Big(\sup_{0\leq r\leq T}\mathbb E\Big[\sup_{r\leq s\leq T}|\mathcal Db(x^{\xi}_s)D_rx^{\xi}_s|^4\Big]\Big)^{\frac12}\Big(\sup_{0\leq r\leq T}\mathbb E[|\sigma(x^{\xi}_r)|^4]\Big)^{\frac12}\\
&\quad+K\epsilon\Big(\sup_{0\leq r\leq T}\mathbb E\Big[\sup_{r\leq s\leq T}|\mathcal D\sigma(x^{\xi}_s)D_rx^{\xi}_s|^4\Big]\Big)^{\frac12}\Big(\sup_{0\leq r\leq T}\mathbb E[|\sigma(x^{\xi}_r)|^4]\Big)^{\frac12}\\
%&\leq K_{T}\epsilon^2+K_{T}\epsilon^{-1}\int_{t-\epsilon_1}^t\E\big[\int_{t-\epsilon_1}^t 
%\big|\mathcal D\sigma(x^{\xi}_s)D_rx^{\xi}_s\big|^2\mathrm ds\big]\mathbb E\big[|\sigma(x^{\xi}_r)\mathbb I_{[0,t]}(r)|^2\big]
%\mathrm dr\\
&\leq \epsilon K_{T}, 
\end{align*}
where in the last inequality we used $$|\mathcal Db(x^{\xi}_s)D_rx^{\xi}_s|\leq K (1+\|x^{\xi}_s\|^{\tilde \beta})\|D_rx^{\xi}_s\|,\quad |\mathcal D\sigma(x^{\xi}_s)D_rx^{\xi}_s|\leq K\|D_rx^{\xi}_s\|,$$  Assumption \ref{a1}, and Lemma \ref{local_Ma}. 
The proof is finished.  
\end{proof}
%Let $\gamma^E(t)$ denote the Malliavin covariance matrix of the solution of \eqref{FF}. 
The existence of the density function of the solution of \eqref{FF} is stated as follows. 
\begin{thm}\label{density_exact}
  Under Assumptions \ref{a1}, \ref{repalceA2}--\ref{assp_den}, for any $T>0$, the law of $x^{\xi}(T)$ admits a density function.
\end{thm}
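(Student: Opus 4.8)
The plan is to invoke the standard Malliavin--calculus criterion for absolute continuity already cited in the excerpt, namely \cite[Theorem 2.1.2]{Nualart}: a random vector $F=(F^1,\dots,F^d)$ whose components lie in $\mathbb D^{1,p}$ and whose Malliavin covariance matrix is invertible almost surely has a law absolutely continuous with respect to Lebesgue measure. Thus the theorem reduces to two verifications for $F=x^{\xi}(T)$: first, membership in $\mathbb D^{1,p}$; second, almost sure invertibility of the covariance matrix $\gamma^{E}(T)=\int_0^T D_rx^{\xi}(T)(D_rx^{\xi}(T))^{\top}\mathrm dr$. The first is immediate from Lemma \ref{local_Ma}: since $\sup_{r\in[0,T]}\mathbb E[\sup_{t\in[r,T]}\|D_rx^{\xi}_t\|^p]\le K_T<\infty$ and $D_rx^{\xi}(T)=0$ for $r>T$, the process $D_{\cdot}x^{\xi}(T)$ belongs to $L^p(\Omega;H)$ for every $p\ge 2$, so each component $x^{\xi}(T)\in\mathbb D^{1,p}$.

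The substantive step is the almost sure invertibility of $\gamma^{E}(T)$, i.e.\ $\det\gamma^{E}(T)>0$ a.s., equivalently $\lambda_{\min}(\gamma^{E}(T))=\min_{|u|=1}u^{\top}\gamma^{E}(T)u>0$ a.s. This is where Proposition \ref{multi_nonlip} enters, but it must be upgraded: the proposition controls $\mathbb P(u^{\top}\gamma^{E}(T)u\le\epsilon)$ only for each \emph{fixed} direction $u$, whereas invertibility concerns the minimum over the whole sphere $S^{d-1}$, and a fixed-direction bound does not by itself exclude a nontrivial kernel. My plan is to close this gap by combining two ingredients. First, the computation in the proof of Proposition \ref{multi_nonlip} (Chebyshev together with the Burkholder--Davis--Gundy inequality, applied near the endpoint $T$ where Assumption \ref{assp_den} gives $\int_{T-\epsilon_1}^T u^{\top}\sigma(x^{\xi}_r)\sigma(x^{\xi}_r)^{\top}u\,\mathrm dr\ge\sigma_0\epsilon_1$) can be run with the $2q$-th moment in place of the second moment; the higher-moment bounds of $D_rx^{\xi}$ from Lemma \ref{local_Ma} then strengthen the estimate to $\sup_{|u|=1}\mathbb P(u^{\top}\gamma^{E}(T)u\le\epsilon)\le\epsilon^{q}$ for any prescribed $q$. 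Second, Lemma \ref{local_Ma} gives $\mathbb E[\|\gamma^{E}(T)\|^{p}]<\infty$ for all $p$, since $\gamma^{E}(T)$ is a Gram matrix of $D_{\cdot}x^{\xi}(T)$.

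With these in hand I would either quote the classical conversion lemma (Nualart's Lemma~2.3.1), which turns a uniform small-ball estimate of order $\epsilon^{q}$ for all $q$ plus operator-norm moments into negative moments of $\det\gamma^{E}(T)$ and hence into a.s.\ invertibility, or carry out the covering argument by hand: take a $\delta$-net $\{u_j\}$ of $S^{d-1}$ with $\#\{u_j\}\lesssim\delta^{-(d-1)}$, use $|u^{\top}\gamma^{E}(T)u-u_j^{\top}\gamma^{E}(T)u_j|\le 2\delta\|\gamma^{E}(T)\|$ to get $\lambda_{\min}(\gamma^{E}(T))\ge\min_j u_j^{\top}\gamma^{E}(T)u_j-2\delta\|\gamma^{E}(T)\|$, and then combine a union bound over the net with Markov's inequality for $\|\gamma^{E}(T)\|$; balancing $\delta$ against $\epsilon$ and $q$ yields $\mathbb P(\lambda_{\min}(\gamma^{E}(T))\le\epsilon)\to0$ as $\epsilon\to0$, whence $\lambda_{\min}(\gamma^{E}(T))>0$ a.s. Once invertibility is established, \cite[Theorem 2.1.2]{Nualart} delivers the absolutely continuous law of $x^{\xi}(T)$, completing the proof.

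I expect the main obstacle to be precisely this passage from the directional estimate of Proposition \ref{multi_nonlip} to nondegeneracy of the full matrix: the quantitative balancing of the net cardinality $\delta^{-(d-1)}$ against the small-ball exponent $q$ forces one to use a large $q$, which in turn requires the superlinear growth of $b$ to be absorbed through the uniform-in-time high-order moment bounds on $D_rx^{\xi}$ (and, in the strengthened proof of Proposition \ref{multi_nonlip}, on $\mathcal Db(x^{\xi}_s)D_rx^{\xi}_s$ via Assumption \ref{Dbpoly1}). The ellipticity Assumption \ref{assp_den} is what makes the leading term positive; everything else is controlling the remainder uniformly in the direction $u$.
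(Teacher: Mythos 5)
Your proposal is correct and follows the same skeleton as the paper's own proof --- Lemma \ref{local_Ma} gives $x^{\xi}(T)\in\mathbb D^{1,p}$, Proposition \ref{multi_nonlip} supplies the directional small-ball estimate, and \cite[Theorem 2.1.2]{Nualart} concludes --- but on the key step you go further than the paper does. The paper's proof deduces, for each \emph{fixed} unit vector $u$, that $\mathbb P(u^{\top}\gamma^E(T)u=0)\leq\epsilon$ for every $\epsilon\in(0,1)$, hence $u^{\top}\gamma^E(T)u>0$ a.s., and then simply asserts that this shows the a.s.\ invertibility of $\gamma^E(T)$; it does not carry out the passage from the fixed-direction statement to $\lambda_{\min}(\gamma^E(T))>0$ a.s.\ that you rightly single out as the substantive issue (the null set depends on $u$, and positivity along countably many directions does not by itself force positivity of the infimum over the sphere). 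Your plan closes exactly this: rerunning the proof of Proposition \ref{multi_nonlip} with $2q$-th moments in Chebyshev and Burkholder--Davis--Gundy (which the uniform bounds of Lemma \ref{local_Ma} and Assumption \ref{Dbpoly1} support) upgrades the estimate to $\sup_{|u|=1}\mathbb P\big(u^{\top}\gamma^E(T)u\leq\epsilon\big)\leq\epsilon^{q}$ for every $q$, after which either \cite[Lemma 2.3.1]{Nualart} or your $\delta$-net argument balanced against moments of $\|\gamma^E(T)\|$ yields nondegeneracy; note that the unused parameter $\epsilon_0(q)$ in the statement of Proposition \ref{multi_nonlip} suggests this strengthened form is what was intended there. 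In short: same route as the paper, with the covering/negative-moment step made explicit where the paper elides it, so your version is, if anything, the more complete argument.
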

\begin{proof}
Based on Proposition \ref{multi_nonlip},  we have
\begin{align*}
\sup_{u\in\RR^d,|u|=1}\mathbb P(u^{\top}\gamma^E(T)u=0) 
\leq \sup_{u\in\RR^d,|u|=1}\mathbb P(u^{\top}\gamma^E(T)u\leq \epsilon)\leq \epsilon\quad\forall \epsilon\in(0,1),
\end{align*}
which shows the a.s. invertibility of $\gamma^E(T)$ due to the arbitrary of $\epsilon$. This, together with $x^{\xi}(T)\in\mathbb D^{1,p}$ and \cite[Theorem 2.1.2]{Nualart}, completes the proof. 
\end{proof}

Now we are in the position to present the existence of the density function of the $\theta$-EM solution. We first show the estimates on the $\theta$-EM  functional solution.

\begin{lemma}\label{multi_nonlip_nu}
Let Assumptions \ref{a1}, \ref{repalceA2}, and \ref{Dbpoly1} hold.  Assume that the  coefficient $\sigma$ has continuous G\^atueax derivative.  Then for $\Delta\in(0, \frac{1}{4\theta a_5})$,  $p\geq2$, and $T>0$, 
\begin{align*}
\mathbb E\Big[\sup_{0\leq k\Delta\leq T}\|y^{\xi,\Delta}_{t_k}\|^p\Big]\leq K_{T},\quad\quad\sup_{0\leq u\leq T}\mathbb E\Big[\sup_{u\leq k\Delta\leq T}\|D_uy^{\xi,\Delta}_{t_k}\|^p\Big]\leq K_{T}.
\end{align*}
\end{lemma}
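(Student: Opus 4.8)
The plan is to mirror the two-part strategy already developed for the exact functional solution in \cref{local_Ma}, but to carry it out in the discrete setting where the auxiliary split process $z^{\xi,\Delta}$ and the relation $z^{\xi,\Delta}(t_k)=y^{\xi,\Delta}(t_k)-\theta b(y^{\xi,\Delta}_{t_k})\Delta$ are the natural objects. For the first bound, on the moment of $\sup_{0\le k\Delta\le T}\|y^{\xi,\Delta}_{t_k}\|^p$, I would not reuse the dissipativity-based \cref{p2.1,l4.2} directly, since here \cref{repalceA2} is only a one-sided Lipschitz condition with a positive constant $a_5$ (rather than a strictly dissipative one with $a_1>a_2+L$). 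Instead, for a finite horizon $T$ it suffices to run the same recursion as in the proof of \cref{p2.1}: expanding $|z(t_{k+1})|^2$ via \eqref{ST}, using \cref{repalceA2} in place of \eqref{F2r1+1} to control $\langle y(t_k),b_k\rangle$, bounding $|\sigma_k|^2$ by \cref{a1}, and then passing to the $2p$-th power and applying the discrete Gr\"onwall inequality. The constant will now grow like $e^{K_aT}$ with $a_5$ entering the exponent, which is acceptable since we only claim a $T$-dependent bound $K_T$. The condition $\Delta\in(0,\tfrac{1}{4\theta a_5})$ is precisely what guarantees the quadratic term $\theta^2\Delta^2|b_k|^2$ and the cross term can be absorbed so that the map $y(t_k)\mapsto z(t_k)$ remains invertible with controlled constants; I would use it, exactly as in \eqref{9p2.1}, to convert bounds on $z$ into bounds on $y$.

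For the second bound, on $D_uy^{\xi,\Delta}_{t_k}$, I would first write the Malliavin derivative of the scheme. Differentiating \eqref{thEM} (equivalently \eqref{ST}) gives, for $t_k>u$,
\begin{align*}
D_uz^{\xi,\Delta}(t_{k+1})&=D_uz^{\xi,\Delta}(t_k)+\mathcal Db(y^{\xi,\Delta}_{t_k})D_uy^{\xi,\Delta}_{t_k}\Delta\\
&\quad+\mathcal D\sigma(y^{\xi,\Delta}_{t_k})D_uy^{\xi,\Delta}_{t_k}\,\delta W_k+\sigma(y^{\xi,\Delta}_{t_k})\mathbf 1_{[t_u]}(\cdot),
\end{align*}
with the jump term $\sigma(y^{\xi,\Delta}_{\lfloor u\rfloor})$ appearing at the step containing $u$, together with the relation $D_uz^{\xi,\Delta}(t_k)=D_uy^{\xi,\Delta}(t_k)-\theta\Delta\,\mathcal Db(y^{\xi,\Delta}_{t_k})D_uy^{\xi,\Delta}_{t_k}$ analogous to \eqref{re_d_yZ}. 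This is a linear (random-coefficient) discrete functional equation in $D_uy^{\xi,\Delta}$, so I would square, take $2p$-th powers, and run the same martingale-plus-Gr\"onwall recursion as in Step 1, now using \cref{Dbpoly1} and \cref{local_Ma}/the first bound of this lemma to handle the coefficient $\mathcal Db(y^{\xi,\Delta}_{t_k})D_uy^{\xi,\Delta}_{t_k}$: the factor $(1+\|y^{\xi,\Delta}_{t_k}\|^{\tilde\beta})$ is controlled by the already-established moment bound via H\"older, and the diffusion term is handled by the discrete Burkholder--Davis--Gundy inequality exactly as in the estimates of $I_2,I_3$ in \cref{p2.1}. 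The initial datum for the derivative at the step containing $u$ is $\sigma(y^{\xi,\Delta}_{\lfloor u\rfloor})$, whose $p$-th moment is uniformly bounded by \cref{a1} and the first estimate, producing the $T$-dependent constant $K_T$.

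The main obstacle I anticipate is the interplay between the implicit term and the superlinear growth when passing from $z$ to $y$ in the derivative equation. Because $\mathcal Db$ has only polynomial (not bounded) growth under \cref{Dbpoly1}, the term $\theta\Delta\,\mathcal Db(y^{\xi,\Delta}_{t_k})D_uy^{\xi,\Delta}_{t_k}$ that relates $D_uz$ and $D_uy$ is not automatically a small perturbation; one must use the restriction $\Delta<\tfrac{1}{4\theta a_5}$ and the one-sided structure of \cref{repalceA2} (which via differentiation yields $\langle\mathcal Db(\phi)\psi,\psi(0)\rangle\le a_5(|\psi(0)|^2+\int_{-\tau}^0|\psi(r)|^2\mathrm d\nu_2(r))$, analogous to \eqref{DB}) to guarantee invertibility of the implicit map and to keep the recursion coefficient below $1+K\Delta$. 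The delay makes this delicate: the integral terms $\int_{-\tau}^0|D_uy^{\xi,\Delta}_{t_k}(r)|^{2p}\mathrm d\nu_i(r)$ must be reindexed and bounded by past values of $\sup_{u\le l\Delta\le k\Delta}|D_uy^{\xi,\Delta}(t_l)|^{2p}$, exactly as in \eqref{5p2.1}, before the discrete Gr\"onwall step closes the estimate. Since the claim is only over a finite horizon, I would not seek the sharp time-independent decay of \cref{Dy_esti}; an exponentially-in-$T$ growing constant is permissible, which simplifies the argument considerably.
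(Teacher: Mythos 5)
Your proposal follows essentially the same route as the paper's proof: the first bound is obtained by rerunning the recursion of \cref{p2.1} with \cref{repalceA2} in place of the dissipativity condition (accepting constants that grow exponentially in $T$), and the second by differentiating the split scheme, using the relation $D_uz^{\xi,\Delta}(t_k)=D_uy^{\xi,\Delta}(t_k)-\theta\Delta\,\mathcal Db(y^{\xi,\Delta}_{t_k})D_uy^{\xi,\Delta}_{t_k}$, the reindexing of the delay integrals as in \eqref{5p2.1}, the Burkholder--Davis--Gundy inequality for the supremum, and a discrete Gr\"onwall argument, with $\Delta<\frac{1}{4\theta a_5}$ entering exactly where you place it, namely in converting $D_uz$-bounds into $D_uy$-bounds via the differentiated one-sided condition (this is \eqref{Dyle+3} in the paper).

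The one sub-step that would fail as written is your plan to control the coefficient $\mathcal Db(y^{\xi,\Delta}_{t_k})D_uy^{\xi,\Delta}_{t_k}$ through \cref{Dbpoly1} and H\"older, using the already-established moments of $y^{\xi,\Delta}$. Any such splitting produces a factor $\big(\mathbb E[\|D_uy^{\xi,\Delta}_{t_k}\|^{2pq'}]\big)^{1/q'}$ with $q'>1$, i.e.\ a strictly higher moment of the Malliavin derivative on the right-hand side, so the Gr\"onwall recursion for the $2p$-th moment never closes (and an induction over moment orders runs upward, not downward). The paper never bounds $\mathcal Db(y^{\xi,\Delta}_{t_k})D_uy^{\xi,\Delta}_{t_k}$ pointwise: in the expansion of $|D_uz^{\xi,\Delta}(t_{k+1})|^2$, the quadratic term $\Delta^2|\mathcal Db(y^{\xi,\Delta}_{t_k})D_uy^{\xi,\Delta}_{t_k}|^2$ carries the prefactor $1-2\theta\leq 0$ once combined with the cross term coming from the implicit relation, so it can be discarded; every remaining occurrence of $\Delta\,\mathcal Db(y^{\xi,\Delta}_{t_k})D_uy^{\xi,\Delta}_{t_k}$ is rewritten as $\frac{1}{\theta}\big(D_uy^{\xi,\Delta}(t_k)-D_uz^{\xi,\Delta}(t_k)\big)$; and the drift inner product is estimated solely by the differentiated form of \cref{repalceA2}. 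This is precisely the mechanism you describe in your ``obstacle'' paragraph, so your proof goes through once that mechanism replaces, rather than supplements, the H\"older step throughout the recursion.
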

\begin{proof}
The proof of $\mathbb E[\sup_{0\leq k\Delta\leq T}\|y^{\xi,\Delta}_{t_k}\|^p]\leq K_{T}$ is similar to that of Proposition \ref{p2.1} and thus is  omitted. Below we prove  $\sup_{0\leq u\leq T}\mathbb E[\sup_{u\leq k\Delta\leq T}\|D_uy^{\xi,\Delta}_{t_k}\|^p]\leq K_{T}$.
For any $k\in\mathbb N$ with $0\leq k\Delta\leq T$ and $u\in[0,T]$, we have
\begin{align*}
&|D_u z^{\xi,\Delta}(t_{k+1})|^2=|D_u z^{\xi,\Delta}(t_k)|^2\!+\!|\mathcal Db(y^{\xi,\Delta}_{t_k})D_u y^{\xi,\Delta}_{t_k}|^2\Delta^2\!+\!|\mathcal D\sigma(y^{\xi,\Delta}_{t_k})D_u y^{\xi,\Delta}_{t_k}\delta W_k|^2\nn\\
&\quad+|\sigma(y^{\xi,\Delta}_{t_k})\mathbf 1_{[t_{k},t_{k+1}]}(u)|^2
+2\big\<D_u y^{\xi,\Delta}(t_k)-\theta\Delta\mathcal Db(y^{\xi,\Delta}_{t_k})D_u y^{\xi,\Delta}_{t_k}, \mathcal Db(y^{\xi,\Delta}_{t_k})D_u y^{\xi,\Delta}_{t_k}\Delta\>\nn\\
&\quad+2\big\<D_u z^{\xi,\Delta}(t_k)+\mathcal Db(y^{\xi,\Delta}_{t_k})D_u y^{\xi,\Delta}_{t_k}\Delta,\sigma(y^{\xi,\Delta}_{t_k})\mathbf 1_{[t_{k},t_{k+1}]}(u)\>
+\mathcal {\tilde M}_{k}\nn\\
&\leq|D_u z^{\xi,\Delta}(t_k)|^2+|\mathcal D\sigma(y^{\xi,\Delta}_{t_k})D_u y^{\xi,\Delta}_{t_k}\delta W_k|^2
+|\sigma(y^{\xi,\Delta}_{t_k})\mathbf 1_{[t_{k},t_{k+1}]}(u)|^2+2\big\<D_u y^{\xi,\Delta}(t_k),\nn\\
&\quad \mathcal Db(y^{\xi,\Delta}_{t_k})D_u y^{\xi,\Delta}_{t_k}\Delta\big\>
+\big\<\frac{2(\theta-1)}{\theta}D_u z^{\xi,\Delta}(t_k)+\frac{2}{\theta}D_u y^{\xi,\Delta}(t_k),\sigma(y^{\xi,\Delta}_{t_k})\mathbf 1_{[t_{k},t_{k+1}]}(u)\big\>,
\end{align*}
 where 
 \begin{align*}
 \mathcal {\tilde M}_{k}:=2\big\<D_u z^{\xi,\Delta}(t_k)\!\!+\!\!\mathcal Db(y^{\xi,\Delta}_{t_k})D_u y^{\xi,\Delta}_{t_k}\Delta\!\!+\!\!\sigma(y^{\xi,\Delta}_{t_k})\mathbf 1_{[t_{k},t_{k+1}]}(u),  \mathcal D\sigma(y^{\xi,\Delta}_{t_k})D_u y^{\xi,\Delta}_{t_k}\delta W_k\big\>.
 \end{align*}
 By Assumptions \ref{a1} and \ref{repalceA2}, one has that for any $\phi,\phi_1\in \mathcal C^d$, 
$$\<\phi(0),\mathcal Db(\phi_1) \phi\>\vee |\mathcal Db(\phi_1) \phi|^2\leq K\big(|\phi(0)|^2+\int_{-\tau}^{0}|\phi(r)|^2\mathrm d \nu_1(r)+\int_{-\tau}^{0}|\phi(r)|^2\mathrm d \nu_2(r)\big).$$ 
 This implies that for any $p\in\mathbb N_+$,
 \begin{align}\label{boundTD1}
 |D_u z^{\xi,\Delta}(t_{k+1})|^{2p}=&|D_u z^{\xi,\Delta}(t_k)|^{2p}+\sum_{l=1}^{p}C_{p}^{l}I_{k,l}
 =\sum_{i=0}^{k}\sum_{l=1}^{p}C_{p}^{l}I_{i,l},
 \end{align}
 where 
 \begin{align*}
 I_{i,l}&:=\,|D_u z^{\xi,\Delta}(t_i)|^{2(p-l)}\Big(|\mathcal D\sigma(y^{\xi,\Delta}_{t_i})D_u y^{\xi,\Delta}_{t_i}\delta W_i|^2
\!\!+\!\!|\sigma(y^{\xi,\Delta}_{t_i})\mathbf 1_{[t_{i},t_{i+1}]}(u)|^2\nn\\
&\quad+K\Delta \big(|D_{u}y^{\xi,\Delta}(t_i)|^{2}
+\int_{-\tau}^{0}|D_{u}y^{\xi,\Delta}_{t_i}(r)|^{2}\mathrm d \nu_2(r)\big)
+\big\<\frac{2(\theta-1)}{\theta}D_u z^{\xi,\Delta}(t_i)\nn\\
&\quad+\frac{2}{\theta}D_u y^{\xi,\Delta}(t_i),\sigma(y^{\xi,\Delta}_{t_i})\mathbf 1_{[t_{i},t_{i+1}]}(u)\big\>+\mathcal {\tilde M}_{i}\Big)^{l}.
 \end{align*}
For the term $I_{i,1}$, by  the Young inequality and the property of the conditional expectation,
we obtain
 \begin{align}\label{boundTD3}
&\E[I_{i,1}]\leq \E\Big[ |D_u z^{\xi,\Delta}(t_i)|^{2(p-1)} \Big(|\mathcal D\sigma(y^{\xi,\Delta}_{t_i})D_u y^{\xi,\Delta}_{t_i}|^2\Delta+|\sigma(y^{\xi,\Delta}_{t_i})\mathbf 1_{[t_{i},t_{i+1}]}(u)|^2\nn\\
&\quad+K\Delta \big(|D_{u}y^{\xi,\Delta}(t_i)|^{2}+\int_{-\tau}^{0}\!|D_{u}y^{\xi,\Delta}_{t_i}(r)|^{2}\mathrm d \nu_2(r)\big)\nn\\
&\quad+\big\<\frac{2(\theta\!-\!1)}{\theta}D_u z^{\xi,\Delta}(t_i)\!+\!\frac{2}{\theta}D_u y^{\xi,\Delta}(t_i),\sigma(y^{\xi,\Delta}_{t_i})\mathbf 1_{[t_{i},t_{i+1}]}(u)\big\>\Big)\Big]\nn\\
&\leq\big(K\Delta+\mathbf 1_{[t_{i},t_{i+1}]}(u)\big)\E\big[|D_u z^{\xi,\Delta}(t_i)|^{2p}+|D_{u}y^{\xi,\Delta}(t_i)|^{2p}\big]\nn\\
&\quad+\!K\Delta\E\Big[\!\int_{-\tau}^{0}\!|D_{u}y^{\xi,\Delta}_{t_i}(r)|^{2p}\mathrm d \nu_1(r)\!
+\!\!\int_{-\tau}^{0}\!|D_{u}y^{\xi,\Delta}_{t_i}(r)|^{2p}\mathrm d \nu_2(r)\!\Big]\!+\!K_{T}\mathbf 1_{[t_{i},t_{i+1}]}(u),
\end{align}
where in the last inequality we used Assumptions \ref{a1} and  $\E[\sup_{0\leq t_{i}\leq T}|y^{\xi,\Delta}(t_i)|^{2p}]\leq K_{T}$.
For the term $I_{i,2}$, we have
\begin{align}\label{boundTD4}
&\E[I_{i,2}]=\E\Big[|D_u z^{\xi,\Delta}(t_i)|^{2(p-2)}\Big(|\mathcal D\sigma(y^{\xi,\Delta}_{t_i})D_u y^{\xi,\Delta}_{t_i}\delta W_k|^2+|\sigma(y^{\xi,\Delta}_{t_i})\mathbf 1_{[t_{i},t_{i+1}]}(u)|^2\nn\\
&\quad +K\Delta \big(|D_{u}y^{\xi,\Delta}(t_i)|^{2}+\int_{-\tau}^{0}|D_{u}y^{\xi,\Delta}_{t_i}(r)|^{2}\mathrm d \nu_2(r)\big)+\big\<\frac{2(\theta-1)}{\theta}D_u z^{\xi,\Delta}(t_i)\nn\\
&\quad+\frac{2}{\theta}D_u y^{\xi,\Delta}(t_i),\sigma(y^{\xi,\Delta}_{t_i})\mathbf 1_{[t_{i},t_{i+1}]}(u)\big\>
\Big)^2+|D_u z^{\xi,\Delta}(t_i)|^{2(p-2)}\mathcal {\tilde M}_{i}^2\Big]\nn\\
&\leq\big(K\Delta+\mathbf 1_{[t_{i},t_{i+1}]}(u)\big)\E\big[|D_u z^{\xi,\Delta}(t_i)|^{2p}+|D_{u}y^{\xi,\Delta}(t_i)|^{2p}\big]\nn\\
&\quad+\!K\Delta \E\Big[\!\int_{-\tau}^{0}\!|D_{u}y^{\xi,\Delta}_{t_i}(r)|^{2p}\mathrm d \nu_1(r)
\!+\!\int_{-\tau}^{0}\!|D_{u}y^{\xi,\Delta}_{t_i}(r)|^{2p}\mathrm d \nu_2(r)\Big]\!+\!K_{T}\mathbf 1_{[t_{i},t_{i+1}]}(u).
 \end{align}
Similarly, we deduce that for any $l\in\{3,\ldots, p\}$,
\begin{align}\label{boundTD5}
&\quad\E[|I_{i,l}|]\leq\big(K\Delta+\mathbf 1_{[t_{i},t_{i+1}]}(u)\big)\E\big[|D_u z^{\xi,\Delta}(t_i)|^{2p}+|D_{u}y^{\xi,\Delta}(t_i)|^{2p}\big]\nn\\
&\quad+K\Delta \E\Big[\int_{-\tau}^{0}|D_{u}y^{\xi,\Delta}_{t_i}(r)|^{2p}\mathrm d \nu_1(r)
+\int_{-\tau}^{0}|D_{u}y^{\xi,\Delta}_{t_i}(r)|^{2p}\mathrm d \nu_2(r)\Big]+K_{T}\mathbf 1_{[t_{i},t_{i+1}]}(u).
\end{align}
Inserting \eqref{boundTD3}--\eqref{boundTD5} into \eqref{boundTD1}, we arrive at
%\begin{align}
%&\quad \E[|D_u z^{\xi,\Delta}(t_{k+1})|^{2p}]
%\leq \big(1+K\Delta+\mathbf 1_{[t_{k},t_{k+1}]}(u)\big)\E\big[|D_u z^{\xi,\Delta}(t_k)|^{2p}\big]\nn\\
%&\quad+\big(K\Delta+\mathbf 1_{[t_{k},t_{k+1}]}(u)\big)\E\big[|D_{u}y^{\xi,\Delta}(t_k)|^{2p}\big]+K_{T}\sum_{i=0}^{k}\mathbf 1_{[t_{i},t_{i+1}]}(u)\nn\\
%&\quad+K\Delta \E\Big[\int_{-\tau}^{0}|D_{u}y^{\xi,\Delta}_{t_k}(r)|^{2p}\mathrm d \nu_1(r)
%+\int_{-\tau}^{0}|D_{u}y^{\xi,\Delta}_{t_k}(r)|^{2p}\mathrm d \nu_2(r)\Big].
% \end{align}
%Hence,
\begin{align}\label{boundTD6}
&\quad \E[|D_u z^{\xi,\Delta}(t_{k+1})|^{2p}]\nn\\
&\leq K\sum_{i=0}^{k}\big(\Delta+\mathbf 1_{[t_{i},t_{i+1}]}(u)\big)\E\big[|D_u z^{\xi,\Delta}(t_i)|^{2p}+|D_{u}y^{\xi,\Delta}(t_i)|^{2p}\big]+K_{T}\sum_{i=0}^{k}\mathbf 1_{[t_{i},t_{i+1}]}(u)\nn\\
&\quad+K\Delta \sum_{i=0}^{k}\E\Big[\int_{-\tau}^{0}|D_{u}y^{\xi,\Delta}_{t_i}(r)|^{2p}\mathrm d \nu_1(r)\!+\!\int_{-\tau}^{0}|D_{u}y^{\xi,\Delta}_{t_i}(r)|^{2p}\mathrm d \nu_2(r)\Big]\nn\\
&\leq K\sum_{i=0}^{k}\big(\Delta+\mathbf 1_{[t_{i},t_{i+1}]}(u)\big)\E\big[|D_u z^{\xi,\Delta}(t_i)|^{2p}+|D_{u}y^{\xi,\Delta}(t_i)|^{2p}\big]+K_{T}.
 \end{align}
 Here, we used
\begin{align*}
\sum_{i=0}^{k}\int_{-\tau}^{0}|D_{u}y^{\xi,\Delta}_{t_i}(r)|^{2p}\mathrm d \nu_{j}(r)
\leq \sum_{i=0}^{k}|D_{u}y^{\xi,\Delta}(t_i)|^{2p},\quad j=1,2,
\end{align*}
whose proof is similar to \eqref{5p2.1}.
It follows from $$D_uz^{\xi,\Delta}(t_{i}) =D_uy^{\xi,\Delta}(t_{i})-\theta\Delta\mathcal D b(y^{\xi,\Delta}_{t_i})D_{u}y^{\xi,\Delta}_{t_i}$$ 
that
\begin{align*}
|D_uz^{\xi,\Delta}(t_{i})|^2 \geq&\, |D_uy^{\xi,\Delta}(t_{i})|^2-2\theta\Delta\big\<D_uy^{\xi,\Delta}(t_{i}),\mathcal D b(y^{\xi,\Delta}_{t_i})D_{u}y^{\xi,\Delta}_{t_i}\big\>\nn\\
\geq&\,|D_uy^{\xi,\Delta}(t_{i})|^2-2\theta\Delta a_5\big(|D_{u}y^{\xi,\Delta}(t_i)|^{2}
+\int_{-\tau}^{0}|D_{u}y^{\xi,\Delta}_{t_i}(r)|^{2}\mathrm d \nu_2(r)\big).
\end{align*}
Let $\Delta_1\in(0,\frac{1}{4\theta a_5})$.
Then for $\epsilon\in(0,\frac{1-2\theta a_5\Delta_1}{2\theta a_5\Delta_1}-1)$ and $\Delta\in(0,\Delta_1)$,
\begin{align}\label{Dyle+3}
&\quad(1-2\theta a_5\Delta)^p|D_uy^{\xi,\Delta}(t_{i})|^{2p}\nn\\
&\leq K(\epsilon)|D_uz^{\xi,\Delta}(t_{i})|^{2p}+(1+\epsilon)(2\theta a_5\Delta)^p\int_{-\tau}^{0}|D_{u}y^{\xi,\Delta}_{t_i}(r)|^{2p}\mathrm d \nu_2(r).
\end{align}
This implies that
\begin{align}\label{Dyle+2}
\sum_{i=0}^{k}|D_uy^{\xi,\Delta}(t_{i})|^{2p}&\leq\frac{K(\epsilon)}{(1-2\theta a_5\Delta_1)^p-(1+\epsilon)(2\theta a_5\Delta_1)^p}
\sum_{i=0}^{k}|D_uz^{\xi,\Delta}(t_{i})|^{2p}\nn\\
&\leq K(\epsilon) \sum_{i=0}^{k}|D_uz^{\xi,\Delta}(t_{i})|^{2p}.
\end{align}
and
\begin{align}\label{Dyle+1}
&\quad\sum_{i=0}^{k}\mathbf 1_{[t_{i},t_{i+1}]}(u)\E[|D_uy^{\xi,\Delta}(t_{i})|^{2p}]\nn\\
%&\leq\frac{1}{(1-2\theta a_5\Delta_1)^p} \Big(\sum_{i=0}^{k}\mathbf 1_{[t_{i},t_{i+1}]}(u)\E[|D_uz^{\xi,\Delta}(t_{i})|^{2p}]+(2\theta a_5\Delta)^p\sup_{0\leq l\leq k}\E[|D_uy^{\xi,\Delta}(t_{l})|^{2p}]\Big)\nn\\
&\leq K\sum_{i=0}^{k}\mathbf 1_{[t_{i},t_{i+1}]}(u)\E[|D_uz^{\xi,\Delta}(t_{i})|^{2p}]+K\Delta \sum_{i=0}^{k}|D_uy^{\xi,\Delta}(t_{i})|^{2p}\nn\\
&\leq K(\epsilon)\sum_{i=0}^{k}\big(\Delta+\mathbf 1_{[t_{i},t_{i+1}]}(u)\big)\E[|D_u z^{\xi,\Delta}(t_i)|^{2p}].
\end{align}
Inserting \eqref{Dyle+2} and \eqref{Dyle+1} into \eqref{boundTD6} and applying discrete Gr\"onwall inequality, we derive
\begin{align}\label{boundTD7}
\sup_{u\leq t_{k}\leq T}\E[|D_u z^{\xi,\Delta}(t_{k})|^{2p}]
\leq \sup_{u\leq t_{k}\leq T}K_{T}e^{Kt_{k+1}+1}\leq K_{T}.
\end{align}
It follows from \eqref{Dyle+3} that
\begin{align}\label{Dyle+4}
\sup_{u\leq t_{k}\leq T}\E[|D_u y^{\xi,\Delta}(t_{k})|^{2p}]
\leq K_{T}.
\end{align}
In addition, by \eqref{boundTD1},  we obtain
\begin{align}\label{boundTD10}
&\quad\E\Big[\sup_{u\leq t_{k+1}\leq T}|D_u z^{\xi,\Delta}(t_{k+1})|^{2p}\Big]\nn\\
&\leq\E \Big[\sup_{u\leq t_{k}\leq T}\sum_{i=0}^{k}\big(C_{p}^{1}I_{i,1}+C_{p}^{2}I_{i,2}\big)\Big]+
\E\Big[\sum_{i=0}^{k}\sum_{l=3}^{p}C_{p}^{l}|I_{i,l}|\Big]\nn\\
&\leq \E\Big[\sup_{u\leq t_{k}\leq T}\sum_{i=0}^{k}\big(C_{p}^{1}I_{i,1}+C_{p}^{2}I_{i,2}\big)\Big]+
\sum_{i=0}^{k}\big(K\Delta\!+\!\mathbf 1_{[t_{i},t_{i+1}]}(u)\big)\times\nn\\
&\quad\E\big[|D_u z^{\xi,\Delta}(t_i)|^{2p}+|D_{u}y^{\xi,\Delta}(t_i)|^{2p}\big]+K_{T}\nn\\
&\leq \E \Big[\sup_{u\leq t_{k}\leq T}\sum_{i=0}^{k}\big(C_{p}^{1}I_{i,1}+C_{p}^{2}I_{i,2}\big)\Big]+K_T.
\end{align}
Utilizing  the Burkholder--Davis--Gundy inequality, \eqref{boundTD7}, and \eqref{Dyle+4}, we have
\begin{align}\label{boundTD8}
&\quad\E\Big[\sup_{u\leq t_{k}\leq T}\sum_{i=0}^{k}I_{i,1}\Big]
\leq \sum_{i=0}^{\lfloor T\rfloor/\Delta}\big(K\Delta+\mathbf 1_{[t_{i},t_{i+1}]}(u)\big)\E\Big[|D_u z^{\xi,\Delta}(t_i)|^{2p}+|D_{u}y^{\xi,\Delta}(t_i)|^{2p}\Big]\nn\\
&\quad+K_{T}\sum_{i=0}^{\lfloor T\rfloor/\Delta}\mathbf 1_{[t_{i},t_{i+1}]}(u)
+\E\big[\sup_{u\leq t_{k}\leq T}\sum_{i=0}^{k}|D_u z^{\xi,\Delta}(t_i)|^{2(p-1)}\mathcal {\tilde M}_{i}\big]\nn\\
&\leq K_{T}+K\E\Big[\Big(\sum_{i=\lfloor u\rfloor/\Delta}^{\lfloor T\rfloor/\Delta}|D_u z^{\xi,\Delta}(t_k)|^{4(p-1)}\big(|D_u z^{\xi,\Delta}(t_k)|+|D_u y^{\xi,\Delta}(t_k)|\nn\\
&\quad+|\sigma(y^{\xi,\Delta}_{t_k})|\mathbf 1_{[t_{k},t_{k+1}]}(u)\big)^2|\mathcal D\sigma(y^{\xi,\Delta}_{t_k})D_u y^{\xi,\Delta}_{t_k}|^2\Delta\Big)^{\frac12}\Big]\nn\\
&\leq K_{T}+\frac{1}{4C_{p}^1}\E\Big[\sup_{u\leq t_{k}\leq T}|D_u z^{\xi,\Delta}(t_k)|^{2p}\Big]+K\E\Big[\sum_{i=\lfloor u\rfloor/\Delta}^{\lfloor T\rfloor/\Delta}\Big(|\sigma(y^{\xi,\Delta}_{t_k})|^{2p}\mathbf 1_{[t_{k},t_{k+1}]}(u)\nn\\
&\quad+|\mathcal D\sigma(y^{\xi,\Delta}_{t_k})D_u y^{\xi,\Delta}_{t_k}|^{2p}\Delta^{\frac{p}{2}}+|D_u y^{\xi,\Delta}(t_k)|^{2p}\Delta^{\frac{p}{2}}\Big)\Big]\nn\\
&\leq K_{T}+\frac{1}{4C_{p}^1}\E\Big[\sup_{u\leq t_{k}\leq T}|D_u z^{\xi,\Delta}(t_k)|^{2p}\Big].
\end{align}
Similarly, 
\begin{align*}
\E\Big[\sup_{u\leq t_{k}\leq T}\sum_{i=0}^{k}I_{i,2}\Big]
\leq K_{T}+\frac{1}{4C_{p}^2}\E\Big[\sup_{u\leq t_{k}\leq T}|D_u z^{\xi,\Delta}(t_k)|^{2p}\Big].
 \end{align*}
This, along with \eqref{boundTD10} and  \eqref{boundTD8} gives that
$$\sup_{0\leq u\leq T}\E\Big[\sup_{u\leq t_{k+1}\leq T}|D_u z^{\xi,\Delta}(t_{k+1})|^{2p}\Big]
\leq K_{T}.$$
It follows from \eqref{Dyle+3} that
$$\sup_{0\leq u\leq T}\E\Big[\sup_{u\leq t_{k+1}\leq T}|D_u y^{\xi,\Delta}(t_{k+1})|^{2p}\Big]
\leq K_{T}.$$
The proof is competed.
\end{proof}
As a consequence, we obtain $y^{\xi,\Delta}(t_k)\in\mathbb D^{1,p}.$ Below we show the existence of the numerical density function. 

\begin{thm}\label{trun_num_den}
Under Assumptions \ref{a1}, \ref{repalceA2}--\ref{assp_den}, for $\Delta\in(0,\frac{1}{4\theta a_5})$ and  $k\in\mathbb N_{+},$ the law of $y^{\xi,\Delta}(t_k)$ admits a density function. 
\end{thm}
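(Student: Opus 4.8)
The plan is to mirror the proof of Theorem \ref{density_exact}, but now for the discrete-time numerical solution $y^{\xi,\Delta}(t_k)$. By Lemma \ref{multi_nonlip_nu} we already know that $y^{\xi,\Delta}(t_k)\in\mathbb D^{1,p}$ for all $p\geq 1$ (and all $\Delta\in(0,\frac{1}{4\theta a_5})$), so the differentiability side of the Bouleau--Hirsch criterion \cite[Theorem 2.1.2]{Nualart} is in hand. What remains is to establish the nondegeneracy of the discrete Malliavin covariance matrix $\gamma^{\Delta}(t_k):=\int_0^{t_k}D_ry^{\xi,\Delta}(t_k)(D_ry^{\xi,\Delta}(t_k))^{\top}\mathrm dr$, namely that it is a.s. invertible. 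As in the continuous case, it suffices to prove a small-ball estimate of the form $\sup_{u\in\mathbb R^d,|u|=1}\mathbb P(u^{\top}\gamma^{\Delta}(t_k)u\leq\epsilon)\leq \epsilon$ for all sufficiently small $\epsilon$, and then let $\epsilon\to0$.

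First I would write out the recursion satisfied by $D_ry^{\xi,\Delta}(t_k)$ (equivalently $D_rz^{\xi,\Delta}(t_k)$), exactly as in the proof of Lemma \ref{multi_nonlip_nu}: for $r\leq t_k$ the Malliavin derivative propagates through the scheme, and crucially the diffusion coefficient enters through the fresh increment $\sigma(y^{\xi,\Delta}_{t_j})\mathbf 1_{[t_j,t_{j+1}]}(r)$ whenever $r$ lies in the $j$-th subinterval. The key observation is that for $r$ in the last grid interval $[t_{k-1},t_k)$, the derivative $D_ry^{\xi,\Delta}(t_k)$ reduces at leading order to $\sigma(y^{\xi,\Delta}_{t_{k-1}})$ plus a correction governed by the implicit drift term; I would isolate this dominant contribution. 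Concretely, I would lower-bound
\begin{align*}
u^{\top}\gamma^{\Delta}(t_k)u\geq \int_{t_{k-1}}^{t_k}u^{\top}D_ry^{\xi,\Delta}(t_k)(D_ry^{\xi,\Delta}(t_k))^{\top}u\,\mathrm dr,
\end{align*}
substitute the leading term $\sigma(y^{\xi,\Delta}_{t_{k-1}})$, use Assumption \ref{assp_den} to get a clean lower bound $\Delta\sigma_0|u|^2$ from the principal part, and treat the remaining drift/diffusion corrections as perturbations controlled by the Chebyshev inequality together with the moment bounds of Lemma \ref{multi_nonlip_nu} and the growth bounds in Assumptions \ref{a1} and \ref{Dbpoly1}. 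This yields the desired small-ball probability estimate, and hence a.s. invertibility of $\gamma^{\Delta}(t_k)$; combining this with $y^{\xi,\Delta}(t_k)\in\mathbb D^{1,p}$ and \cite[Theorem 2.1.2]{Nualart} finishes the proof.

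The main obstacle I anticipate is handling the implicit nature of the scheme when isolating the dominant term in $D_ry^{\xi,\Delta}(t_k)$ for $r$ in the final interval. Unlike the explicit continuous-time stochastic integral representation in Proposition \ref{multi_nonlip}, here the derivative $D_ry^{\xi,\Delta}(t_k)$ is defined through a recursive (and, because of the $\theta$-term, implicit) relation linking $D_rz^{\xi,\Delta}$ and $D_ry^{\xi,\Delta}$; one must carefully verify that the perturbative corrections coming from the implicit drift evaluation are genuinely of higher order in $\Delta$ (or at least small in the relevant probabilistic sense) and do not destroy the lower bound extracted from $\sigma(y^{\xi,\Delta}_{t_{k-1}})$. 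A secondary, more routine, point is that since $t_k$ is a fixed grid point, the integration interval $[t_{k-1},t_k)$ has length $\Delta$, so the lower bound scales like $\Delta$; one must therefore calibrate $\epsilon$ relative to $\Delta$ (for fixed $\Delta$ this is harmless, since we only need invertibility, i.e. the limit $\epsilon\to0$), which parallels the role of $\epsilon_1=2\epsilon/\sigma_0$ in Proposition \ref{multi_nonlip}. Once these bookkeeping issues are dispatched, the remaining estimates are entirely analogous to those already carried out for the exact solution.
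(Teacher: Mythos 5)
Your skeleton is right: $y^{\xi,\Delta}(t_k)\in\mathbb D^{1,p}$ from Lemma \ref{multi_nonlip_nu}, nondegeneracy of $\gamma_k=\int_0^{t_k}D_ry^{\xi,\Delta}(t_k)(D_ry^{\xi,\Delta}(t_k))^{\top}\mathrm dr$, then \cite[Theorem 2.1.2]{Nualart}; and you correctly localize the nondegeneracy to the last grid interval. But the way you plan to handle the implicit term is where the argument breaks. For $r\in(t_{k-1},t_k)$ the Malliavin derivatives of all $\mathcal F_{t_{k-1}}$-measurable quantities vanish, so the exact relation is
\begin{align*}
\big(\mathrm{Id}_{d\times d}-\theta\,\mathcal Db(y^{\xi,\Delta}_{t_{k}})I^0\Delta\big)D_ry^{\xi,\Delta}(t_k)=\sigma(y^{\xi,\Delta}_{t_{k-1}}).
\end{align*}
The ``correction'' is not an additive, higher-order-in-$\Delta$ term that you can peel off and absorb by Chebyshev: it multiplies the unknown $D_ry^{\xi,\Delta}(t_k)$ itself, so you must invert the matrix on the left. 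A Neumann-series/smallness inversion is unavailable, because by Assumption \ref{Dbpoly1} the operator $\mathcal Db(y^{\xi,\Delta}_{t_k})$ only has polynomial growth in $\|y^{\xi,\Delta}_{t_k}\|$; for fixed $\Delta$ the event $\{\theta\Delta\|\mathcal Db(y^{\xi,\Delta}_{t_k})I^0\|\geq 1\}$ has a fixed positive probability, which does \emph{not} shrink as $\epsilon\to0$, so the small-ball estimate $\sup_{|u|=1}\mathbb P(u^{\top}\gamma_ku\leq\epsilon)\to0$ cannot be closed along your route.

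The missing idea --- and the reason the theorem carries the restriction $\Delta\in(0,\frac{1}{4\theta a_5})$, which your plan never produces --- is to invert the implicit matrix \emph{pathwise} via the one-sided condition of Assumption \ref{repalceA2}: it gives $u^{\top}\mathcal Db(\phi)I^0u\leq 2a_5|u|^2$, hence $u^{\top}(\mathrm{Id}_{d\times d}-\theta\,\mathcal Db(y^{\xi,\Delta}_{t_k})I^0\Delta)u\geq(1-2\theta a_5\Delta)|u|^2>0$, so the matrix is a.s. invertible with inverse $A_{1,k-1}$ of norm at most $(1-2\theta a_5\Delta)^{-1}$. Once this is done, $D_ry^{\xi,\Delta}(t_k)=A_{1,k-1}\sigma(y^{\xi,\Delta}_{t_{k-1}})$ \emph{exactly} on the last interval, the history contributions to $\gamma_k$ are positive semidefinite, and Assumption \ref{assp_den} yields the pathwise bound $u^{\top}\gamma_k u\geq \Delta\, u^{\top}A_{1,k-1}\sigma(y^{\xi,\Delta}_{t_{k-1}})\sigma(y^{\xi,\Delta}_{t_{k-1}})^{\top}A_{1,k-1}^{\top}u>0$ a.s. for $u\neq0$. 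In other words, unlike the continuous-time Proposition \ref{multi_nonlip} you are mimicking, no small-ball probability estimate, Chebyshev bound, or calibration of $\epsilon$ against $\Delta$ is needed at all: the discrete covariance matrix admits a deterministic (conditionally on the path) positive-definite lower bound, and a.s. invertibility follows immediately.
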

\begin{proof}
Simialr to the proof of Theorem \ref{density_exact}, it suffices to show that the Malliavin covariance matrix of $y^{\xi,\Delta}(t_k)$, defined by $\gamma_k:=\int_0^{t_{k}}D_ry^{\xi,\Delta}(t_k)(D_ry^{\xi,\Delta}(t_k))^{\top}\mathrm dr$, is a.s. invertible. %For this purpose, 
%we proceed to derive the expression of   $D_r y^{\xi,\Delta}(t_{k})$.  

Taking the Malliavin derivative on both sides of the $\theta$-EM solution, we derive
\begin{align*}
D_r y^{\xi,\Delta}(t_{k+1})&=D_ry^{\xi,\Delta}(t_k)+(1-\theta)\mathcal Db(y^{\xi,\Delta}_{t_k})D_ry^{\xi,\Delta}_{t_k}\Delta+\theta \mathcal Db(y^{\xi,\Delta}_{t_{k+1}})D_ry^{\xi,\Delta}_{t_{k+1}}\Delta\\
&\quad +\mathcal D\sigma(y^{\xi,\Delta}_{t_k})D_ry^{\xi,\Delta}_{t_k}\delta W_k+\sigma(y^{\xi,\Delta}_{t_k})D_r\delta W_k.
\end{align*}
\iffalse
Noting  that 
\begin{align*}
y^{\xi,\Delta}_{t_{k+1}}(s)&=\sum_{j=-N}^{-1}\mathbf 1_{\Delta_j}(s)\Big[y^{\xi,\Delta}_{t_{k+1}}(t_{j+1})\frac{N}{\tau}(s-t_j)+y^{\xi,\Delta}_{t_{k+1}}(t_j)\frac{N}{\tau}(t_{j+1}-s)\Big]\\
&=\sum_{j=-N}^{-2}\mathbf 1_{\Delta_j}(s)\Big[y^{\xi,\Delta}_{t_{k+1}}(t_{j+1})\frac{N}{\tau}(s-t_j)+y^{\xi,\Delta}_{t_{k+1}}(t_j)\frac{N}{\tau}(t_{j+1}-s)\Big]\\
&\quad +\mathbf 1_{\Delta_{-1}}(s)\Big[y^{\xi,\Delta}(t_{k+1})\frac{s+\Delta}{\Delta}-y^{\xi,\Delta}(t_{k})\frac{s}{\Delta}\Big],
\end{align*}
we derive 
\begin{align*}
D_ry^{\xi,\Delta}_{t_{k+1}}&=D_r(y^{\xi,\Delta}_{t_{k+1}}\mathbf 1_{[-\tau,-\Delta]})+D_r(y^{\xi,\Delta}_{t_{k+1}}\mathbf 1_{[-\Delta,0]})\\
&=D_r(y^{\xi,\Delta}_{t_{k+1}}\mathbf 1_{[-\tau,-\Delta]})+\big(D_ry^{\xi,\Delta}(t_{k+1})\frac{\Delta+\cdot}{\Delta}-D_ry^{\xi,\Delta}(t_k)\frac{\cdot}{\Delta}\big)\mathbf 1_{[-\Delta,0]}(\cdot)\\
&=D_r\Big(\sum_{j=-N}^{-1}I^jy^{\xi,\Delta}_{t_{k+1}}(t_j)\Big)+D_ry^{\xi,\Delta}(t_{k+1})I^0,
\end{align*}
where $I^{i}(\cdot)\in \mathcal C^d,\,i=-N,\ldots, 0$ are defined by \eqref{int_1}. 
\fi
Similar to  \eqref{int_1}, we have
$$y^{\xi,\Delta}_{t_{k+1}}=\sum_{j=-N}^{0}I^jy^{\xi,\Delta}_{t_{k+1}}(t_j),$$
where $I^{j}(\cdot)\in \mathcal C([-\tau,0];\mathbb R),\,j =-N,\ldots, 0$ are given in \eqref{int_1}. 
This implies
\begin{align*}
D_ry^{\xi,\Delta}_{t_{k+1}}=D_r\Big(\sum_{j=-N}^{-1}I^jy^{\xi,\Delta}_{t_{k+1}}(t_j)\Big)+D_ry^{\xi,\Delta}(t_{k+1})I^0.
\end{align*}
%I^j(s)=\frac{N}{\tau}(s-s_{j-1})\mathbf 1_{\Delta_{j-1}}(s)+\frac{N}{\tau}(s_{j+1}-s)\mathbf 1_{\Delta_j}(s),\,j=-N+1,\ldots,-1,$ $I^{-N}(s)=\frac{N}{\tau}(s_{-N+1}-s)\mathbf 1_{\Delta_{-N}}(s),$ and $I^0(s)=\frac{N}{\tau}(s-s_{-1})\mathbf 1_{\Delta_{-1}}(s).$
Hence, 
\begin{align*}
&\quad D_ry^{\xi,\Delta}(t_{k+1})-\theta \mathcal Db(y^{\xi,\Delta}_{t_{k+1}}) D_ry^{\xi,\Delta}(t_{k+1})I^0\Delta\nn\\
&=D_ry^{\xi,\Delta}(t_k)+(1-\theta)\mathcal Db(y^{\xi,\Delta}_{t_k})D_ry^{\xi,\Delta}_{t_k}\Delta+\sigma (y^{\xi,\Delta}_{t_k})D_r\delta W_k\nn\\
&\quad+\theta \mathcal Db(y^{\xi,\Delta}_{t_{k+1}})\Big(\sum_{j=-N}^{-1}I^jD_ry^{\xi,\Delta}_{t_{k+1}}(s_j)\Big)\Delta+\mathcal D\sigma(y^{\xi,\Delta}_{t_k})D_ry^{\xi,\Delta}_{t_k}\delta W_k\\
&=D_ry^{\xi,\Delta}(t_k)\!+\!(1\!-\!\theta)\mathcal Db(y^{\xi,\Delta}_{t_k})D_r\Big(\!\!\sum_{j=-N}^0I^jy^{\xi,\Delta}_{t_k}(t_j)\!\Big)\Delta\!+\!\theta \mathcal Db(y^{\xi,\Delta}_{t_{k+1}})I^{-1}D_ry^{\xi,\Delta}(t_k)\Delta\\
&\quad+\theta \mathcal Db(y^{\xi,\Delta}_{t_{k+1}})\sum_{j=-N}^{-2}I^jD_ry^{\xi,\Delta}_{t_{k+1}}(t_j)\Delta+\mathcal D\sigma(y^{\xi,\Delta}_{t_k})D_r\Big(\sum_{j=-N}^0I^jy^{\xi,\Delta}_{t_k}(t_j)\Big)\delta W_k\nn\\
&\quad+\sigma(y^{\xi,\Delta}_{t_k})D_r\delta W_k.
\end{align*}
It is straightforward to see that
\begin{align}\label{expre_deri}
&\Big(\mathrm {Id}_{d\times d}-\theta \mathcal Db(y^{\xi,\Delta}_{t_{k+1}})I^0\Delta\Big)D_ry^{\xi,\Delta}(t_{k+1})\nn\\
=&\sum_{j=-N}^0A_{j,k}D_ry^{\xi,\Delta}(t_{k+j})+ \sigma (y^{\xi,\Delta}_{t_k})\mathbf 1_{[t_k,t_{k+1}]}(r),
\end{align}
where 
\begin{align*}
A_{0,k}:&=\mathrm {Id}_{d\times d}+(1-\theta)\mathcal Db(y^{\xi,\Delta}_{t_k}) I^0\Delta+\theta \mathcal Db(y^{\xi,\Delta}_{t_{k+1}})I^{-1}\Delta +\mathcal D\sigma(y^{\xi,\Delta}_{t_k})I^0\delta W_k,\\
A_{-N,k}:&=(1-\theta)\mathcal Db(y^{\xi,\Delta}_{t_k})I^{-N}\Delta +\mathcal D\sigma(y^{\xi,\Delta}_{t_k})I^{-N}\delta W_k,\\
A_{j,k}:&=(1-\theta)\mathcal Db(y^{\xi,\Delta}_{t_k})I^{j}\Delta+\theta \mathcal Db(y^{\xi,\Delta}_{t_{k+1}})I^{j-1}\Delta +\mathcal D\sigma(y^{\xi,\Delta}_{t_k})I^{j}\delta W_k
%B_k(r):&=(1-\theta)b'(y^{\xi,\Delta}_{t_k})\Delta D_r\Big(\sum_{j=-N}^{-1}I^jy^{\xi,\Delta}_{t_k}(s_j)\Big)+\theta b'(y^{\xi,\Delta}_{t_{k+1}})\Delta\sum_{j=-N}^{-2}I^jD_ry^{\xi,\Delta}_{t_{k+1}}(s_j)\\
%&\quad +\sigma'(y^{\xi,\Delta}_{t_k})D_r\Big(\sum_{j=-N}^{-1}I^jy^{\xi,\Delta}_{t_k}(s_j)\Big)\delta W_k.
\end{align*}
for $j=-1,\ldots,-N+1$.
It follows from Assumption \ref{repalceA2} that for any  $0\neq u\in\RR^d$,
\begin{align*}
u^{\top} \mathcal Db(y^{\xi,\Delta}_{t_{k+1}})I^0  u \leq 2a_5|u|^2.
\end{align*}
Then for $\Delta\in(0,\frac{1}{4\theta a_5})$,
\begin{align*}
u^{\top}\Big(\mathrm {Id}_{d\times d}-\theta \mathcal Db(y^{\xi,\Delta}_{t_{k+1}})I^0\Delta\Big)u\geq (1-2\theta a_5\Delta)|u|^2>0,
\end{align*}
which implies that
$\mathrm {Id}_{d\times d}-\theta \mathcal Db(y^{\xi,\Delta}_{t_{k+1}})I^0\Delta$ is invertible.
Denoting $A_{1,k}:=\big(\mathrm {Id}_{d\times d}-\theta \mathcal Db(y^{\xi,\Delta}_{t_{k+1}})I^0\Delta\big)^{-1}$, we derive
$\|A_{1,k}\|_{\mathcal L(\RR^d; \RR^{d})}\in(0, \frac{1}{1-2\theta a_5\Delta})$.
According to \eqref{expre_deri}, we arrive at 
\begin{align}\label{recur_gamma}
&\quad\gamma_{k+1}=\int_0^{t_{k+1}}D_ry^{\xi,\Delta}(t_{k+1})(D_ry^{\xi,\Delta}(t_{k+1}))^{\top}\mathrm dr\nn\\
=&\int_0^{t_{k}}A_{1,k}\sum_{j=-N}^0A_{j,k}D_ry^{\xi,\Delta}(t_{k+j})\Big(\sum_{j=-N}^0A_{j,k}D_ry^{\xi,\Delta}(t_{k+j})\Big)^{\top}(A_{1,k})^{\top}\mathrm dr\nn\\
&\quad +\int_{t_k}^{t_{k+1}}A_{1,k}\sigma(y^{\xi,\Delta}_{t_k})\sigma(y^{\xi,\Delta}_{t_k})^{\top}(A_{1,k})^{\top}\mathrm dr,
\end{align}
where we used $D_ry^{\xi,\Delta}(t_k)=0$ for $r> t_{k}$.
Since $\sigma(x)\sigma(x)^{\top}$ is positive definite, we deduce
\begin{align*}
u^{\top}\gamma_{k+1}u&\ge \Delta u^{\top}A_{1,k}\sigma(y^{\xi,\Delta}_{t_k})\sigma(y^{\xi,\Delta}_{t_k})^{\top}(A_{1,k})^{\top}u>0\quad a.s.
\end{align*}
Moreover, utilizing again the invertibility of $\sigma\sigma^{\top},$ we have that $$\gamma_1=\int_0^{t_1}A_{1,0}\sigma(\xi)\sigma(\xi)^{\top}(A_{1,0})^{\top}\mathrm dr$$ is also a.s. invertible. This finishes the proof. 
\end{proof}

\section{Convergence of numerical density function}
In this section, we investigate the convergence of the numerical density function of the $\theta$-EM method on the finite and infinite time horizons, respectively.
We first give the convergence of the numerical density function in $L^1(\RR^d)$ on the finite time horizon for the superlinearly growing drift  coefficient case. Then we show that the longtime convergence rate of the numerical density function is 1 for the linear growing drift coefficient   case.
\subsection{Convergence on the finite time horizon}
\begin{assp}\label{Dbpoly2} 
Assume that coefficients $b$ and $\sigma$ have continuous G\^ateaux  derivatives up to order 2  satisfying 
\begin{align*}
&|\mathcal D^2b(\phi_1)(\phi_2,\phi_3)| \leq K(1+\|\phi_1\|^{(\tilde\beta-1)\vee 0})\|\phi_2\|\|\phi_3\|,\\
&|\mathcal D^2\sigma(\phi_1)(\phi_2,\phi_3)| \leq K\|\phi_2\|\|\phi_3\|,
\end{align*}
where $\phi_1,\phi_2, \phi_3\in\mathcal C^d$, $K>0$, and  $\tilde \beta$ is given in Assumption \ref{Dbpoly1}.
\end{assp}
\begin{thm}\label{den_exist}
Let Assumptions \ref{a1}, \ref{a7}, \ref{repalceA2}--\ref{Dbpoly2} hold. 
%with $\beta=\tilde \beta$.   
Then for any $T>0$,
\begin{align*}
\lim_{\Delta\to0}\sup_{0\leq k\Delta\leq T}\int_{\mathbb R^d}|\mathfrak p^{\Delta}(t_{k},x)-\mathfrak p(t_{k},x)|\mathrm dx=0,
\end{align*}
where $\mathfrak p(t_k,\cdot),~\mathfrak p^{\Delta}(t_k,\cdot)$ are density functions of $x^{\xi}(t_k)$ and $y^{\xi,\Delta}(t_k)$, respectively.
\end{thm}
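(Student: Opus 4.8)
The plan is to deduce the $L^1(\mathbb{R}^d)$ convergence from a \emph{pointwise} convergence of the densities together with Scheff\'e's lemma: since $\mathfrak p(t_k,\cdot)$ and $\mathfrak p^{\Delta}(t_k,\cdot)$ are genuine probability densities (they integrate to $1$), once I show $\mathfrak p^{\Delta}(t_k,x)\to\mathfrak p(t_k,x)$ for Lebesgue-a.e. $x$, Scheff\'e's lemma upgrades this automatically to $\int_{\mathbb{R}^d}|\mathfrak p^{\Delta}(t_k,x)-\mathfrak p(t_k,x)|\,\mathrm dx\to 0$; the uniformity in $k$ over $[0,T]$ will come from the fact that every estimate below is uniform in $k$ with $0\le k\Delta\le T$. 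Thus the whole task reduces to establishing uniform pointwise convergence of the densities.

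First I would set up a localized Malliavin integration-by-parts representation of each density. Because Assumption \ref{assp_den} together with Proposition \ref{multi_nonlip} only provides the weak nondegeneracy estimate $\sup_{|u|=1}\mathbb{P}(u^\top\gamma^E(t_k)u\le\epsilon)\le\epsilon$ (and its numerical analogue obtained as in the proof of Theorem \ref{trun_num_den}), the full negative-moment integrability of $\det\gamma^E(t_k)$ and $\det\gamma_k$ is not at hand, so the classical density formula $\mathfrak p_F(x)=\mathbb{E}[\mathbf 1_{\{F>x\}}D^*(DF\gamma_F^{-1})]$ cannot be used directly. Instead I would introduce a smooth cutoff $\Theta_R$ acting on the determinant of the covariance matrix and write, for $F\in\{x^\xi(t_k),y^{\xi,\Delta}(t_k)\}$, a localized representation $\mathfrak p_{F,R}(x)=\mathbb{E}[\mathbf 1_{\{F>x\}}H_{F,R}]$, where $H_{F,R}$ is built from $DF$, $\gamma_F$ and $\Theta_R(\det\gamma_F)$ and is integrable for each fixed $R$. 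The difference $\mathfrak p_F-\mathfrak p_{F,R}$ is controlled uniformly in $F$ through the weak nondegeneracy estimate, so $\sup_x|\mathfrak p_F(x)-\mathfrak p_{F,R}(x)|\to 0$ as $R\to\infty$ uniformly over $F$ in the relevant family.

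Second, for each fixed $R$ I would prove $\mathbb{E}[\mathbf 1_{\{y^{\xi,\Delta}(t_k)>x\}}H_{y^{\xi,\Delta}(t_k),R}]\to\mathbb{E}[\mathbf 1_{\{x^\xi(t_k)>x\}}H_{x^\xi(t_k),R}]$. The ingredients are: (i) the mean-square convergence $\sup_{0\le k\Delta\le T}\mathbb{E}[|x^\xi(t_k)-y^{\xi,\Delta}(t_k)|^2]\to 0$, established on the finite horizon in the spirit of Lemma \ref{l4.5}, which yields convergence in distribution and lets me pass to the limit in the indicator after the a.e.-$x$ reduction; (ii) the $L^p(\Omega)$ convergence of the first Malliavin derivatives $D_ry^{\xi,\Delta}(t_k)\to D_rx^\xi(t_k)$, obtained by a discrete-versus-continuous comparison of the variational equations using the moment bounds of Lemmas \ref{local_Ma} and \ref{multi_nonlip_nu}; and (iii) the resulting $L^p$ convergence of the covariance matrices $\gamma_k\to\gamma^E(t_k)$ and, on the support of $\Theta_R$, of the weights $H_{y^{\xi,\Delta}(t_k),R}\to H_{x^\xi(t_k),R}$. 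Combining the two steps via a triangle inequality—first choose $R$ large to make both localization errors small uniformly in $\Delta$, then let $\Delta\to 0$ for that fixed $R$—gives the desired pointwise convergence.

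I expect the main obstacle to be steps (ii)--(iii) under the \emph{superlinearly growing} drift: proving the uniform-in-$k$ $L^p$ convergence of the Malliavin derivatives of the implicit $\theta$-EM scheme to those of the exact solution requires carefully matching the interpolation operators $I^j$ entering the discrete variational equation \eqref{expre_deri} with the continuous one, and absorbing the superlinear terms $\mathcal Db$ through the moment estimates of Lemma \ref{multi_nonlip_nu} and Assumption \ref{Dbpoly2}. Keeping the localization consistent between the exact and numerical densities while these derivatives converge is the delicate point, and it is exactly here that Assumption \ref{assp_den} and the weak nondegeneracy of Proposition \ref{multi_nonlip} must be invoked uniformly in $\Delta$.
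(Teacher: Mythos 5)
Your overall architecture (localize, prove convergence at a fixed localization level, then de-localize uniformly) resembles the paper's in spirit, but you localize in the wrong place, and this creates a genuine gap at your step (ii). You propose to prove $L^p$-convergence of the Malliavin derivatives $D_r y^{\xi,\Delta}(t_k)\to D_r x^{\xi}(t_k)$ for the \emph{untruncated} equation by comparing the discrete and continuous variational equations and absorbing the superlinear terms through the moment bounds of Lemmas \ref{local_Ma} and \ref{multi_nonlip_nu}. This absorption does not close: writing
$\mathcal Db(x^{\xi}_s)D_ux^{\xi}_s-\mathcal Db(y^{\xi,\Delta}_{\lfloor s\rfloor})D_uy^{\xi,\Delta}_{\lfloor s\rfloor}
=\big[\mathcal Db(x^{\xi}_s)-\mathcal Db(y^{\xi,\Delta}_{\lfloor s\rfloor})\big]D_ux^{\xi}_s+\mathcal Db(y^{\xi,\Delta}_{\lfloor s\rfloor})\big[D_ux^{\xi}_s-D_uy^{\xi,\Delta}_{\lfloor s\rfloor}\big]$,
the second term carries the factor $|\mathcal Db(y^{\xi,\Delta}_{\lfloor s\rfloor})|\leq K(1+\|y^{\xi,\Delta}_{\lfloor s\rfloor}\|^{\tilde\beta})$, which is unbounded under Assumption \ref{Dbpoly1}; any H\"older splitting of $\mathbb E\big[|\mathcal Db(y^{\xi,\Delta}_{\lfloor s\rfloor})|^2|D_ux^{\xi}_s-D_uy^{\xi,\Delta}_{\lfloor s\rfloor}|^2\big]$ changes the exponent of the error term and destroys the Gr\"onwall structure. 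A cutoff acting on $\det\gamma_F$ inside the density representation does nothing to tame this term, because the variational equation still involves the full, untruncated drift. This is exactly why the paper truncates the \emph{coefficients}: it sets $b^R=\Theta_R b$, $\sigma^R=\Theta_R\sigma$ with a cutoff on path space, so that $\mathcal Db^R,\mathcal D^2b^R$ are bounded; it then proves $\sup_{0\leq k\Delta\leq T}\|y^{\xi,\Delta,R}(t_k)-x^{\xi,R}(t_k)\|_{1,2}\leq K_{T,R}\Delta^{1/2}$ for the truncated system, converts this into total-variation (i.e.\ $L^1$-density) convergence with rate $\Delta^{\rho/(2\rho+2)}$ using negative-moment estimates of the \emph{exact} truncated Malliavin covariance together with an abstract comparison lemma, and finally removes the truncation via $\mathbb P(\Omega_{R-1}^c)$ and the strong convergence \eqref{convT}, letting $\Delta\to0$ first and then $R\to\infty$. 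A key feature of that route is that only the exact solution's covariance matrix needs quantitative nondegeneracy.

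Two further problems in your plan. First, your localized representation $\mathfrak p_{F,R}(x)=\mathbb E[\mathbf 1_{\{F>x\}}H_{F,R}]$ requires, for the numerical solution, second-order Malliavin derivatives (through the Skorohod integral defining $H_{F,R}$) and a quantitative small-eigenvalue estimate for $\gamma_k$ uniform in $\Delta$; Theorem \ref{trun_num_den} only yields a.s.\ invertibility of $\gamma_k$, so the ``numerical analogue'' of Proposition \ref{multi_nonlip} that you invoke is not available in the paper and would have to be proved separately. Second, the Scheff\'e step with $\sup_{0\leq k\Delta\leq T}$ is glossed over: as $\Delta\to0$ the grid points move, so the target density $\mathfrak p(t_k,\cdot)$ moves with $\Delta$, and Scheff\'e's lemma with a moving limit requires uniform equi-integrability of the family $\{\mathfrak p(t,\cdot):t\in[0,T]\}$ on top of uniform pointwise convergence; the paper sidesteps this entirely by estimating the total variation distance directly.
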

\begin{proof} The proof is based on the localization argument and is divided into two steps.
%: we first truncate  \eqref{FF} by adding a cut-off function $\Theta^R$ such that the coefficients are globally  Lipschitz continuous and obtain the convergence rate of the density for the truncated SFDE; Then,  borrowing the localization argument we show the desired result.

\underline{Step 1.} Introduce the smooth cut-off functional $\Theta_{R}:\mathcal C^d\to[0,1]$ with continuous derivatives and compact support. For $R>\|\xi\|$, let $\Theta_R(x)=1$ for $\|x\|\leq R$ and $\Theta_R(x)=0$ for $\|x\|>R+1.$ Then we consider the truncated version of \eqref{FF} on $[0,T]$,
\begin{align}\label{tr_FF}
&\mathrm dx^{\xi,R}(t)=b^R(x^{\xi,R}_t)\mathrm dt+\sigma^R(x^{\xi,R}_t)\mathrm dW(t),\quad t\in(0,T],\\
&x^{\xi,R}(t)=\xi,\quad t\in[-\tau,0],\nn
\end{align}
where $b^R=\Theta_Rb$ and $\sigma^R=\Theta_R\sigma$ are globally Lipschitz continuous for each  $R$. 
Moreover, the coefficients $b^R$ and $\sigma^R$ satisfy
\begin{align*}
|\mathcal Db^{R}(\phi_1)\phi_2|^2\vee|\mathcal D\sigma^{R}(\phi_1)\phi_2|^2\leq K_{R}\Big(|\phi_2(0)|^2\!\!+\!\!\int_{-\tau}^0\!|\phi_2(r)|^2\mathrm d\nu_1(r)\!\!+\!\!\int_{-\tau}^0\!|\phi_2(r)|^2\mathrm d\nu_2(r)\Big),
\end{align*}
where $\phi_1,\phi_2\in\mathcal C^d$. 
%Denoting an increasing sample subset sequence
%\begin{align*}
%\Omega_R:=\{\omega\in\Omega:\sup_{t\in[0,T]}\|x^{\xi}_t\|\leq R\},
%\end{align*}
%we have $\lim_{R\to\infty}\mathbb P(\Omega_R)=\mathbb P(\Omega)=1.$ 
In addition,  the numerical solution, the numerical functional solution, and the auxiliary process of the $\theta$-EM method for \eqref{tr_FF}
are denoted by $\{y^{\xi,\Delta,R}(t_{k})\}_{k\geq-N}$, $\{y^{\xi,\Delta,R}_{t_{k}}\}_{k\geq0}$, and $\{z^{\xi,\Delta,R}(t_{k})\}_{k\geq-N}$, respectively. We need to estimate the error between $x^{\xi,R}(t)$ and $y^{\xi,\Delta, R}(t)$ in $\|\cdot\|_{1,2}$.
Similar to the proof of Theorem \ref{th4.1}, using Lemmas \ref{local_Ma} and \ref{multi_nonlip_nu}, we deduce from Assumptions \ref{a1}, \ref{a5},  \ref{repalceA2}, and \ref{Dbpoly1} that 
\begin{align}\label{convT}
\E\Big[\sup_{0\leq t_{k}\leq T}|x^{\xi}(t_k)-y^{\xi,\Delta}(t_k)|^4\Big]\leq K_{T}\Delta^2.
\end{align}
This implies
 \begin{align}\label{convTR}
\E\Big[\sup_{0\leq t_{k}\leq T}|x^{\xi, R}(t_k)-y^{\xi,\Delta, R}(t_k)|^4\Big]\leq K_{T, R}\Delta^2.
\end{align}
Moreover, we have
\begin{align}\label{convTD1}
&\quad \mathbb E[|D_{u}x^{\xi,R}(t)-D_{u}z^{\xi,\Delta, R}(t)|^2]\nn\\
&\leq K(t-u)\E\Big[\int_u^t\big|\mathcal Db^{R}(x^{\xi,R}_s)D_ux^{\xi,R}_s-\mathcal Db^{R}(y^{\xi,\Delta, R}_{\lfloor s \rfloor})D_uy^{\xi,\Delta, R}_{\lfloor s \rfloor}|^2\mathrm ds\Big]\nn\\
&\quad+ K\E\Big[\int_u^t\big|\mathcal D\sigma^{R}(x^{\xi,R}_s)D_ux^{\xi,R}_s-\mathcal D\sigma^{R}(y^{\xi,\Delta, R}_{\lfloor s \rfloor})D_uy^{\xi,\Delta, R}_{\lfloor s \rfloor}|^2\mathrm ds\Big]\nn\\
&\quad +\E\Big[|\sigma^{R}(x^{\xi,R}_u)\mathbf 1_{[0,t]}(u)-\sigma^{R}(y^{\xi,\Delta, R}_{\lfloor u \rfloor})\mathbf 1_{[0,t]}(u)|^2\Big].
\end{align}
It follows from the Taylor formula that
\begin{align*}
&\quad\Big|\mathcal Db^{R}(x^{\xi,R}_s)D_ux^{\xi,R}_s-\mathcal Db^{R}(y^{\xi,\Delta, R}_{\lfloor s \rfloor})D_uy^{\xi,\Delta, R}_{\lfloor s \rfloor}\Big|\nn\\
&\leq \Big|\!\!\int_{0}^{1}\mathcal D^2b^{R}\big(\varsigma x^{\xi,R}_{\lfloor s \rfloor}+(1-\varsigma)y^{\xi,\Delta, R}_s\big)(x^{\xi,R}_s-y^{\xi,\Delta, R}_{\lfloor s \rfloor},D_ux^{\xi,R}_s)\mathrm d\varsigma \Big|\nn\\
&\quad+\Big|\mathcal Db^{R}(y^{\xi,\Delta, R}_{\lfloor s \rfloor})(D_ux^{\xi,R}_s-D_uy^{\xi,\Delta, R}_{\lfloor s \rfloor})\Big|\nn\\
&\leq K_{R}\|x^{\xi,R}_s-y^{\xi,\Delta, R}_{\lfloor s \rfloor}\|\|D_u x^{\xi,R}_s\|+K_{R}\Big(|D_ux^{\xi,R}(s)-D_uy^{\xi,\Delta, R}(\lfloor s \rfloor)|^2\nn\\
&\quad+\int_{-\tau}^0\!|D_ux^{\xi,R}_{s}(r)\!-\!D_uy^{\xi,\Delta, R}_{\lfloor s \rfloor}(r)|^2\mathrm d\nu_2(r)\Big)^{\frac12}.
\end{align*}
where in the last inequality we  used  the boundedness of  operators $\mathcal Db^{R}(\cdot)$ and  $\mathcal D^2b^{R}(\cdot)$ on their compact support $\{x\in\mathcal C^d: \|x\|\leq R+1 \}$.
Similar to the proof of Lemma \ref{l4.4}, we obtain 
\begin{align*}
\sup_{u\leq t\leq T}\E[\|D_uy^{\xi,\Delta, R}_{\lfloor t \rfloor}-D_uz^{\xi,\Delta, R}_{t}\|^2]\leq K_{T,R}\Delta.
\end{align*}
Then
\begin{align}\label{convTD2}
&\quad\E\Big[\big|\mathcal Db^{R}(x^{\xi,R}_s)D_ux^{\xi,R}_s-\mathcal Db^{R}(y^{\xi,\Delta, R}_{\lfloor s \rfloor})D_uy^{\xi,\Delta, R}_{\lfloor s \rfloor}\big|^2\Big]\nn\\
&\leq K_{R}\E\big[\|x^{\xi,R}_s-y^{\xi,\Delta, R}_{\lfloor s \rfloor}\|^2\|D_u x^{\xi,R}_s\|^2\big]\!+\!K_{R}\E\Big[|D_ux^{\xi,R}(s)-D_uz^{\xi,\Delta, R}(s)|^2\nn\\
&\quad+\int_{-\tau}^0\!|D_ux^{\xi,R}_{s}(r)-D_uz^{\xi,\Delta, R}_{s}(r)|^2\mathrm d\nu_2(r)\Big]+K_{T,R}\Delta.
\end{align}
Similarly, we deduce
\begin{align}\label{convTD3}
&\quad\Big|\mathcal D\sigma^{R}(x^{\xi,R}_s)D_ux^{\xi,R}_s-\mathcal D\sigma^{R}(y^{\xi,\Delta, R}_{\lfloor s \rfloor})D_uy^{\xi,\Delta, R}_{\lfloor s \rfloor}\Big|\nn\\
&\leq K_{R}\E\big[\|x^{\xi,R}_s-y^{\xi,\Delta, R}_{\lfloor s \rfloor}\|^2\|D_u x^{\xi,R}_s\|^2\big]+K_{R}\E\Big[|D_ux^{\xi,R}(s)-D_uz^{\xi,\Delta, R}(s)|^2\nn\\
&\quad+\int_{-\tau}^0\!|D_ux^{\xi,R}_{s}(r)-D_uz^{\xi,\Delta, R}_{s}(r)|^2\mathrm d\nu_2(r)\Big]+K_{T,R}\Delta.
\end{align}
Inserting \eqref{convTD2} and \eqref{convTD3} into \eqref{convTD1}, and then using Assumption \ref{a1}, Lemmas \ref{local_Ma} and  \ref{multi_nonlip_nu},  and \eqref{convTR}, we obtain that for $\Delta\in(0,\frac{1}{4\theta a_5})$,
\begin{align*}
&\quad\E\Big[|D_{u}x^{\xi,R}(t)-D_{u}z^{\xi,\Delta, R}(t)|^2\Big]\nn\\
&\leq K_{T,R}\Delta
\!+\!K_{T,R}\int_{u}^{T}\big(\E[\|x^{\xi,R}_{\lfloor s \rfloor}-y^{\xi,\Delta, R}_{\lfloor s \rfloor}\|^4]\big)^{\frac12}\mathrm ds\!+\!K_{T,R}\int_{u}^{T}\big(\E[\|x^{\xi,R}_{\lfloor s \rfloor}\!-\!x^{\xi, R}_{s}\|^4]\big)^{\frac12}\mathrm ds\nn\\
&\quad+K_{T,R}\E\Big[\int_{u}^{T}|D_ux^{\xi,R}(s)-D_uz^{\xi,\Delta, R}(s)|^2\mathrm ds\Big]+K\sup_{0\leq s\leq T}\E[\|x^{\xi,R}_{\lfloor s \rfloor}-x^{\xi, R}_{s}\|^2]\nn\\
&\leq K_{T,R}\Delta+K_{T,R}\int_{u}^{T}\E\big[|D_ux^{\xi,R}(s)-D_u z^{\xi,\Delta, R}(s)|^2\big]\mathrm ds.
\end{align*}
Applying the Gr\"onwall inequality, we arrive at 
\begin{align*}
\E\Big[|D_{u}x^{\xi,R}(t)-D_{u}z^{\xi,\Delta, R}(t)|^2\Big]\leq K_{T,R}\Delta,
\end{align*}
which implies
\begin{align}\label{convDT}
\int_{0}^{T}\E\Big[|D_{u}x^{\xi,R}(t_{k})-D_{u}y^{\xi,\Delta, R}(t_{k})|^2\Big]\mathrm du\leq K_{T,R}\Delta.
\end{align}
It follows from \eqref{convTR} and \eqref{convDT} that
 $$\sup_{0\leq k\Delta\leq T}\|y^{\xi,\Delta,R}(t_{k})-x^{\xi,R}(t_{k})\|_{1,2}\leq K_{T,R}\Delta^{\frac12}.$$
Furthermore,  similar to \cite[Lemma A.5]{hong2023density},  combining Proposition \ref{multi_nonlip}, we obtain that for any $\varrho\in(0,1)$,
$\mathbb E[(\int_{0}^{T}|D_u x^{\xi,R}(t)|^2\mathrm du)^{-\varrho}]\leq K_{T, R, \varrho}.$ 
Similar to the proof of Lemma \ref{local_Ma}, it follows from Assumptions \ref{a1}, \ref{repalceA2}, \ref{Dbpoly1}, and \ref{Dbpoly2} that $x^{\xi}_t\in\mathbb D^{2,4}(\mathcal C^d)$ for all $t\in[0,T].$   
Then, by \cite[Lemma A.1]{hong2023density}, we arrive at 
  \begin{align}\label{TVR}
  &\quad\sup_{0\leq k\Delta\leq T}\int_{\mathbb R^d}|\mathfrak p^{\Delta,R}(t_{k},x)-\mathfrak p^{R}(t_{k},x)|\mathrm dx=\sup_{0\leq k\Delta\leq T}\mathrm d_{TV}(y^{\Delta,\xi,R}(t_k),x^{\xi,R}(t_k))\nn\\
  &\leq  K_{T,R} \sup_{0\leq k\Delta\leq T}\|y^{\Delta,\xi,R}(t_k)-x^{\xi,R}(t_k)\|_{1,2}^{\frac{2\rho}{2\rho+2}}\nn\\
  &\leq  K_{T,R}\Delta^{\frac{\rho}{2\rho+2}},
  \end{align}
where $\mathrm {d}_{TV}(X,Y)$ denotes the total variation distance between two $\mathbb R^d$-valued random variables $X$ and $Y$, and  $\mathfrak p^{R}(t_k,\cdot),~\mathfrak p^{\Delta,R}(t_k,\cdot)$ are density functions of  $x^{\xi,R}(t_k)$ and $y^{\xi,\Delta,R}(t_k)$, respectively.

 \underline{Step 2.} 
Denoting two sequences of events by
\begin{align*}
\Omega_R:=\{\omega\in\Omega:\sup_{t\in[0,T]}|x^{\xi}(t)|\leq R\},\\
\Omega_{R,y}:=\{\omega\in\Omega:\sup_{t_k\in[0,T]}|y^{\xi,\Delta}(t_k)|\leq R\},
\end{align*}
we have $\lim_{R\to\infty}\mathbb P(\Omega_R)=\mathbb P(\Omega)=\lim_{R\to\infty}\mathbb P(\Omega_{R,y})=1.$ 
According to \cite[Section 6]{hong2023density} and \cite[(6.3)]{hong2023density}, 
\iffalse
one has
\begin{align*}
\int_{\mathbb R^d}|\mathfrak p_X(x)-\mathfrak p_Y(x)|\mathrm dx=\mathrm {d}_{TV}(X,Y)
=\sup_{f\in\mathcal C,\|f\|\leq 1}|\mathbb E[f(X)-f(Y)]|,
\end{align*}
\fi
we derive 
\begin{align*}
&\quad\int_{\mathbb R^d}|\mathfrak p^{\Delta}(t_{k},x)-\mathfrak p(t_{k},x)|\mathrm dx=\mathrm d_{TV}(y^{\Delta,\xi}(t_k),x^{\xi}(t_k))\nn\\
%&\leq \big|\E\big[f(y^{\xi,\Delta}(t_{k}))\mathbf{1}_{\Omega_{R,y}^c}-f(x^{\xi}(t_{k}))\mathbf{1}_{\Omega_{R}^c}\big]\big|
%\big|\E\big[f(y^{\xi,\Delta, R}(t_{k}))-f(x^{\xi}(t_{k}))\big]\big|
%+\big|\E\big[f(y^{\xi,\Delta,R}(t_{k}))\mathbf{1}_{\Omega_{R,y}^c}-f(x^{\xi,R}(t_{k}))\mathbf{1}_{\Omega_{R}^c}\big]\big|
&\leq 4\PP(\Omega_{R-1}^c)+2\PP(\sup_{0\leq t_k\leq T}|x^{\xi}(t_k)-y^{\xi,\Delta}(t_{k})|\geq 1)+\mathrm d_{TV}(y^{\Delta,\xi,R}(t_k),x^{\xi,R}(t_k))\nn\\
&\leq 4\frac{\E[\sup_{0\leq t\leq T}|x^{\xi}(t)|^2]}{(R-1)^2}+2\E [\sup_{0\leq t_k\leq T}|x^{\xi}(t_k)-y^{\xi,\Delta}(t_{k})|^2]
+ K_{T,R}\Delta^{\frac{\rho}{2\rho+2}}.
\end{align*}
Combining   \eqref{convT}, \eqref{TVR}, and Lemma \ref{local_Ma}, and letting $\Delta\rightarrow 0$, $R\rightarrow \infty$, we obtain the desired argument.
\iffalse

and the localization argument
 Similar to \cite[Lemma A.5]{hong2023density}, and combining Proposition \ref{multi_nonlip}, we obtain that for some $\rho\in(0,1)$
$\mathbb E[|D x^{\xi,R}(t)|^{-2\rho}]\leq K(\rho,T).$ 
  Then $\mathrm d_{TV}(y^{\Delta,\xi,R}(t),x^{\xi,R}(t))\leq K\|y^{\Delta,\xi,R}(t)-x^{\xi,R}(t)\|_{1,2}^{\frac{2\rho}{2\rho+2}}\to0$ as $\Delta\to0.$ 
 Next,  {\color{red}similar to proofs before,  for the \eqref{FF} with superlinearly growing coefficients, under Assumptions \ref{a1}, \ref{repalceA2}, \ref{a5}, \ref{a7}, we can  prove the strong convergence: \begin{align*}
 \sup_{t\in[0,T]}\mathbb E[|x^{\xi}(t)-y^{\xi,\Delta}(t)|^2]\leq K(T)\Delta^{\frac12},
 \end{align*}
 and the H\"older continuity: for any $p\ge 2,$
 \begin{align*}
 &\mathbb E[|x^{\xi}(s_1)-x^{\xi}(s_2)|^p]\leq K|s_1-s_2|^{\frac p2},\\
 &\mathbb E[|y^{\xi,\Delta}(s_1)-y^{\xi,\Delta}(s_2)|^p]\leq K|s_1-s_2|^{\frac p2}.
 \end{align*}
 }
 Hence, applying \cite[Proposition 6.1]{hong2023density}, we arrive at
 \begin{align*}
 \lim_{\Delta\to0}\mathrm d_{TV}(y^{\Delta,\xi}(t),x^{\xi}(t))\leq \lim_{R\to\infty}\lim_{\Delta\to0}\mathrm d_{TV}(y^{\Delta,\xi,R}(t),x^{\xi,R}(t))=0.
 \end{align*}
 The proof is finished. 
 \fi
\end{proof}

\begin{rem}
It follows from the proof of Theorem \ref{den_exist} that when the coefficients $b$ and $\sigma$ are globally Lipschitz continuous, the convergence rate of the numerical density function is almost  $1/4$ in $L^{1}(\RR^d)$.
\end{rem}

\subsection{Convergence on the infinite time horizon}
In this subsection, we focus on the additive noise case and study the longtime convergence of the numerical density function based on the test-functional-independent weak convergence of the $\theta$-EM method.

\begin{assp}\label{addnoise}
Assume that the noise of \eqref{FF} is of additive type,  i.e., there exists some matrix $\tilde\sigma$ such that for any $\phi\in\mathcal C^d$,
$\sigma(\phi)\equiv\tilde\sigma$.
\end{assp}

\begin{assp}\label{high5} 
Assume that the coefficient  $b$ has continuous and bounded derivatives up to order  $4$ satisfying $$\sup_{\phi\in\mathcal C^d}\|\mathcal D^{k}b(\phi)\|_{\mathcal L\big((\mathcal C^{d})^{\otimes k} ;\mathbb R^{d}\big)}\leq K$$ for $k\in\mathbb N_+$ with $k\leq 4.$ 
In addition, assume that $b$ satisfies the following form
\begin{align}\label{linearb}
|b(\phi)|^2\leq K\big(1+|\phi(0)|^2+\int_{-\tau}^0|\phi(r)|^2\mathrm d\nu_2(r)\big),
\end{align}
where $\phi\in\mathcal C^d.$ 
\end{assp}
The longtime convergence rate of the numerical density function is stated as follows.  

\begin{thm}\label{dens_con}
Let \cref{a2} with $2a_1-
\frac{2\theta-1}{\theta^2}-2a_2>0$,  \cref{ass_nablab} with $L_b:=\sup_{i\in\{1,\ldots, n_b\}}\sup_{\phi\in\mathcal C^d}\|k_b^{i}(\phi)\|<\infty$, 
\cref{addnoise,high5} hold.  In addition, assume that $b$ satisfies 
$$\sup_{i\in\{1, \cdots, n_b\}}\sup_{l\in\{1,2,3\}}\sup_{\phi\in\mathcal C^d} \|D^lk_b^{i}(\phi)\|_{\mathcal L((\mathcal C^d)^{\otimes l}; \mathcal C([-\tau,0]; \RR^{d\times d}))}\leq K.$$
Then there exists a constant $\tilde \Delta\in(0,1]$ such that for any $\Delta\in(0,\tilde \Delta]$,
\begin{align*}
\sup_{T\ge T_0,z\in\mathbb R^d}|\mathfrak p(T,z)-\mathfrak p^{\Delta}(T,z)|\leq K\Delta,
\end{align*}
where $T_0=\frac{\ln(\frac32)}{2L_bn_b(\theta+2)}$.
%will be given in \cref{lower_matrix}.
\end{thm}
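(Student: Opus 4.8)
The plan is to control the pointwise difference between the two density functions by means of the Malliavin integration by parts formula, reducing it to a test-functional-independent weak error estimate. The starting point is the standard Malliavin representation of a density: for a nondegenerate random vector $F$ one has, for each fixed $z\in\mathbb R^d,$
\begin{align*}
\mathfrak p_F(z)=\mathbb E\big[\mathbf 1_{\{F\ge z\}}H_{(1,\ldots,d)}(F,1)\big],
\end{align*}
where $H_{(1,\ldots,d)}$ is the iterated Skorohod-type weight built from the inverse Malliavin covariance matrix $\gamma_F^{-1}$ and the Malliavin derivatives of $F$ via repeated application of \eqref{IBPformula}. Applying this simultaneously to $F=x^{\xi}(T)$ and $F=y^{\xi,\Delta}(T)$ turns the quantity $\mathfrak p(T,z)-\mathfrak p^{\Delta}(T,z)$ into the difference of two expectations of the form $\mathbb E[\mathbf 1_{\{\cdot\ge z\}}H(\cdot)]$. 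The crucial observation is that, because the indicator $\mathbf 1_{\{\cdot\ge z\}}$ has now absorbed the nonsmoothness, the weight $H$ is a genuine smooth functional, and so the problem becomes exactly a \emph{test-functional-independent} weak convergence statement: I must estimate $|\mathbb E[g(x^{\xi}(T))]-\mathbb E[g(y^{\xi,\Delta}(T))]|$ uniformly over the relevant (indicator-type) functionals $g$, with the constant depending only on Malliavin-Sobolev norms and not on the smoothness of $g$.

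The second step is to run the telescoping/weak-error decomposition of \eqref{Wrate}, namely $\mathcal I_0+\mathcal I_b+\mathcal I_{b,\theta}+\mathcal I_{\sigma}$, but now in the test-functional-independent setting. Under \cref{addnoise} the diffusion term is constant, so $\mathcal I_{\sigma}=0$ and the analysis of \cref{thm_weakcon} simplifies considerably: only the drift-type terms $\mathcal I_b$ and $\mathcal I_{b,\theta}$ survive. For each summand I would again use \eqref{IBPformula} to move all Malliavin derivatives off the (nonsmooth) test functional onto the smooth integrand, precisely as in the proof of \cref{thm_weakcon}; the structural hypothesis \cref{ass_nablab} together with the kernel bounds on $k_b^{i}$ and its derivatives is what makes the adjoint $(\mathcal Db)^*$ and the duality \eqref{IBPformula} applicable term by term. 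The required estimates for the iterated weights are supplied by the time-independent Malliavin- and G\^ateaux-derivative bounds of Lemmas \ref{lem_fir_d}--\ref{lem_sec_d1}, Lemma \ref{Ch3D2x}, and Lemma \ref{lemma_high3}, whose exponential decay $e^{-\hat\lambda_1 pt}$ (resp. $e^{-\hat\lambda_2 t_k}$) is exactly what yields a constant independent of $T$. The negative-moment control $\mathbb E[(\det\gamma)^{-p}]\le K$ for both $\gamma^E(T)$ and the numerical covariance $\gamma_{N^\Delta}$, uniform in $T$, is what guarantees the weights are in every $L^p(\Omega)$; on the infinite horizon this uses Assumption \ref{addnoise} via the discrete comparison principle producing the $\sigma_0$-type lower bound for $\gamma$ as in Proposition \ref{multi_nonlip} and Theorem \ref{trun_num_den}.

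The main obstacle, and the reason for the threshold $T_0=\frac{\ln(3/2)}{2L_bn_b(\theta+2)}$, is the \emph{uniform-in-$T$} invertibility and negative-moment boundedness of the Malliavin covariance matrices, in particular for the numerical solution. For short times the covariance is close to $T\tilde\sigma\tilde\sigma^{\top}$ and is easily bounded below, but to make the negative moments uniform for all large $T$ I must show that the accumulated drift perturbation to $\gamma$ (through the factors $A_{j,k}$ and $A_{1,k}=(\mathrm{Id}_{d\times d}-\theta\mathcal Db(y^{\xi,\Delta}_{t_{k+1}})I^0\Delta)^{-1}$ in \eqref{recur_gamma}) does not destroy a time-independent lower bound; this is where the quantitative smallness condition on $L_b,n_b$ and $\theta$ enters, and where the constraint $T\ge T_0$ is needed so that a clean $\frac32$-type contraction estimate on the covariance holds. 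Concretely, I expect to need a lower bound of the form $u^\top\gamma_{N^\Delta}u\ge c\,u^\top\tilde\sigma\tilde\sigma^\top u$ with $c$ independent of $T$ and $\Delta$, obtained by controlling $\|A_{1,k}A_{j,k}-\mathrm{Id}_{d\times d}\|$ through Assumption \ref{high5} and choosing $\tilde\Delta$ small. Once the uniform negative moments and the uniform weight bounds are in place, combining the test-functional-independent weak error $O(\Delta)$ from the decomposition \eqref{Wrate} with the density representation yields $\sup_{T\ge T_0,z}|\mathfrak p(T,z)-\mathfrak p^{\Delta}(T,z)|\le K\Delta$, which is the claim.
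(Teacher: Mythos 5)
Your second and third steps correctly identify the paper's machinery: the telescoping weak-error decomposition \eqref{Wrate} with $\mathcal I_{\sigma}\equiv0$ under \cref{addnoise}, the use of the duality \eqref{IBPformula} term by term (made rigorous in \eqref{IBP1}--\eqref{IBP2}), time-independent exponential-decay derivative estimates (the paper uses the additive-noise versions, \cref{Ch4highDx,Ch4highDy}, rather than the Chapter 3 lemmas you cite, which require different hypotheses on $a_1$), and a uniform-in-$T$ lower bound on the Malliavin covariance matrix over a terminal window of length $T_0$ (\cref{lower_matrix}, proved by a discrete comparison principle; the constant $\ln(3/2)$ arises exactly from the requirement $e^{x}-1\le\frac12$, so your ``$\tfrac32$-type contraction'' intuition is right).

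However, your first step --- the reduction of the density error to a test-functional-independent weak error --- has a genuine gap. Writing $\mathfrak p_F(z)=\mathbb E[\mathbf 1_{\{F\ge z\}}H_{(1,\ldots,d)}(F,1)]$ for $F=x^{\xi}(T)$ and $F=y^{\xi,\Delta}(T)$ does \emph{not} produce a difference of the form $\mathbb E[g(x^{\xi}(T))]-\mathbb E[g(y^{\xi,\Delta}(T))]$ for a common functional $g$: the Skorohod weights $H(x^{\xi}(T),1)$ and $H(y^{\xi,\Delta}(T),1)$ are distinct path functionals built from the Malliavin derivatives and inverse covariance matrices of the two \emph{different} solutions, not evaluations of one fixed function at the respective endpoints. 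To proceed along your route you would have to estimate both $\mathbb E[(\mathbf 1_{\{x^{\xi}(T)\ge z\}}-\mathbf 1_{\{y^{\xi,\Delta}(T)\ge z\}})H(y^{\xi,\Delta}(T),1)]$ (which needs anti-concentration plus strong convergence of the endpoints) and $\mathbb E[\mathbf 1_{\{x^{\xi}(T)\ge z\}}(H(x^{\xi}(T),1)-H(y^{\xi,\Delta}(T),1))]$ (which needs strong convergence rates, uniform in $T$, of Malliavin derivatives and of the inverse covariance matrices) --- none of which is supplied by the lemmas you invoke. The paper avoids this entirely by Gaussian mollification: by \eqref{dens_error}, $\mathfrak p(T,z)-\mathfrak p^{\Delta}(T,z)=\lim_{n\to\infty}\big(\mathbb E[g_{n^{-1}}(x^{\xi}(T)-z)]-\mathbb E[g_{n^{-1}}(y^{\xi,\Delta}(T)-z)]\big)$, a genuine weak error for a common smooth test function lying in the class $\mathscr C$; the point of \eqref{IBP1}--\eqref{IBP2} is then that, inside the decomposition \eqref{Wrate}, every derivative of $f\in\mathscr C$ is transferred onto weights whose norms are controlled by \cref{lower_matrix} and \cref{Ch4highDy}, leaving only the antiderivative $F$ with $0\le F\le1$, so the resulting bound $K\Delta$ in \cref{thm_weak2} is uniform over $f\in\mathscr C$ and over $T\ge T_0$, and survives the limit $n\to\infty$. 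You should replace your indicator-representation step by this mollification argument (or else supply the missing strong-convergence and anti-concentration estimates for the weights).
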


\begin{rem}
Theorem \ref{dens_con}, together with the Scheff\'e lemma, indicates that 
$$\sup_{T\ge T_0}\int_{\mathbb R^d}|\mathfrak p(T,z)-\mathfrak p^{\Delta}(T,z)|\mathrm dz\to0 \text{ as } \Delta \to0;$$ see also \cite[Section 3.1]{Sheng_density_SPDE} for details. 
\end{rem}

According to \cite{KAKA08, Sheng_density_SPDE},
%\cite[Section 1]{talay_density2},
we have the following approximation of density functions for
both the exact solution   and the numerical solution: for fixed $T>0$ and $z\in\mathbb R^d,$
\begin{align*}
&\mathfrak p(T,z)=\lim_{n\to\infty}\int_{\mathbb R^d}g_{n^{-1}}(y-z)\mathfrak p(T,y)\mathrm dy=\lim_{n\to\infty}\mathbb E[g_{n^{-1}}(x^{\xi}(T)-z)],\\
&\mathfrak{p}^{\Delta}(T,z)=\lim_{n\to\infty}\int_{\mathbb R^d}g_{n^{-1}}(y-z)\mathfrak p^{\Delta}(T,y)\mathrm dy=\lim_{n\to\infty}\mathbb E[g_{n^{-1}}(y^{\xi,\Delta}(T)-z)],
\end{align*}
where $g_{\zeta}$ denotes the Gaussian density function with mean $0$ and covariance matrix $\zeta\mathrm {Id}_{d\times d}.$
This gives that
\begin{align}\label{dens_error}
|\mathfrak p(T,z)-\mathfrak p^{\Delta}(T,z)|=\lim_{n\to\infty}|\mathbb E[g_{n^{-1}}(x^{\xi}(T)-z)]-\mathbb E[g_{n^{-1}}(y^{\xi,\Delta}(T)-z)]|.
\end{align}

Note that the function $\{g_{n^{-1}}(\cdot-z)\}_{n\ge 1,z\in\mathbb R^d}$ belongs to
\begin{align*}
\mathscr C:=\Big\{f:\mathbb R^d\to\mathbb R\,\Big|&\,f\in\mathcal C_{pol}^{\infty}(\mathbb R^d;\mathbb R),\exists F:\mathbb R^d\to\mathbb R\text{ with }0\leq F\leq 1\text{ such that  }\\
&\,F(x_1,\ldots,x_d)=\int_{-\infty}^{x_1}\cdots\int_{-\infty}^{x_d}f(y_1,\ldots,y_d)\mathrm dy_d\cdots \mathrm dy_1\Big\}.
\end{align*}
%Here, {\color{blue}$\mathcal C_{pol}^{\infty}(\mathbb R^d;\mathbb R)$ is the space of all real-valued smooth functions on $\mathbb R^d$ whose partial derivatives have at most polynomial growth.} 
To obtain the convergence rate of the numerical density function, it suffices to estimate the  error $|\mathbb E[f(x^{\xi}(T))]-\mathbb E[f(y^{\xi,\Delta}(T))]|$
for any $f\in\mathscr C$.  

We begin with giving some  estimates of the exact solution and numerical solution as well as  their high-order derivatives.
\begin{lemma}
Let \cref{a2,addnoise},  and \eqref{linearb} hold. Then for any $p\in\mathbb N_+$ with $p\geq2$,
\begin{align}\label{Lunix}
\sup_{t\geq0}\E[\|x_t^{\xi}\|^{2p}]\leq K(1+\|\xi\|^{2p}).
\end{align}
In addition, there exists a constant $\tilde \Delta_p\in(0,1]$ such that for any $\Delta\in(0,\tilde \Delta_p]$,
\begin{align}\label{Luniy}
\sup_{k\in \mathbb N}\E[\|y_{t_k}^{\xi,\Delta}\|^{2p}]\leq K(1+\|\xi\|^{2p}).
\end{align}
\end{lemma}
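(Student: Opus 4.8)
The plan is to prove the two uniform-in-time moment bounds \eqref{Lunix} and \eqref{Luniy} by the standard dissipativity argument adapted to the additive-noise setting, where the absence of diffusion growth makes the analysis considerably cleaner than in the general superlinear case of \cref{l4.2,l4.3}. First I would treat the exact solution \eqref{Lunix}. Applying the It\^o formula to $e^{\e t}|x^{\xi}(t)|^{2p}$ for a suitably small $\e>0$, the key ingredient is \cref{a2} in its one-argument form \eqref{F2r1+1}, namely
\begin{align*}
2\langle \phi(0), b(\phi)\rangle
\leq K-\frac{3a_1+a_2+L}{2}|\phi(0)|^{2}+2a_2\int_{-\tau}^{0}|\phi(r)|^2\mathrm d\nu_2(r),
\end{align*}
together with the fact that under \cref{addnoise} the diffusion term $|\sigma|^2=|\tilde\sigma|^2$ is a bounded constant. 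After expanding the drift contribution via this dissipativity estimate and using the Young inequality to absorb the cross terms $|x^{\xi}(t)|^{2p-2}$ against $|x^{\xi}(t)|^{2p}$ and $\int_{-\tau}^0|x^{\xi}(t+r)|^{2p}\mathrm d\nu_2(r)$, the delayed integral term is handled exactly as in \eqref{5p2.1}: one shifts the time integration and uses the bound $e^{\e p\tau}$ to transfer the delay contribution onto $\sup_{s\le t}e^{\e ps}\E[|x^{\xi}(s)|^{2p}]$. Choosing $\e$ small enough that the coefficient of the diagonal term stays negative (which is possible precisely because $a_1>a_2$), the Gr\"onwall-type argument yields $\sup_{t\ge0}\E[|x^{\xi}(t)|^{2p}]\le K(1+\|\xi\|^{2p})$, and the functional-norm bound follows via the Burkholder--Davis--Gundy inequality over a window $[(t-\tau)\vee0,t]$, mirroring \underline{Step 2} of \cref{p2.2}.

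Next I would prove the numerical bound \eqref{Luniy}. The scheme here is to reuse the machinery of \cref{p2.2,l4.2}: work with the split process $z^{\xi,\Delta}$ defined in \eqref{ST}, for which the recursion \eqref{12p2.1} specializes, under additive noise, to a form where $|\sigma_k\delta W_k|^2=|\tilde\sigma\delta W_k|^2$ is independent of the state. The crucial structural identity is $z(t_k)=y(t_k)-\theta b_k\Delta$ together with the relation \eqref{9p2.1} linking $|z(t_i)|^2$ to $|y(t_i)|^2$. Raising \eqref{10p2.1} to the $p$th power, taking expectations, expanding via the binomial theorem, and controlling each term $\mathscr I_l$ by the Young inequality and \cref{a2} exactly as in \eqref{2p2.1}--\eqref{3p2.1}, one arrives at a recursion of the form
\begin{align*}
e^{\e t_{k+1}}\E[|z(t_{k+1})|^{2p}]\le \Big(\tfrac{(1-\theta)^2}{\theta^2}+\tfrac{2\theta-1}{\theta^2(1+\tilde c_\kappa\Delta)}-e^{-\e\Delta}\Big)\sum_{i}e^{\e t_{i+1}}\E[|z(t_i)|^{2p}]+\cdots,
\end{align*}
where the bracketed factor is made nonpositive by the choice $\e=\frac{(2\theta-1)\tilde c_\kappa}{\theta^2(1+\tilde c_\kappa)}\wedge\kappa$ as in \eqref{varep}, and the condition $2a_1-\frac{2\theta-1}{\theta^2}-2a_2>0$ guarantees the dissipative constant is positive. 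The delayed sums are again transferred using \eqref{5p2.1}. The constant $\tilde\Delta_p$ arises because, for $p\ge2$, certain $\Delta^l$ terms with $l\ge2$ must be dominated by the leading $\Delta$ term, which only holds for $\Delta$ below a $p$-dependent threshold; this is the analogue of the $\Delta^*$ appearing in \underline{Step 1} of \cref{l4.2}.

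The main obstacle I anticipate is the same one that forces the restriction to $\theta\in(\tfrac12,1]$ and the threshold $\tilde\Delta_p$: keeping the implicit-drift contributions under control when passing from $z$ to $y$. Specifically, the term $\theta^2\Delta^2|b_k|^2$ in \eqref{9p2.1}, combined with the superlinear-looking $p$th powers, must be reabsorbed using the structural bound \eqref{linearb} on $|b(\phi)|^2$, and the sign of $(1-2\theta)$ (negative for $\theta>\tfrac12$) is what makes the coercive term in \eqref{12p2.1} work in our favor. One must verify that the coefficient $1+\big(\theta\Delta\frac{3a_1+a_2+L}{2}\big)^p$ multiplying $\sum|y(t_i)|^{2p}$ genuinely dominates the competing $(1+\e)(2a_2\theta\Delta)^p$ term, which is exactly the role played by the choice of small $\e$ just below \eqref{9p2.1}; this inequality $\big(\frac{3a_1+a_2+L}{2}\big)^p>(1+\e)(2a_2)^p$ is where the dissipativity margin $a_1>a_2$ is consumed. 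Once these bookkeeping inequalities are arranged, the discrete Gr\"onwall inequality and the $M\to\infty$ stopping-time passage (as in \cref{p2.1}) close the argument, and the functional-norm bound $\sup_k\E[\|y^{\xi,\Delta}_{t_k}\|^{2p}]\le K(1+\|\xi\|^{2p})$ follows from the interpolation identity \eqref{thL} together with the estimate on the grid values, completing the proof.
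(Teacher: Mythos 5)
Your plan for \eqref{Luniy} --- raise the $z$-recursion of \cref{p2.2}/\cref{l4.2} (i.e.\ \eqref{12p2.1} with the $A_{\theta,\Delta}$-contraction) to the $p$th power and control the binomial terms as in \eqref{2p2.1}--\eqref{3p2.1} --- has a genuine gap: the absorption mechanism you are borrowing from \cref{l4.2} is not available under the hypotheses of this lemma. In \cref{l4.2} the dissipative constant $R_2^*$ can be chosen \emph{arbitrarily large}, because \cref{a4} provides superlinear dissipation ($R_2\Delta|y|^2\leq a_3\Delta|y|^{2+\ell}+K\Delta$); that freedom is exactly what swallows the $p$-dependent Young constants $(2a_4)^p/\e_1^{p-1}$, the $B_i$'s, and --- crucially --- the factor $3^{p-1}$ lost in the conversion \eqref{l4.2.2} of $-|z|^{2(p-1)}|y|^2$ into $-|y|^{2p}$. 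Here only \cref{a2} and \eqref{linearb} are assumed: the dissipation rate is pinned at essentially $\tfrac{3a_1+a_2}{2}$ while the margin $a_1>a_2$ may be arbitrarily thin, so for large $p$ the surviving dissipation $\tfrac{R_2}{3^{p-1}}\Delta\,\E|y|^{2p}$ cannot dominate delay contributions whose coefficients remain of size $a_2$ (or blow up like $(2a_2)^p/\e_1^{p-1}$ if you use $\e$-weighted Young inequalities as in \eqref{2p2.1}). The inequality you cite from \eqref{yz-relat}, $\big(\tfrac{3a_1+a_2+L}{2}\big)^p>(1+\e)(2a_2)^p$, does hold, but in \cref{p2.1} it is only used for a \emph{finite-horizon} bound closed by Gr\"onwall, where growth $e^{KT}$ is acceptable; it does not deliver the nonpositive net coefficient of $\sum_ie^{\e t_i}\E[|y(t_i)|^{2p}]$ that a time-uniform bound requires. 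Note also that the condition $2a_1-\frac{2\theta-1}{\theta^2}-2a_2>0$ you invoke is not among the hypotheses of this lemma.

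The missing idea is the one the paper's proof is built on, and it uses the additive noise structurally, not merely as a simplification. The paper expands $|z^{\xi,\Delta}(t_{k+1})|^{2p}$ around $|y^{\xi,\Delta}(t_k)|^{2p}$, via the identity $z^{\xi,\Delta}(t_{k+1})=y^{\xi,\Delta}(t_k)+(1-\theta)b(y^{\xi,\Delta}_{t_k})\Delta+\tilde\sigma\delta W_k$ from \eqref{ST}; because the noise increment is state-independent, the dissipative cross term is $|y(t_k)|^{2(p-1)}\<y(t_k),b(y^{\xi,\Delta}_{t_k})\>$ --- same base as the leading term, so no $3^{p-1}$-type loss occurs. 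The delay terms are then split by the \emph{weight-preserving} Young inequality $|y|^{2(p-1)}\int_{-\tau}^0|y_{t_k}(r)|^2\mathrm d\nu_2(r)\leq\frac{p-1}{p}|y|^{2p}+\frac1p\int_{-\tau}^0|y_{t_k}(r)|^{2p}\mathrm d\nu_2(r)$ (see \eqref{AB2}), keeping the total delay weight exactly at scale $2a_2$. Finally, after telescoping $e^{\epsilon t_{k+1}}\E[|z(t_{k+1})|^{2p}]$, the sum $\sum_i e^{\epsilon t_i}\E[|z(t_i)|^{2p}]$ is converted back into $\sum_i e^{\epsilon t_i}\E[|y(t_i)|^{2p}]$ through the reverse inequality $|z(t_k)|^2\geq-K\Delta+(1+2a_1\theta\Delta)|y(t_k)|^2-2a_2\theta\Delta\int_{-\tau}^0|y_{t_k}(r)|^2\mathrm d\nu_2(r)$ raised to the $p$th power with exact binomial bookkeeping, yielding the ratio bound $\geq1+(4a_2\theta+1)^{-p}$ of \eqref{AB6}--\eqref{AB7}; this preserves the gap $a_1>a_2$ at every order $p$, and $\tilde\Delta_p$ then only needs to suppress the $K\Delta$ and $K\Delta^2$ remainders against this fixed margin. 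Your sketch of \eqref{Lunix} is fine in spirit (the paper simply cites \cite{LMS11}), with the same caveat: the delay term must be handled by the balanced split above rather than an $\e$-weighted one, or the thin margin is again destroyed.
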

\begin{proof}
The proof of \eqref{Lunix} is similar to that of \cite[Theorem 3]{LMS11} and is omitted.
Below we show the proof of \eqref{Luniy}. 
By virtue of \eqref{ST},  we have that for any $k\in\mathbb N$,
\begin{align*}
&\quad|z^{\xi,\Delta}(t_{k+1})|^{2}
=|y^{\xi,\Delta}(t_{k})+(1-\theta)b(y^{\xi,\Delta}_{t_{k}})\Delta+\tilde\sigma\delta W_k|^2\nn\\
&=|y^{\xi,\Delta}(t_{k})|^2+(1-\theta)^2|b(y^{\xi,\Delta}_{t_{k}})|^2\Delta^2+|\tilde\sigma\delta W_k|^2
+2(1-\theta)\Delta\<y^{\xi,\Delta}(t_{k}),b(y^{\xi,\Delta}_{t_{k}})\>\nn\\
&\quad+2\<y^{\xi,\Delta}(t_{k})+(1-\theta)b(y^{\xi,\Delta}_{t_{k}})\Delta, \tilde\sigma\delta W_k\>.
\end{align*}
Then for any $p\in\mathbb N_+$ with $p\geq2$,
\begin{align}\label{AB1}
\E[|z^{\xi,\Delta}(t_{k+1})|^{2p}]
= \E[|y^{\xi,\Delta}(t_{k})|^{2p}]
+\sum_{i=1}^{p}C_{p}^{i}\E[I_{i}],
\end{align}
where 
\begin{align*}
I_i&:=|y^{\xi,\Delta}(t_{k})|^{2(p-i)}\Big((1-\theta)^2|b(y^{\xi,\Delta}_{t_{k}})|^2\Delta^2+|\tilde\sigma\delta W_k|^2
+2(1-\theta)\Delta\<y^{\xi,\Delta}(t_{k}),b(y^{\xi,\Delta}_{t_{k}})\>\nn\\
&\quad+2\<y^{\xi,\Delta}(t_{k})+(1-\theta)b(y^{\xi,\Delta}_{t_{k}})\Delta, \tilde\sigma\delta W_k\>\Big)^{i}.
\end{align*}
For the term $I_1$, it follows from the property of the conditional expectation, \cref{a2}, and \eqref{linearb} that for  $\varepsilon_1\in(0,1)$,
\begin{align}\label{AB2}
&\quad\E[I_1]\leq \E\Big[|y^{\xi,\Delta}(t_{k})|^{2(p-1)}\Big((1-\theta)^2|b(y^{\xi,\Delta}_{t_{k}})|^2\Delta^2+|\tilde \sigma|^2\Delta+K\Delta\nn\\
&\quad-2a_1(1-\theta)\Delta|y^{\xi,\Delta}(t_{k})|^2+2a_2(1-\theta)\Delta\int_{-\tau}^0|y^{\xi,\Delta}_{t_{k}}(r)|^2\mathrm d\nu_2(r)\Big)\Big]\nn\\
&\leq \E\Big[|y^{\xi,\Delta}(t_{k})|^{2(p-1)}\big(K\Delta+(1-\theta)^2|b(y^{\xi,\Delta}_{t_{k}})|^2\Delta^2\big)\Big]
-2(a_1-\frac{a_2(p-1)}{p})\times\nn\\
&\quad(1-\theta)\Delta\E[|y^{\xi,\Delta}(t_{k})|^{2p}]+\frac{2a_2}{p}(1-\theta)\Delta\E\Big[\int_{-\tau}^0|y^{\xi,\Delta}_{t_{k}}(r)|^{2p}\mathrm d\nu_2(r)\Big]\nn\\
&\leq K(\varepsilon_1)\Delta\!+\!K\Delta^2\Big(\E[|y^{\xi,\Delta}(t_{k})|^{2p}]\!+\!\E\Big[\int_{-\tau}^0|y^{\xi,\Delta}_{t_{k}}(r)|^{2p}\mathrm d\nu_2(r)\Big]\Big)
\!-\!2(a_1\!-\!\frac{a_2(p-1)}{p}\nn\\
&\quad-\varepsilon_1)(1-\theta)\Delta\E[|y^{\xi,\Delta}(t_{k})|^{2p}]+\frac{2a_2}{p}(1-\theta)\Delta\E\Big[\int_{-\tau}^0|y^{\xi,\Delta}_{t_{k}}(r)|^{2p}\mathrm d\nu_2(r)\Big],
\end{align}
where we used the Young inequality.
For the term $I_2$, using \eqref{linearb} and the Young inequality again,  we have
\begin{align}\label{AB3}
&\quad\E[I_2]\leq \E\Big[|y^{\xi,\Delta}(t_{k})|^{2(p-2)}\Big((1\!-\!\theta)^2|b(y^{\xi,\Delta}_{t_{k}})|^2\Delta^2\!+\!|\tilde \sigma|^2\Delta\!+\!2(1-\theta)\Delta\<y^{\xi,\Delta}(t_{k}),\nn\\
&\quad b(y^{\xi,\Delta}_{t_{k}})\>\Big)^2
+|y^{\xi,\Delta}(t_{k})|^{2(p-2)}\Big(2\<y^{\xi,\Delta}(t_{k})+(1-\theta)b(y^{\xi,\Delta}_{t_{k}})\Delta, \tilde\sigma\delta W_k\>\Big)^2\Big]\nn\\
&\leq K\Delta^2\Big(1+\E[|y^{\xi,\Delta}(t_{k})|^{2p}]+\E\Big[\int_{-\tau}^0|y^{\xi,\Delta}_{t_{k}}(r)|^{2p}\mathrm d\nu_2(r)\Big]\Big)\nn\\
&\quad+K\Big(\Delta\E[|y^{\xi,\Delta}(t_{k})|^{2(p-1)}]+\Delta^2\E\big[|y^{\xi,\Delta}(t_{k})|^{2(p-2)}|b(y^{\xi,\Delta}_{t_{k}})|^2\big]\Big)\nn\\
&\leq K(\varepsilon_1)\Delta+K\Delta^2\Big(\E[|y^{\xi,\Delta}(t_{k})|^{2p}]+\E\Big[\int_{-\tau}^0|y^{\xi,\Delta}_{t_{k}}(r)|^{2p}\mathrm d\nu_2(r)\Big]\Big)\nn\\
&\quad+\varepsilon_1\Delta\E[|y^{\xi,\Delta}(t_{k})|^{2p}].
\end{align}
Similarly, we derive 
\begin{align}\label{AB4}
\E[I_i]&\leq K(\varepsilon_1)\Delta+K\Delta^2\Big(\E[|y^{\xi,\Delta}(t_{k})|^{2p}]+\E\Big[\int_{-\tau}^0|y^{\xi,\Delta}_{t_{k}}(r)|^{2p}\mathrm d\nu_2(r)\Big]\Big)\nn\\
&\quad+\varepsilon_1\Delta\E[|y^{\xi,\Delta}(t_{k})|^{2p}],
\quad i\in\mathbb N ~\mbox{with}~3\leq i\leq p.
\end{align}
Inserting \eqref{AB2}--\eqref{AB4} into \eqref{AB1}, one has
\begin{align*}
&\quad\E[|z^{\xi,\Delta}(t_{k+1})|^{2p}]\leq \E[|y^{\xi,\Delta}(t_{k})|^{2p}]
+K(\varepsilon_1)\Delta
-\Big(2p(1-\theta)\big(a_1-\frac{a_2(p-1)}{p}\big)\nn\\
&\quad-K\Delta-K\varepsilon_1\Big)\Delta\E[|y^{\xi,\Delta}(t_{k})|^{2p}]\nn\\
&\quad+\Big(2a_2(1-\theta)+K\Delta\Big)\Delta\E\Big[\int_{-\tau}^0|y^{\xi,\Delta}_{t_{k}}(r)|^{2p}\mathrm d\nu_2(r)\Big].
\end{align*}
Then for $\epsilon\in(0,1)$,
\begin{align}\label{AB7}
&\quad e^{\epsilon t_{k+1}}\E[|z^{\xi,\Delta}(t_{k+1})|^{2p}]=\sum_{i=0}^{k}\Big(e^{\epsilon t_{i+1}}\E[|z^{\xi,\Delta}(t_{i+1})|^{2p}]-e^{\epsilon t_{i}}\E[|z^{\xi,\Delta}(t_{i})|^{2p}]\Big)\nn\\
&\leq \sum_{i=0}^{k}e^{\epsilon t_{i+1}}\E[|y^{\xi,\Delta}(t_{i})|^{2p}]
-\sum_{i=0}^{k}e^{\epsilon t_{i}}\E[|z^{\xi,\Delta}(t_{i})|^{2p}]
+K(\varepsilon_1)\Delta\sum_{i=0}^{k}e^{\epsilon t_{i+1}}\nn\\
&\quad-\Big(2p(1\!-\!\theta)\big(a_1\!-\!\frac{a_2(p-1)}{p}\big)-K\Delta-K\varepsilon_1\Big)\Delta\sum_{i=0}^{k}e^{\epsilon t_{i+1}}\E[|y^{\xi,\Delta}(t_{i})|^{2p}]\nn\\
&\quad+\Big(2a_2(1-\theta)+K\Delta\Big)\Delta\sum_{i=0}^{k}e^{\epsilon t_{i+1}}\E\Big[\int_{-\tau}^0|y^{\xi,\Delta}_{t_{i}}(r)|^{2p}\mathrm d\nu_2(r)\Big]\nn\\
&\leq \sum_{i=0}^{k}e^{\epsilon t_{i+1}}\E[|y^{\xi,\Delta}(t_{i})|^{2p}]
-\sum_{i=0}^{k}e^{\epsilon t_{i}}\E[|z^{\xi,\Delta}(t_{i})|^{2p}]
+K(\varepsilon_1, \epsilon)e^{\epsilon t_{k+1}}+K\|\xi\|^{2p}\nn\\
&\quad-\Big(2p(1\!-\!\theta)\big(a_1\!-\!a_2e^{\epsilon\tau}\big)-K\Delta-K\varepsilon_1\Big)\Delta\sum_{i=0}^{k}e^{\epsilon t_{i+1}}\E[|y^{\xi,\Delta}(t_{i})|^{2p}].
\end{align}
According to $z^{\xi,\Delta}(t_{k})=y^{\xi,\Delta}(t_{k})-\theta\Delta b(y^{\xi,\Delta}_{t_{k}})$ and \cref{a2}, we obtain
\begin{align*}
|z^{\xi,\Delta}(t_{k})|^{2}\geq -K\Delta+(1+2a_1\theta\Delta)|y^{\xi,\Delta}(t_{k})|^2-2a_2\theta\Delta\int_{-\tau}^0|y^{\xi,\Delta}{t_{k}}(r)|^{2}\mathrm d\nu_2(r).
\end{align*}
This implies
\begin{align}\label{AB6}
&\quad(1+2a_1\theta\Delta)^{p}\sum_{i=0}^ke^{\epsilon t_i}|y^{\xi,\Delta}(t_{i})|^{2p}\nn\\
&\leq (1\!+\!\varepsilon_1)\sum_{i=0}^ke^{\epsilon t_i}\Big(|z^{\xi,\Delta}(t_{i})|^{2}\!+\!2a_2\theta\Delta\int_{-\tau}^0|y^{\xi,\Delta}_{t_{i}}(r)|^{2}\mathrm d\nu_2(r)\Big)^{p}\!+\!K(\varepsilon_1)\Delta^{p}\sum_{i=0}^k e^{\epsilon t_i}\nn\\
&\leq(1+\varepsilon_1)\sum_{i=0}^k\sum_{l=0}^{p}
\frac{C_{p}^{l}(2a_2\theta\Delta)^{l}(p-l)}{p}e^{\epsilon t_i}|z^{\xi,\Delta}(t_{i})|^{2p}\nn\\
&\quad+
(1+\varepsilon_1)\sum_{i=0}^k\sum_{l=0}^{p}
\frac{C_{p}^{l}(2a_2\theta\Delta)^{l}l}{p}e^{\epsilon t_i}\int_{-\tau}^0|y^{\xi,\Delta}_{t_{i}}(r)|^{2p}\mathrm d\nu_2(r)+K(\varepsilon_1)\Delta e^{\epsilon t_{k}}\nn\\
&\leq(1+\varepsilon_1)\sum_{i=0}^k\sum_{l=0}^{p}
\frac{C_{p}^{l}(2a_2\theta\Delta)^{l}(p-l)}{p}e^{\epsilon t_i}|z^{\xi,\Delta}(t_{i})|^{2p}\nn\\
&\quad+
(1+\varepsilon_1)e^{\epsilon\tau}\sum_{i=0}^k\sum_{l=0}^{p}
\frac{C_{p}^{l}(2a_2\theta\Delta)^{l}l}{p}e^{\epsilon t_i}|y^{\xi,\Delta}(t_i)|^{2p}+K\|\xi\|^{2p}\Delta+K(\varepsilon_1)\Delta e^{\epsilon t_k}.
\end{align}
Owing to $a_1>a_2$, choose sufficiently small numbers $\varepsilon_1, \epsilon>0$ such that
$a_1>a_2(1+\varepsilon_1)e^{\epsilon \tau}.$
Combining \eqref{AB6}, we arrive at
\begin{align*}
&\quad\sum_{i=0}^k\sum_{l=0}^{p}
\frac{C_{p}^{l}(2a_1\theta\Delta)^{l}(p-l)}{p}e^{\epsilon t_i}|y^{\xi,\Delta}(t_{i})|^{2p}\nn\\
&\leq(1+\varepsilon_1)\sum_{i=0}^k\sum_{l=0}^{p}
\frac{C_{p}^{l}(2a_2\theta\Delta)^{l}(p-l)}{p}e^{\epsilon t_i}|z^{\xi,\Delta}(t_{i})|^{2p}\nn\\
&+K\|\xi\|^{2p}+K(\varepsilon_1)\Delta e^{\epsilon t_k}.
\end{align*}
Plugging above inequality into \eqref{AB7}, we deduce
\begin{align*}
&\quad e^{\epsilon t_{k+1}}\E[|z^{\xi,\Delta}(t_{k+1})|^{2p}]
\leq \Big(e^{\epsilon \Delta}-\frac{\sum_{l=0}^{p}C_{p}^{l}(2a_1\theta\Delta)^{l}(p-l)}{\sum_{l=0}^{p}
C_{p}^{l}(2a_2(1+\varepsilon_1)\theta\Delta)^{l}(p-l)}\Big)\times\nn\\
&\quad\sum_{i=0}^ke^{\epsilon t_{i}}\E[|y^{\xi,\Delta}(t_{i})|^{2p}]+K(\varepsilon_1, \epsilon)e^{\epsilon t_{k+1}}+K\|\xi\|^{2p}-\Big(2p(1\!-\!\theta)\big(a_1\!-\!a_2e^{\epsilon\tau}\big)\nn\\
&\quad-K\Delta-K\varepsilon_1\Big)\Delta\sum_{i=0}^{k}e^{\epsilon t_{i+1}}\E|y^{\xi,\Delta}(t_{i})|^{2p}].
\end{align*}
Since
\begin{align*}
&\quad\frac{\sum_{l=0}^{p}C_{p}^{l}(2a_1\theta\Delta)^{l}(p-l)}{\sum_{l=0}^{p}
C_{p}^{l}(2a_2(1+\varepsilon_1)\theta\Delta)^{l}(p-l)}\nn\\
&\geq 1+ \frac{\sum_{l=0}^{p}C_{p}^{l}(2(a_1-a_2(1+\varepsilon_1))\theta\Delta)^{l}(p-l)}{\sum_{l=0}^{p}
C_{p}^{l}(2a_2(1+\varepsilon_1)\theta\Delta)^{l}(p-l)}\nn\\
&\geq1+ \frac{C_{p}^{0}p}{\sum_{l=0}^{p}
C_{p}^{l}(4a_2\theta)^{l}p}=1+ \frac{1}{(4a_2\theta+1)^{p}},
\end{align*}
it is straightforward to see that
\begin{align*}
&\quad e^{\epsilon t_{k+1}}\E[|z^{\xi,\Delta}(t_{k+1})|^{2p}]
\leq \Big(e^{\epsilon \Delta}-1-\frac{1}{(4a_2\theta+1)^{p}}\Big)\sum_{i=0}^ke^{\epsilon t_{i}}\E[|y^{\xi,\Delta}(t_{i})|^{2p}]\nn\\
&\quad+K(\varepsilon_1, \epsilon)e^{\epsilon t_{k+1}}+K\|\xi\|^{2p}-\Big(2p(1-\theta)\big(a_1-a_2e^{\epsilon\tau}\big)-K\Delta-K\varepsilon_1\Big)\Delta\times\nn\\
&\quad\sum_{i=0}^{k}e^{\epsilon t_{i+1}}\E[|y^{\xi,\Delta}(t_{i})|^{2p}].
\end{align*}
There exists a constant $\tilde\Delta_p\in(0,1]$ such that 
$2p(1-\theta)(a_1-a_2)-K\tilde\Delta_p>0.$
Let $\varepsilon_1,\epsilon >0$ be sufficiently small such that
\begin{align*}
&e^{\epsilon \Delta}-1-\frac{1}{(4a_2\theta+1)^{p}}<0,\\
&2p(1-\theta)\big(a_1-a_2e^{\epsilon\tau}\big)-K\tilde\Delta_p-K\varepsilon_1>0.
\end{align*}
Thus for any $\Delta\in(0,\tilde \Delta_p]$,
\begin{align*}
e^{\epsilon t_{k+1}}\E[|z^{\xi,\Delta}(t_{k+1})|^{2p}]
\leq K(\varepsilon_1, \epsilon)e^{\epsilon t_{k+1}}+K\|\xi\|^{2p},
\end{align*}
which yields 
$\sup_{k\geq-N}\E[|z^{\xi,\Delta}(t_{k})|^{2p}]\leq K(1+\|\xi\|^{2p})$.
This, along with $z^{\xi,\Delta}(t_{k})=y^{\xi,\Delta}(t_{k})-\theta\Delta b(y^{\xi,\Delta}_{t_{k}})$, implies that
$\sup_{k\geq-N}\E[|y^{\xi,\Delta}(t_{k})|^{2p}]\leq K(1+\|\xi\|^{2p}).$
Furthermore, similar to \underline{Step 2} in the proof of \cref{p2.2}, we conclude that
$\sup_{k\in\mathbb N}\E[\|y^{\xi,\Delta}_{t_{k}}\|^{2p}]\leq K(1+\|\xi\|^{2p}).$
The proof is completed.
\end{proof}

\begin{lemma}\label{Ch4highDx}
Under \cref{a2,addnoise,high5}, there exists a constant $\hat\lambda_3>0$ such that for any $p\in\mathbb N_+$ with 
$p\geq2$,
\begin{align*}
&\quad\E[\|\mathcal D x^{\tilde\xi}_t\cdot\eta\|^{2p}]\leq K e^{-\hat\lambda_3 pt}\|\eta\|^{2p},\; t>0,\\
&\quad \E[\| D_{r_1} x^{\tilde\xi}_t\|^{2p}]\leq K e^{-\hat\lambda_3 p(t-r_1)}(1+\E[\|D_{r_1}\tilde \xi\|^{2p}]),\;t>r_1,\\
&\quad \E[\|D_{r_1}\mathcal D x^{\tilde\xi}_t\cdot\eta\|^{2p}]\leq K e^{-\hat\lambda_3 pt}\|\eta\|^{2p}\big(1+\E[\|D_{r_1}\tilde \xi\|^{4p}]\big),\;t>0,\\
&\quad \E[\|D_{r_1, r_2}x^{\tilde\xi}_t\|^{2p}]\leq
K e^{-\hat\lambda_3 p(t-r_1\vee r_2)}\big(1\!+\!\E[\|D_{r_1, r_2}\tilde\xi\|^{2p}]\!+\!\E[\|D_{r_1}\tilde \xi\|^{4p}]\!\\
&\quad \quad\quad\quad\quad\quad\quad\quad+\!\E[\|D_{r_2}\tilde \xi\|^{4p}]\big),\; t>r_1\vee r_2,\\
&\quad \E[\|D_{r_1, r_2}\mathcal D x^{\tilde\xi}_t\cdot\eta\|^{2p}]\leq
 K e^{-\hat\lambda_3 pt}\|\eta\|^{2p}\big(1+
 \E[\|D_{r_1,r_2}\tilde \xi\|^{4p}]+\E[\|D_{r_1}\tilde \xi\|^{8p}]\nn\\
 &\quad\quad\quad\quad\quad\quad\quad\quad\quad\quad+\E[\|D_{r_2}\tilde \xi\|^{8p}]\big),\;t>0,,
\end{align*}
where $\tilde \xi\in \mathbb D^{2,8p}(\mathcal C^d)$.
\end{lemma}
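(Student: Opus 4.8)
The plan is to establish the five estimates in order of increasing derivative complexity, since each bound feeds into the next through the variation-of-constants structure of the linearized equations. The unifying tool is the It\^o formula applied to $e^{\hat\lambda_3 pt}|\cdot|^{2p}$ combined with the dissipativity encoded in \cref{a2} (which, via \eqref{DB}--\eqref{DS}, controls the first G\^ateaux and Malliavin derivatives) and the boundedness of all derivatives of $b$ up to order $4$ from \cref{high5}. The crucial simplification is that under \cref{addnoise} the noise is additive, so $\mathcal D\sigma\equiv 0$ and all its higher derivatives vanish; this means that for every derivative process the diffusion term in the linearized equation comes only from $\sigma$ being constant (contributing an initial datum via $\sigma(x^{\tilde\xi}_{r})\mathbf 1_{[0,t]}(r)$ in the Malliavin case, and nothing in the G\^ateaux case). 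Consequently the quadratic-variation contributions that forced the restrictive condition $a_1-a_2-(2p-1)L>0$ in \cref{lem_fir_d} are absent, and the weaker hypothesis $2a_1-\frac{2\theta-1}{\theta^2}-2a_2>0$ together with $a_1>a_2$ suffices to close each Gr\"onwall-type argument.

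First I would treat $\mathcal Dx^{\tilde\xi}_t\cdot\eta$, which solves $\mathrm d(\mathcal Dx^{\tilde\xi}(t)\cdot\eta)=\mathcal Db(x^{\tilde\xi}_t)\mathcal Dx^{\tilde\xi}_t\cdot\eta\,\mathrm dt$ (no martingale part, by additivity) with initial datum $\eta$. Applying the It\^o formula to $e^{\hat\lambda_3 pt}|\mathcal Dx^{\tilde\xi}(t)\cdot\eta|^{2p}$, using \eqref{DB} to bound $\langle \mathcal Dx^{\tilde\xi}(t)\cdot\eta,\mathcal Db(x^{\tilde\xi}_t)\mathcal Dx^{\tilde\xi}_t\cdot\eta\rangle$, and absorbing the delayed term $\int_{-\tau}^0|\mathcal Dx^{\tilde\xi}(t+r)\cdot\eta|^2\mathrm d\nu_2(r)$ by a time-shift estimate as in \cref{lem_fir_d}, I would choose $\hat\lambda_3>0$ small enough that $\hat\lambda_3-2a_1+2a_2e^{\hat\lambda_3 p\tau}<0$ (possible since $a_1>a_2$). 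This gives exponential decay directly, with no Burkholder--Davis--Gundy step needed for the $L^{2p}$ bound on $\mathcal Dx^{\tilde\xi}(t)\cdot\eta$; the functional-norm version follows by the same Burkholder--Davis--Gundy-plus-Young argument used in \cref{lem_fir_d}, noting the stochastic integral there now vanishes so the argument is strictly simpler. The single Malliavin derivative $D_{r_1}x^{\tilde\xi}_t$ is handled identically, except its equation carries the inhomogeneous jump $\tilde\sigma\mathbf 1_{[0,t]}(r_1)$ at $t=r_1$, producing the constant $\tilde\sigma$ as initial datum and hence the $(1+\mathbb E[\|D_{r_1}\tilde\xi\|^{2p}])$ factor; this mirrors \cref{lem_fir_d2} verbatim.

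The remaining three estimates are obtained by iterating this scheme on the second-order variational equations, exactly along the lines of Lemmas \ref{lem_sec_d1} and \ref{Ch3D2x}. For $D_{r_1}\mathcal Dx^{\tilde\xi}_t\cdot\eta$ and $D_{r_1,r_2}x^{\tilde\xi}_t$, the linearized equation now contains a forcing term built from $\mathcal D^2b$ paired with two first-order derivative processes; I would apply the It\^o formula, use the dissipativity of $\mathcal Db$ via \eqref{DB} on the linear part, bound the $\mathcal D^2b$ forcing term by \cref{high5} (giving $K\|(\text{first deriv}_1)\|\,\|(\text{first deriv}_2)\|$ with \emph{no} polynomial-growth prefactor, since $b$ has bounded derivatives here), and feed in the already-established exponential bounds on the first-order processes. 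The product of two exponentials $e^{-\hat\lambda_3 ps}e^{-\hat\lambda_3 p(s-r_1)}$ integrates against $e^{\hat\lambda_3 ps}$ to yield the claimed decay, and the H\"older splitting transfers the $L^{2p}$ norm of the product onto $L^{4p}$ norms of the factors, which accounts for the doubling of exponents ($4p$, then $8p$) in the stated right-hand sides. The third-order case $D_{r_1,r_2}\mathcal Dx^{\tilde\xi}_t\cdot\eta$ requires $\mathcal D^3b$ and products of three lower-order processes, whence the $8p$ exponents. \textbf{The main obstacle} I anticipate is purely bookkeeping: organizing the many cross terms arising from differentiating the implicit/delayed structure three times and verifying that every forcing term integrates to a convergent, time-uniformly-decaying bound. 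There is no genuine analytic difficulty beyond what Lemmas \ref{lem_sec_d1}--\ref{Ch3D2x} already surmount; the additive-noise hypothesis removes the delicate quadratic-variation balancing, so the proof is essentially a careful transcription of those earlier arguments with $\mathcal D^k\sigma=0$ for $k\ge 1$ and the relaxed spectral-gap condition, and I would simply state that the remaining estimates follow by the same inductive It\^o--Gr\"onwall procedure.
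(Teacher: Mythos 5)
Your proposal is correct and follows essentially the same route as the paper, which omits this proof precisely because it is the transcription you describe: repeat the It\^o--Gr\"onwall arguments of Lemmas \ref{lem_fir_d}, \ref{lem_fir_d2}, \ref{lem_sec_d1}, and \ref{Ch3D2x} with all $\mathcal D^k\sigma$ terms ($k\ge 1$) removed by \cref{addnoise}, the $\mathcal D^2b,\mathcal D^3b$ forcings bounded via \cref{high5}, and the exponent doubling ($2p\to 4p\to 8p$) coming from the H\"older splitting of products of lower-order derivative processes. One small correction: the condition $2a_1-\frac{2\theta-1}{\theta^2}-2a_2>0$ you invoke is only needed for the numerical-solution analogue (Lemma \ref{Ch4highDy}); for the exact solution here, $\theta$ plays no role and the Gr\"onwall arguments close under \cref{a2} alone, i.e.\ $a_1>a_2$ after $L$ is rendered irrelevant by additivity.
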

\begin{proof}
The proofs are similar to those of \cref{lem_fir_d,lem_fir_d2,lem_sec_d1,Ch3D2x}, and hence  are omitted.
\end{proof}
\begin{lemma}\label{Ch4highDy}
Let Assumption \ref{a2} with $2a_1-
\frac{2\theta-1}{\theta^2}-2a_2>0$, Assumptions \ref{addnoise} and \ref{high5} hold. Then there exists a constant $\hat\lambda_4>0$ such that for any $p\in\mathbb N_+$ and $\Delta\in(0,\tilde \Delta_p]$, 
\begin{align*}
&\E[\|\mathcal D y^{\tilde\xi,\Delta}_{t_k}\cdot\eta\|^{2p}]\leq Ke^{-\hat\lambda_4p t_k}\|\eta\|^{2p},\;t_k>0,\\
&\E[\|D_{r_1} y^{\tilde\xi,\Delta}_{t_k}\|^{2p}]\leq Ke^{-\hat\lambda_4p (t_k-r_1)}(1+\E[\|D_{r_1}\tilde \xi\|^{2p}]),\;t_k>r_1,\\
&\E[\|D_{r_1}\mathcal D y^{\tilde\xi,\Delta}_{t_k}\cdot\eta\|^{2p}]\leq Ke^{-\hat\lambda_4p t_k}\|\eta\|^{2p}(1+\E[\|D_{r_1}\tilde \xi\|^{4p}]),\; t_k>0,\\
&\E[\|D_{r_1, r_2}y^{\tilde\xi,\Delta}_{t_k}\|^{2p}]\leq Ke^{-\hat\lambda_4p (t-r_1\vee r_2)}\big(1\!+\!\E[\|D_{r_1, r_2}\tilde\xi\|^{2p}]\!+\!\E[\|D_{r_1}\tilde \xi\|^{4p}]\!\\
&\quad \quad\quad\quad\quad\quad\quad\quad\quad+\!\E[\|D_{r_2}\tilde \xi\|^{4p}]\big),\;t_k>r_1\vee r_2,\\
&\E[\|D_{r_1, r_2}\mathcal D y^{\tilde\xi,\Delta}_{t_k}\cdot\eta\|^{2p}]\leq Ke^{-\hat\lambda_4p t_k}\|\eta\|^{2p}\big(1+
 \E[\|D_{r_1,r_2}\tilde \xi\|^{4p}]+\E[\|D_{r_1}\tilde \xi\|^{8p}]\nn\\
 &\quad\quad\quad\quad\quad\quad\quad\quad\quad\quad+\E[\|D_{r_2}\tilde \xi\|^{8p}]\big),\; t_k>0,\\
&\E[\|D_{r_1, r_2,r_3}y^{\tilde\xi,\Delta}_{t_k}\|^{2p}]\leq Ke^{-\hat\lambda_4p (t_k-r_1\vee r_2\vee r_3)}\big(1+\E[\|D_{r_1,r_2,r_3}\tilde \xi\|^{2p}]+\E[\|D_{r_1,r_2}\tilde \xi\|^{4p}]\nn\\
&\quad\quad+\E[\|D_{r_1,r_3}\tilde \xi\|^{4p}]+\E[\|D_{r_2,r_3}\tilde \xi\|^{4p}]+\E[\|D_{r_1}\tilde \xi\|^{8p}]+\E[\|D_{r_2}\tilde \xi\|^{8p}]\\
&\quad \quad +\E[\|D_{r_3}\tilde \xi\|^{8p}]\big),\;t_k>r_1\vee r_2\vee r_3,\\
&\E[\|D_{r_1, r_2, r_3}\mathcal D y^{\tilde\xi,\Delta}_{t_k}\cdot\eta\|^{2p}]\leq Ke^{-\hat\lambda_4p t_k}\|\eta\|^{2p}\Big(1+\E[\|D_{r_1,r_2,r_3}\tilde \xi\|^{4p}]\nn\\
&\quad\quad+\sum_{\substack{i_1,i_2\in\{1,2,3\}\\i_1<i_2}}
 \E[\|D_{r_{i_1},r_{i_2}}\tilde \xi\|^{8p}]+\sum_{i\in\{1,\ldots, 3\}}\E[\|D_{r_{i}}\tilde \xi\|^{16p}]\Big),\;t_k>0,\\
&\E[\|D_{r_1 r_2,r_3,r_4}y^{\tilde\xi,\Delta}_{t_k}\|^{2p}]\leq Ke^{-\hat\lambda_4p (t_k-r_1\vee r_2\vee r_3\vee r_4)}\Big(1+\E[\|D_{r_1,r_2,r_3,r_4}\tilde \xi\|^{2p}]\nn\\
&\quad\quad+\sum_{\substack{i_1,i_2,i_3
\in
\{1,\ldots,4\}\\ i_1<i_2<i_3}}\E[\|D_{r_{i_1},r_{i_2},r_{i_3}}\tilde \xi\|^{4p}]+
\sum_{\substack{i_1,i_2\in\{1,\ldots,4\}\\i_1<i_2}}\E[\|D_{r_{i_1},r_{i_2}}\tilde \xi\|^{8p}]\nn\\
&\quad\quad+\sum_{i\in\{1,\ldots,4\}}\E[\|D_{r_{i}}\tilde \xi\|^{16p}]\Big),\; t_k>r_1\vee\cdots\vee r_4,
%&\E[\|D_{r_1, r_2, r_3,r_4}\mathcal D y^{\tilde\xi,\Delta}_{t_k}\cdot\eta\|^{2p}]\leq Ke^{-\hat\lambda_4p %t_k}\|\eta\|^{2p}\Big(1+\E[\|D_{r_1,r_2,r_3,r_4}\tilde \xi\|^{4p}]\nn\\
%&\quad\quad+\sum_{\substack{i_1,i_2,i_3\in\{1,\ldots, 4\}\\i_1<i_2<i_3}}
 %\E[\|D_{r_{i_1},r_{i_2},r_{i_3}}\tilde \xi\|^{8p}]+\sum_{\substack{i_1,i_2\in\{1,\ldots, 4\}\\i_1<i_2}}
 %\E[\|D_{r_{i_1},r_{i_2}}\tilde \xi\|^{16p}]\nn\\
 %&\quad\quad+\sum_{i\in\{1,\ldots, 4\}}\E[\|D_{r_{i}}\tilde \xi\|^{32p}]\Big),\\
%&\E[\|D_{r_1, r_2,r_3,r_4,r_5}y^{\tilde\xi,\Delta}_{t_k}\|^{2p}]\leq Ke^{-\hat\lambda_4p (t_k-r_1\vee r_2\vee r_3\vee r_4 \vee r_5)}\Big(1+\E[\|D_{r_1,r_2,r_3,r_4,r_5}\tilde \xi\|^{2p}]\nn\\
%&\quad\quad+\sum_{\substack{i_1,i_2,i_3,i_4
%\in
%\{1,\ldots,5\}\\ i_1<i_2<i_3<i_4}}\E[\|D_{r_{i_1},r_{i_2},r_{i_3},r_{i_4}}\tilde \xi\|^{4p}]+
%\sum_{\substack{i_1,i_2,i_3\in\{1,\ldots,5\}\\i_1<i_2<i_3}}\E[\|D_{r_{i_1},r_{i_2},r_{i_3}}\tilde \xi\|^{8p}]\nn\\
%&\quad\quad\sum_{\substack{i_1,i_2\in\{1,\ldots,5\}\\i_1<i_2}}\E[\|D_{r_{i_1},r_{i_2}}\tilde \xi\|^{16p}]+
%\sum_{i\in\{1,\ldots,5\}}\E[\|D_{i}\tilde \xi\|^{32p}]\Big),
\end{align*}
where $\tilde \xi\in \mathbb D^{4,p''}(\mathcal C^d)$ with some $p''>0$.
\end{lemma}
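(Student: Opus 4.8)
The plan is to prove \cref{Ch4highDy} by induction on the order of the Malliavin derivative, mirroring the structure already established for the exact solution in \cref{Ch4highDx} and reusing the estimate techniques of \cref{lem_fir_d,Dy_esti,lemma_high3}. The base cases---the bounds for $\mathcal D y^{\tilde\xi,\Delta}_{t_k}\cdot\eta$, $D_{r_1}y^{\tilde\xi,\Delta}_{t_k}$, and the mixed second-order derivatives $D_{r_1}\mathcal D y^{\tilde\xi,\Delta}_{t_k}\cdot\eta$ and $D_{r_1,r_2}y^{\tilde\xi,\Delta}_{t_k}$---are obtained exactly as in \cref{Dy_esti,Dy_esti2,lemma_high3}, except that the additive-noise hypothesis (\cref{addnoise}) removes the diffusion-derivative contributions entirely, which both simplifies the recursion and allows the exponential decay rate $\hat\lambda_4$ to be chosen under the weaker spectral gap condition $2a_1-\tfrac{2\theta-1}{\theta^2}-2a_2>0$ rather than under the large-$a_1$ requirement of \cref{Dy_esti}. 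First I would write down the variational equations obtained by applying $D_{r_1},D_{r_1,r_2},\ldots$ successively to the $\theta$-EM recursion \eqref{thEM}; because $\sigma\equiv\tilde\sigma$ is constant, $\mathcal D^{\ell}\sigma=0$ for all $\ell\ge 1$, so every iterated-derivative process solves a \emph{linear} implicit difference equation driven by lower-order derivatives through the chain-rule terms $\mathcal D^{j}b$, $2\le j\le 4$.

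The inductive step for the $n$-th order derivative ($n\le 4$) proceeds as follows. Applying the chain rule, $D_{r_1,\ldots,r_n}y^{\tilde\xi,\Delta}(t_{k+1})$ satisfies a recursion of the split form used in \cref{Dy_esti}: one introduces the auxiliary variable $D_{r_1,\ldots,r_n}z^{\tilde\xi,\Delta}$, squares, and takes the $p$-th power, producing a leading term $A_{\theta,\Delta}^p|D_{r_1,\ldots,r_n}z^{\tilde\xi,\Delta}(t_k)|^{2p}$ plus remainder terms $\sum_i C_p^i\mathbb E[\mathcal I_i]$. The crucial point is that the highest-order derivative appears only linearly through $\mathcal Db(y^{\tilde\xi,\Delta}_{t_k})D_{r_1,\ldots,r_n}y^{\tilde\xi,\Delta}_{t_k}$, to which \eqref{DB} applies verbatim, giving the dissipativity needed to absorb this term; all genuinely nonlinear contributions involve $\mathcal D^jb$ with $j\ge 2$ contracted against \emph{strictly lower-order} derivative processes $D_{S}y^{\tilde\xi,\Delta}_{t_k}$ with $|S|<n$. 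By \cref{high5}, $\mathcal D^jb$ is bounded, so each such term is controlled by products of lower-order derivative norms, whose time-integrated exponential decay is already available from the induction hypothesis. Choosing $\hat\lambda_4>0$ small, multiplying by $e^{\hat\lambda_4 p t_{k+1}}$, summing the telescoping difference as in \eqref{DYexp1}, and invoking \eqref{l4.2.16}-type estimates $A_{\theta,\Delta}e^{\hat\lambda_4\Delta}\le$ (appropriate bound) converts the products of decaying lower-order bounds into the stated right-hand side; the Burkholder--Davis--Gundy inequality handles the supremum/martingale pieces exactly as in \cref{l4.2,Dy_esti}.

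The combinatorial bookkeeping in the highest-order cases is where the real work lies. When $D_{r_1,\ldots,r_n}$ hits a composite term $b(y^{\tilde\xi,\Delta}_{t_k})$, the Fa\`a di Bruno formula generates a sum over set partitions of $\{r_1,\ldots,r_n\}$, each block feeding a derivative of $b$ of the corresponding order against the product of partial-derivative processes indexed by the blocks. Tracking which products of lower-order bounds arise---and verifying that every resulting term matches one of the summands on the right-hand side (for instance, the $\sum_{i_1<i_2}\mathbb E[\|D_{r_{i_1},r_{i_2}}\tilde\xi\|^{8p}]$ and $\sum_i\mathbb E[\|D_{r_i}\tilde\xi\|^{16p}]$ terms in the fourth-order estimate, with the escalating exponents $4p,8p,16p$ coming from repeated applications of H\"older's inequality to split products of two or three derivative factors)---is the main obstacle. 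The exponent doubling is forced precisely because, at each level, a product of $m$ derivative factors must be separated by H\"older into individual moments, raising the integrability index geometrically; one must check at the outset that $\tilde\xi\in\mathbb D^{4,p''}$ with $p''$ large enough (e.g.\ $p''\ge 16p$ for the top-order bound) so that all these higher moments are finite, which is the role of the hypothesis $\tilde\xi\in\mathbb D^{4,p''}(\mathcal C^d)$. I would carry out the first- and second-order cases in full to fix the notation and the decay-rate selection, then state that the third- and fourth-order estimates follow by the same induction, presenting only the partition structure and the key dissipativity/H\"older steps rather than grinding through every term.
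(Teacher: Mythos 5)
Your proposal is correct and follows essentially the same route as the paper: the auxiliary split recursion for $D_{r_1,\ldots,r_n}z^{\tilde\xi,\Delta}$, dissipativity of $\mathcal Db$ inherited from Assumption \ref{a2}, exponential weighting with $\hat\lambda_4$ chosen small against the gap $2a_1-\tfrac{2\theta-1}{\theta^2}-2a_2e^{\hat\lambda_4\tau}>0$, boundedness of $\mathcal D^jb$ from Assumption \ref{high5} to control the lower-order (Fa\`a di Bruno) terms, and H\"older-driven exponent doubling producing the $4p$, $8p$, $16p$ moments of $\tilde\xi$ --- indeed the paper only writes out the first (G\^ateaux) estimate and declares the remaining ones similar, exactly as you organize the induction. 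One cosmetic remark: under the additive-noise assumption no martingale terms appear in any of the derivative recursions (the paper's first-order bound is in fact pathwise, with expectations taken only at the very end), so your appeal to the Burkholder--Davis--Gundy inequality is superfluous rather than incorrect.
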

\begin{proof}
We only show the estimate of $\mathcal D y^{\tilde \xi,\Delta}_{t_{k}}\cdot\eta$, since estimates of other terms  are similar.
Similar to the proof of Lemma \ref{Dy_esti}, by Assumptions \ref{a2} and \ref{addnoise}, we derive 
\begin{align*}
&\quad|\mathcal D z^{\tilde \xi,\Delta}(t_{k+1})\cdot\eta|^2=|\mathcal D z^{\tilde \xi,\Delta}(t_{k})\cdot\eta|^2+|\mathcal D b(y^{\tilde \xi,\Delta}_{t_k})\mathcal D y^{\tilde \xi,\Delta}_{t_k}\cdot\eta|^2\Delta^2\nn\\
&\quad+2\<\mathcal D y^{\tilde \xi,\Delta}(t_{k})\cdot\eta-\theta\Delta\mathcal D b(y^{\tilde \xi,\Delta}_{t_k})\mathcal D y^{\tilde \xi,\Delta}_{t_k}\cdot\eta,\mathcal D b(y^{\tilde \xi,\Delta}_{t_k})\mathcal D y^{\tilde \xi,\Delta}_{t_k}\cdot\eta\>\Delta\nn\\
&\leq A_{\theta,\Delta}|\mathcal D z^{\tilde \xi,\Delta}(t_{k})\cdot\eta|^2
-\big(2a_1-
\frac{2\theta-1}{\theta^2}\big)\Delta|\mathcal D y^{\tilde \xi,\Delta}(t_{k})\cdot\eta|^2\nn\\
&\quad+2a_2\Delta\int_{-\tau}^{0}|\mathcal D y^{\tilde \xi,\Delta}(t_{k}+r)\cdot\eta|^2\mathrm d\nu_2(r),
\end{align*}
where $A_{\theta,\Delta}=\frac{(1-\theta)^2}{\theta^2}+\frac{2\theta-1}{\theta^2(1+\Delta)}$.
For $\hat\lambda_4>0$, we arrive at
\begin{align*}
&\quad e ^{\hat\lambda_4 t_{k+1}}|\mathcal D z^{\tilde \xi,\Delta}(t_{k+1})|^2
\leq \|\eta \|^2+(A_{\theta,\Delta}e^{\hat\lambda_4\Delta}-1)\sum_{i=0}^{k}e ^{\hat\lambda_4 t_{i}}|\mathcal D z^{\tilde \xi,\Delta}(t_{k})\cdot\eta|^2\nn\\
&\quad-\big(2a_1-
\frac{2\theta-1}{\theta^2}\big)\Delta\sum_{i=0}^{k}e ^{\hat\lambda_4 t_{i+1}}|\mathcal D y^{\tilde \xi,\Delta}(t_{i})\cdot\eta|^2\nn\\
&\quad+2a_2\Delta\sum_{i=0}^{k}e ^{\hat\lambda_4 t_{i+1}}\int_{-\tau}^{0}|\mathcal D y^{\tilde \xi,\Delta}(t_{i}+r)\cdot\eta|^2\mathrm d\nu_2(r)\nn\\
&\leq K\|\eta\|^2+(A_{\theta,\Delta}e^{\hat\lambda_4\Delta}-1)\sum_{i=0}^{k}e ^{\hat\lambda_4 t_{i}}|\mathcal D z^{\tilde \xi,\Delta}(t_{k})\cdot\eta|^2\nn\\
&\quad-\big(2a_1-
\frac{2\theta-1}{\theta^2}-2a_2e^{\hat\lambda_4\tau}\big)\Delta\sum_{i=0}^{k}e ^{\hat\lambda_4 t_{i+1}}|\mathcal D y^{\tilde \xi,\Delta}(t_{i})\cdot\eta|^2.
\end{align*}
Note that $A_{\theta,\Delta}=1-\frac{(2\theta-1)\Delta}{\theta^2(1+\Delta)}\leq e^{-\frac{(2\theta-1)\Delta}{2\theta^2}}$. Choose a sufficiently small number $\hat\lambda_4>0$ such that
$A_{\theta,\Delta}e^{\hat\lambda_4\Delta}<1$ and $2a_1-
\frac{2\theta-1}{\theta^2}-2a_2e^{\hat\lambda_4\tau}>0$. Then
\begin{align}\label{LDy+1}
e^{\hat\lambda_4 t_{k+1}}|\mathcal D z^{\tilde \xi,\Delta}(t_{k+1})\cdot\eta|^2\leq K\|\eta\|^2, \quad \mbox{a.s}.
\end{align}
According to $\mathcal D z^{\tilde \xi,\Delta}(t_{k})\cdot\eta=\mathcal D y^{\tilde \xi,\Delta}(t_{k})\cdot\eta-\theta\Delta\mathcal D b(y^{\tilde \xi,\Delta}_{t_k})\mathcal D y^{\tilde \xi,\Delta}_{t_k}\cdot\eta$ and Assumption \ref{a2}, we obtain
\begin{align*}
&\quad (1+2a_1\theta\Delta)\sup_{0\leq i\leq k}e^{\hat\lambda_4 t_{i}}|\mathcal D y^{\tilde \xi,\Delta}(t_{i})\cdot\eta|^2\nn\\
&\leq\sup_{0\leq i\leq k}e^{\hat\lambda_4 t_{i}}|\mathcal D z^{\tilde \xi,\Delta}(t_{i})\cdot\eta|^2
+2a_2\theta\Delta\sup_{0\leq i\leq k}e^{\hat\lambda_4 t_{i}}\int_{-\tau}^{0}|\mathcal D y^{\tilde \xi,\Delta}_{t_{i}}(r)|^2\mathrm d\nu_2(r)\nn\\
&\leq K\|\eta\|^2+ \sup_{0\leq i\leq k}e^{\hat\lambda_4 t_{i}}|\mathcal D z^{\tilde \xi,\Delta}(t_{i})\cdot\eta|^2+2a_2\theta e^{\hat\lambda_4 \tau}\Delta\sup_{0\leq i\leq k}e^{\hat\lambda_4 t_{i}}|\mathcal D y^{\tilde \xi,\Delta}(t_{i})\cdot\eta|^2.
\end{align*}
This, along with $a_2 e^{\hat \lambda_4\tau}<a_1$ and \eqref{LDy+1}, implies that
$$\sup_{0\leq i\leq k}e^{\hat\lambda_4 t_{i}}|\mathcal D y^{\tilde \xi,\Delta}(t_{i})\cdot\eta|^2\leq K\|\eta\|^2, \quad \mbox{a.s}.$$
The desired argument follows by taking expectations.

\end{proof}

\begin{lemma}\label{lower_matrix}
Let Assumption \ref{ass_nablab}  with $L_b=\sup_{i\in\{1,\ldots, n_b\}}\sup_{\phi\in\mathcal C^d}\|k_b^{i}(\phi)\|<\infty$ and 
 Assumption \ref{addnoise} hold.   Then for any $u\in\mathbb R^d$ with $|u|=1$ and $\Delta\in(0,\frac{1}{2L_bn_b\theta}]$,
\begin{align}\label{lower_1}
|u^{\top}\gamma_{\Phi(T;t_j,Y^{\varsigma}_j)}u|\ge \frac{1}{4}\tilde \sigma(T\wedge T_0),
\end{align}
where $T_0=\frac{\ln(\frac32)}{2L_bn_b(\theta+2)}$ is given in Theorem \ref{dens_con}, 
 and $Y^{\varsigma}_j$ is given in \eqref{Y_defini}.
In particular, we have $(\gamma_{\Phi(T;t_i,Y^{\varsigma}_i)})^{-1}\in L^{\infty-}(\Omega).$ 
\end{lemma}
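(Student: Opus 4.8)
The plan is to derive a uniform lower bound on the quadratic form $u^{\top}\gamma_{\Phi(T;t_j,Y^{\varsigma}_j)}u$ by exploiting the additive structure of the noise together with the recursive formula for the Malliavin derivative of the $\theta$-EM solution. Recall from \eqref{recur_gamma} that
\begin{align*}
\gamma_{\Phi(T;t_j,Y^{\varsigma}_j)}=\int_0^{T}D_r\Phi(T;t_j,Y^{\varsigma}_j)\big(D_r\Phi(T;t_j,Y^{\varsigma}_j)\big)^{\top}\mathrm dr,
\end{align*}
so that $u^{\top}\gamma u=\int_0^T|u^{\top}D_r\Phi(T)|^2\mathrm dr$, and it suffices to bound this integral from below. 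First I would isolate the contribution of the interval $[T-(T\wedge T_0),T]$, on which the diffusion increment directly feeds the Malliavin derivative: from the recursion \eqref{expre_deri} in the additive-noise case ($\sigma\equiv\tilde\sigma$), the derivative $D_r\Phi$ satisfies an implicit linear recurrence whose inhomogeneous term is $\tilde\sigma\mathbf 1_{[t_k,t_{k+1}]}(r)$. The idea is that for $r$ near $T$, $D_r\Phi(T)$ is close to $A_{1,k}\tilde\sigma$, and the deviation from this leading term is controlled by the operator norms of the $A_{j,k}$, which are in turn governed by $\mathcal Db$ through $L_b$ and $n_b$.

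The key steps, in order, are as follows. First I would write $D_r\Phi(T)=A_{1,k}\tilde\sigma+\mathcal R_r$ where $\mathcal R_r$ collects the terms $A_{1,k}\sum_{j}A_{j,k}D_r\Phi(t_{k+j})$, and estimate $\|\mathcal R_r\|$ recursively. Using $\|A_{1,k}\|\le\frac{1}{1-2L_bn_b\theta\Delta}\le 2$ for $\Delta\le\frac{1}{4L_bn_b\theta}$ (which follows from Assumption \ref{ass_nablab} with bound $L_b$ exactly as in the proof of Theorem \ref{trun_num_den}), and the bound $\sum_j\|A_{j,k}\|\le L_bn_b(\theta+2)\Delta+\|\mathcal D\sigma\cdot\|$; since the noise is additive $\mathcal D\sigma=0$, this sum is purely of order $L_bn_b(\theta+2)\Delta$. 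Second, I would run a discrete Gr\"onwall-type argument over the grid points in $[T-(T\wedge T_0),T]$ to show that the accumulated perturbation $\|D_r\Phi(T)-A_{1,k}\tilde\sigma\|$ stays below a fixed fraction of $|\tilde\sigma|$ provided $T\wedge T_0$ is small enough; the threshold $T_0=\frac{\ln(3/2)}{2L_bn_b(\theta+2)}$ is precisely the horizon over which the amplification factor $e^{L_bn_b(\theta+2)(T\wedge T_0)}$ stays below $3/2$. Third, I would conclude that $|u^{\top}D_r\Phi(T)|\ge\frac{1}{2}|u^{\top}A_{1,k}\tilde\sigma|\ge\frac14|u^{\top}\tilde\sigma|$ on this interval, and integrate over $r$ to obtain $u^{\top}\gamma u\ge\frac14\tilde\sigma(T\wedge T_0)$, where $\tilde\sigma$ here abbreviates the nondegeneracy constant $\inf_{|u|=1}u^{\top}\tilde\sigma\tilde\sigma^{\top}u$. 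Finally, the bound \eqref{lower_1} is deterministic and uniform in $\omega$, so the smallest eigenvalue of $\gamma$ is bounded below by a positive constant almost surely, giving $\det\gamma\ge c>0$ and hence $(\gamma)^{-1}\in L^{\infty-}(\Omega)$ trivially (indeed $(\gamma)^{-1}\in L^{\infty}(\Omega)$).

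The main obstacle I anticipate is controlling the implicit nature of the recursion \eqref{expre_deri}: because $\Phi_{t_{k+1}}$ appears inside $A_{1,k}$ and also through the delayed terms $D_r\Phi(t_{k+j})$ for $j=-N,\dots,-1$, the perturbation $\mathcal R_r$ couples the current derivative to its entire history over one delay window $[-\tau,0]$. The delayed contributions $A_{j,k}D_r\Phi(t_{k+j})$ with $j<0$ are not small individually, but their coefficients $A_{j,k}$ carry a factor $\Delta$ from the $\mathcal Db$ terms and satisfy $\sum_{j=-N}^{-1}\|A_{j,k}\|\le L_bn_b(\theta+2)\Delta$ because the interpolation weights $I^j$ form a partition of unity. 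The delicate point is therefore to set up the Gr\"onwall estimate over a sliding window correctly, tracking $\sup_{t_{k+j}}\|D_r\Phi(t_{k+j})\|$ rather than a single term, and to verify that the additive structure (absence of a $\mathcal D\sigma$ martingale term in $A_{j,k}$) is what makes the deterministic, pathwise bound possible; with multiplicative noise the argument would only yield an $L^p$-type estimate. Once the window recursion is controlled by the constant $L_bn_b(\theta+2)$, the explicit choice of $T_0$ falls out by requiring the amplification to stay below $3/2$, and the remaining integration is routine.
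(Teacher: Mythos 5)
Your proposal follows essentially the same route as the paper's proof: both exploit the additive noise to run a pathwise (deterministic) discrete Gr\"onwall-type estimate on the Malliavin-derivative recursion, showing that for $r$ in a window of length $T\wedge T_0$ before $T$ the quantity $u^{\top}D_r\Phi(T)$ deviates from $u^{\top}\tilde\sigma$ by at most a fixed fraction — the paper formalizes your ``sliding-window Gr\"onwall tracking the sup over one delay window'' as an explicit two-parameter comparison sequence $A_i^k$ (translation-invariant in $i-k$) with an induction argument — and in both cases $T_0=\frac{\ln(3/2)}{2L_bn_b(\theta+2)}$ arises precisely from keeping the amplification factor below $3/2$. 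Your constants are slightly different (a pointwise bound $\frac14|u^{\top}\tilde\sigma|$ rather than the paper's $\frac12\tilde\sigma$, hence $\frac1{16}$ instead of $\frac14$ after integrating, and the stepsize restriction $\Delta\le\frac{1}{4L_bn_b\theta}$ rather than $\frac{1}{2L_bn_b\theta}$), but the mechanism and the final conclusion $(\gamma_{\Phi(T;t_j,Y^{\varsigma}_j)})^{-1}\in L^{\infty}(\Omega)\subset L^{\infty-}(\Omega)$ coincide with the paper's.
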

\begin{proof}  We first prove \eqref{lower_1} for the case of  $d=1$. 
For $r\in(t_k,t_{k+1}]$ with $j\leq k\leq i-1$, we have
\begin{align*}
D_r\Phi(t_i;t_j,Y^{\varsigma}_j)&=\sum_{n=k}^{i-1}\Big[(1-\theta)\mathcal Db(\Phi_{t_n}(t_j,Y^{\varsigma}_j))D_r\Phi_{t_n}(t_j,Y^{\varsigma}_j)\Delta\nn\\
&\quad+\theta \mathcal Db(\Phi_{t_{n+1}}(t_j,Y^{\varsigma}_j))D_r\Phi_{t_{n+1}}(t_j,Y^{\varsigma}_j)\Delta\Big]+\tilde \sigma.
\end{align*}
This, together with $D_r\Phi_{t_n}(t_j,Y^{\varsigma}_j)=\sum_{l=-N}^0I^lD_r\Phi(t_{n+l};t_j,Y^{\varsigma}_j)$ and Assumption \ref{ass_nablab}, implies that
\begin{align}\label{rela1}
&\quad D_r\Phi(t_i;t_j,Y^{\varsigma}_j)\nn\\
&=\sum_{n=k}^{i-1}\Big[(1-\theta)\sum_{\ell=1}^{n_b}\sum_{l=-N}^0\int_{-\tau}^0k_b^{\ell }(\Phi_{t_n}(t_j,Y^{\varsigma}_j)(s))I^l(s)\mathrm d\nu_3^{\ell}(s)D_r\Phi(t_{n+l};t_j,Y^{\varsigma}_j)\Delta\nn\\
&\quad+\theta \sum_{\ell=1}^{n_b}\sum_{l=-N}^0\int_{-\tau}^0k_b^{\ell}(\Phi_{t_{n+1}}(t_j,Y^{\varsigma}_j)(s))I^l(s)\mathrm d\nu_3^{\ell}(s)D_r\Phi(t_{n+l+1};t_j,Y^{\varsigma}_j)\Delta \Big]+\tilde \sigma.
\end{align}

\underline{Step 1.} To derive a lower bound of $D_r\Phi(t_i;t_j,Y^{\varsigma}_j)$, we need to present  a discrete comparison principle. Define a two-parameter nonnegative sequence $\{A_i^k\}_{0\leq k,i\leq N^{\Delta}}$ as follows: when $i\leq k$, define  $A^{k}_{i}=0$; when $0\leq k\leq i-1,$ define 
\begin{align*}
A^k_i=L_b\Delta \sum_{n=k}^{i-1}\sum_{\ell=1}^{n_b}\sum_{l=-N}^0\nu_3^{\ell}(\Delta_{l-1}+\Delta_{l})\big((1-\theta)A^k_{n+l}+\theta A^k_{n+l+1}\big)+\tilde \sigma,
\end{align*}
where we let $\nu_3^{\ell}(\Delta_{-N-1})=\nu_3^{\ell}(\Delta_0)=0$ and $L_b=\sup_{\ell\in\{1,\ldots, n_b\}}\sup_{\phi\in\mathcal C^d}|k_b^{\ell}(\phi)|$. 
It follows from the  definition that when $i_1-k_1=i_2-k_2>0,$ $A^{k_1}_{i_1}=A^{k_2}_{i_2}=:\mathcal A_{i_1-k_1}$. 
Then
\begin{align*}
\mathcal A_{i-k}&=A^k_i=L_b\Delta \sum_{\ell=1}^{n_b}\sum_{l=-N}^0\nu_3^{\ell}(\Delta_{l-1}+\Delta_l)\big((1-\theta)A^k_{i-1+l}+\theta A^k_{i+l}\big)\\
&\quad+L_b\Delta\sum_{n=k}^{i-2}\sum_{\ell=1}^{n_b}\sum_{l=-N}^0\nu_3^{\ell}(\Delta_{l-1}\!+\!\Delta_l)\big((1\theta)A^k_{n+l}+\theta A^k_{n+1+l}\big)+\tilde \sigma\\
&=L_b\Delta \sum_{\ell=1}^{n_b}\sum_{l=-N}^0\nu_3^{\ell}(\Delta_{l-1}+\Delta_l)\big((1-\theta)A^k_{i-1+l}+\theta A^k_{i+l}\big)+\mathcal A_{i-k-1}.
\end{align*}
This gives that for any $\Delta\in(0,\frac{1}{L_b\theta n_b})$,
\begin{align*}
\mathcal A_{i-k}
&=\big(1\!-\!L_b\Delta \theta\sum_{\ell=1}^{n_b}\nu_3^{\ell}(\Delta_{-1})\big)^{-1}\Big\{\mathcal A_{i-k-1}\!+\!L_b\Delta \Big[(1\!-\!\theta)\sum_{\ell=1}^{n_b}\sum_{l=-N}^0\nu_3^{\ell}(\Delta_{l-1}\!+\!\Delta_l)\times\nn\\
&\quad\mathcal A_{i-1-k+l}+\theta\sum_{\ell=1}^{n_b}\sum_{l=-N+1}^0\nu_3^{\ell}(\Delta_{l-2}+\Delta_{l-1})\mathcal A_{i-k+l-1}\Big]\Big\}.
\end{align*}
By the relation $(1-L_b\Delta\theta n_b)^{-1}=1+\frac{L_b\theta\Delta n_b}{1-L_b\Delta\theta n_b}$ and by iteration, we obtain
\begin{align*}
\mathcal A_{i-k}
&\leq(1+\frac{L_b\theta\Delta n_b}{1-L_b\Delta\theta n_b})\mathcal A_{i-k-1}+(1-L_b\Delta \theta n_b)^{-1}L_b\Delta \Big[(1-\theta)\sum_{\ell=1}^{n_b}\sum_{l=-N}^0\nn\\
&\quad\nu_3^{\ell}(\Delta_{l-1}+\Delta_l)\times\mathcal A_{i-1-k+l}+\theta\sum_{\ell=1}^{n_b}\sum_{l=-N+1}^0\nu_3^{\ell}(\Delta_{l-2}+\Delta_{l-1})\mathcal A_{i-k+l-1}\Big]\nn\\
&\leq \mathcal A_1+\sum_{n=1}^{i-k-1}\frac{L_b\theta\Delta n_b}{1-L_b\Delta\theta n_b}\mathcal A_n+(1-L_b\Delta\theta n_b)^{-1}L_b\Delta\sum_{\ell=1}^{n_b}\sum_{n=1}^{i-k-1}\Big[(1-\theta)\times\nn\\
&\quad\sum_{l=-N}^0\nu_3^{\ell}(\Delta_{l-1}+\Delta_l)\mathcal A_{n+l} +\theta\sum_{l=-N+1}^{0}\nu_3^{\ell}(\Delta_{l-2}+\Delta_{l-1})\mathcal A_{n+l}\Big].
\end{align*}
Noting that
\begin{align*}
\sum_{n=1}^{i-k-1}\sum_{l=-N}^0\nu_3^{\ell}(\Delta_{l-1}+\Delta_l)\mathcal A_{n+l}&=\sum_{l=-N}^0\sum_{n=1}^{i-k-1}\nu_3^{\ell}(\Delta_{l-1}+\Delta_l)\mathcal A_{n+l}\\
&\leq \sum_{l=-N}^0\sum_{m=-N}^{i-k-1}\nu_3^{\ell}(\Delta_{l-1}+\Delta_l)\mathcal A_{m}\leq 2\sum_{n=1}^{i-k-1}\mathcal A_n
\end{align*}
and $\mathcal A_1=(1-L_b\sum_{\ell=1}^{n_b}\nu_3^{\ell}(\Delta_{-1})\Delta\theta )^{-1}\tilde \sigma\leq (1-L_b\Delta\theta n_b)^{-1}\tilde \sigma,$
we arrive at
\begin{align*}
\mathcal A_{i-k}\leq (1-L_b\Delta\theta n_b)^{-1}\tilde \sigma+\frac{L_bn_b(\theta+2)}{1-L_b\Delta\theta n_b}\sum_{n=1}^{i-k-1}\mathcal A_n\Delta.
\end{align*}
Combining the discrete Gr\"onwall inequality, we deduce 
\begin{align}\label{boun1}
\mathcal A_{i-k}&\leq (1-L_b\Delta\theta n_b)^{-1}\tilde \sigma \exp\Big\{{\frac{L_bn_b(\theta+2)\Delta (i-k-1)}{1-L_b\Delta\theta n_b}}\Big\}\nn\\
&=:K_0\exp\Big\{{\frac{L_bn_b(\theta+2)\Delta (i-k-1)}{1-L_b\Delta\theta n_b}}\Big\}.
\end{align}

 We claim that \begin{align}\label{claim1}
|D_r\Phi(t_i;t_j,Y^{\varsigma}_j)|\leq A^k_i,\text{ for any } r\in(t_k,t_{k+1}],\; j\leq k\leq i-1.
\end{align} 
We prove the claim by the induction argument on $i-k$. In fact, when $j\leq i_1\leq k_1$ and $r\in(t_{k_1},t_{k_1+1}],$ we have $D_r\Phi(t_{i_1};t_j,Y^{\varsigma}_j)=0= A^{k_1}_{i_1}$. This gives $|D_r\Phi(t_{i_1};t_j,Y^{\varsigma}_j)|\leq A^{k_1}_{i_1}$ for $i_1-k_1\leq 0.$ Suppose that 
\eqref{claim1} holds for integers $k,i$ satisfying $0\leq i-k\leq i-k'-1$. Then we show \eqref{claim1} holds for $i-k=i-k'.$ 
%{\color{red}This can be proved} by the combination of the induction assumption $|D_r\Phi(t_i;t_j,Y^{\varsigma}_j)|\leq A^{k'}_m,\;m\leq i-1$. Precisely,  
By  \eqref{rela1} and  the induction assumption $|D_r\Phi(t_i;t_j,Y^{\varsigma}_j)|\leq A^{k'}_n,\;n\leq i-1$, we have 
\begin{align*}
&\quad|D_r\Phi(t_i;t_j,Y^{\varsigma}_j)|\nn\\
&\leq \sum_{n=k'}^{i-2}\sum_{\ell=1}^{n_b}\sum_{l=-N}^0L_b\nu_3^{\ell}(\Delta_{l-1}+\Delta_l)\big((1-\theta)A^{k'}_{n+l}\Delta+\theta A^{k'}_{n+l+1}\Delta\big)+\tilde \sigma\nn\\
&\quad+(1-\theta)\sum_{\ell=1}^{n_b}\sum_{l=-N}^0L_b\nu_3^{\ell}(\Delta_{l-1}+\Delta_l)A^{k'}_{i-1+l}\Delta
+\theta \sum_{\ell=1}^{n_b}\sum_{l=-N}^{-1} L_b\nu_3^{\ell}(\Delta_{l-1}+\Delta_l)A^{k'}_{i+l}\Delta\nn\\
&\quad+L_b\theta\Delta\sum_{\ell=1}^{n_b}\nu_3^{\ell}(\Delta_{-1})|D_r\Phi(t_i;t_j,Y^{\varsigma}_j)|,
\end{align*}
which yields 
\begin{align*}
&\quad|D_r\Phi(t_i;t_j,Y^{\varsigma}_j)|\leq (1-L_b\Delta\theta\sum_{\ell=1}^{n_b}\nu_3^{\ell}(\Delta_{-1}))^{-1}\Big[A^{k'}_{i-1}+L_b\Delta (1-\theta)\sum_{\ell=1}^{n_b}\sum_{l=-N}^0\nu_3^{\ell}(\Delta_{l-1}\nn\\
&\quad+\Delta_l)A^{k'}_{i-1+l}+L_b\Delta \theta\sum_{\ell=1}^{n_b}\sum_{l=-N}^{-1}\nu_3^{\ell}(\Delta_{l-1}+\Delta_l)A^{k'}_{i+l}\Big]=A^{k'}_i,\quad r\in(t_{k'},t_{k'+1}].
\end{align*}
This finishes the proof of the claim \eqref{claim1}. 

\underline{Step 2.} 
From \eqref{rela1} and 
\eqref{boun1}, we obtain
\begin{align*}
&\quad|D_r\Phi(t_i;t_j,Y^{\varsigma}_j)|\nn\\
&\ge \tilde \sigma-L_b\Delta \Big[\sum_{n=k}^{i-1}\sum_{\ell=1}^{n_b}\sum_{l=-N}^0\Big((1-\theta)\nu_3^{\ell}(\Delta_{l-1}+\Delta_l)A^k_{n+l}+\theta\nu_3^{\ell}(\Delta_{l-1}+\Delta_l)A^k_{n+l+1}\Big)\Big]\\
&\ge \tilde \sigma-L_b\Delta \Big[\sum_{n=k}^{i-1}\sum_{\ell=1}^{n_b}\sum_{l=-N}^0\Big((1-\theta)\nu_3^{\ell}(\Delta_{l-1}+\Delta_l)K_0e^{-\frac{L_bn_b(\theta+2)\Delta}{1-L_b\Delta\theta n_b}}+\theta\nu_3^{\ell}(\Delta_{l-1}+\Delta_l)K_0\Big)\\
&\quad\times \exp\{\frac{L_bn_b(\theta+2)\Delta(n+l-k)}{1-L_b\Delta\theta n_b}\}\Big]\\
&\ge \tilde \sigma-2L_bn_b\Delta \Big((1-\theta)K_0e^{-\frac{L_bn_b(\theta+2)\Delta}{1-L_b\Delta\theta n_b}}+\theta K_0\Big)\frac{\exp\{\frac{L_bn_b(\theta+2)\Delta(i-k)}{1-L_b\Delta\theta n_b}\}-1}{\exp\{\frac{L_bn_b(\theta+2)\Delta}{1-L_b\Delta\theta n_b}\}-1}\\
&\ge \tilde \sigma-\frac{2}{\theta+2}\Big(\exp\Big\{\frac{L_bn_b(\theta+2)\Delta(i-k)}{1-L_b\Delta\theta n_b}\Big\}-1\Big)\tilde \sigma \\
&\ge \tilde \sigma-\Big(\exp\Big\{\frac{L_bn_b(\theta+2)\Delta(i-k)}{1-L_b\Delta\theta n_b}\Big\}-1\Big)\tilde \sigma,
\end{align*}
where we used $e^x\ge x+1,x\ge 0.$
Then, 
\begin{align*}
|D_r\Phi(t_i;t_j,Y^{\varsigma}_j)|\ge \frac12\tilde \sigma,
\end{align*}
%when $i-k\leq \frac{(1-L_b\Delta\thetan_b)\ln (\frac{3}{2})}{L_bn_b(\theta+2)\Delta}$.
when  $\Delta\leq \frac{1}{2L_b\theta n_b}$ and 
%and $j-k\leq \frac{(1-L_b\Delta\theta)\ln (\frac{3}{2})}{L_b(\theta+2)\Delta}$ 
  $i-k\leq \frac{\ln(\frac{3}{2})}{2L_bn_b(\theta+2)\Delta}$. 
%Denote $k_b^{j}:=\frac{\ln(\frac{3}{2})}{2L_b(\theta+2)}$ and $k^i_0:=0\vee \lfloor i-\frac{k_b^{j}}{\Delta}\rfloor.$ Then $t_i-t_{k^i_0}=t_i-\Delta(0\vee \lfloor i-\frac{k_b^{j}}{\Delta}\rfloor)\ge t_i\wedge k_b^{j}$.
Thus, 
\begin{align*}
\gamma_{\Phi(T;t_j,Y^{\varsigma}_j)}:&=\int_{0}^TD_r\Phi(T;t_j,Y^{\varsigma}_j)(D_r\Phi(T;t_j,Y^{\varsigma}_j))^{\top}\mathrm dr\\
&\ge \int_{(T-T_0)\vee 0}^T|D_r\Phi(T;t_j,Y^{\varsigma}_j)|^2\mathrm dr\ge \frac14\tilde{\sigma}^2(T\wedge T_0),
\end{align*}
where $T_0:=\frac{\ln(\frac32)}{2L_bn_b(\theta+2)}$.
This finishes the proof of \eqref{lower_1} for  $d=1.$

For the case of $d\ge 2,$  by replacing $D_r\Phi$  with $u^{\top}D_r\Phi$ in the above argument, one can  obtain \eqref{lower_1}. %This finishes the proof of \eqref{lower_1}. 

Moreover, it follows from \eqref{lower_1}  that  $$\lambda_{\min}(\gamma_{\Phi(T;t_j,Y^{\varsigma}_j)})=\min_{u\in\mathbb R^d,|u|=1}u^{\top}\gamma_{\Phi(T;t_j,Y^{\varsigma}_j)} u\ge \frac 14\tilde \sigma(T\wedge T_0),$$ which implies 
$$|(\gamma_{\Phi(T;t_j,Y^{\varsigma}_j)})^{-1}|\leq K|\det(\gamma_{\Phi(T;t_j,Y^{\varsigma}_j)})^{-1}|\leq K[\frac 14\tilde \sigma(T\wedge T_0)]^{-d}.$$ Thus the proof is completed. 
\end{proof}

As a consequence, 
one can obtain  the following Malliavin integration by parts formula.
\begin{lemma}
Let $\alpha\in \{1,\ldots,d\}^4$ be the multi-index, 
$f\in\mathscr C,$ and $G_1\in\mathbb D^{\alpha+1,\infty}.$ Then under conditions in \cref{Ch4highDy,lower_matrix},  for $\Delta\in(0,\frac{1}{2L_bn_b\theta}]$, $i\in\mathbb N$ with $i\Delta\leq \lfloor T-\Delta\rfloor,$ there exists a random variable $H_{|\alpha|+1}(\Phi(T;t_i,Y^{\varsigma}_i),G_1)$ such that
\begin{align}\label{IBP1}
\mathbb E[\mathcal \partial_{\alpha}f(\Phi(T;t_i,Y^{\varsigma}_i))G_1]=\mathbb E[F(\Phi(T;t_i,Y^{\varsigma}_i))H_{|\alpha|+1}(\Phi(T;t_i,Y^{\varsigma}_i),G_1)].
\end{align}
Moreover, 
\begin{align}\label{IBP2}
\sup_{T\ge T_0}|\mathbb E[\mathcal \partial_{\alpha}f(\Phi(T;t_i,Y^{\varsigma}_i))G_1]|\leq K\|G_1\|_{|\alpha|+1,2},
\end{align}
where $T_0$ is given in Theorem \ref{dens_con},  and $Y^{\varsigma}_i$ is given in \eqref{Y_defini}. 
\end{lemma}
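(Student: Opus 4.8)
The plan is to establish the Malliavin integration by parts formula \eqref{IBP1} by an inductive application of the duality relation \eqref{IBPformula}, and then to derive the time-independent bound \eqref{IBP2} by carefully tracking the exponential decay provided by the derivative estimates in \cref{Ch4highDy} against the uniform nondegeneracy bound from \cref{lower_matrix}. The standard mechanism (see \cite[Proposition 2.1.4]{Nualart}) is that for $F\in\mathbb D^{1,\infty}(\mathbb R^d)$ with invertible Malliavin covariance matrix $\gamma_F$ such that $(\det\gamma_F)^{-1}\in L^{\infty-}(\Omega)$, one has for any $G\in\mathbb D^{1,\infty}$ and $j\in\{1,\ldots,d\}$ that
\begin{align*}
\mathbb E[\partial_jg(F)G]=\mathbb E\Big[g(F)\,D^*\Big(G\sum_{\ell=1}^d(\gamma_F^{-1})_{j\ell}DF^{\ell}\Big)\Big].
\end{align*}
First I would set $F=\Phi(T;t_i,Y^{\varsigma}_i)$ and verify its admissibility: \cref{lower_matrix} gives $(\gamma_F)^{-1}\in L^{\infty-}(\Omega)$ for $\Delta\in(0,\frac{1}{2L_bn_b\theta}]$ and $T\ge T_0$, while \cref{Ch4highDy} provides $F\in\mathbb D^{4,\infty}(\mathbb R^d)$. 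Iterating the above identity $|\alpha|$ times, peeling off one derivative of $f$ at each step, produces the weight $H_{|\alpha|+1}(F,G_1)$ built from products of entries of $\gamma_F^{-1}$, Malliavin derivatives $DF$, $D^2F$, and the derivatives of $G_1$ up to order $|\alpha|$, together with Skorohod integrals thereof; this directly yields \eqref{IBP1}.

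To prove the uniform-in-$T$ bound \eqref{IBP2}, I would start from $|\mathbb E[\partial_{\alpha}f(F)G_1]|=|\mathbb E[F(F)H_{|\alpha|+1}(F,G_1)]|\le \mathbb E[|H_{|\alpha|+1}(F,G_1)|]$ using $0\le F\le 1$ for $f\in\mathscr C$. The key is then to control $\mathbb E[|H_{|\alpha|+1}(F,G_1)|]$ uniformly in $T$. The weight $H_{|\alpha|+1}$ is a finite sum of terms, each a product of factors of the form $(\gamma_F^{-1})_{jk}$, $D^{\beta}F$ with $|\beta|\le 2$, and Malliavin derivatives of $G_1$; the Skorohod integrals are estimated through the Meyer inequalities by $L^p$ norms of the integrands and their derivatives. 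I would bound each factor by the H\"older inequality: the $\gamma_F^{-1}$ factors are bounded uniformly by $\cref{lower_matrix}$ (the bound $K[\frac14\tilde\sigma(T\wedge T_0)]^{-d}$ is uniform for $T\ge T_0$), and the $L^p$ norms of $DF,D^2F,\ldots$ are controlled by \cref{Ch4highDy}. The $G_1$ factors and their derivatives contribute exactly the $\|G_1\|_{|\alpha|+1,2}$ factor, after suitably raising the integrability of the other factors.

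The main obstacle I expect is the uniformity in $T$, which is precisely where the exponential-decay estimates must compensate for potentially growing numbers of terms. The differentiation of $\gamma_F^{-1}$ generates factors $D\gamma_F=\int_0^T(D^2F\,DF^{\top}+DF\,D^2F^{\top})\mathrm dr$, whose $L^p$ norms a priori involve integration over $[0,T]$ and could grow with $T$; the saving grace is that \cref{Ch4highDy} shows $\mathbb E[\|D_r F\|^{2p}]$ and the higher derivatives decay like $e^{-\hat\lambda_4 p(T-r\vee\cdots)}$, so the $\mathrm dr$-integrals of these derivative norms remain bounded uniformly in $T$ via $\sup_{T}\int_0^T e^{-c(T-r)}\mathrm dr<\infty$. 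I would organize the estimate so that every occurrence of a Malliavin derivative contributes such an exponentially decaying factor, and then invoke the uniform nondegeneracy from \cref{lower_matrix} for the $\gamma_F^{-1}$ factors; the combination gives a bound depending only on $\tilde\sigma$, $T_0$, the order $|\alpha|$, and $\|G_1\|_{|\alpha|+1,2}$, but not on $T$. Assembling these H\"older estimates termwise and summing the finitely many terms then completes the proof of \eqref{IBP2}.
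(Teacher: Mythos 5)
Your proposal is correct and follows essentially the same route as the paper: identity \eqref{IBP1} comes from the standard Malliavin integration by parts machinery (the paper simply cites \cite[Proposition 2.1.4, (2.29)--(2.32)]{Nualart}, which packages your iterated duality argument and the structure of the weight $H_{|\alpha|+1}$), and the uniform bound \eqref{IBP2} is obtained exactly as you describe, by combining the quantitative estimate
$\|H_{|\alpha|+1}\|_{q_1}\leq K\|\gamma_{\Phi(T;t_i,Y^{\varsigma}_i)}^{-1}\|^{n_1}_{0,\eta_1}\|D\Phi(T;t_i,Y^{\varsigma}_i)\|^{n_2}_{|\alpha|+1,\eta_2}\|G_1\|_{|\alpha|+1,q}$
with the uniform nondegeneracy from Lemma \ref{lower_matrix} and the exponentially decaying derivative moments from Lemma \ref{Ch4highDy}. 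Your additional discussion of why the $\mathrm dr$-integrals over $[0,T]$ stay bounded in $T$ is precisely the mechanism that makes the paper's appeal to Lemma \ref{Ch4highDy} legitimate, so no gap remains.
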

\begin{proof}
According to the definition of $\mathscr C,$
%there exists a function $F\in[0,1]$  
and applying \cite[Proposition 2.1.4, (2.29)--(2.32)]{Nualart} give \eqref{IBP1}. Moreover, for $q>q_1\ge 1,$ there exist constants $\eta_1,\eta_2>0$ and integers $n_1,n_2>0$ such that
\begin{align*}
&\quad\|H_{|\alpha|+1}(\Phi(T;t_i,Y^{\varsigma}_i),G_1)\|_{q_1}\nn\\
&\leq K(q_1,q)\|\gamma_{\Phi(T;t_i,Y^{\varsigma}_i)}^{-1}\|^{n_1}_{0,\eta_1}\|D\Phi(T;t_i,Y^{\varsigma}_i)\|^{n_2}_{|\alpha|+1,\eta_2}\|G_1\|_{|\alpha|+1,q}\\
&\leq K\Big[\frac 14\tilde \sigma(T\wedge T_0)\Big]^{-dn_1}\|G_1\|_{|\alpha|+1,q},
\end{align*}
where in the last inequality we used \cref{Ch4highDy,lower_matrix}.
Taking $q_1=1,q=2$ and supremum for $T\ge T_0$, we finishes the proof.
\end{proof}

Now we give the test-functional-independent weak convergence rate of the $\theta$-EM method. 

\begin{thm}\label{thm_weak2}
Let conditions in Theorem \ref{dens_con} hold. Then for  $\Delta\in(0,\tilde \Delta]$,
\begin{align*}
\sup_{f\in \mathscr C}\sup_{T\ge T_0}\big|\mathbb E[f(x^{\xi}(T))]-\mathbb E[f(y^{\xi,\Delta}(T))]\big|\leq K\Delta,
\end{align*}
where $T_0$ is given in \cref{dens_con}. 
%where $K$ is independent of  $f$.
\end{thm}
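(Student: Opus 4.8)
The plan is to adapt the weak error decomposition already established in \eqref{Wrate} to the present additive-noise, test-functional-independent setting, and then estimate each of the four terms $\mathcal I_0,\mathcal I_b,\mathcal I_{b,\theta},\mathcal I_\sigma$ uniformly in the test function $f\in\mathscr C$ and in $T\ge T_0$. The crucial structural difference from Theorem \ref{thm_weakcon} is that here $f$ is merely smooth with polynomial growth (not in $\mathcal C_b^3$), so one cannot simply bound $|f'|$ by a constant. Instead, every occurrence of a derivative of $f$ evaluated along the flow $\Phi(T;t_i,Y_i^\varsigma)$ must be removed by the Malliavin integration by parts formula \eqref{IBP1}, converting $\mathbb E[\partial_\alpha f(\Phi)G_1]$ into $\mathbb E[F(\Phi)H_{|\alpha|+1}(\Phi,G_1)]$ with $0\le F\le 1$ bounded, at the cost of a weight $H_{|\alpha|+1}$ controlled by \eqref{IBP2}.

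First I would treat $\mathcal I_0$. Since $f$ is not bounded, I would write $\mathcal I_0$ as the integral over $\varsigma$ of $\mathbb E[f'(\varphi^{Int}(t_{N^\Delta};0,\cdot))\mathcal D\varphi^{Int}(\xi-\Phi_0)]$ and apply the integration by parts formula (in the exact-solution analogue, whose nondegeneracy follows from the additive noise as in Lemma \ref{lower_matrix}) to replace $f'$ by $F$ times a weight, then use $\|\xi-\Phi_0\|\le K\Delta$ from Assumption \ref{a7} together with the time-independent derivative estimates of Lemma \ref{Ch4highDx}; because these decay exponentially and $T\ge T_0$, the bound is $K\Delta$ uniformly in $T$. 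For the three remaining terms I would follow exactly the inner structure of the proof of Theorem \ref{thm_weakcon}: insert the same decompositions of $\mathcal I_b^i,\mathcal I_{b,\theta}^i,\mathcal I_\sigma^i$, and use Assumption \ref{ass_nablab} to move the kernels $k_b^\ell,k_\sigma^\ell$ out so that the Brownian-increment factors can be handled by the duality \eqref{IBPformula}. The key point is that wherever $f'(\Phi)$, $f''(\Phi)$ or $f'''(\Phi)$ appears after these manipulations, I would invoke \eqref{IBP1}–\eqref{IBP2} (with $|\alpha|$ up to $3$, hence the requirement that $b$ have bounded derivatives up to order $4$ in Assumption \ref{high5} and that $\Phi\in\mathbb D^{4,\infty}$, guaranteed by Lemma \ref{Ch4highDy}), so that each summand is bounded by $K\Delta\cdot e^{-\hat\lambda_4(T-t_i)/2}\,\|G_1\|_{|\alpha|+1,2}$ with the norm controlled by the time-independent Malliavin and G\^ateaux derivative bounds of Lemmas \ref{Ch4highDx}–\ref{Ch4highDy} and the moment bounds \eqref{Lunix}–\eqref{Luniy}.

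The summation over $i$ then closes exactly as before, since $\sup_{T>0}\Delta\sum_{i=1}^{N^\Delta}e^{-\hat\lambda_4(T-t_i)/2}<\infty$, which is what produces a $T$-independent constant $K$. The main obstacle, and the reason $T_0$ enters, is the nondegeneracy of the Malliavin covariance matrix $\gamma_{\Phi(T;t_i,Y_i^\varsigma)}$: the lower bound $\tfrac14\tilde\sigma(T\wedge T_0)$ in Lemma \ref{lower_matrix} degenerates as $T\to 0$, so the integration by parts weights blow up for small $T$; restricting to $T\ge T_0$ makes $T\wedge T_0=T_0$ a fixed positive constant and keeps $(\gamma)^{-1}$ uniformly in $L^{\infty-}(\Omega)$, which is precisely what \eqref{IBP2} requires. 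The truly delicate accounting is ensuring that after integration by parts the weights $H_{|\alpha|+1}$ are bounded in $\|\cdot\|_{|\alpha|+1,2}$ uniformly in $T\ge T_0$, using the exponential-in-time decay of all derivative moments so that the $i$-sum converges; I expect the bookkeeping of the higher-order Malliavin derivatives of $\Phi$ (up to order $4$) appearing in $H$ to be the most laborious step, but it is controlled termwise by Lemma \ref{Ch4highDy}. Combining the four estimates yields $\sup_{f\in\mathscr C}\sup_{T\ge T_0}|\mathbb E[f(x^\xi(T))]-\mathbb E[f(y^{\xi,\Delta}(T))]|\le K\Delta$, which is the claim.
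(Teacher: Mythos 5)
Your proposal is correct and follows essentially the same route as the paper: the same decomposition \eqref{Wrate}, with every derivative of $f$ removed via the Malliavin integration by parts formula \eqref{IBP1}--\eqref{IBP2}, the weights controlled by the time-uniform derivative estimates of Lemmas \ref{Ch4highDx} and \ref{Ch4highDy} together with the nondegeneracy bound of Lemma \ref{lower_matrix} (which is precisely where the restriction $T\ge T_0$ enters), and the exponentially weighted sum over $i$ yielding a $T$-independent constant. The only simplification you miss is that under Assumption \ref{addnoise} the diffusion coefficient is constant, so $\mathcal I_{\sigma}\equiv 0$ identically and requires no estimate at all, rather than the kernel manipulations via Assumption \ref{ass_nablab} that you propose.
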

\begin{proof} In order to obtain the test-functional-independent weak convergence rate, we need to reestimate   the sub-terms in  Theorem \ref{thm_weakcon} one-by-one by means of the Malliavin integration by parts formula \eqref{IBP1} and the inequality \eqref{IBP2}. 

Recalling \eqref{Wrate}, for the term 
$$\mathcal I_{0}=\mathbb E[f(\Phi(T;t_{N^{\Delta}},\varphi_{t_{N^{\Delta}}}^{Int}(0,\xi)))]-\mathbb E[f(\Phi(T;t_{N^{\Delta}},\varphi_{t_{N^{\Delta}}}^{Int}(0,\Phi_0)))],
$$
it follows from \eqref{IBP2} that
\begin{align*}
&\quad \mathbb E[f(\Phi(T;t_{N^{\Delta}},\varphi_{t_{N^{\Delta}}}^{Int}(0,\xi)))]-\mathbb E[f(\Phi(T;t_{N^{\Delta}},\varphi_{t_{N^{\Delta}}}^{Int}(0,\Phi_0)))]\\
&=\mathbb E[f(\varphi^{Int}(t_{N^{\Delta}};0,\xi)]-\mathbb E[f(\varphi^{Int}(t_{N^{\Delta}};0,\Phi_0))]\\
&=\int_0^1\mathbb E\Big[f'(\varphi^{Int}(t_{N^{\Delta}};0,\varsigma\xi+(1\!-\!\varsigma)\Phi_0))\mathcal D \varphi^{Int}(t_{N^{\Delta}};0,\varsigma\xi\!+\!(1\!-\!\varsigma)\Phi_0)(\xi\!-\!\Phi_0)\Big]\mathrm d\varsigma\\
&\leq K\int_0^1\|\mathcal D \varphi^{Int}(t_{N^{\Delta}};0,\varsigma\xi+(1-\varsigma)\Phi_0)(\xi-\Phi_0)\|_{2,2}\mathrm d\varsigma\\
&\leq K\Delta\int_0^1\|\mathcal D \varphi^{Int}(t_{N^{\Delta}};0,\varsigma\xi+(1-\varsigma)\Phi_0)\|_{2,2}\mathrm d\varsigma\\
&\leq K\Delta,
\end{align*}
where $K$ is independent of $T$, and we used  Assumption \ref{a7} with $\rho\geq1$.

\textbf{Reestimate of  term $\mathcal I_{b}.$} 
\iffalse
By the Taylor formula, we have 
\begin{align*}
\mathcal I_b&=\sum_{i=1}^{N^{\Delta}}\int_0^1\mathbb E\Big[\Big\< (\mathcal D\Phi(T;t_i,Y^{\varsigma}_i) I^0\mathrm {Id}_{d\times d})^*f'(\Phi(T;t_i,Y^{\varsigma}_i)),\int_{t_{i-1}}^{t_i}\int_0^1\mathcal Db(Z^{\beta_1}_{i,r})\\
&\quad \big(\varphi_r(t_{i-1},\varphi_{t_{i-1}}(0,\Phi_0))-\varphi^{Int}_{t_{i-1}}(0,\Phi_0)\big)\mathrm d\beta_1\mathrm dr\Big\>\Big]\mathrm d\varsigma=:\mathcal I_{b,1}+\mathcal I_{b,2},
\end{align*}
\fi
Recall 
\begin{align*}
&\mathcal I_{b,1}=\sum_{i=1}^{N^{\Delta}}\int_0^1\mathbb E\Big[\Big\< (\mathcal D\Phi(T;t_i,Y^{\varsigma}_i) I^0\mathrm {Id}_{d\times d})^*f'(\Phi(T;t_i,Y^{\varsigma}_i)),\int_{t_{i-1}}^{t_i}\int_0^1\mathcal Db(Z^{\beta_1}_{i,r})\\
&\quad \big(\varphi^{Int}_r(t_{i-1},\varphi_{t_{i-1}}(0,\Phi_0))-\varphi^{Int}_{t_{i-1}}(0,\Phi_0)\big)\mathrm d\beta_1\mathrm dr\Big\>\Big]\mathrm d\varsigma,\\
&\mathcal I_{b,2}=\sum_{i=1}^{N^{\Delta}}\int_0^1\mathbb E\Big[\Big\< (\mathcal D\Phi(T;t_i,Y^{\varsigma}_i) I^0\mathrm {Id}_{d\times d})^*f'(\Phi(T;t_i,Y^{\varsigma}_i)),\int_{t_{i-1}}^{t_i}\int_0^1\mathcal Db(Z^{\beta_1}_{i,r})\\
&\quad \big(\varphi_r(t_{i-1},\varphi_{t_{i-1}}(0,\Phi_0))-\varphi^{Int}_r(t_{i-1},\varphi_{t_{i-1}}(0,\Phi_0))\big)\mathrm d\beta_1\mathrm dr\Big\>\Big]\mathrm d\varsigma,
\end{align*}
and $Z^{\beta_1}_{i,r}=\beta_1\varphi_r(t_{i-1},\varphi_{t_{i-1}}(0,\Phi_0))+(1-\beta_1)\varphi^{Int}_{t_{i-1}}(0,\Phi_0).$

\textit{Reestimate of term $\mathcal I_{b,1}.$} 
We only reestimate the sub-term  
\begin{align*}
&\sum_{i=1}^{N^{\Delta}}\int_0^1\mathbb E\Big[\Big\< (\mathcal D\Phi(T;t_i,Y^{\varsigma}_i) I^0\mathrm {Id}_{d\times d})^*f'(\Phi(T;t_i,Y^{\varsigma}_i)),\int_{t_{i-1}}^{t_i}\int_0^1\mathcal Db(Z^{\beta_1}_{i,r})\nn\\
&\quad\sum_{j=-N}^{-1}\mathbf 1_{\{i+j\geq1\}} \mathbf 1_{[t_{i+j}-r,t_{i+j+1}-r)}(\cdot)\frac{t_{j+1}-\cdot}{\Delta}\int_{t_{i+j-1}}^{t_{i+j}}b(\varphi_u(0,\Phi_0))\mathrm du\mathrm d\beta_1\mathrm dr\Big\>\Big]\mathrm d\varsigma
\end{align*}
and
the sub-term with the Brownian motion
\begin{align*}
&\quad\mathcal I_{b,1}^1=\sum_{i=1}^{N^{\Delta}}\int_0^1\mathbb E\Big[\Big\< (\mathcal D\Phi(T;t_i,Y^{\varsigma}_i) I^0\mathrm {Id}_{d\times d})^*f'(\Phi(T;t_i,Y^{\varsigma}_i)),\int_{t_{i-1}}^{t_i}\!\int_0^1\!\mathcal Db(Z^{\beta_1}_{i,r})\nn\\
&\quad\sum_{j=-N}^{-1}\mathbf 1_{\{i+j\geq1\}}\mathbf 1_{[t_{i+j}-r,t_{i+j+1}-r)}(\cdot)\frac{t_{j+1}-\cdot}{\Delta}\!\!\int_{t_{i+j-1}}^{t_{i+j}}\!\tilde\sigma\mathrm dW(u)\mathrm d\beta_1\mathrm dr\Big\>\Big]\mathrm d\varsigma,
\end{align*}
since other sub-terms in $\mathcal I_{b,1}$ can be estimated similarly. 
It follows from \eqref{IBP1} and \eqref{IBP2}  that  
\begin{align}\label{Ch4b11}
&\quad\sum_{i=1}^{N^{\Delta}}\int_0^1\mathbb E\Big[\Big\< (\mathcal D\Phi(T;t_i,Y^{\varsigma}_i) I^0\mathrm {Id}_{d\times d})^*f'(\Phi(T;t_i,Y^{\varsigma}_i)),\int_{t_{i-1}}^{t_i}\int_0^1\mathcal Db(Z^{\beta_1}_{i,r})\nn\\
&\quad\sum_{j=-N}^{-1}\mathbf 1_{\{i+j\geq1\}}
\mathbf 1_{[t_{i+j}-r,t_{i+j+1}-r)}(\cdot)\frac{t_{j+1}-\cdot}{\Delta}\int_{t_{i+j-1}}^{t_{i+j}}b(\varphi_u(0,\Phi_0))\mathrm du\mathrm d\beta_1\mathrm dr\Big\>\Big]\mathrm d\varsigma\nn\\
&\leq K\sum_{i=1}^{N^{\Delta}}\int_0^1\Big\|\mathcal D\Phi(T;t_i,Y^{\varsigma}_{i}) I^0\mathrm{Id}_{d\times d}\int_{t_{i-1}}^{t_i}\int_0^1 \mathcal Db(Z^{\beta_1}_{i,r})\sum_{j=-N}^{-1}\mathbf 1_{\{i+j\geq1\}}\nn\\
&\quad\mathbf 1_{[t_{i+j-r},t_{i+j+1}-r)}(\cdot)\frac{t_{j+1}-\cdot}{\Delta}\int_{t_{i+j-1}}^{t_{i+j}}b(\varphi_u(0,\Phi_0))\mathrm du\mathrm d\beta_1\mathrm dr\Big\|_{2,2}\mathrm d\varsigma.
%&\leq K\sum_{i=1}^{N^{\Delta}}\!\int_0^1\Big\{\Big(e^{-\hat \lambda_4(T-t_i)}\E\Big[\Big\|I^0\mathrm{Id}_{d\times d}\int_{t_{i-1}}^{t_i}\int_0^1 \mathcal Db(Z^{\beta_1}_{i,r})\sum_{j=-N}^{-1}\mathbf 1_{\{i+j\geq1\}}\nn\\
%&\quad\mathbf 1_{[t_{i+j-r},t_{i+j+1}-r)}(\cdot)\frac{t_{j+1}-\cdot}{\Delta}\int_{t_{i+j-1}}^{t_{i+j}}b(\varphi_u(0,\Phi_0))\mathrm du\mathrm d\beta_1\mathrm dr\Big\|^2\Big]\Big)^{\frac12}
%+\Big(\int_{t_i}^{T} e^{-{\color{red}\hat \lambda_4 (T-t_i)}}\times\nn\\
%&\quad\E\Big[\Big\|I^0\mathrm{Id}_{d\times d}\int_{t_{i-1}}^{t_i}\int_0^1\mathcal Db(Z^{\beta_1}_{i,r})\sum_{j=-N}^{-1}\mathbf 1_{\{i+j\geq1\}}\mathbf 1_{[t_{i+j-r},t_{i+j+1}-r)}(\cdot)\frac{t_{j+1}-\cdot}{\Delta}\int_{t_{i+j-1}}^{t_{i+j}}\nn\\
%&\quad b(\varphi_u(0,\Phi_0))\mathrm du\mathrm d\beta_1\mathrm dr\Big\|^2\Big]
%\big(1+\E\big[\|D_{r_1}Y^{\varsigma}_i\|^{4p}\big]\big)
%\mathrm dr_1\Big)^{\frac12}+\Big(\int_{t_i}^{T}\int_{t_i}^{T} e^{-\hat \lambda_4(T-t_i)}\times\nn\\
%&\quad\E\Big[\Big\|I^0\mathrm{Id}_{d\times d}\int_{t_{i-1}}^{t_i}\int_0^1 \mathcal Db(Z^{\beta_1}_{i,r})\sum_{j=-N}^{-1}\mathbf 1_{\{i+j\geq1\}}\mathbf 1_{[t_{i+j-r},t_{i+j+1}-r)}(\cdot)\frac{t_{j+1}-\cdot}{\Delta}\nn\\
%&\quad\int_{t_{i+j-1}}^{t_{i+j}}b(\varphi_u(0,\Phi_0))\mathrm du\mathrm d\beta_1\mathrm dr\Big\|^2\Big]
%\Big(1+\E\big[\|D_{r_1,r_2}Y^{\varsigma}_i\|^{4p}\big]+\E\big[\|D_{r_1}Y^{\varsigma}_i\|^{8p}\big]\nn\\
%&\quad+\E\big[\|D_{r_2}Y^{\varsigma}_i\|^{8p}\big]\Big)
%\mathrm d r_1\mathrm dr_2\Big)^{\frac12}\Big\}\mathrm d\varsigma.
\end{align}
According to \cref{Ch4highDy}, $\sup_{\phi_1\in\mathcal C^d}|\mathcal D b(\phi_1)\phi_2|\leq K\|\phi_2\|$, \eqref{linearb}, and  \eqref{Lunix}, we have
\begin{align}\label{esti02}
&\quad\Big\|\mathcal D\Phi(T;t_i,Y^{\varsigma}_{i}) I^0\mathrm{Id}_{d\times d}\int_{t_{i-1}}^{t_i}\int_0^1 \mathcal Db(Z^{\beta_1}_{i,r})\sum_{j=-N}^{-1}\mathbf 1_{\{i+j\geq1\}}\nn\\
&\quad\mathbf 1_{[t_{i+j-r},t_{i+j+1}-r)}(\cdot)\frac{t_{j+1}-\cdot}{\Delta}\int_{t_{i+j-1}}^{t_{i+j}}b(\varphi_u(0,\Phi_0))\mathrm du\mathrm d\beta_1\mathrm dr\Big\|_{0,2}\nn\\
&\leq K\Big(e^{-\hat \lambda_4(T-t_i)}\E\Big[\Big\|I^0\mathrm{Id}_{d\times d}\int_{t_{i-1}}^{t_i}\int_0^1 \mathcal Db(Z^{\beta_1}_{i,r})\sum_{j=-N}^{-1}\mathbf 1_{\{i+j\geq1\}}\nn\\
&\quad\mathbf 1_{[t_{i+j-r},t_{i+j+1}-r)}(\cdot)\frac{t_{j+1}-\cdot}{\Delta}\int_{t_{i+j-1}}^{t_{i+j}}b(\varphi_u(0,\Phi_0))\mathrm du\mathrm d\beta_1\mathrm dr\Big\|^2\Big]\Big)^{\frac12}\nn\\
&\leq K e^{-\frac{\hat \lambda_4(T-t_i)}{2}}\Delta^2.
\end{align}
In addition, using \cref{Ch4highDy} and Assumption \ref{high5}, we derive
\begin{align}\label{esti12}
&\quad \Big(\int_{t_i}^T\mathbb E\Big[\Big\|D_{r_1}\Big[\mathcal D\Phi(T;t_i,Y^{\varsigma}_{i}) I^0\mathrm{Id}_{d\times d}\int_{t_{i-1}}^{t_i}\int_0^1 \mathcal Db(Z^{\beta_1}_{i,r})\sum_{j=-N}^{-1}\mathbf 1_{\{i+j\geq1\}}\nn\\
&\quad\mathbf 1_{[t_{i+j-r},t_{i+j+1}-r)}(\cdot)\frac{t_{j+1}-\cdot}{\Delta}\int_{t_{i+j-1}}^{t_{i+j}}b(\varphi_u(0,\Phi_0))\mathrm du\mathrm d\beta_1\mathrm dr\Big]\Big\|^2\Big]\mathrm dr_1\Big)^{\frac12}\nn\\
&\leq K\Big\{\int_{t_i}^{T}\Big(e^{-\hat \lambda_4(T-t_i)}\E\Big[\Big\|I^0\mathrm{Id}_{d\times d}\int_{t_{i-1}}^{t_i}\int_0^1\mathcal Db(Z^{\beta_1}_{i,r})\sum_{j=-N}^{-1}\mathbf 1_{\{i+j\geq1\}}\nn\\
&\quad\mathbf 1_{[t_{i+j-r},t_{i+j+1}-r)}(\cdot)\frac{t_{j+1}-\cdot}{\Delta}\int_{t_{i+j-1}}^{t_{i+j}} b(\varphi_u(0,\Phi_0))\mathrm du\mathrm d\beta_1\mathrm dr\Big\|^2
\big(1+\|D_{r_1}Y^{\varsigma}_i\|^{4}\big)\Big]\nn\\
&\quad+e^{-\hat \lambda_4(T-t_i)}\E\Big[\Big\|I^0\mathrm{Id}_{d\times d}\int_{t_{i-1}}^{t_i}\int_0^1\mathcal D^2b(Z^{\beta_1}_{i,r})\Big(D_{r_1}Z^{\beta_1}_{i,r},\sum_{j=-N}^{-1}\mathbf 1_{\{i+j\geq1\}}\nn\\
&\quad\mathbf 1_{[t_{i+j-r},t_{i+j+1}-r)}(\cdot)\frac{t_{j+1}-\cdot}{\Delta}\int_{t_{i+j-1}}^{t_{i+j}}b(\varphi_u(0,\Phi_0))\mathrm du\Big)\mathrm d\beta_1\mathrm dr\Big\|^2\Big]\nn\\
&\quad+e^{-\hat \lambda_4(T-t_i)}\E\Big[\Big\|I^0\mathrm{Id}_{d\times d}\int_{t_{i-1}}^{t_i}\int_0^1 \mathcal Db(Z^{\beta_1}_{i,r})\sum_{j=-N}^{-1}\mathbf 1_{\{i+j\geq1\}}\mathbf 1_{[t_{i+j-r},t_{i+j+1}-r)}(\cdot)\nn\\
&\quad\frac{t_{j+1}-\cdot}{\Delta}\int_{t_{i+j-1}}^{t_{i+j}}\mathcal Db(\varphi_u(0,\Phi_0))D_{r_1}\varphi_u(0,\Phi_0)\mathrm du\mathrm d\beta_1\mathrm dr\Big\|^2\Big]\Big)
\mathrm dr_1\Big\}^{\frac12}\nn\\
&\leq K\Big\{\int_{t_i}^{T}\Big(\Delta^{4}e^{-\hat \lambda_4(T-t_i)} (1+\sup_{u\geq0}\E[\|\varphi_u(0,\Phi_0)\|^4])^{\frac12}\big(1+\E\big[\|D_{r_1}Y^{\varsigma}_i\|^{8}\big]\big)^{\frac12}\nn\\
&\quad+\Delta^3
e^{-\hat \lambda_4(T-t_i)}\int_{t_{i-1}}^{t_i}\int_0^1\E[\|D_{r_1}Z^{\beta_1}_{i,r}\|^4]^{\frac{1}{2}}\mathrm d \beta_1\mathrm d r(1+\sup_{u\geq0}\E[\|\varphi_u(0,\Phi_0)\|^4])^{\frac{1}{2}}\nn\\
&\quad+\Delta^3 e^{-\hat \lambda_4(T-t_i)}\int_{t_{i+j-1}}^{t_{i+j}}\E[\|D_{r_1}\varphi_u(0,\Phi_0)\|^2]\mathrm du\Big)\mathrm dr_1\Big\}^{\frac{1}{2}}\nn\\
&\leq K e^{-\frac{\hat \lambda_4(T-t_i)}{2}}(T-t_i)^{\frac12}\Delta^2.
\end{align}
Similarly,
\begin{align}\label{esti22}
%&\quad\Big\|\mathcal D\Phi(T;t_i,Y^{\varsigma}_{i}) I^0\mathrm{Id}_{d\times d}\int_{t_{i-1}}^{t_i}\int_0^1 \mathcal Db(Z^{\beta_1}_{i,r})\sum_{j=-N}^{-1}\mathbf 1_{\{i+j\geq1\}}\nn\\
%&\quad\mathbf 1_{[t_{i+j-r},t_{i+j+1}-r)}(\cdot)\frac{t_{j+1}-\cdot}{\Delta}\int_{t_{i+j-1}}^{t_{i+j}}b(\varphi_u(0,\Phi_0))\mathrm du\mathrm d\beta_1\mathrm dr\Big\|_{2,2}\nn\\
&\Big\{\int_{t_i}^{T}\int_{t_i}^{T}\mathbb E\Big[ \Big\|D_{r_1,r_2}\Big(\mathcal D\Phi(T;t_i,Y^{\varsigma}_{i}) I^0\mathrm{Id}_{d\times d}\int_{t_{i-1}}^{t_i}\int_0^1 \mathcal Db(Z^{\beta_1}_{i,r})\sum_{j=-N}^{-1}\mathbf 1_{\{i+j\geq1\}}\nn\\
&\quad\mathbf 1_{[t_{i+j-r},t_{i+j+1}-r)}(\cdot)\frac{t_{j+1}-\cdot}{\Delta}\int_{t_{i+j-1}}^{t_{i+j}}b(\varphi_u(0,\Phi_0))\mathrm du\mathrm d\beta_1\mathrm dr\Big)\Big\|^2\Big]\mathrm dr_1\mathrm dr_2\Big\}^{\frac12}\nn\\
&\leq K e^{-\frac{\hat \lambda_4(T-t_i)}{2}}(T-t_i)\Delta^2.
\end{align}
Inserting \eqref{esti02}--\eqref{esti22} into \eqref{Ch4b11}, one has
\begin{align*}
&\quad\sum_{i=1}^{N^{\Delta}}\mathbb E\Big[\Big\< (\mathcal D\Phi(T;t_i,Y^{\varsigma}_i) I^0\mathrm {Id}_{d\times d})^*f'(\Phi(T;t_i,Y^{\varsigma}_i)),\int_{t_{i-1}}^{t_i}\int_0^1\mathcal Db(Z^{\beta_1}_{i,r})\sum_{j=-N}^{-1}\nn\\
&\quad\mathbf 1_{\{i+j\geq1\}}
\mathbf 1_{[t_{i+j}-r,t_{i+j+1}-r)}(\cdot)\frac{t_{j+1}-\cdot}{\Delta}\int_{t_{i+j-1}}^{t_{i+j}}b(\varphi_u(0,\Phi_0))\mathrm du\mathrm d\beta_1\mathrm dr\Big\>\Big]\mathrm d\varsigma\\
 &\leq K\sum_{i=1}^{N^{\Delta}}e^{-\frac{\hat \lambda_4(T-t_i)}{2}}\Delta^{2}\big(1+(T-t_i)^{\frac12}+(T-t_i)\big)
\nn\\
&\leq K\Delta,
\end{align*}
 where we used the fact $$\sup_{T>0}\Delta\sum_{i=1}^{N^{\Delta}}e^{-\frac{\hat\lambda_4}{2}(T-t_i)}(T-t_i)^{l}\leq \sup_{T\geq0}\int_{0}^{T}e^{-\frac{\hat\lambda_4\Delta}{2}} e^{-\frac{\hat\lambda_4}{2}(T-s)}(T-s)^l\mathrm ds<\infty \quad\forall~l\in\mathbb N.$$

For the sub-term with the Brownian motion,  we have 
\begin{align*}
&\quad\mathcal I_{b,1}^1
%=\sum_{i=1}^{N^{\Delta}}\int_0^1\mathbb E\Big[\Big\< (\mathcal D\Phi(T;t_i,Y^{\varsigma}_i) I^0\mathrm {Id}_{d\times d})^*f'(\Phi(T;t_i,Y^{\varsigma}_i)),\int_{t_{i-1}}^{t_i}\int_0^1\mathcal Db(Z^{\beta_1}_{i,r})\\
%&\quad \mathbf 1_{[t_{i+j}-r,t_{i+j+1}-r)}(\cdot)\frac{t_{j+1}-\cdot}{\Delta}\int_{t_{i+j-1}}^{t_{i+j}}\tilde\sigma\mathrm dW(u)\mathrm d\beta_1\mathrm dr\Big\>\Big]\mathrm d\varsigma\\
%&= \sum_{i=1}^{N^{\Delta}}\int_0^1\int_{t_{i-1}}^{t_i}\int_0^1 \mathbb E\Big[\Big\<(\mathcal D b(Z^{\beta_1}_{i,r})\mathbf 1_{[t_{i+j}-r,t_{i+j+1}-r})(\cdot)\frac{t_{j+1}-\cdot}{\Delta}\mathrm{Id}_{d\times d})^*(\mathcal D\Phi(T;t_i,Y^{\varsigma}_i)I^0\mathrm {Id}_{d\times d})^*\\
%&\quad f'(\Phi(T;t_i,Y^{\varsigma}_i)),\int_{t_{i+j-1}}^{t_{i+j}}\tilde \sigma\mathrm dW(u)\Big\>\Big]\mathrm d{\beta_1}\mathrm dr\mathrm d\varsigma\\
%&=\sum_{i=1}^{N^{\Delta}}\int_0^1\int_{t_{i-1}}^{t_i}\int_0^1\int_{t_{i+j-1}}^{t_{i+j}}\mathbb E\Big[\Big\<D_r\big[(\mathcal Db(Z^{\beta_1}_{i,r})\mathbf 1_{[t_{i+j}-r,t_{i+j+1}-r)}(\cdot)\frac{t_{j+1}-\cdot}{\Delta}\mathrm {Id}_{d\times d})^*(\mathcal D\Phi(T;t_i,Y^{\varsigma}_i)I^0\mathrm {Id}_{d\times d})^*\\
%&\quad f'(\Phi(T;t_i,Y^{\varsigma}_i))\big],\tilde \sigma\Big\>\Big]\mathrm du \mathrm d\beta_1\mathrm dr\mathrm d\varsigma\\
=\sum_{i=1}^{N^{\Delta}}\int_0^1\int_{t_{i-1}}^{t_i}\int_0^1\int_{t_{i+j-1}}^{t_{i+j}}\Big\{\mathbb E\Big[\Big\<f'(\Phi(T;t_i,Y^{\varsigma}_i)),\mathcal D\Phi(T;t_i,Y^{\varsigma}_i)I^0\mathrm {Id}_{d\times d}\\
&\quad \mathcal D^2b(Z^{\beta_1}_{i,r})\Big(D_uZ^{\beta_1}_{i,r},\sum_{j=-N}^{-1}\mathbf 1_{\{i+j\geq1\}}\mathbf1_{[t_{i+j}-r,t_{i+j+1}-r)}(\cdot)\frac{t_{j+1}-\cdot}{\Delta}\mathrm {Id}_{d\times d}\tilde\sigma\Big)\Big\>\Big]\nn\\
&\quad+\mathbb E\Big[\Big\<f'(\Phi(T;t_i,Y^{\varsigma}_i)),D_u\mathcal D\Phi(T;t_i,Y^{\varsigma}_i)I^0\mathrm {Id}_{d\times d}\mathcal Db(Z^{\beta_1}_{i,r})\sum_{j=-N}^{-1}\mathbf 1_{\{i+j\geq1\}}\nn\\
&\quad\mathbf1_{[t_{i+j}-r,t_{i+j+1}-r)}(\cdot)\frac{t_{j+1}-\cdot}{\Delta}\mathrm {Id}_{d\times d}\tilde\sigma\Big\>\Big]+\mathbb E\Big[\Big\<f''(\Phi(T;t_i,Y^{\varsigma}_i))\nn\\
&\quad D_u\Phi(T;t_i,Y^{\varsigma}_i),\mathcal D\Phi(T;t_i,Y^{\varsigma}_i) I^0\mathrm {Id}_{d\times d}\mathcal Db(Z^{\beta_1}_{i,r})\sum_{j=-N}^{-1}\mathbf 1_{\{i+j\geq1\}}\mathbf1_{[t_{i+j}-r,t_{i+j+1}-r)}(\cdot)\nn\\
&\quad\frac{t_{j+1}-\cdot}{\Delta}\mathrm {Id}_{d\times d}\tilde\sigma\Big\>\Big]\Big\}\mathrm du\mathrm d\beta_1\mathrm dr\mathrm d\varsigma.
\end{align*}
This, together with \eqref{IBP1} and \eqref{IBP2}, implies that
\begin{align*}
\mathcal I^1_{b,1}&\leq K\sum_{i=1}^{N^{\Delta}}\int_0^1\int_{t_{i-1}}^{t_i}\int_0^1\int_{t_{i+j-1}}^{t_{i+j}}\Big\{\Big\| \mathcal D\Phi(T;t_i,Y^{\varsigma}_i)I^0\mathrm {Id}_{d\times d} \mathcal D^2b(Z^{\beta_1}_{i,r})\Big(D_uZ^{\beta_1}_{i,r},\nn\\
&\quad\sum_{j=-N}^{-1}\mathbf 1_{\{i+j\geq1\}}\mathbf1_{[t_{i+j}-r,t_{i+j+1}-r)}(\cdot)
\frac{t_{j+1}-\cdot}{\Delta}\mathrm {Id}_{d\times d}\tilde\sigma\Big)\Big\|_{2,2}\nn\\
&\quad+\Big\| D_u\mathcal D\Phi(T;t_i,Y^{\varsigma}_i)I^0\mathrm {Id}_{d\times d}\mathcal Db(Z^{\beta_1}_{i,r})\sum_{j=-N}^{-1}\mathbf 1_{\{i+j\geq1\}}\mathbf1_{[t_{i+j}-r,t_{i+j+1}-r)}(\cdot)\nn\\
&\quad\frac{t_{j+1}-\cdot}{\Delta}\mathrm {Id}_{d\times d}\tilde\sigma\Big\|_{2,2}+\Big\|\Big\<D_u\Phi(T;t_i,Y^{\varsigma}_i),\mathcal D\Phi(T;t_i,Y^{\varsigma}_i)I^0\mathrm {Id}_{d\times d}\nn\\
&\quad\mathcal Db(Z^{\beta_1}_{i,r})\sum_{j=-N}^{-1}\mathbf 1_{\{i+j\geq1\}}\mathbf1_{[t_{i+j}-r,t_{i+j+1}-r)}(\cdot)\frac{t_{j+1}-\cdot}{\Delta}\mathrm {Id}_{d\times d}\nn\\
&\quad\tilde\sigma\Big\>\Big\|_{3,2}\Big\}\mathrm du\mathrm d\beta_1\mathrm dr\mathrm d\varsigma=:\widetilde{\mathcal I}^1_{b,1,1}+\widetilde{\mathcal I}^1_{b,1,2}+\widetilde{\mathcal I}^1_{b,1,3}.
\end{align*}
By Assumption \ref{high5}, \cref{Ch4highDx,Ch4highDy}, and \eqref{Lunix}, the term $\widetilde{\mathcal I}^1_{b,1,1}$ is  estimated as 
\begin{align*}
&\quad\widetilde{\mathcal I}^1_{b,1,1}\leq K\sum_{i=1}^{N^{\Delta}}\int_0^1\int_{t_{i-1}}^{t_i}\int_0^1\int_{t_{i+j-1}}^{t_{i+j}}\Big\{\Big(e^{-\hat\lambda_4(T-t_i)}\mathbb E\Big[\Big\|I^0\mathrm{Id}_{d\times d}\mathcal D^2b(Z^{\beta_1}_{i,r})\Big(D_uZ^{\beta_1}_{i,r},\nn\\
&\quad\sum_{j=-N}^{-1}\mathbf 1_{\{i+j\geq1\}}\mathbf 1_{[t_{i+j}-r,t_{i+j+1}-r)}(\cdot)\frac{t_{j+1}-\cdot}{\Delta}\mathrm {Id}_{d\times d}\tilde \sigma\Big)\Big\|^2\Big]\Big)^{\frac12}\\
&\quad+\!\Big(\int_{t_{i}}^{T}\mathbb E\Big[\Big\|D_{r_1}\Big(
 \mathcal D\Phi(T;t_i,Y^{\varsigma}_i)I^0\mathrm {Id}_{d\times d} \mathcal D^2b(Z^{\beta_1}_{i,r})\Big(D_uZ^{\beta_1}_{i,r},\nn\\
&\quad\sum_{j=-N}^{-1}\mathbf 1_{\{i+j\geq1\}}\mathbf1_{[t_{i+j}-r,t_{i+j+1}-r)}(\cdot)
\frac{t_{j+1}-\cdot}{\Delta}\mathrm {Id}_{d\times d}\tilde\sigma\Big)\Big\|^2\Big]\mathrm dr_1\Big)^{\frac12}\\
&\quad +\Big(\int_{t_{i}}^{T}\int_{t_{i}}^{T}\mathbb E\Big[\Big\|D_{r_1,r_2} \Big(\mathcal D\Phi(T;t_i,Y^{\varsigma}_i)I^0\mathrm {Id}_{d\times d} \mathcal D^2b(Z^{\beta_1}_{i,r})\Big(D_uZ^{\beta_1}_{i,r},\sum_{j=-N}^{-1}\mathbf 1_{\{i+j\geq1\}}\nn\\
&\quad\mathbf1_{[t_{i+j}-r,t_{i+j+1}-r)}(\cdot)
\frac{t_{j+1}-\cdot}{\Delta}\mathrm {Id}_{d\times d}\tilde\sigma\Big)\Big\|^2\Big]\mathrm d r_1\mathrm dr_2\Big)^{\frac12}\Big\}\mathrm du\mathrm d\beta_1\mathrm d r\mathrm d\varsigma\\
&\leq K\Delta.
\end{align*}
For the term $\widetilde{\mathcal I}^1_{b,1,2},$ we arrive at
\begin{align*}
&\quad\widetilde{\mathcal I}^1_{b,1,2}\leq K\sum_{i=1}^{N^{\Delta}}\int_0^1\int_{t_{i-1}}^{t_i}\int_0^1\int_{t_{i+j-1}}^{t_{i+j}}\Big\{\Big(e^{-\hat\lambda_4(T-t_i)}\mathbb E\Big[\Big\|I^0\mathrm {Id}_{d\times d}\mathcal Db(Z^{\beta_1}_{i,r})\sum_{j=-N}^{-1}\nn\\
&\quad\mathbf 1_{\{i+j\geq1\}}\mathbf 1_{[t_{i+j}-r,t_{i+j+1}-r)}(\cdot)\frac{t_{j+1}-\cdot}{\Delta}\mathrm {Id}_{d\times d}\tilde \sigma\Big\|^2\big(1+\|D_{u}Y^{\varsigma}_i\|^4\big)\Big]\Big)^{\frac12}\\
&\quad +\Big(\int_{t_{i}}^{T}\mathbb E\Big[\Big\|D_{r_1}\Big( D_u\mathcal D\Phi(T;t_i,Y^{\varsigma}_i)I^0\mathrm {Id}_{d\times d}\mathcal Db(Z^{\beta_1}_{i,r})\sum_{j=-N}^{-1}\mathbf 1_{\{i+j\geq1\}}\mathbf1_{[t_{i+j}-r,t_{i+j+1}-r)}(\cdot)\nn\\
&\quad\frac{t_{j+1}-\cdot}{\Delta}\mathrm {Id}_{d\times d}\tilde\sigma\Big)\Big\|^2\Big]\mathrm dr_1\Big)^{\frac12} +\Big(\!\int_{t_{i}}^{T}\!\int_{t_{i}}^{T}\mathbb E\Big[\Big\|\!D_{r_1,r_2}\Big( D_u\mathcal D\Phi(T;t_i,Y^{\varsigma}_i)I^0\mathrm {Id}_{d\times d}\nn\\
&\quad\mathcal Db(Z^{\beta_1}_{i,r})\sum_{j=-N}^{-1}\mathbf 1_{\{i+j\geq1\}}\mathbf1_{[t_{i+j}-r,t_{i+j+1}-r)}(\cdot)\frac{t_{j+1}-\cdot}{\Delta}\mathrm {Id}_{d\times d}\tilde\sigma\Big)\Big\|^2\Big]\nn\\
&\quad\mathrm dr_1\mathrm dr_2\Big)^{\frac12}\Big\}\mathrm du\mathrm d\beta_1\mathrm dr\mathrm d\varsigma\leq K\Delta.
\end{align*}
Similar to estimates of $\widetilde{\mathcal I}^1_{b,1,1}$ and $\widetilde{\mathcal I}_{b,1,2}^1$, for the term $\widetilde{\mathcal I}_{b,1,3}^1,$ we obtain
\begin{align*}
\widetilde{\mathcal I}_{b,1,3}^1\leq& K\sum_{i=1}^{N^{\Delta}}\int_0^1\int_{t_{i-1}}^{t_i}\int_0^1\int_{t_{i+j-1}}^{t_{i+j}} \Big\|D_u\Phi(T;t_i,Y^{\varsigma}_i)\Big\|_{3,4}\Big\|\mathcal D\Phi(T;t_i,Y^{\varsigma}_i)I^0\mathrm {Id}_{d\times d}\\
&\quad \mathcal Db(Z^{\beta_1}_{i,r})\sum_{j=-N}^{-1}\mathbf 1_{\{i+j\geq1\}}\mathbf1_{[t_{i+j}-r,t_{i+j+1}-r)}(\cdot)\frac{t_{j+1}-\cdot}{\Delta}\nn\\
&\quad\mathrm {Id}_{d\times d}\tilde\sigma\Big\|_{3,4}\mathrm du\mathrm d\beta_1\mathrm dr\mathrm d\varsigma\leq K\Delta.
\end{align*}
%due to $\sup_{u\in[t_{i+j-1},t_{i+j}]}\|D_u\Phi (T;t_i,Y^{\varsigma}_i)\|_{3,4}<\infty.$

Other terms can be estimated similarly as above.  Therefore, we have $\mathcal I_{b,1}\leq K\Delta.$

\textit{Reestimate of term $\mathcal I_{b,2}.$}
We still only estimate the sub-term with $$\sum_{j=-N}^{-1}\mathbf 1_{\{i+j\geq1\}}\mathbf 1_{[t_{i+j}-r,t_{i+j+1}-r)}(\cdot)\int_{t_{i+j}}^{r+\cdot}\tilde \sigma\mathrm dW(u),$$ since other sub-terms in $\mathcal I_{b,2}$ can be estimated similarly to  those in $\mathcal I_{b,1}.$
Recall that 
\begin{align*}
&\quad\mathcal I_{b,2,1}=\sum_{i=1}^{N^{\Delta}}\sum_{j=-N}^{-1}\mathbf 1_{\{i+j\geq1\}}\int_0^1\mathbb E\Big[\Big\< (\mathcal D\Phi(T;t_i,Y^{\varsigma}_i) I^0\mathrm {Id}_{d\times d})^*f'(\Phi(T;t_i,Y^{\varsigma}_i)),\nn\\
&\quad\int_{t_{i-1}}^{t_i}\int_0^1\mathcal Db(Z^{\beta_1}_{i,r})\mathbf 1_{[t_{i+j}-r,t_{i+j+1}-r)}(\cdot)\int_{t_{i+j}}^{r+\cdot}\tilde \sigma\mathrm dW(u) \mathrm d\beta_1\mathrm dr\Big\>\Big]\mathrm d\varsigma\\
%&=\sum_{i=1}^{N^{\Delta}}\int_0^1\mathbb E\Big[\Big\< (\mathcal D\Phi(T;t_i,Y^{\varsigma}_i) I^0\mathrm {Id}_{d\times d})^*f'(\Phi(T;t_i,Y^{\varsigma}_i)),\int_{t_{i-1}}^{t_i}\int_0^1\\
%&\quad\int_{-\tau}^0k_b^{j}(Z^{\beta_1}_{i,r},s)\mathbf 1_{[t_{i+j}-r,t_{i+j+1}-r)}(s)\int_{t_{i+j}}^{r+s}\tilde \sigma\mathrm dW(u) \mathrm d\nu_3^{j}(s)\mathrm d\beta_1\mathrm dr\Big\>\Big]\mathrm d\varsigma\\
% &=\sum_{i=1}^{N^{\Delta}}\int_0^1\int_{t_{i-1}}^{t_i}\int_0^1\int_{-\tau\vee (t_{i+j}-r)}^{ (t_{i+j+1}-r)\wedge 0} \mathbb E\Big[\Big\<k_b^{j}(Z^{\beta_1}_{i,r},s)^*(\mathcal D\Phi(T;t_i,Y^{\varsigma}_i)I^0\mathrm {Id}_{d\times d})^*f'(\Phi(T;t_i,Y^{\varsigma}_i)),\\
 %&\quad \int_{t_{i+j}}^{r+s}\tilde \sigma\mathrm dW(u)\Big\>\Big]\mathrm d\nu_3^{j}(s)\mathrm d\beta_1\mathrm dr\mathrm d\varsigma\\
% &=\sum_{i=1}^{N^{\Delta}}\int_0^1\int_{t_{i-1}}^{t_i}\int_0^1\int_{-\tau\vee (t_{i+j}-r)}^{ (t_{i+j+1}-r)\wedge 0}\int_{t_{i+j}}^{r+s}\mathbb E\Big[\Big\<D_u[k_b^{j}(Z^{\beta_1}_{i,r},s)^*(\mathcal D\Phi(T;t_i,Y^{\varsigma}_i)I^0\mathrm {Id}_{d\times d})^*\\
% &\quad f'(\Phi(T;t_i,Y^{\varsigma}_i))],\tilde \sigma\Big\>\Big]\mathrm du\mathrm d\nu_3^{j}(s)\mathrm d\beta_1\mathrm dr\mathrm d\varsigma\\
 &=\sum_{i=1}^{N^{\Delta}}\sum_{j=-N}^{-1}\mathbf 1_{\{i+j\geq1\}}\int_0^1\int_{t_{i-1}}^{t_i}\int_0^1\int_{-\tau\vee (t_{i+j}-r)}^{ (t_{i+j+1}-r)\wedge 0}\int_{t_{i+j}}^{r+s}\Big\{\mathbb E\Big[\Big\<f'(\Phi(T;t_i,Y^{\varsigma}_i)),\nn\\
 &\quad\mathcal D\Phi(T;t_i,Y^{\varsigma}_i) I^0\mathrm {Id}_{d\times d} D_uk_b^{1}(Z^{\beta_1}_{i,r}(s))\tilde \sigma\Big\>\Big]\mathrm du\mathrm d\nu_3^{1}(s)\mathrm d\beta_1\mathrm dr\mathrm d\varsigma\\
 &\quad+\mathbb E\Big[\Big\<f'(\Phi(T;t_i,Y^{\varsigma}_i)), D_u\mathcal D\Phi(T;t_i,Y^{\varsigma}_i)
 I^0\mathrm {Id}_{d\times d}k_b^{1}(Z^{\beta_1}_{i,r}(s))\tilde \sigma\Big\>\Big]\\
 &\quad+\mathbb E\Big[\Big\<f''(\Phi(T;t_i,Y^{\varsigma}_i))D_u \Phi(T;t_i,Y^{\varsigma}_i), \mathcal D\Phi(T;t_i,Y^{\varsigma}_i)I^0\mathrm {Id}_{d\times d}
 k_b^{1}(Z^{\beta_1}_{i,r}(s))\nn\\
 &\quad\tilde \sigma\Big\>\Big]\Big\}\mathrm du\mathrm d\nu_3^{1}(s)\mathrm d\beta_1\mathrm dr\mathrm d\varsigma.
 \end{align*}
Combining \eqref{Lunix}, \eqref{IBP1}, \eqref{IBP2}, and Lemma \ref{Ch4highDy}, we deduce 
 \begin{align*}
&\quad\mathcal I_{b,2,1}\leq K\sum_{i=1}^{N^{\Delta}}\sum_{j=-N}^{-1}\mathbf 1_{\{i+j\geq1\}}\int_0^1\!\int_{t_{i-1}}^{t_i}\!\int_0^1\!\int_{-\tau\vee (t_{i+j}-r)}^{ (t_{i+j+1}-r)\wedge 0}\!\int_{t_{i+j}}^{r+s}\!\Big\{\Big\|\mathcal D\Phi(T;t_i,Y^{\varsigma}_i)\nn\\
&\quad I^0\mathrm {Id}_{d\times d} D_uk_b^{1}(Z^{\beta_1}_{i,r}(s))\tilde \sigma\Big\|_{2,2}+\Big\|D_u\mathcal D\Phi(T;t_i,Y^{\varsigma}_i)I^0\mathrm {Id}_{d\times d}k_b^{1}(Z^{\beta_1}_{i,r}(s))\nn\\
&\quad\tilde \sigma\Big\|_{2,2}+\big\|\big\<D_u\Phi(T;t_i,\!Y^{\varsigma}_i),
 \mathcal D\Phi(T;t_i,\!Y^{\varsigma}_i)I^0\mathrm {Id}_{d\times d}
 k_b^{1}(Z^{\beta_1}_{i,r}(s))\nn\\
 &\quad\tilde \sigma\big\>\big\|_{3,2}\Big\}\mathrm du\mathrm d\nu_3^{1}(s)\mathrm d\beta_1\mathrm dr\mathrm d\varsigma\leq K\Delta,
\end{align*}
where we used the assumption %$$\sup\limits_{u\in [t_{i+j},t_{i+j+1}]}\mathbb E\Big[\sup\limits_{s\in[-\tau,0]}|D^3k_b^{1}(Z^{\beta_1}_{i,r}(s))|^{2p_0}\Big]+\mathbb E\Big[\sup\limits_{s\in[-\tau,0]}|k_b^{1}(Z^{\beta_1}_{i,r}(s))|^{2p_0}\Big]\leq K$$
$$\sup_{l\in\{1,2,3\}}\sup_{\phi\in\mathcal C^d}\Big(\|D^lk_b^{1}(\phi)\|_{\mathcal L((\mathcal C^d)^{\otimes l}; \mathcal C([-\tau,0]; \RR^{d\times d}))}+\|k_b^{1}(\phi)\|\Big)\leq K.$$

Hence, we have  $\mathcal I_{b,2}\leq K\Delta.$

\textbf{Reestimate of $\mathcal I_{b,\theta}.$} 
\iffalse
It can be rewritten  as
\begin{align*}
\mathcal I_{b,\theta}&=\sum_{i=1}^{N^{\Delta}}\int_0^1\mathbb E\Big[\Big\<(\mathcal D\Phi(T;t_i,Y^{\varsigma}_i) I^0\mathrm {Id}_{d\times d})^* f'(\Phi(T;t_i,Y^{\varsigma}_i)),\theta \int_{t_{i-1}}^{t_i}\int_0^1\mathcal Db(Z^{\beta_2}_i)\\
&\quad (\varphi^{Int}_{t_{i-1}}(0,\Phi_0)-\Phi_{t_i}(t_{i-1},\varphi^{Int}_{t_{i-1}}(0,\Phi_0)))\mathrm d\beta_2\mathrm dr\Big\>\Big]\mathrm d \tau,
\end{align*}
where $Z^{\beta_1}_{i,r}=\beta_2\varphi_{t_{i-1}}^{Int}(0,\Phi_0)+(1-\beta_2)\Phi_{t_i}(t_{i-1},\varphi^{Int}_{t_{i-1}}(0,\Phi_0)).$

It can be calculated that
\begin{align*}
&\quad \varphi^{Int}_{t_{i-1}}(0,\Phi_0)-\Phi_{t_i}(t_{i-1},\varphi^{Int}_{t_{i-1}}(0,\Phi_0))\\
&=\sum_{j=-N}^{-1}I^j[\varphi(t_{i+j-1};0,\Phi_0)-\varphi(t_{i+j};0,\Phi_0)]+I^0[\varphi(t_{i-1};0,\Phi_0)-\Phi(t_i;t_{i-1},\varphi_{t_{i-1}}^{Int}(0,\Phi_0))]\\
&=-\sum_{j=-N}^{-1}I^j\Big(\int_{t_{i+j-1}}^{t_{i+j}}b(\varphi_u(0,\Phi_0))\mathrm du+\int_{t_{i+j-1}}^{t_{i+j}}\tilde \sigma\mathrm dW(u)\Big)\\
&\quad -I^0\Big((1-\theta)b(\varphi^{Int}_{t_{i-1}}(0,\Phi_0))\Delta+\theta b(\Phi_{t_i}(t_{i-1},\varphi^{Int}_{t_{i-1}}(0,\Phi_0)))\Delta+\sigma(\varphi^{Int}_{t_{i-1}}(0,\Phi_0))\delta W_{i-1}\Big).
\end{align*}
\fi
Similar to the proof of $\mathcal I_b,$ we can derive that $\mathcal I_{b,\theta}\leq K\Delta.$

\textbf{Reestimate of $\mathcal I_{\sigma}.$} 
When the noise of \eqref{FF} is additive, the term $\mathcal I_{\sigma}\equiv0$.

Combining the above terms, we complete the proof.
\end{proof}

\begin{proof}[Proof of Theorem \ref{dens_con}]
Based on the combination of Theorem \ref{thm_weak2} and \eqref{dens_error}, we complete the proof. 
\end{proof}

\section{Summary and outlook}

This chapter presents the numerical approximation of the density function of the SFDE. We first prove the existence of the density function for the $\theta$-EM  solution. For the SFDE with the multiplicative noise, we show the convergence of the numerical density function by the localization argument. Then by establishing the test-functional-independent weak convergence, we show that for the  additive noise case, the numerical density function  converges to the exact one with the convergence rate $1$. 
%with multiplicative noise. Then by  
%make  contributions on the numerical approximation of the density function for the SFDE by  proving  the existence and  convergence of the density function of the $\theta$-EM method.
 There are some problems that remain to be solved, for example,
\begin{itemize}
\item[(\romannumeral1)]  can we obtain the  convergence rate with order $1$ of the numerical density function for the multiplicative noise case? 
\item[(\romannumeral2)] For the SFDE driven by the rough path, does the density function of the exact solution exist, and further is it smooth?
\item[(\romannumeral3)] If (\romannumeral2) holds, can we obtain the approximation result for the underlying density function?
\end{itemize}
%how to estimate the error between the density of the numerical solution and the exact one? 
%In contrast, the research of approximations for densities of exact solutions of stochastic partial differential equations (SPDEs) is still at its infancy.

These problems are challenging. We would like to mention that authors in \cite{Friz} prove the existence of the density function for the rough stochastic differential equation under the H\"ormander condition. But there is no work for the case of the SFDE. 
%the numerical study of the density function for the stochastic equation driven by the rough path. 
We leave them as open problems and attempt to solve them in the future.

    \cleardoublepage
    
%-----------------------------------------------------------------------
% Beginning of chap1.tex
%-----------------------------------------------------------------------
%
%  AMS-LaTeX sample file for a chapter of a monograph, to be used with
%  an AMS monograph document class.  This is a data file input by
%  chapter.tex.
%
%  Use this file as a model for a chapter; DO NOT START BY removing its
%  contents and filling in your own text.
% 
%%%%%%%%%%%%%%%%%%%%%%%%%%%%%%%%%%%%%%%%%%%%%%%%%%%%%%%%%%%%%%%%%%%%%%%%

%\part{This is a Part Title Sample}

\chapter{Large deviation principle of numerical solution}
In this chapter, we focus on the $\theta$-EM method of the SFDE \eqref{FF} with small noise and investigate the LDP of the underlying  numerical solution. 
By the weak convergence approach, 
%based on the equivalence of the LDP and the Laplace principle, 
 we deduce the Freidlin--Wentzell LDP of the $\theta$-EM solution on the infinite time horizon under the suplerlinearly growing drift coefficient condition.  
 %for the $\theta$-EM method of the SFDE \eqref{FF} with small noise under the suplerlinearly growing coefficient condition. 
 %{\color{red}As a by-product, the LDP for the numerical solution  is  obtained.} 
Based on the estimates of negative moments of the determinant of the corresponding Malliavin covariance matrix, 
we prove  the existence and smoothness of the numerical density function for the $\theta$-EM method of \eqref{FF} with small noise. Combining the technique of the Malliavin calculus, we present the  logarithmic estimates of the numerical density function and reveal the relation between the logarithmic limit and the rate function of the LDP. 
%are also presented with the limit being given. In addition, we give the relation between the limit of the logarithmic estimate and the rate function of the LDP. 
%with the limit coinciding  with the rate function of the LDP for one single point. 

\section{The Freidlin--Wentzell large deviation  of numerical solution}
In this section, we consider the SFDE \eqref{FF} with small noise on the infinite time horizon, which is written as follows
\begin{align}\label{FF_small}
\begin{cases}
\mathrm dx^{\xi,\epsilon}(t)=b(x^{\xi,\epsilon}_t)\mathrm dt+\sqrt{\epsilon}\sigma(x^{\xi,\epsilon}_t)\mathrm dW(t),\quad t>0,\\
x^{\xi,\epsilon}_0=\xi\in\mathcal C^d,
\end{cases}
\end{align}
where $\epsilon\in(0,1)$.
The aim is to study the  Freidlin--Wentzell  large deviation  of the $\theta$-EM method of \eqref{FF_small} by means of the weak convergence approach. The $\theta$-EM method of \eqref{FF_small} reads as 
\begin{align}
\label{EM_epsilon}
\begin{cases}
y^{\Delta,\epsilon}(t_{k+1})=y^{\Delta,\epsilon}(t_k)+(1-\theta)b(y^{\Delta,\epsilon}_{t_k})\Delta+\theta b(y^{\Delta,\epsilon}_{t_{k+1}})\Delta+\sqrt{\epsilon} \sigma(y^{\Delta,\epsilon}_{t_k})\delta W_k,\;k\in\mathbb N,\\
y^{\Delta,\epsilon}(t_k)=\xi(t_k),\quad k=-N,\ldots,0.
\end{cases}
\end{align}
%The key for this study is the  weak convergence of the solution of \eqref{EM_epsilon} towards the Dirac measure on the solution of the corresponding skeleton equation as the intensity of the noise tends to zero.

We denote 
$$L^2_{\Delta}([0,+\infty);\mathbb R^m):=\Big\{v: [0,+\infty)\rightarrow \RR^{m} \text{ is measurable}\; \Big|\int_0^{+\infty}|v(\lfloor s\rfloor)|^2\mathrm ds<\infty\Big\}.$$  For any $M\in(0,+\infty)$, 
set $$\mathcal S_M:=\Big\{v\in L^2_{\Delta}([0,+\infty);\mathbb R^m)\Big|\int_0^{+\infty}|v(\lfloor s\rfloor)|^2\mathrm ds\leq M\Big\}$$ and $$\mathcal P_M:=\{v:\Omega\times [0,+\infty)\to\mathbb R^m|v\text{ is }\mathcal F_t\text{-measurable and }v\in\mathcal S_M \text{ a.s.}\}.$$ It follows from 
\cite[Chapter \uppercase\expandafter{\romannumeral3}, Section 28, Theorem 1']{weaktopo}
that $\mathcal S_M$ is a Polish space endowed with the weak topology. %generated by $$d_{1}(v_1,v_2)=\sum_{k=1}^{\infty}2^{-k}|\int_0^{+\infty}\langle v_1(\lfloor s\rfloor)-v_2(\lfloor s\rfloor),\zeta_k(\lfloor s\rfloor)\rangle \mathrm ds |,$$ 
%where $\{\zeta_k\}_{k\in \mathbb N_+}$ is an orthogonal basis of $L^2_{\Delta}([0,+\infty);\mathbb R^m)$.}
Introduce the space $\mathcal C_{\xi}:=\mathcal C_{\xi}([-\tau,+\infty);\mathbb R^d)$ consisting of continuous functions $u: [-\tau,\infty)\to\mathbb R^d$ with $u(r)=\xi(r),\;r\in[-\tau,0],$  
which is a polish space endowed with the norm 
\begin{align*}
\|u\|_{\mathcal C_{\xi}}:=\sum_{k=1}^{\infty}e^{-\mathfrak {a} t_k}\big(\sup_{t\in[-\tau,t_k]}|u(t)|\big)\Delta
\end{align*}
for some constant $\mathfrak a>0$.

Denote $\{y^{\Delta,\epsilon}(\cdot)\}$ by the linear interpolation  with respect to $y^{\Delta,\epsilon}(-t_N),\ldots, y^{\Delta,\epsilon}(t_k),\ldots$ We will prove in \cref{estimate_y2} that $\{y^{\Delta,\epsilon}(\cdot)\}$ belongs to $\mathcal C_{\xi}$.  

%; see \cite{Cspace} for a more general form of the class of spaces.

In order to present the LDP for the solution of \eqref{EM_epsilon}, we consider the following stochastic
controlled equation
\begin{align}\label{control}
\begin{cases}
y^{\Delta,v^{\epsilon}}(t_{k+1})=y^{\Delta,v^{\epsilon}}(t_k)+(1-\theta)b(y^{\Delta,v^{\epsilon}}_{t_k})\Delta+\theta b(y^{\Delta,v^{\epsilon}}_{t_{k+1}})\Delta\\
\quad\quad\quad\quad\quad\quad+\sigma(y^{\Delta,v^{\epsilon}}_{t_k})\big(\sqrt{\epsilon}\delta W_k+v^{\epsilon}(t_k)\Delta\big),\;k\in\mathbb N,\\
y^{\Delta,v^{\epsilon}}(t_k)=\xi(t_k),\quad k=-N,\ldots,0,
\end{cases}
\end{align}
and the skeleton equation
\begin{align}\label{skeleton}
\begin{cases}%\label{skeleton}
w^{\Delta,v}(t_{k+1})=w^{\Delta,v}(t_k)\!+\!(1\!-\!\theta)b(w^{\Delta,v}_{t_k})\Delta\!+\!\theta b(w^{\Delta,v}_{t_{k+1}})\Delta\!+\!\sigma(w^{\Delta,v}_{t_k})v(t_k)\Delta,\;k\in\mathbb N,\\
w^{\Delta,v}(t_k)=\xi(t_k),\quad k=-N,\ldots,0,
\end{cases}
\end{align}
where $v^{\epsilon},v\in L^2_{\Delta}([0,+\infty);\mathbb R^m)$, and 
$y^{\Delta, v^{\epsilon}}_{t_k}$ and $w^{\Delta, v}_{t_k}$ are defined by : for $r\in[t_j,t_{j+1}]$, $j\in\{-N,\ldots,-1\}$,
\begin{align*}
y^{\Delta, v^{\epsilon}}_{t_k}(r)&:=\frac{t_{j+1}-r}{\Delta}y^{\Delta, v^{\epsilon}}(t_{k+j})+\frac{r-t_j}{\Delta}y^{\Delta, v^{\epsilon}}(t_{k+j+1}),\nn\\
w^{\Delta, v}_{t_k}(r)&:=\frac{t_{j+1}-r}{\Delta}w^{\Delta, v}(t_{k+j})+\frac{r-t_j}{\Delta}w^{\Delta, v}(t_{k+j+1}).
\end{align*}
 The continuous versions for solutions  of \eqref{control} and \eqref{skeleton} are defined by the linear interpolations of the solutions, respectively. 
Define measurable maps  $\mathcal G^{\Delta,\epsilon},\mathcal G^{\Delta}:\mathcal C([0,+\infty);\mathbb R^m)\to\mathcal C_{\xi}([-\tau,+\infty);\mathbb R^d)$ respectively, by  $$\mathcal G^{\Delta,\epsilon}\Big(\sqrt{\epsilon}W+\int _0^{\cdot}v^{\epsilon}(\lfloor s\rfloor)\mathrm ds\Big)=y^{\Delta,v^{\epsilon}},\quad  \mathcal G^{\Delta}\Big(\int_0^{\cdot}v(\lfloor s\rfloor)\mathrm ds\Big)=w^{\Delta,v}.$$ 
Introduce the auxiliary process
\begin{align}\label{auxi}
\begin{cases}
z^{\Delta,v}(t_k)=\xi(t_k),\quad k=-N,\ldots,-1,\\
z^{\Delta,v}(t_k)=w^{\Delta,v}(t_k)-\theta b(w^{\Delta,v}_{t_k})\Delta,\quad k= 0,\\
z^{\Delta,v}(t_k)=z^{\Delta,v}(t_{k-1})+b(w^{\Delta,v}_{t_{k-1}})\Delta+\sigma(w^{\Delta,v}_{t_{k-1}})v(t_{k-1})\Delta,\quad k\in\mathbb N_+
\end{cases}
\end{align}
and the continuous version $\{z^{\Delta,v}(t)\}_{t\geq0}$ as follows: for $t\in [t_k,t_{k+1})$ with $k\in\mathbb N,$ 
\begin{align*}
z^{\Delta,v}(t)=z^{\Delta,v}(t_k)+b(w^{\Delta,v}_{t_k})(t-t_k)+\sigma(w^{\Delta,v}_{t_k})v(t_k)(t-t_k),
\end{align*}
for $t\in[-\tau,0]$, $z^{\Delta,v}(t)=\xi(t)$.
\begin{thm}\label{main_LDP}
Let  \cref{a1,a2,a4,a5} hold. Then for any $\Delta\in(0,\frac{1}{4\theta a_2}]$, the family of random variables $\{y^{\Delta,\epsilon}(\cdot)\}_{\epsilon\in(0,1)}$ satisfies  the LDP in $\mathcal C_{\xi}([-\tau,+\infty);\mathbb R^d)$, i.e.,
\begin{itemize}
\item[(\romannumeral1)] for each closed subset $F$ of $\mathcal C_{\xi}([-\tau,+\infty);\mathbb R^d)$, 
\begin{align*}
\limsup_{\epsilon\to0}\epsilon\log\mathbb P\big(y^{\Delta,\epsilon}(\cdot)\in F\big)\leq-\inf_{x\in F}I(x);
\end{align*}
\item[(\romannumeral2)] for each open  subset $O$ of $\mathcal C_{\xi}([-\tau,+\infty);\mathbb R^d)$, 
\begin{align*}
\liminf_{\epsilon\to0}\epsilon\log\mathbb P\big(y^{\Delta,\epsilon}(\cdot)\in O\big)\ge-\inf_{x\in O}I(x),
\end{align*}
\end{itemize}
 where the good rate function is given  by
\begin{align*}
I(f)=\inf_{\{v\in L^2_{\Delta}([0,+\infty);\mathbb R^m):f=\mathcal G^{\Delta}(\int_0^{\cdot}v(\lfloor s\rfloor)\mathrm ds)\}}\frac12\int_0^{+\infty}|v(\lfloor s\rfloor)|^2\mathrm ds.
\end{align*}
\end{thm}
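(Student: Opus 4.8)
The plan is to establish the LDP via the weak convergence method (equivalently, the Laplace principle), following the framework of \cite{Weakapproach} adapted to the infinite time horizon as in \cite{ZhengZH}. Since $\{y^{\Delta,\epsilon}(\cdot)\}_{\epsilon}$ admits the representation $y^{\Delta,\epsilon}=\mathcal G^{\Delta,\epsilon}(\sqrt{\epsilon}W)$ and each controlled solution satisfies $y^{\Delta,v^{\epsilon}}=\mathcal G^{\Delta,\epsilon}\big(\sqrt{\epsilon}W+\int_0^{\cdot}v^{\epsilon}(\lfloor s\rfloor)\mathrm ds\big)$, it suffices to verify the two sufficient conditions for the Laplace principle: (a) a \emph{compactness} condition for the skeleton map, namely that for every $M<\infty$ the set $\{\mathcal G^{\Delta}(\int_0^{\cdot}v(\lfloor s\rfloor)\mathrm ds):v\in\mathcal S_M\}$ is compact in $\mathcal C_{\xi}$; and (b) a \emph{weak convergence} condition, namely that whenever $v^{\epsilon}\in\mathcal P_M$ converges in distribution to $v$ (with $\mathcal S_M$ equipped with its weak topology), one has $\mathcal G^{\Delta,\epsilon}\big(\sqrt{\epsilon}W+\int_0^{\cdot}v^{\epsilon}(\lfloor s\rfloor)\mathrm ds\big)\to\mathcal G^{\Delta}\big(\int_0^{\cdot}v(\lfloor s\rfloor)\mathrm ds\big)$ in distribution on $\mathcal C_{\xi}$. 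Once (a) and (b) hold, the stated LDP with the good rate function $I$ follows from the general equivalence between the Laplace principle and the LDP on the Polish space $\mathcal C_{\xi}$.

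To carry this out, I would first record the solvability and the basic time-independent moment bounds. Fixing $v\in\mathcal S_M$, the skeleton equation \eqref{skeleton} is solvable by the same coercivity/monotonicity argument as in \cref{solvability}, since the control term $\sigma(w^{\Delta,v}_{t_k})v(t_k)\Delta$ does not affect the implicit solve in $w^{\Delta,v}(t_{k+1})$. The crucial analytic input is a family of time-independent estimates on the solutions of \eqref{skeleton} and \eqref{control}, uniform over $v\in\mathcal S_M$, of the form $\sup_{k\ge0}|w^{\Delta,v}(t_k)|^2\le K(1+\|\xi\|^2)$ together with control on the increments; these are obtained by combining the dissipativity from \cref{a2} (which produces a strictly contractive recursion in $|z^{\Delta,v}(t_k)|^2$ via the auxiliary process \eqref{auxi}) with the Young inequality applied to the cross term $\langle z^{\Delta,v}(t_k),\sigma(w^{\Delta,v}_{t_k})v(t_k)\Delta\rangle$, where the $\int_0^{\infty}|v(\lfloor s\rfloor)|^2\mathrm ds\le M$ bound absorbs the quadratic cost. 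These are precisely the estimates the text refers to in \cref{estimate_y2}. With such bounds one verifies that $\{w^{\Delta,v}(\cdot)\}$ lies in $\mathcal C_{\xi}$ and, via equicontinuity of the interpolants on each compact $[-\tau,t_k]$, that the skeleton images of $\mathcal S_M$ are relatively compact; combined with continuity of $v\mapsto w^{\Delta,v}$ in the weak topology (using that $v\mapsto\int_0^{\cdot}\sigma(w^{\Delta,v}_{\lfloor s\rfloor})v(\lfloor s\rfloor)\mathrm ds$ is weakly continuous on $\mathcal S_M$), this gives condition (a).

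For condition (b), I would take $v^{\epsilon}\in\mathcal P_M$ with $v^{\epsilon}\Rightarrow v$ and estimate the difference $y^{\Delta,v^{\epsilon}}-w^{\Delta,v}$ on each grid interval. Writing the recursion for the difference, the noise contribution $\sqrt{\epsilon}\,\sigma(y^{\Delta,v^{\epsilon}}_{t_k})\delta W_k$ vanishes in $L^2$ as $\epsilon\to0$ by the martingale/Burkholder--Davis--Gundy argument together with the uniform moment bounds, while the control contribution is handled by the Lipschitz property of $\sigma$ (\cref{a1}), the dissipativity of $b$ (\cref{a2}), and the weak convergence of $\int_0^{\cdot}\sigma(\cdot)v^{\epsilon}(\lfloor s\rfloor)\mathrm ds$ to $\int_0^{\cdot}\sigma(\cdot)v(\lfloor s\rfloor)\mathrm ds$. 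The exponentially weighted norm $\|\cdot\|_{\mathcal C_{\xi}}$ is what makes the infinite-horizon passage work: the factor $e^{-\mathfrak a t_k}$ lets me control the tail of the series $\sum_k e^{-\mathfrak a t_k}\big(\sup_{[-\tau,t_k]}|y^{\Delta,v^{\epsilon}}-w^{\Delta,v}|\big)\Delta$ by the uniform moment bounds, so that convergence on each finite window upgrades to convergence in $\mathcal C_{\xi}$.

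The main obstacle I anticipate is the interplay between the infinite time horizon and the superlinear drift: one must obtain moment and difference estimates that are genuinely \emph{uniform in time} and uniform over the control set $\mathcal S_M$, rather than growing like $e^{KT}$ as in the finite-horizon bounds of \cref{p2.1}. This forces reliance on the dissipative recursion (constant $c_\lambda$ of \eqref{lamb}, or its discrete analogue appearing in \cref{p2.2,p2.3}) to beat the control cost, and care is needed because the implicit $\theta$-term and the linear interpolation \eqref{thL} couple the difference across $N$ grid points. Once these time-uniform estimates are secured in the weighted space $\mathcal C_{\xi}$, conditions (a) and (b) follow and the theorem is proved; the identification of $I$ as a good rate function is then automatic from the compactness in (a).
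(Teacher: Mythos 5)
Your proposal follows essentially the same route as the paper: it invokes the equivalence of the LDP with the Laplace principle via the infinite-horizon weak convergence framework of \cite{ZhengZH}, and reduces the theorem to exactly the two conditions the paper verifies, namely compactness of the skeleton map image over $\mathcal S_M$ (the paper's Proposition~\ref{propKM}, proved there as continuity of $\mathcal G^{\Delta}\circ F$ on the weakly compact set $\mathcal S_M$) and convergence in distribution of the controlled solutions $y^{\Delta,v^{\epsilon}}$ to $w^{\Delta,v}$ (the paper's Proposition~\ref{prop_cond}), supported by control-uniform a priori bounds as in Lemmas~\ref{bound_control} and~\ref{estimate_y2} and by the exponential weighting of $\|\cdot\|_{\mathcal C_{\xi}}$ to pass from finite windows to the infinite horizon. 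The only minor deviation is your insistence on moment bounds that are genuinely uniform in time, whereas the paper's Lemma~\ref{estimate_y2} allows growth of order $e^{\sqrt{\epsilon}K(M)t}$, which suffices because the weighted norm and the truncation $\wedge\,1$ absorb it.
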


According to  the contraction principle (see e.g. \cite[Theorem 4.2.1]{Dembo}), we obtain the following LDP for the family $\{y^{\Delta,\epsilon}(t)\}_{\epsilon\in(0,1)}$ for each $t>0$. 
\begin{coro}\label{coro_LDP1}
Under  conditions in Theorem \ref{main_LDP},
for any $t>0,$ the family of random variables $\{y^{\Delta,\epsilon}(t)\}_{\epsilon\in(0,1)}$ satisfies the LDP in $\mathbb R^d$ with the good rate function given   by
\begin{align*}
\tilde I(z)=\inf\{I(f):f\in\mathcal C_{\xi}([-\tau,\infty);\mathbb R^d),z=f(t)\},
\end{align*}
where $I(\cdot)$ is given in Theorem \ref{main_LDP}.
\end{coro}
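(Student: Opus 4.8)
The plan is to deduce \Cref{coro_LDP1} as a direct consequence of \Cref{main_LDP} via the contraction principle, so the bulk of the work reduces to identifying the correct continuous map and verifying its hypotheses. First I would fix $t>0$ and introduce the evaluation map $\pi_t:\mathcal C_{\xi}([-\tau,+\infty);\mathbb R^d)\to\mathbb R^d$ defined by $\pi_t(f)=f(t)$. The key observation is that $y^{\Delta,\epsilon}(t)=\pi_t(y^{\Delta,\epsilon}(\cdot))$, so that the family $\{y^{\Delta,\epsilon}(t)\}_{\epsilon\in(0,1)}$ is precisely the pushforward under $\pi_t$ of the family $\{y^{\Delta,\epsilon}(\cdot)\}_{\epsilon\in(0,1)}$, which satisfies the LDP on $\mathcal C_{\xi}$ with good rate function $I$ by \Cref{main_LDP}.

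The main step is to establish that $\pi_t$ is continuous from $(\mathcal C_{\xi},\|\cdot\|_{\mathcal C_{\xi}})$ to $\mathbb R^d$. This is where the specific form of the norm $\|u\|_{\mathcal C_{\xi}}=\sum_{k=1}^{\infty}e^{-\mathfrak a t_k}(\sup_{s\in[-\tau,t_k]}|u(s)|)\Delta$ must be used carefully: I would pick any grid index $k_0$ with $t_{k_0}\ge t$, and bound
\begin{align*}
|f(t)-g(t)|\leq \sup_{s\in[-\tau,t_{k_0}]}|f(s)-g(s)|\leq \frac{e^{\mathfrak a t_{k_0}}}{\Delta}\,e^{-\mathfrak a t_{k_0}}\Big(\sup_{s\in[-\tau,t_{k_0}]}|f(s)-g(s)|\Big)\Delta\leq \frac{e^{\mathfrak a t_{k_0}}}{\Delta}\,\|f-g\|_{\mathcal C_{\xi}},
\end{align*}
since the single term indexed by $k_0$ is dominated by the full sum defining $\|f-g\|_{\mathcal C_{\xi}}$. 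This shows $\pi_t$ is in fact Lipschitz, hence continuous.

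With continuity of $\pi_t$ in hand, the contraction principle (\cite[Theorem 4.2.1]{Dembo}) applies directly: the image family $\{y^{\Delta,\epsilon}(t)\}_{\epsilon\in(0,1)}$ satisfies the LDP on $\mathbb R^d$ with good rate function obtained by minimizing $I$ over the fibres of $\pi_t$, namely
\begin{align*}
\tilde I(z)=\inf\{I(f):f\in\mathcal C_{\xi}([-\tau,\infty);\mathbb R^d),\,f(t)=z\},
\end{align*}
which is exactly the asserted expression. I would also note that the infimum over an empty set is $+\infty$ by convention, covering points $z$ not attained by any $f$, and that $\tilde I$ inherits the good rate function property (lower semicontinuity and compact sublevel sets) automatically from the contraction principle since $\pi_t$ is continuous and $I$ is good.

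The only genuine obstacle is the continuity verification of $\pi_t$, and this is mild precisely because the weighted-sup structure of $\|\cdot\|_{\mathcal C_{\xi}}$ controls the supremum on every compact time interval $[-\tau,t_{k_0}]$ up to an explicit constant; the infinite sum and the exponential weights cause no difficulty since we only need a lower bound on $\|f-g\|_{\mathcal C_{\xi}}$ by one of its (nonnegative) summands. Everything else is a mechanical invocation of the contraction principle, so no further estimates are required.
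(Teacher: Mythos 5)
Your proposal is correct and follows exactly the paper's route: the paper also obtains \Cref{coro_LDP1} as an immediate consequence of \Cref{main_LDP} via the contraction principle \cite[Theorem 4.2.1]{Dembo}. The only difference is that you spell out the (Lipschitz) continuity of the evaluation map $\pi_t$ under the weighted norm $\|\cdot\|_{\mathcal C_{\xi}}$, a verification the paper leaves implicit; your bound via the single summand indexed by $k_0$ with $t_{k_0}\ge t$ is valid.
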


The proof of Theorem \ref{main_LDP} is  based on the equivalence of the LDP and the Laplace principle. According to   \cite[Condition 2.1, Theorem 2.10]{ZhengZH}, it suffices to prove the compactness of solutions of the  skeleton equation and the stochastic controlled equation (i.e., Propositions \ref{propKM} and \ref{prop_cond}).  
%Now we are in the position to prove  Theorem \ref{main_LDP},  based on the equivalence of the LDP and the Laplace principle, where the latter is ensured by \cite[Condition 2.1]{ZhengZH}.    Below, we verify  sufficient conditions (i.e., Propositions \ref{propKM} and \ref{prop_cond}) to ensure the Laplace principle and thus the LDP. 
We first give the \textit{a priori} estimates of $w^{\Delta, v}$ and $z^{\Delta, v}$.
% are given in the following lemma. 

\begin{lemma}\label{bound_control}
Let $M>0$ and $v\in  \mathcal S_M$. Under \cref{a1,a2,a5},  it holds that for any $\Delta\in(0,\frac{1}{4\theta a_2}]$,
$$\sup_{k\ge 0}|w^{\Delta,v}(t_k)|+\sup_{k\ge 0}|z^{\Delta,v}(t_k)|\leq(1+\|\xi\|^{\beta+1}) K(M).$$
\end{lemma}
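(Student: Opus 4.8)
The plan is to derive a discrete energy estimate for the auxiliary process $z^{\Delta,v}$ that is uniform in $k$, and then transfer it back to $w^{\Delta,v}$ using the relation $z^{\Delta,v}(t_k)=w^{\Delta,v}(t_k)-\theta b(w^{\Delta,v}_{t_k})\Delta$ together with \cref{a2}. This mirrors the deterministic analogue of the splitting technique used in Propositions~\ref{p2.1}--\ref{p2.2}, but now the control $v$ plays the role of the noise increment, with the crucial difference that $v$ is deterministic and square-summable rather than a Brownian increment. First I would expand $|z^{\Delta,v}(t_{k+1})|^2$ using the recursion in \eqref{auxi}, obtaining
\begin{align*}
|z^{\Delta,v}(t_{k+1})|^2
&=|z^{\Delta,v}(t_{k})|^2+|b(w^{\Delta,v}_{t_k})\Delta+\sigma(w^{\Delta,v}_{t_k})v(t_k)\Delta|^2\nn\\
&\quad+2\langle z^{\Delta,v}(t_k),b(w^{\Delta,v}_{t_k})\Delta+\sigma(w^{\Delta,v}_{t_k})v(t_k)\Delta\rangle.
\end{align*}
I would then substitute $z^{\Delta,v}(t_k)=w^{\Delta,v}(t_k)-\theta b(w^{\Delta,v}_{t_k})\Delta$ into the inner product, so that the leading term becomes $2\langle w^{\Delta,v}(t_k),b(w^{\Delta,v}_{t_k})\rangle\Delta$, which is controlled by the dissipativity estimate \eqref{F2r1+1} from \cref{r1}.

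The key mechanism is that \eqref{F2r1+1} supplies a strictly negative coefficient $-\frac{3a_1+a_2+L}{2}\Delta|w^{\Delta,v}(t_k)|^2$, and this dissipation must dominate both the quadratic drift term $|b|^2\Delta^2$ (handled by the implicitness, i.e. the $\theta$-term, exactly as in the coercivity computation in the solvability lemma) and the contributions from the control. For the control terms I would use Assumption~\ref{a1} to bound $|\sigma(w^{\Delta,v}_{t_k})|^2\leq K(1+\|w^{\Delta,v}_{t_k}\|^2)$ and then apply the Young inequality to split $2\langle z^{\Delta,v}(t_k),\sigma(w^{\Delta,v}_{t_k})v(t_k)\rangle\Delta$ and $|\sigma(w^{\Delta,v}_{t_k})v(t_k)|^2\Delta^2$ into a small multiple of $|z^{\Delta,v}(t_k)|^2\Delta$ plus a factor involving $|v(t_k)|^2\Delta$. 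Multiplying by an exponential weight $e^{\e t_{k+1}}$ and summing, as in \eqref{1p2.2}, I expect the delay integrals $\int_{-\tau}^0|w^{\Delta,v}_{t_k}(r)|^2\mathrm d\nu_2(r)$ to be absorbed by a shifting argument identical to \eqref{5p2.1}, using $a_1>a_2+L$ to close the estimate.

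The decisive point is that the accumulated control contribution is $K\Delta\sum_{i=0}^k e^{\e t_{i+1}}|v(t_i)|^2(1+\sup_{j}|w^{\Delta,v}(t_j)|^2)$, and here $\Delta\sum_i|v(t_i)|^2=\int_0^\infty|v(\lfloor s\rfloor)|^2\mathrm ds\leq M$ by the constraint $v\in\mathcal S_M$. Thus the bound $K(M)$ arises because the total control energy is finite and uniformly bounded by $M$; this is precisely where the hypothesis $v\in\mathcal S_M$ enters. I would need to be a little careful that the term $(1+\sup_j|w^{\Delta,v}(t_j)|^2)$ multiplying the control energy does not create a circular dependence: the remedy is to first establish a crude supremum bound over a finite window (say the first $N$ steps, using \cref{a5} to control $|b(\xi)|$ and hence the initial data, which produces the $\|\xi\|^{\beta+1}$ factor via the superlinear growth of $b$), and then run a discrete Gr\"onwall argument in the exponentially weighted norm.

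The main obstacle I anticipate is the interaction between the superlinear drift and the control term in the cross product $2\langle z^{\Delta,v}(t_k),\sigma v\rangle\Delta$: unlike the martingale setting of Propositions~\ref{p2.1}--\ref{p2.2}, where the stochastic integral vanishes in expectation, here every term is deterministic and must be bounded pathwise, so there is no cancellation and one must genuinely absorb the control into the dissipation. I expect the $\|\xi\|^{\beta+1}$ dependence in the final bound to come from bounding $|z^{\Delta,v}(0)|=|\xi(0)-\theta b(\xi)\Delta|\leq \|\xi\|+\theta\Delta K(1+\|\xi\|^{\beta+1})$ via Assumption~\ref{a5}, which exhibits exactly the growth order $\beta+1$. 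Once the uniform bound on $z^{\Delta,v}$ is in hand, transferring it to $w^{\Delta,v}$ is routine: one uses $|w^{\Delta,v}(t_k)|^2\leq K\Delta+(1+\theta\Delta\tfrac{3a_1+a_2+L}{2})^{-1}(\ldots)$ as in \eqref{9p2.1}, so no new difficulty appears there.
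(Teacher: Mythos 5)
Your proposal follows essentially the same route as the paper's proof: the same discrete energy recursion for $z^{\Delta,v}$ with the $\theta$-implicitness absorbing the $|b|^2\Delta^2$ term, the dissipativity of Remark \ref{r1} together with the shifting argument of \eqref{5p2.1} for the delay integrals, a Gr\"onwall estimate in the exponentially weighted norm whose integrating factor is the control energy $\int_0^{\infty}|v(\lfloor s\rfloor)|^2\mathrm ds\leq M$ (this is exactly where $K(M)$ comes from), the $\|\xi\|^{\beta+1}$ factor arising from $|z^{\Delta,v}(0)|=|\xi(0)-\theta\Delta b(\xi)|$ via Assumption \ref{a5}, and the transfer back to $w^{\Delta,v}$ under the restriction $\Delta\leq\frac{1}{4\theta a_2}$. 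The circularity you worry about does not actually arise: since the quantity multiplying $|v(\lfloor t\rfloor)|^2$ inside the integral is the \emph{running} supremum up to time $t$ (not the global one), the weighted Gr\"onwall argument you invoke at the end closes the estimate directly, exactly as in the paper, and no preliminary finite-window bound is needed.
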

\begin{proof} %We only give the proof of $\theta\in[0.5,1]$ since the proof of $\theta\in[0,0.5)$ can be shown similarly.  
%We use the induction method to show the proof. Suppose that $\sup_{0\leq i\leq k}|w^{\Delta, v}(t_i)|<\infty.$ Then we show that $\sup_{0\leq i\leq {k+1}}|w^{\Delta, v}(t_i)|<\infty.$ It can be derived from  \eqref{auxi} that
%\begin{align}\label{est_zk}
%|z^{\Delta, v}(t_k)|^2&\leq 2|w^{\Delta, v}(t_k)|^2+2\Delta^2|b(w^{\Delta, v}_{t_k})|^2\leq 2|w^{\Delta, v}(t_k)|^2+K\Delta^2(1+\|w^{\Delta, v}_{t_k}\|^{2\beta+2})\nn\\
%&\leq K(1+\sup_{0\leq i\leq k}|w^{\Delta, v}(t_i)|^{2\beta+2}),
%\end{align}
%which yields $\sup_{0\leq i\leq k}|z^{\Delta, v}(t_i)|<\infty.$ 
By the definition of $z^{\Delta,v}$, we have 
\begin{align*}
&\quad\frac12\mathrm d|z^{\Delta,v}(t)|^2=\langle b(w^{\Delta,v}_{\lfloor t\rfloor}),z^{\Delta,v}(t)\rangle\mathrm dt+\langle \sigma (w^{\Delta,v}_{\lfloor t\rfloor})v(\lfloor t\rfloor),z^{\Delta,v}(t)\rangle\mathrm dt\\
&=\Big\langle b(w^{\Delta,v}_{\lfloor t\rfloor}),
w^{\Delta,v}(\lfloor t\rfloor)-\theta b(w^{\Delta,v}_{\lfloor t\rfloor})\Delta
+b(w^{\Delta,v}_{\lfloor t\rfloor})(t-\lfloor t\rfloor)+\sigma(w^{\Delta,v}_{\lfloor t\rfloor})v(\lfloor t\rfloor)(t-\lfloor t\rfloor)
\Big\rangle\mathrm dt\nn\\
&\quad+\Big\langle \sigma (w^{\Delta,v}_{\lfloor t\rfloor})v(\lfloor t\rfloor),z^{\Delta,v}(\lfloor t\rfloor)+b(w^{\Delta,v}_{\lfloor t\rfloor})(t-\lfloor t\rfloor)+\sigma(w^{\Delta,v}_{\lfloor t\rfloor})v(\lfloor t\rfloor)(t-\lfloor t\rfloor)\Big\rangle\mathrm dt\\
&= \Big[(t-\lfloor t\rfloor-\theta\Delta)|b(w^{\Delta,v}_{\lfloor t\rfloor})|^2+ \big\langle b(w^{\Delta,v}_{\lfloor t\rfloor}),w^{\Delta,v}({\lfloor t\rfloor})\big\rangle +2\big\langle b(w^{\Delta,v}_{\lfloor t\rfloor}),\sigma (w^{\Delta,v}_{\lfloor t\rfloor})\nn\\
&\quad v(\lfloor t\rfloor)(t-\lfloor t\rfloor)\big\rangle+|\sigma(w^{\Delta,v}_{\lfloor t\rfloor})v(\lfloor t\rfloor)|^2(t-\lfloor t\rfloor)+\langle \sigma(w^{\Delta,v}_{\lfloor t\rfloor})v(\lfloor t\rfloor),z^{\Delta,v}(\lfloor t\rfloor)\rangle\Big]\mathrm dt.
\end{align*}
This implies
\begin{align*}
&\quad |z^{\Delta,v}(t_{k+1})|^2=|z^{\Delta,v}(t_k)|^2\!+\!2\int_{t_k}^{t_{k+1}}|b(w^{\Delta,v}_{t_k})|^2(t\!-\!t_k-\theta \Delta)\mathrm dt\!+\!2\int_{t_k}^{t_{k+1}}\big\langle b(w^{\Delta,v}_{t_k}),\nn\\
&\quad w^{\Delta,v}(t_k)\big\rangle \mathrm dt+\int_{t_k}^{t_{k+1}}\big\langle 4b(w^{\Delta,v}_{t_k})(t-t_k)+2z^{\Delta,v}(t_k),\sigma(w^{\Delta,v}_{t_k})v(t_k)\big\rangle\mathrm dt\\
&\quad +2\int_{t_k}^{t_{k+1}}|\sigma(w^{\Delta,v}_{t_k})v(t_k)|^2(t-t_k)\mathrm dt\\
&= |z^{\Delta,v}(t_k)|^2+\frac{1-2\theta}{\theta^2}|w^{\Delta,v}(t_k)-z^{\Delta,v}(t_k)|^2+2\int_{t_k}^{t_{k+1}}\big\langle b(w^{\Delta,v}_{t_k}),w^{\Delta,v}(t_k)\big\rangle \mathrm dt\\
&\quad +\int_{t_k}^{t_{k+1}}2\big\langle \frac{1}{\theta}(w^{\Delta,v}({t_k})-z^{\Delta,v}(t_k))+z^{\Delta,v}(t_k),\sigma(w^{\Delta,v}_{t_k})v(t_k)\big\rangle \mathrm dt\nn\\
&\quad+\Delta \int_{t_k}^{t_{k+1}}|\sigma(w^{\Delta, v}_{t_k})v(t_k)|^2\mathrm dt\\
%&=\frac{(1-\theta)^2}{\theta^2}|z^{\Delta,v}(t_k)|^2+\frac{1-2\theta}{\theta^2}|w^{\Delta,v}(t_k)|^2+\frac{2(2\theta-1)}{\theta^2}\langle w^{\Delta,v}(t_k),z^{\Delta,v}(t_k)\rangle\\
%&\quad+2\int_{t_k}^{t_{k+1}}\big\langle b(w^{\Delta,v}_{t_k}),w^{\Delta,v}(t_k)\big\rangle\mathrm dt+\Delta \int_{t_k}^{t_{k+1}}|\sigma(w^{\Delta, v}_{t_k})v(t_k)|^2\mathrm dt\\
%&\quad +\int_{t_k}^{t_{k+1}}2\Big\langle \frac{1}{\theta}(w^{\Delta,v}({t_k})-z^{\Delta,v}(t_k))+z^{\Delta,v}(t_k),\sigma(w^{\Delta,v}_{t_k})v(t_k)\big\rangle \mathrm dt\\
&\leq \Big(\frac{(1-\theta)^2}{\theta^2}+\frac{2\theta-1}{\theta^2(1+\tilde c\Delta)}\Big)|z^{\Delta,v}(t_k)|^2+\Big(\frac{2\theta-1}{\theta^2}(1+\tilde c\Delta)+\frac{1-2\theta}{\theta^2}\Big)|w^{\Delta,v}(t_k)|^2\\
&\quad +2\int_{t_k}^{t_{k+1}}\big\langle b(w^{\Delta,v}_{t_k}),w^{\Delta,v}(t_k)\big\rangle\mathrm dt+\Delta \int_{t_k}^{t_{k+1}}|\sigma(w^{\Delta,v}_{t_k})v(t_k)|^2\mathrm dt\\
&\quad +\int_{t_k}^{t_{k+1}}2\big\langle \frac{1}{\theta}(w^{\Delta,v}({t_k})-z^{\Delta,v}(t_k))+z^{\Delta,v}(t_k),\sigma(w^{\Delta,v}_{t_k})v(t_k)\big\rangle \mathrm dt,
\end{align*}
where in the second equality  we used  $z^{\Delta,v}(\lfloor t\rfloor)=w^{\Delta,v}(\lfloor t\rfloor)-\theta b(w^{\Delta,v}_{\lfloor t\rfloor})\Delta$ and $\int_{t_k}^{t_{k+1}}(t-t_k)\mathrm dt=\frac{1}{2}\Delta^{2}$, and in the last step we used the inequality $2\langle a,b\rangle\leq (1+\tilde c\Delta)|a|^2+\frac{1}{1+\tilde c\Delta}|b|^2$ for $\tilde c>0$. 
Therefore, by iterating, we arrive at that for $\iota\in(0,1)$,
\begin{align*}
&\quad e^{\iota  t_{k+1}}|z^{\Delta,v}(t_{k+1})|^2=|z^{\Delta,v}(0)|^2+\sum_{i=0}^k\Big(e^{\iota t_{i+1}}|z^{\Delta,v}(t_{i+1})|^2-e^{\iota t_i}|z^{\Delta,v}(t_i)|^2\Big)\\
&\leq |z^{\Delta,v}(0)|^2+\Big(\frac{(1-\theta)^2}{\theta^2}+\frac{2\theta-1}{\theta^2(1+\tilde c\Delta)}-e^{-\iota \Delta}\Big)\sum_{i=0}^ke^{\iota t_{i+1}}|z^{\Delta,v}(t_{i})|^2\\
&\quad +\Delta\sum_{i=0}^ke^{\iota t_{i+1}}\Big[\frac{2\theta-1}{\theta^2}\tilde c|w^{\Delta,v}(t_i)|^2+2 \langle b(w^{\Delta,v}_{t_i}),w^{\Delta,v}(t_i)\rangle +\Delta |\sigma (w^{\Delta,v}_{t_i})|^2|v(t_i)|^2\\
&\quad +2\Big \langle \frac{1}{\theta}(w^{\Delta,v}(t_i)-z^{\Delta,v}(t_i))+z^{\Delta,v}(t_i),\sigma (w^{\Delta,v}_{t_i})v(t_i)\Big \rangle \Big].
\end{align*}
On account of \eqref{varep}, take sufficiently small numbers  $\iota,\tilde c>0$ independent of $\Delta$ such that $\frac{(1-\theta)^2}{\theta^2}+\frac{2\theta-1}{\theta^2(1+\tilde c\Delta)}-e^{-\iota \Delta}\leq 0.$ 
Then, utilizing  the H\"older inequality, \eqref{F3r1}, and \cref{a5}, we see that  
\begin{align*}
&\quad e^{\iota t_{k+1}}|z^{\Delta,v}(t_{k+1})|^2\leq |z^{\Delta,v}(0)|^2+\int_0^{t_{k+1}}e^{\iota \lceil t\rceil }\Big[\frac{2\theta-1}{\theta^2}\tilde c|w^{\Delta,v}(\lfloor t\rfloor)|^2+2\langle b(w^{\Delta,v}_{\lfloor t\rfloor}),\nn\\
&\quad w^{\Delta,v}(\lfloor t\rfloor)\rangle 
+\Delta|\sigma (w^{\Delta,v}_{\lfloor t\rfloor})|^2 |v(\lfloor t\rfloor)|^2+
\Big|\frac{1}{\theta}w^{\Delta,v}(\lfloor t\rfloor)+\frac{\theta-1}{\theta}z^{\Delta,v}(\lfloor t\rfloor)\Big|^2|v(\lfloor t\rfloor)|^2\nn\\
&\quad+|\sigma(w^{\Delta,v}_{\lfloor t\rfloor})|^2 \Big]\mathrm dt\\
&\leq |\xi(0)-\theta\Delta b(\xi)|^2+\int_0^{t_{k+1}}e^{\iota \lceil t \rceil}\Big[K+(\frac{2\theta-1}{\theta^2}\tilde c-a_1-a_2)|w^{\Delta,v}(\lfloor t\rfloor)|^2\\
&\quad +\frac{a_1-a_2+L}{2}\int_{-\tau}^0|w^{\Delta,v}_{\lfloor t\rfloor}(r)|^2\mathrm d\nu_1(r)+2a_2\int_{-\tau}^0|w^{\Delta,v}_{\lfloor t\rfloor}(r)|^2\mathrm d\nu_2(r)\Big]\mathrm dt\\
&\quad +K\int_0^{t_{k+1}}e^{\iota \lceil t\rceil}\Big(|w^{\Delta,v}(\lfloor t\rfloor)|^2+|z^{\Delta,v}(\lfloor t\rfloor)|^2+\int_{-\tau}^0|w^{\Delta,v}_{\lfloor t\rfloor}(r)|^2\mathrm d\nu_1(r)\Big)|v(\lfloor t\rfloor)|^2\mathrm dt\\
&\leq (1+\|\xi\|^{2(\beta+1)})+\!\int_0^{t_{k+1}}\!e^{\iota \lceil t\rceil}\Big[(\frac{2\theta\!-\!1}{\theta^2}\tilde c\!-\!a_1\!-\!a_2)\!+\!(\frac{a_1\!-\!a_2\!+\!L}{2}\!+\!2a_2)e^{\iota \tau}\Big]\times\nn\\
&\quad|w^{\Delta,v}(\lfloor t\rfloor)|^2\Big]\mathrm dt+Ke^{\iota t_{k+1}}+(\frac{a_1-a_2+L}{2}+2a_2)e^{\iota \tau}\tau\|\xi\|^2\\
&\quad +K\int_0^{t_{k+1}}\Big(\sup_{r\in[-\tau,t]}e^{\iota \lfloor r\rfloor}|w^{\Delta,v}(\lfloor r\rfloor)|^2+\sup_{r\in[0,t]}e^{\iota \lfloor r\rfloor}|z^{\Delta,v}(\lfloor r\rfloor)|^2\Big)|v(\lfloor t\rfloor)|^2\mathrm dt.
\end{align*}
Here in the last step we used 
\begin{align*}
&\quad\int_{0}^{t_{k+1}}\!\!\!e^{\iota\lceil t\rceil}\!\!\int_{-\tau}^{0}\!|w^{\Delta,v}_{\lfloor t\rfloor}(r)|^2\mathrm d\nu_{i}(r)\mathrm dt\nn\\
&\leq e^{\iota \tau}\tau\|\xi\| e^{\iota \tau}\!+\!e^{\iota \tau}\int_{0}^{t_{k+1}}\!\! e^{\iota\lceil t\rceil}|w^{\Delta,v}(\lfloor t\rfloor)|^2\mathrm dt\quad\mbox{for}~ i=1,2,
\end{align*}
whose proof is similar to that of \eqref{5p2.1}. Analogously  to the proof of Proposition \ref{p2.2},  parameters $\tilde c$ and $\iota$ can  further be  taken  such that $\frac{2\theta-1}{\theta^2}\tilde c-a_1-a_2+(\frac{a_1-a_2+L}{2}+2a_2)e^{\iota \tau}\leq 0.$ Consequently, 
\begin{align}\label{extimate_z1}
e^{\iota t_{k+1}}|z^{\Delta,v}(t_{k+1})|^2&\leq K(1+\|\xi\|^{2(\beta+1)}+e^{\iota t_{k+1}})+K\int_0^{t_{k+1}}\Big(\sup_{r\in[-\tau,t]}e^{\iota \lfloor r\rfloor}|w^{\Delta,v}(\lfloor r\rfloor)|^2\nn\\
&\quad+\sup_{r\in[0,t]}e^{\iota \lfloor r\rfloor}|z^{\Delta,v}(\lfloor r\rfloor)|^2\Big)|v(\lfloor t\rfloor)|^2\mathrm dt.
\end{align}
It follows from \eqref{F1r1} and \eqref{auxi} that
\begin{align*}
&\quad |z^{\Delta,v}(t_{k+1})|^2\ge |w^{\Delta,v}(t_{k+1})|^2-2\theta\Delta\langle w^{\Delta,v}(t_{k+1}),b(w^{\Delta,v}_{t_{k+1}})\rangle\\
&\ge |w^{\Delta,v}(t_{k+1})|^2\!-\!\theta \Delta\Big(K-2a_1|w^{\Delta,v}(t_{k+1})|^2\!+\!2a_2\int_{-\tau}^0|w^{\Delta,v}_{t_{k+1}}(r)|^2\mathrm d\nu_2(r)\Big)\\
&\geq |w^{\Delta,v}(t_{k+1})|^2-K\theta \Delta-2\theta a_2\Delta\int_{-\tau}^0|w^{\Delta,v}_{t_{k+1}}(r)|^2\mathrm d\nu_2(r),
\end{align*}
which implies
\begin{align*}
&\quad e^{\iota t_{k+1}} |z^{\Delta,v}(t_{k+1})|^2\\
&\ge e^{\iota t_{k+1}} |w^{\Delta,v}(t_{k+1})|^2-K\theta \Delta e^{\iota t_{k+1}}-2\theta a_2e^{\iota\tau}\Delta\sup_{t\in[-\tau,t_{k+1}]} e^{\iota \lfloor t\rfloor} |w^{\Delta,v}(\lfloor t\rfloor)|^2.
\end{align*}
Then we arrive at
\begin{align*}
e^{\iota t_{k+1}} |w^{\Delta,v}(t_{k+1})|^2&\leq K\theta \Delta e^{\iota t_{k+1}}+2\theta a_2e^{\iota\tau}\Delta\sup_{t\in[-\tau,t_{k+1}]}e^{\iota \lfloor t\rfloor} |w^{\Delta,v}(\lfloor t\rfloor)|^2 \nn\\
&\quad+K(1+\|\xi\|^{2(\beta+1)}+e^{\iota t_{k+1}})+K\int_0^{t_{k+1}}\Big(\sup_{r\in[-\tau,t]}e^{\iota \lfloor r\rfloor}|w^{\Delta,v}(\lfloor r\rfloor)|^2\nn\\
&\quad+\sup_{r\in[0,t]}e^{\iota \lfloor r\rfloor}|z^{\Delta,v}(\lfloor r\rfloor)|^2\Big)|v(\lfloor t\rfloor)|^2\mathrm dt.
\end{align*}
Choose a sufficiently small number  $\iota$  such that $e^{\iota \tau}\leq \frac{3}{2}$.
Then for $\Delta\in(0,\frac{1}{4\theta a_2}],$
\begin{align*}
&\quad\frac14\sup_{t\in[-\tau,t_{k+1}]}e^{\iota \lfloor t\rfloor} |w^{\Delta,v}(\lfloor t\rfloor)|^2\nn\\
&\leq K(1+\|\xi\|^{2(\beta+1)}+e^{\iota t_{k+1}})+K\int_0^{t_{k+1}}\Big(\sup_{r\in[-\tau,t]}e^{\iota \lfloor r\rfloor}|w^{\Delta,v}(\lfloor r\rfloor)|^2\nn\\
&\quad+\sup_{r\in[0,t]}e^{\iota \lfloor r\rfloor}|z^{\Delta,v}(\lfloor r\rfloor)|^2\Big)|v(\lfloor t\rfloor)|^2\mathrm dt, 
\end{align*}
which, together with \eqref{extimate_z1} gives that
\begin{align*}
&\quad \sup_{t\in[-\tau,t_{k+1}]}e^{\iota \lfloor t\rfloor} |w^{\Delta,v}(\lfloor t\rfloor)|^2+\sup_{t\in[0,t_{k+1}]}e^{\iota \lfloor t\rfloor} |z^{\Delta,v}(\lfloor t\rfloor)|^2\\&\leq K(1+\|\xi\|^{2(\beta+1)}+e^{\iota t_{k+1}})+K\int_0^{t_{k+1}}\Big(\sup_{r\in[-\tau,t]}e^{\iota \lfloor r\rfloor}|w^{\Delta,v}(\lfloor r\rfloor)|^2\nn\\
&\quad+\sup_{r\in[0,t]}e^{\iota \lfloor r\rfloor}|z^{\Delta,v}(\lfloor r\rfloor)|^2\Big)|v(\lfloor t\rfloor)|^2\mathrm dt.
\end{align*}
Applying the Gr\"onwall inequality and using $v\in\mathcal S_{M}$, we deduce  
\begin{align*}
&\sup_{t\in[-\tau,t_{k+1}]}e^{\iota \lfloor t\rfloor} |w^{\Delta,v}(\lfloor t\rfloor)|^2+\sup_{t\in[0,t_{k+1}]}e^{\iota \lfloor t\rfloor} |z^{\Delta,v}(\lfloor t\rfloor)|^2\nn\\
\leq&\, K(1+\|\xi\|^{2(\beta+1)}+e^{\iota t_{k+1}})e^{K\int_{0}^{t_{k+1}}|v(\lfloor t\rfloor)|^2\mathrm dt}
\leq K(1+\|\xi\|^{2(\beta+1)}+e^{\iota t_{k+1}})e^{KM}.
\end{align*}
This implies that $\sup_{k\ge 0}|w^{\Delta,v}(t_k)|+\sup_{k\ge 0}|z^{\Delta,v}(t_k)|\leq (1+\|\xi\|^{\beta+1})K(M)$.
The proof is finished. 
\end{proof}

\begin{prop}\label{propKM}
Let $M>0.$ Under Assumptions \ref{a1}, \ref{a2}, and \ref{a5}, for any 
 $\Delta\in(0,\frac{1}{4\theta a_2}]$, $\mathcal{K}_{M}:=\{\mathcal G^{\Delta}(\int_0^{\cdot}v(\lfloor s\rfloor)\mathrm ds)|v\in\mathcal S_M\}$ is compact in $\mathcal C_{\xi}([-\tau,+\infty);\mathbb R^d).$
\end{prop}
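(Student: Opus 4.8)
The plan is to realize $\mathcal{K}_M$ as the continuous image of a compact set. Recall from the reference following the definition of $\mathcal{S}_M$ that $\mathcal{S}_M$ is a Polish space that is \emph{compact} in the weak topology of $L^2_{\Delta}([0,+\infty);\mathbb R^m)$. Since $\mathcal{K}_M=\{\mathcal G^{\Delta}(\int_0^{\cdot}v(\lfloor s\rfloor)\mathrm ds):v\in\mathcal S_M\}=\{w^{\Delta,v}:v\in\mathcal S_M\}$, it suffices to prove that the solution map $v\mapsto w^{\Delta,v}$ is continuous from $\mathcal{S}_M$, equipped with the weak topology, into $\mathcal{C}_{\xi}([-\tau,+\infty);\mathbb R^d)$ equipped with its norm; the continuous image of a compact set is compact. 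Throughout I would invoke the uniform-in-time a priori estimate of \cref{bound_control}, which gives $\sup_{t\ge -\tau}|w^{\Delta,v}(t)|\le K(M)(1+\|\xi\|^{\beta+1})=:R_M$ for every $v\in\mathcal S_M$ (linear interpolation does not exceed the bound at the grid points, and $w^{\Delta,v}=\xi$ on $[-\tau,0]$).

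First I would fix $v_n\to v$ weakly in $\mathcal{S}_M$ and prove, by induction on the grid index $k$, that $w^{\Delta,v_n}(t_k)\to w^{\Delta,v}(t_k)$ in $\mathbb R^d$. For $k\le 0$ both sides equal $\xi(t_k)$. The inductive step rests on the observation that the control enters the recursion \eqref{skeleton} only through
$$\sigma(w^{\Delta,v}_{t_k})v(t_k)\Delta=\sigma(w^{\Delta,v}_{t_k})\int_{t_k}^{t_{k+1}}v(\lfloor s\rfloor)\mathrm ds.$$
By the induction hypothesis the segment $w^{\Delta,v_n}_{t_k}$, being the linear interpolation of the grid values $w^{\Delta,v_n}(t_{k-N}),\dots,w^{\Delta,v_n}(t_k)$, converges in $\mathcal C^d$ to $w^{\Delta,v}_{t_k}$, so $\sigma(w^{\Delta,v_n}_{t_k})\to\sigma(w^{\Delta,v}_{t_k})$ by the global Lipschitz continuity of $\sigma$, and $b(w^{\Delta,v_n}_{t_k})\to b(w^{\Delta,v}_{t_k})$ by continuity of $b$ on the bounded set furnished by $R_M$. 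Meanwhile $\int_{t_k}^{t_{k+1}}v_n(\lfloor s\rfloor)\mathrm ds=\langle v_n(\lfloor\cdot\rfloor),\mathbf 1_{[t_k,t_{k+1})}\rangle_{L^2}\to\int_{t_k}^{t_{k+1}}v(\lfloor s\rfloor)\mathrm ds$ by weak convergence, as the indicator lies in $L^2$. Hence the \emph{strong}$\times$\emph{weak} product $\sigma(w^{\Delta,v_n}_{t_k})\int_{t_k}^{t_{k+1}}v_n(\lfloor s\rfloor)\mathrm ds$ converges, so the explicit data
$$G_n:=w^{\Delta,v_n}(t_k)+(1-\theta)b(w^{\Delta,v_n}_{t_k})\Delta+\sigma(w^{\Delta,v_n}_{t_k})v_n(t_k)\Delta$$
converge to the corresponding datum $G$ built from $v$.

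It then remains to pass to the limit through the implicit step. Writing the recursion as $F^{\Delta}_{\theta,w^{\Delta,v_n}_{t_k}}(w^{\Delta,v_n}(t_{k+1}))=G_n$ as in \cref{solvability}, and setting $u_n:=w^{\Delta,v_n}(t_{k+1})$, $u:=w^{\Delta,v}(t_{k+1})$, I would test the strict monotonicity $\langle u_1-u_2,F^{\Delta}_{\theta,\phi}(u_1)-F^{\Delta}_{\theta,\phi}(u_2)\rangle>|u_1-u_2|^2$ with $\phi=w^{\Delta,v}_{t_k}$ and split
$$F^{\Delta}_{\theta,w^{\Delta,v}_{t_k}}(u_n)-F^{\Delta}_{\theta,w^{\Delta,v}_{t_k}}(u)=(G_n-G)+\Big(F^{\Delta}_{\theta,w^{\Delta,v}_{t_k}}(u_n)-F^{\Delta}_{\theta,w^{\Delta,v_n}_{t_k}}(u_n)\Big).$$
The first bracket tends to $0$ by the previous step, while the second equals $-\theta\Delta\big(b(\Phi^{\Delta}_{w^{\Delta,v}_{t_k},u_n})-b(\Phi^{\Delta}_{w^{\Delta,v_n}_{t_k},u_n})\big)$, which by \eqref{movephi} is controlled by $\|w^{\Delta,v}_{t_k}-w^{\Delta,v_n}_{t_k}\|$ through the Lipschitz continuity of $b$ on the bounded set (all arguments bounded by $R_M$), hence also vanishes. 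As $\{u_n\}$ is bounded, this forces $|u_n-u|^2\to 0$, completing the induction.

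Finally I would upgrade grid-point convergence to convergence in $\mathcal{C}_{\xi}$. Since the interpolation on $[-\tau,t_k]$ is determined by finitely many grid values, for each fixed $k$ we get $\sup_{t\in[-\tau,t_k]}|w^{\Delta,v_n}(t)-w^{\Delta,v}(t)|\to 0$, while each such quantity is bounded by $2R_M$. Therefore every summand of
$$\|w^{\Delta,v_n}-w^{\Delta,v}\|_{\mathcal{C}_{\xi}}=\sum_{k=1}^{\infty}e^{-\mathfrak a t_k}\Big(\sup_{t\in[-\tau,t_k]}|w^{\Delta,v_n}(t)-w^{\Delta,v}(t)|\Big)\Delta$$
is dominated by the summable majorant $2R_M\,e^{-\mathfrak a t_k}\Delta$, and dominated convergence for series yields $\|w^{\Delta,v_n}-w^{\Delta,v}\|_{\mathcal{C}_{\xi}}\to 0$. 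This establishes continuity of the solution map and hence compactness of $\mathcal{K}_M$. The main obstacle is the implicit step above: reconciling the merely \emph{weak} convergence of the controls with the nonlinear, implicitly defined scheme, which is precisely where the product structure $\sigma(w^{\Delta,v_n}_{t_k})\int_{t_k}^{t_{k+1}}v_n(\lfloor s\rfloor)\mathrm ds$ and the strict monotonicity of $F^{\Delta}_{\theta,\phi}$ must be combined.
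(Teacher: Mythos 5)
Your proof is correct, but it follows a genuinely different route from the paper's. The paper makes the same initial reduction (weak compactness of $\mathcal S_M$ plus continuity of $v\mapsto w^{\Delta,v}$), but it proves continuity \emph{quantitatively}: it forms the difference of the auxiliary processes $z^{\Delta,v^{\epsilon}}-z^{\Delta,v}$, invokes the dissipativity in Assumptions \ref{a1}--\ref{a2} and a Gr\"onwall argument to bound $\sup_{t\in[0,t_{k+1}]}|w^{\Delta,v^{\epsilon}}(\lfloor t\rfloor)-w^{\Delta,v}(\lfloor t\rfloor)|^2$ by $K(M)\bigl(\sup_{t}|E^{\epsilon}(t)|+\sup_{t}|E^{\epsilon}(t)|^2\bigr)$, where the term $E^{\epsilon}(t)=\int_0^t\sigma(w^{\Delta,v}_{\lfloor s\rfloor})\bigl(v^{\epsilon}(\lfloor s\rfloor)-v(\lfloor s\rfloor)\bigr)\mathrm ds$ is extracted through an integration by parts; it then shows $\sup_{t}|E^{\epsilon}(t)|\to0$ by combining uniform boundedness and equicontinuity of $\{E^{\epsilon}\}$ (Ascoli--Arzel\`a) with the weak convergence of the controls, and finally interchanges limit and series. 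You bypass all of this machinery by exploiting the discrete structure: since the scheme sees the control only through the grid averages $v(t_k)\Delta=\langle v(\lfloor\cdot\rfloor),\mathbf 1_{[t_k,t_{k+1})}\rangle_{L^2}$, weak convergence already gives $v_n(t_k)\to v(t_k)$ for every $k$, so the solution converges grid point by grid point by induction, with the implicit step resolved by the strict monotonicity of $F^{\Delta}_{\theta,\phi}$ from Lemma \ref{solvability} together with the polynomial Lipschitz bound of Assumption \ref{a5} on the ball of radius $R_M$ supplied by Lemma \ref{bound_control}, and dominated convergence then upgrades to the $\mathcal C_{\xi}$-norm. Your argument is shorter and more elementary (no energy estimates, no integration by parts, no Ascoli--Arzel\`a), and it is fully adequate here because $\Delta$ is fixed and the controls are deterministic; the paper's heavier quantitative estimates pay off in the companion result, Proposition \ref{prop_cond}, where the controls are random, converge only in distribution, and a stochastic integral is present, so that a pathwise grid-point induction is unavailable and essentially the same Gr\"onwall/$E^{\epsilon}$ machinery is recycled there.
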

\begin{proof} 
Since $\mathcal S_M$ is compact with respect to the weak topology, it suffices to show that the mapping $\mathcal G^{\Delta}\circ F$ is continuous, where
$F: L^2_{\Delta}([0,+\infty);\mathbb R^m)\rightarrow \mathcal C([0,\infty);\RR^m)$ is defined by
$F(v)(\cdot):=\int_0^{\cdot}v(\lfloor s\rfloor)\mathrm ds$. Namely, letting  $v^{\epsilon}\to v$ in $\mathcal S_M$, we shall prove $w^{\Delta, v^{\epsilon}}\to w^{\Delta, v}$ in $\mathcal C_{\xi}([-\tau,+\infty);\mathbb R^d).$
%Since $\mathcal C_{\xi}([-\tau,+\infty);\mathbb R^d)$ is a Polish space, it suffices to show that $\mathcal{K}_{M}$ is sequentially compact in $\mathcal C_{\xi}([-\tau,+\infty);\mathbb R^d)$. Let $v^{\epsilon}\to v$ in $\mathcal S_M.$ The aim is to prove $w^{\Delta, v^{\epsilon}}\to w^{\Delta, v}$ in $\mathcal C_{\xi}([-\tau,+\infty);\mathbb R^d).$ 
First, we show that for any $k\in\mathbb N_+,$ $\sup_{t\in[0,t_k]}|w^{\Delta, v^{\epsilon}}(\lfloor t\rfloor)-w^{\Delta, v}(\lfloor t\rfloor)|\to0$. 
By \eqref{auxi} and the continuous version of $z^{\Delta, v}(\cdot),$ we have 
\begin{align*}
&\quad \frac12\mathrm d|z^{\Delta, v^{\epsilon}}(t)-z^{\Delta, v}(t)|^2\\
&=\langle b(w^{\Delta, v^{\epsilon}}_{\lfloor t\rfloor})-b(w^{\Delta, v}_{\lfloor t\rfloor}),z^{\Delta, v^{\epsilon}}(t)-z^{\Delta, v}(t)\rangle\mathrm dt\nn\\
&\quad+\langle \sigma(w^{\Delta, v^{\epsilon}}_{\lfloor t\rfloor})v^{\epsilon}(\lfloor t\rfloor)-\sigma(w^{\Delta, v}_{\lfloor t\rfloor})v(\lfloor t\rfloor),z^{\Delta, v^{\epsilon}}(t)-z^{\Delta, v}(t)\rangle\mathrm dt\\
&=\langle b(w^{\Delta, v^{\epsilon}}_{\lfloor t\rfloor})-b(w^{\Delta, v}_{\lfloor t\rfloor}),b(w^{\Delta, v^{\epsilon}}_{\lfloor t\rfloor})-b(w^{\Delta, v}_{\lfloor t\rfloor})\rangle (t-\lfloor t\rfloor)\mathrm dt\\
&\quad +\langle b(w^{\Delta, v^{\epsilon}}_{\lfloor t\rfloor})-b(w^{\Delta, v}_{\lfloor t\rfloor}),\sigma(w^{\Delta, v^{\epsilon}}_{\lfloor t\rfloor})v^{\epsilon}(\lfloor t\rfloor)-\sigma(w^{\Delta, v}_{\lfloor t\rfloor})v(\lfloor t\rfloor)\rangle(t-\lfloor t\rfloor)\mathrm dt\\
&\quad +\langle b(w^{\Delta, v^{\epsilon}}_{\lfloor t\rfloor})-b(w^{\Delta, v}_{\lfloor t\rfloor}),w^{\Delta, v^{\epsilon}}(\lfloor t\rfloor)-w^{\Delta, v}(\lfloor t\rfloor)\rangle\mathrm dt\\
&\quad -\theta\Delta  \langle b(w^{\Delta, v^{\epsilon}}_{\lfloor t\rfloor})-b(w^{\Delta, v}_{\lfloor t\rfloor}),b(w^{\Delta, v^{\epsilon}}_{\lfloor t\rfloor})-b(w^{\Delta, v}_{\lfloor t\rfloor})\rangle\mathrm dt\\
&\quad +\langle \sigma(w^{\Delta, v^{\epsilon}}_{\lfloor t\rfloor})v^{\epsilon}(\lfloor t\rfloor)-\sigma(w^{\Delta, v}_{\lfloor t\rfloor})v(\lfloor t\rfloor),z^{\Delta, v^{\epsilon}}(t)-z^{\Delta, v}(t)\rangle\mathrm dt. 
\end{align*}
This implies 
\begin{align*}
&\quad |z^{\Delta, v^{\epsilon}}(t_{k+1})-z^{\Delta, v}(t_{k+1})|^2\nn\\
&= |z^{\Delta, v^{\epsilon}}(t_k)-z^{\Delta, v}(t_k)|^2+(1-2\theta)\Delta^2|b(w^{\Delta, v^{\epsilon}}_{t_k})-b(w^{\Delta, v}_{t_k})|^2\\
&\quad +\int_{t_k}^{t_{k+1}}2\langle b(w^{\Delta, v^{\epsilon}}_{\lfloor t\rfloor})-b(w^{\Delta, v}_{\lfloor t\rfloor}),w^{\Delta, v^{\epsilon}}(\lfloor t\rfloor)-w^{\Delta, v}(\lfloor t\rfloor)\rangle\mathrm dt\\
&\quad +\int_{t_k}^{t_{k+1}}2\Big\langle (\sigma (w^{\Delta, v^{\epsilon}}_{\lfloor t\rfloor})-\sigma(w^{\Delta, v}_{\lfloor t\rfloor}))v^{\epsilon}(\lfloor t\rfloor),z^{\Delta, v^{\epsilon}}(t)-z^{\Delta, v}(t)\nn\\
&\quad+\frac12 \Delta \big(b(w^{\Delta, v^{\epsilon}}_{\lfloor t\rfloor})-b(w^{\Delta, v}_{\lfloor t\rfloor})\big)\Big\rangle\mathrm dt\\
&\quad +\int_{t_k}^{t_{k+1}}2\Big\langle \sigma(w^{\Delta, v}_{\lfloor t\rfloor})(v^{\epsilon}(\lfloor t\rfloor)-v(\lfloor t\rfloor)),z^{\Delta, v^{\epsilon}}(t)-z^{\Delta, v}(t)\nn\\
&\quad+\frac12 \Delta \big(b(w^{\Delta, v^{\epsilon}}_{\lfloor t\rfloor})-b(w^{\Delta, v}_{\lfloor t\rfloor})\big)\Big\rangle\mathrm dt,
\end{align*}
where we used
\begin{align}\label{rebzw}
b(w^{\Delta, v^{\epsilon}}_{\lfloor t\rfloor})-b(w^{\Delta, v}_{\lfloor t\rfloor})=\frac{1}{\theta\Delta}\Big(w^{\Delta, v^{\epsilon}}(\lfloor t\rfloor)-z^{\Delta, v^{\epsilon}}(\lfloor t\rfloor)-\big(w^{\Delta, v^{\epsilon}}(\lfloor t\rfloor)-z^{\Delta, v}(\lfloor t\rfloor)\big)\Big).
\end{align}
%Hence, combining the expression of $z^{\Delta, v^{\epsilon}}(t)-z^{\Delta, v}(t),$
Taking into account   $1-2\theta\leq0$ and 
\begin{align*}
&\quad(1-2\theta)\Delta^2|b(w^{\Delta, v^{\epsilon}}_{t_k})-b(w^{\Delta, v}_{t_k})|^2\nn\\
&\leq\frac{(1-2\theta)}{\theta^2}(1-\frac{1}{(1+c_{\lambda}\Delta)})|z^{\Delta, v^{\epsilon}}(t_k)-z^{\Delta, v}(t_k)|^2\nn\\
&\quad+\frac{(1\!-\!2\theta)}{\theta^2}(1\!-\!(1\!+\!c_{\lambda}\Delta))|w^{\Delta, v^{\epsilon}}(t_k)\!-\!w^{\Delta, v^{\epsilon}}(t_k)|^2,
\end{align*}
we obtain
\begin{align}\label{com1}
&\quad |z^{\Delta, v^{\epsilon}}(t_{k+1})-z^{\Delta, v}(t_{k+1})|^2\notag\nn\\
&\leq \big(\frac{(1\!-\!\theta)^2}{\theta^2}\!+\!\frac{2\theta\!-\!1}{\theta^2(1\!+\!c_{\lambda}\Delta)}\big)|z^{\Delta, v^{\epsilon}}(t_{k})\!\!-\!\!z^{\Delta, v}(t_{k})|^2\!+\!c_{\lambda}\Delta|w^{\Delta, v^{\epsilon}}(t_k)\!\!-\!\!w^{\Delta, v}(t_k)|^2\nn\\
&\quad +\int_{t_k}^{t_{k+1}}2\langle b(w^{\Delta, v^{\epsilon}}_{\lfloor t\rfloor})-b(w^{\Delta, v}_{\lfloor t\rfloor}),w^{\Delta, v^{\epsilon}}(\lfloor t\rfloor)-w^{\Delta, v}(\lfloor t\rfloor)\rangle\mathrm dt\nn\\
&\quad +\int_{t_k}^{t_{k+1}}2\Big\langle (\sigma (w^{\Delta, v^{\epsilon}}_{\lfloor t\rfloor})-\sigma(w^{\Delta, v}_{\lfloor t\rfloor}))v^{\epsilon}(\lfloor t\rfloor),z^{\Delta, v^{\epsilon}}(t)-z^{\Delta, v}(t)\nn\\
&\quad+\frac12 \Delta \big(b(w^{\Delta, v^{\epsilon}}_{\lfloor t\rfloor})-b(w^{\Delta, v}_{\lfloor t\rfloor})\big)\Big\rangle\mathrm dt
+\int_{t_k}^{t_{k+1}}2\Big\langle \sigma(w^{\Delta, v}_{\lfloor t\rfloor})(v^{\epsilon}(\lfloor t\rfloor)-v(\lfloor t\rfloor)),\nn\\
&\quad z^{\Delta, v^{\epsilon}}(t)-z^{\Delta, v}(t)+\frac12 \Delta \big(b(w^{\Delta, v^{\epsilon}}_{\lfloor t\rfloor})-b(w^{\Delta, v}_{\lfloor t\rfloor})\big)\Big\rangle\mathrm dt,
\end{align}
where the constant $c_{\lambda}$ is defined by \eqref{lamb}. Utilizing  \eqref{auxi} and the continuous version of $z^{\Delta, v}(\cdot)$ again,  and applying the Young inequality, one has
\begin{align}\label{com2}
&\quad\int_{t_k}^{t_{k+1}}\!\!\!2\Big\langle (\sigma (w^{\Delta, v^{\epsilon}}_{\lfloor t\rfloor})\!\!-\!\!\sigma(w^{\Delta, v}_{\lfloor t\rfloor}))v^{\epsilon}(\lfloor t\rfloor),z^{\Delta, v^{\epsilon}}(t)\!\!-\!\!z^{\Delta, v}(t)\!\!+\!\!\frac12 \Delta \big(b(w^{\Delta, v^{\epsilon}}_{\lfloor t\rfloor})\!\!-\!\!b(w^{\Delta, v}_{\lfloor t\rfloor})\big)\Big\rangle\mathrm dt\nn\\
&=\!\int_{t_k}^{t_{k+1}}\!\!2\Big\langle (\sigma (w^{\Delta, v^{\epsilon}}_{\lfloor t\rfloor})\!\!-\!\!\sigma(w^{\Delta, v}_{\lfloor t\rfloor}))v^{\epsilon}(\lfloor t\rfloor),z^{\Delta, v^{\epsilon}}(\lfloor t\rfloor)\!\!-\!\!z^{\Delta, v}(\lfloor t\rfloor)\nn\\
&\quad+ \frac{\Delta}{2} \big(b(w^{\Delta, v^{\epsilon}}_{\lfloor t\rfloor})\!-\!b(w^{\Delta, v}_{\lfloor t\rfloor})\big)\Big\rangle\mathrm dt
+\int_{t_k}^{t_{k+1}}|\big(\sigma (w^{\Delta, v^{\epsilon}}_{\lfloor t\rfloor})-\sigma(w^{\Delta, v}_{\lfloor t\rfloor})\big)v^{\epsilon}(\lfloor t\rfloor)|^2\Delta\mathrm dt
\nn\\
&\quad+\int_{t_k}^{t_{k+1}}\big\langle (\sigma (w^{\Delta, v^{\epsilon}}_{\lfloor t\rfloor})-\sigma(w^{\Delta, v}_{\lfloor t\rfloor}))v^{\epsilon}(\lfloor t\rfloor),\sigma(w^{\Delta, v}_{\lfloor t\rfloor}))(v^{\epsilon}(\lfloor t\rfloor)-v(\lfloor t\rfloor))\big\>\Delta\mathrm dt\nn\\
&\leq\int_{t_k}^{t_{k+1}}|\sigma (w^{\Delta, v^{\epsilon}}_{\lfloor t\rfloor})-\sigma(w^{\Delta, v}_{\lfloor t\rfloor})|^2\mathrm dt+K \int_{t_k}^{t_{k+1}}|v^{\epsilon}(\lfloor t\rfloor)|^2\big(|z^{\Delta, v^{\epsilon}}(\lfloor t\rfloor)-z^{\Delta, v}(\lfloor t\rfloor)|^2\nn\\
&\quad+|w^{\Delta, v^{\epsilon}}(\lfloor t\rfloor)-w^{\Delta, v}(\lfloor t\rfloor)|^2\big)\mathrm dt
+\int_{t_k}^{t_{k+1}}|\big(\sigma (w^{\Delta, v^{\epsilon}}_{\lfloor t\rfloor})-\sigma(w^{\Delta, v}_{\lfloor t\rfloor})\big)v^{\epsilon}(\lfloor t\rfloor)|^2\Delta\mathrm dt\nn\\
&\quad+\int_{t_k}^{t_{k+1}}\big\langle (\sigma (w^{\Delta, v^{\epsilon}}_{\lfloor t\rfloor})-\sigma(w^{\Delta, v}_{\lfloor t\rfloor}))v^{\epsilon}(\lfloor t\rfloor),\sigma(w^{\Delta, v}_{\lfloor t\rfloor}))(v^{\epsilon}(\lfloor t\rfloor)-v(\lfloor t\rfloor))\big\>\Delta\mathrm dt,
\end{align}
where we used $\int_{t_{k}}^{t_{k+1}}(t-\lfloor t\rfloor)\mathrm dt=\frac{1}{2}\Delta^2$.
Inserting \eqref{com2}  into \eqref{com1}, and then by iterating,  we obtain that for $\gamma\in(0,1)$,
\begin{align*}
&\quad e^{\gamma t_{k+1}}|z^{\Delta, v^{\epsilon}}(t_{k+1})-z^{\Delta, v}(t_{k+1})|^2\nn\\
&=\sum_{i=0}^k\Big(e^{\gamma t_{i+1}}|z^{\Delta, v^{\epsilon}}(t_{i+1})-z^{\Delta, v}(t_{i+1})|^2-e^{\gamma t_i}|z^{\Delta, v^{\epsilon}}(t_{i})-z^{\Delta, v}(t_{i})|^2\Big)\\
&\leq \sum_{i=0}^{k}e^{\gamma t_{i+1}}\Big[\big(\frac{(1-\theta)^2}{\theta^2}+\frac{2\theta-1}{\theta^2(1+c_{\lambda}\Delta)}-e^{-\gamma\Delta}\big)|z^{\Delta, v^{\epsilon}}(t_{i})-z^{\Delta, v}(t_{i})|^2\nn\\
&\quad+c_{\lambda}\Delta|w^{\Delta, v^{\epsilon}}(t_i)-w^{\Delta, v}(t_i)|^2\Big]+\int_{0}^{t_{k+1}}e^{\gamma \lceil t\rceil}\Big[2\langle b(w^{\Delta, v^{\epsilon}}_{\lfloor t\rfloor})-b(w^{\Delta, v}_{\lfloor t\rfloor}),w^{\Delta, v^{\epsilon}}(\lfloor t\rfloor)\nn\\
&\quad-w^{\Delta, v}(\lfloor t\rfloor)\rangle
+|\sigma (w^{\Delta, v^{\epsilon}}_{\lfloor t\rfloor})-\sigma(w^{\Delta, v}_{\lfloor t\rfloor})|^2\Big]\mathrm dt+K \int_{0}^{t_{k+1}}e^{\gamma \lceil t\rceil}|v^{\epsilon}(\lfloor t\rfloor)|^2\times\nn\\
&\quad\Big[|z^{\Delta, v^{\epsilon}}(\lfloor t\rfloor)\!-\!z^{\Delta, v}(\lfloor t\rfloor)|^2\!+\!|w^{\Delta, v^{\epsilon}}(\lfloor t\rfloor)\!-\!w^{\Delta, v}(\lfloor t\rfloor)|^2\Big]\mathrm dt+\int_{0}^{t_{k+1}}e^{\gamma \lceil t\rceil}\big|\sigma (w^{\Delta, v^{\epsilon}}_{\lfloor t\rfloor})\nn\\
&\quad-\sigma(w^{\Delta, v}_{\lfloor t\rfloor})\big|^2|v^{\epsilon}(\lfloor t\rfloor)|^2\Delta\mathrm dt+J(t_{k+1}),
\end{align*}
where $J$ is defined by 
\begin{align*}
J(s):=&\int_{0}^{s}2e^{\gamma \lceil t\rceil}\Big\langle \sigma(w^{\Delta, v}_{\lfloor t\rfloor})(v^{\epsilon}(\lfloor t\rfloor)-v(\lfloor t\rfloor)),\frac{1}{2}\Delta(\sigma (w^{\Delta, v^{\epsilon}}_{\lfloor t\rfloor})-\sigma(w^{\Delta, v}_{\lfloor t\rfloor}))v^{\epsilon}(\lfloor t\rfloor)\nn\\
&\quad+z^{\Delta, v^{\epsilon}}(t)-z^{\Delta, v}(t)+\frac12 \Delta \big(b(w^{\Delta, v^{\epsilon}}_{\lfloor t\rfloor})-b(w^{\Delta, v}_{\lfloor t\rfloor})\big)\Big\rangle\mathrm dt.
\end{align*}
It follows Assumptions \ref{a1} and \ref{a2} that 
\begin{align*}
&\quad e^{\gamma t_{k+1}}|z^{\Delta, v^{\epsilon}}(t_{k+1})-z^{\Delta, v}(t_{k+1})|^2\nn\\
&\leq \sum_{i=0}^{k}e^{\gamma t_{i+1}}\big(\frac{(1-\theta)^2}{\theta^2}+\frac{2\theta-1}{\theta^2(1+c_{\lambda}\Delta)}-e^{-\gamma\Delta}\big)|z^{\Delta, v^{\epsilon}}(t_{i})-z^{\Delta, v}(t_{i})|^2\nn\\
&\quad+\sum_{i=0}^{k}-\big(2a_1-L-c_{\lambda}-(L+2a_2)e^{\gamma\tau}\big)\Delta e^{\gamma t_{i+1}}|w^{\Delta, v^{\epsilon}}(t_i)-w^{\Delta, v}(t_i)|^2\Big]\\
&\quad+\!K \!\int_{0}^{t_{k+1}}\!e^{\gamma \lceil t\rceil}|v^{\epsilon}(\lfloor t\rfloor)|^2\Big[|z^{\Delta, v^{\epsilon}}(\lfloor t\rfloor)\!-\!z^{\Delta, v}(\lfloor t\rfloor)|^2\!+\!|w^{\Delta, v^{\epsilon}}(\lfloor t\rfloor)\!-\!w^{\Delta, v}(\lfloor t\rfloor)|^2\Big]\mathrm dt\nn\\
%&\quad+2\int_{0}^{t_{k+1}}e^{\gamma \lceil t\rceil}\big|\sigma (w^{\Delta, v^{\epsilon}}_{\lfloor t\rfloor})-\sigma(w^{\Delta, v}_{\lfloor t\rfloor})\big|^2|v^{\epsilon}(\lfloor t\rfloor)|^2\Delta\mathrm dt\nn\\
&\quad+2Le^{\gamma\Delta}\int_{0}^{t_{k+1}}\sup_{s\in[0,t]}e^{\gamma \lfloor s\rfloor}|w^{\Delta, v^{\epsilon}}(\lfloor s\rfloor)-w^{\Delta, v}(\lfloor s\rfloor)|^2|v^{\epsilon}(\lfloor t\rfloor)|^2\Delta\mathrm dt+J(t_{k+1}).
\end{align*}
Similar to the proof of Proposition \ref{p2.3}, one can take a sufficiently small number $\gamma$   independent of $\Delta$ such that $\frac{(1-\theta)^2}{\theta^2}+\frac{2\theta-1}{\theta^2(1+c_{\lambda}\Delta)}-e^{-\gamma\Delta}\leq0$ and $2a_1-L-c_{\lambda}-(L+2a_2)e^{\gamma\tau}\geq0$.
Then, we see that  
\begin{align}\label{com3}
&\quad e^{\gamma t_{k+1}}|z^{\Delta, v^{\epsilon}}(t_{k+1})-z^{\Delta, v}(t_{k+1})|^2\nn\\
&\leq K \int_{0}^{t_{k+1}}|v^{\epsilon}(\lfloor t\rfloor)|^2\sup_{s\in[0,t]}\Big[e^{\gamma \lfloor s\rfloor}|z^{\Delta, v^{\epsilon}}(\lfloor s\rfloor)-z^{\Delta, v}(\lfloor s\rfloor)|^2\nn\\
&\quad+e^{\gamma \lfloor s\rfloor}|w^{\Delta, v^{\epsilon}}(\lfloor s\rfloor)-w^{\Delta, v}(\lfloor t\rfloor)|^2\Big]\mathrm dt
%&\quad+2\int_{0}^{t_{k+1}}e^{\gamma \lceil t\rceil}\big|\sigma (w^{\Delta, v^{\epsilon}}_{\lfloor t\rfloor})-\sigma(w^{\Delta, v}_{\lfloor t\rfloor})\big|^2|v^{\epsilon}(\lfloor t\rfloor)|^2\Delta\mathrm dt\nn\\
+J(t_{k+1}).
\end{align}
Denote $E^{\epsilon}(t)=\int_0^t\sigma (w^{\Delta, v}_{\lfloor s\rfloor})(v^{\epsilon}(\lfloor s\rfloor)-v(\lfloor s\rfloor))\mathrm ds $. Applying the integration by parts formula, $v^{\epsilon}(\cdot)\in\mathcal S_{M}$, Assumptions \ref{a1} and \ref{a5}, and Lemma \ref{bound_control}, we arrive at
\begin{align}\label{com5}
&\quad J(t_{k+1})
=2e^{\gamma \lceil t\rceil}\Big\langle E^{\epsilon}(t),\frac{1}{2}\Delta(\sigma (w^{\Delta, v^{\epsilon}}_{\lfloor t\rfloor})-\sigma(w^{\Delta, v}_{\lfloor t\rfloor}))v^{\epsilon}(\lfloor t\rfloor)+z^{\Delta, v^{\epsilon}}(t)-z^{\Delta, v}(t)\nn\\
&\quad+\frac12 \Delta \big(b(w^{\Delta, v^{\epsilon}}_{\lfloor t\rfloor})\!-\!b(w^{\Delta, v}_{\lfloor t\rfloor})\big)\Big\rangle\Big|_{t=0}^{t=t_{k+1}}\!-\!2\int_0^{t_{k+1}}e^{\gamma \lceil t\rceil} \Big\langle E^{\epsilon}(t),(z^{\Delta, v^{\epsilon}}(t)\!-\!z^{\Delta, v}(t))'\Big\rangle\mathrm dt\nn\\
&\leq e^{\gamma t_{k+2}}|E^{\epsilon}(t_{k+1})||\sigma (w^{\Delta, v^{\epsilon}}_{t_{k+1}})-\sigma(w^{\Delta, v}_{t_{k+1}})|\int_{t_{k+1}}^{t_{k+2}}|v^{\epsilon}(\lfloor t\rfloor)|\mathrm dt+Ke^{\gamma t_{k+1}}|E^{\epsilon}(t_{k+1})|^2\nn\\
&\quad+\frac14\sup_{t\in[0,t_{k+1}]}\!e^{\gamma \lfloor t\rfloor}|z^{\Delta, v^{\epsilon}}(\lfloor t\rfloor)\!-\!z^{\Delta, v}(\lfloor t\rfloor)|^2\!+\!\frac14 \sup_{t\in[0,t_{k+1}]}e^{\gamma \lfloor t\rfloor}|w^{\Delta, v^{\epsilon}}(\lfloor t\rfloor)\!-\!w^{\Delta, v}(\lfloor t\rfloor)|^2\nn\\
&\quad -2\int_0^{t_{k+1}}e^{\gamma \lceil t\rceil}\Big\<E^{\epsilon}(t),b(w^{\Delta, v^{\epsilon}}_{\lfloor t\rfloor})-b(w^{\Delta, v}_{\lfloor t\rfloor}) +\sigma(w^{\Delta, v^{\epsilon}}_{\lfloor t\rfloor})v^{\epsilon}(\lfloor t\rfloor)-\sigma(w^{\Delta, v}_{\lfloor t\rfloor})v(\lfloor t\rfloor)\Big\>\mathrm dt\nn\\
&\leq K(M)e^{\gamma t_{k+1}}\sup_{k\ge -N}(1+|w^{\Delta, v^{\epsilon}}(t_k)|+|w^{\Delta, v}(t_k)|)|E^{\epsilon}(t_{k+1})|+Ke^{\gamma t_{k+1}}|E^{\epsilon}(t_{k+1})|^2\nn\\
&\quad+\frac14\sup_{t\in[0,t_{k+1}]}\!e^{\gamma \lfloor t\rfloor}|z^{\Delta, v^{\epsilon}}(\lfloor t\rfloor)\!-\!z^{\Delta, v}(\lfloor t\rfloor)|^2\!+\!\frac14 \sup_{t\in[0,t_{k+1}]}\!e^{\gamma \lfloor t\rfloor}|w^{\Delta, v^{\epsilon}}(\lfloor t\rfloor)\!-\!w^{\Delta, v}(\lfloor t\rfloor)|^2\nn\\
&\quad +K\int_0^{t_{k+1}}e^{\gamma \lceil t\rceil}|E^{\epsilon}(t)|\Big[1+\sup_{k\ge -N}(|w^{\Delta, v^{\epsilon}}(t_k)|^{\beta+1}+|w^{\Delta, v}(t_k)|^{\beta+1})\nn\\
&\quad+(1+\sup_{k\ge -N}|w^{\Delta, v^{\epsilon}}(t_k)|)|v^{\epsilon}(\lfloor t\rfloor)|
+(1+\sup_{k\ge -N}|w^{\Delta, v}(t_k)|)|v (\lfloor t\rfloor)|\Big]\mathrm dt\nn\\
&\leq K(M)e^{\gamma t_{k+1}}|E^{\epsilon}(t_{k+1})|\!+\!K(M)e^{\gamma t_{k+1}}\!\sup_{t\in[0,t_{k+1}]}|E^{\epsilon}(t)|^2\!+\!\frac14\sup_{t\in[0,t_{k+1}]}e^{\gamma \lfloor t\rfloor}|z^{\Delta, v^{\epsilon}}(\lfloor t\rfloor)\nn\\
&\quad-z^{\Delta, v}(\lfloor t\rfloor)|^2
+\frac14 \sup_{t\in[0,t_{k+1}]}e^{\gamma \lfloor t\rfloor}|w^{\Delta, v^{\epsilon}}(\lfloor t\rfloor)-w^{\Delta, v}(\lfloor t\rfloor)|^2, 
\end{align}
where we used the Young  inequality and the H\"older inequality. 
Inserting \eqref{com5} into \eqref{com3} and together with 
 \begin{align*}
 &\quad e^{\gamma t_{k+1}}|z^{\Delta, v^{\epsilon}}(t_{k+1})-z^{\Delta, v}(t_{k+1})|^2\\
 &\ge e^{\gamma t_{k+1}}|w^{\Delta, v^{\epsilon}}(t_{k+1})\!-\!w^{\Delta, v}(t_{k+1})|^2\!-\!2\theta e^{\gamma t_{k+1}} \Delta\<w^{\Delta, v^{\epsilon}}(t_{k+1})\!-\!w^{\Delta, v}(t_{k+1}),\nn\\
 &\quad b(w^{\Delta, v^{\epsilon}}_{t_{k+1}})\!-\!b(w^{\Delta, v}_{t_{k+1}})\>\\
 &\ge e^{\gamma t_{k+1}}|w^{\Delta, v^{\epsilon}}(t_{k+1})\!-\!w^{\Delta, v}(t_{k+1})|^2\!-\!2\theta e^{\gamma t_{k+1}}\Delta \Big(\!-\!a_1|w^{\Delta, v^{\epsilon}}(t_{k+1})\!-\!w^{\Delta, v}(t_{k+1})|^2\\
 &\quad +a_2\int_{-\tau}^0|w^{\Delta, v^{\epsilon}}_{t_{k+1}}(r)-w^{\Delta, v}_{t_{k+1}}(r)|^2\mathrm d\nu_2(r)\Big)\nn\\
 &\ge e^{\gamma t_{k+1}} |w^{\Delta, v^{\epsilon}}(t_{k+1})-w^{\Delta, v}(t_{k+1})|^2\nn\\
 &\quad-2\theta a_2e^{\gamma(\tau+\Delta)}\Delta\sup_{t\in[0,t_{k+1}]}e^{\gamma \lfloor t\rfloor}|w^{\Delta, v^{\epsilon}}(\lfloor t\rfloor)\!-\!w^{\Delta, v}(\lfloor t\rfloor)|^2, 
 \end{align*}
 we obtain that for $\Delta\in(0,\frac{1}{4a_2\theta}]$,
% and $\Delta\in(0,\frac{1}{4\theta a_2})$, we derive 
 \begin{align*}
 &\quad e^{\gamma t_{k+1}}|w^{\Delta, v^{\epsilon}}(t_{k+1})-w^{\Delta, v}(t_{k+1})|^2\\
 &\leq K \int_{0}^{t_{k+1}}|v^{\epsilon}(\lfloor t\rfloor)|^2\sup_{s\in[0,t]}\Big[e^{\gamma \lfloor s\rfloor}|z^{\Delta, v^{\epsilon}}(\lfloor s\rfloor)-z^{\Delta, v}(\lfloor s\rfloor)|^2+e^{\gamma \lfloor s\rfloor}|w^{\Delta, v^{\epsilon}}(\lfloor s\rfloor)\nn\\
 &\quad-w^{\Delta, v}(\lfloor s\rfloor)|^2\Big]\mathrm dt+\frac{1}{4}\sup_{t\in(0,t_{k+1]}}e^{\gamma\lfloor t\rfloor}|z^{\Delta, v^{\epsilon}}(\lfloor t\rfloor)-z^{\Delta, v}(\lfloor t\rfloor)|^2\mathrm dt\nn\\
 &\quad+\frac{7}{8}\sup_{t\in(0,t_{k+1]}}e^{\gamma\lfloor t\rfloor}|w^{\Delta, v^{\epsilon}}(\lfloor t\rfloor)-w^{\Delta, v}(\lfloor t\rfloor)|^2\mathrm dt\nn\\
&\quad+K(M) e^{\gamma t_{k+1}}\sup_{t\in[0,t_{k+1}]}|E^{\epsilon}(t)|+K(M) e^{\gamma t_{k+1}}\sup_{t\in[0,t_{k+1}]}|E^{\epsilon}(t)|^2,
 \end{align*} 
 where we chose $\gamma$ sufficiently small such that $e^{\gamma(\tau+1)}\leq \frac{5}{4}$.
It follows from the Gr\"onwall inequality and $v^{\epsilon}(\cdot)\in\mathcal S_{M}$  that 
 \begin{align}\label{leadsto1}
&\quad\sup_{t\in[0,t_{k+1}]}e^{\gamma \lfloor t\rfloor}|z^{\Delta, v^{\epsilon}}(\lfloor t\rfloor)\!-\!z^{\Delta, v}(\lfloor t\rfloor)|^2\!+\!\sup_{t\in[0,t_{k+1}]}e^{\gamma \lfloor t\rfloor}|w^{\Delta, v^{\epsilon}}(\lfloor t\rfloor)\!-\!w^{\Delta, v}(\lfloor t\rfloor)|^2 \nn\\
&\leq K(M)e^{\gamma t_{k+1}}\sup_{t\in[0,t_{k+1}]}|E^{\epsilon}(t)|+K(M)e^{\gamma t_{k+1}}\sup_{t\in[0,t_{k+1}]}|E^{\epsilon}(t)|^2.
 \end{align}
By virtue of
$$|\int_{r_1}^{r_2}\sigma(w^{\Delta, v}_{\lfloor s\rfloor})(v^{\epsilon}(\lfloor s\rfloor)-v(\lfloor s\rfloor))\mathrm ds|\leq K(M)\sqrt{|r_2-r_1|},$$ we deduce the equicontinuity of $\{E^{\epsilon}(t), t\in[0, +\infty)\}_{\epsilon\in(0,1)}$. According to \cref{a1}, \cref{bound_control}, $v^{\epsilon}(\cdot), v(\cdot)\in\mathcal S_M$, and the H\"older inequality, one has
$$\sup_{\epsilon\in(0,1)}\sup_{t\in[0,+\infty)}|\int_0^{+\infty}\sigma(w^{\Delta, v}_{\lfloor s\rfloor})(v^{\epsilon}(\lfloor s\rfloor)-v(\lfloor s\rfloor))\mathrm ds|\leq K(M)(1+\|\xi\|^{\beta+1}) ,$$ which implies the uniform boundedness of $\{E^{\epsilon}(\cdot)\}_{\epsilon\in(0,1)}$. Hence, according to the Ascoli--Arzela theorem, we obtain  that $\{E^{\epsilon}(\cdot)\}_{\epsilon\in(0,1)}$ is compact in $\mathcal C([0,t_{k+1}];\mathbb R^d).$ 
Note that 
\begin{align*}
\sigma(w^{\Delta, v}_{\lfloor \cdot\rfloor})(v^{\epsilon}(\lfloor \cdot\rfloor)-v(\lfloor \cdot\rfloor))\to0 \quad \mbox{as} \quad\epsilon\rightarrow 0
\end{align*} with respect to the weak topology of $\mathcal S_{M'}$ for $M'=M K(M)(1+\|\xi\|^{2(\beta+1)})$.
By \cite[Chapter \uppercase\expandafter{\romannumeral6}, Proposition 3.3]{FunAnal}, we conclude  that
$\sup_{t\in[0,t_{k+1})}|E^{\epsilon}(t)|\to0$. 
%Combining  the induction hypothesis  and $\sup_{t\in[0,t_{k+1}]}|e^{\iota}(t)|\to0$ implies
This, together with \eqref{leadsto1} implies that for each $k\in\mathbb N_+,$
\begin{align*}
\sup_{t\in[0,t_{k+1}]}|w^{\Delta, v^{\epsilon}}(\lfloor t\rfloor)-w^{\Delta, v}(\lfloor t\rfloor )|^2\to0\text{ as }\epsilon\to0.
\end{align*}
Thus, by the monotone convergence theorem, we arrive at
\begin{align*}
\lim_{\epsilon\to0}\|w^{\Delta, v^{\epsilon}}-w^{\Delta, v}\|_{\mathcal C_{\xi}}&=\lim_{\epsilon\to0}\sum_{k=1}^{\infty}e^{-\mathfrak at_k}\big(\sup_{t\in[0,t_k]}|w^{\Delta, v^{\epsilon}}(\lfloor t\rfloor)-w^{\Delta, v}(\lfloor t\rfloor)|\big)\Delta\\
&=\sum_{k=1}^{\infty}e^{-\mathfrak at_k}\big(\lim_{\epsilon\to0}\sup_{t\in[0,t_k]}|w^{\Delta, v^{\epsilon}}(\lfloor t\rfloor)-w^{\Delta, v}(\lfloor t\rfloor)|\big)\Delta=0.
\end{align*}
%{\color{red} Taking $\epsilon_n$??}}
This finishes the proof.
\end{proof}

In order to obtain the boundedness of the moments of $y^{\Delta, v^{\epsilon}},$ we 
introduce the auxiliary process 
\begin{align}\label{auxi2}
\begin{cases}
Z^{\Delta, v^{\epsilon}}(t_k)=\xi(t_k),\quad k=-N,\ldots,-1,\\
Z^{\Delta, v^{\epsilon}}(t_k)=y^{\Delta, v^{\epsilon}}(t_k)-\theta b(y^{\Delta, v^{\epsilon}}_{t_k})\Delta,\quad k= 0,\\
Z^{\Delta, v^{\epsilon}}(t_k)=Z^{\Delta, v^{\epsilon}}(t_{k-1})+b(y^{\Delta, v^{\epsilon}}_{t_{k-1}})\Delta+\sqrt{\epsilon}\sigma(y^{\Delta, v^{\epsilon}}_{t_{k-1}})\delta W_{k-1}\\
\quad\quad\quad\quad\quad\;\;+\sigma(y^{\Delta, v^{\epsilon}}_{t_{k-1}})v^{\epsilon}(t_{k-1})\Delta,\quad k\in\mathbb N_+.
\end{cases}
\end{align}
Define  the continuous version $Z^{\Delta, v^{\epsilon}}(\cdot)$ as follows:
for $t\in[t_{k}, t_{k+1})$ with $k\in\mathbb N$,
\begin{align*}
Z^{\Delta, v^{\epsilon}}(t)&:=Z^{\Delta, v^{\epsilon}}(t_{k})+b(y^{\Delta, v^{\epsilon}}_{t_{k}})(t-t_k)+\sqrt{\epsilon}\sigma(y^{\Delta, v^{\epsilon}}_{t_{k}})( W(t)-W(t_{k}))\nn\\
&\quad+\sigma(y^{\Delta, v^{\epsilon}}_{t_{k}})v^{\epsilon}(t_{k})(t-t_k),
\end{align*}
and for $t\in[-\tau,0]$, $Z^{\Delta, v^{\epsilon}}(t):=\xi(t)$.

\begin{lemma}\label{estimate_y2}
Let $M>0$ and  $\{v^{\epsilon}\}_{\epsilon\in(0,1)}\subset \mathcal P_M.$ Under \cref{a1,a2,a4,a5}, for any $\Delta\in(0,1]$ and  $p\in\mathbb N_+,$ we have 
\begin{align*}
&\quad\mathbb E\Big[\sup_{t\in[0,t_{k+1}]}|Z^{\Delta,v^{\epsilon}}(\lfloor t\rfloor)|^{2p}\Big]+\mathbb E\Big[\sup_{t\in[0,t_{k+1}]}|y^{\Delta,v^{\epsilon}}(\lfloor t\rfloor)|^{2p}\Big]
\\&\leq  K(M)(1+\|\xi\|^{2p(\beta+1)})e^{\sqrt{\epsilon} K(M)t_{k+1}}.
\end{align*} 
As a consequence, for sufficiently small $\epsilon>0$, 
$Z^{\Delta,v^{\epsilon}}(\cdot)$ and $y^{\Delta,v^{\epsilon}}(\cdot)$ belong to  $L^{2p}(\Omega; \mathcal C_{\xi})$.

\end{lemma}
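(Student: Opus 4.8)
The plan is to mirror the high-order moment machinery developed for the $\theta$-EM method in \cref{p2.1,l4.2,l4.3}, but now carrying the two extra ingredients of the controlled equation \eqref{control}: the scaled diffusion $\sqrt{\epsilon}\sigma(y^{\Delta,v^{\epsilon}}_{t_k})\delta W_k$ and the control drift $\sigma(y^{\Delta,v^{\epsilon}}_{t_k})v^{\epsilon}(t_k)\Delta$. As in those proofs, I would first establish the bound for the split process $Z^{\Delta,v^{\epsilon}}$ defined by \eqref{auxi2}, whose increments are of pure Euler type, and only afterwards transfer the estimate to $y^{\Delta,v^{\epsilon}}$ through the algebraic relation $Z^{\Delta,v^{\epsilon}}(t_k)=y^{\Delta,v^{\epsilon}}(t_k)-\theta b(y^{\Delta,v^{\epsilon}}_{t_k})\Delta$ together with \cref{a4}. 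The treatment of the control term parallels that of \cref{bound_control}, while the martingale part is handled as in \cref{p2.1}.

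First I would expand $|Z^{\Delta,v^{\epsilon}}(t_{k+1})|^2$ from \eqref{auxi2}, isolating the deterministic increment $b(y^{\Delta,v^{\epsilon}}_{t_k})\Delta$, the martingale increment $\sqrt{\epsilon}\sigma(y^{\Delta,v^{\epsilon}}_{t_k})\delta W_k$, and the control increment $\sigma(y^{\Delta,v^{\epsilon}}_{t_k})v^{\epsilon}(t_k)\Delta$. Using \cref{a2} (or \cref{a4}) to extract the dissipative/superlinear term from $\langle y^{\Delta,v^{\epsilon}}(t_k),b(y^{\Delta,v^{\epsilon}}_{t_k})\rangle$, \cref{a1} on $\sigma$, and \cref{a5} for the polynomial growth of $b$, the cross term between $Z^{\Delta,v^{\epsilon}}(t_k)$ and the control increment is treated by the Young inequality, producing a factor of the form $|v^{\epsilon}(t_k)|^2\big(|Z^{\Delta,v^{\epsilon}}(t_k)|^2+|y^{\Delta,v^{\epsilon}}(t_k)|^2\big)$. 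Raising to the $p$-th power and using the binomial expansion (exactly as for the terms $\mathscr I_l$ in \cref{p2.1} and $I_i$ in \cref{l4.2}), with the measure integrals $\int_{-\tau}^0|y^{\Delta,v^{\epsilon}}_{t_k}(r)|^{2p}\mathrm d\nu_i(r)$ reduced by the convexity argument of \eqref{5p2.1}, I would arrive at a recursion of the schematic form
\begin{align*}
\mathbb E[|Z^{\Delta,v^{\epsilon}}(t_{k+1})|^{2p}]
\leq{}&\mathbb E[|Z^{\Delta,v^{\epsilon}}(t_k)|^{2p}]+K\Delta+K\Delta\,\mathbb E\big[|v^{\epsilon}(t_k)|^2(1+|Z^{\Delta,v^{\epsilon}}(t_k)|^{2p}+|y^{\Delta,v^{\epsilon}}(t_k)|^{2p})\big]\\
&+(\text{centered martingale increments of order }\sqrt{\epsilon}).
\end{align*}

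To obtain the supremum inside the expectation, I would pass to the continuous version and apply the Burkholder--Davis--Gundy inequality to the martingale part generated by $\sqrt{\epsilon}\sigma(y^{\Delta,v^{\epsilon}}_{\lfloor\cdot\rfloor})\mathrm dW$. Because this martingale carries the prefactor $\sqrt{\epsilon}$, the Young inequality lets me absorb a fraction of $\mathbb E[\sup|Z^{\Delta,v^{\epsilon}}|^{2p}]$ on the left and leaves a residual integral whose coefficient is of order $\epsilon$ (hence at most of order $\sqrt{\epsilon}$, since $\epsilon\in(0,1)$); the control drift, handled by the Young inequality against $v^{\epsilon}$, only produces a constant factor after integrating $\int_0^{+\infty}|v^{\epsilon}(\lfloor s\rfloor)|^2\mathrm ds\le M$ a.s. A continuous Gr\"onwall inequality then yields $K(M)(1+\|\xi\|^{2p(\beta+1)})e^{\sqrt{\epsilon}K(M)t_{k+1}}$, where $K(M)$ collects the $e^{KM}$ factor coming from the control and the exponential time-rate stems from the $\epsilon$-order diffusion residual. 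Transferring from $Z^{\Delta,v^{\epsilon}}$ to $y^{\Delta,v^{\epsilon}}$ via $Z^{\Delta,v^{\epsilon}}(t_k)=y^{\Delta,v^{\epsilon}}(t_k)-\theta b(y^{\Delta,v^{\epsilon}}_{t_k})\Delta$ and \cref{a4} (as in \underline{Step~3} of \cref{l4.2}) gives the same bound for $y^{\Delta,v^{\epsilon}}$, which is the asserted estimate.

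For the consequence, I would bound the $\mathcal C_{\xi}$-norm by viewing $\{e^{-\mathfrak a t_k}\Delta\}_k$ as a finite-mass weight: since $\sum_k e^{-\mathfrak a t_k}\Delta<\infty$, the Jensen inequality gives $\mathbb E[\|y^{\Delta,v^{\epsilon}}\|_{\mathcal C_{\xi}}^{2p}]\le K\sum_k e^{-\mathfrak a t_k}\Delta\,\mathbb E[\sup_{t\in[-\tau,t_k]}|y^{\Delta,v^{\epsilon}}(t)|^{2p}]$, and the linear interpolation makes the continuous supremum comparable to $\sup_{j\le k}|y^{\Delta,v^{\epsilon}}(t_j)|+\|\xi\|$. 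Inserting the moment bound, the series converges as soon as $\sqrt{\epsilon}K(M)<\mathfrak a$, i.e. for $\epsilon$ sufficiently small, giving $y^{\Delta,v^{\epsilon}},Z^{\Delta,v^{\epsilon}}\in L^{2p}(\Omega;\mathcal C_{\xi})$. The main obstacle I anticipate is the simultaneous bookkeeping of the control term and the dissipativity: one must ensure that the random $L^2$-control enters only through the integrable quantity $\int|v^{\epsilon}(\lfloor s\rfloor)|^2\mathrm ds\le M$ (hence a harmless $K(M)$ factor) and does not spoil the time-rate, so that the sole source of growth is the $\sqrt{\epsilon}$-scaled diffusion---this is exactly what keeps the rate at $\sqrt{\epsilon}K(M)$ and makes the $L^{2p}(\Omega;\mathcal C_{\xi})$ conclusion available for all small $\epsilon$.
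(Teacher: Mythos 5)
Your proposal follows essentially the same route as the paper's own proof: bound the split process $Z^{\Delta,v^{\epsilon}}$ of \eqref{auxi2} with the binomial/Young machinery of Lemma \ref{l4.2}, reduce the delay integrals by the convexity argument of \eqref{5p2.1}, extract a harmless $e^{K(M)M}$ factor from the a.s.\ bound $\int_0^{\infty}|v^{\epsilon}(\lfloor s\rfloor)|^2\mathrm ds\leq M$ via Gr\"onwall, obtain the $e^{\sqrt{\epsilon}K(M)t_{k+1}}$ time-rate from the Burkholder--Davis--Gundy estimate of the $\sqrt{\epsilon}$-scaled martingale, transfer to $y^{\Delta,v^{\epsilon}}$ through the implicit relation and Assumption \ref{a4}, and deduce the $\mathcal C_{\xi}$-membership by summability of $\sum_k e^{-\mathfrak a t_k}\Delta$ once $\sqrt{\epsilon}K(M)<\mathfrak a$ --- all of which matches the paper. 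The one point you gloss over is that, exactly as in Lemma \ref{l4.2} which you cite as your template, covering \emph{all} $\Delta\in(0,1]$ forces a two-regime argument: for $\Delta\in(0,\Delta^*]$ the $O(\Delta)$ dissipative term (with $R$ chosen large and then $\Delta^*$ small) absorbs the Young remainders, whereas for $\Delta\in[\Delta^*,1]$ one must keep the superlinear term $-a_3^{p}\Delta^{p}|y^{\Delta,v^{\epsilon}}(t_k)|^{p(2+\ell)}$ from Assumption \ref{a4} on the left to absorb the $\Delta^{p}|y^{\Delta,v^{\epsilon}}(t_k)|^{2p}$-terms, since a single-pass absorption fails at one end of the step-size range or the other.
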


\begin{proof}
\underline{Step 1.} We show that for $\Delta\in(0,\Delta^*]$ with some sufficiently small $\Delta^*\in(0,1),$
 \begin{align*}
&\quad\mathbb E\Big[\sup_{t\in[0,t_{k+1}]}|Z^{\Delta,v^{\epsilon}}(\lfloor t\rfloor)|^{2p}\Big]+\mathbb E\Big[\sup_{t\in[0,t_{k+1}]}|y^{\Delta,v^{\epsilon}}(\lfloor t\rfloor)|^{2p}\Big]
\\&\leq  K(M)(1+\|\xi\|^{2p(\beta+1)})e^{\sqrt{\epsilon} K(M)t_{k+1}}.
\end{align*} 
It follows from the definition of $Z^{\Delta,v^{\epsilon}}$ and the Young inequality that 
 \begin{align*}
 &\quad |Z^{\Delta,v^{\epsilon}}(t_{k+1})|^2
 =|Z^{\Delta,v^{\epsilon}}(t_{k})|^2+|b(y^{\Delta,v^{\epsilon}}_{t_k})|^2\Delta^2+\epsilon |\sigma(y^{\Delta,v^{\epsilon}}_{t_k})\delta W_k|^2\nn\\
 &\quad+2\<y^{\Delta,v^{\epsilon}}(t_k)-\theta\Delta b(y^{\Delta,v^{\epsilon}}_{t_k}),b(y^{\Delta,v^{\epsilon}}_{t_k})\Delta\>+2\<Z^{\Delta,v^{\epsilon}}(t_k),\sqrt{\epsilon}\sigma(y^{\Delta,v^{\epsilon}}_{t_k})\delta W_k\>\nn\\
 &\quad+2\sqrt{\epsilon}\<b(y^{\Delta,v^{\epsilon}}_{t_k})\Delta,\sigma(y^{\Delta,v^{\epsilon}}_{t_k})\delta W_k\>+|\sigma(y^{\Delta,v^{\epsilon}}_{t_k})v^{\epsilon}(t_k)|^2\Delta^2\\
 &\quad+2\<Z^{\Delta,v^{\epsilon}}(t_k)+b(y^{\Delta,v^{\epsilon}}_{t_k})\Delta+\sqrt{\epsilon}\sigma(y^{\Delta,v^{\epsilon}}_{t_k})\delta W_k,\sigma(y^{\Delta,v^{\epsilon}}_{t_k})v^{\epsilon}(t_k)\Delta\>\\
 &\leq \Big(\frac{(1-\theta)^2}{\theta^2}+\frac{2\theta-1}{\theta^2(1+\Delta)}\Big)|Z^{\Delta,v^{\epsilon}}(t_k)|^2+\Big(\frac{1-2\theta}{\theta^2}+\frac{2\theta-1}{\theta^2}(1+\Delta)\Big)|y^{\Delta,v^{\epsilon}}(t_k)|^2\\
 &\quad+2\Delta \<y^{\Delta,v^{\epsilon}}(t_k),b(y^{\Delta,v^{\epsilon}}_{t_k})\>+\Delta |\sigma(y^{\Delta,v^{\epsilon}}_{t_k})|^2+\Delta |v^{\epsilon}(t_k)|^2\Big(|Z^{\Delta,v^{\epsilon}}(t_k)+b(y^{\Delta,v^{\epsilon}}_{t_k})\Delta|^2\nn\\
 &\quad+|\sigma(y^{\Delta,v^{\epsilon}}_{t_k})|^2\Delta \Big)+\epsilon |\sigma (y^{\Delta,v^{\epsilon}}_{t_k})\delta W_k|^2+2\widetilde M_k, 
 \end{align*}
 where $\widetilde M_k:=\<\sqrt{\epsilon}\sigma (y^{\Delta,v^{\epsilon}}_{t_k})\delta W_k,\sigma (y^{\Delta,v^{\epsilon}}_{t_k})v^{\epsilon}(t_k)\Delta +Z^{\Delta,v^{\epsilon}}(t_k)+b(y^{\Delta,v^{\epsilon}}_{t_k})\Delta \>.$
Recalling the notation $A_{\theta,\Delta}:=\frac{(1-\theta)^2}{\theta^2}+\frac{2\theta-1}{\theta^2(1+\Delta)}$ and using \cref{a1,a4}, we have
\begin{align}\label{derive_eq1}
&\quad|Z^{\Delta,v^{\epsilon}}(t_{k+1})|^2\!\leq \!A_{\theta,\Delta}|Z^{\Delta,v^{\epsilon}}(t_{k})|^2+\frac{2\theta-1}{\theta^2}\Delta |y^{\Delta,v^{\epsilon}}(t_k)|^2+K\Delta\nn\\
&\quad-2a_3\Delta |y^{\Delta,v^{\epsilon}}(t_k)|^{2+\ell}+2L\Delta|y^{\Delta,v^{\epsilon}}(t_k)|^2+2L\Delta \int_{-\tau}^0|y_{t_k}^{\Delta,v^{\epsilon}}(r)|^2\mathrm d\nu_1(r)\nn\\
&\quad+2a_4\Delta \int_{-\tau}^0|y_{t_k}^{\Delta,v^{\epsilon}}(r)|^2\mathrm d\nu_2(r)+\Delta |v^{\epsilon}(t_k)|^2\Big(|Z^{\Delta,v^{\epsilon}}(t_k)+b(y^{\Delta,v^{\epsilon}}_{t_k})\Delta|^2\nn\\
&\quad+|\sigma(y^{\Delta,v^{\epsilon}}_{t_k})|^2\Delta\Big)+\epsilon |\sigma(y_{t_k}^{\Delta,v^{\epsilon}})\delta W_k|^2+2\widetilde{M}_k\nn\\
&\leq A_{\theta,\Delta}|Z^{\Delta,v^{\epsilon}}(t_{k})|^2+K\Delta-a_3\Delta |y^{\Delta,v^{\epsilon}}(t_k)|^{2+l}-R\Delta |y^{\Delta,v^{\epsilon}}(t_k)|^2\nn\\
&\quad+\!2L\Delta \int_{-\tau}^0|y_{t_k}^{\Delta,v^{\epsilon}}(r)|^2\mathrm d\nu_1(r)\!+\!2a_4\Delta \int_{-\tau}^0|y_{t_k}^{\Delta,v^{\epsilon}}(r)|^2\mathrm d\nu_2(r)\!+\!\epsilon |\sigma(y_{t_k}^{\Delta,v^{\epsilon}})\delta W_k|^2\nn\\
&\quad+\Delta |v^{\epsilon}(t_k)|^2\Big(|Z^{\Delta,v^{\epsilon}}(t_k)+b(y^{\Delta,v^{\epsilon}}_{t_k})\Delta|^2+|\sigma(y^{\Delta,v^{\epsilon}}_{t_k})|^2\Delta\Big)+2\widetilde{M}_k,
\end{align}
where $R$ is positive constant with $R>2L.$
From this, we obtain that for $p\in\mathbb N$ with $p>1$,
\begin{align*}
(|Z^{\Delta,v^{\epsilon}}(t_{k+1})|^{2}+a_3\Delta|y^{\Delta,v^{\epsilon}}(t_k)|^{2+\ell})^p\leq A^p_{\theta,\Delta}|Z^{\Delta,v^{\epsilon}}(t_k)|^{2p}+\sum_{i=1}^pC_p^i\mathcal I_i^{\epsilon},
\end{align*}
where
\begin{align*}
\mathcal I_i^{\epsilon}&=A^{p-i}_{\theta,\Delta}|Z^{\Delta,v^{\epsilon}}(t_k)|^{2(p-i)}\Big[K\Delta\!-\!R\Delta |y^{\Delta,v^{\epsilon}}(t_k)|^2\!+\!2L\Delta \int_{-\tau}^0|y_{t_k}^{\Delta,v^{\epsilon}}(r)|^2\mathrm d\nu_1(r)\\
&\quad+2a_4\Delta \int_{-\tau}^0|y_{t_k}^{\Delta,v^{\epsilon}}(r)|^2\mathrm d\nu_2(r)+\epsilon |\sigma(y_{t_k}^{\Delta,v^{\epsilon}})\delta W_k|^2\\
&\quad+\Delta |v^{\epsilon}(t_k)|^2\Big(|Z^{\Delta,v^{\epsilon}}(t_k)+b(y^{\Delta,v^{\epsilon}}_{t_k})\Delta|^2+|\sigma(y^{\Delta,v^{\epsilon}}_{t_k})|^2\Delta\Big)+2\widetilde{M}_k\Big]^i.
\end{align*}
For the term $\mathcal I^{\epsilon}_1,$ applying  the Young inequality yields   that for $\gamma_1\in(0,1),$
\begin{align*}
&\quad\mathcal I_{1}^{\epsilon}\leq \gamma_1\Delta A^p_{\theta,\Delta}|Z^{\Delta,v^{\epsilon}}(t_k)|^{2p}+K(\gamma_1)\Delta-R\Delta A^{p-1}_{\theta,\Delta}|Z^{\Delta,v^{\epsilon}}(t_k)|^{2(p-1)}|y^{\Delta,v^{\epsilon}}(t_k)|^2\\
&\quad +K(\gamma_1)\Delta \int_{-\tau}^0|y^{\Delta,v^{\epsilon}}_{t_k}(r)|^{2p}\mathrm d\nu_1(r)+K(\gamma_1)\Delta \int_{-\tau}^0|y^{\Delta,v^{\epsilon}}_{t_k}(r)|^{2p}\mathrm d\nu_2(r)\\
&\quad +K\Delta |v^{\epsilon}(t_k)|^2\Big(|Z^{\Delta,v^{\epsilon}}(t_k)|^{2p}+|y^{\Delta,v^{\epsilon}}(t_k)|^{2p}+\int_{-\tau}^0|y^{\Delta,v^{\epsilon}}_{t_k}(r)|^{2p}\mathrm d\nu_1(r)\Big)\\
&\quad+A^{p-1}_{\theta,\Delta}|Z^{\Delta,v^{\epsilon}}(t_k)|^{2(p-1)}(\epsilon|\sigma(y^{\Delta,v^{\epsilon}}_{t_k})\delta W_k|^2+2\widetilde{\mathcal M}_k).
\end{align*}
By Assumption \ref{a4}, we have 
\begin{align}\label{y&Z}
&\quad|y^{\Delta,v^{\epsilon}}(t_k)|^2+2a_3\theta\Delta|y^{\Delta,v^{\epsilon}}(t_k)|^{2+\ell}\nn\\
&\leq |Z^{\Delta,v^{\epsilon}}(t_k)|^2+K\Delta+2a_4\theta\Delta\int_{-\tau}^0|y^{\Delta,v^{\epsilon}}_{t_k}(r)|^2\mathrm d\nu_2(r),
\end{align}
which implies 
\begin{align*}
|y^{\Delta,v^{\epsilon}}(t_k)|^{2p}&=\big(|Z^{\Delta,v^{\epsilon}}(t_k)|^2\!+\!K\Delta\!+\!2a_4\theta\Delta\int_{-\tau}^0\!|y^{\Delta,v^{\epsilon}}_{t_k}(r)|^2\mathrm d\nu_2(r)\big)^{2(p\!-\!1)}|y^{\Delta,v^{\epsilon}}(t_k)|^2\nn\\
&\leq 2|Z^{\Delta,v^{\epsilon}}(t_k)|^{2(p-1)}|y^{\Delta,v^{\epsilon}}(t_k)|^2\\
&\quad+K|y^{\Delta,v^{\epsilon}}(t_k)|^2\Big(K\Delta+2a_4\theta\Delta\int_{-\tau}^0|y^{\Delta,v^{\epsilon}}_{t_k}(r)|^2\mathrm d\nu_2(r)\Big)^{p-1}.
\end{align*}
In consequence, 
\begin{align*}
&-|Z^{\Delta,v^{\epsilon}}(t_k)|^{2(p-1)}|y^{\Delta,v^{\epsilon}}(t_k)|^2\leq -\frac{1}{2}|y^{\Delta,v^{\epsilon}}(t_k)|^{2p}\\
&\quad+K |y^{\Delta,v^{\epsilon}}(t_k)|^2\Big(K\Delta+2a_4\theta\Delta\int_{-\tau}^0|y^{\Delta,v^{\epsilon}}_{t_k}(r)|^2\mathrm d\nu_2(r)\Big)^{p-1}.
\end{align*}
Inserting the above inequality  into $\mathcal I_1^{\epsilon}$, we derive
\begin{align*}
&\mathcal I_1^{\epsilon}\leq \gamma_1\Delta A^p_{\theta,\Delta}|Z^{\Delta,v^{\epsilon}}(t_k)|^{2p}+K(\gamma_1)\Delta-\frac12R\Delta A^{p-1}_{\theta,\Delta}|y^{\Delta,v^{\epsilon}}(t_k)|^{2p}\\
&\quad+K(\gamma_1)\Delta\Big(|y^{\Delta,v^{\epsilon}}(t_k)|^{2p}+\int_{-\tau}^0|y^{\Delta,v^{\epsilon}}_{t_k}(r)|^{2p}\mathrm d\nu_1(r)+\int_{-\tau}^0|y^{\Delta,v^{\epsilon}}_{t_k}(r)|^{2p}\mathrm d\nu_2(r)\Big)\\
&\quad  +K\Delta |v^{\epsilon}(t_k)|^2\Big(|Z^{\Delta,v^{\epsilon}}(t_k)|^{2p}+|y^{\Delta,v^{\epsilon}}(t_k)|^{2p}+\int_{-\tau}^0|y^{\Delta,v^{\epsilon}}_{t_k}(r)|^{2p}\mathrm d\nu_1(r)\Big)\\
&\quad+A^{p-1}_{\theta,\Delta}|Z^{\Delta,v^{\epsilon}}(t_k)|^{2(p-1)}(\epsilon|\sigma(y^{\Delta,v^{\epsilon}}_{t_k})\delta W_k|^2+2\widetilde{\mathcal M}_k).
\end{align*}
For the term $\mathcal I^{\epsilon}_i$ with $i\in\{2,\ldots,p\},$
\begin{align*}
&\mathcal I^{\epsilon}_i
%\leq KA^{p-i}_{\theta,\Delta}\mathbb E\Big[|Z^{\Delta,v^{\epsilon}}(t_k)|^{2(p-i)}\Big(\Delta^i+\Delta^i|y^{\Delta,v^{\epsilon}}(t_k)|^{2i}+\Delta^i\int_{-\tau}^0|y^{\Delta,v^{\epsilon}}_{t_k}(r)|^{2i}\mathrm d\nu_1(r)\\
%&\quad+\Delta^i\int_{-\tau}^0|y^{\Delta,v^{\epsilon}}_{t_k}(r)|^{2i}\mathrm d\nu_2(r)+\Delta^i|v^{\epsilon}(t_k)|^{2i}(|Z^{\Delta,v^{\epsilon}}(t_k)|^{2i}+|y^{\Delta,v^{\epsilon}}(t_k)|^{2i}+\int_{-\tau}^0|y^{\Delta,v^{\epsilon}}_{t_k}|^{2i}\mathrm d\nu_1(r))\\
%&\quad+\Delta^{\frac i2}|\sigma(y_{t_k}^{\Delta,v^{\epsilon}})|^i(|\sigma(y_{t_k}^{\Delta,v^{\epsilon}})v^{\epsilon}(t_k)\Delta|^i+A^{\frac i2}_{\theta,\Delta}|Z^{\Delta,v^{\epsilon}}(t_k)|^i+|y^{\Delta,v^{\epsilon}}(t_k)|^i)\Big)\Big]\\
\leq \gamma_1\Delta A^p_{\theta,\Delta}|Z^{\Delta,v^{\epsilon}}(t_k)|^{2p}+K(\gamma_1)\Delta^2\Big(1+|y^{\Delta,v^{\epsilon}}(t_k)|^{2p}+\int_{-\tau}^0|y^{\Delta,v^{\epsilon}}_{t_k}(r)|^{2p}\mathrm d\nu_1(r)\nn\\
&\quad+\int_{-\tau}^0|y^{\Delta,v^{\epsilon}}_{t_k}(r)|^{2p}\mathrm d\nu_2(r)\Big)+K(M)\Delta |v^{\epsilon}(t_k)|^2\Big(|Z^{\Delta,v^{\epsilon}}(t_k)|^{2p}+|y^{\Delta,v^{\epsilon}}(t_k)|^{2p}\nn\\
&\quad+\int_{-\tau}^0|y^{\Delta,v^{\epsilon}}_{t_k}|^{2p}\mathrm d\nu_1(r)\Big)+KA^{p-i}_{\theta,\Delta}|Z^{\Delta,v^{\epsilon}}(t_k)|^{2(p-i)}(\epsilon|\sigma(y^{\Delta,v^{\epsilon}}_{t_k})\delta W_k|^{2i}+|\widetilde{\mathcal M}_k|^i),
\end{align*}
where we used $\Delta|v^{\epsilon}(t_k)|^2\leq M.$
Combining the estimates of the terms $\mathcal I^{\epsilon}_i,i=1,\ldots,p$, we arrive at
\begin{align*}
&\quad|Z^{\Delta,v^{\epsilon}}(t_{k+1})|^{2p}\leq (1+K\gamma_1\Delta)A^p_{\theta,\Delta}|Z^{\Delta,v^{\epsilon}}(t_{k})|^{2p}+K(\gamma_1)\Delta +\mathcal J_{k1}(R)+\mathcal J_{k2}\\
&\quad+K(M)\Delta |v^{\epsilon}(t_k)|^2\Big(|Z^{\Delta,v^{\epsilon}}(t_k)|^{2p}+|y^{\Delta,v^{\epsilon}}(t_k)|^{2p}+\int_{-\tau}^0|y^{\Delta,v^{\epsilon}}_{t_k}|^{2p}\mathrm d\nu_1(r)\Big)\\
&\quad+K|Z^{\Delta,v^{\epsilon}}(t_k)|^{2(p-1)}(\epsilon |\sigma(y^{\Delta,v^{\epsilon}}_{t_k})\delta W_k|^{2}+\widetilde {\mathcal M}_k)\\
&\quad+K\sum_{i=2}^pC_p^i|Z^{\Delta,v^{\epsilon}}(t_k)|^{2(p-i)}(\epsilon |\sigma(y^{\Delta,v^{\epsilon}}_{t_k})\delta W_k|^{2i}+|\widetilde {\mathcal M}_k|^i),
\end{align*}
where
% $\mathcal J_k(K,R)$ is the summation of terms with $\Delta,$ and $\mathcal J_k(\hat{\lambda},R)$ is the summation of terms with $\Delta^2,$ precisely, 
\begin{align*}
&\mathcal J_{k1}(R):=-\Big(\frac p2RA^{p-1}_{\theta,\Delta}-K(\gamma_1)\Big)\Delta|y^{\Delta,v^{\epsilon}}(t_k)|^{2p}\\
&\quad\quad\quad\quad\quad\quad+K(\gamma_1)\Delta \Big(\int_{-\tau}^0|y^{\Delta,v^{\epsilon}}_{t_k}(r)|^{2p}\mathrm d\nu_1(r)+\int_{-\tau}^0|y^{\Delta,v^{\epsilon}}_{t_k}(r)|^{2p}\mathrm d\nu_2(r)\Big),\\
&\mathcal J_{k2}:= K(\gamma_1)\Delta^2 \Big(|y^{\Delta,v^{\epsilon}}(t_k)|^{2p}+\int_{-\tau}^0|y^{\Delta,v^{\epsilon}}_{t_k}(r)|^{2p}\mathrm d\nu_1(r)\nn\\
&\quad\quad\quad\quad\quad\quad+\int_{-\tau}^0|y^{\Delta,v^{\epsilon}}_{t_k}(r)|^{2p}\mathrm d\nu_2(r)\Big).
\end{align*}
Then  for  $\gamma_2>0$, 
\begin{align*}
&e^{\gamma_2t_{k+1}}|Z^{\Delta,v^{\epsilon}}(t_{k+1})|^{2p}\!\leq\! |Z^{\Delta,v^{\epsilon}}(0)|^{2p}\!\!+\!\!\sum_{l=0}^k\!\big((1+K\gamma_1\Delta)A^p_{\theta,\Delta}e^{\gamma_2\Delta}\!\!-\!\!1\big)e^{\gamma_2t_{l}}|Z^{\Delta,v^{\epsilon}}(t_{l})|^{2p}\\
&\quad+K(\gamma_1)e^{\gamma_2 t_{k+1}}+\sum_{l=0}^ke^{\gamma_2t_{l+1}}\big(\mathcal J_{l1}(R)+\mathcal J_{l2}\big)+K(M)\sum_{l=0}^ke^{\gamma_2t_{l+1}} \Delta |v^{\epsilon}(t_l)|^2\times\nn\\
&\quad\Big(|Z^{\Delta,v^{\epsilon}}(t_l)|^{2p}+|y^{\Delta,v^{\epsilon}}(t_l)|^{2p}+\int_{-\tau}^0|y^{\Delta,v^{\epsilon}}_{t_l}(r)|^{2p}\mathrm d\nu_1(r)\Big)\\
&\quad+K\sum_{l=0}^ke^{\gamma_2t_{l+1}}|Z^{\Delta,v^{\epsilon}}(t_l)|^{2(p-1)}(\epsilon |\sigma(y^{\Delta,v^{\epsilon}}_{t_l})\delta W_l|^2+\widetilde{\mathcal M}_l)\\
&\quad+K\sum_{l=0}^ke^{\gamma_2t_{l+1}}\sum_{i=2}^pC_p^i|Z^{\Delta,v^{\epsilon}}(t_l)|^{2(p-i)}(\epsilon |\sigma(y^{\Delta,v^{\epsilon}}_{t_l})\delta W_l|^{2i}+|\widetilde{\mathcal M}_l|^i).
\end{align*}
Similar to the proof of \cref{l4.2}, take sufficiently small $\gamma_1$ and $\gamma_2$ independent of $\Delta$ such that $(1+K\gamma_1\Delta)A^p_{\theta,\Delta}e^{\gamma_2\Delta}-1\leq 0.$ Choose $R$ sufficiently large and further  choose $\Delta^*$ sufficiently small. We  obtain that for $\Delta\leq \Delta^*,$
\begin{align*}
\sum_{l=0}^ke^{\gamma_2t_{l+1}}\big(\mathcal J_{l1}(R)+\mathcal J_{l2}\big)\leq K\|\xi\|^{2p}.
\end{align*} 
Hence, 
\begin{align*}
&e^{\gamma_2t_{k+1}}|Z^{\Delta,v^{\epsilon}}(t_{k+1})|^{2p}\leq |Z^{\Delta,v^{\epsilon}}(0)|^{2p}+K e^{\gamma_2t_{k+1}}+K(M)\int_0^{t_{k+1}}|v^{\epsilon}(\lfloor s\rfloor)|^2\times\nn\\
&\quad\Big(\sup_{r\in[0,s]}e^{\gamma_2 \lfloor r\rfloor}|Z^{\Delta,v^{\epsilon}}(\lfloor r\rfloor)|^{2p}+ \sup_{r\in[0,s]}e^{\gamma_2 \lfloor r\rfloor}|y^{\Delta,v^{\epsilon}}(\lfloor r\rfloor)|^{2p}\Big)\mathrm ds\\
&\quad+K\sum_{l=0}^ke^{\gamma_2t_l}|Z^{\Delta,v^{\epsilon}}(t_l)|^{2(p-1)}(\epsilon |\sigma(y^{\Delta,v^{\epsilon}}_{t_l})\delta W_l|^2+\widetilde{\mathcal M}_l)\\
&\quad+K\sum_{l=0}^ke^{\gamma_2t_l}\sum_{i=2}^pC_p^i|Z^{\Delta,v^{\epsilon}}(t_l)|^{2(p-i)}(\epsilon |\sigma(y^{\Delta,v^{\epsilon}}_{t_l})\delta W_l|^{2i}+|\widetilde{\mathcal M}_l|^i).
\end{align*}
Moreover, it follows from \eqref{y&Z} that
%\begin{align*}
%(2a_3\theta\Delta)^{\frac{2p}{2+\ell}}|y^{\Delta,v^{\epsilon}}(t_{k})|^{2p}&\leq K+\frac{1}{2}(2a_3\theta\Delta)^{\frac{2p}{2+\ell}}|Z^{\Delta,v^{\epsilon}}(t_{k})|^{2p}\nn\\
%&\quad+(2a_4\theta\Delta)^{\frac{2p}{2+\ell}}\big(\int_{-\tau}^0|y^{\Delta,v^{\epsilon}}_{t_{k}}(r)|^{2}\mathrm d\nu_2(r)\big)^{\frac{2p}{2+\ell}},
%\end{align*}
%which leads to
%\begin{align*}
%&\quad %\sup_{t\in[0,t_{k}]}e^{\gamma_2\lfloor t\rfloor}|y^{\Delta,v^{\epsilon}}(\lfloor t\rfloor)|^{2p}\\
%&\leq 3^{p-1}\Big(\sup_{t\in[0,t_{k}]}e^{\gamma_2\lfloor t\rfloor}|Z^{\Delta,v^{\epsilon}}(\lfloor t\rfloor)|^{2p}+Ke^{\gamma_2t_{k}}\Delta^p+e^{\gamma_2\tau}(2a_4\theta\Delta)^p\sup_{t\in[0,t_{k}]}e^{\gamma_2\lfloor t\rfloor}|y^{\Delta,v^{\epsilon}}(\lfloor t\rfloor)|^{2p}\Big).
%\end{align*}
%Taking $\Delta\in(0,1)$ so that $\Delta\leq \frac{1}{2a_4\theta}(\frac{1}{2\times 3^{p-1}e^{\gamma_2\tau}})^{\frac 1p}=:\Delta_1$, then 
\begin{align*}
\sup_{t\in[0,t_{k}]}e^{\gamma_2\lfloor t\rfloor}|y^{\Delta,v^{\epsilon}}(\lfloor t\rfloor)|^{2p}\leq Ke^{\gamma_2t_k}+\sup_{t\in[0,t_{k}]}e^{\gamma_2\lfloor t\rfloor}|Z^{\Delta,v^{\epsilon}}(\lfloor t\rfloor)|^{2p}.
\end{align*}
We conclude
\begin{align}\label{derive1}
&\sup_{t\in[0,t_{k+1}]}e^{\gamma_2\lfloor t\rfloor}|Z^{\Delta,v^{\epsilon}}(\lfloor t\rfloor)|^{2p}+\sup_{t\in[0,t_{k+1}]}e^{\gamma_2\lfloor t\rfloor}|y^{\Delta,v^{\epsilon}}(\lfloor t\rfloor)|^{2p}\nn\\
&\leq Ke^{\gamma_2t_{k+1}}+K(1+\|\xi\|^{2p(\beta+1)})+K(M)\int_0^{t_{k+1}}|v^{\epsilon}(\lfloor s\rfloor)|^2\times\nn\\
&\quad\Big(\sup_{r\in[0,s]}e^{\gamma_2\lfloor r\rfloor}|Z^{\Delta,v^{\epsilon}}(\lfloor r\rfloor)|^{2p}\!+\!\sup_{r\in[0,s]}e^{\gamma_2\lfloor r\rfloor} |y^{\Delta,v^{\epsilon}}(\lfloor r\rfloor)|^{2p}\Big)\mathrm dt\nn\\
&\quad+K\sup_{j\leq k}\sum_{l=0}^je^{\gamma_2t_l}|Z^{\Delta,v^{\epsilon}}(t_l)|^{2(p-1)}(\epsilon |\sigma(y^{\Delta,v^{\epsilon}}_{t_l})\delta W_l|^2+\widetilde{\mathcal M}_l)\nn\\
&\quad+K\sum_{l=0}^ke^{\gamma_2t_l}\sum_{i=2}^pC_p^i|Z^{\Delta,v^{\epsilon}}(t_l)|^{2(p-i)}(\epsilon |\sigma(y^{\Delta,v^{\epsilon}}_{t_l})\delta W_l|^{2i}+|\widetilde{\mathcal M}_l|^i),
\end{align}
where we used $|Z^{\Delta,v^{\epsilon}}(0)|\leq K(1+\|\xi\|^{\beta+1})$, which was deduced by using \cref{a5}.
By virtue of the Gr\"onwall inequality, one has
\begin{align}\label{yield_eq1}
&\quad\sup_{t\in[0,t_{k+1}]}e^{\gamma_2\lfloor t\rfloor}|Z^{\Delta,v^{\epsilon}}(\lfloor t\rfloor)|^{2p}+\sup_{t\in[0,t_{k+1}]}e^{\gamma_2\lfloor t\rfloor}|y^{\Delta,v^{\epsilon}}(\lfloor t\rfloor)|^{2p}\nn\\
&\leq e^{K(M)M}\Big(K(1+\|\xi\|^{2p(\beta+1)})+Ke^{\gamma_2t_{k+1}}+K\sup_{j\leq k}\sum_{l=0}^je^{\gamma_2t_l}|Z^{\Delta,v^{\epsilon}}(t_l)|^{2(p-1)}\times\nn\\
&\quad(\epsilon |\sigma(y^{\Delta,v^{\epsilon}}_{t_l})\delta W_l|^2+\!\widetilde{\mathcal M}_l)+K\sum_{l=0}^ke^{\gamma_2t_l}\sum_{i=2}^pC_p^i|Z^{\Delta,v^{\epsilon}}(t_l)|^{2(p-i)}\times\nn\\
&\quad(\epsilon |\sigma(y^{\Delta,v^{\epsilon}}_{t_l})\delta W_l|^{2i}+|\widetilde{\mathcal M}_l|^i)\Big).
\end{align}
Applying the Burkholder--Davis--Gundy inequality and the Young inequality, we arrive at that for $\gamma_3\in(0,1)$,
\begin{align*}
&\quad\mathbb E\Big[\sup_{j\leq k}\sum_{l=0}^je^{\gamma_2t_l}|Z^{\Delta,v^{\epsilon}}(t_l)|^{2(p-1)}(\epsilon |\sigma(y^{\Delta,v^{\epsilon}}_{t_l})\delta W_l|^2+\widetilde{\mathcal M}_l)\Big]\\
&\leq \epsilon\Delta \mathbb E\Big[\sum_{l=0}^k e^{\gamma_2t_l}|Z^{\Delta,v^{\epsilon}}(t_l)|^{2(p-1)}|\sigma(y^{\Delta,v^{\epsilon}}_{t_l})|^2\Big]+
K\E\Big[\Big(\sum_{l=0}^k e^{2\gamma_2t_l}|Z^{\Delta,v^{\epsilon}}(t_l)|^{4(p-1)}\times\nn\\
&\quad\epsilon|\sigma (y^{\Delta,v^{\epsilon}}_{t_k})|^2\Big(|\sigma (y^{\Delta,v^{\epsilon}}_{t_k})|^2M+|Z^{\Delta,v^{\epsilon}}(t_k)|^2+|y^{\Delta,v^{\epsilon}}(t_k)|^2\Big)\Delta\Big)^{\frac12}\Big]\\
&\leq \sqrt{\epsilon} K(M,\gamma_3)\int_0^{t_{k+1}}\mathbb E\Big[\sup_{r\in[0,s]}e^{\gamma _3\lfloor r\rfloor}|Z^{\Delta,v^{\epsilon}}(\lfloor r\rfloor)|^{2p}+\sup_{r\in[0,s]}e^{\gamma _3\lfloor r\rfloor}|y^{\Delta,v^{\epsilon}}(\lfloor r\rfloor)|^{2p}\Big]\mathrm ds\\
&\quad+ \gamma_3 K(M)\Big(\sup_{t\in[0,t_{k+1}]} e^{\gamma _3\lfloor t\rfloor}|Z^{\Delta,v^{\epsilon}}(\lfloor t\rfloor)|^{2p}\!+\!\sup_{t\in[0,t_{k+1}]} e^{\gamma _3\lfloor t\rfloor}|y^{\Delta,v^{\epsilon}}(\lfloor t\rfloor)|^{2p}\Big)\!+\!K\|\xi\|^{2p}.
\end{align*}
Similarly, 
\begin{align*}
&\quad \mathbb E\Big[\sum_{l=0}^ke^{\gamma_2t_l}\sum_{i=2}^pC_p^i|Z^{\Delta,v^{\epsilon}}(t_l)|^{2(p-i)}(\epsilon |\sigma(y^{\Delta,v^{\epsilon}}_{t_l})\delta W_l|^{2i}+|\widetilde{\mathcal M}_l|^i)\Big]\\
&\leq \epsilon K(M)\int_0^{t_{k+1}}\mathbb E\Big[\sup_{r\in[0,s]}e^{\gamma _3\lfloor r\rfloor}|Z^{\Delta,v^{\epsilon}}(\lfloor r\rfloor)|^{2p}+\sup_{r\in[0,s]}e^{\gamma _3\lfloor r\rfloor}|y^{\Delta,v^{\epsilon}}(\lfloor r\rfloor)|^{2p}\Big]\mathrm ds\\
&\quad+\gamma_3 K(M)\Big(\sup_{t\in[0,t_{k+1}]} e^{\gamma _3\lfloor t\rfloor}|Z^{\Delta,v^{\epsilon}}(\lfloor t\rfloor)|^{2p}\!+\!\sup_{t\in[0,t_{k+1}]} e^{\gamma _3\lfloor t\rfloor}|y^{\Delta,v^{\epsilon}}(\lfloor t\rfloor)|^{2p}\Big)\!+\!K\|\xi\|^{2p}.
\end{align*}
Taking expectation on both sides of \eqref{yield_eq1}, inserting the above two inequalities into \eqref{yield_eq1}, letting $\gamma_3>0$ be sufficiently small, and using  the Gr\"onwall inequality again, we deduce  
\begin{align*}
&\quad\mathbb E\Big[\sup_{t\in[0,t_{k+1}]}e^{\gamma_2 \lfloor t\rfloor}|Z^{\Delta,v^{\epsilon}}(\lfloor t\rfloor)|^{2p}\Big]+\mathbb E\Big[\sup_{t\in[0,t_{k+1}]}e^{\gamma_2 \lfloor t\rfloor}|y^{\Delta,v^{\epsilon}}(\lfloor t\rfloor)|^{2p}\Big]
\\&\leq  K e^{K(M)M}(1+\|\xi\|^{2p(\beta+1)}+e^{\gamma_2t_{k+1}})e^{\sqrt{\epsilon} K(M)t_{k+1}e^{K(M)M}},
\end{align*}
which implies
\begin{align*}
&\quad\mathbb E\Big[\sup_{t\in[0,t_{k+1}]}|Z^{\Delta,v^{\epsilon}}(\lfloor t\rfloor)|^{2p}\Big]+\mathbb E\Big[\sup_{t\in[0,t_{k+1}]}|y^{\Delta,v^{\epsilon}}(\lfloor t\rfloor)|^{2p}\Big]
\\&\leq  K(M)(1+\|\xi\|^{2p(\beta+1)})e^{\sqrt{\epsilon} K(M)t_{k+1}}.
\end{align*}
%Taking supremum with respect to $\epsilon\in(0,1)$ on both sides of the above equation 

 We finishes the proof of \underline{Step 1}. 

\underline{Step 2.} We show that for $\Delta\in[\Delta^*,1],$
\begin{align*}
 &\quad\mathbb E\Big[\sup_{t\in[0,t_{k+1}]}|Z^{\Delta,v^{\epsilon}}(\lfloor t\rfloor)|^{2p}\Big]+\mathbb E\Big[\sup_{t\in[0,t_{k+1}]}|y^{\Delta,v^{\epsilon}}(\lfloor t\rfloor)|^{2p}\Big]
\\&\leq  K(M)(1+\|\xi\|^{2p(\beta+1)})e^{\sqrt{\epsilon} K(M)t_{k+1}}.
\end{align*} 
The proof is similar to that of \underline{Step 2} in Lemma \ref{l4.2}, and thus we only give the sketch of the proof.

 Similar  to the estimate of $\mathcal I^{\epsilon}_i$, we have 
\begin{align*}
&\mathcal I_1^{\epsilon}\leq \gamma_1A^p_{\theta,\Delta}|Z^{\Delta,v^{\epsilon}}(t_k)|^{2p}+K(\gamma_1)\Delta-\frac12R\Delta A^{p-1}_{\theta,\Delta}|y^{\Delta,v^{\epsilon}}(t_k)|^{2p}\\
&\quad+K(\gamma_1)\Delta^p\Big(|y^{\Delta,v^{\epsilon}}(t_k)|^{2p}+\int_{-\tau}^0|y^{\Delta,v^{\epsilon}}_{t_k}(r)|^{2p}\mathrm d\nu_2(r)+\int_{-\tau}^0|y^{\Delta,v^{\epsilon}}_{t_k}(r)|^{2p}\mathrm d\nu_1(r)
\Big)\\
&\quad  +K(M)\Delta |v^{\epsilon}(t_k)|^2\Big(|Z^{\Delta,v^{\epsilon}}(t_k)|^{2p}+|y^{\Delta,v^{\epsilon}}(t_k)|^{2p}+\int_{-\tau}^0|y^{\Delta,v^{\epsilon}}_{t_k}(r)|^{2p}\mathrm d\nu_1(r)\Big)\\
&\quad+A^{p-1}_{\theta,\Delta}|Z^{\Delta,v^{\epsilon}}(t_k)|^{2(p-1)}(\epsilon|\sigma(y^{\Delta,v^{\epsilon}}_{t_k})\delta W_k|^2+2\widetilde{\mathcal M}_k),
\end{align*}
and for $i\in\{2,\ldots,p\},$
\begin{align*}
&\quad\mathcal I^{\epsilon}_i\leq  \gamma_1 A^p_{\theta,\Delta}|Z^{\Delta,v^{\epsilon}}(t_k)|^{2p}
%+K\Delta \Big(1+|y^{\Delta,v^{\epsilon}}(t_k)|^{2p}+ \int_{-\tau}^0|y^{\Delta,v^{\epsilon}}_{t_k}(r)|^{2p}\mathrm d\nu_1(r)\Big)\\
+K(\gamma_1)\Delta^p\Big(|y^{\Delta,v^{\epsilon}}(t_k)|^{2p}+ \int_{-\tau}^0|y^{\Delta,v^{\epsilon}}_{t_k}(r)|^{2p}\mathrm d\nu_1(r)\nn\\
&\quad+ \int_{-\tau}^0|y^{\Delta,v^{\epsilon}}_{t_k}(r)|^{2p}\mathrm d\nu_2(r)\Big)+K(M)\Delta |v^{\epsilon}(t_k)|^2\Big(|Z^{\Delta,v^{\epsilon}}(t_k)|^{2p}+|y^{\Delta,v^{\epsilon}}(t_k)|^{2p}\nn\\
&\quad+\int_{-\tau}^0|y^{\Delta,v^{\epsilon}}_{t_k}|^{2p}\mathrm d\nu_1(r)\Big)+KA^{p-i}_{\theta,\Delta}|Z^{\Delta,v^{\epsilon}}(t_k)|^{2(p-i)}(\epsilon|\sigma(y^{\Delta,v^{\epsilon}}_{t_k})\delta W_k|^{2i}+|\widetilde{\mathcal M}_k|^i).
\end{align*}
It follows from $(|a|+|b|)^p\ge |a|^p+|b|^p$ for $p\mathbb N_+$ that
\begin{align*}
&\quad|Z^{\Delta,v^{\epsilon}}(t_{k+1})|^{2p}+a_3^p\Delta^p|y^{\Delta,v^{\epsilon}}(t_{k+1})|^{p(2+l)}\\
&\leq (1+K\gamma_1)A^p_{\theta,\Delta}|Z^{\Delta,v^{\epsilon}}(t_k)|^{2p}+K\Delta+\widetilde{\mathcal J}_{k1}(R)+\widetilde{\mathcal J}_{k2}\\
&\quad+K\Delta |v^{\epsilon}(t_k)|^2\Big(|Z^{\Delta,v^{\epsilon}}(t_k)|^{2p}+|y^{\Delta,v^{\epsilon}}(t_k)|^{2p}+\int_{-\tau}^0|y^{\Delta,v^{\epsilon}}_{t_k}|^{2p}\mathrm d\nu_1(r)\Big)\\
&\quad+K|Z^{\Delta,v^{\epsilon}}(t_k)|^{2(p-1)}(\epsilon|\sigma(y^{\Delta,v^{\epsilon}}_{t_k})\delta W_k|^2+2\widetilde{\mathcal M}_k)\\
&\quad+K\sum_{i=2}^pC^i_p|Z^{\Delta,v^{\epsilon}}(t_k)|^{2(p-i)}(\epsilon|\sigma(y^{\Delta,v^{\epsilon}}_{t_k})\delta W_k|^{2i}+|\widetilde{\mathcal M}_k|^i),
\end{align*}
where 
\begin{align*}
&\widetilde{\mathcal J}_{k1}(R)=\!-\frac12RA^{p-1}_{\theta,\Delta}\Delta|y^{\Delta,v^{\epsilon}}(t_k)|^{2p},\\
%+K\Delta \int_{-\tau}^0\mathbb E[|y^{\Delta,v^{\epsilon}}_{t_k}(r)|^{2p}]\mathrm d\nu_1(r),\\
&\widetilde{\mathcal J}_{k2}=\!K(\gamma_1) (R)\Delta^p\big(|y^{\Delta,v^{\epsilon}}(t_k)|^{2p}\!\!+\!\!\int_{-\tau}^0\!|y^{\Delta,v^{\epsilon}}_{t_k}(r)|^{2p}\mathrm d\nu_1(r)\!\!+\!\!\int_{-\tau}^0\!|y^{\Delta,v^{\epsilon}}_{t_k}(r)|^{2p}\mathrm d\nu_2(r)\big).
\end{align*}
By taking $\gamma_1$ and $\gamma_2$ sufficiently small such that $(1+K\gamma_1)A^p_{\theta,\Delta^*}e^{\gamma_2}-1\leq 0,$ we obtain that for $\Delta\in(\Delta^*,1]$,
\begin{align*}
&\quad e^{\gamma_2t_{k+1}}|Z^{\Delta,v^{\epsilon}}(t_{k+1})|^{2p}
\leq |Z^{\Delta,v^{\epsilon}}0)|^{2p}+Ke^{\gamma_2t_{k+1}}+\sum_{l=0}^ke^{\gamma_2t_{l+1}}\widetilde {\mathcal J}_{l1}(R)\nn\\
&\quad+\sum_{l=0}^ke^{\gamma_2t_{l+1}}(\widetilde {\mathcal J}_{l2}-a_3^p\Delta^p|y^{\Delta,v^{\epsilon}}(t_l)|^{p(2+l)})+K(M)\sum_{l=0}^ke^{\gamma_2t_l} \Delta |v^{\epsilon}(t_l)|^2\times\nn\\
&\quad\Big(|Z^{\Delta,v^{\epsilon}}(t_l)|^{2p}+|y^{\Delta,v^{\epsilon}}(t_l)|^{2p}+\int_{-\tau}^0|y^{\Delta,v^{\epsilon}}_{t_l}(r)|^{2p}\mathrm d\nu_1(r)\Big)\nn\\
&\quad+K\sum_{l=0}^ke^{\gamma_2t_l}|Z^{\Delta,v^{\epsilon}}(t_l)|^{2(p-1)}(\epsilon|\sigma(y^{\Delta,v^{\epsilon}}_{t_l})\delta W_l|^2+2\widetilde{\mathcal M}_l)\\
&\quad+K\sum_{l=0}^k\sum_{i=2}^pC^i_p|Z^{\Delta,v^{\epsilon}}(t_l)|^{2(p-i)}(\epsilon|\sigma(y^{\Delta,v^{\epsilon}}_{t_l})\delta W_l|^{2i}+|\widetilde{\mathcal M}_l|^i).
\end{align*}
Then by the similar procedure to the proofs of \eqref{l4.2.13} and \eqref{l4.2.14}, and choosing a sufficiently large number $R$ independent of $\Delta$, we conclude  that
\begin{align*}
&\quad\sum_{l=0}^ke^{\gamma_2t_{l+1}}\widetilde {\mathcal J}_{l1}(R)+\sum_{l=0}^ke^{\gamma_2t_{l+1}}(\widetilde {\mathcal J}_{l2}-a_3^p\Delta^p|y^{\Delta,v^{\epsilon}}(t_l)|^{p(2+l)})\\
&\leq K(\|\xi\|^{2p}+1).
\end{align*}
The remaining proof is similar to that of \underline{Step 1} and is omitted. 

Combining \underline{Steps 1--2}  finishes the proof. 
\end{proof}

\begin{prop}\label{prop_cond}
Let $M>0$ and $\{v^{\epsilon}\}_{\epsilon\in(0,1)} \subset \mathcal P_M$ such that $v^{\epsilon}\overset{d}{\underset{\epsilon\to0}\longrightarrow}v$ as $\mathcal S_M$-valued random variables. Under Assumptions \ref{a1}, \ref{a2}, \ref{a4}, and \ref{a5}, 
it holds that for any $\Delta\in(0,\frac{1}{4a_2\theta}]$,
$$\mathcal G^{\Delta,\epsilon}\Big(\sqrt{\epsilon}W+\int_0^{\cdot}v^{\epsilon}(\lfloor s\rfloor)\mathrm ds\Big)\overset{d}{\underset{\epsilon\to0}\longrightarrow}\mathcal G^{\Delta}\Big(\int_0^{\cdot}v(\lfloor s\rfloor)\mathrm ds\Big).$$
\end{prop}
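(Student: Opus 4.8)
The plan is to supply the ``convergence'' half of the sufficient condition of \cite{ZhengZH} for the Laplace principle, and the key reduction is to compare $y^{\Delta,v^\epsilon}=\mathcal G^{\Delta,\epsilon}(\sqrt\epsilon W+\int_0^\cdot v^\epsilon(\lfloor s\rfloor)\mathrm ds)$ not directly with $w^{\Delta,v}=\mathcal G^{\Delta}(\int_0^\cdot v(\lfloor s\rfloor)\mathrm ds)$, but with the skeleton solution $w^{\Delta,v^\epsilon}=\mathcal G^\Delta(\int_0^\cdot v^\epsilon(\lfloor s\rfloor)\mathrm ds)$ driven by the \emph{same} random control. I would write
$$y^{\Delta,v^\epsilon}-w^{\Delta,v}=\big(y^{\Delta,v^\epsilon}-w^{\Delta,v^\epsilon}\big)+\big(w^{\Delta,v^\epsilon}-w^{\Delta,v}\big),$$
and prove separately that (i) $w^{\Delta,v^\epsilon}\to w^{\Delta,v}$ in distribution in $\mathcal C_\xi$, and (ii) $\|y^{\Delta,v^\epsilon}-w^{\Delta,v^\epsilon}\|_{\mathcal C_\xi}\to0$ in probability. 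Granting these, the converging-together lemma in the Polish space $\mathcal C_\xi$ (if $X_\epsilon\to X$ in distribution and the distance between $X_\epsilon$ and $Y_\epsilon$ tends to $0$ in probability, then $Y_\epsilon\to X$ in distribution) yields $y^{\Delta,v^\epsilon}\to w^{\Delta,v}$ in distribution, which is the assertion.

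For (i) I would invoke the continuity of the skeleton map already established inside Proposition \ref{propKM}: the mapping $v\mapsto\mathcal G^\Delta(\int_0^\cdot v(\lfloor s\rfloor)\mathrm ds)$ is continuous from $\mathcal S_M$, equipped with the weak topology, into $\mathcal C_\xi$. Since $w^{\Delta,v^\epsilon}$ is a deterministic continuous image of $v^\epsilon$ and $v^\epsilon\to v$ in distribution as $\mathcal S_M$-valued random variables, the continuous mapping theorem delivers $w^{\Delta,v^\epsilon}\to w^{\Delta,v}$ in distribution immediately.

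The crux is (ii), the uniform-in-time smallness of the difference between the stochastic controlled process and its noiseless counterpart. Working with the auxiliary processes $Z^{\Delta,v^\epsilon}$ of \eqref{auxi2} and $z^{\Delta,v^\epsilon}$ of \eqref{auxi} (the latter driven by $v^\epsilon$), the two recursions differ only through the drift and diffusion evaluated at $y^{\Delta,v^\epsilon}$ versus $w^{\Delta,v^\epsilon}$ and through the extra martingale increment $\sqrt\epsilon\,\sigma(y^{\Delta,v^\epsilon}_{t_k})\delta W_k$. I would run an energy estimate on $|Z^{\Delta,v^\epsilon}(t)-z^{\Delta,v^\epsilon}(t)|^2$ of exactly the type used in Lemma \ref{bound_control} and Proposition \ref{propKM}: the one-sided dissipativity of Assumption \ref{a2}, with the parameter choice as in \eqref{varep}, absorbs the implicit $\theta b$-terms and produces a contraction factor; the Lipschitz property (Assumption \ref{a1}) and the polynomial bound (Assumption \ref{a5}) control the $\sigma$- and $b$-differences; and the additional noise term contributes a martingale of zero mean and a quadratic-variation term of order $\epsilon$. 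After taking expectations, applying the discrete Gr\"onwall inequality and using $\|v^\epsilon\|_{L^2_\Delta}^2\le M$, one arrives at a bound of the form
$$\mathbb E\Big[\sup_{0\le t_i\le t_k}|Z^{\Delta,v^\epsilon}(t_i)-z^{\Delta,v^\epsilon}(t_i)|^2\Big]\le \epsilon\, K(M)\,(1+\|\xi\|^{2(\beta+1)})\,e^{\gamma t_k}$$
for some rate $\gamma>0$ independent of $\Delta$ and $\epsilon$, where the moment bounds of Lemma \ref{estimate_y2} (whose growth exponent $\sqrt\epsilon K(M)$ is negligible) guarantee that all right-hand side quantities are finite and uniformly controlled. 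The relation \eqref{3p2.3}, applied once to $y^{\Delta,v^\epsilon}$ and once to $w^{\Delta,v^\epsilon}$, transfers this to the genuine differences $|y^{\Delta,v^\epsilon}(t_i)-w^{\Delta,v^\epsilon}(t_i)|$, and inserting it into the weighted norm gives
$$\mathbb E\big[\|y^{\Delta,v^\epsilon}-w^{\Delta,v^\epsilon}\|_{\mathcal C_\xi}\big]\le\sum_{k=1}^{\infty}e^{-\mathfrak a t_k}\Big(\mathbb E\big[\sup_{t\in[-\tau,t_k]}|y^{\Delta,v^\epsilon}(t)-w^{\Delta,v^\epsilon}(t)|^2\big]\Big)^{1/2}\Delta.$$

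The main obstacle is precisely (ii): since the rate function and the scheme both live on the infinite time horizon, the estimate must be genuinely time-uniform after weighting. The delicate point is to keep the exponential growth rate $\gamma$ of the difference bound strictly below the damping rate $2\mathfrak a$ built into $\|\cdot\|_{\mathcal C_\xi}$, so that summation against $e^{-\mathfrak a t_k}$ converts the $O(\sqrt\epsilon)$ pointwise smallness into $O(\sqrt\epsilon)$ smallness of the entire norm; this is exactly where the dissipativity-driven contraction of Assumption \ref{a2} and the step-size restriction $\Delta\le\tfrac{1}{4a_2\theta}$ are essential, as in the proofs of Lemma \ref{bound_control} and Proposition \ref{propKM}. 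Once the series is summed, Markov's inequality upgrades the resulting $L^1$ bound to convergence in probability, establishing (ii) and completing the argument.
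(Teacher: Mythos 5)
Your proposal is correct in substance but follows a genuinely different route from the paper. The paper never introduces the intermediate skeleton $w^{\Delta,v^{\epsilon}}$: it compares $y^{\Delta,v^{\epsilon}}$ with $w^{\Delta,v}$ directly, reduces convergence in distribution to $\lim_{\epsilon\to0}\mathbb E\big[\|y^{\Delta,v^{\epsilon}}-w^{\Delta,v}\|_{\mathcal C_{\xi}}\wedge 1\big]=0$ via the Kantorovich--Rubinstein duality, and then runs a single energy estimate on $Z^{\Delta,v^{\epsilon}}-z^{\Delta,v}$ in which the control mismatch appears through $E^{\epsilon}(t)=\int_0^t\sigma(w^{\Delta,v}_{\lfloor r\rfloor})(v^{\epsilon}(\lfloor r\rfloor)-v(\lfloor r\rfloor))\mathrm dr$; this term is tamed by an integration by parts (exactly as in the proof of Proposition \ref{propKM}) and then killed using $v^{\epsilon}\overset{d}{\to}v$. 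Your decomposition $y^{\Delta,v^{\epsilon}}-w^{\Delta,v}=(y^{\Delta,v^{\epsilon}}-w^{\Delta,v^{\epsilon}})+(w^{\Delta,v^{\epsilon}}-w^{\Delta,v})$ instead pushes the whole burden of $v^{\epsilon}\to v$ into the continuity of the skeleton map (already proved in Proposition \ref{propKM}) plus the continuous mapping theorem, and leaves in part (ii) only a noise-vanishing estimate in which both processes share the \emph{same} control $v^{\epsilon}$, so no $E^{\epsilon}$ term and no integration by parts arise; the conclusion then follows from the standard converging-together (Slutsky) lemma. What your route buys is modularity and a simpler estimate (ii) of genuine order $O(\epsilon)$ for the squared difference; what the paper's route buys is that it avoids running the skeleton equation with a random, adapted control and needs only the lighter continuity fact for the linear functional $E^{\epsilon}$ rather than the full nonlinear map $\mathcal G^{\Delta}$.

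One caveat on your summation mechanism in (ii). A crude Gr\"onwall bound for the difference gives a factor of the form $e^{K(M+1)t_k}$ (this is what the paper's own computation produces), whose rate need not lie below $2\mathfrak a$; to make your series $\sum_k e^{-\mathfrak a t_k}\big(\mathbb E[\sup|\cdot|^2]\big)^{1/2}\Delta$ converge you must absorb all constant-rate terms into the dissipativity of Assumption \ref{a2} (choosing the Young-inequality slack small, as in \eqref{varep}) so that the only non-decaying Gr\"onwall input is $K\int_0^{\infty}|v^{\epsilon}(\lfloor s\rfloor)|^2\mathrm ds\leq KM$, and you must also account for the moment growth $e^{\sqrt{\epsilon}K(M)t_k}$ from Lemma \ref{estimate_y2}. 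This is achievable, but a more robust alternative --- and essentially what the paper does --- is to work with $\|\cdot\|_{\mathcal C_{\xi}}\wedge1$: split the series at a large index $N$, control the tail uniformly in small $\epsilon$ by the moment bounds (whose growth rate $\sqrt{\epsilon}K(M)$ is eventually below $\mathfrak a$), and use your finite-horizon estimate, with whatever exponential rate, only on the finitely many remaining terms. With either repair your argument closes.
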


\begin{proof}
To show $y^{\Delta,v^{\epsilon}}\overset{d}{\underset{\epsilon\to0}\longrightarrow} w^{\Delta,v}$ as  $v^{\epsilon}\overset{d}{\underset{\epsilon\to0}\longrightarrow}v$,
%$\tilde y^{\Delta}=w^{\Delta,\nu}$ and $\tilde z^{\Delta}=z^{\Delta,\nu},$ 
by the duality formula for the Kantorovich–Rubinstein distance (see e.g. \cite[Remark 6.5]{Villani}),
it suffices to prove that 
$\E[\|y^{\Delta,v^{\epsilon}}-w^{\Delta,\nu}\|_{\mathcal C_{\xi}}\wedge1]\to 0$ as $\epsilon\to0$.
 %Since for any $\delta>0$, there is $N>0$ such that $\sum_{k=N+1}^{\infty}2^{-k}\leq \delta/2$, then we have 
%\begin{align*}
%\mathbb P(\|y^{\Delta,v^{\epsilon}}-w^{\Delta,\nu}\|_{\mathcal C_{\xi}}>\delta)&\leq \mathbb P\Big(\sum_{k=1}^N2^{-k}(\sup_{t\in[0,t_k]}|y^{\Delta,v^{\epsilon}}(t)-w^{\Delta,v}(t)|\wedge 1)>\frac{\delta}{2}\Big)\\
%&\leq \sum_{k=1}^N \mathbb P\Big(2^{-k}(\sup_{t\in[0,t_k]}|y^{\Delta,v^{\epsilon}}(t)-w^{\Delta,v}(t)|\wedge 1)>\frac{\delta}{2N}\Big).
%\end{align*}
%Hence, i
Note that
\begin{align*}
&\quad\lim_{\epsilon\rightarrow 0}\E\big[\|y^{\Delta,v^{\epsilon}}-w^{\Delta,\nu}\|_{\mathcal C_{\xi}}\wedge1\big]\nn\\
&\leq \sum_{k=1}^{\infty}\lim_{\epsilon\rightarrow 0}\E\Big[\Big(\Delta e^{-\mathfrak at_{k}}\sup_{t\in[0,t_k]}|y^{\Delta,v^{\epsilon}}(t)-w^{\Delta,v}(t)|\Big)\wedge 1\Big]\nn\\
&\leq \sum_{k=1}^{\infty}\Delta e^{-\mathfrak at_{k}}\lim_{\epsilon\rightarrow 0}\E\Big[\Big(\sup_{t\in[0,t_k]}|y^{\Delta,v^{\epsilon}}(t)-w^{\Delta,v}(t)|\Big)\wedge(e^{\mathfrak at_{k}}\Delta^{-1})\Big].
\end{align*}
Hence, 
we  only need to show that for any  $k\in\mathbb N_+$, $$\lim_{\epsilon\rightarrow 0}\E\Big[\sup_{t\in[0,t_k]}|y^{\Delta,v^{\epsilon}}(t)-w^{\Delta,v}(t)|\wedge(e^{\mathfrak at_{k}}\Delta^{-1})\Big]=0.$$

It follows from  definitions of $Z^{\Delta, v^{\epsilon}}$ and $z^{\Delta, v},$ and  the It\^o formula 
that  
\begin{align*}
&\quad \mathrm d|Z^{\Delta, v^{\epsilon}}(t)-z^{\Delta, v}(t)|^2=\Big[2\big\langle Z^{\Delta, v^{\epsilon}}(t)-z^{\Delta, v}(t), b(y^{\Delta, v^{\epsilon}}_{\lfloor t\rfloor})-b(w^{\Delta, v}_{\lfloor t\rfloor})\big\rangle\nn\\
&\quad+\epsilon |\sigma(y^{\Delta, v^{\epsilon}}_{\lfloor t\rfloor})|^2+2\big\langle Z^{\Delta, v^{\epsilon}}(t)-z^{\Delta, v}(t), \sigma(y^{\Delta, v^{\epsilon}}_{\lfloor t\rfloor})v^{\epsilon}(\lfloor t\rfloor)-\sigma(w^{\Delta, v}_{\lfloor t\rfloor})v (\lfloor t\rfloor)\big\rangle \big]\mathrm dt\nn\\
&\quad+2\sqrt{\epsilon}\<Z^{\Delta, v^{\epsilon}}(t)-z^{\Delta, v}(t),\sigma(y^{\Delta, v^{\epsilon}}_{\lfloor t\rfloor})\mathrm dW(t)\>.
\end{align*}
On account of 
\begin{align*}
&Z^{\Delta, v^{\epsilon}}(t)-z^{\Delta, v}(t)= Z^{\Delta, v^{\epsilon}}(\lfloor t\rfloor)-z^{\Delta, v}(\lfloor t\rfloor)
+\big(b(y^{\Delta, v^{\epsilon}}_{\lfloor t\rfloor})-b(w^{\Delta, v}_{\lfloor t\rfloor})\big)(t-\lfloor t\rfloor)\nn\\
&\quad+\sqrt{\epsilon}\sigma(y^{\Delta, v^{\epsilon}}_{\lfloor t\rfloor})\big(W(t)-W(\lfloor t\rfloor)\big)+\big(\sigma(y^{\Delta, v^{\epsilon}}_{\lfloor t\rfloor})v^{\epsilon}(\lfloor t\rfloor)-\sigma(w^{\Delta, v}_{\lfloor t\rfloor})v (\lfloor t\rfloor)\big)(t-\lfloor t\rfloor)
\end{align*}
and
\begin{align*}
Z^{\Delta, v^{\epsilon}}(\lfloor t\rfloor)-z^{\Delta, v}(\lfloor t\rfloor)=y^{\Delta, v^{\epsilon}}(\lfloor t\rfloor)-w^{\Delta, v}(\lfloor t\rfloor)-\theta\Delta\big(b(y^{\Delta, v^{\epsilon}}_{\lfloor t\rfloor})-b(w^{\Delta, v}_{\lfloor t\rfloor})\big),
\end{align*}
we deduce
\begin{align*}
&\quad \mathrm d|Z^{\Delta, v^{\epsilon}}(t)-z^{\Delta, v}(t)|^2=\Big[2\langle y^{\Delta, v^{\epsilon}}(\lfloor t\rfloor)-w^{\Delta, v}(\lfloor t\rfloor),b(y^{\Delta, v^{\epsilon}}_{\lfloor t\rfloor})-b(w^{\Delta, v}_{\lfloor t\rfloor})\rangle\nn\\
&\quad-\!2\theta\Delta |b(y^{\Delta, v^{\epsilon}}_{\lfloor t\rfloor})\!-\!b(w^{\Delta, v}_{\lfloor t\rfloor})|^2\!
+\!2\<(b(y^{\Delta, v^{\epsilon}}_{\lfloor t\rfloor})\!-\!b(w^{\Delta, v}_{\lfloor t\rfloor}))(t-\lfloor t\rfloor),b(y^{\Delta, v^{\epsilon}}_{\lfloor t\rfloor})\!-\!b(w^{\Delta, v}_{\lfloor t\rfloor})\>\nn\\
&\quad+2\big\<\sigma(y^{\Delta, v^{\epsilon}}_{\lfloor t\rfloor})v^{\epsilon}(\lfloor t\rfloor)-\sigma(w^{\Delta, v}_{\lfloor t\rfloor})v (\lfloor t\rfloor),(b(y^{\Delta, v^{\epsilon}}_{\lfloor t\rfloor})-b(w^{\Delta, v}_{\lfloor t\rfloor}))(t-\lfloor t\rfloor)\nn\\
&\quad+Z^{\Delta, v^{\epsilon}}(t)
-z^{\Delta, v}(t)\big\>+\epsilon |\sigma(y^{\Delta, v^{\epsilon}}_{\lfloor t\rfloor})|^2\Big]\mathrm dt+2\sqrt{\epsilon}\big{\<}b(y^{\Delta, v^{\epsilon}}_{\lfloor t\rfloor})-b(w^{\Delta, v}_{\lfloor t\rfloor}),\nn\\
&\quad\sigma (y^{\Delta, v^{\epsilon}}_{\lfloor t\rfloor})(W(t)-W(\lfloor t\rfloor))\big{\>}\mathrm dt+2\sqrt{\epsilon}\<Z^{\Delta, v^{\epsilon}}(t)-z^{\Delta, v}(t),\sigma(y^{\Delta, v^{\epsilon}}_{\lfloor t\rfloor})\mathrm dW(t)\>.
\end{align*}
By the Young inequality, we have
\begin{align*}
&\quad |Z^{\Delta, v^{\epsilon}}(s)-z^{\Delta, v}(s)|^2
%&=\int_{0}^{s}\gamma e^{\gamma t}|Z^{\Delta, v^{\epsilon}}(t)-z^{\Delta, v}(t)|^2\mathrm dt+\int_{0}^{s} e^{\gamma t}\mathrm d|Z^{\Delta, v^{\epsilon}}(t)-z^{\Delta, v}(t)|^2\nn\\
=\int_{0}^{s} 2\big\langle y^{\Delta, v^{\epsilon}}(\lfloor t\rfloor)-w^{\Delta, v}(\lfloor t\rfloor),b(y^{\Delta, v^{\epsilon}}_{\lfloor t\rfloor})-b(w^{\Delta, v}_{\lfloor t\rfloor})\big\rangle\nn\\
&\quad+2\big(t-\lfloor t\rfloor-\theta\Delta\big)\big|b(y^{\Delta, v^{\epsilon}}_{\lfloor t\rfloor})-b(w^{\Delta, v}_{\lfloor t\rfloor})\big|^2\Big]\mathrm dt+\int_{0}^{s} 2\big\<\sigma(y^{\Delta, v^{\epsilon}}_{\lfloor t\rfloor})v^{\epsilon}(\lfloor t\rfloor)\nn\\
&\quad-\sigma(w^{\Delta, v}_{\lfloor t\rfloor})v (\lfloor t\rfloor),\big(b(y^{\Delta, v^{\epsilon}}_{\lfloor t\rfloor})-b(w^{\Delta, v}_{\lfloor t\rfloor})\big)(t-\lfloor t\rfloor)+Z^{\Delta, v^{\epsilon}}(t)-z^{\Delta, v}(t)\big\>\nn\\
&\quad+\epsilon |\sigma(y^{\Delta, v^{\epsilon}}_{\lfloor t\rfloor})|^2\mathrm dt
+2\sqrt{\epsilon}\int_{0}^{s}\big{\<}b(y^{\Delta, v^{\epsilon}}_{\lfloor t\rfloor})-b(w^{\Delta, v}_{\lfloor t\rfloor}),\sigma (y^{\Delta, v^{\epsilon}}_{\lfloor t\rfloor})(W(t)-W(\lfloor t\rfloor))\big{\>}\mathrm dt\nn\\
&\quad+2\sqrt{\epsilon}\int_{0}^{s}\<Z^{\Delta, v^{\epsilon}}(t)-z^{\Delta, v}(t),\sigma(y^{\Delta, v^{\epsilon}}_{\lfloor t\rfloor})\mathrm dW(t)\>\nn\\
&\leq\int_{0}^{s} 2\big\langle y^{\Delta, v^{\epsilon}}(\lfloor t\rfloor)-w^{\Delta, v}(\lfloor t\rfloor),b(y^{\Delta, v^{\epsilon}}_{\lfloor t\rfloor})-b(w^{\Delta, v}_{\lfloor t\rfloor})\big\rangle+|\sigma(y^{\Delta, v^{\epsilon}}_{\lfloor t\rfloor})-\sigma(w^{\Delta, v}_{\lfloor t\rfloor})|^2\nn\\
&\quad+\epsilon |\sigma(y^{\Delta, v^{\epsilon}}_{\lfloor t\rfloor})|^2\mathrm dt+K\int_{0}^{s}|v^{\epsilon}(\lfloor t\rfloor)|^2\Big|\frac{1}{\theta}\big(y^{\Delta, v^{\epsilon}}(\lfloor t\rfloor)-w^{\Delta, v}(\lfloor t\rfloor)\nn\\
&\quad+Z^{\Delta, v^{\epsilon}}(\lfloor t\rfloor)-z^{\Delta, v}(\lfloor t\rfloor)\big)+Z^{\Delta, v^{\epsilon}}(t)-z^{\Delta, v}(t)\Big|^2\mathrm dt\nn\\
&\quad+\int_{0}^{s} 2\big\<\sigma(w^{\Delta, v}_{\lfloor t\rfloor})\big(v^{\epsilon}(\lfloor t\rfloor)-v (\lfloor t\rfloor)\big), \big(b(y^{\Delta, v^{\epsilon}}_{\lfloor t\rfloor})-b(w^{\Delta, v}_{\lfloor t\rfloor})\big)(t-\lfloor t\rfloor)+Z^{\Delta, v^{\epsilon}}(t)\nn\\
&\quad-z^{\Delta, v}(t)\big\>\mathrm dt
+2\sqrt{\epsilon}\int_{0}^{s}\big{\<}b(y^{\Delta, v^{\epsilon}}_{\lfloor t\rfloor})-b(w^{\Delta, v}_{\lfloor t\rfloor}),\sigma (y^{\Delta, v^{\epsilon}}_{\lfloor t\rfloor})(W(t)-W(\lfloor t\rfloor))\big{\>}\mathrm dt\nn\\
&\quad+2\sqrt{\epsilon}\int_{0}^{s}\<Z^{\Delta, v^{\epsilon}}(t)-z^{\Delta, v}(t),\sigma(y^{\Delta, v^{\epsilon}}_{\lfloor t\rfloor})\mathrm dW(t)\>,
\end{align*}
where in the first inequality we used $\int_{t_k}^{t_{k+1}}2(t-\lfloor t\rfloor -\theta\Delta)\mathrm dt=(1-\theta)\Delta^2\leq 0$.
It follows from Assumptions \ref{a1}--\ref{a2} and 
\begin{align*}
\int_{0}^s\int_{-\tau}^0|y^{\Delta, v^{\epsilon}}_{\lfloor t\rfloor}(r)\!-\!w^{\Delta, v}_{\lfloor t\rfloor}(r)|^2\mathrm d\nu_i(r)\mathrm  dt
\leq \int_{0}^s |y^{\Delta, v^{\epsilon}}(\lfloor t\rfloor)-w^{\Delta, v}(\lfloor t\rfloor)|^2\mathrm dt\quad \forall\, i=1,2
\end{align*}
that
\begin{align*}
&\quad |Z^{\Delta, v^{\epsilon}}(s)-z^{\Delta, v}(s)|^2\nn\\
&\leq\int_{0}^{s}\Big[
-2(a_1-a_2-L)|y^{\Delta, v^{\epsilon}}(\lfloor t\rfloor)-w^{\Delta, v}(\lfloor t\rfloor)|^2+\epsilon |\sigma(y^{\Delta, v^{\epsilon}}_{\lfloor t\rfloor})|^2\Big]\mathrm dt\nn\\
&\quad+K\int_{0}^{s} |v^{\epsilon}(\lfloor t\rfloor)|^2\Big(\sup_{r\in[0,t]}\big(|y^{\Delta, v^{\epsilon}}(\lfloor r\rfloor)-w^{\Delta, v}(\lfloor r\rfloor)|^2\!+\!|Z^{\Delta, v^{\epsilon}}(r)-z^{\Delta, v}(r)|^2\big)\Big)\mathrm dt\nn\\
&\quad+E_1(s)+2\sqrt{\epsilon}\int_{0}^{s}\big{\<}b(y^{\Delta, v^{\epsilon}}_{\lfloor t\rfloor})-b(w^{\Delta, v}_{\lfloor t\rfloor}),\sigma (y^{\Delta, v^{\epsilon}}_{\lfloor t\rfloor})(W(t)-W(\lfloor t\rfloor))\big{\>}\mathrm dt\nn\\
&\quad+2\sqrt{\epsilon}\int_{0}^{s}\<Z^{\Delta, v^{\epsilon}}(t)-z^{\Delta, v}(t),\sigma(y^{\Delta, v^{\epsilon}}_{\lfloor t\rfloor})\mathrm dW(t)\>,
\end{align*}
where $E_1(s):=\int_{0}^{s} 2\big\<\sigma(w^{\Delta, v}_{\lfloor t\rfloor})\big(v^{\epsilon}(\lfloor t\rfloor)-v (\lfloor t\rfloor)\big), \big(b(y^{\Delta, v^{\epsilon}}_{\lfloor t\rfloor})-b(w^{\Delta, v}_{\lfloor t\rfloor})\big)(t-\lfloor t\rfloor)+Z^{\Delta, v^{\epsilon}}(t)-z^{\Delta, v}(t)\big\>\mathrm dt.$
By $a_1>a_2+L$, we arrive at
\begin{align*}
&\quad |Z^{\Delta, v^{\epsilon}}(s)-z^{\Delta, v}(s)|^2\nn\\
&\leq \int_{0}^{s} (K|v^{\epsilon}(\lfloor t\rfloor)|^2+\gamma)\Big(\sup_{r\in[0,t]}|y^{\Delta, v^{\epsilon}}(\lfloor r\rfloor)-w^{\Delta, v}(\lfloor r\rfloor)|^2+\sup_{r\in[0,t]}|Z^{\Delta, v^{\epsilon}}(r)\nn\\
&\quad-z^{\Delta, v}(r)|^2\Big)\mathrm dt
+\int_{0}^{s} \epsilon |\sigma(y^{\Delta, v^{\epsilon}}_{\lfloor t\rfloor})|^2\mathrm dt+E_1(s)+2\sqrt{\epsilon}\int_{0}^{s}\big{\<}b(y^{\Delta, v^{\epsilon}}_{\lfloor t\rfloor})-b(w^{\Delta, v}_{\lfloor t\rfloor}),\nn\\
&\quad\sigma (y^{\Delta, v^{\epsilon}}_{\lfloor t\rfloor})(W(t)-W(\lfloor t\rfloor))\big{\>}\mathrm dt+2\sqrt{\epsilon}\int_{0}^{s}\<Z^{\Delta, v^{\epsilon}}(t)-z^{\Delta, v}(t),\sigma(y^{\Delta, v^{\epsilon}}_{\lfloor t\rfloor})\mathrm dW(t)\>.
\end{align*}
Denote $ E^{\epsilon}(t):=\int_0^t\sigma(w^{\Delta, v}_{\lfloor r\rfloor})(v^{\epsilon}(\lfloor r\rfloor)-v(\lfloor r\rfloor))\mathrm dr.$ By virtue of the integration by parts formula, one has
\begin{align*}
&\quad E_1(s)=2\<E^{\epsilon}(s),(b(y^{\Delta, v^{\epsilon}}_{\lfloor s\rfloor})-b(w^{\Delta, v}_{\lfloor s\rfloor}))(s-\lfloor s\rfloor) +(Z^{\Delta, v^{\epsilon}}(s)-z^{\Delta, v}(s))\>\\
&\quad-2\int_0^{s}\Big\<E^{\epsilon}(t),\mathrm d \Big(\big(b(y^{\Delta, v^{\epsilon}}_{\lfloor t\rfloor})-b(w^{\Delta, v}_{\lfloor t\rfloor})\big)(t-\lfloor t\rfloor)+Z^{\Delta, v^{\epsilon}}(t)-z^{\Delta, v}(t)\Big)\Big\>\\
&\leq K|E^{\epsilon}(s)|^2+\frac14 |y^{\Delta, v^{\epsilon}}(\lfloor s\rfloor)-w^{\Delta, v}(\lfloor s\rfloor)|^2+\frac14|Z^{\Delta, v^{\epsilon}}(\lfloor s\rfloor)-z^{\Delta, v}(\lfloor s\rfloor)|^2\\
&\quad +\frac14|Z^{\Delta, v^{\epsilon}}(s)-z^{\Delta, v}(s)|^2
-4\int_0^{s}\<E^{\epsilon}(t),b(y^{\Delta, v^{\epsilon}}_{\lfloor t\rfloor})-\!b(w^{\Delta, v}_{\lfloor t\rfloor}) \>\mathrm dt\nn\\
&\quad-2\int_0^{s}\Big\<E^{\epsilon}(t),
\big(\sigma(y^{\Delta, v^{\epsilon}}_{\lfloor t\rfloor})-\sigma(w^{\Delta, v}_{\lfloor t\rfloor})\big)v^{\epsilon}(\lfloor t\rfloor)\Big\>\mathrm dt-2\int_0^{s}\Big\<E^{\epsilon}(t),
\sigma(w^{\Delta, v}_{\lfloor t\rfloor})\big(v^{\epsilon}(\lfloor t\rfloor)\nn\\
&\quad-v(\lfloor t\rfloor)\big)\Big\>\mathrm dt-2\sqrt{\epsilon}\int_0^{s} e^{\gamma t}\<E^{\epsilon}(t),\sigma(y^{\Delta, v^{\epsilon}}_{\lfloor t\rfloor})\mathrm dW(t)\>\nn\\
&\leq K\sup_{r\in[0,s]}|E^{\epsilon}(r)|^2+\frac14 |y^{\Delta, v^{\epsilon}}(\lfloor s\rfloor)-w^{\Delta, v}(\lfloor s\rfloor)|^2+\frac12 \sup_{r\in[0,s]}|Z^{\Delta, v^{\epsilon}}(r)-z^{\Delta, v}(r)|^2\nn\\
&\quad+K\int_{0}^{s} (|v^{\epsilon}({\lfloor t\rfloor})|^2+1)\Big(\sup_{r\in[0,t]}\big(|y^{\Delta, v^{\epsilon}}(\lfloor r\rfloor)-w^{\Delta, v}(\lfloor r\rfloor)|^2+|Z^{\Delta, v^{\epsilon}}(r)\nn\\
&\quad-z^{\Delta, v}(r)|^2\big)\Big)\mathrm dt
-2\sqrt{\epsilon}\int_0^{s} \<E^{\epsilon}(t),\sigma(y^{\Delta, v^{\epsilon}}_{\lfloor t\rfloor})\mathrm dW(t)\>.
\end{align*}
Note that 
\begin{align*}
&\quad |Z^{\Delta, v^{\epsilon}}(t_k)-z^{\Delta, v}(t_k)|^2\nn\\
&\ge |y^{\Delta, v^{\epsilon}}(t_k)-w^{\Delta, v}(t_k)|^2-2\theta\Delta \<y^{\Delta, v^{\epsilon}}(t_k)-w^{\Delta, v}(t_k),b(y^{\Delta, v^{\epsilon}}_{t_k})-b(w^{\Delta, v}_{t_k})\>\\
&\ge |y^{\Delta, v^{\epsilon}}(t_k)-w^{\Delta, v}(t_k)|^2-2\theta\Delta \Big(-a_1|y^{\Delta, v^{\epsilon}}(t_k)-w^{\Delta, v}(t_k)|^2\nn\\
&\quad+a_2\int_{-\tau}^0|y^{\Delta, v^{\epsilon}}_{t_k}(r)-w^{\Delta, v}_{t_k}(r)|^2\mathrm d\nu_2(r)\Big),
\end{align*}
which implies
\begin{align*}
&\quad |Z^{\Delta, v^{\epsilon}}(t_k)-z^{\Delta, v}(t_k)|^2\nn\\
&\geq |y^{\Delta, v^{\epsilon}}(t_k)-w^{\Delta, v}(t_k)|^2-2a_2\theta \Delta
\sup_{r\in[0,t_k]} |y^{\Delta, v^{\epsilon}}(\lfloor r\rfloor)-w^{\Delta, v}(\lfloor r\rfloor)^2.
\end{align*}
Then,
\begin{align*}
&\quad |y^{\Delta, v^{\epsilon}}(t_{k})\!-\!w^{\Delta, v}(t_{k})|^2
\leq 2a_2\theta \Delta\sup_{t\in[0,t_{k}]}|y^{\Delta, v^{\epsilon}}(\lfloor t\rfloor)\!-\!w^{\Delta, v}(\lfloor t\rfloor)|^2\nn\\
&\quad+K\int_{0}^{t_{k}}(|v^{\epsilon}(\lfloor t\rfloor)|^2+1)\Big(\sup_{r\in[0,t]}\big(|y^{\Delta, v^{\epsilon}}(\lfloor r\rfloor)-w^{\Delta, v}(\lfloor r\rfloor)|^2\!+\!|Z^{\Delta, v^{\epsilon}}(r)\nn\\
&\quad-z^{\Delta, v}(r)|^2\big)\Big)\mathrm dt+\int_{0}^{t_{k}} \epsilon |\sigma(y^{\Delta, v^{\epsilon}}_{\lfloor t\rfloor})|^2\mathrm dt+2\sqrt{\epsilon}\int_{0}^{t_{k}}\big{\<}b(y^{\Delta, v^{\epsilon}}_{\lfloor t\rfloor})-b(w^{\Delta, v}_{\lfloor t\rfloor}),\nn\\
&\quad\sigma (y^{\Delta, v^{\epsilon}}_{\lfloor t\rfloor})(W(t)-W(\lfloor t\rfloor))\big{\>}\mathrm dt+2\sqrt{\epsilon}\int_{0}^{t_{k}}\<Z^{\Delta, v^{\epsilon}}(t)-z^{\Delta, v}(t),\sigma(y^{\Delta, v^{\epsilon}}_{\lfloor t\rfloor})\mathrm dW(t)\>\nn\\
&\quad+K\sup_{r\in[0,t_{k}]}|E^{\epsilon}(r)|^2+\frac14 |y^{\Delta, v^{\epsilon}}(t_{k})-w^{\Delta, v}(t_{k})|^2+\frac12\sup_{r\in[0,t_{k}]}|Z^{\Delta, v^{\epsilon}}(r)-z^{\Delta, v}(r)|^2\nn\\
&\quad-2\sqrt{\epsilon}\int_0^{t_{k}}\<E^{\epsilon}(t),\sigma(y^{\Delta, v^{\epsilon}}_{\lfloor t\rfloor})\mathrm dW(t)\>.
\end{align*}
Therefore,
\begin{align*}
&\quad \frac{1}{2}\sup_{t\in[0,t_{k}]}e^{\gamma t}|Z^{\Delta, v^{\epsilon}}(t)-z^{\Delta, v}(t)|^2+\frac{1}{8}\sup_{t\in\{0,\ldots,t_{k}\}}e^{\gamma t }|y^{\Delta, v^{\epsilon}}(t)-w^{\Delta, v}(t)|^2\\
&\leq K\int_{0}^{t_{k}} \big(1+|v^{\epsilon}(\lfloor t\rfloor)|^2\big)\Big(\sup_{r\in[0,t]}\big(|y^{\Delta, v^{\epsilon}}(\lfloor r\rfloor)-w^{\Delta, v}(\lfloor r\rfloor)|^2+|Z^{\Delta, v^{\epsilon}}(r)\nn\\
&\quad-z^{\Delta, v}(r)|^2\big)\Big)\mathrm dt+2\epsilon \int_0^{t_{k}}\sigma(y^{\Delta, v^{\epsilon}}_{\lfloor t\rfloor})|^2\mathrm dt+4\sqrt{\epsilon}\sup_{r\in[0,t_{k}]}\Big(\int_{0}^{r}\<b(y^{\Delta, v^{\epsilon}}_{\lfloor t\rfloor})-b(w^{\Delta, v}_{\lfloor t\rfloor}), \nn\\
&\quad\sigma(y^{\Delta, v^{\epsilon}}_{\lfloor t\rfloor})(W(t)-W(\lfloor t\rfloor))\>\mathrm dt+\int_{0}^{r}\<Z^{\Delta, v^{\epsilon}}(t)-z^{\Delta, v}(t),\sigma(y^{\Delta, v^{\epsilon}}_{\lfloor t\rfloor})\mathrm dW(t)\>\Big)\nn\\
&\quad+K\sup_{t\in[0,t_{k}]}|E^{\epsilon}(t)|^2+2\sqrt{\epsilon}\sup_{r\in[0,t_{k}]}\int_0^{r}\<E^{\epsilon}(t),\sigma(y^{\Delta, v^{\epsilon}}_{\lfloor t\rfloor})\mathrm dW(t)\>.
\end{align*}
Applying the Gr\"onwall inequality and $v^{\epsilon}(\cdot)\in\mathcal P_{M}$,  we arrive at
\begin{align*}
&\quad \sup_{t\in[0,t_{k}]}|Z^{\Delta, v^{\epsilon}}(t)-z^{\Delta, v}(t)|^2+\sup_{t\in[0,t_{k}]}|y^{\Delta, v^{\epsilon}}(\lfloor t\rfloor)-w^{\Delta, v}(\lfloor t\rfloor)|^2\\
&\leq Ke^{K(M+1)t_{k}}\Big[\epsilon \int_0^{t_{k}}|\sigma(y^{\Delta, v^{\epsilon}}_{\lfloor t\rfloor})|^2\mathrm dt+\sqrt{\epsilon}\sup_{r\in[0,t_{k}]}\Big(\int_{0}^{r}\big\<b(y^{\Delta, v^{\epsilon}}_{\lfloor t\rfloor})-b(w^{\Delta, v}_{\lfloor t\rfloor}),\nn\\
&\quad\sigma(y^{\Delta, v^{\epsilon}}_{\lfloor t\rfloor})(W(t)-W(\lfloor t\rfloor))\big\>\mathrm dt+\int_{0}^{r}\big\<Z^{\Delta, v^{\epsilon}}(t)-z^{\Delta, v}(t),\sigma(y^{\Delta, v^{\epsilon}}_{\lfloor t\rfloor})\mathrm dW(t)\big\>\Big)\nn\\
&\quad+\sup_{t\in[0,t_{k}]}|E^{\epsilon}(t)|^2+\sqrt{\epsilon}\sup_{r\in[0,t_{k}]}\int_0^{r}\<E^{\epsilon}(t),\sigma(y^{\Delta, v^{\epsilon}}_{\lfloor t\rfloor})\mathrm dW(t)\>\Big].
\end{align*}
Utilizing  \cref{bound_control,estimate_y2}, the Burkholder--Davis--Gundy inequality, and the Young inequality, we obtain
\begin{align*}
&\quad\mathbb E\Big[\sup_{t\in[0,t_{k}]}|y^{\Delta, v^{\epsilon}}(\lfloor t\rfloor)-w^{\Delta, v}(\lfloor t\rfloor)|^2\wedge(e^{\mathfrak at_{k}}\Delta^{-1})\Big]\\
&\leq Ke^{K(M+1)t_{k}}\bigg\{\epsilon \int_0^{t_{k}}\mathbb E[1+\|y^{\Delta, v^{\epsilon}}_{\lfloor t\rfloor}\|^2]\mathrm dt+\sqrt{\epsilon}\int_0^{t_{k}}\E\Big[|b(y^{\Delta, v^{\epsilon}}_{\lfloor t\rfloor})-b(w^{\Delta, v}_{\lfloor t\rfloor})|\times\nn\\
&\quad|\sigma(y^{\Delta, v^{\epsilon}}_{\lfloor t\rfloor})||W(t)-W(\lfloor t\rfloor)|\Big]\mathrm dt+\!\sqrt{\epsilon}\mathbb E\Big[\Big(\int_0^{t_{k}}\!\!|Z^{\Delta, v^{\epsilon}}(t)\!-\!z^{\Delta, v}(t)|^2|\sigma(y^{\Delta, v^{\epsilon}}_{\lfloor t\rfloor})|^2\mathrm dt\Big)^{\frac12}\Big]\nn\\
&\quad+\sup_{t\in[0,t_{k}]}\E\big[|E^{\epsilon}(t)|^2\wedge(e^{\mathfrak at_{k}}\Delta^{-1})\big]+\sqrt{\epsilon} \mathbb E\Big[\int_0^{t_{k}}|E^{\epsilon}(t)|^2|\sigma(y^{\Delta, v^{\epsilon}}_{\lfloor t\rfloor})|^2\mathrm dt\Big)^{\frac12}\Big]\bigg\}\\
&\leq K(M)e^{K(M+1)t_{k}}\Big[(1+\|\xi\|^{2(\beta+1)}))(\epsilon+\sqrt\epsilon)+
\E\big[\sup_{t\in[0,t_{k}]}|E^{\epsilon}(t)|^2\wedge(e^{\mathfrak at_{k}}\Delta^{-1})\big]\Big].
\end{align*}
It follows from $v^{\epsilon}\overset{d}{\underset{\epsilon\to0}\longrightarrow}v$ that $\lim_{\epsilon\rightarrow 0}\E\big[\sup_{t\in[0,t_{k}]}|E^{\epsilon}(t)|^2\wedge(e^{\mathfrak at_{k}}\Delta^{-1})\big]=0$.
Consequently, letting $\epsilon\rightarrow 0$ finishes the proof.
\end{proof}

\section{Logarithmic  estimate for numerical density function}
This section is devoted to the logarithmic estimate on the finite time horizon for the  density function of the $\theta$-EM method \eqref{EM_epsilon} of \eqref{FF_small}, based on the smoothness of the numerical density function and the technique of the Malliavin calculus. The constant $T>0$ is fixed in this section.  
For the numerical solution $y^{\Delta,\epsilon}$,
% of the $\theta$-EM method with small noise \eqref{EM_epsilon}, 
under Assumptions \ref{a1}, \ref{repalceA2}--\ref{assp_den},
it follows from Theorem \ref{trun_num_den} that for any $\Delta\in(0,\frac{1}{4\theta a_5})$ and  $k\in\mathbb N_+$,
 $y^{\Delta,\epsilon}(t_k)$ admits a density function denoted by $\mathfrak p^{\Delta,\epsilon}(t_k, \cdot)$. We introduce the following assumption to ensure the smoothness of $\mathfrak p^{\Delta,\epsilon}(t_k, \cdot)$.

\begin{assp}\label{smooth_assp}
Assume that coefficients $b$ and $\sigma$ are smooth with bounded derivatives of arbitrary orders. 
\end{assp}
The proof of the smoothness of the density $\mathfrak p^{\Delta,\epsilon}(t_k, \cdot)$ depends on the estimate of the Malliavin covariance matrix $\gamma_{k}^{\epsilon}$ of the numerical solution $y^{\Delta,\epsilon}_{t_k}$ and the result that $y^{\Delta,\epsilon}_{t_k}\in \mathbb D^{\infty}(\RR^d)$.

%If coefficients $b$ and $\sigma$ satisfy \cref{smooth_assp}, then \cref{a5,Dbpoly1} naturally hold. 
%Once \cref{smooth_assp} holds,  similar to the proof of Theorem \ref{main_LDP}, for any $T>0$, we can replace \cref{a2,a4} by Assumption \ref{repalceA2} to   derive the LDP of  $\{y^{\Delta,\epsilon}(t), t\in[-\tau,T]\}_{\epsilon\in(0,1)}$.

%similar to the proof of Theorem \ref{main_LDP}, if we replace Assumption \ref{a2} by Assumption \ref{repalceA2}, then we have that the family of random variables $\{y^{\Delta,\epsilon}(\cdot)\}_{\epsilon}$ satisfies the LDP in $\mathcal C_{\xi}([-\tau,T];\mathbb R^d)$ with the rate function given by 
%\begin{align*}
%I_T(f)=\inf_{\{v\in L^2_{\Delta}([0,T];\mathbb R^m):f=\mathcal G^{\Delta}(\int_0^{\cdot}v(\lfloor s\rfloor)\mathrm ds)\}}\frac1 2\int_0^T|v(\lfloor s\rfloor)|^2\mathrm ds.
%\end{align*} 
\iffalse
Then similar to Corollary \ref{coro_LDP1}, we obtain that for any $t\in[0,T],$ the family of random variables  $\{y^{\Delta,\epsilon}(t)\}_{\epsilon\in
(0,1)}$ satisfies the LDP in $\mathbb R^d$ with the good rate function given by
\begin{align}\label{2444}
\tilde I_{t}(z)=\inf \{I_T(f):f\in\mathcal C_{\xi}([-\tau,T];\mathbb R^d),z=f(t)\}.
\end{align}
\fi

\begin{lemma}\label{lemma_matrix}
Let \cref{a1,repalceA2,assp_den,smooth_assp} hold. Then for any $p\ge 1$ and  $\Delta\in(0, \frac{1}{4\theta a_5})\wedge(0,\frac{1}{4\theta\|\mathcal Db\|_{\mathcal L(\mathcal C^{d};\RR^d)})}]$,
\begin{align*}
\|\det (\gamma_{N^{\Delta}}^{\epsilon})^{-1}\|_{L^p(\Omega)}\leq K\epsilon^{-d}\Delta^{-d}.
\end{align*}
\end{lemma}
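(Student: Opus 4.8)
The plan is to establish a \emph{pathwise} lower bound on the smallest eigenvalue of the Malliavin covariance matrix $\gamma_{N^{\Delta}}^{\epsilon}$ of $y^{\Delta,\epsilon}(t_{N^{\Delta}})$, from which the negative-moment estimate follows immediately. First I would record the small-noise analogue of the recursion \eqref{recur_gamma}. Differentiating \eqref{EM_epsilon} exactly as in the proof of \cref{trun_num_den} and using $y^{\Delta,\epsilon}(t_k)\in\mathbb D^{1,p}$, one arrives at
\begin{align*}
\gamma_{k+1}^{\epsilon}=&\int_0^{t_{k}}A_{1,k}^{\epsilon}\Big(\sum_{j=-N}^0A_{j,k}^{\epsilon}D_ry^{\Delta,\epsilon}(t_{k+j})\Big)\Big(\sum_{j=-N}^0A_{j,k}^{\epsilon}D_ry^{\Delta,\epsilon}(t_{k+j})\Big)^{\top}(A_{1,k}^{\epsilon})^{\top}\mathrm dr\\
&\quad +\epsilon\Delta\,A_{1,k}^{\epsilon}\sigma(y^{\Delta,\epsilon}_{t_k})\sigma(y^{\Delta,\epsilon}_{t_k})^{\top}(A_{1,k}^{\epsilon})^{\top},
\end{align*}
where $A_{1,k}^{\epsilon}=\big(\mathrm{Id}_{d\times d}-\theta\Delta\,\mathcal Db(y^{\Delta,\epsilon}_{t_{k+1}})I^0\big)^{-1}$; the only change from \eqref{recur_gamma} is the prefactor $\epsilon$ on the diffusion term, carried over from the scaling $\sqrt{\epsilon}\sigma$ in \eqref{EM_epsilon}.

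Next, since the first integral has the form $MM^{\top}$ and is therefore positive semidefinite, for every $u\in\RR^d$ with $|u|=1$ I would retain only the last term at the index $k=N^{\Delta}-1$, which gives $u^{\top}\gamma_{N^{\Delta}}^{\epsilon}u\ge \epsilon\Delta\,u^{\top}A^{\epsilon}\sigma\sigma^{\top}(A^{\epsilon})^{\top}u$ with $A^{\epsilon}:=A_{1,N^{\Delta}-1}^{\epsilon}$. The uniform ellipticity of \cref{assp_den} then yields $u^{\top}\gamma_{N^{\Delta}}^{\epsilon}u\ge \epsilon\Delta\sigma_0\,|(A^{\epsilon})^{\top}u|^2$. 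To control $|(A^{\epsilon})^{\top}u|$ from below, I would use \cref{smooth_assp}: writing $\|\mathcal Db\|_{\mathcal L}:=\sup_{\phi\in\mathcal C^d}\|\mathcal Db(\phi)\|_{\mathcal L(\mathcal C^d;\RR^d)}<\infty$ and noting $\|I^0\|_{\infty}=1$, one has $\|\mathrm{Id}_{d\times d}-\theta\Delta\,\mathcal Db(\cdot)I^0\|\le 1+\theta\Delta\|\mathcal Db\|_{\mathcal L}\le 2$ for $\Delta\le \frac{1}{4\theta\|\mathcal Db\|_{\mathcal L}}$, whence $\|(A^{\epsilon})^{-1}\|\le 2$ and, using $\|M^{\top}\|=\|M\|$, $|(A^{\epsilon})^{\top}u|\ge \tfrac12|u|$.

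Combining the two estimates gives the pathwise bound $\lambda_{\min}(\gamma_{N^{\Delta}}^{\epsilon})\ge \tfrac14\sigma_0\epsilon\Delta$ almost surely, so that
\begin{align*}
\det(\gamma_{N^{\Delta}}^{\epsilon})^{-1}\le \lambda_{\min}(\gamma_{N^{\Delta}}^{\epsilon})^{-d}\le \Big(\tfrac{4}{\sigma_0}\Big)^d\epsilon^{-d}\Delta^{-d}\qquad\text{a.s.}
\end{align*}
Since this bound is deterministic, the $L^p(\Omega)$ estimate holds for every $p\ge 1$ with $K=(4/\sigma_0)^d$, with no moment estimates on $y^{\Delta,\epsilon}$ required. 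It is worth emphasizing that it is precisely the uniform boundedness of $\mathcal Db$ from \cref{smooth_assp} that renders $A^{\epsilon}$ deterministically invertible with a controlled norm; this upgrades the lower bound from the merely almost-sure invertibility obtained in \cref{trun_num_den} (via the one-sided condition \cref{repalceA2}) to a genuinely deterministic one.

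The main obstacle is conceptual rather than computational: one must check that retaining only the single final sub-interval $[t_{N^{\Delta}-1},t_{N^{\Delta}}]$ and discarding the positive semidefinite first integral already reproduces the order $(\epsilon\Delta)^{-d}$ claimed in the statement. This is indeed so — the retained one-step term is of order $\epsilon\Delta$, and since the discarded term is positive semidefinite it can only enlarge $\lambda_{\min}(\gamma_{N^{\Delta}}^{\epsilon})$, so no factor of $\epsilon$ or $\Delta$ is lost relative to the target. The only remaining care is the correct handling of transposes when passing from $\|(A^{\epsilon})^{-1}\|$ to the lower bound on $|(A^{\epsilon})^{\top}u|$ via $\|M^{\top}\|=\|M\|$.
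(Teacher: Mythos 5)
Your proof is correct and follows essentially the same route as the paper's: both reduce the claim to a pathwise lower bound on $\lambda_{\min}(\gamma^{\epsilon}_{N^{\Delta}})$, discard the positive-semidefinite part of the recursion, retain only the final-step term $\epsilon\Delta\, A^{\epsilon}\sigma(y^{\Delta,\epsilon}_{t_{N^{\Delta}-1}})\sigma(y^{\Delta,\epsilon}_{t_{N^{\Delta}-1}})^{\top}(A^{\epsilon})^{\top}$, and combine the uniform ellipticity of \cref{assp_den} with the stepsize restriction involving $\|\mathcal Db\|_{\mathcal L(\mathcal C^{d};\RR^d)}$ to obtain a deterministic bound $\lambda_{\min}(\gamma^{\epsilon}_{N^{\Delta}})\geq c\,\sigma_0\epsilon\Delta$, hence the $L^p$ estimate for all $p$ at once. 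The only difference is in executing the last step: where the paper expands $A^{\epsilon}=\mathrm{Id}_{d\times d}+A^{\epsilon}\theta\mathcal Db(y^{\Delta,\epsilon}_{t_{N^{\Delta}}})I^0\Delta$ and manipulates the resulting matrix products, you bound $|(A^{\epsilon})^{\top}u|\geq\|(A^{\epsilon})^{-1}\|^{-1}|u|\geq\tfrac12$ directly, which is a cleaner way to carry out the same step (and yields the constant $\tfrac14\sigma_0$ in place of the paper's $\tfrac12\sigma_0$, which is immaterial).
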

\begin{proof}
Since $\det(\gamma^{\epsilon}_{N^{\Delta}})^{-1}=\prod_{i=1}^d\lambda_i^{-1}\leq (\lambda_{\min}^{-1})^{d},$ where $\lambda_i,i=1,\ldots,d$ are the eigenvalues of $\gamma^{\epsilon}_{N^{\Delta}}$ and $\lambda_{\min}:=\min\{\lambda_i,i=1,\ldots,d\}, $ it suffices to estimate the smallest eigenvalue of $\gamma^{\epsilon}_{N^{\Delta}}.$ 
For the Malliavin covariance matrix $\gamma_{k}^{\epsilon}$ of the numerical solution $y^{\Delta,\epsilon}(t_k),$ similar to the recursive relation \eqref{recur_gamma}, for any $\Delta\in(0,\frac{1}{4\theta a_5})$,
\iffalse
we obtain  
\begin{align*}
\gamma^{\epsilon}_{N^{\Delta}}=\widetilde  A^{\epsilon}_{N^{\Delta}-1}\cdots \widetilde A_1^{\epsilon}\gamma_1^{\epsilon}(\widetilde A_1^{\epsilon})^{\top}\cdots(\widetilde A^{\epsilon}_{N^{\Delta}-1})^{\top}+\sum_{i=1}^{N^{\Delta}-2}\widetilde A^{\epsilon}_{N^{\Delta}-1}\cdots \widetilde A^{\epsilon}_{i+1}\widetilde{\mathcal  R}^{\epsilon}_{i}(\widetilde A^{\epsilon}_{i+1})^{\top}\cdots(\widetilde A^{\epsilon}_{N^{\Delta}-1})^{\top}+\widetilde {\mathcal R}^{\epsilon}_{N^{\Delta}-1},
\end{align*}
where 
\begin{align*}
&\widetilde A^{\epsilon}_i:=A^{\epsilon}_{2,i}A^{\epsilon}_{1,i}\\
&=(\mathrm{Id}-\theta \mathcal Db(y^{\Delta,\epsilon}(t_{i+1}))I^0\Delta)^{-1}\big(\mathrm{Id}+(1-\theta)\mathcal Db(y^{\Delta,\epsilon}_{t_i})\Delta I^0+\theta \mathcal Db(y^{\Delta,\epsilon}_{t_{i+1}})\Delta I^{-1}+\sqrt{\epsilon}\mathcal D\sigma(y^{\Delta,\epsilon}_{t_i})I^0\delta W_i\big),\\
&\widetilde{\mathcal R}^{\epsilon}_i=\mathcal R^{\epsilon}_{i+1}+\epsilon \Delta A^{\epsilon}_{2,i}\sigma(y^{\Delta,\epsilon}_{t_i})\sigma(y^{\Delta,\epsilon}_{t_i})^{\top}(A^{\epsilon}_{2,i})^{\top}.
\end{align*}
\fi
 using the equality 
 \begin{align}\label{A+}
 A_{1,N^{\Delta}-1}^{\epsilon}:=(\mathrm{Id}_{d\times d}-\theta \mathcal Db(y^{\Delta,\epsilon}_{t_{N^{\Delta}}})I^0\Delta)^{-1}=\mathrm{Id}_{d\times d}+A^{\epsilon}_{1,N^{\Delta}-1}\theta \mathcal Db(y^{\Delta,\epsilon}_{t_{N^{\Delta}}}) I^0\Delta,
\end{align}
 we derive
\begin{align}\label{A++}
&\quad\lambda_{\min}(\gamma^{\epsilon}_{N^{\Delta}})=\min_{u\in\mathbb R^d,|u|=1}u^{\top}\gamma^{\epsilon}_{N^{\Delta}}u\nn\\
&\ge \epsilon \Delta \min_{u\in\mathbb R^d,|u|=1}u^{\top} A^{\epsilon}_{1,N^{\Delta}-1}\sigma(y^{\Delta,\epsilon}_{t_{N^{\Delta}-1}})\sigma(y^{\Delta,\epsilon}_{t_{N^{\Delta}-1}})^{\top}(A^{\epsilon}_{1,N^{\Delta}-1})^{\top}u\nn\\
&= \epsilon\Delta \min_{u\in\mathbb R^d,|u|=1}u^{\top} \Big[\sigma(y^{\Delta,\epsilon}_{t_{N^{\Delta}-1}})\sigma(y^{\Delta,\epsilon}_{t_{N^{\Delta}-1}})^{\top}+A^{\epsilon}_{1,N^{\Delta}-1}\theta \mathcal Db(y^{\Delta,\epsilon}_{t_{N^{\Delta}}})I^0 \sigma(y^{\Delta,\epsilon}_{t_{N^{\Delta}-1}})\nn\\
&\quad\sigma(y^{\Delta,\epsilon}_{t_{N^{\Delta}-1}})^{\top}\Delta +\sigma(y^{\Delta,\epsilon}_{t_{N^{\Delta}-1}})\sigma(y^{\Delta,\epsilon}_{t_{N^{\Delta}-1}})^{\top} (A^{\epsilon}_{1,N^{\Delta}-1}\theta \mathcal Db(y^{\Delta,\epsilon}_{t_{N^{\Delta}}})I^0\Delta )^{\top}\nn\\
&\quad +A^{\epsilon}_{1,N^{\Delta}-1}\theta \mathcal Db(y^{\Delta,\epsilon}_{t_{N^{\Delta}}})I^0\Delta \sigma(y^{\Delta,\epsilon}_{t_{N^{\Delta}-1}})\sigma(y^{\Delta,\epsilon}_{t_{N^{\Delta}-1}})^{\top} (A^{\epsilon}_{1,N^{\Delta}-1}\theta \mathcal Db(y^{\Delta,\epsilon}_{t_{N^{\Delta}}})I^0\Delta )^{\top}
\Big]u\nn\\
&\ge\frac12\epsilon \Delta \min_{u\in\mathbb R^d,|u|=1}u^{\top}\sigma(y^{\Delta,\epsilon}_{t_{N^{\Delta}-1}})\sigma(y^{\Delta,\epsilon}_{t_{N^{\Delta}-1}})^{\top}u\nn\\
&\quad+\epsilon\Delta \min_{u\in\mathbb R^d,|u|=1}u^{\top} \Big[\big(\frac14\mathrm {Id}_{d\times d}+A^{\epsilon}_{1,N^{\Delta}-1}\theta \mathcal Db(y^{\Delta,\epsilon}_{t_{N^{\Delta}}})I^0\Delta\big)\sigma(y^{\Delta,\epsilon}_{t_{N^{\Delta}-1}})\sigma(y^{\Delta,\epsilon}_{t_{N^{\Delta}-1}})^{\top}\nn\\
&\quad+ \sigma(y^{\Delta,\epsilon}_{t_{N^{\Delta}-1}})\sigma(y^{\Delta,\epsilon}_{t_{N^{\Delta}-1}})^{\top} \big(\frac14\mathrm {Id}_{d\times d}+A^{\epsilon}_{1,N^{\Delta}-1}\theta \mathcal Db(y^{\Delta,\epsilon}_{t_{N^{\Delta}}})I^0\Delta\big)^{\top} \Big]u\nn\\
&\ge \frac12 \epsilon\Delta \min_{u\in\mathbb R^d,|u|=1}u^{\top}\sigma(y^{\Delta,\epsilon}_{t_{N^{\Delta}-1}})\sigma(y^{\Delta,\epsilon}_{t_{N^{\Delta}-1}})^{\top}u+\epsilon\Delta\sigma_0 \min _{u\in\mathbb R^d,|u|=1}u^{\top}\Big(\frac12\mathrm{Id}_{d\times d}\nn\\
&\quad+A^{\epsilon}_{1,N^{\Delta}-1}\theta \mathcal Db(y^{\Delta,\epsilon}_{t_{N^{\Delta}}})I^0\Delta+(A^{\epsilon}_{1,N^{\Delta}-1}\theta \mathcal Db(y^{\Delta,\epsilon}_{t_{N^{\Delta}}})I^0)^{\top}\Delta\Big)u\nn\\
&\ge\frac12 \epsilon \Delta \inf_{\phi\in\mathcal C^d}\min_{u\in\mathbb R^d,|u|=1}u^{\top}\sigma(\phi)\sigma(\phi)^{\top}u\nn\\
&\quad+\epsilon\Delta\sigma_0\min_{u\in\mathbb R^d,|u|=1}\Big(\frac12 |u|^2-2|u|^2|A^{\epsilon}_{1,N^{\Delta}-1}\theta \mathcal Db(y^{\Delta,\epsilon}_{t_{N^{\Delta}}})I^0|\Delta\Big)\\
&\ge\frac12 \epsilon\Delta\sigma_0, 
\end{align}
where we used Assumption \ref{assp_den}, the fact $\|A^{\epsilon}_{1,N^{\Delta}-1}\|_{\mathcal L(\mathbb R^d;\RR^d)}\leq 1$ showed in the proof of \cref{trun_num_den}, and the inequality 
$2|A^{\epsilon}_{1,N^{\Delta}-1}\theta\mathcal Db(y^{\Delta,\epsilon}_{t_{N^{\Delta}}})I^0|\Delta\leq \frac12$ for $\Delta\in(0, \frac{1}{4\theta a_5})\wedge(0,\frac{1}{4\theta\|\mathcal Db\|_{\mathcal L(\mathcal C^{d};\RR^d)})}]$. 
This gives that $\det(\gamma^{\epsilon}_{N^{\Delta}})^{-1}\leq (\frac12\sigma_0\epsilon\Delta)^{-d}.$

\iffalse
{\color{blue}It follows from \eqref{A+} that
\begin{align*}
\frac{1}{4}\mathrm{Id}_{d\times d}+A^{\epsilon}_{1,N^{\Delta}-1}\theta \mathcal Db(y^{\Delta,\epsilon}_{t_{N^{\Delta}}})I^0\Delta=A^{\epsilon}_{1,N^{\Delta}-1}-\frac{3}{4}\mathrm{Id}_{d\times d}.
\end{align*}
According to Assumptions \ref{a2} and \ref{first_deri}, we derive that for any $u\in\RR^d$ and $\phi\in\mathcal C^d$,
\begin{align*}
u^{\top} \mathcal Db(\phi)I^0u\leq -(a_2-a_3)|u|^2.
\end{align*}
Then 
\begin{align*}
u^{\top}\big(A^{\epsilon}_{1,N^{\Delta}-1}-\frac{3}{4}\mathrm{Id}_{d\times d}\big)u\leq \big(\frac{1}{1+(a_1-a_2)\theta\Delta}-\frac{3}{4}\big)|u|^2.
\end{align*}
Letting $\Delta\leq \frac{1}{3\theta(a_1-a_2)}$, one has
$$u^{\top}\big(\frac{1}{4}\mathrm{Id}_{d\times d}+A^{\epsilon}_{1,N^{\Delta}-1}\theta \mathcal Db(y^{\Delta,\epsilon}_{t_{N^{\Delta}}})I^0\Delta\big)u\geq 0. $$
This, along with \eqref{A++} and Assumption \ref{assp_den} implies  that $\det(\gamma^{\epsilon}_{N^{\Delta}})^{-1}\leq (\frac12\sigma_0\epsilon\Delta)^{-d}.$}\fi
\end{proof}

%In a similar way, we could prove that $y^{\Delta,\epsilon}(t_{k+1}),y^{\Delta,\epsilon}_{t_{k+1}}\in\mathbb D^{2,\infty}.$ 
Similar to proofs of \cref{multi_nonlip_nu,bound_control} with $\iota=0$, %for any $\Delta\in(0,\frac{1}{4\theta a_5})$ and $p\in\mathbb N_+$, %we  that %$y^{\Delta,\epsilon}_{t_{k+1}}\in\mathbb D^{r,\infty}$ for any $r\in\mathbb N_+.$
%Precisely, 
we obtain the following regularity estimates of  $y^{\Delta,\epsilon}$ and $w^{\Delta,v}$. %Denote $N^{\Delta}=T/\Delta.$
\begin{prop}\label{prop_mo_bound}
%Let \cref{a1,repalceA2,assp_den,smooth_assp}  hold. 
Under the conditions in \cref{lemma_matrix}, for any $T>0$, $p\in\mathbb N_+$, and $\Delta\in(0,\frac{1}{4\theta a_5})$,
$$\sup_{0\leq t_k\leq T}|w^{\Delta,\epsilon}(t_k)|^{2p}\leq K_{T},\quad\quad\E\Big[\sup_{0\leq t_k\leq T}|y^{\Delta,\epsilon}(t_k)|^{2p}\Big]\leq K_{T}.$$
Moreover, for $n\in\mathbb N_+$,
\begin{align*}
\sup_{r_1,\ldots,r_{n}\in[0,T]}\mathbb E\Big[\sup_{r_1\vee \cdots\vee r_{n}\leq t_k\leq T}|D_{r_1,\ldots,r_{n}}y^{\Delta,\epsilon}(t_k)|^p\Big]\leq K_{T}.
\end{align*}
Therefore, $y^{\Delta,\epsilon}(t_k)\in\mathbb D^{\infty}(\mathbb R^d)$ for $k=1,2,\ldots,N^{\Delta}.$ %Moreover, we have that for $n\in\mathbb N_+$ and  $p\ge 1$,
%\begin{align*}
%\sup_{r_1,\ldots,r_{n}\in[0,T]}\mathbb E\Big[\sup_{r_1\vee \cdots\vee r_{n}\leq t_k\leq T}|D_{r_1,\ldots,r_{n}}y^{\Delta,\epsilon}(t_k)|^p\Big]\leq K_{T}.
%\end{align*}
\end{prop}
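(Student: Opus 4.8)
The plan is to transfer the finite-horizon energy estimates already carried out for the controlled and skeleton equations to the small-noise numerical solution, setting the exponential-weight parameter $\iota=0$ so that the exponential decay is replaced by a time-dependent constant $K_T$ valid on the bounded interval $[0,T]$. The recursion \eqref{EM_epsilon} for $y^{\Delta,\epsilon}$ coincides with the $\theta$-EM scheme studied in Lemma \ref{multi_nonlip_nu} once $\sigma$ is replaced by $\sqrt{\epsilon}\,\sigma$; since $\epsilon\in(0,1)$ gives $\sqrt{\epsilon}\le 1$, every diffusion contribution is controlled uniformly in $\epsilon$, and the arguments of Lemmas \ref{multi_nonlip_nu} and \ref{bound_control} apply verbatim. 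First I would introduce the split (auxiliary) process $Z^{\Delta,\epsilon}$ associated with $y^{\Delta,\epsilon}$, defined exactly as in \eqref{auxi2} with $v^{\epsilon}\equiv 0$, together with its continuous interpolation, so that the one-step increment of $|Z^{\Delta,\epsilon}|^2$ can be expanded and the implicit drift handled through the contraction factor $A_{\theta,\Delta}=\frac{(1-\theta)^2}{\theta^2}+\frac{2\theta-1}{\theta^2(1+\Delta)}\in(0,1)$.

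For the moment bounds, for the deterministic process $w^{\Delta,\epsilon}$ I would repeat the computation of Lemma \ref{bound_control} with $\iota=0$: expand $|Z^{\Delta,\epsilon}(t_{k+1})|^2$, invoke the monotonicity of Assumption \ref{repalceA2} to absorb the drift, convert the delayed integrals $\int_{-\tau}^0|\cdot|^2\mathrm d\nu_2$ into a sum over the grid via the summation identity used in \eqref{5p2.1}, and close with the discrete Gr\"onwall inequality on $[0,T]$, which produces the constant $K_T$. For $y^{\Delta,\epsilon}$ the same expansion leads to a local martingale term; applying the Doob stopping and Burkholder--Davis--Gundy arguments exactly as in Lemma \ref{multi_nonlip_nu}, together with Assumption \ref{a1} and the Young inequality, yields the bound on $\mathbb E[\sup_{0\le t_k\le T}|Z^{\Delta,\epsilon}(t_k)|^{2p}]$; the identity $Z^{\Delta,\epsilon}(t_k)=y^{\Delta,\epsilon}(t_k)-\theta\Delta\,b(y^{\Delta,\epsilon}_{t_k})$ then transfers this to $y^{\Delta,\epsilon}$ for every $p\in\mathbb N_+$.

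The Malliavin-derivative bounds I would prove by induction on the differentiation order $n$. Differentiating the scheme $n$ times and using the chain rule (Lemma \ref{Malliavin_SFDE}) gives, for each index $(r_1,\dots,r_n)$, a \emph{linear} implicit recursion for $D_{r_1,\dots,r_n}y^{\Delta,\epsilon}$ of the same structure as \eqref{expre_deri}, in which the implicit future-time contribution is inverted using the invertibility of $\mathrm{Id}_{d\times d}-\theta\,\mathcal D b(y^{\Delta,\epsilon}_{t_{k+1}})I^0\Delta$ established in Theorem \ref{trun_num_den} for $\Delta<\tfrac{1}{4\theta a_5}$. Under Assumption \ref{smooth_assp} all coefficient derivatives are bounded, so the inhomogeneous forcing at order $n$ involves only products of coefficient derivatives with strictly lower-order Malliavin derivatives, all of which are already controlled by $K_T$ through the induction hypothesis and the moment bounds of the previous step. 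An energy estimate on $|D_{r_1,\dots,r_n}Z^{\Delta,\epsilon}|^{p}$ followed by the discrete Gr\"onwall inequality on $[0,T]$ then closes the induction and yields the stated bound uniformly in $r_1,\dots,r_n$; since this holds for every $p$ and every $n$, the membership $y^{\Delta,\epsilon}(t_k)\in\mathbb D^{\infty}(\mathbb R^d)$ follows directly from the definition of $\mathbb D^{\infty}$.

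The hard part will be the bookkeeping of the higher-order recursions: each differentiation produces forcing terms indexed by the combinatorics of how the derivative operator distributes over the delayed interpolations $y^{\Delta,\epsilon}_{t_k}=\sum_{j=-N}^{0}I^{j}\,y^{\Delta,\epsilon}(t_{k+j})$, so that $D_{r_1,\dots,r_n}y^{\Delta,\epsilon}_{t_{k+1}}$ couples the implicit future value with values spread across the whole delay window $[t_{k-N},t_k]$. Moving the highest-order implicit term to the left, inverting it uniformly in $k$, and then dominating the resulting sum over the delay window by a single supremum --- uniformly across derivative orders and across the points $r_1,\dots,r_n$ --- is the step that requires the most care; everything else is a routine adaptation of the $\iota=0$ versions of Lemmas \ref{multi_nonlip_nu} and \ref{bound_control}.
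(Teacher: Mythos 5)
Your proposal is correct and follows essentially the same route as the paper, which proves this proposition simply by invoking the arguments of Lemma \ref{multi_nonlip_nu} and Lemma \ref{bound_control} with the exponential weight $\iota=0$ (the $\sqrt{\epsilon}\le 1$ factor making all estimates uniform in $\epsilon$), and handles the higher-order Malliavin derivatives by the same recursion you describe: differentiate the scheme, invert $\mathrm{Id}_{d\times d}-\theta\,\mathcal Db(y^{\Delta,\epsilon}_{t_{k+1}})I^0\Delta$, and close by induction on the order with Burkholder--Davis--Gundy and discrete Gr\"onwall. Your observation that Assumption \ref{repalceA2} only yields a positive one-sided constant, so that the Gr\"onwall argument produces $K_T$ rather than a time-uniform bound, is exactly the role of taking $\iota=0$ in the paper's reference.
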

\begin{prop}\label{smooth_density}
Under the conditions in \cref{lemma_matrix}, for any $k\in\mathbb N_+$ and $\Delta\in(0, \frac{1}{4\theta a_5})\wedge(0,\frac{1}{4\theta\|\mathcal Db\|_{\mathcal L(\mathcal C^{d};\RR^d)})}]$, $y^{\Delta,\epsilon}(t_k)$ admits a smooth density $\mathfrak p^{\Delta,\epsilon}(t_k,\cdot)$.
\end{prop}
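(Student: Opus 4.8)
The plan is to invoke the classical Malliavin criterion for the smoothness of densities (see \cite[Theorem 2.1.4]{Nualart}): if an $\mathbb R^d$-valued random vector $F$ belongs to $\mathbb D^{\infty}(\mathbb R^d)$ and its Malliavin covariance matrix $\gamma_F$ is nondegenerate in the sense that $(\det\gamma_F)^{-1}\in\cap_{p\ge1}L^p(\Omega)$, then the law of $F$ admits an infinitely differentiable density. Accordingly, I would reduce the proof to verifying these two hypotheses for $F=y^{\Delta,\epsilon}(t_k)$. To make both preparatory results available, I would fix $k\in\mathbb N_+$ and choose $T$ to be a multiple of $\tau$ with $T\ge t_k$, so that $k\le N^{\Delta}$.

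For the first hypothesis I would appeal directly to Proposition \ref{prop_mo_bound}, which asserts $y^{\Delta,\epsilon}(t_k)\in\mathbb D^{\infty}(\mathbb R^d)$ for $k=1,\ldots,N^{\Delta}$. The point to emphasize is that this membership rests on Assumption \ref{smooth_assp}: the smoothness of $b$ and $\sigma$ together with the boundedness of their derivatives of every order propagates through the recursion \eqref{EM_epsilon}, so that each iterated Malliavin derivative $D_{r_1,\ldots,r_n}y^{\Delta,\epsilon}(t_k)$ obeys a uniform $L^p(\Omega)$ bound, which is precisely the last assertion of Proposition \ref{prop_mo_bound}.

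For the second hypothesis, the crucial estimate is supplied by the computation in Lemma \ref{lemma_matrix}. Although that lemma is stated for $\gamma^{\epsilon}_{N^{\Delta}}$, its proof exploits only the structure of the final recursion step: the lower bound \eqref{A++} isolates the contribution $\epsilon\Delta\,A^{\epsilon}_{1,k-1}\sigma(y^{\Delta,\epsilon}_{t_{k-1}})\sigma(y^{\Delta,\epsilon}_{t_{k-1}})^{\top}(A^{\epsilon}_{1,k-1})^{\top}$, and it uses the uniform ellipticity of $\sigma\sigma^{\top}$ from Assumption \ref{assp_den}, the operator bound $\|A^{\epsilon}_{1,k-1}\|_{\mathcal L(\mathbb R^d;\mathbb R^d)}\le 1$, and the identity \eqref{A+}. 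I would therefore re-run that argument with $k$ in place of $N^{\Delta}$, obtaining the deterministic bound $\det(\gamma^{\epsilon}_{k})^{-1}\le\big(\frac12\sigma_0\epsilon\Delta\big)^{-d}$, valid for every $k\ge1$ and every admissible $\Delta\in\big(0,\frac{1}{4\theta a_5}\big)\wedge\big(0,\frac{1}{4\theta\|\mathcal Db\|_{\mathcal L(\mathcal C^{d};\mathbb R^d)}}\big]$. Being a deterministic constant, this bound lies in $L^p(\Omega)$ for all $p\ge1$, which is far stronger than the required nondegeneracy.

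With both hypotheses established, the Malliavin criterion yields a smooth density $\mathfrak p^{\Delta,\epsilon}(t_k,\cdot)$, completing the proof. I expect the only genuine subtlety to be the bookkeeping needed to transfer Lemma \ref{lemma_matrix} from the terminal index $N^{\Delta}$ to an arbitrary $k$: one must verify that the invertibility of $\mathrm{Id}_{d\times d}-\theta\mathcal Db(y^{\Delta,\epsilon}_{t_k})I^0\Delta$ and the representation \eqref{A+} hold at every step, which is exactly what fixes the admissible range of $\Delta$. Everything else is a direct citation of the preparatory results, so the proof is essentially assembly rather than new analysis.
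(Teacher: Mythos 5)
Your proposal is correct and follows exactly the paper's route: the paper's proof is precisely the one-line assembly of Lemma \ref{lemma_matrix}, Proposition \ref{prop_mo_bound}, and \cite[Theorem 2.1.4]{Nualart}. Your additional care in transferring the covariance bound from the terminal index $N^{\Delta}$ to an arbitrary $k$ (by re-running the last-step argument, since $t_k$ need not be a multiple of $\tau$) is a detail the paper leaves implicit, but it does not change the approach.
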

\begin{proof}
The proof follows  from  Lemma \ref{lemma_matrix}, Proposition \ref{prop_mo_bound}, and \cite[Theorem 2.1.4]{Nualart}.
\end{proof}
The limit of the logarithmic estimate on the finite time horizon of the numerical density function is closed related to the rate function of the LDP of $\{y^{\Delta,\epsilon}(t)\}_{\epsilon\in
(0,1)}$ for $t\in[0,T]$. To proceed, we list the LDPs of $\{y^{\Delta,\epsilon}(\cdot)\}_{\epsilon\in(0,1)}$ and $\{y^{\Delta,\epsilon}(t)\}_{\epsilon\in
(0,1)}$ in the following proposition, whose proofs are similar to those  of Theorem \ref{main_LDP} and Corollary \ref{coro_LDP1} and thus are omitted. Denote by $\mathcal C_{\xi}([-\tau,T];\mathbb R^d)$ the space of all continuous functions $u:[-\tau,T]\rightarrow \mathbb R^d$ with $u(r)=\xi(r)$ for $r\in[-\tau,0]$, endowed with the norm 
$\|u\|_{\mathcal C_{\xi}([-\tau,T];\mathbb R^d)}:=\sup_{t\in[-\tau,T]}|u(t)|$.

\begin{prop}
 Under the conditions in \cref{lemma_matrix}, it holds that: 
 
(\romannumeral1) The family of random variables  $\{y^{\Delta,\epsilon}(\cdot)\}_{\epsilon\in(0,1)}$ satisfies the LDP in $\mathcal C_{\xi}([-\tau,T];\mathbb R^d)$
with the good rate function given by 
\begin{align*}
I_T(f)=\inf_{\{v\in L^2_{\Delta}([0,T];\mathbb R^m):f=\mathcal G^{\Delta}(\int_0^{\cdot}v(\lfloor s\rfloor)\mathrm ds)\}}\frac1 2\int_0^T|v(\lfloor s\rfloor)|^2\mathrm ds.
\end{align*} 

(\romannumeral2) For any $t\in[0,T],$ the family of random variables  $\{y^{\Delta,\epsilon}(t)\}_{\epsilon\in
(0,1)}$ satisfies the LDP in $\mathbb R^d$ with the good rate function given by
\begin{align}\label{2444}
\tilde I_{t}(z)=\inf \{I_T(f):f\in\mathcal C_{\xi}([-\tau,T];\mathbb R^d),z=f(t)\}.
\end{align}
\end{prop}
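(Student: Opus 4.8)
The plan is to adapt the weak convergence method employed in the proof of Theorem \ref{main_LDP} to the finite time horizon, and then to deduce (ii) from (i) via the contraction principle. Since the problem on $[0,T]$ equipped with the uniform norm is merely a truncation of the infinite-horizon problem, the two sufficient conditions for the Laplace principle, and hence the LDP, will follow from finite-time restrictions of the compactness and convergence statements already established.

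To prove (i), I would first restrict the stochastic controlled equation \eqref{control}, the skeleton equation \eqref{skeleton}, and the auxiliary process \eqref{auxi} to grid points lying in $[0,T]$, and redefine the measurable maps $\mathcal G^{\Delta,\epsilon}$ and $\mathcal G^{\Delta}$ so that they take values in $\mathcal C_{\xi}([-\tau,T];\mathbb R^d)$. By the equivalence of the LDP and the Laplace principle (see \cite[Condition 2.1, Theorem 2.10]{ZhengZH}), it then suffices to verify the finite-horizon analogues of Propositions \ref{propKM} and \ref{prop_cond}: namely, that for every $M>0$ the set $\{\mathcal G^{\Delta}(\int_0^{\cdot}v(\lfloor s\rfloor)\,\mathrm ds):v\in\mathcal S_M\}$ is compact in $\mathcal C_{\xi}([-\tau,T];\mathbb R^d)$, and that $\mathcal G^{\Delta,\epsilon}(\sqrt{\epsilon}W+\int_0^{\cdot}v^{\epsilon}(\lfloor s\rfloor)\,\mathrm ds)$ converges in distribution to $\mathcal G^{\Delta}(\int_0^{\cdot}v(\lfloor s\rfloor)\,\mathrm ds)$ whenever $\{v^{\epsilon}\}_{\epsilon\in(0,1)}\subset\mathcal P_M$ with $v^{\epsilon}\overset{d}{\to}v$. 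The a priori moment bounds underpinning these two statements are precisely those of Lemma \ref{bound_control}, Lemma \ref{estimate_y2}, and Proposition \ref{prop_mo_bound}; over $[0,T]$ they hold uniformly on the grid and, in contrast to the infinite-horizon case, require no dissipativity or exponential-weight device, so one may simply take the exponential parameter to be zero. The compactness and convergence arguments are then word-for-word transcriptions of those in Propositions \ref{propKM}--\ref{prop_cond}, carried out under the supremum norm rather than the weighted norm $\|\cdot\|_{\mathcal C_{\xi}}$; truncating to $[0,T]$ only simplifies matters, as the infinite grid sums are replaced by finite sums terminating at $t_{N^{\Delta}}=T$.

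To prove (ii), once the LDP in $\mathcal C_{\xi}([-\tau,T];\mathbb R^d)$ with good rate function $I_T$ is in hand, I would apply the contraction principle \cite[Theorem 4.2.1]{Dembo} through the evaluation map $\Pi_t:\mathcal C_{\xi}([-\tau,T];\mathbb R^d)\to\mathbb R^d$ defined by $\Pi_t(f)=f(t)$. Since $\Pi_t$ is continuous with respect to the uniform topology, the pushed-forward family $\{y^{\Delta,\epsilon}(t)=\Pi_t(y^{\Delta,\epsilon}(\cdot))\}_{\epsilon\in(0,1)}$ satisfies the LDP in $\mathbb R^d$ with the good rate function $\tilde I_t(z)=\inf\{I_T(f):\Pi_t(f)=z\}=\inf\{I_T(f):f\in\mathcal C_{\xi}([-\tau,T];\mathbb R^d),\,f(t)=z\}$, which is exactly \eqref{2444}.

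The conceptual difficulty resides entirely in the infinite-horizon Theorem \ref{main_LDP}, which is already proved; the finite-horizon case inherits its structure with strictly easier estimates. The one point deserving care in (i) is the goodness of $I_T$, that is, the compactness of its sublevel sets, which follows from the compactness of $\{\mathcal G^{\Delta}(\int_0^{\cdot}v(\lfloor s\rfloor)\,\mathrm ds):v\in\mathcal S_M\}$ together with the fact that $v\mapsto\frac12\int_0^T|v(\lfloor s\rfloor)|^2\,\mathrm ds$ has weakly compact sublevel sets in $\mathcal S_M$; for (ii) one need only note that $\Pi_t$ is trivially continuous, so no further obstacle arises.
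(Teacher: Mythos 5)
Your proposal is correct and follows essentially the same route as the paper: the paper omits the proof of this proposition precisely because it is obtained by repeating the weak convergence argument of Theorem \ref{main_LDP} (finite-horizon analogues of Propositions \ref{propKM} and \ref{prop_cond}, with the exponential weight parameter taken to be zero, as also done in Proposition \ref{prop_mo_bound}) together with the contraction principle as in Corollary \ref{coro_LDP1}. Your additional remarks on the goodness of $I_T$ and the continuity of the evaluation map $\Pi_t$ fill in exactly the details the paper leaves implicit.
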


%In order to obtain the lower bound of the logarithmic estimate for the density function of numerical solution, 
Now we state our main result as follows. 

\begin{thm}\label{LDP_density}
Under the conditions in \cref{lemma_matrix}, for any $k\in\mathbb N_+$ and sufficiently small $\Delta>0$,
\begin{align*}
\lim_{\epsilon\to0}\epsilon\ln \mathfrak p^{\Delta,\epsilon}(t,y)= -\tilde I_{t}(y),\quad y\in\mathbb R^d.
\end{align*}
%where $\tilde I(\cdot)$ is given in Corollary \ref{coro_LDP1}.
\end{thm}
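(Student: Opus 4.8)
The plan is to establish the claimed identity by proving the two matching logarithmic bounds
\[
\liminf_{\epsilon\to0}\epsilon\ln\mathfrak p^{\Delta,\epsilon}(t,y)\geq -\tilde I_{t}(y),\qquad \limsup_{\epsilon\to0}\epsilon\ln\mathfrak p^{\Delta,\epsilon}(t,y)\leq -\tilde I_{t}(y),
\]
each reduced to the already-available large deviation principle for the endpoint family $\{y^{\Delta,\epsilon}(t)\}_{\epsilon\in(0,1)}$ with good rate function $\tilde I_t$ from \eqref{2444}. The organizing principle is that every Malliavin integration-by-parts weight $H^{\epsilon}$ used to represent $\mathfrak p^{\Delta,\epsilon}(t,\cdot)$ and its derivatives is invisible on the logarithmic scale: combining the determinant bound $\|\det(\gamma_{N^{\Delta}}^{\epsilon})^{-1}\|_{L^p(\Omega)}\leq K\epsilon^{-d}\Delta^{-d}$ of \cref{lemma_matrix} with the uniform-in-$\epsilon$ Malliavin moment estimates of \cref{prop_mo_bound} and the smoothness of \cref{smooth_density}, the iterated Skorohod integrals defining any $H^{\epsilon}$ obey $\|H^{\epsilon}\|_{L^p(\Omega)}\leq K_p\,\epsilon^{-\kappa}$ for some $\kappa=\kappa(d,p)$, so that $\epsilon\ln\|H^{\epsilon}\|_{L^p(\Omega)}\to0$. (When $\tilde I_t(y)=+\infty$ both sides are $-\infty$, so I assume $\tilde I_t(y)<\infty$.)

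For the lower bound I would fix a control $v^{*}\in L^{2}_{\Delta}([0,T];\mathbb R^{m})$ attaining the infimum in \eqref{2444}, so that $w^{\Delta,v^{*}}(t)=y$ and $\tfrac12\int_{0}^{T}|v^{*}(\lfloor s\rfloor)|^{2}\mathrm ds=\tilde I_t(y)$; such a minimizer exists because $\tilde I_t$ is a good rate function. Performing a Girsanov change of measure that shifts $W$ by $\epsilon^{-1/2}\int_{0}^{\cdot}v^{*}(\lfloor s\rfloor)\mathrm ds$ turns $y^{\Delta,\epsilon}$ into the stochastic controlled process \eqref{control} driven by $v^{*}$ and produces the exponential weight $\exp\!\big(-\tfrac1\epsilon(\tfrac12\int_{0}^{T}|v^{*}(\lfloor s\rfloor)|^{2}\mathrm ds+\sqrt{\epsilon}\int_{0}^{T}\langle v^{*}(\lfloor s\rfloor),\mathrm dW(s)\rangle)\big)$. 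Expressing $\mathfrak p^{\Delta,\epsilon}(t,y)$ through this change of measure and bounding the density of the controlled endpoint near its deterministic value $y=w^{\Delta,v^{*}}(t)$ from below by a negative power of $\epsilon$ (again via the nondegeneracy encoded in \cref{lemma_matrix}), I obtain $\mathfrak p^{\Delta,\epsilon}(t,y)\geq \epsilon^{-\kappa'}\exp(-(\tilde I_t(y)+o(1))/\epsilon)$ on an event of overwhelming probability where the martingale term $\sqrt{\epsilon}\int_{0}^{T}\langle v^{*},\mathrm dW\rangle$ is $o(1/\epsilon)$; taking $\epsilon\ln(\cdot)$ gives the lower bound.

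The upper bound is the delicate part. Starting from the integration-by-parts representation $\mathfrak p^{\Delta,\epsilon}(t,y)=\mathbb E[\mathbf 1_{\{y^{\Delta,\epsilon}(t)\ge y\}}H^{\epsilon}]$, I would localize at the level of the whole trajectory by inserting a smooth cutoff $\chi$ supported in a tube $\{\|y^{\Delta,\epsilon}-\Phi^{*}\|_{\mathcal C_{\xi}([-\tau,T];\mathbb R^d)}<2\eta\}$ around the near-minimizing paths with endpoint $y$, writing $\mathfrak p^{\Delta,\epsilon}(t,y)=\mathbb E[\chi\,\mathbf 1_{\{y^{\Delta,\epsilon}(t)\ge y\}}H^{\epsilon}]+\mathbb E[(1-\chi)\,\mathbf 1_{\{y^{\Delta,\epsilon}(t)\ge y\}}H^{\epsilon}]$. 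The second term is confined to $\{\|y^{\Delta,\epsilon}-\Phi^{*}\|\ge\eta\}$, whose probability decays, by the finite-horizon process-level LDP, at the rate $\inf\{I_{T}(f):f(t)=y,\ \|f-\Phi^{*}\|\ge\eta\}$, which is strictly larger than $\tilde I_t(y)$ once $\eta$ is small; together with $\|H^{\epsilon}\|_{L^p}\leq K\epsilon^{-\kappa}$ this term is exponentially negligible. The tube term is estimated by H\"older's inequality against the tube probability $\exp(-(\tilde I_t(y)-o(1))/\epsilon)$, yielding the matching bound $\limsup_{\epsilon\to0}\epsilon\ln\mathfrak p^{\Delta,\epsilon}(t,y)\leq-\tilde I_t(y)$.

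The hard part will be precisely this tube localization. The naive route of comparing $\mathfrak p^{\Delta,\epsilon}(t,y)$ with the ball probability $\mathbb P(y^{\Delta,\epsilon}(t)\in B(y,\rho))$ fails for the upper bound, because the spatial oscillation of the density is controlled only through orthant probabilities whose large-deviation rate may fall below $\tilde I_t(y)$ (even vanish, when the orthant reaches the minimizer of $\tilde I_t$), so the correction term swamps the exponentially small principal term. One must instead use the process-level large deviation principle to show that the only non-negligible contribution comes from a neighborhood of the minimizing paths and that the rate away from them is strictly separated from $\tilde I_t(y)$; making this separation quantitative requires the compactness of the sublevel sets of $I_{T}$, a careful treatment of the endpoint constraint $f(t)=y$, and a local analysis inside the tube in which the polynomial-in-$\epsilon^{-1}$ weight estimates of \cref{lemma_matrix,prop_mo_bound} are once more decisive in absorbing the Malliavin factors on the logarithmic scale.
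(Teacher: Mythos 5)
Your overall two-bound strategy and your guiding principle (every Malliavin integration-by-parts weight is polynomial in $\epsilon^{-1}$, hence invisible at the logarithmic scale) are exactly the philosophy underlying the paper's proof, but both halves of your argument contain genuine gaps.

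\emph{Upper bound.} Your tube localization does not close. In the complement term $\mathbb E[(1-\chi)\,\mathbf 1_{\{y^{\Delta,\epsilon}(t)\ge y\}}H^{\epsilon}]$ the only endpoint information is the orthant constraint $y^{\Delta,\epsilon}(t)\ge y$, not the equality $f(t)=y$; after H\"older, its exponential rate is
$\inf\{I_{T}(f):f(t)\ge y,\ \|f-\Phi^{*}\|\ge\eta\}$, which can be far below $\tilde I_{t}(y)$ --- indeed it is zero whenever the zero-control skeleton ends componentwise above $y$. So this term is \emph{not} exponentially negligible, and your claimed rate $\inf\{I_{T}(f):f(t)=y,\ \|f-\Phi^{*}\|\ge\eta\}$ is not what the LDP gives for that event. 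Your diagnosis that the ball-probability route ``fails'' is also mistaken: the standard (and the paper's, via the cited result combined with \cref{lemma_matrix} and \cref{prop_mo_bound}) fix is to insert a spatial cutoff $\chi_{\rho}(y^{\Delta,\epsilon}(t)-y)$ \emph{before} integrating by parts; the Sobolev--Watanabe norms of the localized weight are then supported on $\{|y^{\Delta,\epsilon}(t)-y|\le 2\rho\}$ and carry a factor $\mathbb P(|y^{\Delta,\epsilon}(t)-y|\le 2\rho)^{1/q}$, after which the LDP for closed balls and goodness of $\tilde I_{t}$ (letting $\rho\to0$, $q\to1$) yield $\limsup_{\epsilon\to0}\epsilon\ln\mathfrak p^{\Delta,\epsilon}(t,y)\le-\tilde I_{t}(y)$. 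The orthant indicator is bounded by one and never enters the rate.

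\emph{Lower bound.} The Girsanov reduction is fine and standard, but the step on which everything rests --- a lower bound, by a negative power of $\epsilon$, on the density of the controlled endpoint $y^{\Delta,\epsilon,v^{*}}(t)$ at its deterministic limit $y=w^{\Delta,v^{*}}(t)$ --- cannot come from \cref{lemma_matrix}. Moment bounds on $\det(\gamma^{\epsilon}_{N^{\Delta}})^{-1}$ (which moreover blow up like $\epsilon^{-d}$) produce density \emph{upper} bounds and smoothness; they never produce a pointwise lower bound. What is actually needed, and what constitutes the bulk of the paper's proof, is a fluctuation analysis: one must show that $Y^{\Delta,\epsilon,v^{*}}:=\epsilon^{-1/2}(y^{\Delta,\epsilon,v^{*}}-w^{\Delta,v^{*}})$ converges to the linearized process $\mathbb Z^{\Delta,v^{*}}$ not merely in $L^{p}$ but together with Malliavin derivatives of all orders (the paper's Steps 1--5), and then that the limiting \emph{deterministic} Malliavin covariance $\bar\gamma_{w^{\Delta,v^{*}}(t)}$ of the skeleton is nondegenerate --- a separate argument in the paper, which is also what identifies the resulting rate $\tilde d_{t}$ with $\tilde I_{t}$. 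Only then does the rescaled density converge to a positive limit, giving $p_{y^{\Delta,\epsilon,v^{*}}(t)}(y)\gtrsim\epsilon^{-d/2}$ and hence the matching lower bound. Your proposal omits this entire layer, so as written the lower bound remains unproven.
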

We prove  Theorem \ref{LDP_density} by  
 introducing  the following equation
\begin{align*}
%\begin{cases}
\left\{
\begin{array}{ll}
y^{\Delta,\epsilon, v}(t_{k+1})=y^{\Delta,\epsilon, v}(t_k)+(1-\theta)b(y^{\Delta,\epsilon, v}_{t_k})\Delta +\theta b(y^{\Delta,\epsilon, v}_{t_{k+1}})\Delta +\sqrt{\epsilon}\sigma(y^{\Delta,\epsilon, v}_{t_k})\delta W_k\\
\qquad \qquad \qquad +\sigma(y^{\Delta,\epsilon, v}_{t_k})v(t_k)\Delta,\quad k\in\mathbb N,\\
y^{\Delta,\epsilon, v}(t_k)=\xi(t_k),\quad k=-N,\ldots,0,
\end{array}
\right.
%\end{cases}
\end{align*}
where  $y^{\Delta,\epsilon, v}_{t_k}$ is the linear interpolation with respect to $y^{\Delta,\epsilon}(t_{k-N}),\ldots, y^{\Delta,\epsilon}(t_{k})$, and $v\in L^2_{\Delta}$.
Define $\mathbb Z^{\Delta, v}$ as follows:
\begin{align*}
\begin{cases}
\begin{array}{ll}
\mathbb Z^{\Delta, v}(t_{k+1})=\mathbb Z^{\Delta, v}(t_k)+(1-\theta)\mathcal Db(w^{\Delta, v}_{t_k})\mathbb Z^{\Delta, v}_{t_k}\Delta+\theta\mathcal Db(w^{\Delta, v}_{t_{k+1}})\mathbb Z^{\Delta, v}_{t_{k+1}}\Delta\\
\quad\quad\quad\quad\quad\quad+\mathcal D\sigma(w^{\Delta, v}_{t_k})\mathbb Z^{\Delta, v}_{t_k}v(t_k)\Delta +\sigma(w^{\Delta, v}_{t_k})\delta W_k,\quad k\in\mathbb N,\\
\mathbb Z^{\Delta, v}(t_k)=0,\quad k=-N,\ldots,0,
\end{array}
\end{cases}
\end{align*}
where $w^{\Delta, v}$ is the solution of \eqref{skeleton}.

Introduce the auxiliary processes associated to $y^{\Delta,\epsilon, v}$ and $\mathbb Z^{\Delta, v}$, respectively  as
\begin{align*}
\begin{cases}
\begin{array}{lll}
Z^{\Delta,\epsilon, v}(t_k)=\xi(t_k),\quad k=-N,\ldots,-1,\\
Z^{\Delta,\epsilon, v}(t_k)=y^{\Delta,\epsilon, v}(t_k)-\theta b(y^{\Delta,\epsilon, v}_{t_k})\Delta,\quad k=0,\\
Z^{\Delta,\epsilon, v}(t_{k+1})=Z^{\Delta,\epsilon, v}(t_{k})+b(y^{\Delta,\epsilon, v}_{t_{k}})\Delta+\sqrt{\epsilon}\sigma(y^{\Delta,\epsilon, v}_{t_{k}})\delta W_{k}\nn\\
\qquad \qquad \quad \quad\;+\sigma(y^{\Delta,\epsilon, v}_{t_{k}})v(t_{k})\Delta,\quad k\in\mathbb N,
\end{array}
\end{cases}
\end{align*}
and
\begin{align*}
\begin{cases}
\begin{array}{lll}
\widetilde{\mathbb Z}^{\Delta, v}(t_k)=0,\quad k=-N,\ldots,-1,\\
\widetilde{\mathbb Z}^{\Delta, v}(t_k)=\mathbb Z^{\Delta, v}(t_k)-\theta \mathcal Db(w^{\Delta, v}_{t_k})\mathbb Z^{\Delta, v}_{t_k}\Delta,\quad k=0,\\
\widetilde{\mathbb Z}^{\Delta, v}(t_{k+1}) =\widetilde{\mathbb Z}^{\Delta, v}(t_{k})+\mathcal Db(w^{\Delta, v}_{t_{k}}){\mathbb Z}^{\Delta, v}_{t_{k}}\Delta +\mathcal D\sigma(w^{\Delta, v}_{t_{k}})\mathbb Z^{\Delta, v}_{t_{k}}v(t_{k})\Delta\nn\\
\qquad \qquad \quad\quad+\sigma(w^{\Delta, v}_{t_{k}})\delta W_{k},\quad k\in\mathbb N. 
\end{array}
\end{cases}
\end{align*}
Moreover, define the continuous versions  $Z^{\Delta,\epsilon, v}(\cdot)$ and $\widetilde{\mathbb Z}^{\Delta, v}$$(\cdot)$ by
\begin{align*}
Z^{\Delta,\epsilon, v}(t)&:=Z^{\Delta,\epsilon, v}(t_{k})+b(y^{\Delta,\epsilon, v}_{t_{k}})(t-t_k)+\sqrt{\epsilon}\sigma(y^{\Delta,\epsilon, v}_{t_{k}})(W(t)-W(t_k))\nn\\
&\quad+\sigma(y^{\Delta,\epsilon, v}_{t_{k}})v(t_{k})(t-t_k),\\
\widetilde{\mathbb Z}^{\Delta, v}(t)&:=\widetilde{\mathbb Z}^{\Delta, v}(t_k)+\mathcal Db(w^{\Delta, v}_{t_{k}}){\mathbb Z}^{\Delta, v}_{t_{k}}(t-t_k)+\mathcal D\sigma(w^{\Delta, v}_{t_{k}})\mathbb Z^{\Delta, v}_{t_{k}}v(t_{k})(t-t_k)\nn\\
&\quad+\sigma(w^{\Delta, v}_{t_{k}})(W(t)-W(t_k))
\end{align*}
for $t\in[t_k,t_{k+1})$ with $k\in\mathbb N$, and 
$Z^{\Delta,\epsilon, v}(t):=\xi(t)$, $\widetilde{\mathbb Z}^{\Delta, v}(t):=0$ for $t\in[-\tau,0]$.

%\begin{thm}
%Let Assumptions 1-3 and Assumption \ref{assp_den} hold. Then
%\begin{align*}
%\liminf_{\epsilon\to0}\epsilon\ln \mathfrak p^{\Delta,\epsilon}(t,y)\ge -\tilde I(y).
%\end{align*}
%\end{thm}

%Below we give the proof of Theorem \ref{LDP_density}. 

Similar to the proof of \cref{multi_nonlip_nu}, %for any $\Delta\in(0,\frac{1}{4\theta a_5})$ and $p\in\mathbb N_+$, %we  that %$y^{\Delta,\epsilon}_{t_{k+1}}\in\mathbb D^{r,\infty}$ for any $r\in\mathbb N_+.$
%Precisely, 
we obtain the following regularity estimates of  $y^{\Delta,\epsilon,v}$. %Denote $N^{\Delta}=T/\Delta.$
\begin{prop}\label{Chp5Y+1}
Let \cref{a1,repalceA2,assp_den,smooth_assp}  hold. Then for any $T>0$, $p\in\mathbb N_+$, and
$\Delta\in(0,\frac{1}{a\theta a_5})$,
$$\E\Big[\sup_{0\leq t_k\leq T}|y^{\Delta,\epsilon,v}(t_k)|^{2p}\Big]\leq K_{T}.$$
\end{prop}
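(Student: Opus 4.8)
The plan is to adapt the proof of Lemma \ref{multi_nonlip_nu} (equivalently the finite-time part of Lemma \ref{estimate_y2}), working with the auxiliary process $Z^{\Delta,\epsilon, v}$ introduced above, whose one-step recursion carries no implicit $\theta$-term. First I would compute $|Z^{\Delta,\epsilon, v}(t_{k+1})|^2$ from its update, expand the square, and isolate the drift contribution $2\langle y^{\Delta,\epsilon, v}(t_k),b(y^{\Delta,\epsilon, v}_{t_k})\rangle\Delta$ by substituting $Z^{\Delta,\epsilon, v}(t_k)=y^{\Delta,\epsilon, v}(t_k)-\theta b(y^{\Delta,\epsilon, v}_{t_k})\Delta$. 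Taking $\phi_2=\mathbf 0$ in Assumption \ref{repalceA2} and applying the Young inequality yields a one-sided growth bound
\[
\langle y^{\Delta,\epsilon, v}(t_k),b(y^{\Delta,\epsilon, v}_{t_k})\rangle\leq K+K|y^{\Delta,\epsilon, v}(t_k)|^2+a_5\int_{-\tau}^0|y^{\Delta,\epsilon, v}_{t_k}(r)|^2\mathrm d\nu_2(r),
\]
while the globally Lipschitz bounds on $b$ and $\sigma$ (coming from Assumptions \ref{smooth_assp} and \ref{a1}) control $|b(y^{\Delta,\epsilon, v}_{t_k})|^2$, $|\sigma(y^{\Delta,\epsilon, v}_{t_k})|^2$, and the control increment $\sigma(y^{\Delta,\epsilon, v}_{t_k})v(t_k)\Delta$.

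Next I would raise the resulting inequality to the $2p$-th power, expand binomially as in the proof of Proposition \ref{p2.1}, and estimate each summand separately. The martingale increments $\sqrt{\epsilon}\sigma(y^{\Delta,\epsilon, v}_{t_k})\delta W_k$ vanish at first order under $\mathbb E[\cdot\,|\mathcal F_{t_k}]$ and, together with $\epsilon\leq 1$ and the moment bounds on Gaussian increments, contribute only higher-order-in-$\Delta$ terms; the control contributions are collected as factors $\Delta|v(t_k)|^2(\cdots)$. The delay integrals $\int_{-\tau}^0|y^{\Delta,\epsilon, v}_{t_k}(r)|^{2p}\mathrm d\nu_j(r)$ are reduced to $\sum_{l\le k}|y^{\Delta,\epsilon, v}(t_l)|^{2p}$ plus $\|\xi\|^{2p}$ by the interpolation argument already used in \eqref{5p2.1}. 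Summing over $k$, taking expectations, and noting that the first-order martingale terms drop out, I would arrive at a recursion of the form
\[
\mathbb E[|Z^{\Delta,\epsilon, v}(t_{k+1})|^{2p}]\leq K(1+\|\xi\|^{2p})+K\int_0^{t_{k+1}}(1+|v(\lfloor s\rfloor)|^2)\,\mathbb E\Big[\sup_{r\le s}|Z^{\Delta,\epsilon, v}(\lfloor r\rfloor)|^{2p}+\sup_{r\le s}|y^{\Delta,\epsilon, v}(\lfloor r\rfloor)|^{2p}\Big]\mathrm ds.
\]

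To pass back from $Z^{\Delta,\epsilon, v}$ to $y^{\Delta,\epsilon, v}$ and to insert the supremum inside the expectation, I would use $Z^{\Delta,\epsilon, v}=y^{\Delta,\epsilon, v}-\theta b\Delta$ together with Assumption \ref{repalceA2} to deduce $|y^{\Delta,\epsilon, v}(t_k)|^{2p}\leq K|Z^{\Delta,\epsilon, v}(t_k)|^{2p}+K\Delta(\cdots)$, exactly as in \eqref{boundTD6}, and then apply the Burkholder--Davis--Gundy inequality to the martingale part of the supremum, absorbing the resulting $\tfrac12\mathbb E[\sup|Z^{\Delta,\epsilon, v}|^{2p}]$ into the left-hand side by the Young inequality. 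Since $v\in L^2_{\Delta}$ gives $\int_0^T|v(\lfloor s\rfloor)|^2\mathrm ds<\infty$ for each fixed $T$, the discrete (integral) Gr\"onwall inequality then closes the estimate with a constant $K_T$ depending on $T,p,\|\xi\|$, and $\|v\|_{L^2_{\Delta}([0,T])}$.

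The main obstacle I anticipate is precisely the control drift $\sigma(y^{\Delta,\epsilon, v}_{t_k})v(t_k)\Delta$: because $v$ is only square-integrable in time rather than bounded, the Gr\"onwall step must be driven by the clock $\int_0^t(1+|v(\lfloor s\rfloor)|^2)\mathrm ds$, and every cross term between this control drift and the auxiliary state $Z^{\Delta,\epsilon, v}$ (and between the control drift and the Brownian increment) must be organized so as to produce the factor $\Delta|v(t_k)|^2$ multiplying the state raised to at most the $2p$-th power; only then is the sup-moment integrated against $|v|^2$ and not against an uncontrolled quantity. Handling these interactions simultaneously with the implicit $\theta$-term inside the supremum estimate is the delicate point, and it is the smoothness of the coefficients (Assumption \ref{smooth_assp}) that keeps all intermediate growth estimates linear and hence tractable.
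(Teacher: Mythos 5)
Your proposal is correct and follows essentially the same route the paper takes: the paper omits the proof, stating only that it is ``similar to the proof of Lemma~\ref{multi_nonlip_nu}'' (which in turn rests on Proposition~\ref{p2.1}), and your argument is exactly that adaptation --- auxiliary process $Z^{\Delta,\epsilon,v}$, one-sided drift bound from Assumption~\ref{repalceA2}, binomial expansion, the delay-integral reduction of \eqref{5p2.1}, passage from $Z$ back to $y$ for $\Delta<\tfrac{1}{4\theta a_5}$, and a Burkholder--Davis--Gundy plus Gr\"onwall closing step. Your explicit treatment of the control drift via the clock $\int_0^t\bigl(1+|v(\lfloor s\rfloor)|^2\bigr)\mathrm ds$ is the same device the paper uses in Lemma~\ref{estimate_y2}, and the resulting dependence of $K_T$ on $\|v\|_{L^2_\Delta([0,T])}$ is harmless since $v$ is a fixed deterministic control here.
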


\begin{proof}[\textbf{Proof of Theorem \ref{LDP_density}.}]
We first give the proof of 
\begin{align}\label{proof_logupp}\limsup_{\epsilon\to0}\epsilon\ln \mathfrak p^{\Delta,\epsilon}(t,y)\leq -\tilde I_t(y).
\end{align}
By Lemma \ref{lemma_matrix} and the boundedness of the $p$th moment of $y^{\Delta,\epsilon}$, we obtain  
\begin{align*}
\|(\gamma^{\epsilon}_{N^{\Delta}})^{-1}\|_{L^p(\Omega)}\leq K\|\det(\gamma^{\epsilon}_{N^{\Delta}})^{-1}\|_{L^{2p}(\Omega)}\leq K\epsilon^{-d}\Delta^{-d}.
\end{align*}
This, combining \eqref{2444}, Proposition \ref{prop_mo_bound}, and \cite[Proposition 4.1]{Ma_functional} finishes the proof of \eqref{proof_logupp}. 

Next, we show that $\liminf_{\epsilon\to0}\epsilon\ln \mathfrak p^{\Delta,\epsilon}(t,y)\ge -\tilde I_t(y)$, whose proof is divided  
 into the following five steps.

\underline{Step 1.} Prove that for any $p\ge 1$ and $\Delta\in(0, \frac{1}{8\theta a_5}]\wedge(0,\frac{1}{4\theta\|\mathcal Db\|_{\mathcal L(\mathcal C^{d};\RR^d)})}]$,
$$\lim_{\epsilon\to 0}\mathbb E\Big[\sup_{0\leq k\leq N^{\Delta}}|y^{\Delta,\epsilon, v}(t_k)-w^{\Delta, v}(t_k)|^p\Big]=0.$$

By the definitions  of $Z^{\Delta,\epsilon, v}$ and $z^{\Delta, v},$ we have that for $p\ge 2,$
\begin{align*}
|Z^{\Delta,\epsilon, v}(t)-z^{\Delta, v}(t)|^p&\leq K\Big|\int_0^tb(y^{\Delta,\epsilon, v}_{\lfloor s\rfloor})-b(w^{\Delta, v}_{\lfloor s\rfloor})\mathrm ds\Big|^p+K\Big|\int_0^t\sigma(y^{\Delta,\epsilon, v}_{\lfloor s\rfloor})v(\lfloor s\rfloor)\nn\\
&\quad-\sigma(w^{\Delta, v}_{\lfloor s\rfloor})v(\lfloor s\rfloor)\mathrm ds\Big|^p+K\Big|\sqrt{\epsilon}\int_0^t\sigma(y^{\Delta,\epsilon, v}_{\lfloor s\rfloor})\mathrm dW(s)\Big|^p.
\end{align*}
It follows from the boundedness of derivatives of $b$, $\sigma$, and  the Burkholder--Davis--Gundy inequality that
 \begin{align*}
 &\mathbb  E\Big[\sup_{t\in[0,T]}|Z^{\Delta,\epsilon, v}(t)-z^{\Delta, v}(t)|^p\Big]\leq K_T\int_0^T\mathbb E[\|y^{\Delta,\epsilon, v}_{\lfloor s\rfloor}\!-\!w^{\Delta, v}_{\lfloor s\rfloor}\|^p]\mathrm ds
 +K_T\int_0^T\mathbb E[\|y^{\Delta,\epsilon, v}_{\lfloor s\rfloor}\nn\\
 &\quad-w^{\Delta, v}_{\lfloor s\rfloor}\|^p]\mathrm ds\big(\int_0^T|v(\lfloor s\rfloor)|^2\mathrm ds\big)^{\frac p2}+K_T\epsilon^{\frac p2}\int_0^T\big(1+\mathbb E[\|y^{\Delta,\epsilon, v}_{\lfloor s\rfloor}\|^p]\big)\mathrm ds\\
 &\leq K_T\int_0^T\mathbb E[\sup_{r\in[0,s]}|y^{\Delta,\epsilon, v}(\lfloor r\rfloor)-w^{\Delta, v}(\lfloor r\rfloor)|^p]\mathrm ds+K_T\epsilon^{\frac p2}\int_0^T\big(1+\mathbb E[\|y^{\Delta,\epsilon, v}_{\lfloor s\rfloor}\|^p]\big)\mathrm ds.
 \end{align*}
Moreover, we have
\begin{align*}
&\quad|y^{\Delta,\epsilon, v}(t_{k+1})-w^{\Delta, v}(t_{k+1})|^2\leq |Z^{\Delta,\epsilon, v}(t_{k+1})-z^{\Delta, v}(t_{k+1})|^2\\
&\quad+2\theta\Delta \<y^{\Delta,\epsilon, v}(t_{k+1})-w^{\Delta, v}(t_{k+1}),b(y^{\Delta,\epsilon, v}_{t_{k+1}})-b(w^{\Delta, v}_{t_{k+1}})\>\\
%&\leq |Z^{\Delta,\epsilon, v}(t_{k+1})-z^{\Delta, v}(t_{k+1})|^2+\Delta\theta |y^{\Delta,\epsilon, v}(t_{k+1})-w^{\Delta, v}(t_{k+1})|^2\nn\\
%&\quad+\theta\Delta |b(y^{\Delta,\epsilon, v}_{t_{k+1}})-b(w^{\Delta, v}_{t_{k+1}})|^2\\
&\leq |Z^{\Delta,\epsilon, v}(t_{k+1})-z^{\Delta, v}(t_{k+1})|^2+4\theta a_5\Delta \sup_{0\leq k\leq N^{\Delta}}|y^{\Delta,\epsilon, v}(t_{k})-w^{\Delta, v}(t_{k})|^2.
\end{align*}
By  $\Delta\leq \frac{1}{8\theta a_5}$, we obtain 
\begin{align}\label{inver_ineq}
\sup_{0\leq k\leq N^{\Delta}}|y^{\Delta,\epsilon, v}(t_{k})-w^{\Delta, v}(t_{k})|^2&\leq 2\sup_{0\leq k\leq N^{\Delta}}|Z^{\Delta,\epsilon, v}(t_{k})-z^{\Delta, v}(t_{k})|^2,
\end{align}
which implies that
\begin{align*}
\mathbb E\Big[\sup_{0\leq k\leq N^{\Delta}}|y^{\Delta,\epsilon, v}(t_{k})\!-\!w^{\Delta, v}(t_{k})|^p\Big]&\leq K_T\int_0^T\mathbb E[\sup_{r\in[0,s]}|y^{\Delta,\epsilon, v}(\lfloor r\rfloor)\!-\!w^{\Delta, v}(\lfloor r\rfloor)|^p]\mathrm ds\\
&\quad +K_T\epsilon^{\frac p2}\int_0^T\big(1+\mathbb \sup_{s\in[0,T]}E[\|y^{\Delta,\epsilon, v}_{\lfloor s\rfloor}\|^p]\big)\mathrm ds.
\end{align*}
According to \cref{Chp5Y+1} and the Gr\"onwall inequality, one has 
\begin{align*}
\mathbb E\Big[\sup_{0\leq k\leq N^{\Delta}}|y^{\Delta,\epsilon, v}(t_{k})-w^{\Delta, v}(t_{k})|^p\Big]\leq K_T\epsilon^{\frac p2},
\end{align*}
which finishes the proof of \underline{Step 1} by taking $\epsilon\to0.$

\underline{Step 2.} Prove that for any $p\ge 1,$  $$\lim_{\epsilon\to0}\mathbb E\Big[\sup_{0\leq k\leq N^{\Delta}}|Y^{\Delta,\epsilon,v}(t_k)-\mathbb Z^{\Delta, v}(t_k)|^p\Big]=0,$$ where $Y^{\Delta,\epsilon, v}(t_k):=\frac{1}{\sqrt{\epsilon}}(y^{\Delta,\epsilon, v}(t_k)-w^{\Delta, v}(t_k)).$

By the definitions of $Z^{\Delta,\epsilon, v}(t)$, $z^{\Delta, v}(t)$, and $\widetilde{\mathbb Z}^{\Delta, v}(t)$, we arrive at 
\begin{align*}
\frac{1}{\sqrt{\epsilon}}(Z^{\Delta,\epsilon, v}(t)-z^{\Delta, v}(t))\!-\!\widetilde{\mathbb Z}^{\Delta, v}(t)=\mathscr A_1(t)+\mathscr A_2(t)+\mathscr A_3(t),
\end{align*}
where
\begin{align*}
\mathscr A_1(t)&:=\int_0^t\Big[\frac{1}{\sqrt{\epsilon}}\big(b(y^{\Delta,\epsilon, v}_{\lfloor s\rfloor})\!-\!b(w^{\Delta, v}_{\lfloor s\rfloor})\big)\!-\!\mathcal Db(w^{\Delta, v}_{\lfloor s\rfloor})\mathbb Z^{\Delta, v}_{\lfloor s\rfloor}\Big]\mathrm ds,\\
\mathscr A_2(t)&:=\int_0^t\Big[\frac{1}{\sqrt{\epsilon}}\big(\sigma(y^{\Delta\epsilon,v}_{\lfloor s\rfloor})v(\lfloor s\rfloor)-\sigma(w^{\Delta, v}_{\lfloor s\rfloor})v(\lfloor s\rfloor)\big)-\mathcal D\sigma(w^{\Delta, v}_{\lfloor s\rfloor})\mathbb Z^{\Delta, v}_{\lfloor s\rfloor}v(\lfloor s\rfloor)\Big]\mathrm ds,\\
\mathscr A_3(t)&:=\int_0^t\Big[\sigma(y^{\Delta,\epsilon, v}_{\lfloor s\rfloor})-\sigma(w^{\Delta, v}_{\lfloor s\rfloor})\Big]\mathrm dW(s)=:\mathscr A_1(t)+\mathscr A_2(t)+\mathscr A_3(t).
\end{align*}
For the term $\mathscr A_3,$ utilizing the Burkholder--Davis--Gundy inequality, we deduce that for $p\ge 2,$
\begin{align*}
\mathbb E\Big[\sup_{t\leq T}|\mathscr A_3(t)|^p\Big]&\leq K_T\mathbb E\Big[\int_0^T\|y^{\Delta,\epsilon, v}_{\lfloor s\rfloor}-w^{\Delta, v}_{\lfloor s\rfloor}\|^p\mathrm ds\Big]\\
&\leq K_T\int_0^T\mathbb E\Big[\sup_{0\leq k\leq N^{\Delta}}|y^{\Delta,\epsilon, v}(t_k)-w^{\Delta, v}(t_k)|^p\Big]\mathrm ds\leq K_T\epsilon^{\frac p2},
\end{align*}
where in the last step we used  \underline{Step 1}.

For the term $\mathscr A_1,$ by the Taylor formula,  we obtain
\begin{align*}
b(y^{\Delta,\epsilon, v}_{\lfloor s\rfloor})-b(w^{\Delta, v}_{\lfloor s\rfloor})&=\mathcal Db(w^{\Delta, v}_{\lfloor s\rfloor})(y^{\Delta,\epsilon, v}_{\lfloor s\rfloor}-w^{\Delta, v}_{\lfloor s\rfloor})+\int_0^1(1-\varsigma)\mathcal D^2b\big((1-\varsigma)w^{\Delta, v}_{\lfloor s\rfloor}\nn\\
&\quad+\varsigma y^{\Delta,\epsilon, v}_{\lfloor s\rfloor}\big)\mathrm d\varsigma(y^{\Delta,\epsilon, v}_{\lfloor s\rfloor}-w^{\Delta, v}_{\lfloor s\rfloor},y^{\Delta,\epsilon, v}_{\lfloor s\rfloor}-w^{\Delta, v}_{\lfloor s\rfloor}),
\end{align*}
which implies 
\begin{align}\label{taylor1}
&\quad \frac{1}{\sqrt{\epsilon}}\big(b(y^{\Delta,\epsilon, v}_{\lfloor s\rfloor})-b(w^{\Delta, v}_{\lfloor s\rfloor})\big)-\mathcal Db(w^{\Delta, v}_{\lfloor s\rfloor})\mathbb Z^{\Delta, v}_{\lfloor s\rfloor}\nn\\
&=\mathcal Db(w^{\Delta, v}_{\lfloor s\rfloor})(Y^{\Delta,\epsilon, v}_{\lfloor s\rfloor}-\mathbb Z^{\Delta, v}_{\lfloor s\rfloor})+\int_0^1(1-\varsigma)\mathcal D^2b\big((1-\varsigma)w^{\Delta, v}_{\lfloor s\rfloor}\nn\\
&\quad+\varsigma y^{\Delta,\epsilon, v}_{\lfloor s\rfloor}\big)\mathrm d\varsigma(Y^{\Delta,\epsilon, v}_{\lfloor s\rfloor},y^{\Delta,\epsilon, v}_{\lfloor s\rfloor}-w^{\Delta, v}_{\lfloor s\rfloor}).
\end{align}
It follows  from the H\"older inequality, \cref{smooth_assp}, and \underline{Step 1} that for $p\ge 2,$
\begin{align*}
&\quad\mathbb E\Big[\sup_{t\leq T}|\mathscr A_1(t)|^p\Big]\leq K_T\E\Big[\int_0^T\mathbb \|Y^{\Delta,\epsilon, v}_{\lfloor s\rfloor}-\mathbb Z^{\Delta, v}_{\lfloor s\rfloor}\|^p+\|Y^{\Delta,\epsilon, v}_{\lfloor s\rfloor}\|^p\|y^{\Delta,\epsilon, v}_{\lfloor s\rfloor}-w^{\Delta, v}_{\lfloor s\rfloor}\|^p\mathrm ds\Big]\\
&\leq K_T\!\int_0^T\!\mathbb E\big[\sup_{r\leq s}|Y^{\Delta,\epsilon, v}(\lfloor r\rfloor)\!-\!\mathbb Z^{\Delta, v}(\lfloor r\rfloor)|^p\big]\mathrm ds\!+\!K_T\Big(\!\int_0^T\!\mathbb E\big[\sup_{0\leq k\leq N^{\Delta}}|Y^{\Delta,\epsilon, v}(t_k)|^{2p}\big]\mathrm ds\!\\
&\quad\times \int_0^T\mathbb E\big[\sup_{0\leq k\leq N^{\Delta}}|y^{\Delta,\epsilon, v}(t_k)-w^{\Delta, v}(t_k)|^{2p}\big]\mathrm ds\Big)^{\frac12}\\
&\leq K_T\int_0^T\mathbb E\Big[\sup_{r\leq s}|Y^{\Delta,\epsilon, v}(\lfloor r\rfloor)-\mathbb Z^{\Delta, v}(\lfloor r\rfloor)|^p\Big]\mathrm ds+K_T\epsilon^{\frac p2}.
\end{align*}
Similarly, for the term $\mathscr A_2,$ by virtue of the Taylor formula and the H\"older inequality,  we have that for $p\ge 2,$
\begin{align*}
&\quad \mathbb E\Big[\sup_{t\leq T}|\mathscr A_2(t)|^p\Big]\leq \mathbb E \Big[\Big(\int_0^T\Big|\mathcal D\sigma (w^{\Delta, v}_{\lfloor s\rfloor})(Y^{\Delta,\epsilon, v}_{\lfloor s\rfloor}-\mathbb Z^{\Delta, v}_{\lfloor s\rfloor})+\int_0^1(1-\varsigma)\times\nn\\
&\quad\mathcal D ^2\sigma\big((1-\varsigma)w^{\Delta, v}_{\lfloor s\rfloor}+\lambda y^{\Delta,\epsilon, v}_{\lfloor s\rfloor}\big)\mathrm d\varsigma(Y^{\Delta,\epsilon, v}_{\lfloor s\rfloor},y^{\Delta,\epsilon, v}_{\lfloor s\rfloor}-w^{\Delta, v}_{\lfloor s\rfloor})\Big||v(\lfloor s\rfloor)|\mathrm ds\Big)^p\Big]\\
&\leq K_T\int_0^T\mathbb E\Big[\sup_{r\leq s}|Y^{\Delta,\epsilon, v}(\lfloor r\rfloor)-\mathbb Z^{\Delta, v}(\lfloor r\rfloor)|^p\Big]\mathrm ds\big(\int_0^T|v(\lfloor s\rfloor)|^2\mathrm ds\big)^{\frac p2}\nn\\
&\quad+K\!\Big(\int_0^T\!\mathbb E\Big[\sup_{0\leq k\leq N^{\Delta}}|Y^{\Delta,\epsilon, v}(t_k)|^{2p}\Big]\mathrm ds\int_0^T\mathbb E\Big[\sup_{0\leq k\leq N^{\Delta}}|y^{\Delta,\epsilon, v}(t_k)\nn\\
&\quad-w^{\Delta, v}(t_k)|^{2p}\Big]\mathrm ds\Big)^{\frac12}\big(\int_0^T|v(\lfloor s\rfloor)|^2\mathrm ds\big)^{\frac p2}\\
&\leq K_T\int_0^T\mathbb E\Big[\sup_{r\leq s}|Y^{\Delta,\epsilon, v}(\lfloor r\rfloor)-\mathbb Z^{\Delta, v}(\lfloor r\rfloor)|^p\Big]\mathrm ds+K_T\epsilon^{\frac p2}.
\end{align*}
Combining estimates of terms $\mathscr A_1,\mathscr A_2,$ and $\mathscr A_3$, we arrive at that for $p\ge 2,$
\begin{align}\label{inter_1}
&\quad\mathbb E\Big[\sup_{t\leq T}\Big|\frac{1}{\sqrt{\epsilon}}(Z^{\Delta,\epsilon, v}(t)-z^{\Delta, v}(t))-\widetilde{\mathbb Z}^{\Delta, v}(t)\Big|^p\Big]\nn\\
&\leq K_T\int_0^T\mathbb E\Big[\sup_{r\leq s}|Y^{\Delta,\epsilon, v}(\lfloor r\rfloor)-\mathbb Z^{\Delta, v}(\lfloor r\rfloor)|^p\Big]\mathrm ds+K_T\epsilon^{\frac p2}.
\end{align}
Noting that
\begin{align*}
&\quad\Big|\frac{1}{\sqrt{\epsilon}}(Z^{\Delta,\epsilon, v}(t_k)-z^{\Delta, v}(t_k))-\widetilde{\mathbb Z}^{\Delta, v}(t_k)\Big|^2\nn\\
&\ge \Big| \frac{1}{\sqrt{\epsilon}}(y^{\Delta,\epsilon, v}(t_k)-w^{\Delta, v}(t_k))-{\mathbb Z}^{\Delta, v}(t_k)\Big|^2-2\theta\Delta \Big\<\frac{1}{\sqrt{\epsilon}}(y^{\Delta,\epsilon, v}(t_k)-w^{\Delta, v}(t_k))\nn\\
&\quad-{\mathbb Z}^{\Delta, v}(t_k), \frac{1}{\sqrt{\epsilon}}(b(y^{\Delta,\epsilon, v}_{t_k})-b(w^{\Delta, v}_{t_k}))-\mathcal Db(w^{\Delta, v}_{t_k}){\mathbb Z}^{\Delta, v}_{t_k}\Big\>,
\end{align*}
we derive from \eqref{taylor1}, \cref{smooth_assp}, and the Young inequality that for $\Delta\leq\frac{1}{8\theta a_5}$,
\begin{align*}
&\quad\Big| \frac{1}{\sqrt{\epsilon}}(y^{\Delta,\epsilon, v}(t_k)-w^{\Delta, v}(t_k))-{\mathbb Z}^{\Delta, v}(t_k)\Big|^2\nn\\
&\leq \Big|\frac{1}{\sqrt{\epsilon}}(Z^{\Delta,\epsilon, v}(t_k)-z^{\Delta, v}(t_k))-\widetilde{\mathbb Z}^{\Delta, v}(t_k)\Big|^2+5\theta a_5\Delta \sup_{0\leq k\leq N^{\Delta}}|Y^{\Delta,\epsilon, v}(t_k)\nn\\
&\quad-\mathbb Z^{\Delta, v}(t_k)|^2+K\Delta \sup_{0\leq k\leq N^{\Delta}}|Y^{\Delta,\epsilon, v}(t_k)|^2|y^{\Delta,\epsilon, v}(t_k)-w^{\Delta, v}(t_k)|^2.
\end{align*}
This indicates 
\begin{align*}
&\quad\sup_{0\leq k\leq N^{\Delta}}\Big| Y^{\Delta,\epsilon, v}(t_k)-{\mathbb Z}^{\Delta, v}(t_k)\Big|^2\nn\\
&\leq 3 \sup_{0\leq k\leq N^{\Delta}}\Big|\frac{1}{\sqrt{\epsilon}}(Z^{\Delta,\epsilon, v}(t_k)-z^{\Delta, v}(t_k))-\widetilde{\mathbb Z}^{\Delta, v}(t_k)\Big|^2\\
&\quad+K\sup_{0\leq k\leq N^{\Delta}}|Y^{\Delta,\epsilon, v}(t_k)|^2|y^{\Delta,\epsilon, v}(t_k)-w^{\Delta, v}(t_k)|^2,
\end{align*}
which together with \eqref{inter_1}, and \cref{prop_mo_bound,Chp5Y+1}, yields  that for $p\ge 2,$
\begin{align*}
&\quad\mathbb E\Big[\sup_{0\leq k\leq N^{\Delta}}\Big| Y^{\Delta,\epsilon, v}(t_k)-{\mathbb Z}^{\Delta, v}(t_k)\Big|^p\Big]\nn\\
&\leq K_T\int_0^T\mathbb E\Big[\sup_{r\leq s}|Y^{\Delta,\epsilon, v}(\lfloor r\rfloor)-\mathbb Z^{\Delta, v}(\lfloor r\rfloor)|^p\Big]\mathrm ds+K_T\epsilon^{\frac p2}\\
&\quad+K\Big(\mathbb E\Big[\sup_{0\leq k\leq N^{\Delta}}|Y^{\Delta,\epsilon, v}(t_k)|^{2p}\Big]\mathbb E\Big[\sup_{0\leq k\leq N^{\Delta}}|y^{\Delta,\epsilon, v}(t_k)-w^{\Delta, v}(t_k)| ^{2p}\Big]\Big)^{\frac12}\\
&\leq K_T\int_0^T\mathbb E\Big[\sup_{r\leq s}|Y^{\Delta,\epsilon, v}(\lfloor r\rfloor)-\mathbb Z^{\Delta, v}(\lfloor r\rfloor)|^p\Big]\mathrm ds+K_T\epsilon^{\frac p2}.
\end{align*}
Applying the Gr\"owall inequality completes the proof of \underline{Step 2}.

\underline{Step 3.} Prove that for any $p\ge 1,$ 
$\lim_{\epsilon\to0}\mathbb E\Big[\sup_{\alpha\leq t_k\leq T}|D_{\alpha}y^{\Delta,\epsilon, v}(t_k)|^p\Big]=0.$

It follows from the definition of $Z^{\Delta,\epsilon, v}$ that $D_{\alpha}Z^{\Delta,\epsilon, v}$ satisfies  
\begin{align}\label{DZ}
D_{\alpha}Z^{\Delta,\epsilon, v}(t)&=\int_0^t\mathcal Db(y^{\Delta,\epsilon, v}_{\lfloor s\rfloor})D_{\alpha}y^{\Delta,\epsilon, v}_{\lfloor s\rfloor}\mathrm ds+\int_0^t\sqrt{\epsilon}\mathcal D\sigma(y^{\Delta,\epsilon, v}_{\lfloor s\rfloor})D_{\alpha}y^{\Delta,\epsilon, v}\mathrm dW(s)\nn\\
&\quad +\int_0^t\mathcal D\sigma(y^{\Delta,\epsilon, v}_{\lfloor s\rfloor})D_{\alpha}y^{\Delta,\epsilon, v}_{\lfloor s\rfloor}v(\lfloor s\rfloor)\mathrm ds+\sqrt{\epsilon}\sigma(y^{\Delta,\epsilon, v}_{\lfloor \alpha\rfloor})\mathbf 1_{[0,t]}(\alpha). 
\end{align}
%where $\mathbf 1$ is the $\mathbb %R^m$-valued indicator function. 
We obtain from the Burkholder--Davis--Gundy inequality and the H\"older inequality that for $p\ge 2,$
\begin{align*}
&\quad\mathbb E\Big[\sup_{t\leq T}|D_{\alpha}Z^{\Delta,\epsilon, v}(t)|^p\Big]\nn\\
&\leq K_T\int_0^T\mathbb E[\|D_{\alpha}y^{\Delta,\epsilon, v}_{\lfloor s\rfloor}\|^p]\mathrm ds+K_T\epsilon^{\frac p2}\int_0^T\mathbb E[\|D_{\alpha}y^{\Delta,\epsilon, v}_{\lfloor s\rfloor}\|^p]\mathrm ds+K\epsilon^{\frac p2}\big(1+\mathbb E[\|y^{\Delta,\epsilon, v}_{\lfloor \alpha\rfloor}\|^p]\big)\\
&\leq K_T\int_0^T\mathbb E\Big[\sup_{r\leq s}|D_{\alpha}y^{\Delta,\epsilon, v}(\lfloor r\rfloor)|^p\Big]\mathrm ds+K\epsilon^{\frac p2}\big(1+\mathbb E[\|y^{\Delta,\epsilon, v}_{\lfloor \alpha\rfloor}\|^p]\big).
\end{align*}
Moreover, similar to the proof of \eqref{inver_ineq},  we have
\begin{align*}
\sup_{0\leq k\leq N^{\Delta}}|D_{\alpha}y^{\Delta,\epsilon, v}(t_k)|^2\leq 2\sup_{0\leq k\leq N^{\Delta}}|D_{\alpha}Z^{\Delta,\epsilon, v}(t_k)|^2,
\end{align*}
 which together with the Gr\"onwall inequality finishes the proof of \underline{Step 3}.

\underline{Step 4.} Prove that for any $p\ge 1$ and sufficiently small $\Delta>0$,  $$\lim_{\epsilon\to0}\mathbb E\Big[\sup_{ \alpha\leq t_k\leq T}|D_{\alpha}Y^{\Delta,\epsilon, v}(t_k)-D_{\alpha}\mathbb Z^{\Delta, v}(t_k)|^p\Big]=0.$$

Since  $D_{\alpha}\widetilde {\mathbb Z}^{\Delta, v}$ satisfies 
\begin{align*}
D_{\alpha}\widetilde{\mathbb Z}^{\Delta, v}(t)=&\int_0^t\mathcal Db(w^{\Delta, v}_{\lfloor s\rfloor})D_{\alpha}\mathbb Z^{\Delta, v}_{\lfloor s\rfloor}\mathrm ds+\int_0^t\mathcal D\sigma(w^{\Delta, v}_{\lfloor s\rfloor})D_{\alpha}\mathbb Z^{\Delta, v}_{\lfloor s\rfloor}v(\lfloor s\rfloor)\mathrm ds\nn\\
&+\sigma(w^{\Delta, v}_{\lfloor \alpha\rfloor})\mathbf 1_{[0,t]}(\alpha),
\end{align*}
we derive from \eqref{DZ} that
\begin{align*}
&\frac{1}{\sqrt{\epsilon}}D_{\alpha}Z^{\Delta,\epsilon, v}(t)-D_{\alpha}\widetilde{\mathbb Z}^{\Delta, v}(t)=\tilde {\mathscr A}_1(t)+\tilde {\mathscr A}_2(t)+\tilde {\mathscr A}_3(t)+\tilde {\mathscr A}_4(t),
\end{align*}
where
\begin{align*}
\tilde {\mathscr A}_1(t)&:=\int_0^t\Big(\frac{1}{\sqrt{\epsilon}}\mathcal Db(y^{\Delta,\epsilon, v}_{\lfloor s\rfloor})D_{\alpha}y^{\Delta,\epsilon, v}_{\lfloor s\rfloor}-\mathcal Db(w^{\Delta, v}_{\lfloor s\rfloor})D_{\alpha}\mathbb Z^{\Delta, v}_{\lfloor s\rfloor}\Big)\mathrm ds,\\
\tilde {\mathscr A}_2(t)&:=\int_0^t\Big(\frac{1}{\sqrt{\epsilon}}\mathcal D\sigma(y^{\Delta,\epsilon, v}_{\lfloor s\rfloor})D_{\alpha}y^{\Delta,\epsilon, v}_{\lfloor s\rfloor}-\mathcal D\sigma(w^{\Delta, v}_{\lfloor s\rfloor})D_{\alpha}\mathbb Z^{\Delta, v}_{\lfloor s\rfloor}\Big)v(\lfloor s\rfloor)\mathrm ds,\\
\tilde {\mathscr A}_3(t)&:=(\sigma (y^{\Delta,\epsilon, v}_{\lfloor \alpha\rfloor})-\sigma(w^{\Delta, v}_{\lfloor \alpha \rfloor}))\mathbf 1_{[0,t]}(\alpha),\nn\\
\tilde {\mathscr A}_4(t)&:=\int_0^t\mathcal D\sigma(y^{\Delta,\epsilon, v}_{\lfloor s\rfloor})D_{\alpha}y^{\Delta,\epsilon, v}_{\lfloor s\rfloor}\mathrm dW(s).
\end{align*}
By the Taylor formula, we obtain
\begin{align*}
&\quad\big(\mathcal Db(y^{\Delta,\epsilon, v}_{\lfloor s\rfloor})-\mathcal Db(w^{\Delta, v}_{\lfloor s\rfloor})\big)D_{\alpha}y^{\Delta,\epsilon, v}_{\lfloor s\rfloor}=\mathcal D^2b(w^{\Delta, v}_{\lfloor s\rfloor})(y^{\Delta,\epsilon, v}_{\lfloor s\rfloor}-w^{\Delta, v}_{\lfloor s\rfloor},D_{\alpha}y^{\Delta,\epsilon, v}_{\lfloor s\rfloor})\\
&\quad +\int_0^1\mathcal D^3b((1-\varsigma)w^{\Delta, v}_{\lfloor s\rfloor}+\varsigma y^{\Delta,\epsilon, v}_{\lfloor s\rfloor})\mathrm d\varsigma\big(y^{\Delta,\epsilon, v}_{\lfloor s\rfloor}-w^{\Delta, v}_{\lfloor s\rfloor},y^{\Delta,\epsilon, v}_{\lfloor s\rfloor}-w^{\Delta, v}_{\lfloor s\rfloor},D_{\alpha}y^{\Delta,\epsilon, v}_{\lfloor s\rfloor}\big).
\end{align*}
Hence,  for $p\ge 2, $
\begin{align*}
&\quad\mathbb  E\Big[\sup_{t\leq T}|\tilde {\mathscr A}_1(t)|^p\Big]
= \mathbb E\Big[\sup_{t\leq T}\Big|\int_0^t\frac{1}{\sqrt{\epsilon}}(\mathcal Db(y^{\Delta,\epsilon, v}_{\lfloor s\rfloor})-\mathcal Db(w^{\Delta, v}_{\lfloor s\rfloor}))D_{\alpha}y^{\Delta,\epsilon, v}_{\lfloor s\rfloor}\nn\\
&\quad+\mathcal Db(w^{\Delta, v}_{\lfloor s\rfloor})\big(\frac{1}{\sqrt{\epsilon}}D_{\alpha}y^{\Delta,\epsilon, v}_{\lfloor s\rfloor}-D_{\alpha}\mathbb Z^{\Delta, v}_{\lfloor s\rfloor}\big)\mathrm ds\Big|^p\Big]\\
&\leq K_T\int_0^T\mathbb E\Big[\|Y^{\Delta,\epsilon, v}_{\lfloor s\rfloor}\|^p\|D_{\alpha}y^{\Delta,\epsilon, v}_{\lfloor s\rfloor}\|^p+\|Y^{\Delta,\epsilon, v}_{\lfloor s\rfloor}\|^p\|y^{\Delta,\epsilon, v}_{\lfloor s\rfloor}-w^{\Delta, v}_{\lfloor s\rfloor}\|^p\|D_{\alpha}y^{\Delta,\epsilon, v}_{\lfloor s\rfloor}\|^p\nn\\
&\quad+\|\frac{1}{\sqrt{\epsilon}}D_{\alpha}y^{\Delta,\epsilon, v}_{\lfloor s\rfloor}-D_{\alpha}\mathbb Z^{\Delta, v}_{\lfloor s\rfloor}\|^p\Big]\mathrm ds\\
&\leq K_T\Big(\mathbb E\Big[\sup_{0\leq k\leq N^{\Delta}}|Y^{\Delta,\epsilon, v}(t_k)|^{2p}\Big]\mathbb E\Big[\sup_{0\leq k\leq N^{\Delta}}|D_{\alpha}y^{\Delta,\epsilon, v}(t_k)|^{2p}\Big]\Big)^{\frac12}\\
&\quad+K_T\Big(\mathbb E\Big[\sup_{0\leq k\leq N^{\Delta}}|Y^{\Delta,\epsilon, v}(t_k)|^{2p}\Big]\Big)^{\frac12}\Big(\mathbb E\Big[\sup_{0\leq k\leq N^{\Delta}}|y^{\Delta,\epsilon, v}(t_k)-w^{\Delta, v}(t_k)|^{4p}\Big]\times\nn\\
&\quad\E\Big[\sup_{0\leq k\leq N^{\Delta}}|Y^{\Delta,\epsilon, v}(t_k)|^{4p}\Big]\Big)^{\frac14}\nn\\
&+K_T\int_0^T\mathbb E\Big[\sup_{r\leq s}|\frac{1}{\sqrt{\epsilon}}D_{\alpha}y^{\Delta,\epsilon, v}(\lfloor r\rfloor)-D_{\alpha}\mathbb Z^{\Delta, v}(\lfloor r\rfloor)|^p\Big]\mathrm ds. 
\end{align*}
Similarly, %the term $\tilde {\mathscr A}_2$ can be estimated as
\begin{align*}
&\quad\mathbb E\Big[\sup_{t\leq T}|\tilde {\mathscr A}_2(t)|^p\Big]\leq K_T\Big(\mathbb E\Big[\sup_{0\leq k\leq N^{\Delta}}|Y^{\Delta,\epsilon, v}(t_k)|^{2p}\Big]\mathbb E\Big[\sup_{0\leq k\leq N^{\Delta}}|D_{\alpha}y^{\Delta,\epsilon, v}(t_k)|^{2p}\Big]\Big)^{\frac12}\\
&\quad+K_T\Big(\mathbb E\Big[\sup_{0\leq k\leq N^{\Delta}}|Y^{\Delta,\epsilon, v}(t_k)|^{2p}\Big]\Big)^{\frac12}\Big(\mathbb E\Big[\sup_{0\leq k\leq N^{\Delta}}|y^{\Delta,\epsilon, v}(t_k)-w^{\Delta, v}(t_k)|^{4p}\Big]\times\nn\\
&\quad\E\Big[\sup_{0\leq k\leq N^{\Delta}}|Y^{\Delta,\epsilon, v}(t_k)|^{4p}\Big]\Big)^{\frac14}\nn\\
&+K_T\int_0^T\mathbb E\Big[\sup_{r\leq s}|\frac{1}{\sqrt{\epsilon}}D_{\alpha}y^{\Delta,\epsilon, v}(\lfloor r\rfloor)-D_{\alpha}\mathbb Z^{\Delta, v}(\lfloor r\rfloor)|^p\Big]\mathrm ds.
\end{align*}
Consequently, we arrive at
\begin{align}\label{inter_DZeq}
&\quad\mathbb E\Big[\sup_{t\leq T}\Big|\frac{1}{\sqrt{\epsilon}}D_{\alpha}Z^{\Delta,\epsilon, v}(t)-D_{\alpha}\widetilde{\mathbb Z}^{\Delta, v}(t)\Big|^p\Big]\nn\\
&\leq K_T\Big(\mathbb E\Big[\sup_{0\leq k\leq N^{\Delta}}|Y^{\Delta,\epsilon, v}(t_k)|^{2p}\Big]\mathbb E\Big[\sup_{0\leq k\leq N^{\Delta}}|D_{\alpha}y^{\Delta,\epsilon, v}(t_k)|^{2p}\Big]\Big)^{\frac12}\\
&\quad+K_T\Big(\mathbb E\Big[\sup_{0\leq k\leq N^{\Delta}}|Y^{\Delta,\epsilon, v}(t_k)|^{2p}\Big]\Big)^{\frac12}\Big(\mathbb E\Big[\sup_{0\leq k\leq N^{\Delta}}|y^{\Delta,\epsilon, v}(t_k)-w^{\Delta, v}(t_k)|^{4p}\Big]\times\nn\\
&\quad\E\Big[\sup_{0\leq k\leq N^{\Delta}}|Y^{\Delta,\epsilon, v}(t_k)|^{4p}\Big]\Big)^{\frac14}+K_T\int_0^T\mathbb E\Big[\sup_{r\leq s}|\frac{1}{\sqrt{\epsilon}}D_{\alpha}y^{\Delta,\epsilon, v}(\lfloor r\rfloor)\nn\\
&\quad-D_{\alpha}\mathbb Z^{\Delta, v}(\lfloor r\rfloor)|^p\Big]\mathrm ds +\mathbb E\Big[\!\sup_{0\leq k\leq N^{\Delta}}|y^{\Delta,\epsilon, v}(t_k)-w^{\Delta, v}(t_k)|^p\Big]\nn\\
&\quad+K_T\int_0^T\mathbb E\Big[\sup_{0\leq k\leq N^{\Delta}}|D_{\alpha}y^{\Delta,\epsilon, v}(t_k)|^p\Big]\mathrm ds.
\end{align}
Moreover, we have 
\begin{align*}
&\Big|\frac{1}{\sqrt{\epsilon}}D_{\alpha}y^{\Delta,\epsilon, v}(t_k)-D_{\alpha}\mathbb Z^{\Delta, v}(t_k)\Big|^2\leq \Big|\frac{1}{\sqrt{\epsilon}}D_{\alpha}Z^{\Delta,\epsilon, v}(t_k)-D_{\alpha}\widetilde{\mathbb Z}^{\Delta, v}(t_k)\Big|^2\\
&\quad+2\theta\Delta \Big|\mathcal Db(y^{\Delta,\epsilon, v}_{t_k})\frac{1}{\sqrt{\epsilon}}D_{\alpha}y^{\Delta,\epsilon, v}_{t_k}-\mathcal Db(w^{\Delta, v}_{t_k})D_{\alpha}\mathbb Z^{\Delta, v}_{t_k}\Big|^2,
\end{align*}
According to the Taylor formula, we have
\begin{align*}
&\quad\sup_{0\leq k\leq N^{\Delta}}\Big|\frac{1}{\sqrt{\epsilon}}D_{\alpha}y^{\Delta,\epsilon, v}(t_k)-D_{\alpha}\mathbb Z^{\Delta, v}(t_k)\Big|^2\nn\\
&\leq \sup_{0\leq k\leq N^{\Delta}}\Big|\frac{1}{\sqrt{\epsilon}}D_{\alpha}Z^{\Delta,\epsilon, v}(t_k)-D_{\alpha}\widetilde{\mathbb Z}^{\Delta, v}(t_k)\Big|^2\\
&\quad +K\Delta \sup_{0\leq k\leq N^{\Delta}}|Y^{\Delta,\epsilon, v}(t_k)|^2|D_{\alpha}y^{\Delta,\epsilon, v}(t_k)|^2\\
&\quad +K\Delta \sup_{0\leq k\leq N^{\Delta}}|Y^{\Delta,\epsilon, v}(t_k)|^2|y^{\Delta,\epsilon, v}(t_k)-w^{\Delta, v}(t_k)|^2|D_{\alpha}y^{\Delta,\epsilon, v}(t_k)|^2\nn\\
&\quad+4\theta\Delta\|^2\mathcal Db\|_{\mathcal L(\mathcal C^{d};\RR^d)}\sup_{0\leq k\leq N^{\Delta}}\Big|\frac{1}{\sqrt{\epsilon}}D_{\alpha}y^{\Delta,\epsilon, v}(t_k)-D_{\alpha}\mathbb Z^{\Delta, v}(t_k)\Big|^2.
\end{align*}
Then, for sufficiently small $\Delta$,
\begin{align*}
&\quad\sup_{0\leq k\leq N^{\Delta}}\Big|\frac{1}{\sqrt{\epsilon}}D_{\alpha}y^{\Delta,\epsilon, v}(t_k)-D_{\alpha}\mathbb Z^{\Delta, v}(t_k)\Big|^2\nn\\
&\leq K \sup_{0\leq k\leq N^{\Delta}}\Big|\frac{1}{\sqrt{\epsilon}}D_{\alpha}Z^{\Delta,\epsilon, v}(t_k)-D_{\alpha}\widetilde{\mathbb Z}^{\Delta, v}(t_k)\Big|^2\\
&\quad +K\Delta \sup_{0\leq k\leq N^{\Delta}}|Y^{\Delta,\epsilon, v}(t_k)|^2|D_{\alpha}y^{\Delta,\epsilon, v}(t_k)|^2\\
&\quad +K\Delta \sup_{0\leq k\leq N^{\Delta}}|Y^{\Delta,\epsilon, v}(t_k)|^2|y^{\Delta,\epsilon, v}(t_k)-w^{\Delta, v}(t_k)|^2|D_{\alpha}y^{\Delta,\epsilon, v}(t_k)|^2.
\end{align*}
Making use of \eqref{inter_DZeq}, the Gr\"onwall inequality and \underline{Steps 1--3} finishes the proof of \underline{Step 4}.

\underline{Step 5.} Prove that for any $p\ge 1$, $l\in \mathbb N_+$, and sufficiently small $\Delta>0$,
$$\lim_{\epsilon\to0}\mathbb E\Big[\sup_{\alpha_1\vee\cdots\vee\alpha_l\leq t_k\leq T}|D_{\alpha_1\cdots\alpha_l}Y^{\Delta,\epsilon, v}(t_k)-D_{K\cdots\alpha_l}\mathbb Z^{\Delta, v}(t_k)|^p\Big]=0.$$

The case of $l=1$ is proved in \underline{Step 4}, and the case of $l>1$ can be proved by the induction argument on $l$. We omit the proof. 

Combining \underline{Steps 1--5}, and according to  \cite[Proposition 5.1]{Ma_functional}, we obtain
\begin{align*}
\liminf_{\epsilon\to0}\epsilon \ln\mathfrak p^{\Delta,\epsilon}(t,y)\ge -\tilde d_{t}(y)
\end{align*}
with  $\tilde d_t(y):=\inf\{I_T(f):f\in \mathcal C_{\xi}([-\tau,T];\mathbb R^d),y=f(t),\det(\bar{\gamma} _{w^{\Delta, v}(t)})>0\}$, where $\bar{\gamma}_{\Phi(v)}=(\<\mathscr D(\Phi(v))^{i},\mathscr  D(\Phi(v))^j\>_H)_{1\leq i,j\leq d}$ with functional $\Phi:L^2([0,T];\mathbb R^m)\to\mathbb R^d$.

At last, we  show that $\tilde I_t=\tilde d_t,$ which is equivalent  to prove that $\det(\bar{\gamma}_{w^{\Delta, v}(t)})>0$ for each $t\in(0,T]$. Noticing   
\begin{align*}
\mathscr  D_{\alpha}w^{\Delta, v}(t_{k+1})&=\mathscr {D}_{\alpha}w^{\Delta, v}(t_{k})+(1-\theta)\mathscr  Db(w^{\Delta, v}_{t_k})\mathscr {D}_{\alpha}w^{\Delta, v}_{t_k}\Delta+\theta \mathscr  Db(w^{\Delta, v}_{t_{k+1}})\mathscr {D}_{\alpha}w^{\Delta, v}_{t_{k+1}}\Delta\\
&\quad+\mathscr  D\sigma (w^{\Delta, v}_{t_k})v(t_k)\Delta+\sigma(w^{\Delta, v}_{t_{k}})\mathbf 1_{[t_k,t_{k+1})}(\alpha)\Delta,
\end{align*}
the proof of $\det(\bar{\gamma}_{w^{\Delta, v}(t)})>0$ is similar to that of Theorem \ref{trun_num_den} and is omitted. 
Hence the proof of Theorem \ref{LDP_density} is finished. 
\end{proof}

\section{Summary and outlook}
This chapter investigates  the LDP for the $\theta$-EM method of the SFDE. 
%The main results and comments are as follows. 
For the SFDE with small noise, under the superlinearly growing drift coefficient condition, 
we first present  the Freidlin--Wentzell LDP for the $\theta$-EM solution    on the infinite time horizon.   
In fact, the exact solution of the considered equation  can also be proved to satisfy the LDP by the approach used in this chapter. Natural questions arise: What is the relation between  rate functions for the numerical  solution case and  the exact solution case?  How to apply the result on the LDP  of the $\theta$-EM method to solving  some problems in statistics, engineering, statistical mechanics, and applied probability?
 %give the rare events simulation by means of the LDP? 
%give the numerical realization for 
\iffalse
Based on the existence and smoothness of the  density function of the  numerical solution of the $\theta$-EM method of \eqref{FF_small},  
we obtain the logarithmic estimate for the numerical density function
%of the  numerical solution
%of the $\theta$-EM method of \eqref{FF_small} 
on the finite time horizon under a strong condition on coefficients $b$ and $\sigma$ (i.e., smoothness and boundedness of arbitrary order derivatives).  
\fi

Another result obtained in this chapter is the logarithmic estimate for the density function of the $\theta$-EM  solution  provided that drift and diffusion coefficients of the SFDE are  smooth and have arbitrary order bounded derivatives. The relation between the logarithmic limit and the rate function of the LDP is given.  It would be  interesting to further  study the logarithmic estimate for the numerical  density function for the SFDE with  superlinearly growing coefficients. 
%The smoothness of the numerical density function is required in the proof. 
%We remark that the restriction on the finite time horizon is for technical reason to obtain the smoothness of the numerical density function based on the recursive argument, and may  have  room to be improved. 
%condition and the restrictive smooth condition on coefficients are used for proving the smoothness of the numerical density function based on the recursive argument. 
%The smooth condition is restrictive. 
%It is interesting to further study how to loose these conditions.  

    \cleardoublepage
    \renewcommand{\sectionmark}[1]{\markright{#1}}
    \addcontentsline{toc}{chapter}{Bibliography}
    \bibliographystyle{splncs04.bst}
    \bibliography{sfde}

\begin{thebibliography}{10}
\providecommand{\url}[1]{\texttt{#1}}
\providecommand{\urlprefix}{URL }
\providecommand{\doi}[1]{https://doi.org/#1}

\bibitem{BDH18}
Ba\~{n}os, D.R., Di~Nunno, G., Haferkorn, H., Proske, F.N.: Stochastic
  {F}unctional {D}ifferential {E}quations and {S}ensitivity to {T}heir
  {I}nitial {P}ath. In: Computation and Combinatorics in Dynamics, Stochastics
  and Control, Abel Symp., vol.~13, pp. 37--70. Springer, Cham (2018)

\bibitem{Pardoux}
Bally, V., Pardoux, E.: Malliavin calculus for white noise driven parabolic
  {SPDE}s. Potential Anal.  \textbf{9}(1),  27--64 (1998).
  \doi{10.1023/A:1008686922032}, \url{https://doi.org/10.1023/A:1008686922032}

\bibitem{talay_density2}
Bally, V., Talay, D.: The law of the {E}uler scheme for stochastic differential
  equations. {II}. {C}onvergence rate of the density. Monte Carlo Methods Appl.
   \textbf{2}(2),  93--128 (1996). \doi{10.1515/mcma.1996.2.2.93},
  \url{https://doi.org/10.1515/mcma.1996.2.2.93}

\bibitem{Bao-Shao-Yuan2023}
Bao, J., Shao, J., Yuan, C.: Invariant probability measures for path-dependent
  random diffusions. Nonlinear Anal.  \textbf{228},  Paper No. 113201 (2023).
  \doi{10.1016/j.na.2022.113201},
  \url{https://doi.org/10.1016/j.na.2022.113201}

\bibitem{Baobook}
Bao, J., Yin, G., Yuan, C.: Asymptotic {A}nalysis for {F}unctional {S}tochastic
  {D}ifferential {E}quations. SpringerBriefs in Mathematics, Springer, Cham
  (2016). \doi{10.1007/978-3-319-46979-9},
  \url{https://doi.org/10.1007/978-3-319-46979-9}

\bibitem{Buckwar2006}
Buckwar, E.: One-step approximations for stochastic functional differential
  equations. Appl. Numer. Math.  \textbf{56}(5),  667--681 (2006).
  \doi{10.1016/j.apnum.2005.05.001},
  \url{https://doi.org/10.1016/j.apnum.2005.05.001}

\bibitem{Buchwar2005weak}
Buckwar, E., Shardlow, T.: Weak approximation of stochastic differential delay
  equations. IMA J. Numer. Anal.  \textbf{25}(1),  57--86 (2005).
  \doi{10.1093/imanum/drh012}, \url{https://doi.org/10.1093/imanum/drh012}

\bibitem{BS17}
Butkovsky, O., Scheutzow, M.: Invariant measures for stochastic functional
  differential equations. Electron. J. Probab.  \textbf{22},  Paper No. 98
  (2017). \doi{10.1214/17-EJP122}, \url{https://doi.org/10.1214/17-EJP122}

\bibitem{caowr}
Cao, W., Zhang, Z., Karniadakis, G.E.: Numerical methods for stochastic delay
  differential equations via the {W}ong--{Z}akai approximation. SIAM J. Sci.
  Comput.  \textbf{37}(1),  A295--A318 (2015). \doi{10.1137/130942024},
  \url{https://doi.org/10.1137/130942024}

\bibitem{Friz}
Cass, T., Friz, P.: Densities for rough differential equations under
  {H}\"{o}rmander's condition. Ann. of Math. (2)  \textbf{171}(3),  2115--2141
  (2010). \doi{10.4007/annals.2010.171.2115},
  \url{https://doi.org/10.4007/annals.2010.171.2115}

\bibitem{maxwell_LDP}
Chen, C.: A symplectic discontinuous {G}alerkin full discretization for
  stochastic {M}axwell equations. SIAM J. Numer. Anal.  \textbf{59}(4),
  2197--2217 (2021). \doi{10.1137/20M1368537},
  \url{https://doi.org/10.1137/20M1368537}

\bibitem{chenzh2021large}
Chen, C., Chen, Z., Hong, J., Jin, D.: Large deviations principles of sample
  paths and invariant measures of numerical methods for parabolic {SPDE}s,
  \textup{arXiv: 2106.11018} (2021)

\bibitem{Sheng_density_SPDE}
Chen, C., Cui, J., Hong, J., Sheng, D.: Accelerated exponential {E}uler scheme
  for stochastic heat equation: convergence rate of the density. IMA J. Numer.
  Anal.  \textbf{43}(2),  1181--1220 (2023). \doi{10.1093/imanum/drac011},
  \url{https://doi.org/10.1093/imanum/drac011}

\bibitem{chen2022adaptive}
Chen, C., Dang, T., Hong, J.: An adaptive time-stepping fully discrete scheme
  for stochastic {NLS} equation: strong convergence and numerical asymptotics,
  \textup{arXiv: 2212.01988} (2022)

\bibitem{chen2023probabilistic}
Chen, C., Dang, T., Hong, J., Song, G.: Probabilistic limit behaviors of
  numerical discretizations for time-homogeneous {M}arkov processes,
  \textup{arXiv: 2310.08227} (2023)

\bibitem{dang2023}
Chen, C., Dang, T., Hong, J., Zhou, T.: C{LT} for approximating ergodic limit
  of {SPDE}s via a full discretization. Stochastic Process. Appl.
  \textbf{157},  1--41 (2023). \doi{10.1016/j.spa.2022.11.015},
  \url{https://doi.org/10.1016/j.spa.2022.11.015}

\bibitem{luyulan}
Chen, C., Hong, J., Lu, Y.: Stochastic differential equation with piecewise
  continuous arguments: {M}arkov property, invariant measure and numerical
  approximation. Discrete Contin. Dyn. Syst. Ser. B  \textbf{28}(1),  765--807
  (2023). \doi{10.3934/dcdsb.2022098},
  \url{https://doi.org/10.3934/dcdsb.2022098}

\bibitem{sode_LDP}
Chen, C., Hong, J., Jin, D., Sun, L.: Asymptotically-preserving large
  deviations principles by stochastic symplectic methods for a linear
  stochastic oscillator. SIAM J. Numer. Anal.  \textbf{59}(1),  32--59 (2021).
  \doi{10.1137/19M1306919}, \url{https://doi.org/10.1137/19M1306919}

\bibitem{memo_SHEden}
Chen, L., Hu, Y., Nualart, D.: Regularity and strict positivity of densities
  for the nonlinear stochastic heat equation. Mem. Amer. Math. Soc.
  \textbf{273}(1340) (2021). \doi{10.1090/memo/1340},
  \url{https://doi.org/10.1090/memo/1340}

\bibitem{CEK2006}
Cl\'{e}ment, E., Kohatsu-Higa, A., Lamberton, D.: A duality approach for the
  weak approximation of stochastic differential equations. Ann. Appl. Probab.
  \textbf{16}(3),  1124--1154 (2006). \doi{10.1214/105051606000000060},
  \url{https://doi.org/10.1214/105051606000000060}

\bibitem{FunAnal}
Conway, J.B.: A {C}ourse in {F}unctional {A}nalysis, Graduate Texts in
  Mathematics, vol.~96. Springer-Verlag, New York (1985).
  \doi{10.1007/978-1-4757-3828-5},
  \url{https://doi.org/10.1007/978-1-4757-3828-5}

\bibitem{cui_CHeq}
Cui, J., Hong, J.: Absolute continuity and numerical approximation of
  stochastic {C}ahn-{H}illiard equation with unbounded noise diffusion. J.
  Differential Equations  \textbf{269}(11),  10143--10180 (2020).
  \doi{10.1016/j.jde.2020.07.007},
  \url{https://doi.org/10.1016/j.jde.2020.07.007}

\bibitem{Sheng_density_SODE}
Cui, J., Hong, J., Sheng, D.: Density function of numerical solution of
  splitting {AVF} scheme for stochastic {L}angevin equation. Math. Comp.
  \textbf{91}(337),  2283--2333 (2022). \doi{10.1090/mcom/3752},
  \url{https://doi.org/10.1090/mcom/3752}

\bibitem{DaPrato1996}
Da~Prato, G., Zabczyk, J.: Ergodicity for {I}nfinite-dimensional {S}ystems,
  London Mathematical Society Lecture Note Series, vol.~229. Cambridge
  University Press, Cambridge (1996). \doi{10.1017/CBO9780511662829},
  \url{https://doi.org/10.1017/CBO9780511662829}

\bibitem{TamedEM2016}
Dareiotis, K., Kumar, C., Sabanis, S.: On tamed {E}uler approximations of
  {SDE}s driven by {L}\'{e}vy noise with applications to delay equations. SIAM
  J. Numer. Anal.  \textbf{54}(3),  1840--1872 (2016). \doi{10.1137/151004872},
  \url{https://doi.org/10.1137/151004872}

\bibitem{Dembo}
Dembo, A., Zeitouni, O.: Large {D}eviations {T}echniques and {A}pplications,
  Applications of Mathematics, vol.~38. Springer-Verlag, New York, second edn.
  (1998). \doi{10.1007/978-1-4612-5320-4},
  \url{https://doi.org/10.1007/978-1-4612-5320-4}

\bibitem{Weakapproach}
Dupuis, P., Ellis, R.S.: A {W}eak {C}onvergence {A}pproach to the {T}heory of
  {L}arge {D}eviations. Wiley Series in Probability and Statistics: Probability
  and Statistics, John Wiley \& Sons, Inc., New York (1997).
  \doi{10.1002/9781118165904}, \url{https://doi.org/10.1002/9781118165904}

\bibitem{Ma_functional}
Ferrante, M., Rovira, C., Sanz-Sol\'{e}, M.: Stochastic delay equations with
  hereditary drift: estimates of the density. J. Funct. Anal.  \textbf{177}(1),
   138--177 (2000). \doi{10.1006/jfan.2000.3631},
  \url{https://doi.org/10.1006/jfan.2000.3631}

\bibitem{ZhengZH}
Gao, P., Liu, Y., Sun, Y., Zheng, Z.: Large deviations principle for stationary
  solutions of stochastic differential equations with multiplicative noise,
  \textup{arXiv: 2206.02356} (2022)

\bibitem{SODE_density}
Guyon, J.: Euler scheme and tempered distributions. Stochastic Process. Appl.
  \textbf{116}(6),  877--904 (2006). \doi{10.1016/j.spa.2005.11.011},
  \url{https://doi.org/10.1016/j.spa.2005.11.011}

\bibitem{hong2023density}
Hong, J., Jin, D., Sheng, D.: Density convergence of a fully discrete finite
  difference method for stochastic {C}ahn--{H}illiard equation. Math. Comp.
  (Published electronically: 2023). \doi{https://doi.org/10.1090/mcom/3928}

\bibitem{jin_langevinldp}
Hong, J., Jin, D., Sheng, D., Sun, L.: Numerically asymptotical preservation of
  the large deviations principles for invariant measures of {L}angevin
  equations, \textup{arXiv: 2009.13336} (2020)

\bibitem{Huetal2004}
Hu, Y., Mohammed, S.E.A., Yan, F.: Discrete-time approximations of stochastic
  delay equations: the {M}ilstein scheme. Ann. Probab.  \textbf{32}(1A),
  265--314 (2004). \doi{10.1214/aop/1078415836},
  \url{https://doi.org/10.1214/aop/1078415836}

\bibitem{Hu_Watanabe}
Hu, Y., Watanabe, S.: Donsker's delta functions and approximation of heat
  kernels by the time discretization methods. J. Math. Kyoto Univ.
  \textbf{36}(3),  499--518 (1996). \doi{10.1215/kjm/1250518506},
  \url{https://doi.org/10.1215/kjm/1250518506}

\bibitem{diverge}
Hutzenthaler, M., Jentzen, A., Kloeden, P.E.: Strong and weak divergence in
  finite time of {E}uler's method for stochastic differential equations with
  non-globally {L}ipschitz continuous coefficients. Proc. R. Soc. Lond. Ser. A
  Math. Phys. Eng. Sci.  \textbf{467}(2130),  1563--1576 (2011).
  \doi{10.1098/rspa.2010.0348}, \url{https://doi.org/10.1098/rspa.2010.0348}

\bibitem{Jiang2023}
Jiang, J., Lv, X.: Global stability of stationary solutions for a class of
  semilinear stochastic functional differential equations with additive white
  noise. J. Differential Equations  \textbf{367},  890--921 (2023).
  \doi{10.1016/j.jde.2023.05.035},
  \url{https://doi.org/10.1016/j.jde.2023.05.035}

\bibitem{jin2022large}
Jin, D., Chen, Z., Zhou, T.: Large deviations principle for stochastic delay
  differential equations with super-linearly growing coefficients,
  \textup{arXiv: 2201.00143} (2022)

\bibitem{jin2023asymptotics}
Jin, D., Sheng, D.: Asymptotics of large deviations of finite difference method
  for stochastic {C}ahn--{H}illiard equation, \textup{arXiv: 2304.11916} (2023)

\bibitem{KAKA08}
Kebaier, A., Kohatsu-Higa, A.: An optimal control variance reduction method for
  density estimation. Stochastic Process. Appl.  \textbf{118}(12),  2143--2180
  (2008). \doi{10.1016/j.spa.2008.01.006},
  \url{https://doi.org/10.1016/j.spa.2008.01.006}

\bibitem{AAdensity}
Kitagawa, A., Takeuchi, A.: Asymptotic behavior of densities for stochastic
  functional differential equations. Int. J. Stoch. Anal. p. Art.ID 537023
  (2013). \doi{10.1155/2013/537023}, \url{https://doi.org/10.1155/2013/537023}

\bibitem{Ito_Taylor}
Kohatsu-Higa, A.: High order {I}t\^{o}--{T}aylor approximations to heat
  kernels. J. Math. Kyoto Univ.  \textbf{37}(1),  129--150 (1997).
  \doi{10.1215/kjm/1250518401}, \url{https://doi.org/10.1215/kjm/1250518401}

\bibitem{weaktopo}
Kolmogorov, A.N., Fomin, S.V.: Elements of the {T}heory of {F}unctions and
  {F}unctional {A}nalysis. {V}ol. 1. {M}etric and {N}ormed {S}paces. Graylock
  Press, Rochester, NY, russian edn. (1957)

\bibitem{LMS2024spa}
Li, X., Mao, X., Song, G.: An explicit approximation for super-linear
  stochastic functional differential equations. Stochastic Process. Appl.
  \textbf{169},  Paper No. 104275 (2024). \doi{10.1016/j.spa.2023.104275},
  \url{https://doi.org/10.1016/j.spa.2023.104275}

\bibitem{LD21}
Liu, L., Deng, F.: Complete backward {E}uler numerical scheme for general
  {SFDE}s with exponential stability under the polynomial growth condition. J.
  Comput. Appl. Math.  \textbf{386},  Paper No. 113242 (2021).
  \doi{10.1016/j.cam.2020.113242},
  \url{https://doi.org/10.1016/j.cam.2020.113242}

\bibitem{LXW23}
Liu, W., Mao, X., Wu, Y.: The backward {E}uler--{M}aruyama method for invariant
  measures of stochastic differential equations with super-linear coefficients.
  Appl. Numer. Math.  \textbf{184},  137--150 (2023).
  \doi{10.1016/j.apnum.2022.09.017},
  \url{https://doi.org/10.1016/j.apnum.2022.09.017}

\bibitem{LMS11}
Luo, Q., Mao, X., Shen, Y.: Generalised theory on asymptotic stability and
  boundedness of stochastic functional differential equations. Automatica J.
  IFAC  \textbf{47}(9),  2075--2081 (2011).
  \doi{10.1016/j.automatica.2011.06.014},
  \url{https://doi.org/10.1016/j.automatica.2011.06.014}

\bibitem{PMall}
Malliavin, P.: Stochastic calculus of variation and hypoelliptic operators. In:
  Proceedings of the {I}nternational {S}ymposium on {S}tochastic {D}ifferential
  {E}quations ({R}es. {I}nst. {M}ath. {S}ci., {K}yoto {U}niv., {K}yoto, 1976).
  pp. 195--263. Wiley-Intersci. Publ., John Wiley \& Sons, New
  York-Chichester-Brisbane (1978)

\bibitem{Mao2003LMS}
Mao, X.: Numerical solutions of stochastic functional differential equations.
  LMS J. Comput. Math.  \textbf{6},  141--161 (2003).
  \doi{10.1112/S1461157000000425},
  \url{https://doi.org/10.1112/S1461157000000425}

\bibitem{Mao2007}
Mao, X.: Stochastic {D}ifferential {E}quations and {A}pplications. Horwood
  Publishing Limited, Chichester, second edn. (2008).
  \doi{10.1533/9780857099402}, \url{https://doi.org/10.1533/9780857099402}

\bibitem{Mao-Szpruch2013}
Mao, X., Szpruch, L.: Strong convergence rates for backward {E}uler-{M}aruyama
  method for non-linear dissipative-type stochastic differential equations with
  super-linear diffusion coefficients. Stochastics  \textbf{85}(1),  144--171
  (2013). \doi{10.1080/17442508.2011.651213},
  \url{https://doi.org/10.1080/17442508.2011.651213}

\bibitem{Mohammed84}
Mohammed, S.E.A.: Stochastic {F}unctional {D}ifferential {E}quations, Research
  Notes in Mathematics, vol.~99. Pitman (Advanced Publishing Program), Boston,
  MA (1984)

\bibitem{Nualart}
Nualart, D.: The {M}alliavin {C}alculus and {R}elated {T}opics. Probability and
  its Applications, Springer-Verlag, Berlin, second edn. (2006)

\bibitem{Nualart_Quer}
Nualart, D., Quer-Sardanyons, L.: Existence and smoothness of the density for
  spatially homogeneous {SPDE}s. Potential Anal.  \textbf{27}(3),  281--299
  (2007). \doi{10.1007/s11118-007-9055-3},
  \url{https://doi.org/10.1007/s11118-007-9055-3}

\bibitem{RM10}
von Renesse, M.K., Scheutzow, M.: Existence and uniqueness of solutions of
  stochastic functional differential equations. Random Oper. Stoch. Equ.
  \textbf{18}(3),  267--284 (2010). \doi{10.1515/ROSE.2010.015},
  \url{https://doi.org/10.1515/ROSE.2010.015}

\bibitem{Shaikhet2013}
Shaikhet, L.: Lyapunov {F}unctionals and {S}tability of {S}tochastic
  {F}unctional {D}ifferential {E}quations. Springer, Cham (2013).
  \doi{10.1007/978-3-319-00101-2},
  \url{https://doi.org/10.1007/978-3-319-00101-2}

\bibitem{Wufuke2024}
Shi, B., Wang, Y., Mao, X., Wu, F.: Approximation of invariant measures of a
  class of backward {E}uler--{M}aruyama scheme for stochastic functional
  differential equations. J. Differential Equations  \textbf{389},  415--456
  (2024). \doi{10.1016/j.jde.2024.01.025},
  \url{https://doi.org/10.1016/j.jde.2024.01.025}

\bibitem{Shiryaev}
Shiryaev, A.N.: Probability. ``Nauka'', Moscow, second edn. (1989)

\bibitem{Villani}
Villani, C.: Optimal {T}ransport: {O}ld and {N}ew, Grundlehren der
  mathematischen Wissenschaften [Fundamental Principles of Mathematical
  Sciences], vol.~338. Springer-Verlag, Berlin (2009).
  \doi{10.1007/978-3-540-71050-9},
  \url{https://doi.org/10.1007/978-3-540-71050-9}

\bibitem{WWM2019}
Wang, Y., Wu, F., Mao, X.: Stability in distribution of stochastic functional
  differential equations. Systems Control Lett.  \textbf{132},  104513 (2019).
  \doi{10.1016/j.sysconle.2019.104513},
  \url{https://doi.org/10.1016/j.sysconle.2019.104513}

\bibitem{MYM17}
Wu, F., Yin, G., Mei, H.: Stochastic functional differential equations with
  infinite delay: existence and uniqueness of solutions, solution maps,
  {M}arkov properties, and ergodicity. J. Differential Equations
  \textbf{262}(3),  1226--1252 (2017). \doi{10.1016/j.jde.2016.10.006},
  \url{https://doi.org/10.1016/j.jde.2016.10.006}

\bibitem{XuYang}
Xu, D., Yang, Z., Huang, Y.: Existence-uniqueness and continuation theorems for
  stochastic functional differential equations. J. Differential Equations
  \textbf{245}(6),  1681--1703 (2008). \doi{10.1016/j.jde.2008.03.029},
  \url{https://doi.org/10.1016/j.jde.2008.03.029}

\bibitem{Zhou15}
Zhou, S.: Exponential stability of numerical solution to neutral stochastic
  functional differential equation. Appl. Math. Comput.  \textbf{266},
  441--461 (2015). \doi{10.1016/j.amc.2015.05.041},
  \url{https://doi.org/10.1016/j.amc.2015.05.041}

\end{thebibliography}
    \cleardoublepage
    %\appendix
    %\pretocmd{\chapter}{\pagenumbering{arabic}
	%                          \renewcommand*{\thepage}{\thechapter.\arabic{page}}
            %               }{}{} 
    %\include{appendix1}
\end{document}